\documentclass[12pt]{amsart}

\usepackage{dsfont}
\usepackage{amsxtra}
\usepackage{amsmath}
\usepackage{amscd}
\usepackage{amssymb}
\usepackage{amsfonts}
\usepackage{mathrsfs}
\usepackage{ytableau}
\usepackage{amsthm}
\usepackage{color}
\usepackage{upgreek }
\usepackage{tikz}
\usepackage{float}
\usetikzlibrary{matrix,arrows,decorations.pathmorphing,automata, matrix, positioning, calc, shapes.multipart}
\usepackage{tikz-cd}
\usepackage{enumerate}

\usepackage{blkarray}
\usepackage{multirow}
\usepackage{mathtools}
\usepackage{graphicx}

\usepackage{verbatim}
\usepackage{dsfont}
\usepackage{enumitem}
\usepackage[cmtip,arrow,all]{xy}
\usepackage[mathscr]{eucal}
\usepackage{footmisc}
\usepackage{stmaryrd}
\usepackage{amscd}
\usepackage{latexsym}
\usepackage{amscd}
\usepackage{graphicx}
\usepackage{tikz}
\usepackage{enumitem}
\usepackage{txfonts}
\usepackage{setspace}
\usepackage{wasysym}
\usepackage[breaklinks=true]{hyperref}

\usepackage{imakeidx}
\makeindex

\tikzset{wipe/.style={white,line width=4pt}}

\numberwithin{itemcounter}{subsection}

\theoremstyle{plain}

\newtheorem{theorem}{Theorem}[section]
\newtheorem{lemma}[theorem]{Lemma}
\newtheorem{definition-lemma}[theorem]{Definition-Lemma}

\newtheorem{proposition}[theorem]{Proposition}

\newtheorem{corollary}[theorem]{Corollary}
\theoremstyle{definition}
\newtheorem{definition}[theorem]{Definition}
\theoremstyle{remark}
\newtheorem{remark}[theorem]{Remark}
\newtheorem{example}[theorem]{Example}
\numberwithin{equation}{section}

\def\bbC{\mathbb{C}}

\def\bbF{\mathbb{F}}

\def\bbH{\mathbb{H}}

\def\bbK{\mathbb{K}}

\def\bbN{\mathbb{N}}
\def\bbO{\mathbb{O}}

\def\bbZ{\mathbb{Z}}

\def\k{\Bbbk}
\def\KK{\mathcal K}
\def\kk{\kappa}

\def\sfT{\mathsf{T}}
\def\sfH{\mathsf{H}}
\def\scrI{{I}}

\def\scrP{\mathscr{P}}

\def\fraks{\mathfrak{s}}
\def\frakl{\mathfrak{l}}

\def\frakS{\mathfrak{S}}

\def\frakU{\mathfrak{U}}

\def\calC{\mathcal{C}}

\def\calE{\mathcal{E}}
\def\calF{\mathcal{F}}

\def\calI{\mathcal{I}}

\def\calL{\mathcal{L}}

\def\calO{\mathcal{O}}
\def\calP{\mathcal{P}}

\def\frakl{\mathfrak{l}}

\def\bfG{\mathbf{G}}

\def\bfO{\mathbf{O}}

\def\simto{\overset{\sim}\to}

\def\O{\mathrm{O}}
\def\SO{\mathrm{SO}}
\def\Sp{\mathrm{Sp}}

\def\Irr{{\operatorname{Irr}\nolimits}}

\def\sp{{\operatorname{sp}\nolimits}}

\def\GL{\mathrm{GL}}
\def\GU{\mathrm{GU}}
\def\diag{\operatorname{diag}\nolimits}

\def\C{\operatorname{C}\nolimits}
\def\M{{\operatorname{M}\nolimits}}

\def\R{\operatorname{R}\nolimits}
\def\X{\operatorname{X}\nolimits}
\def\Y{\operatorname{Y}\nolimits}
\def\Z{\operatorname{Z}\nolimits}
\def\W{\operatorname{W}\nolimits}

\def\up{{\operatorname{\,up}\nolimits}}

\def\rank{{\operatorname{rank}\nolimits}}
\def\min{{\operatorname{min}\nolimits}}

\def\Hom{\operatorname{Hom}\nolimits}

\def\End{\operatorname{End}\nolimits}
\def\Res{\operatorname{Res}\nolimits}

\def\Ind{\operatorname{Ind}\nolimits}

\def\Id{\operatorname{Id}\nolimits}
\def\id{\operatorname{id}\nolimits}

\def\wt{\operatorname{wt}\nolimits}

\def\mod{\operatorname{-mod}\nolimits}

\def\ct{{{\text{ct}}}}

\def\Heis{\mathcal{H}eis}

\def\RX{\mathrm{RX}}
\def\LX{\mathrm{LX}}

\def\Lcross{\mathrm{LT}}
\def\MixLcross{\mathrm{LH}}
\def\Rcross{\mathrm{RT}}

\def\Proj{\mathrm{Proj}}

\def\up{\uparrow}
\def\down{\downarrow}
\def\thickdown{\pmb\downarrow}
\def\diam{{\color{white}\scriptstyle\vardiamondsuit}\hspace{-1.493mm}\scriptstyle\diamondsuit}
\def\thickup{\pmb\uparrow}

\def\Heis{\mathcal{H}eis}

\def\symHeis{\underline{\mathcal{H}eis}}

\renewcommand{\sl}{\mathfrak{sl}}

\newcommand{\g}{\mathfrak{g}}
\newcommand{\hh}{\mathfrak{h}}

\def\Hom{\operatorname{Hom}}

\def\wt{{\operatorname{wt}}}
\def\End{{\operatorname{End}}}

\def\UU{{\mathfrak U}}

\def\C{{\mathbb C}}

\def\Z{{\mathbb Z}}
\def\N{{\mathbb N}}
\def\R{{\mathcal R}}

\def\dt{{\color{white}\bullet}\!\!\!\circ}
\def\bull{{\scriptstyle\bullet}}

\newcommand*{\tuple}[1]
{\boldsymbol{#1}}

\makeatletter
\newcommand{\unit}{\mathds{1}}
\newcommand{\qc}{p}

\newcommand{\Fqb}{\overline{\mathbb{F}}_q}

  \def\anticlock{\begin{tikzpicture}[baseline=-.9mm]
		\filldraw[white] (0,0) circle (1.72mm);
		\draw[-] (0,-0.18) to[out=180,in=-102] (-.178,0.02);
		\draw[-] (-0.18,0) to[out=90,in=180] (0,0.18);
		\draw[-] (0.18,0) to[out=-90,in=0] (0,-0.18);
		\draw[<-] (0,0.18) to[out=0,in=90] (0.18,0);
	\end{tikzpicture}\,}
\def\redanticlock{\begin{tikzpicture}[baseline=-.9mm,red]
		\filldraw[white] (0,0) circle (1.72mm);
		\draw[-] (0,-0.18) to[out=180,in=-102] (-.178,0.02);
		\draw[-] (-0.18,0) to[out=90,in=180] (0,0.18);
		\draw[-] (0.18,0) to[out=-90,in=0] (0,-0.18);
		\draw[<-] (0,0.18) to[out=0,in=90] (0.18,0);
	\end{tikzpicture}\,}

 \def\redclock{\begin{tikzpicture}[baseline=-.9mm,red]
		\filldraw[white] (0,0) circle (1.72mm);
		\draw[-] (0,-0.18) to[out=180,in=-90] (-.18,0);
		\draw[->] (-0.18,0) to[out=90,in=180] (0,0.18);
		\draw[-] (-0.02,0.178) to[out=12,in=90] (0.18,0);
		\draw[-] (0.18,0) to[out=-90,in=0] (0,-0.18);
	\end{tikzpicture}\,}

\newcommand{\darkg}{\color{green!70!black}}
\tikzset{darkg/.style={green!70!black}}

\def\dott{{\color{white}\bullet}\!\!\!\circ}

\def\dott{{\color{white}\bullet}\!\!\!\circ}
\definecolor{darkblue}{HTML}{000000}
\def\red#1{{\color{red} #1}}
\def\blue#1{{\color{blue} #1}}
\def\green#1{{\color{green} #1}}
\def\up{\uparrow}
\def\down{\downarrow}

\DeclareMathOperator*{\ot}{\otimes}

\newcommand{\coloredbubble}[2]{
\begin{tikzpicture}[baseline = 1.25mm]
  \draw[->] (0.2,0.2) to[out=90,in=0] (0,.4);
  \draw[-] (0,0.4) to[out=180,in=90] (-.2,0.2);
  \draw[-] (-.2,0.2) to[out=-90,in=180] (0,0);
  \draw[-] (0,0) to[out=0,in=-90] (0.2,0.2);
  \node at (0.2,0.2) {$\dt$};
  \node at (0.4,0.2) {$\scriptstyle{#1}$};
  \node at (0.0,-.2) {$\scriptstyle{#2}$};
\end{tikzpicture}
}

\begin{document}
	\title{Big categorification on towers of classical groups and wreath product groups}

    \author{Xin Huang}
\address[Xin Huang]{School of Mathematics and Statistics, Central China Normal University, Wuhan 430079, China}
\email{xinhuang@mails.ccnu.edu.cn}

    \author{Pengcheng Li}
\address[Pengcheng Li]{Department of Mathematics and New Cornerstone Science Laboratory, The University of Hong Kong, Hong Kong}
\email{pcli25@hku.hk}

	\author{Yaolong Shen}
\address[Yaolong Shen]{School of Mathematical Sciences, Key Laboratory of MEA (Ministry of Education)
\& Shanghai Key Laboratory of PMMP, East China Normal University, Shanghai 200241,
China}
\email{yaolongshen3555@gmail.com}

	\keywords{Categorical action, Kac--Moody categorification}
\subjclass[2020]{20C15, 20C20, 20C33}

\begin{abstract}
We develop a uniform framework for ``big'' categorification of representation categories of towers of finite classical groups and wreath product groups. We construct actions of symmetric products of Heisenberg categories, quantum in the finite classical group case and degenerate in the wreath product case. These actions lead to categorical actions of suitable symmetric products of Kac--Moody 2-categories, and hence to actions of large Lie algebras on Grothendieck groups. In characteristic zero, the resulting actions control the full graded centers of the group algebras through diagrammatic central elements, and the associated colored weight functions separate all irreducible ordinary characters. We also obtain modular block descriptions for wreath product groups through categorification.
\end{abstract}

	\maketitle
\tableofcontents

	\pagestyle{myheadings}

	\markboth{}{}

\section{Introduction}

The study of towers of groups and their representation categories has long
played a central role in representation theory. A paradigmatic example is the
tower of symmetric groups. For each $n \geq 0$, the symmetric group
$\mathfrak S_n$ embeds naturally into $\mathfrak S_{n+1}$ as the subgroup
fixing the element $n+1$, giving rise to the ascending chain
\[
    \mathfrak S_0 \subset \mathfrak S_1 \subset \mathfrak S_2 \subset \cdots
    \subset \mathfrak S_n \subset \mathfrak S_{n+1} \subset \cdots .
\]

Let $\k$ be an algebraically closed field. A fundamental object associated to this tower is the additive category
\[
    \k \mathfrak S_\bullet\text{-mod}
    :=
    \bigoplus_{n \geq 0} \k \mathfrak S_n\text{-mod},
\]
where $\k \mathfrak S_n\text{-mod}$ denotes the category of
finite-dimensional $\k\mathfrak S_n$-modules. In characteristic $0$, the Grothendieck group of $\k \mathfrak S_\bullet\mod$ is isomorphic to the Fock space. Under this
identification, suitable summands of the induction and restriction functors
give rise to the standard action of the infinite-dimensional Lie algebra
$\mathfrak{gl}_\infty$ \cite{OV96}. 
In characteristic $\ell>0$, the modular branching functors, namely the
$i$-induction and $i$-restriction functors, induce an action of the affine Lie
algebra $\widehat{\mathfrak{sl}}_\ell$ on
\(
    \mathbb C\otimes_{\mathbb Z}
    K_0\bigl(\k\mathfrak S_\bullet\text{-mod}\bigr).
\)
This Grothendieck group is isomorphic to the irreducible highest weight module
$L(\Lambda_0)$ of $\widehat{\mathfrak{sl}}_\ell$; see \cite{LLT}.
Thus, the tower of symmetric groups provides a basic
example of a categorical realization of representations of 
infinite-dimensional Lie algebras.

This example naturally motivates the study of analogous structures associated
with other infinite towers of groups. Two important families are wreath
product groups and finite classical groups. 

Let $H$ be a finite group. For each $n\geq 0$, the wreath product
\[
H_n:=H\wr\mathfrak S_n=H^n\rtimes\mathfrak S_n
\]
is the semi-direct product in which $\mathfrak S_n$ acts on $H^n$ by permuting
the factors. The standard embeddings of symmetric groups induce a tower
\[
H_0\subset H_1\subset H_2\subset\cdots.
\]
Wreath product groups generalize symmetric groups and play a crucial role in the representation theory of finite groups, as they often serve as building blocks for more complex group structures and their representations exhibit rich combinatorial patterns; see for example \cite[Ch.~4]{JK81},\cite[Appendix~B]{Mac95},\cite{Pus99}.

Finite classical groups -- including general linear groups $\GL_n$, unitary groups $\GU_n$, symplectic groups $\Sp_{2n}$, and orthogonal groups $\O^\pm_n$ -- form another important family of group towers. These groups arise naturally in linear algebra and geometry, and their tower structures are defined by embedding each group into a larger one of the same type by extending the underlying vector space. Specifically, we have the following towers:
\[
\cdots \subset\GL_{n-1}\subset \GL_n\subset \GL_{n+1}\subset \cdots ;
\]
\[
\cdots \subset \GU_{n-2}\subset \GU_{n}\subset \GU_{n+2}\subset \cdots ;
\]
\[
\cdots \subset \Sp_{2n-2}\subset \Sp_{2n}\subset \Sp_{2n+2}\subset \cdots ;
\]
and
\[
\cdots \subset \O^\pm_{n-2}\subset \O^\pm_{n}\subset \O^\pm_{n+2}\subset \cdots .
\]

The representation theory of finite classical groups, and in particular the finite general linear groups $\mathrm{GL}_n(q)$, has been extensively studied through a wide range of distinct approaches over the past decades.
For example, for the ordinary representation of $\GL_n(q)$:
\begin{itemize}
    \item Green  gave the first complete classification and explicit construction of all irreducible characters of $\mathrm{GL}_n(q)$ via combinatorial parabolic induction, establishing the celebrated Green polynomial character formula \cite{Green55}.
    \item Zelevinsky developed a unified Hopf algebraic framework for the representation theory of $\mathrm{GL}_n(q)$, connecting it to Hall algebras and the combinatorics of symmetric functions \cite{Zel81}.
    \item Deligne and Lusztig pioneered a geometric approach, constructing all irreducible representations of finite reductive groups via $\ell$-adic cohomology of Deligne--Lusztig varieties, which provides a uniform geometric interpretation of the full character theory of $\mathrm{GL}_n(q)$ \cite{DL76}.
    \item More recently, Jing and Wu initiated a vertex algebraic approach to the representation theory of $\mathrm{GL}_n(q)$ \cite{JW23}.
\end{itemize}

In this paper, we study the representation categories of {\bf all} finite classical
groups and wreath product groups uniformly from the perspective of categorification. For each of the
classical towers considered above, we form the additive category
\[
    \k G_\bullet\text{-mod}
    :=
    \bigoplus_m \k G_m\text{-mod}
\]
where $\k$ is an appropriate coefficient ring, as specified below.
 The inclusions in the tower give rise to induction and restriction functors
between the summands of this category. 
Categorification provides a natural language for studying such structures. A
landmark result in this direction is the work of Chuang and Rouquier
\cite{CR08}, who constructed categorical $\mathfrak{sl}_2$-actions on
categories of unipotent representations of the finite general linear groups
$\GL_n(q)$. Their construction, however, concerns distinguished subcategories
of unipotent representations rather than the full representation categories.

Our first main result is a uniform construction of categorical Heisenberg and Kac--Moody actions on \(\k G_\bullet\)-mod for the towers considered in this paper. More specifically, for the tower of finite general linear groups in characteristic 0, we construct a categorical
action of a Kac--Moody 2-category on
$\k\GL_\bullet\text{-mod}$, and the corresponding Kac--Moody algebra acts on its Grothendieck group. For all the
other finite classical groups and wreath product groups, we also establish analogous categorification results in a uniform way.

Our uniform approach is based on the Heisenberg categorification of \cite{BSW20a}.  More specifically, we study symmetric products of degenerate and quantum Heisenberg categories and establish a Heisenberg-to-Kac--Moody construction for these symmetric products.  We prove that any abelian module category over such a symmetric product, subject to suitable finiteness conditions, naturally carries the structure of a 2-representation of a Kac--Moody 2-category associated with a disjoint union of type~A quivers.

We then apply this general framework to \(\k G_\bullet\)-mod for the towers of groups considered in this paper. The colors in the resulting Heisenberg actions come from the primitive building blocks in the corresponding induction--restriction theory. For example, for finite general linear groups, these colors are indexed by cuspidal irreducible representations, the primitive building blocks in Harish--Chandra theory. The corresponding functors are realized by Harish--Chandra induction and restriction with respect to these cuspidal representations. This generalizes the tower of symmetric groups, where the single primitive block is the trivial representation of \(\mathfrak S_1\), giving the usual one-colored Heisenberg action.

For the finite classical groups, fix a prime $\ell$ with $\ell\nmid q(q-1)$ and the coefficient triple $(\mathbb K,\mathcal O,\mathbb F)$ used in \S\ref{subsec:cus}. The quantum Heisenberg action is defined for $\k\in\{\mathbb K,\mathcal O,\mathbb F\}$, while the Kac--Moody $2$-action is defined for $\k\in\{\mathbb K,\mathbb F\}$. For the wreath-product towers in characteristic $\ell>0$, assume $\ell\nmid |H|$.

In this way, we obtain the following theorem.
\begin{theorem}[Theorem~\ref{Thm:quantumheisenberg},~\ref{ThmB}, ~\ref{Thm:doubleheisenberg} and \ref{thm:Lieactwreath}.]
\label{introthm1}
Let $G_\bullet$ be one of the above towers of groups, with the coefficient assumptions just stated.
\begin{enumerate}
    \item The category $\k G_\bullet\text{-mod}$ admits an action of a
    suitable symmetric product of Heisenberg categories.

    \item If $\k$ is a field, the category $\k G_\bullet\text{-mod}$ admits an action of a
    suitable symmetric product of Kac--Moody 2-categories.
\end{enumerate}
\end{theorem}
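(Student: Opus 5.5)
This is an introductory summary theorem, so the plan is to assemble it from the cited results. The argument splits into two layers: a representation-theoretic construction of Heisenberg actions for each tower, and a general Heisenberg-to-Kac--Moody passage for symmetric products.

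\textbf{Heisenberg actions, part (1).} For each tower I first fix the set of colors: the primitive building blocks of the induction--restriction theory.
\begin{itemize}
\item For the finite classical groups these are the cuspidal irreducible $\k$-representations $\sigma$ of the groups $G_m$ (or of the Levi factors of type $\GL$ appearing in the classical tower). They are defined over the coefficient triple $(\mathbb K,\mathcal O,\mathbb F)$; the hypothesis $\ell\nmid q(q-1)$ ensures that reduction behaves well and that Harish-Chandra induction is exact and biadjoint to restriction.
\item For wreath products $H\wr\mathfrak S_n$ the colors are the irreducible $\k H$-modules. Here $\ell\nmid|H|$ makes $\k H$ semisimple.
\end{itemize}
For each color I define the generating functors:
\begin{itemize}
\item in the classical case, $E_\sigma$ and $F_\sigma$ are Harish-Chandra restriction and induction tensored with, respectively taking $\Hom$ from, $\sigma$ along the Levi $L=G_m\times \GL_{d_\sigma}$;
\item in the wreath case, they are induction and restriction from $H_{n}\times H$ to $H_{n+1}$, twisted by the chosen irreducible.
\end{itemize}

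Next I produce the Heisenberg generators. The dot is given by a Jucys--Murphy-type endomorphism:
\begin{itemize}
\item in the degenerate (wreath) case, the sum of transpositions twisted by the $H$-central element;
\item in the quantum case, the Hecke-algebra element $X$ coming from the endomorphism algebra of the induced cuspidal $\Ind_{L}^{G}(\sigma^{\otimes k})$. By Howlett--Lehrer, and by Geck--Hiss--Malle in the modular case, this is an affine-type Hecke algebra with parameter $q^{d_\sigma}$ or its unitary/orthogonal analogue.
\end{itemize}
The crossings come from the Mackey isomorphisms, and the cups and caps from the biadjunction units and counits. I then verify the defining relations of the quantum (respectively degenerate) Heisenberg category of central charge determined by the tower, using the criterion of \cite{BSW20a}. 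That criterion reduces the check to two things:
\begin{itemize}
\item the affine Hecke relations among the dots and crossings;
\item an inversion relation, namely that a certain explicit 2-morphism $E F\oplus \unit^{\oplus k}\to FE$ (or the reverse) is an isomorphism.
\end{itemize}
The inversion relation is checked by the Mackey formula, whose double coset decomposition splits off exactly the expected copies of the identity functor. For distinct colors $\sigma\neq\tau$ the same Mackey computation shows that $E_\sigma$ and $F_\tau$ commute up to a specified isomorphism. Together these yield an action of the symmetric product of the single-color Heisenberg categories.

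\textbf{Kac--Moody actions, part (2).} Here I invoke the general Heisenberg-to-Kac--Moody theorem for symmetric products established in the body of the paper. An abelian module category with the needed finiteness is a Kac--Moody 2-representation for a disjoint union of type A quivers, obtained by decomposing $E_\sigma$ and $F_\sigma$ into generalized eigenspaces of the dot. The finiteness conditions to verify are:
\begin{itemize}
\item all Hom spaces are finite-dimensional;
\item the dot acts locally finitely, which holds because each $\k G_m$ is finite-dimensional;
\item the eigenvalues lie in a set of the form $q^{\mathbb Z}$ (respectively $\mathbb Z$ in the degenerate case), up to the relevant field.
\end{itemize}
This is why $\k$ must be a field, $\mathbb K$ or $\mathbb F$, rather than $\mathcal O$.

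\textbf{Main obstacle.} The hardest step is the inversion relation in the quantum classical case. There the Levi is not simply a product of copies of the same group, and the Mackey decomposition for Harish-Chandra induction involves relative Weyl groups of cuspidal pairs, which can be type $B$ rather than type $A$ with unequal parameters. I would first treat $\GL_n$, where the relative Weyl group is symmetric, and then handle $\Sp$, $\O^\pm$ and $\GU$ using Lusztig's parameter computations. The step I would try first is to show that the extra summands are precisely the $\unit^{\oplus k}$ terms, with $k$ read off from the cuspidal pair.
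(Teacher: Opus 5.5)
Your architecture matches the paper's: twisted Harish--Chandra (resp.\ wreath) induction and restriction as $F$ and $E$, Mackey decompositions for the inversion relations, then the symmetric-product Heisenberg-to-Kac--Moody theorem of Section~\ref{sec:HeistoKac}. The wreath-product half goes through once the Jucys--Murphy dot is normalized as $c_i^{-1}L_{r+1}$ with $c_i=|H|/\chi_i(1)$; otherwise Lemma~\ref{Lemma:ortho} gives the dot--slide relation with constant $c_i$ rather than $1$. The classical half, however, has a genuine gap in the choice of colors, and the word ``suitable'' in the statement hides exactly this.

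First, for $\GU$, $\Sp$, $\mathrm O$ you cannot make every cuspidal $\M_\Delta$ of a $\GL_d(q)$ Levi factor its own color. (Cuspidals of the classical $G_m$ themselves are highest-weight sources, not colors.) Conjugation by the opposition element $\dot x_{r,d}$ carries $\M_\Delta$ to $\M_{\Delta^*}$. Hence in the signed-permutation Mackey decomposition of $E^{\Delta'}F^\Delta$, the opposition double coset contributes a copy of $\unit$ exactly when $\Delta'=\Delta^*$ (Lemma~\ref{lem:finite-bimodule-decomposition}, Step~2(ii)). So for $\Delta\ne\Delta^*$ your claim that distinct colors commute is false: $E^{\Delta^*}F^\Delta\cong F^\Delta E^{\Delta^*}\oplus\unit$, and indeed $F^\Delta\cong F^{\Delta^*}$ (Lemma~\ref{Lem:isomfunctors}). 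The colors must be the $*$-orbits $\Gamma\in\calI(\k)$, and the level is attached to the color, not the tower:
\begin{itemize}
\item $k_\Gamma=-1$ for $\GL$ and for paired orbits $\Delta\Delta^*$;
\item $k_\Gamma=-2$ for self-dual $\Delta=\Delta^*$ (including $X\pm1$), the second $\unit$ coming from the opposition coset.
\end{itemize}
For a self-dual color, defining the dot as a natural transformation of $F^\Gamma$ on all of $\k G_\bullet$-mod is more than taking an element of a Howlett--Lehrer algebra $\End(\Ind\,\sigma^{\otimes k})$. It requires extending $\M_\Delta$ to the crossed extension of Definition~\ref{def:cuspidal-extension}, where the lift of $\dot x_{r,d}$ squares to $\kappa_d=-I_d$ in the symplectic case.

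Second, over $\calO$ and $\bbF$ the hypothesis $\ell\nmid q(q-1)$ only makes plain Harish--Chandra induction and restriction biadjoint. The functor $\Hom_{\GL_d}(\M_\Delta,-)$ is exact, and agrees with the left adjoint $\M_\Delta^*\otimes_{\k\GL_d}-$ of $-\otimes\M_\Delta$, only when $\M_\Delta$ is projective. Otherwise $E$ and $F$ are not biadjoint, and the maps $\varepsilon^\Delta_R,\eta^\Delta_R$ are unavailable, since they use the identification of the block of $\M_\Delta$ with a full matrix algebra. You must therefore restrict to the defect-zero colors $\calF(\ell)=\{\Delta: e\nmid d_\Delta\}$ of Lemma~\ref{Lem:ellnmidq}, where $e$ is the order of $q$ modulo $\ell$. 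This is why $\calI(\k)$ depends on $\k$.

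Third, ``affine Hecke relations plus inversion'' is the criterion for the degenerate category. The quantum category $\Heis_k(z,t)$ also requires an invertible dot and the extra $t$-normalization, and neither is formal here.
\begin{itemize}
\item Invertibility is proved by rank induction from the $F^\Gamma$-cuspidal case (Propositions~\ref{Prop:dim2} and~\ref{prop:invertible}).
\item The normalization is checked through the initial bubble values on $F^\Gamma$-cuspidal simples $M$: $X=1$ in level $-1$, and $X^2+\mu_\Gamma(M)X+(t^\Gamma)^2=0$ in level $-2$ with constant term independent of $M$. This is what the scalar $-t^\Gamma z^\Gamma$ in $\varepsilon_R$ and the choice of $\beta_\Delta$ in \eqref{eq:beta-normalization} arrange.
\end{itemize}
For part (2) you must also handle infinitely many colors. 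The paper does this with the product weight set $\widehat X$ and the finite color support of every relation (Theorem~\ref{ThmB}). Your requirement that the spectrum lie in $q^{\mathbb Z}$ is neither true (roots such as $-q^{-d_\Gamma/2}$ occur) nor needed.

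Conversely, the step you flag as the main obstacle is easier than you expect. After cuspidality only the identity, opposition and block-exchange double cosets survive, so no unequal-parameter computation enters the Heisenberg relations. The type-$B$ parameters appear only in the bubble values.
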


This theorem places several existing constructions in a common framework. For finite classical groups, it
unifies the categorification program initiated by Chuang and Rouquier
\cite{CR08}, the categorical actions on unipotent representations of finite
classical groups constructed by Dudas, Varagnolo, and Vasserot
\cite{DVV17,DVV19}, and the recent work on quadratic unipotent representations
and finite classical groups by Li, Liu, and Zhang \cite{LLZ} and Li, Shan, and
Zhang \cite{LSZ25}. %The point of the present work is to formulate these phenomena in terms of symmetric products of Heisenberg categories and the associated Kac--Moody 2-representations, and to apply this framework uniformly to the full representation categories of the group towers considered above.
For wreath product groups, it establishes a 2-categorical lift of the results in \cite{WW08} in the sense of Khovanov and Lauda \cite{KL10} and Rouquier \cite{R08}. 

%In \cite{WW08}, Wan and Wang study the modular representation theory of wreath product groups via wreath Hecke algebras. They show that at the level of Grothendieck groups, there exists a product of Kac-Moody actions. Our results on wreath product groups can be viewed as a 2-categorical lift of their work: we realize the categorical Kac-Moody action in the sense of Khovanov and Lauda \cite{KL10} and Rouquier \cite{R08}, rather than merely at the Grothendieck group level.

Beyond Theorem~\ref{introthm1}, we also study several applications of the
categorical actions constructed above. We first consider the problem of
distinguishing irreducible ordinary characters of finite classical groups. The
categorical actions give rise to colored weight functions, obtained by
evaluating colored bubbles on irreducible objects. These functions provide
explicit invariants of irreducible characters. Our second main result states that these invariants are complete.
\begin{theorem} \label{introthm2}
{\rm (Theorem~\ref{thm:invariants} and \rm Theorem~\ref{cwf})} In characteristic $0$,
for a finite classical tower the colored weight functions
\[
    \{\bbO^\Gamma(u)(-)\}_{\Gamma\in \calI(\bbK)}
\]
form a complete set of invariants for the irreducible ordinary characters. For a wreath-product tower $H\wr\mathfrak S_\bullet$, the analogous complete family is indexed by $\chi\in\Irr(H)$.
\end{theorem}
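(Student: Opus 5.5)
We prove that the colored weight functions separate irreducibles by reducing to the combinatorial parametrization of $\Irr(G_\bullet)$ and computing each bubble generating function explicitly on an irreducible object.

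\textbf{Step 1: parametrize the irreducibles.} In characteristic $0$, Harish-Chandra theory (and Lusztig's parametrization) labels the irreducible ordinary characters of a classical tower. Each irreducible $V$ is determined by a cuspidal datum together with a function $\Lambda\colon \calI(\bbK)\to\{\text{partitions}\}$. Here $\Lambda(\Gamma)$ indexes an irreducible module of the relative Hecke algebra of type $A$ or $B$ attached to the cuspidal $\Gamma$.

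In the wreath case the analogous label is a multipartition $(\lambda^\chi)_{\chi\in\Irr(H)}$. Under the Heisenberg action of Theorem~\ref{introthm1}(1), the $\Gamma$-colored Heisenberg category acts on the $\Gamma$-isotypic part through Harish-Chandra induction and restriction with respect to $\Gamma$. Thus $V$ should correspond to a tensor product of irreducible objects in Fock-type modules, one for each color.

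\textbf{Step 2: compute $\bbO^\Gamma(u)$ on an irreducible.} The bubble generating function $\bbO^\Gamma(u)$ acts on an irreducible $V$ by a scalar-valued rational function. Via the infinite Grassmannian relation, it equals the ratio of the characteristic polynomials of the $\Gamma$-colored dot (the Jucys--Murphy element) acting on the $\Gamma$-restriction and on the $\Gamma$-induction of $V$.

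We use the earlier description of the eigenvalues of the dots on $E_\Gamma V$ and $F_\Gamma V$. These are the contents, or $q$-contents, of the removable and addable boxes of $\Lambda(\Gamma)$, shifted by a parameter depending on the cuspidal datum in types $B/C/D$. From this we obtain
\[
\bbO^\Gamma(u)(V)=\frac{\prod_{b\ \text{addable}}(u-c(b))}{\prod_{b\ \text{removable}}(u-c(b))}
\]
up to a normalization.

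\textbf{Step 3: recover the labels.} A partition is recovered from its multiset of addable and removable contents. Indeed, these alternate along the boundary and determine the profile of $\Lambda(\Gamma)$, as for $\mathfrak{S}_n$, where the Kerov transition measure determines the diagram. Since the rational function above is already in lowest terms (addable and removable contents are distinct), $\bbO^\Gamma(u)(V)$ determines $\Lambda(\Gamma)$.

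The cuspidal shift parameter is also read off from the location of the zeros and poles. This distinguishes the different Harish-Chandra series, for instance the two unipotent-type series for $\O^\pm$ and $\Sp$. Hence the full family $\{\bbO^\Gamma(u)\}_\Gamma$ determines the label of $V$, and therefore $V$ itself. The wreath case is the same argument with $\chi\in\Irr(H)$ as colors and degenerate contents.

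\textbf{Main obstacle.} The hardest step is Step 2 in the non-$\GL$ types. There the relative Hecke algebra is of type $B$ with unequal parameters, and the eigenvalues of the dot involve the cuspidal parameters. We must check that distinct Harish-Chandra series never yield the same generating functions for all $\Gamma$.

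To handle this, I would first show that $\bbO^\Gamma(u)=1$ exactly when $\Gamma$ does not appear in the cuspidal support. This isolates the support, and the shift is then detected from the constant part of the function attached to the unique lowest box.
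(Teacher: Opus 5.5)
Your architecture (evaluate each colored bubble as a ratio over addable and removable residues, then invert by interlacing) is the paper's for the level $-1$ colors and for the wreath tower. In the wreath case Theorem~\ref{cwf} does exactly this on Gelfand--Tsetlin vectors, using the normalized Jucys--Murphy elements $c_i^{-1}L_j$. One correction in Step~2: the right statement uses minimal polynomials, $\bbO^\Gamma_V=m^\Gamma_V/n^\Gamma_V$ for the dots on $F^\Gamma V$ and $E^\Gamma V$; characteristic polynomials on these modules carry dimension multiplicities and do not give the bubble.

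Steps~2--3 break down, however, for the self-dual colors $\Gamma\in\calI_{-2}(\bbK)$. There the relative Hecke algebra is of type $B$, and its simples are labeled by bipartitions, equivalently a $2$-core plus a $2$-quotient. The evaluated bubble is the product $\mathsf R^\Gamma_{\lambda_{\Gamma,1},\xi_{\Gamma,1}}(u)\,\mathsf R^\Gamma_{\lambda_{\Gamma,2},\xi_{\Gamma,2}}(u)$ of two residue functions (Lemma~\ref{lem:jordan-bubble}), not a single content ratio. Your ``already in lowest terms'' argument does not apply to such a product: a zero from one runner could cancel a pole from the other, and nothing tells you which surviving zeros and poles belong to which runner.

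The missing input is that the two initial roots of the cuspidal source lie in different $q^{d_\Gamma\mathbb Z}$-orbits. For a unitary factor their quotient is $(-q^{d_\Gamma/2})^{\pm(2t+1)}$; for $X\pm1$ they lie in $q^{\mathbb Z}$ and $-q^{\mathbb Z}$. This makes the splitting canonical and reduces to the one-runner interlacing argument. The quotient of the two recovered initial roots then returns the core parameter $t$ (Lemma~\ref{lem:local-injectivity}).

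The more serious gap is the separation of Harish--Chandra series. You rightly call this the main obstacle, but you propose to handle it with a false claim: $\bbO^\Gamma(u)(V)$ is never $1$. It is a rational function of degree $-k_\Gamma\in\{1,2\}$. On an $F^\Gamma$-cuspidal source it equals $u-1$ in level $-1$ and a quadratic in level $-2$ (Lemma~\ref{lem:Fgamma-cuspidal-seeds}, Proposition~\ref{prop:all-bottom-bubbles}). Moreover, in level $-2$ an $F^\Gamma$-cuspidal source can have nontrivial $\Gamma$-primary part: a triangular partition $\lambda_t$ with $t>0$, or a cuspidal Lusztig symbol. So ``$\Gamma$ occurs in the cuspidal support'' is not what the bubble detects.

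What you actually need is that the initial polynomials $\{m^\Gamma_{\rho_0}(u)\}_\Gamma$ determine the cuspidal character $\rho_0$ itself. The paper gets this from Lusztig's Jordan decomposition, chosen compatibly with Harish--Chandra induction. Each color changes exactly one primary Jordan factor, so the $\Gamma$-bubble recovers the $\Gamma$-primary label including its core. That gives the multiplicity $m_\Gamma(s)$ and hence the whole semisimple parameter (Proposition~\ref{prop:recover-primary-jordan}).

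Even this is not enough at the classical colors $X\pm1$. The Hecke relations fix the two initial roots only up to a simultaneous sign. The determinant, spinor and diagonal twists (and the central sign in $\O_{2n+1}$) act on the colored bubbles only through the substitutions $u\mapsto\pm u$ or $\Gamma\mapsto\Gamma^-$ (Lemma~\ref{lem:extra-symmetries}). That alone does not separate a character from its twist when the relevant series are invariant under these substitutions. The paper closes this with the explicit cuspidal computations of \cite[Theorem~5.13, Table~4 and Theorem~4.22]{LSZ25}. Your proposal has no substitute for that input, so as it stands it does not prove completeness for the symplectic and orthogonal towers.
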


Recall that the ordinary irreducible characters of a finite classical group $G$ are partitioned into Lusztig series
\[
   \Irr(G)=\coprod_{(s)}\mathcal E(G,s),
\]
where $s$ runs over semisimple conjugacy classes of the dual group $G^*$. Lusztig's Jordan decomposition gives a bijection between $\mathcal E(G,s)$ and the unipotent characters of the corresponding centralizer $C_{G^*}(s)$; see \cite{DL76,DM}. For finite classical groups, semisimple conjugacy classes and their centralizers are described in terms of elementary divisors: writing the semisimple parameter in its primary decomposition, the corresponding factors of $C_{G^*}(s)$ are indexed by the elementary divisors $\Gamma$. We use these elementary divisors, or the corresponding duality orbits when appropriate, as the colors in our categorical construction.

The centers of group algebras for symmetric groups, wreath products, and finite general linear groups have been studied through many approaches, notably by Farahat--Higman \cite{FH59}, Wang \cite{W04b}, and Wan--Wang \cite{WW19}. Our colored bubble generators provide an independent categorical realization of the full graded centers appearing in the towers considered above. %\textcolor{red}{It is by no means obvious from the definition of bubbles alone that all colored bubbles generate the full center of the group algebra of a finite classical group.}

\begin{theorem}
{\rm (Theorem~\ref{Thm:centersurjective} and Theorem~\ref{cwf})} \label{introthm3}
In characteristic $0$, the evaluated colored bubbles generate the full center in every rank. More precisely, for each $n$, the coefficients of
\[
\big\{\bbO^\Gamma(u)\big\}_{\Gamma\in\calI(\bbK)}
\]
generate $Z(\bbK G_n)$, and, for the wreath-product tower $H\wr\mathfrak S_\bullet$, the coefficients of
\[
\big\{\bbO^\chi(u)\big\}_{\chi\in\Irr(H)}
\]
generate $Z(\bbK H_n)$.
\end{theorem}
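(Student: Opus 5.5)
The strategy is to reduce the surjectivity onto $Z(\bbK G_n)$ to a separation statement, and then use Theorem~\ref{introthm2}. In characteristic $0$ the algebra $\bbK G_n$ is split semisimple. Hence $Z(\bbK G_n)\cong \prod_{\rho\in\Irr(G_n)}\bbK$ via the central characters $\omega_\rho$. A subalgebra $A\subseteq Z(\bbK G_n)$ is therefore all of $Z(\bbK G_n)$ if and only if it separates the irreducible characters. That is, for $\rho\neq\rho'$ there must be $z\in A$ with $\omega_\rho(z)\neq\omega_{\rho'}(z)$. This holds because $Z(\bbK G_n)$ is a finite product of copies of $\bbK$, and a unital subalgebra separating all the factors contains the primitive idempotents, by Lagrange interpolation or a product of $(z-c)/(c'-c)$ factors. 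So the plan is:
\begin{enumerate}
\item[(i)] show that each coefficient of each colored bubble generating function $\bbO^\Gamma(u)$, evaluated on the module category in rank $n$, is an element of $Z(\bbK G_n)$;
\item[(ii)] show that these elements act on an irreducible $\rho$ by the scalars that define the colored weight function;
\item[(iii)] invoke the completeness of the invariants.
\end{enumerate}

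For (i), a bubble is an endomorphism of the identity functor on $\bbK G_n\mod$, built from the cups, caps and dots of the Heisenberg action (Theorem~\ref{Thm:quantumheisenberg} in the classical case, Theorem~\ref{Thm:doubleheisenberg} in the wreath case). Natural endomorphisms of the identity functor on $\bbK G_n\mod$ are exactly the elements of $Z(\bbK G_n)$, since $\End(\Id_{A\mod})\cong Z(A)$. Explicitly, a $\Gamma$-colored bubble is the composite of the unit $\Id\to R_\Gamma E_\Gamma$, a power of the Jucys--Murphy type dot on the induced module, and the counit. Here $E_\Gamma$ is Harish-Chandra induction, or wreath induction, with respect to the cuspidal or $\chi$-block, and $R_\Gamma$ is the corresponding restriction. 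Unwinding this gives a concrete class-sum-type central element. This step is formal.

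For (ii), by Schur's lemma each such central element acts on an irreducible $\rho$ by a scalar. By definition these scalars assemble into the power series $\bbO^\Gamma(u)(\rho)$, so the evaluation of $\bbO^\Gamma(u)$ at $\rho$ is exactly the family of central characters of the bubble coefficients. Theorem~\ref{introthm2}, stated for finite classical towers as Theorem~\ref{thm:invariants} and for the wreath case in Theorem~\ref{cwf}, says that if $\rho\neq\rho'$ then $\bbO^\Gamma(u)(\rho)\neq\bbO^\Gamma(u)(\rho')$ for some $\Gamma$, or some $\chi$ in the wreath case. Hence some coefficient separates $\rho$ from $\rho'$. Together with the semisimplicity argument above, this shows that the subalgebra generated by all coefficients in rank $n$ is the whole of $Z(\bbK G_n)$. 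The wreath-product statement is identical with $\Gamma$ replaced by $\chi\in\Irr(H)$.

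The main obstacle is the compatibility in (ii). One must make sure that the scalar by which the rank-$n$ component of a bubble acts on $\rho$ is precisely the quantity used in the completeness theorem, including normalisations (the units and factors of $q$ in the quantum case) and the fact that only finitely many colors act nontrivially in rank $n$. I would verify this directly from the definition of $\bbO^\Gamma(u)$ as a generating function of bubbles acting on $\unit_n$. Everything else is standard semisimple algebra. If the completeness theorem were only stated at the level of blocks or eigenvalues on $E_\Gamma\rho$, I would first identify the bubble scalars with elementary symmetric functions in the dot eigenvalues on $E_\Gamma\rho$, which is the standard infinite Grassmannian relation, and then apply the theorem.
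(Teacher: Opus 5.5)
Your proposal is correct and matches the paper's proof of Theorem~\ref{Thm:centersurjective}, and the center assertion of Theorem~\ref{cwf}, essentially step for step: bubbles evaluate to central elements, their central characters are the colored weight functions, and Theorem~\ref{thm:invariants} (resp.\ Theorem~\ref{cwf}) separates distinct simples. Lagrange interpolation in the split semisimple center $\prod_\rho\bbK$ then produces every primitive central idempotent. The compatibility you flag in (ii) is automatic here, because $\bbO^\Gamma(u)(\rho)$ is by definition the evaluation of the bubble series on the simple object $\rho$.
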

Theorems~\ref{introthm2} and~\ref{introthm3} together establish that our categorical quantum Heisenberg and Kac--Moody actions in characteristic 0 are ``biggest". More precisely, the colored weight functions arising from the action separate all ordinary irreducible characters, while the colored bubbles generate the full center of the corresponding group algebras in each fixed rank.

For wreath product groups, in characteristic zero, the degenerate
symmetric-product Heisenberg action gives rise to an analogous family of
colored weight functions. Using the Jucys--Murphy elements and the
wreath-product version of the Okounkov--Vershik approach developed by Pushkarev \cite{Pus99} and further studied by Mishra--Srinivasan \cite{MS16}, building on the symmetric-group method of Okounkov--Vershik \cite{OV96}, we identify
these functions with the content functions of the multipartitions labeling
irreducible representations. In particular, they form a complete set of
invariants for irreducible modules over characteristic-zero wreath product
algebras.

Finally, we consider the modular representation theory of wreath product
groups under the standing assumption $\ell\nmid |H|$. The Kac--Moody
2-representation obtained from the symmetric-product Heisenberg action
identifies the block decomposition of
\(
    K_0(\bbF H_n\text{-mod})
\)
with the simultaneous decomposition by the component sizes and the Kac--Moody weight spaces. Equivalently, two
irreducible $\bbF H_n$-modules lie in the same block precisely when the
corresponding multipartitions have the same component sizes and the same componentwise $\ell$-cores. Thus, in the modular
setting, the colored weight functions together with the component sizes give complete invariants for blocks of
wreath product groups as well. Using categorical Kac--Moody actions, we also recover the modular branching results studied by Tsuchioka \cite{Tsu07} and Wan--Wang \cite{WW08}.

The structure of this article is as follows: In Section~\ref{threecats}, we recall the diagrammatic categories that will be used throughout the paper, including Heisenberg categories and Kac--Moody 2-categories. In
Sections~\ref{sec:symproduct} and~\ref{sec:HeistoKac}, we study symmetric
products of degenerate and quantum Heisenberg categories and prove that, under suitable finiteness assumptions, their module categories give rise to 2-representations of Kac--Moody 2-categories of type~A. In
Section~\ref{sec:bigquantum}, we apply this framework to the representation categories of finite classical groups, introduce colored weight functions and prove their completeness as invariants for ordinary irreducible characters. In Section~\ref{sec:degenerate} we discuss applications to the (modular) representation theory of wreath product groups.
    
\subsection*{Acknowledgment}
We thank Professor Alistair Savage for explaining the relation between the wreath-product action in Section~\ref{sec:degenerate} and Frobenius Heisenberg categorification, and in particular for pointing out the normalization issue corrected in Theorem~\ref{Thm:expofbubbles}.
XH is supported by NSFC (No. 12501024, 12471016) and China Postdoctoral Science Foundation (No. GZC20252006, 2025T001HB).
YS is partially supported by the Science and Technology Commission of Shanghai Municipality (No. 22DZ2229014).
This work was partially supported by the New Cornerstone Science Foundation through the New Cornerstone Investigator Program awarded to Professor Xuhua He.

\section{Three diagrammatic categories}\label{threecats}

Throughout the article, $\kk$ is an algebraically closed field and $z \in \kk$ is a parameter.
We refer to the cases $z\neq 0$ and $z = 0$ as the
{\em quantum} and {\em degenerate} cases, respectively.
For use in the quantum case, we choose a root $q$ of the polynomial
$x^2-zx-1$, so that $z=q - q^{-1}$. By convention, if $z=0$, then we take $q=1$.

We recall the three diagrammatic categories used below, following the
notation of \cite[\S3]{BSW20a}.
\subsection{Kac--Moody 2-category}
\label{sec:KMCdef}
The Kac--Moody 2-category was introduced by Khovanov--Lauda \cite{KL10}
and Rouquier \cite{R08}.  We use only Cartan type~$A$.  Let $I$ carry a
fixed-point-free automorphism $i\mapsto i^+$, with inverse $i\mapsto i^-$.
The associated quiver has components of type $A_\infty$ or
$A_{\qc-1}^{(1)}$, $\qc\geq2$.  Its generalized Cartan matrix is
$a_{i,i}=2$ and
$a_{i,j}=-\delta_{i^+,j}-\delta_{i,j^+}$ for $i\ne j$.  Let $\g$ be the
Kac--Moody Lie algebra over $\C$ defined by this matrix.  It is a direct sum
of copies of $\sl_\infty(\C)$ and $\widehat{\sl}_{\qc}(\C)'$, according to
the infinite and finite components of the quiver.

Let $\hh$ be the Cartan subalgebra of $\g$ with
basis
$\{h_i\:|\:i \in I\}$.
The {\em weight lattice} $X$ of $\g$ is the abelian
subgroup of $\hh^*$ generated by
the fundamental weights $\{\Lambda_j\:|\:j \in I\}$ defined from
$\langle h_i,\Lambda_j \rangle = \delta_{i,j}$.
We have the set of {\em dominant weights}
$$
X^+ := \bigoplus_{i \in I} \N \Lambda_i
= \big\{\lambda \in X\:\big|\:\langle h_i, \lambda \rangle \geq 0
\text{ for all $i \in I$ and $\textstyle\sum_{i \in I}\langle
  h_i,\lambda\rangle < \infty$}\big\}.
$$
Let $\alpha_i := \sum_{j \in I} a_{i,j} \Lambda_j \in X$ be the
$i$-th simple root.
Note that
we have  $\sum_{i \in I_0} \alpha_i = 0$ for each finite component
$I_0$ of $I$.
Let $Y := \sum_{i \in I} \Z \alpha_i \subseteq X$.

The  Kac--Moody 2-category
 $\UU=\UU(\g)$
is the strict $\kk$-linear 2-category
with objects $X$,
generating
1-morphisms
$E_i 1_\lambda = \substack{\thickup \\ {\scriptscriptstyle i}}{\scriptstyle \color{gray}\lambda}:\lambda \rightarrow \lambda+\alpha_i$ and
$F_i 1_\lambda= \substack{{\scriptscriptstyle
    i}\\\thickdown}{\scriptstyle \color{gray}\lambda}:\lambda \rightarrow \lambda-\alpha_i$ for
$i \in I$ and $\lambda \in X$,
and generating
2-morphisms
\begin{align*}
\mathord{
\begin{tikzpicture}[baseline = -2]
	\draw[->,thick] (0.08,-.15) to (0.08,.3);
      \node at (0.08,0.05) {$\bull$};
   \node at (0.08,-.3) {$\scriptstyle{i}$};
\end{tikzpicture}
}
{\color{gray}\scriptstyle\lambda}
&:E_i 1_\lambda \Rightarrow E_i 1_\lambda,
&
\mathord{
\begin{tikzpicture}[baseline = 0]
	\draw[<-,thick] (0.3,0.2) to[out=-90, in=0] (0.1,-0.1);
	\draw[-,thick] (0.1,-0.1) to[out = 180, in = -90] (-0.1,0.2);
    \node at (-0.1,.35) {$\scriptstyle{i}$};
\end{tikzpicture}
}\:\,
{\color{gray}\scriptstyle\lambda}
&:1_\lambda \Rightarrow F_i E_i 1_\lambda,
&
\mathord{
\begin{tikzpicture}[baseline = -2]
	\draw[<-,thick] (0.3,-0.1) to[out=90, in=0] (0.1,0.2);
	\draw[-,thick] (0.1,0.2) to[out = 180, in = 90] (-0.1,-0.1);
    \node at (-0.1,-.25) {$\scriptstyle{i}$};
\end{tikzpicture}
}
\:\,{\color{gray}\scriptstyle\lambda}
&:E_i F_i 1_\lambda \Rightarrow 1_\lambda,\\
\mathord{
\begin{tikzpicture}[baseline = -2]
	\draw[->,thick] (0.18,-.15) to (-0.18,.3);
	\draw[->,thick] (-0.18,-.15) to (0.18,.3);
   \node at (-0.18,-.3) {$\scriptstyle{j}$};
   \node at (0.18,-.3) {$\scriptstyle{i}$};
\end{tikzpicture}
}
{\color{gray}\scriptstyle\lambda}
&:E_j E_i 1_\lambda \Rightarrow E_i E_j 1_\lambda,&
\mathord{
\begin{tikzpicture}[baseline = 0]
	\draw[-,thick] (0.3,0.2) to[out=-90, in=0] (0.1,-0.1);
	\draw[->,thick] (0.1,-0.1) to[out = 180, in = -90] (-0.1,0.2);
    \node at (0.3,.35) {$\scriptstyle{i}$};
\end{tikzpicture}
}\!
{\color{gray}\scriptstyle\lambda}
&:1_\lambda \Rightarrow E_i F_i 1_\lambda,
&
\mathord{
\begin{tikzpicture}[baseline = -2]
	\draw[-,thick] (0.3,-0.1) to[out=90, in=0] (0.1,0.2);
	\draw[->,thick] (0.1,0.2) to[out = 180, in = 90] (-0.1,-0.1);
    \node at (0.3,-.25) {$\scriptstyle{i}$};
\end{tikzpicture}
}
\!{\color{gray}\scriptstyle\lambda}
&:F_i E_i 1_\lambda \Rightarrow 1_\lambda.
\end{align*}
For convenience, we also define the sideways crossings
\begin{align*}
\mathord{
\begin{tikzpicture}[baseline = 0]
	\draw[<-,thick] (0.28,-.3) to (-0.28,.4);
	\draw[->,thick] (-0.28,-.3) to (0.28,.4);
   \node at (-.3,-.43) {$\scriptstyle{j}$};
   \node at (-.3,.55) {$\scriptstyle{i}$};
   \node at (0.5,0.05) {$\color{gray}\scriptstyle{\lambda}$};
\end{tikzpicture}
}
&:=
\mathord{
\begin{tikzpicture}[baseline = 0]
	\draw[->,thick] (0.3,-.5) to (-0.3,.5);
	\draw[-,thick] (-0.2,-.2) to (0.2,.3);
        \draw[-,thick] (0.2,.3) to[out=50,in=180] (0.5,.5);
        \draw[->,thick] (0.5,.5) to[out=0,in=90] (0.9,-.5);
        \draw[-,thick] (-0.2,-.2) to[out=230,in=0] (-0.6,-.5);
        \draw[-,thick] (-0.6,-.5) to[out=180,in=-90] (-0.9,.5);
   \node at (.3,-.65) {$\scriptstyle{j}$};
   \node at (-.9,.65) {$\scriptstyle{i}$};
   \node at (1.1,0) {$\color{gray}\scriptstyle{\lambda}$};
\end{tikzpicture}
}\:,&
\mathord{
\begin{tikzpicture}[baseline = 0]
	\draw[->,thick] (0.28,-.3) to (-0.28,.4);
	\draw[<-,thick] (-0.28,-.3) to (0.28,.4);
   \node at (.3,-.43) {$\scriptstyle{j}$};
   \node at (.3,.55) {$\scriptstyle{i}$};
   \node at (0.5,0.05) {$\color{gray}\scriptstyle{\lambda}$};
\end{tikzpicture}
}
&:=
\mathord{
\begin{tikzpicture}[baseline = 0]
	\draw[<-,thick] (0.3,.5) to (-0.3,-.5);
	\draw[-,thick] (-0.2,.2) to (0.2,-.3);
        \draw[-,thick] (0.2,-.3) to[out=130,in=180] (0.5,-.5);
        \draw[-,thick] (0.5,-.5) to[out=0,in=270] (0.9,.5);
        \draw[-,thick] (-0.2,.2) to[out=130,in=0] (-0.6,.5);
        \draw[->,thick] (-0.6,.5) to[out=180,in=-270] (-0.9,-.5);
   \node at (-0.3,-.65) {$\scriptstyle{j}$};
   \node at (.9,.65) {$\scriptstyle{i}$};
   \node at (1.1,0) {$\color{gray}\scriptstyle{\lambda}$};
\end{tikzpicture}
}\:,
\end{align*}

The generating 2-morphisms are subject to the following
relations:
\begin{align}\label{KM1}
\mathord{
\begin{tikzpicture}[baseline = 1mm]
	\draw[<-,thick] (0.25,.55) to (-0.25,-.15);
	\draw[->,thick] (0.25,-.15) to (-0.25,.55);
  \node at (-0.25,-.3) {$\scriptstyle{j}$};
   \node at (0.25,-.3) {$\scriptstyle{i}$};
  \node at (.3,.25) {$\color{gray}\scriptstyle{\lambda}$};
      \node at (-0.13,0.01) {$\bull$};
\end{tikzpicture}
}
-
\mathord{
\begin{tikzpicture}[baseline = 1mm]
	\draw[<-,thick] (0.25,.55) to (-0.25,-.15);
	\draw[->,thick] (0.25,-.15) to (-0.25,.55);
  \node at (-0.25,-.3) {$\scriptstyle{j}$};
   \node at (0.25,-.3) {$\scriptstyle{i}$};
  \node at (.3,.25) {$\color{gray}\scriptstyle{\lambda}$};
      \node at (0.13,0.38) {$\bull$};
\end{tikzpicture}
}
&=
\mathord{
\begin{tikzpicture}[baseline = 1mm]
 	\draw[<-,thick] (0.25,.55) to (-0.25,-.15);
	\draw[->,thick] (0.25,-.15) to (-0.25,.55);
  \node at (-0.25,-.3) {$\scriptstyle{j}$};
   \node at (0.25,-.3) {$\scriptstyle{i}$};
  \node at (.3,.25) {$\color{gray}\scriptstyle{\lambda}$};
      \node at (-0.13,0.38) {$\bull$};
\end{tikzpicture}
}
-
\mathord{
\begin{tikzpicture}[baseline = 1mm]
	\draw[<-,thick] (0.25,.55) to (-0.25,-.15);
	\draw[->,thick] (0.25,-.15) to (-0.25,.55);
  \node at (-0.25,-.3) {$\scriptstyle{j}$};
   \node at (0.25,-.3) {$\scriptstyle{i}$};
  \node at (.3,.25) {$\color{gray}\scriptstyle{\lambda}$};
      \node at (0.13,0.01) {$\bull$};
\end{tikzpicture}
}
=
\delta_{i,j}
\mathord{
\begin{tikzpicture}[baseline = -.5mm]
 	\draw[->,thick] (0.08,-.3) to (0.08,.4);
	\draw[->,thick] (-0.28,-.3) to (-0.28,.4);
   \node at (-0.28,-.45) {$\scriptstyle{j}$};
   \node at (0.08,-.45) {$\scriptstyle{i}$};
 \node at (.28,.06) {$\color{gray}\scriptstyle{\lambda}$};
\end{tikzpicture}
},\\
\label{KM2}
\mathord{
\begin{tikzpicture}[baseline = 2.5mm]
	\draw[->,thick] (0.28,.4) to[out=90,in=-90] (-0.28,1.1);
	\draw[->,thick] (-0.28,.4) to[out=90,in=-90] (0.28,1.1);
	\draw[-,thick] (0.28,-.3) to[out=90,in=-90] (-0.28,.4);
	\draw[-,thick] (-0.28,-.3) to[out=90,in=-90] (0.28,.4);
  \node at (-0.28,-.45) {$\scriptstyle{j}$};
  \node at (0.28,-.45) {$\scriptstyle{i}$};
   \node at (.43,.4) {$\color{gray}\scriptstyle{\lambda}$};
\end{tikzpicture}
}
&=
\left\{
\begin{array}{ll}
0\hspace{40mm}&\text{if $j=i$,}\\
\mathord{
\begin{tikzpicture}[baseline = -.5mm]
	\draw[->,thick] (0.08,-.3) to (0.08,.4);
	\draw[->,thick] (-0.28,-.3) to (-0.28,.4);
   \node at (-0.28,-.45) {$\scriptstyle{j}$};
   \node at (0.08,-.45) {$\scriptstyle{i}$};
   \node at (.3,-.05) {$\color{gray}\scriptstyle{\lambda}$};
     \node at (0.08,0.05) {$\bull$};
\end{tikzpicture}
}
-\mathord{
\begin{tikzpicture}[baseline = -.5mm]
	\draw[->,thick] (0.08,-.3) to (0.08,.4);
	\draw[->,thick] (-0.28,-.3) to (-0.28,.4);
   \node at (-0.28,-.45) {$\scriptstyle{j}$};
   \node at (0.08,-.45) {$\scriptstyle{i}$};
   \node at (.3,-.05) {$\color{gray}\scriptstyle{\lambda}$};
      \node at (-0.28,0.05) {$\bull$};
\end{tikzpicture}
}
&\text{if $j^- =i\neq j^+$,}\\
\mathord{
\begin{tikzpicture}[baseline = -.5mm]
	\draw[->,thick] (0.08,-.3) to (0.08,.4);
	\draw[->,thick] (-0.28,-.3) to (-0.28,.4);
   \node at (-0.28,-.45) {$\scriptstyle{j}$};
   \node at (0.08,-.45) {$\scriptstyle{i}$};
   \node at (.3,-.05) {$\color{gray}\scriptstyle{\lambda}$};
      \node at (-0.28,0.05) {$\bull$};
\end{tikzpicture}
}
-\mathord{
\begin{tikzpicture}[baseline = -.5mm]
	\draw[->,thick] (0.08,-.3) to (0.08,.4);
	\draw[->,thick] (-0.28,-.3) to (-0.28,.4);
   \node at (-0.28,-.45) {$\scriptstyle{j}$};
   \node at (0.08,-.45) {$\scriptstyle{i}$};
   \node at (.3,-.05) {$\color{gray}\scriptstyle{\lambda}$};
     \node at (0.08,0.05) {$\bull$};
\end{tikzpicture}
}
&\text{if $j^- \neq i= j^+$,}\\
 2 \mathord{
\begin{tikzpicture}[baseline = -.5mm]
	\draw[->,thick] (0.08,-.3) to (0.08,.4);
	\draw[->,thick] (-0.28,-.3) to (-0.28,.4);
   \node at (-0.28,-.45) {$\scriptstyle{j}$};
   \node at (0.08,-.45) {$\scriptstyle{i}$};
   \node at (.3,-.05) {$\color{gray}\scriptstyle{\lambda}$};
     \node at (0.08,0.05) {$\bull$};
      \node at (-0.28,0.05) {$\bull$};
\end{tikzpicture}
}
-
\mathord{
\begin{tikzpicture}[baseline = -.5mm]
	\draw[->,thick] (0.08,-.3) to (0.08,.4);
	\draw[->,thick] (-0.28,-.3) to (-0.28,.4);
   \node at (-0.28,-.45) {$\scriptstyle{j}$};
   \node at (0.08,-.45) {$\scriptstyle{i}$};
   \node at (.3,-.05) {$\color{gray}\scriptstyle{\lambda}$};
     \node at (-0.28,0.15) {$\bull$};
     \node at (-0.28,-.1) {$\bull$};
\end{tikzpicture}
}
-\mathord{
\begin{tikzpicture}[baseline = -.5mm]
	\draw[->,thick] (0.08,-.3) to (0.08,.4);
	\draw[->,thick] (-0.28,-.3) to (-0.28,.4);
   \node at (-0.28,-.45) {$\scriptstyle{j}$};
   \node at (0.08,-.45) {$\scriptstyle{i}$};
   \node at (.3,-.05) {$\color{gray}\scriptstyle{\lambda}$};
     \node at (0.08,0.15) {$\bull$};
     \node at (0.08,-.1) {$\bull$};
\end{tikzpicture}
}
\:\:\:\:\;&\text{if $j^- =i=j^+$,}\\
\mathord{
\begin{tikzpicture}[baseline = -.5mm]
	\draw[->,thick] (0.08,-.3) to (0.08,.4);
	\draw[->,thick] (-0.28,-.3) to (-0.28,.4);
   \node at (-0.28,-.45) {$\scriptstyle{j}$};
   \node at (0.08,-.45) {$\scriptstyle{i}$};
   \node at (.3,.05) {$\color{gray}\scriptstyle{\lambda}$};
\end{tikzpicture}
}&\text{otherwise,}
\end{array}
\right. \\
\label{KM3}
\mathord{
\begin{tikzpicture}[baseline = .5mm]
	\draw[<-,thick] (0.45,.8) to (-0.45,-.4);
	\draw[->,thick] (0.45,-.4) to (-0.45,.8);
        \draw[-,thick] (0,-.4) to[out=90,in=-90] (-.45,0.2);
        \draw[->,thick] (-0.45,0.2) to[out=90,in=-90] (0,0.8);
   \node at (-0.45,-.6) {$\scriptstyle{k}$};
   \node at (0,-.6) {$\scriptstyle{j}$};
  \node at (0.45,-.6) {$\scriptstyle{i}$};
   \node at (.5,-.1) {$\color{gray}\scriptstyle{\lambda}$};
\end{tikzpicture}
}
\!\!-
\!\!\!
\mathord{
\begin{tikzpicture}[baseline = .5mm]
	\draw[<-,thick] (0.45,.8) to (-0.45,-.4);
	\draw[->,thick] (0.45,-.4) to (-0.45,.8);
        \draw[-,thick] (0,-.4) to[out=90,in=-90] (.45,0.2);
        \draw[->,thick] (0.45,0.2) to[out=90,in=-90] (0,0.8);
   \node at (-0.45,-.6) {$\scriptstyle{k}$};
   \node at (0,-.6) {$\scriptstyle{j}$};
  \node at (0.45,-.6) {$\scriptstyle{i}$};
   \node at (.5,-.1) {$\color{gray}\scriptstyle{\lambda}$};
\end{tikzpicture}
}
&=
\left\{
\begin{array}{ll}
\displaystyle
\mathord{
\begin{tikzpicture}[baseline = -.5mm]
	\draw[->,thick] (0.44,-.3) to (0.44,.4);
	\draw[->,thick] (0.08,-.3) to (0.08,.4);
	\draw[->,thick] (-0.28,-.3) to (-0.28,.4);
   \node at (-0.28,-.5) {$\scriptstyle{i}$};
   \node at (0.08,-.5) {$\scriptstyle{j}$};
   \node at (0.44,-.5) {$\scriptstyle{i}$};
  \node at (.6,-.1) {$\color{gray}\scriptstyle{\lambda}$};
\end{tikzpicture}
}
&\text{if $j^- = i=k \neq j^+$}\\
\displaystyle
-\mathord{
\begin{tikzpicture}[baseline = -.5mm]
	\draw[->,thick] (0.44,-.3) to (0.44,.4);
	\draw[->,thick] (0.08,-.3) to (0.08,.4);
	\draw[->,thick] (-0.28,-.3) to (-0.28,.4);
   \node at (-0.28,-.5) {$\scriptstyle{i}$};
   \node at (0.08,-.5) {$\scriptstyle{j}$};
   \node at (0.44,-.5) {$\scriptstyle{i}$};
  \node at (.6,-.1) {$\color{gray}\scriptstyle{\lambda}$};
\end{tikzpicture}
}
&\text{if $j^-\neq i=k= j^+$,}\\
2\displaystyle
\mathord{
\begin{tikzpicture}[baseline = -0.5mm]
	\draw[->,thick] (0.44,-.3) to (0.44,.4);
	\draw[->,thick] (0.08,-.3) to (0.08,.4);
	\draw[->,thick] (-0.28,-.3) to (-0.28,.4);
   \node at (-0.28,-.5) {$\scriptstyle{i}$};
   \node at (0.08,-.5) {$\scriptstyle{j}$};
   \node at (0.44,-.5) {$\scriptstyle{i}$};
  \node at (.6,-.1) {$\color{gray}\scriptstyle{\lambda}$};
     \node at (0.08,0.05) {$\bull$};
\end{tikzpicture}
}
\!\!-\!
\displaystyle
\mathord{
\begin{tikzpicture}[baseline = -.5mm]
	\draw[->,thick] (0.44,-.3) to (0.44,.4);
	\draw[->,thick] (0.08,-.3) to (0.08,.4);
	\draw[->,thick] (-0.28,-.3) to (-0.28,.4);
   \node at (-0.28,-.5) {$\scriptstyle{i}$};
   \node at (0.08,-.5) {$\scriptstyle{j}$};
   \node at (0.44,-.5) {$\scriptstyle{i}$};
  \node at (.6,-.1) {$\color{gray}\scriptstyle{\lambda}$};
     \node at (-0.28,0.05) {$\bull$};
\end{tikzpicture}
}\!\!-\!
\mathord{
\begin{tikzpicture}[baseline = -.5mm]
	\draw[->,thick] (0.44,-.3) to (0.44,.4);
	\draw[->,thick] (0.08,-.3) to (0.08,.4);
	\draw[->,thick] (-0.28,-.3) to (-0.28,.4);
   \node at (-0.28,-.5) {$\scriptstyle{i}$};
   \node at (0.08,-.5) {$\scriptstyle{j}$};
   \node at (0.44,-.5) {$\scriptstyle{i}$};
  \node at (.6,-.1) {$\color{gray}\scriptstyle{\lambda}$};
     \node at (0.44,0.05) {$\bull$};
\end{tikzpicture}
}
&\text{if $j^-=i= k=j^+$,}\\
0&\text{otherwise,}
\end{array}
\right.\end{align}\begin{align}
\mathord{
\begin{tikzpicture}[baseline = -1mm]
  \draw[->,thick] (0.3,0) to (0.3,.4);
	\draw[-,thick] (0.3,0) to[out=-90, in=0] (0.1,-0.4);
	\draw[-,thick] (0.1,-0.4) to[out = 180, in = -90] (-0.1,0);
	\draw[-,thick] (-0.1,0) to[out=90, in=0] (-0.3,0.4);
	\draw[-,thick] (-0.3,0.4) to[out = 180, in =90] (-0.5,0);
  \draw[-,thick] (-0.5,0) to (-0.5,-.4);
   \node at (-0.5,-.55) {$\scriptstyle{i}$};
   \node at (0.5,0) {$\color{gray}\scriptstyle{\lambda}$};
\end{tikzpicture}
}
&=
\mathord{\begin{tikzpicture}[baseline=-1mm]
  \draw[->,thick] (0,-0.4) to (0,.4);
   \node at (0,-.55) {$\scriptstyle{i}$};
   \node at (0.2,0) {$\color{gray}\scriptstyle{\lambda}$};
\end{tikzpicture}
},\qquad\quad\qquad
\mathord{
\begin{tikzpicture}[baseline = 0mm]
  \draw[->,thick] (0.3,0) to (0.3,-.4);
	\draw[-,thick] (0.3,0) to[out=90, in=0] (0.1,0.4);
	\draw[-,thick] (0.1,0.4) to[out = 180, in = 90] (-0.1,0);
	\draw[-,thick] (-0.1,0) to[out=-90, in=0] (-0.3,-0.4);
	\draw[-,thick] (-0.3,-0.4) to[out = 180, in =-90] (-0.5,0);
  \draw[-,thick] (-0.5,0) to (-0.5,.4);
   \node at (-0.5,.55) {$\scriptstyle{i}$};
   \node at (0.5,0) {$\color{gray}\scriptstyle{\lambda}$};
\end{tikzpicture}
}
=
\mathord{\begin{tikzpicture}[baseline=0mm]
  \draw[<-,thick] (0,-0.4) to (0,.4);
   \node at (0,.55) {$\scriptstyle{i}$};
   \node at (0.2,0) {$\color{gray}\scriptstyle{\lambda}$};
\end{tikzpicture}
},
\label{KMrightadj}\end{align}
 plus the
{\em inversion relation} asserting that the following are isomorphisms:
\begin{align}
\mathord{
\begin{tikzpicture}[baseline = 0]
	\draw[<-,thick] (0.28,-.3) to (-0.28,.4);
	\draw[->,thick] (-0.28,-.3) to (0.28,.4);
   \node at (-0.31,-.45) {$\scriptstyle{i}$};
   \node at (-0.31,.55) {$\scriptstyle{j}$};
   \node at (.4,.05) {$\scriptstyle{\color{gray}\lambda}$};
\end{tikzpicture}
}
&:E_i F_j 1_\lambda \Rightarrow F_j E_i 1_\lambda
&&\text{if $j \neq i$,}\label{KMinverse1}\\
\left[
\begin{tikzpicture}[baseline = 0]
	\draw[<-,thick] (0.28,-.3) to (-0.28,.4);
	\draw[->,thick] (-0.28,-.3) to (0.28,.4);
   \node at (-0.32,-.4) {$\scriptstyle{i}$};
   \node at (-0.32,.5) {$\scriptstyle{i}$};
   \node at (.4,.05) {$\scriptstyle{\color{gray}\lambda}$};
\end{tikzpicture}
\begin{tikzpicture}[baseline = 0]
	\draw[<-,thick] (0.4,0.2) to[out=-90, in=0] (0.1,-.2);
	\draw[-,thick] (0.1,-.2) to[out = 180, in = -90] (-0.2,0.2);
    \node at (-0.2,.35) {$\scriptstyle{i}$};
  \node at (0.3,-0.25) {$\scriptstyle{\color{gray}\lambda}$};
\end{tikzpicture}
\:\cdots
\:
\begin{tikzpicture}[baseline = 0]
	\draw[<-,thick] (0.4,0.2) to[out=-90, in=0] (0.1,-.2);
	\draw[-,thick] (0.1,-.2) to[out = 180, in = -90] (-0.2,0.2);
    \node at (-0.2,.35) {$\scriptstyle{i}$};
  \node at (0.3,-0.25) {$\scriptstyle{\color{gray}\lambda}$};
      \node at (1.05,0) {$\scriptstyle{-\langle h_i,\lambda\rangle-1}$};
      \node at (0.38,0) {$\bull$};
\end{tikzpicture}
\!\right]
&
:E_i F_i 1_\lambda \oplus
1_\lambda^{\oplus -\langle h_i,\lambda\rangle}
\Rightarrow
 F_i E_i 1_\lambda&&\text{if $\langle h_i,\lambda\rangle \leq
  0$,}\notag
  \end{align}
\begin{align}
\left[\begin{array}{r}
\begin{tikzpicture}[baseline = 0]
	\draw[<-,thick] (0.28,-.3) to (-0.28,.4);
	\draw[->,thick] (-0.28,-.3) to (0.28,.4);
   \node at (-0.31,-.45) {$\scriptstyle{i}$};
   \node at (-0.31,.55) {$\scriptstyle{i}$};
   \node at (.4,.05) {$\scriptstyle{\color{gray}\lambda}$};
\end{tikzpicture}
\\
\begin{tikzpicture}[baseline = 0]
	\draw[<-,thick] (0.4,0) to[out=90, in=0] (0.1,0.4);
	\draw[-,thick] (0.1,0.4) to[out = 180, in = 90] (-0.2,0);
    \node at (-0.2,-.15) {$\scriptstyle{i}$};
  \node at (0.3,0.5) {$\scriptstyle{\color{gray}\lambda}$};
\end{tikzpicture}
\\\vdots\:\:\:\:\\
\!\!\!\!\begin{tikzpicture}[baseline = 0]
	\draw[<-,thick] (0.4,0) to[out=90, in=0] (0.1,0.4);
	\draw[-,thick] (0.1,0.4) to[out = 180, in = 90] (-0.2,0);
    \node at (-0.2,-.15) {$\scriptstyle{i}$};
  \node at (0.3,0.5) {$\scriptstyle{\color{gray}\lambda}$};
      \node at (-0.7,0.2) {$\scriptstyle{\langle h_i,\lambda\rangle-1}$};
      \node at (-0.15,0.2) {$\bullet$};
\end{tikzpicture}
\end{array}
\right]
&:
E_i F_i 1_\lambda \Rightarrow
F_i E_i 1_\lambda \oplus 1_\lambda^{\oplus \langle h_i,\lambda\rangle}
&&\text{if $\langle h_i,\lambda\rangle \geq
  0$}.\label{KMinverse3}
\end{align}

Moreover, $\UU(\g)$ is strictly pivotal, so that
we can introduce downward dots and crossings by taking right and/or left mates of the upward ones.

\subsection{Degenerate Heisenberg categories} Recall from \cite{B18} that
the {\em degenerate Heisenberg category} $\Heis_k$ is the strict $\kk$-linear monoidal category
		generated by objects
		$E=\up$ and $F=\down$
		and
		morphisms
		\begin{align*}
			\mathord{
				\begin{tikzpicture}[baseline = 0]
					\draw[->] (0.08,-.3) to (0.08,.4);
					\node at (0.08,0.05) {$\dt$};
				\end{tikzpicture}
			}
			&:E\rightarrow E,
			&\mathord{
				\begin{tikzpicture}[baseline = 1mm]
					\draw[<-] (0.4,0.4) to[out=-90, in=0] (0.1,0);
					\draw[-] (0.1,0) to[out = 180, in = -90] (-0.2,0.4);
				\end{tikzpicture}
			}&:\unit\rightarrow F\otimes E
			\:,
			&\mathord{
				\begin{tikzpicture}[baseline = 1mm]
					\draw[<-] (0.4,0) to[out=90, in=0] (0.1,0.4);
					\draw[-] (0.1,0.4) to[out = 180, in = 90] (-0.2,0);
				\end{tikzpicture}
			}&:E \otimes F\rightarrow\unit\:,
			\mathord{
				\begin{tikzpicture}[baseline = 0]
					\draw[->] (0.28,-.3) to (-0.28,.4);
					\draw[->] (-0.28,-.3) to (0.28,.4);
				\end{tikzpicture}
			}&:E\otimes E \rightarrow E \otimes E
		\end{align*}
		subject to certain relations.
		To record these relations, we denote $n\geq 0$ dots on a string instead by labelling a single dot
		with the multiplicity $n$. We also introduce
		the sideways crossings
		\begin{align*}
			\mathord{
				\begin{tikzpicture}[baseline = 0]
					\draw[<-] (0.28,-.3) to (-0.28,.4);
					\draw[->] (-0.28,-.3) to (0.28,.4);
				\end{tikzpicture}
			}
			&:=
			\mathord{
				\begin{tikzpicture}[baseline = 0]
					\draw[->] (0.3,-.5) to (-0.3,.5);
					\draw[-] (-0.2,-.2) to (0.2,.3);
					\draw[-] (0.2,.3) to[out=50,in=180] (0.5,.5);
					\draw[->] (0.5,.5) to[out=0,in=90] (0.9,-.5);
					\draw[-] (-0.2,-.2) to[out=230,in=0] (-0.6,-.5);
					\draw[-] (-0.6,-.5) to[out=180,in=-90] (-0.9,.5);
				\end{tikzpicture}
			}\:,&
			\mathord{
				\begin{tikzpicture}[baseline = 0]
					\draw[->] (0.28,-.3) to (-0.28,.4);
					\draw[<-] (-0.28,-.3) to (0.28,.4);
				\end{tikzpicture}
			}
			&:=
			\mathord{
				\begin{tikzpicture}[baseline = 0]
					\draw[<-] (0.3,.5) to (-0.3,-.5);
					\draw[-] (-0.2,.2) to (0.2,-.3);
					\draw[-] (0.2,-.3) to[out=130,in=180] (0.5,-.5);
					\draw[-] (0.5,-.5) to[out=0,in=270] (0.9,.5);
					\draw[-] (-0.2,.2) to[out=130,in=0] (-0.6,.5);
					\draw[->] (-0.6,.5) to[out=180,in=-270] (-0.9,-.5);
				\end{tikzpicture}
			}\:,
		\end{align*}
		Then the relations are
		\begin{align}\label{dAHA}
			\mathord{
				\begin{tikzpicture}[baseline = -1mm]
					\draw[->] (0.28,0) to[out=90,in=-90] (-0.28,.6);
					\draw[->] (-0.28,0) to[out=90,in=-90] (0.28,.6);
					\draw[-] (0.28,-.6) to[out=90,in=-90] (-0.28,0);
					\draw[-] (-0.28,-.6) to[out=90,in=-90] (0.28,0);
				\end{tikzpicture}
			}&=
			\mathord{
				\begin{tikzpicture}[baseline = -1mm]
					\draw[->] (0.18,-.6) to (0.18,.6);
					\draw[->] (-0.18,-.6) to (-0.18,.6);
				\end{tikzpicture}
			}\:,
			\quad\qquad\mathord{
				\begin{tikzpicture}[baseline = -1mm]
					\draw[<-] (0.45,.6) to (-0.45,-.6);
					\draw[->] (0.45,-.6) to (-0.45,.6);
					\draw[-] (0,-.6) to[out=90,in=-90] (-.45,0);
					\draw[->] (-0.45,0) to[out=90,in=-90] (0,0.6);
				\end{tikzpicture}
			}
			=
			\mathord{
				\begin{tikzpicture}[baseline = -1mm]
					\draw[<-] (0.45,.6) to (-0.45,-.6);
					\draw[->] (0.45,-.6) to (-0.45,.6);
					\draw[-] (0,-.6) to[out=90,in=-90] (.45,0);
					\draw[->] (0.45,0) to[out=90,in=-90] (0,0.6);
				\end{tikzpicture}
			}\:,&
			\qquad
			\mathord{
				\begin{tikzpicture}[baseline = -1mm]
					\draw[<-] (0.25,.3) to (-0.25,-.3);
					\draw[->] (0.25,-.3) to (-0.25,.3);
					\node at (-0.12,-0.145) {$\dt$};
				\end{tikzpicture}
			}
			&=
			\mathord{
				\begin{tikzpicture}[baseline = -1mm]
					\draw[<-] (0.25,.3) to (-0.25,-.3);
					\draw[->] (0.25,-.3) to (-0.25,.3);
					\node at (0.12,0.135) {$\dt$};
			\end{tikzpicture}}
			+\:\mathord{
				\begin{tikzpicture}[baseline = -1mm]
					\draw[->] (0.08,-.3) to (0.08,.3);
					\draw[->] (-0.28,-.3) to (-0.28,.3);
				\end{tikzpicture}
			}
			\:,\\
   \label{eq:rightadj}
			\mathord{
				\begin{tikzpicture}[baseline = -.8mm]
					\draw[->] (0.3,0) to (0.3,.4);
					\draw[-] (0.3,0) to[out=-90, in=0] (0.1,-0.4);
					\draw[-] (0.1,-0.4) to[out = 180, in = -90] (-0.1,0);
					\draw[-] (-0.1,0) to[out=90, in=0] (-0.3,0.4);
					\draw[-] (-0.3,0.4) to[out = 180, in =90] (-0.5,0);
					\draw[-] (-0.5,0) to (-0.5,-.4);
				\end{tikzpicture}
			}
			&=
			\mathord{\begin{tikzpicture}[baseline=-.8mm]
					\draw[->] (0,-0.4) to (0,.4);
				\end{tikzpicture}
			}\:,
			&\mathord{
				\begin{tikzpicture}[baseline = -.8mm]
					\draw[->] (0.3,0) to (0.3,-.4);
					\draw[-] (0.3,0) to[out=90, in=0] (0.1,0.4);
					\draw[-] (0.1,0.4) to[out = 180, in = 90] (-0.1,0);
					\draw[-] (-0.1,0) to[out=-90, in=0] (-0.3,-0.4);
					\draw[-] (-0.3,-0.4) to[out = 180, in =-90] (-0.5,0);
					\draw[-] (-0.5,0) to (-0.5,.4);
				\end{tikzpicture}
			}
			&=
			\mathord{\begin{tikzpicture}[baseline=-.8mm]
					\draw[<-] (0,-0.4) to (0,.4);
				\end{tikzpicture}
			}\:
		\end{align}
  plus the inversion relation, namely, that the
following
is an isomorphism in the additive envelope:
\begin{align}
\label{dMackey1}
\left[
\begin{array}{r}
\mathord{
\begin{tikzpicture}[baseline = 0]
	\draw[<-] (0.28,-.3) to (-0.28,.3);
	\draw[->] (-0.28,-.3) to (0.28,.3);
   \end{tikzpicture}
}\\
\mathord{
\begin{tikzpicture}[baseline = 1mm]
	\node at (0,.6){$\phantom.$};
\draw[<-] (0.4,0) to[out=90, in=0] (0.1,0.4);
	\draw[-] (0.1,0.4) to[out = 180, in = 90] (-0.2,0);
\end{tikzpicture}
}\\
\mathord{
\begin{tikzpicture}[baseline = 1mm]
	\node at (0,.6){$\phantom.$};
	\draw[<-] (0.4,0) to[out=90, in=0] (0.1,0.4);
	\draw[-] (0.1,0.4) to[out = 180, in = 90] (-0.2,0);
      \node at (-0.15,0.2) {$\dt$};
\end{tikzpicture}
}\\
\vdots\:\:\:\:\\
\!\!\!\!\mathord{
\begin{tikzpicture}[baseline = 1mm]
	\node at (0,.5){$\phantom.$};
	\draw[<-] (0.4,0) to[out=90, in=0] (0.1,0.4);
	\draw[-] (0.1,0.4) to[out = 180, in = 90] (-0.2,0);
     \node at (-0.5,0.2) {$\scriptstyle{k-1}$};
      \node at (-0.15,0.2) {$\dt$};
\end{tikzpicture}
}\:\end{array}
\right]
&:
E \otimes F\:\rightarrow\:
F \otimes E \oplus \unit^{\oplus k}&&
\text{if $k \geq
  0$},\\
\left[\:
\mathord{
\begin{tikzpicture}[baseline = 0]
	\draw[<-] (0.28,-.3) to (-0.28,.3);
	\draw[->] (-0.28,-.3) to (0.28,.3);
\end{tikzpicture}
}\:\:\:
\mathord{
\begin{tikzpicture}[baseline = -0.9mm]
	\draw[<-] (0.4,0.2) to[out=-90, in=0] (0.1,-.2);
	\draw[-] (0.1,-.2) to[out = 180, in = -90] (-0.2,0.2);
\end{tikzpicture}
}
\:\:\:
\mathord{
\begin{tikzpicture}[baseline = -0.9mm]
	\draw[<-] (0.4,0.2) to[out=-90, in=0] (0.1,-.2);
	\draw[-] (0.1,-.2) to[out = 180, in = -90] (-0.2,0.2);
      \node at (0.38,0) {$\dt$};
\end{tikzpicture}
}
\:\:\:\cdots
\:\:
\mathord{
\begin{tikzpicture}[baseline = -0.9mm]
	\draw[<-] (0.4,0.2) to[out=-90, in=0] (0.1,-.2);
	\draw[-] (0.1,-.2) to[out = 180, in = -90] (-0.2,0.2);
     \node at (0.83,0) {$\scriptstyle{-k-1}$};
      \node at (0.38,0) {$\dt$};
\end{tikzpicture}
}
\right]
&:E \otimes F \oplus
\unit^{\oplus (-k)}
\:\rightarrow\:
 F \otimes E&&\text{if $k \leq
  0$}.\label{dMackey2}
\end{align}
The resulting category then contains unique morphisms
$\:\begin{tikzpicture}[baseline = .75mm]
	\draw[-] (0.3,0.3) to[out=-90, in=0] (0.1,0);
	\draw[->] (0.1,0) to[out = 180, in = -90] (-0.1,0.3);
\end{tikzpicture}\:$
and
$\:\begin{tikzpicture}[baseline = .75mm]
	\draw[-] (0.3,0) to[out=90, in=0] (0.1,0.3);
	\draw[->] (0.1,0.3) to[out = 180, in = 90] (-0.1,0);
\end{tikzpicture}\:$; cf. \cite{BSW20a}. Moreover, the degenerate Heisenberg category $\Heis_k$ can been shown to be strictly pivotal as well.

  \subsection{Quantum Heisenberg categories}
We also recall the quantum Heisenberg categories introduced in \cite[Definition 2.2]{BSW20b}.
In the quantum case, $z\neq 0$ and there is an additional invertible parameter $t$. Thus, in the quantum case, we will work over the
ground ring $$\KK=\kk[t,t^{-1}].$$
    The quantum Heisenberg category $\Heis_k(z,t)$ is the strict $\KK$-linear monoidal category generated by objects
$E=\up$ and $F=\down$
and the following
morphisms:
\begin{align}\label{qHgens}
\mathord{
\begin{tikzpicture}[baseline = 0]
	\draw[->] (0.08,-.3) to (0.08,.4);
      \node at (0.08,0.05) {$\dt$};
\end{tikzpicture}
}
&:E\rightarrow E,
&\mathord{
\begin{tikzpicture}[baseline = 1mm]
	\draw[<-] (0.4,0.4) to[out=-90, in=0] (0.1,0);
	\draw[-] (0.1,0) to[out = 180, in = -90] (-0.2,0.4);
\end{tikzpicture}
}&:\unit\rightarrow F\otimes E
\:,
&\mathord{
\begin{tikzpicture}[baseline = 1mm]
	\draw[<-] (0.4,0) to[out=90, in=0] (0.1,0.4);
	\draw[-] (0.1,0.4) to[out = 180, in = 90] (-0.2,0);
\end{tikzpicture}
}&:E\otimes F\rightarrow\unit\:,
\mathord{
\begin{tikzpicture}[baseline = 0]
	\draw[->] (0.28,-.3) to (-0.28,.4);
	\draw[-,white,line width=4pt] (-0.28,-.3) to (0.28,.4);
	\draw[->] (-0.28,-.3) to (0.28,.4);
\end{tikzpicture}
}&:E\otimes E \rightarrow E\otimes E
\:
\end{align}
The first and last generator of \ref{qHgens}, the dot and the positive crossing, are required to be invertible. Hence it makes sense to label dots by an arbitrary integer.
We denote the inverse of the positive crossing
by
\begin{align*}
\mathord{
\begin{tikzpicture}[baseline = -.5mm]
	\draw[->] (-0.28,-.3) to (0.28,.4);
	\draw[-,line width=4pt,white] (0.28,-.3) to (-0.28,.4);
	\draw[->] (0.28,-.3) to (-0.28,.4);
\end{tikzpicture}
}&:E\otimes E\rightarrow E \otimes E
\:,
\end{align*}
and call this the negative crossing.
Thus, we have that
\begin{align*}
\mathord{
\begin{tikzpicture}[baseline = -1mm]
	\draw[-] (0.28,-.6) to[out=90,in=-90] (-0.28,0);
	\draw[->] (-0.28,0) to[out=90,in=-90] (0.28,.6);
	\draw[-,line width=4pt,white] (-0.28,-.6) to[out=90,in=-90] (0.28,0);
	\draw[-] (-0.28,-.6) to[out=90,in=-90] (0.28,0);
	\draw[-,line width=4pt,white] (0.28,0) to[out=90,in=-90] (-0.28,.6);
	\draw[->] (0.28,0) to[out=90,in=-90] (-0.28,.6);
\end{tikzpicture}
}&=
\mathord{
\begin{tikzpicture}[baseline = -1mm]
	\draw[->] (0.18,-.6) to (0.18,.6);
	\draw[->] (-0.18,-.6) to (-0.18,.6);
\end{tikzpicture}
}
=
\mathord{
\begin{tikzpicture}[baseline = -1mm]
	\draw[->] (0.28,0) to[out=90,in=-90] (-0.28,.6);
	\draw[-,line width=4pt,white] (-0.28,0) to[out=90,in=-90] (0.28,.6);
	\draw[->] (-0.28,0) to[out=90,in=-90] (0.28,.6);
	\draw[-] (-0.28,-.6) to[out=90,in=-90] (0.28,0);
	\draw[-,line width=4pt,white] (0.28,-.6) to[out=90,in=-90] (-0.28,0);
	\draw[-] (0.28,-.6) to[out=90,in=-90] (-0.28,0);
\end{tikzpicture}
}\:.
\end{align*}
We also need the sideways crossings, both positive and negative,
\begin{align*}
\mathord{
\begin{tikzpicture}[baseline = -.5mm]
	\draw[->] (-0.28,-.3) to (0.28,.4);
	\draw[line width=4pt,white,-] (0.28,-.3) to (-0.28,.4);
	\draw[<-] (0.28,-.3) to (-0.28,.4);
\end{tikzpicture}
}&:=
\mathord{
\begin{tikzpicture}[baseline = 0]
	\draw[->] (0.3,-.5) to (-0.3,.5);
	\draw[line width=4pt,-,white] (-0.2,-.2) to (0.2,.3);
	\draw[-] (-0.2,-.2) to (0.2,.3);
        \draw[-] (0.2,.3) to[out=50,in=180] (0.5,.5);
        \draw[->] (0.5,.5) to[out=0,in=90] (0.8,-.5);
        \draw[-] (-0.2,-.2) to[out=230,in=0] (-0.6,-.5);
        \draw[-] (-0.6,-.5) to[out=180,in=-90] (-0.85,.5);
\end{tikzpicture}
}\:,
&\mathord{
\begin{tikzpicture}[baseline = -.5mm]
	\draw[<-] (0.28,-.3) to (-0.28,.4);
	\draw[line width=4pt,white,-] (-0.28,-.3) to (0.28,.4);
	\draw[->] (-0.28,-.3) to (0.28,.4);
\end{tikzpicture}
}&:=
\mathord{
\begin{tikzpicture}[baseline = 0]
	\draw[-] (-0.2,-.2) to (0.2,.3);
	\draw[-,line width=4pt,white] (0.3,-.5) to (-0.3,.5);
	\draw[->] (0.3,-.5) to (-0.3,.5);
        \draw[-] (0.2,.3) to[out=50,in=180] (0.5,.5);
        \draw[->] (0.5,.5) to[out=0,in=90] (0.8,-.5);
        \draw[-] (-0.2,-.2) to[out=230,in=0] (-0.6,-.5);
        \draw[-] (-0.6,-.5) to[out=180,in=-90] (-0.85,.5);
\end{tikzpicture}
}\:.
\end{align*}
The other defining relations are :
\begin{align*}
\mathord{
\begin{tikzpicture}[baseline = -.5mm]
	\draw[->] (0.28,-.3) to (-0.28,.4);
	\draw[line width=4pt,white,-] (-0.28,-.3) to (0.28,.4);
	\draw[->] (-0.28,-.3) to (0.28,.4);
\end{tikzpicture}
}-\mathord{
\begin{tikzpicture}[baseline = -.5mm]
	\draw[->] (-0.28,-.3) to (0.28,.4);
	\draw[line width=4pt,white,-] (0.28,-.3) to (-0.28,.4);
	\draw[->] (0.28,-.3) to (-0.28,.4);
\end{tikzpicture}
}&=
z\:\mathord{
\begin{tikzpicture}[baseline = -.5mm]
	\draw[->] (0.18,-.3) to (0.18,.4);
	\draw[->] (-0.18,-.3) to (-0.18,.4);
\end{tikzpicture}
}\:,
&\mathord{
\begin{tikzpicture}[baseline = -1mm]
	\draw[->] (0.45,-.6) to (-0.45,.6);
        \draw[-] (0,-.6) to[out=90,in=-90] (-.45,0);
        \draw[-,line width=4pt,white] (-0.45,0) to[out=90,in=-90] (0,0.6);
        \draw[->] (-0.45,0) to[out=90,in=-90] (0,0.6);
	\draw[-,line width=4pt,white] (0.45,.6) to (-0.45,-.6);
	\draw[<-] (0.45,.6) to (-0.45,-.6);
\end{tikzpicture}
}
&=
\mathord{
\begin{tikzpicture}[baseline = -1mm]
	\draw[->] (0.45,-.6) to (-0.45,.6);
        \draw[-,line width=4pt,white] (0,-.6) to[out=90,in=-90] (.45,0);
        \draw[-] (0,-.6) to[out=90,in=-90] (.45,0);
        \draw[->] (0.45,0) to[out=90,in=-90] (0,0.6);
	\draw[-,line width=4pt,white] (0.45,.6) to (-0.45,-.6);
	\draw[<-] (0.45,.6) to (-0.45,-.6);
\end{tikzpicture}
}
\:,
\qquad\qquad
\mathord{
\begin{tikzpicture}[baseline = -.5mm]
	\draw[->] (-0.28,-.3) to (0.28,.4);
      \node at (-0.16,-0.15) {$\dt$};
	\draw[-,line width=4pt,white] (0.28,-.3) to (-0.28,.4);
	\draw[->] (0.28,-.3) to (-0.28,.4);
\end{tikzpicture}
}=
\mathord{
\begin{tikzpicture}[baseline = -.5mm]
	\draw[->] (0.28,-.3) to (-0.28,.4);
	\draw[line width=4pt,white,-] (-0.28,-.3) to (0.28,.4);
	\draw[->] (-0.28,-.3) to (0.28,.4);
      \node at (0.145,0.23) {$\dt$};
\end{tikzpicture}
}\:,\\
\mathord{
\begin{tikzpicture}[baseline = -.8mm]
  \draw[->] (0.3,0) to (0.3,.4);
	\draw[-] (0.3,0) to[out=-90, in=0] (0.1,-0.4);
	\draw[-] (0.1,-0.4) to[out = 180, in = -90] (-0.1,0);
	\draw[-] (-0.1,0) to[out=90, in=0] (-0.3,0.4);
	\draw[-] (-0.3,0.4) to[out = 180, in =90] (-0.5,0);
  \draw[-] (-0.5,0) to (-0.5,-.4);
\end{tikzpicture}
}
&=
\mathord{\begin{tikzpicture}[baseline=-.8mm]
  \draw[->] (0,-0.4) to (0,.4);
\end{tikzpicture}
}\:,
&\mathord{
\begin{tikzpicture}[baseline = -.8mm]
  \draw[->] (0.3,0) to (0.3,-.4);
	\draw[-] (0.3,0) to[out=90, in=0] (0.1,0.4);
	\draw[-] (0.1,0.4) to[out = 180, in = 90] (-0.1,0);
	\draw[-] (-0.1,0) to[out=-90, in=0] (-0.3,-0.4);
	\draw[-] (-0.3,-0.4) to[out = 180, in =-90] (-0.5,0);
  \draw[-] (-0.5,0) to (-0.5,.4);
\end{tikzpicture}
}
&=
\mathord{\begin{tikzpicture}[baseline=-.8mm]
  \draw[<-] (0,-0.4) to (0,.4);
\end{tikzpicture}
}\:,\end{align*}
plus the {\em inversion relation} asserting that the following is
invertible:
\begin{align}
\label{qinver1}
\left[\!\!\!\!\!
\begin{array}{r}
\mathord{
\begin{tikzpicture}[baseline = 0]
	\draw[->] (-0.28,-.3) to (0.28,.3);
	\draw[-,line width=4pt,white] (0.28,-.3) to (-0.28,.3);
	\draw[<-] (0.28,-.3) to (-0.28,.3);
   \end{tikzpicture}
}\\
\mathord{
\begin{tikzpicture}[baseline = 1mm]
	\draw[<-] (0.4,0) to[out=90, in=0] (0.1,0.4);
      \node at (-0.15,0.45) {$\phantom\bullet$};
	\draw[-] (0.1,0.4) to[out = 180, in = 90] (-0.2,0);
\end{tikzpicture}
}\\
\mathord{
\begin{tikzpicture}[baseline = 1mm]
	\draw[<-] (0.4,0) to[out=90, in=0] (0.1,0.4);
	\draw[-] (0.1,0.4) to[out = 180, in = 90] (-0.2,0);
      \node at (-0.15,0.45) {$\phantom\bullet$};
      \node at (-0.15,0.2) {$\dt$};
\end{tikzpicture}
}\\\vdots\:\:\;\\
\mathord{
\begin{tikzpicture}[baseline = 1mm]
	\draw[<-] (0.4,0) to[out=90, in=0] (0.1,0.4);
	\draw[-] (0.1,0.4) to[out = 180, in = 90] (-0.2,0);
     \node at (-0.52,0.2) {$\scriptstyle{k-1}$};
      \node at (-0.15,0.42) {$\phantom\bullet$};
      \node at (-0.15,0.2) {$\dt$};
\end{tikzpicture}
}
\end{array}
\right]
&:
E \otimes F \:\rightarrow\:
F \otimes E \oplus \unit^{\oplus k}
&&\text{if $k \geq
  0$,}\\
\left[\:
\mathord{
\begin{tikzpicture}[baseline = 0]
	\draw[<-] (0.28,-.3) to (-0.28,.3);
	\draw[-,line width=4pt,white] (-0.28,-.3) to (0.28,.3);
	\draw[->] (-0.28,-.3) to (0.28,.3);
\end{tikzpicture}
}\:\:\:
\mathord{
\begin{tikzpicture}[baseline = -0.9mm]
	\draw[<-] (0.4,0.2) to[out=-90, in=0] (0.1,-.2);
	\draw[-] (0.1,-.2) to[out = 180, in = -90] (-0.2,0.2);
\end{tikzpicture}
}
\:\:\:
\mathord{
\begin{tikzpicture}[baseline = -0.9mm]
	\draw[<-] (0.4,0.2) to[out=-90, in=0] (0.1,-.2);
	\draw[-] (0.1,-.2) to[out = 180, in = -90] (-0.2,0.2);
      \node at (0.38,0) {$\dt$};
\end{tikzpicture}
}
\:\:\:\cdots
\:\:\:
\mathord{
\begin{tikzpicture}[baseline = -0.9mm]
	\draw[<-] (0.4,0.2) to[out=-90, in=0] (0.1,-.2);
	\draw[-] (0.1,-.2) to[out = 180, in = -90] (-0.2,0.2);
     \node at (0.83,0) {$\scriptstyle{-k-1}$};
      \node at (0.38,0) {$\dt$};
\end{tikzpicture}
}
\right]
&:E \otimes F \oplus
\unit^{\oplus (-k)}
\:\rightarrow\:
 F \otimes  E&&
\text{if $k \leq 0$.}\label{qinver2}
\end{align}
The situation is slightly more delicate than in the degenerate case as
it is also necessary to impose one additional
relation:
\begin{itemize}
\item
If $k > 0$ we require that
$\begin{tikzpicture}[baseline = 1mm]
  \draw[<-] (0,0.4) to[out=180,in=90] (-.2,0.2);
  \draw[-] (0.2,0.2) to[out=90,in=0] (0,.4);
 \draw[-] (-.2,0.2) to[out=-90,in=180] (0,0);
      \node at (-0.2,0.2) {$\dt$};
      \node at (-.5,.2) {$\scriptstyle{-1}$};
  \draw[-] (0,0) to[out=0,in=-90] (0.2,0.2);
      \node at (0,0) {$\diam$};
\end{tikzpicture}
=-t^2 1_\unit$
where $\:\begin{tikzpicture}[baseline = .75mm]
	\draw[-] (0.3,0.3) to[out=-90, in=0] (0.1,0);
	\draw[->] (0.1,0) to[out = 180, in = -90] (-0.1,0.3);
      \node at (.1,0) {$\diam$};
\end{tikzpicture}\:$ is the last entry of the
inverse of the matrix (\ref{qinver1}).
\item
If $k < 0$ we require that
$\begin{tikzpicture}[baseline = 1mm]
  \draw[-] (0,0.4) to[out=180,in=90] (-.2,0.2);
  \draw[-] (0.2,0.2) to[out=90,in=0] (0,.4);
 \draw[-] (-.2,0.2) to[out=-90,in=180] (0,0);
  \draw[->] (0,0) to[out=0,in=-90] (0.2,0.2);
      \node at (-0.2,0.2) {$\dt$};
      \node at (-.5,.2) {$\scriptstyle{-1}$};
      \node at (0,.4) {$\diam$};
\end{tikzpicture}
=-t^{-2} 1_\unit$
where $\:\begin{tikzpicture}[baseline = .75mm]
	\draw[-] (0.3,0) to[out=90, in=0] (0.1,0.3);
	\draw[->] (0.1,0.3) to[out = 180, in = 90] (-0.1,0);
      \node at (0.1,.3) {$\diam$};
\end{tikzpicture}\:$
is the last entry of the inverse of the matrix (\ref{qinver2}).
\item
If $k=0$ there are two equivalent presentations
here:
if one picks
(\ref{qinver1}) the additional relation is
$\mathord{
\begin{tikzpicture}[baseline = 1.5mm]
	\draw[<-] (-0.2,.5) to[out=-90,in=90] (0.2,.05);
	\draw[-] (0.2,.05) to[out=-90, in=0] (0,-0.15);
	\draw[-] (0,-0.15) to[out = 180, in = -90] (-0.2,.05);
	\draw[-,line width=4pt,white] (0.25,.5) to[out=-90,in=90] (-0.2,.05);
	\draw[-] (0.2,.5) to[out=-90,in=90] (-0.2,.05);
\draw[-](-.2,.5) to [out=90,in=180] (0,.7);
\draw[-](.2,.5) to [out=90,in=0] (0,.7);
\end{tikzpicture}
}= \frac{1-t^{-2}}{z}\unit$
where
$\begin{tikzpicture}[baseline = -1mm]
	\draw[->] (0.2,-.2) to (-0.2,.2);
	\draw[-,line width=4pt,white] (-0.2,-.2) to (0.2,.2);
	\draw[<-] (-0.2,-.2) to (0.2,.2);
\end{tikzpicture}
:= \Big(
\begin{tikzpicture}[baseline = -1mm]
	\draw[->] (-0.2,-.2) to (0.2,.2);
	\draw[-,line width=4pt,white] (0.2,-.2) to (-0.2,.2);
	\draw[<-] (0.2,-.2) to (-0.2,.2);
   \end{tikzpicture}
\Big)^{-1}$, while for
(\ref{qinver2}) it is
$\mathord{
\begin{tikzpicture}[baseline = 1.5mm]
	\draw[-] (0.2,.05) to[out=-90, in=0] (0,-0.15);
	\draw[-] (0,-0.15) to[out = 180, in = -90] (-0.2,.05);
	\draw[->] (0.2,.5) to[out=-90,in=90] (-0.2,.05);
\draw[-](-.2,.5) to [out=90,in=180] (0,.7);
\draw[-](.2,.5) to [out=90,in=0] (0,.7);
	\draw[-,line width=4pt,white] (-0.2,.5) to[out=-90,in=90] (0.2,.05);
	\draw[-] (-0.2,.5) to[out=-90,in=90] (0.2,.05);
\end{tikzpicture}
}
=\frac{t^2-1}{z} 1_\unit$
where
$\mathord{
\begin{tikzpicture}[baseline = -1mm]
	\draw[<-] (-0.2,-.2) to (0.2,.2);
	\draw[-,line width=4pt,white] (0.2,-.2) to (-0.2,.2);
	\draw[->] (0.2,-.2) to (-0.2,.2);
   \end{tikzpicture}
} := \Big(\begin{tikzpicture}[baseline = -1mm]
	\draw[<-] (0.2,-.2) to (-0.2,.2);
	\draw[-,line width=4pt,white] (-0.2,-.2) to (0.2,.2);
	\draw[->] (-0.2,-.2) to (0.2,.2);
\end{tikzpicture}
\Big)^{-1}$.
\end{itemize}
The resulting category then contains unique morphisms
$\:\begin{tikzpicture}[baseline = .75mm]
	\draw[-] (0.3,0.3) to[out=-90, in=0] (0.1,0);
	\draw[->] (0.1,0) to[out = 180, in = -90] (-0.1,0.3);
\end{tikzpicture}\:$
and
$\:\begin{tikzpicture}[baseline = .75mm]
	\draw[-] (0.3,0) to[out=90, in=0] (0.1,0.3);
	\draw[->] (0.1,0.3) to[out = 180, in = 90] (-0.1,0);
\end{tikzpicture}\:$; see \cite[Lemma 4.3]{BSW20b}.

\section{Symmetric product of Heisenberg categories}
\label{sec:symproduct}
 We recall symmetric products of degenerate and quantum Heisenberg categories
 from \cite{BSW20b,BSW23} and give an upward presentation.
    \subsection{Degenerate case}
	As outlined in \cite{BSW20b,BSW23}, given strict $\kk$-linear monoidal categories $\mathcal{C}$ and $\mathcal{D}$,
   the {\em symmetric product} $\mathcal C \odot \mathcal D$ is the strict  $\kk$-linear monoidal
	category obtained from their free product by
	adjoining isomorphisms $\sigma_{X,Y}:X \otimes Y \stackrel{\sim}{\rightarrow} Y \otimes
	X$
	such that $\sigma_{Y,X} = \sigma_{X,Y}^{-1}$
	for each pair of objects $X \in \mathcal C$ and $Y \in \mathcal D$,
	subject also to the relations
	\begin{align}\label{symm1}
		\sigma_{X_1 \otimes X_2, Y} &= (\sigma_{X_1,Y} \otimes 1_{X_2}) \circ
		(1_{X_1} \otimes \sigma_{X_2,Y}),&
		\sigma_{X, Y_1 \otimes Y_2} &= (1_{Y_1} \otimes \sigma_{X,Y_2}) \circ
		(\sigma_{X, Y_1} \otimes 1_{Y_2})
	\end{align}
	\begin{align}\label{symm2}
		\sigma_{X_2,Y} \circ (f \otimes 1_Y)  &= (1_Y \otimes f) \circ
		\sigma_{X_1,Y},&
		\sigma_{X,Y_2} \circ (1_X \otimes g) &= (g \otimes 1_X)\circ \sigma_{X,Y_1}
	\end{align}
	for all $X, X_1,X_2 \in\mathcal C, Y, Y_1,Y_2 \in \mathcal D$ and
	$f\in \Hom_{\mathcal C}(X_1,X_2), g \in \Hom_{\mathcal
		D}(Y_1,Y_2)$.

	\smallskip

	For monoidal categories $\mathcal C , \mathcal D$ and $  \mathcal E$, we can always identify $(\mathcal C \odot \mathcal D) \odot \mathcal E$ with $\mathcal C \odot (\mathcal D \odot \mathcal E)$, and we will write it by $\mathcal C \odot \mathcal D \odot \mathcal E.$ Similarly, for monoidal categories $\mathcal C_1 ,\cdots, \mathcal C_d$, we can define  their symmetric product $\mathcal C_1\odot\cdots\odot \mathcal C_d$ accordingly. In this subsection, we will be mainly interested in the symmetric product of degenerate Heisenberg categories.

	Let $$\symHeis_d:=\Heis_{k_1}\odot\cdots \odot \Heis_{k_d}$$ be the symmetric product
	of $\Heis_{k_{\red{\alpha}}}$ for ${\red{\alpha}}=1,2,\cdots, d$.  Diagrammatically it is often convenient to use different colors to distinguish morphisms and objects in different copies of $\Heis_{k_{\red{\alpha}}}$.

		By definitions, the generating objects of $\symHeis_d$ are: $$F^{(\red\alpha)}=% [inline block 0: 159 envs, 59491 chars in 28 pieces, piece 1 here, a bare % at each other -> data_tex | \begin{tikzpicture}[baseline = -1mm] 		\draw[->,red] (0.08,-.2) to (0.08,.2);...]
$$ for each $\red\alpha\in \{1,\cdots, d\}$. By (\ref{symm1}), we know that $\sigma_{X\otimes Y,Z}$ is determined by $\sigma_{X,Z}$ and $\sigma_{ Y,Z}$, hence the
	generating morphisms of $\symHeis_d$ are $$%
	$$ for all $\red\alpha,\blue\beta\in \{1,\cdots, d\}$ with $\red\alpha\neq \blue\beta$. Since $\sigma_{Y,X} = \sigma_{X,Y}^{-1}$, we have the following relations:
	\begin{align}\label{mixedrelation1}
		\mathord{
			%
		}\:.
	\end{align}

In relations \eqref{mixedrelation3}--\eqref{3color}, an unoriented strand may
carry either admissible orientation; every relation preserves the
orientations of its strands.

	By (\ref{symm2}), we have
	\begin{align}\label{mixedrelation2}
		\mathord{
			%
		}\:.
	\end{align}

	If $d\geq 3$, then we also have the following relations

	\begin{align}\label{3color}
		\mathord{
			%
		}\:.
	\end{align}

	\subsection{An upward presentation of the symmetric product of Heisenberg categories}
	The following presentation uses only upward crossings among its generators.
		\begin{definition}\label{DoubleHeisenberg}
		The \emph{$d$-multiple degenerate Heisenberg category} $\symHeis_d'$ is  the strict $\kk$-linear monoidal category
		generated by the objects
		$$F^{(\red\alpha)}=%
				$$ for all $\red\alpha,\blue\beta\in \{1,\cdots, d\}$ with $\red\alpha\neq \blue\beta$,
					subject to the certain relations as follows.
					We require that
					$%
\:,$
					 satisfy all relations of the degenerate Heisenberg category $\Heis_{k_{\red{\alpha}}}.$
							In addition, for each $\red\alpha\neq \blue\beta$, $\red\alpha\neq \green\gamma$ and $\blue\beta\neq \green\gamma,$ we also have
							\begin{align}\label{mixedHecke}
								\mathord{
									%
								}\:.
							\end{align}

							We also introduce the sideways mixed crossings,
							\begin{align}\label{defcross}
								\mathord{
									%
								}\:,
							\end{align} then we require \begin{align}\label{mixedMackey}
								\mathord{
									%
								}\:.\end{align}
						\end{definition}

							\begin{theorem}\label{thm:equiv}
								The $d$-multiple Heisenberg category $\symHeis_d'$ is equivalent to $\symHeis_d$.
						\end{theorem}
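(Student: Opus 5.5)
We will construct mutually inverse strict monoidal functors $\Phi:\symHeis_d'\to\symHeis_d$ and $\Psi:\symHeis_d\to\symHeis_d'$ that are the identity on generating objects; this gives an isomorphism, and in particular the claimed equivalence. The model is the analogous upward presentation of symmetric products in \cite{BSW20b,BSW23}, where the same argument is used for $\Heis_k\odot\Heis_l$.

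\emph{Step 1: the functor $\Phi$.} Send the one-colored generators to the generators of the copy $\Heis_{k_{\red\alpha}}$. Send the upward mixed crossing of colors $\red\alpha\neq\blue\beta$ to $\sigma_{E^{(\red\alpha)},E^{(\blue\beta)}}$. We then check the defining relations of $\symHeis_d'$ in $\symHeis_d$.
\begin{itemize}
\item The one-colored relations hold by construction.
\item The mixed relations \eqref{mixedHecke} follow from \eqref{mixedrelation1}, \eqref{mixedrelation2} and \eqref{3color}: invertibility of mixed crossings, dots and crossings sliding through, and the braid relations for two or three colors.
\item The sideways mixed crossings \eqref{defcross} are built from upward ones and cups/caps. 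In $\symHeis_d$ they equal $\sigma_{E^{(\red\alpha)},F^{(\blue\beta)}}$ etc., by naturality \eqref{symm2} applied to cups and caps together with \eqref{symm1}.
\item With this identification, \eqref{mixedMackey} (the sideways crossings are mutually inverse) follows from $\sigma_{Y,X}=\sigma_{X,Y}^{-1}$.
\end{itemize}

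\emph{Step 2: the functor $\Psi$.} Start from the free product of the $\Heis_{k_{\red\alpha}}$, which is generated by the one-colored data and maps tautologically into $\symHeis_d'$. Each $\sigma_{X,Y}$ with $X,Y$ generating objects of different colors is sent to the corresponding upward, downward or sideways mixed crossing in $\symHeis_d'$. Downward mixed crossings are defined as rotations (mates) of upward ones using the cups and caps of both colors. For general objects, define $\sigma_{X,Y}$ by the factorization rules \eqref{symm1}. We must verify three things:
\begin{enumerate}
\item $\sigma_{Y,X}=\sigma_{X,Y}^{-1}$. For $(\up,\up)$ this is part of \eqref{mixedHecke}; for mixed orientations it is \eqref{mixedMackey}; for $(\down,\down)$ it follows by rotating the $(\up,\up)$ case.
\item Compatibility with \eqref{symm1}, which holds by construction.
\item Naturality \eqref{symm2} for every generating morphism $f$ of $\Heis_{k_{\red\alpha}}$: dots, upward crossings, and the four cups and caps.
\end{enumerate}

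\emph{Step 3: naturality, the main obstacle.} For dots and upward crossings passing through an upward $\blue\beta$-strand, naturality is a relation in \eqref{mixedHecke}. Passing through a downward strand, we rotate using the zig-zag relations \eqref{eq:rightadj} of color $\blue\beta$. The hard cases are the cups and caps, in particular the leftward ones, which are not generators but are defined through the inverse of the inversion matrix \eqref{dMackey1}/\eqref{dMackey2}.
\begin{itemize}
\item For rightward cups and caps, sliding a $\blue\beta$-strand across is essentially the definition of the sideways crossing \eqref{defcross}, combined with the zig-zag identities.
\item For leftward cups and caps, we first show that the $\blue\beta$-strand slides through the sideways crossing $\red\alpha\up\down\to\down\up$ and through the rightward cups and caps with dots; this is by the previous cases.
\item It follows that conjugation by the mixed crossing commutes with the whole inversion matrix \eqref{dMackey1}. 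Hence it commutes with its inverse, whose entries include the leftward cups and caps.
\end{itemize}
Once Step 3 is done, $\Psi$ is well defined.

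\emph{Step 4: inverse functors.} Both $\Phi\circ\Psi$ and $\Psi\circ\Phi$ fix all generating morphisms: $\sigma$ on generating objects, the one-colored generators, and upward mixed crossings. Hence both composites are identities, and the equivalence follows.
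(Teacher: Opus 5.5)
Your proposal follows the paper's proof. Your $\Phi$ and $\Psi$ are its $\Theta$ and $\Omega$. Your identification of the sideways mixed crossings with the $\sigma$'s, via naturality with respect to cups and caps plus the zig-zag identities, is the content of Lemma~\ref{mixedmackey}. The final check that both composites fix the generators is the same as in the paper. Your treatment of the leftward cups and caps is more explicit than the paper's ``one orientation suffices'': they slide because they are built from the entries of the inverse of an inversion matrix, and every entry of that matrix already slides.

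One item is missing from your Step~2 checklist when $d\ge 3$. $\symHeis_d$ is an iterated symmetric product, so $\sigma_{-,Y}$ must also be natural with respect to the mixed crossings $\sigma_{X_1,X_2}$ of two other colors. In other words, the three-color braid relation \eqref{3color} must hold in $\symHeis_d'$ in every orientation. This does not follow from naturality with respect to the generators of a single $\Heis_{k_\alpha}$.

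Only the all-upward case is among the relations \eqref{mixedHecke}. The other orientations follow by rotating it with the cup/cap sliding you establish in Step~3, which is what Lemma~\ref{Lem:braid} does. With that added, the argument is complete.
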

					In the proof, thick strands denote morphisms in $\symHeis_d'$
					and thin strands denote morphisms in $\symHeis_d$.

					We define $\Theta:\symHeis_d'\to \symHeis_d$ by sending generating objects $$\Theta(%
$$ for each $\red\alpha\in \{1,\cdots, d\}$ and sending
					morphisms $$%
					$$ for all $\red\alpha,\blue\beta\in \{1,\cdots, d\}$ with $\red\alpha\neq \blue\beta$ to the corresponding generating morphisms$$%
					. $$

					We also define its inverse $\Omega:\symHeis_d\to \symHeis_d'$  by sending generating objects $$\Omega(%
$$ for each $\red\alpha\in \{1,\cdots, d\}$ to the corresponding generating objects  and sending $$%
					, $$
					morphisms for all $\red\alpha,\blue\beta\in \{1,\cdots, d\}$ with $\red\alpha\neq \blue\beta$ to the corresponding generating morphisms $$%
}.$$

					We need to check that
					\begin{itemize}
						\item $\Theta$ and $\Omega$ are well-defined;
						\item $\Theta\circ\Omega=\Id$ and $\Omega\circ \Theta=\Id.$
					\end{itemize}

					\begin{lemma}\label{mixedmackey}
						In $\symHeis_d$, we have \begin{align*}
							\mathord{
								%
							}\:.
						\end{align*}
					\end{lemma}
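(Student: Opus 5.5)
The lemma says that, in $\symHeis_d$, the mixed sideways crossings built from the upward mixed crossings by attaching cups and caps, as in \eqref{defcross}, are mutually inverse, i.e.\ the relations \eqref{mixedMackey} hold. My plan is to prove it by identifying each such sideways crossing with one of the structural isomorphisms $\sigma_{X,Y}$ of the symmetric product. Invertibility will then come for free from $\sigma_{Y,X}=\sigma_{X,Y}^{-1}$.

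\textbf{Step 1 (one sideways crossing).} Take the sideways crossing of an upward $\red\alpha$-strand with a downward $\blue\beta$-strand. By definition it is obtained from the upward mixed crossing $\sigma_{E^{(\blue\beta)},E^{(\red\alpha)}}$ (or its inverse, depending on the orientation in \eqref{defcross}) by tensoring with identities and composing with a $\blue\beta$-colored cup on one side and a $\blue\beta$-colored cap on the other.
\begin{itemize}
\item Apply the first relation of \eqref{symm1} to split $\sigma_{F^{(\blue\beta)}\otimes E^{(\blue\beta)},E^{(\red\alpha)}}$ into two single-strand crossings.
\item Apply \eqref{symm2}, with $f$ the $\blue\beta$-cup $\unit\to F^{(\blue\beta)}\otimes E^{(\blue\beta)}$ in $\Heis_{k_{\blue\beta}}$, to slide the cup through the $\red\alpha$-strand. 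On $\unit$ the symmetry is the identity, by \eqref{symm1} applied to $\unit=\unit\otimes\unit$.
\item Use the zigzag relation \eqref{eq:rightadj} of $\Heis_{k_{\blue\beta}}$ to straighten the resulting snake.
\end{itemize}
The outcome is that the sideways crossing equals $\sigma_{E^{(\red\alpha)},F^{(\blue\beta)}}$ or $\sigma_{F^{(\blue\beta)},E^{(\red\alpha)}}$, according to its direction.

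\textbf{Step 2 (the other sideways crossing).} Run the same argument with the $\blue\beta$-cap instead of the cup, and the right mate instead of the left. This identifies the opposite sideways crossing with the opposite symmetry morphism.

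\textbf{Step 3 (conclusion).} Combining Steps 1 and 2 with $\sigma_{Y,X}=\sigma_{X,Y}^{-1}$ gives both composites in the lemma as identities. The same argument, with colors and orientations swapped, gives the relations involving two downward strands.

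\textbf{Main obstacle.} The only delicate point is bookkeeping. The leftward cups and caps in $\Heis_{k}$ are not generators; they are defined through the inverse of the matrices \eqref{dMackey1}/\eqref{dMackey2}. So for the rotations that use them, I will first check that $\sigma$ is natural with respect to these morphisms too. This holds because \eqref{symm2} applies to every morphism $f$ of $\Heis_{k_{\blue\beta}}$, not only to generators. Consequently each rotation reduces to naturality plus a zigzag, and no Heisenberg-specific relation other than adjunction is needed.
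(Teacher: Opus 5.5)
Your proposal is correct and is essentially the paper's argument. The substance is that, in $\symHeis_d$, the mixed crossings obtained by rotating the upward symmetry with same-colored cups and caps coincide with the structural isomorphisms $\sigma$; this identification is also what Theorem~\ref{thm:equiv} uses for $\Theta\circ\Omega=\Id$. You derive it from \eqref{symm1}--\eqref{symm2}, $\sigma_{\unit,Y}=1_Y$ and zigzag identities, while the paper uses $\sigma_{Y,X}=\sigma_{X,Y}^{-1}$ to reduce each relation to one diagrammatic check of the same slide-and-straighten kind.

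Two small precisions. First, the zigzags for the leftward cups and caps needed in Step~2 are not \eqref{eq:rightadj}; they come from the strict pivotality of $\Heis_k$ \cite{B18}, which you should cite. Second, the downward--downward crossing is obtained by one further rotation of an already identified sideways crossing, followed again by slide and straighten, not by merely swapping colors and orientations.
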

					\begin{proof}  For the first relation, since $%
$ are inverse to each other, it suffices to show that
						\begin{align*}\mathord{
								%
.
							}
						\end{align*} The second relation follows similarly. For the third relation, we first observe that
						\begin{align}
                        \label{middle}
                        \mathord{
								%
$ are inverse to each other, we have
						\begin{align*}%
}
						\end{align*}
						This concludes the proof.
					\end{proof}

					\begin{lemma}\label{Lemma:dot}
						In $\symHeis_d'$, the relations (\ref{mixedrelation2}) hold.
					\end{lemma}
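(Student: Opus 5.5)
The plan is to verify (\ref{mixedrelation2}) generator by generator. By (\ref{symm2}) these relations say that a mixed crossing of colours $\red\alpha\neq\blue\beta$ is natural with respect to every generating morphism of $\Heis_{k_{\red\alpha}}$ and of $\Heis_{k_{\blue\beta}}$. In $\symHeis_d$ these generators are the dot, the upward crossing, and the four cups and caps, each of which can sit on a strand of either orientation. In $\symHeis_d'$ the only mixed generators are the upward mixed crossings, and (\ref{mixedHecke}) already gives naturality of upward mixed crossings with respect to upward dots and upward same-colour crossings; I am reading that off the presentation, since the displays are not reproduced here. I will therefore derive every other case from these, using the sideways mixed crossings defined in (\ref{defcross}) by rotating upward mixed crossings with rightward cups and caps.

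The steps, in order, are as follows.
\begin{enumerate}
\item \emph{Rightward cups and caps.} Here the relation is essentially definitional. Substitute the definition (\ref{defcross}) of the sideways mixed crossing and then straighten using the zigzag relations (\ref{eq:rightadj}) of the relevant Heisenberg category.
\item \emph{Downward dots and downward same-colour crossings.} These are right mates of the upward ones, since the Heisenberg categories are strictly pivotal. Write the downward dot or crossing as an upward one conjugated by rightward cups and caps, and do the same for the mixed crossing via (\ref{defcross}). Then slide using (\ref{mixedHecke}) on the upward part and step 1 on the cups and caps, and straighten.
\item \emph{Leftward cups and caps.} These are not generators: in each $\Heis_{k_{\red\alpha}}$ they are defined as entries of the inverse of the matrix (\ref{dMackey1}) or (\ref{dMackey2}).
\end{enumerate}

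For step 3 I will use a uniqueness argument. Consider the two sides of the desired relation, with the leftward cup passed through the $\blue\beta$-strand in one of the two ways. I will show that both, after composing with the invertible matrix (\ref{dMackey1}) or (\ref{dMackey2}) placed beside the $\blue\beta$-strand, produce the same morphism. Steps 1 and 2 let every entry of that matrix (sideways crossing, rightward caps with dots) slide across the $\blue\beta$-strand. The sideways same-colour crossing is handled as a rotated upward crossing together with step 2. Invertibility of the matrix then forces the two sides to agree. An alternative for the same step is to use (\ref{mixedMackey}), which asserts that the two sideways mixed crossings are mutually inverse. With it, the leftward cup slides through by conjugating: one slides it through the inverse, which is an upward-type crossing already handled.

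I expect step 3 to be the main obstacle, because of the bookkeeping with the inverse matrix. In particular it must be checked that the sideways mixed crossings built from upward ones in $\symHeis_d'$ are genuinely two-sided inverses of each other, which is exactly where (\ref{mixedMackey}) enters. Once step 3 is done, every relation in (\ref{mixedrelation2}) follows by combining steps 1--3 with the adjunction and pivotality relations.
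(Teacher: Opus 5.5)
Your plan is correct, and the part that bears on this lemma (step 2) is the paper's own diagrammatic computation. There, one unfolds each rotated mixed crossing via \eqref{defcross} or as a mate, slides the dot along its own cups and caps (left or right, since each downward dot is both the left and right mate of the upward one), pushes it through the upward mixed crossing by \eqref{mixedHecke}, and refolds. Note, however, that \eqref{mixedrelation2} consists only of the dot-sliding relations: the cup/cap and same-colour-crossing slides you treat in steps~1 and~3 are the separate relations \eqref{mixedrelation3} and \eqref{twisbraid}, proved in Lemmas~\ref{Lem:adj} and~\ref{Lem:braid}, so they are not needed here.
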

					\begin{proof} We only prove the third relation as an example.\begin{align*}
							\mathord{
								%
.
							}
						\end{align*}
						This concludes the proof.
					\end{proof}
					\begin{lemma}
						In $\symHeis_d'$, we have the following relations:

						\begin{align}\label{pic}
							\mathord{
								%
							}
							\:.
						\end{align*}

					\end{lemma}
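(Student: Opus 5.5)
The relations \eqref{pic} are presumably the ``pitchfork'' compatibilities between mixed crossings and cups and caps. I will prove them in $\symHeis_d'$ in two stages: first for the rightward cups and caps, which are generators of each $\Heis_{k_{\red\alpha}}$, and then for the leftward ones, which only exist as entries of the inverse of the inversion matrix \eqref{dMackey1}--\eqref{dMackey2}.

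\emph{Stage 1: rightward cups and caps.} Here I unfold the sideways mixed crossings via their definition \eqref{defcross}. These are the right mates of the upward mixed crossings, built from a cup and a cap of the appropriate colour. A rightward cup or cap of colour $\blue\beta$ passing through a vertical strand of colour $\red\alpha$ then becomes an upward mixed crossing with a $\blue\beta$-zigzag attached. Straightening that zigzag with the adjunction relations \eqref{eq:rightadj} in $\Heis_{k_{\blue\beta}}$ gives the desired slide.

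If the strand is itself downward, I first rewrite the downward mixed crossing as a mate of an upward one. I then use the first relation of Lemma~\ref{Lemma:dot}, namely the dot and crossing slides \eqref{mixedrelation2}, together with \eqref{mixedHecke}, to move everything onto upward mixed crossings. After that the same zigzag straightening applies. All of this is routine isotopy bookkeeping.

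\emph{Stage 2: leftward cups and caps.} These are not generators. They are the last entries of the inverse of the inversion matrix of $\Heis_{k_{\blue\beta}}$, so no defining relation controls how they interact with a $\red\alpha$-strand. My plan is a uniqueness argument.

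Fix a vertical $\red\alpha$-strand. Conjugating by the (invertible, by \eqref{mixedMackey}) mixed crossings that move it past $\blue\beta$-strands gives a functor-like operation on $\blue\beta$-diagrams. By Stage 1 and \eqref{mixedHecke}, this operation commutes with the $\blue\beta$-generators: dots, upward crossings, and rightward cups and caps. Consequently it carries the $\blue\beta$-inversion morphism $E\otimes F\to F\otimes E\oplus\unit^{\oplus k_{\blue\beta}}$ (with the $\red\alpha$-strand alongside) to itself. It must therefore also carry the inverse matrix to itself, entry by entry. Comparing the entries that are the leftward cup and cap yields exactly the sliding relations for leftward cups and caps. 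The case $k_{\blue\beta}\le 0$ is handled the same way using \eqref{dMackey2}.

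Any remaining relations in \eqref{pic} involving two downward strands of different colours then follow by taking mates of the upward versions and invoking the invertibility in \eqref{mixedMackey}.

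I expect Stage 2 to be the main obstacle. The delicate point is to make the ``conjugation commutes with the inversion matrix'' argument precise at the level of matrices in the additive envelope. In particular, I must check that the correction terms, namely the dotted cups and caps indexed $0,\dots,k-1$, are each transported to themselves, and this uses the dot-slide part of \eqref{mixedrelation2}.
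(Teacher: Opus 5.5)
Your Stage~1 is essentially the paper's method. The paper proves \eqref{pic} by a single explicit chain of diagram equalities for the first relation: it unfolds the sideways mixed crossings of \eqref{defcross} and simplifies with the adjunction relations and the defining mixed relations. It then says the remaining relations follow similarly, and uses no uniqueness argument.

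Two points in your setup need correcting. First, only the rightward mixed crossing is a right mate built from rightward cups and caps. The leftward one in \eqref{defcross} (compare the quantum version displayed in Theorem~\ref{dmultiHeisenberg}) is built from the \emph{leftward} cup and cap of the colour of its downward strand. So the premise of Stage~2, that no defining relation controls leftward cups and caps against an $\alpha$-strand, is not accurate. A leftward $\beta$-cup or cap passing an upward $\alpha$-strand is again a direct zigzag computation. It uses the leftward adjunction relations (each $\Heis_k$ is strictly pivotal) and the upward Reidemeister~II relation in \eqref{mixedHecke}. Second, a rightward $\beta$-cup moving from the left of $\uparrow_\alpha$ to its right meets the leftward crossing $\downarrow_\beta\uparrow_\alpha\to\uparrow_\alpha\downarrow_\beta$. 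This is one of the two directions required by \eqref{symm2}. Unfolding that crossing pairs the rightward cup with a leftward cap, which is not a zigzag. You must first rewrite the crossing as the inverse of the rightward crossing via \eqref{mixedMackey}; only then does your straightening apply.

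Stage~2 itself is a genuinely different route, and it is valid. Conjugation by the invertible mixed crossings is functorial on the additive envelope. By Stage~1 and \eqref{mixedHecke} it fixes the $\beta$-inversion matrix entrywise, so it fixes the two-sided inverse of that matrix as well. The leftward cup and cap need not literally be entries of that inverse; in general they are composites of such entries with generators, and closure under composition and tensor product handles this.

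What Stage~2 buys is uniformity. It shows at once that every morphism of $\Heis_{k_\beta}$, including leftward cups, caps and bubbles, slides through an $\alpha$-strand. That is exactly the full naturality \eqref{symm2} needed for $\Omega$ to be well defined, whereas the paper's ``similarly'' leaves each case to a separate direct computation. The paper's route buys explicitness and avoids the additive-envelope bookkeeping, but it too needs \eqref{mixedMackey} wherever leftward and rightward mixed crossings meet.
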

					\begin{proof} We only show the first one as an example.
						\begin{align*}
							\mathord{
								%
							}
						\end{align*}
							The rest follow similarly.
					\end{proof}

					\begin{lemma}\label{Lem:adj}In $\symHeis_d'$,  the relations (\ref{mixedrelation3}) hold.
					\end{lemma}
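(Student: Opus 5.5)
Since the diagrams were stripped, I take (\ref{mixedrelation3}) to be the relations in $\symHeis_d$ saying that mixed crossings are compatible with cups and caps. Concretely, sliding a cup or cap of color $\red\alpha$ through a strand of color $\blue\beta$ should agree with the corresponding mixed sideways or downward crossing (pitchfork / adjunction-type relations). Equivalently, the mixed crossings in $\symHeis_d'$ are mates of one another under the $\red\alpha$- and $\blue\beta$-adjunctions.

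My plan is to derive these in $\symHeis_d'$ from three ingredients: the definition (\ref{defcross}) of the sideways mixed crossings as rotations of the upward mixed crossing, the zigzag relations (\ref{eq:rightadj}) within each $\Heis_{k_{\red\alpha}}$, and the inverse relations (\ref{mixedMackey}).

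\textbf{Step 1: right-type relations.} These are the relations where a rightward cup or cap passes a mixed crossing. I unfold the sideways crossing using (\ref{defcross}) and straighten the resulting zigzag using (\ref{eq:rightadj}) for the relevant color. This step is purely formal, because the sideways crossings were defined via right cups and caps.

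\textbf{Step 2: left-type relations.} These involve the leftward cups and caps, which are only defined implicitly through the inversion relation of each $\Heis_{k_{\red\alpha}}$. I would argue as follows.
\begin{itemize}
\item By (\ref{mixedMackey}), the sideways mixed crossing of colors $\red\alpha,\blue\beta$ is invertible, with inverse the other sideways mixed crossing. This is the $\symHeis_d'$ analogue of Lemma~\ref{mixedmackey}.
\item To prove an identity of the form ``leftward cap composed with crossing equals crossing composed with leftward cap'', I compose both sides with the appropriate invertible mixed sideways crossing.
\item I then rewrite the leftward cap using its characterization as an entry of the inverse of the matrix (\ref{dMackey1}) or (\ref{dMackey2}).
\end{itemize}
The key observation is that a strand of a different color commutes with every entry of that matrix. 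For the upward crossing this follows from (\ref{mixedHecke}) and (\ref{pic}); for dots it follows from Lemma~\ref{Lemma:dot}; for rightward cups and caps it follows from Step 1. Since the $\blue\beta$ strand commutes with every entry of the invertible matrix, it also commutes with every entry of the inverse, and in particular with the leftward cap and cup. The argument is the same as in the proof of Lemma~\ref{mixedmackey}, with the roles of $\Theta$ and $\Omega$ swapped.

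\textbf{Main obstacle.} The hard part is Step 2: transferring ``commutes with the matrix'' to ``commutes with its inverse'' when the matrix has a different shape depending on the sign of $k_{\red\alpha}$. To handle this, I would phrase it abstractly. Let $\phi: X\to Y$ be an isomorphism in the additive envelope, and let $c_X, c_Y$ be the natural isomorphisms given by the mixed crossings with a $\blue\beta$ strand. If
\[
c_Y\circ(\phi\otimes 1)=(1\otimes\phi)\circ c_X,
\]
then the same identity holds for $\phi^{-1}$. Applying this to the matrix (\ref{dMackey1}) or (\ref{dMackey2}) gives the result uniformly in the sign of $k_{\red\alpha}$. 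The remaining relations then follow by symmetry: swap the roles of $\red\alpha$ and $\blue\beta$, and reflect diagrams using the pivotal structure.
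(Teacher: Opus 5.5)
Your proposal is correct. For the generating (rightward) cups and caps it is essentially the paper's argument: the paper checks one orientation by a direct diagrammatic computation in $\symHeis_d'$ and declares the other orientations similar. That computation unfolds the rotated mixed crossing of \eqref{defcross}, straightens a zigzag with \eqref{eq:rightadj}, and cancels mutually inverse mixed crossings.

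For the leftward cups and caps you take a different route. The paper's ``similarly'' presumably means running the same computation with the leftward zigzag identities. These are available because each $\Heis_{k_\alpha}$ is strictly pivotal. They also match the fact that the other sideways crossing in \eqref{defcross} (compare the quantum version displayed in Theorem~\ref{dmultiHeisenberg}) is built from leftward cups and caps. You instead transport naturality from the inversion matrix \eqref{dMackey1}/\eqref{dMackey2} to its inverse. This route is uniform in the sign of $k_\alpha$ and never uses the leftward zigzags, but it needs more input. You need naturality of the $\beta$-strand with every matrix entry, in particular with the sideways same-color crossing for both orientations of the $\beta$-strand. That must be derived directly from \eqref{mixedHecke}, the $\beta$-zigzags and \eqref{mixedMackey}, not quoted from Lemma~\ref{Lem:braid}, which comes later.

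Two small corrections:
\begin{enumerate}
\item Step 1 is not purely formal in every rightward case. The crossing $\downarrow_\alpha\otimes\uparrow_\beta\to\uparrow_\beta\otimes\downarrow_\alpha$ is a left mate built from the leftward cup and cap of color $\alpha$. Where it occurs, for example when an upward $\beta$-strand passes the generating $\alpha$-cap, you must first use \eqref{mixedMackey} to replace it by the inverse of the right-mate crossing; only then does a rightward zigzag appear.
\item In the final symmetry step, use only the $\alpha\leftrightarrow\beta$ symmetry of the presentation and the pivotality of each $\Heis_{k_\alpha}$. Strict pivotality of the symmetric product is a corollary of Theorem~\ref{thm:equiv}, whose proof uses this lemma, so invoking it here would be circular.
\end{enumerate}
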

					\begin{proof} We only prove for one orientation as an example:
						\begin{align*}\mathord{
								%
}
						\end{align*}
						The rest follow similarly.
					\end{proof}
					\begin{lemma}
						In $\symHeis_d'$,		we have the following relations:

						\begin{align*}
							\mathord{
								%
							}\:,
						\end{align*}
					\end{lemma}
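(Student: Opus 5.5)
We cannot see the displayed relations of this last lemma, since the diagrams were stripped. The surrounding text, though, shows it sits in the sequence of lemmas establishing that $\Omega$ is well defined. It should therefore assert further relations in $\symHeis_d'$ involving the sideways mixed crossings of \eqref{defcross}. The most likely candidates are the mixed braid relations with downward strands, and the compatibility of mixed crossings with leftward cups and caps; these are needed to recover \eqref{mixedrelation1} and \eqref{3color} for all orientations. The plan below follows the same pattern as Lemmas~\ref{Lemma:dot} and~\ref{Lem:adj}.

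\emph{Step 1: reduce to upward crossings.} I will unfold each sideways or downward mixed crossing using its definition \eqref{defcross}, that is, as an upward mixed crossing with a rightward cup and cap attached. Each side of a displayed relation then becomes a diagram built only from the upward generators of $\symHeis_d'$, together with caps and cups within a single color $\red\alpha$.

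\emph{Step 2: slide through the mixed crossing.} Next I will use the relations \eqref{pic} and Lemma~\ref{Lem:adj}. These let cups and caps of one color slide through mixed crossings with a strand of another color. By isotopy, the cup and cap introduced in Step 1 can then be moved past the other strands.

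\emph{Step 3: straighten and compare.} After sliding, I will apply the zig-zag relations \eqref{eq:rightadj} inside each $\Heis_{k_{\red\alpha}}$ to straighten the resulting curls. For the braid-type relations, the upward mixed braid relation \eqref{mixedHecke} then matches the two sides. For relations involving the leftward cups and caps, which are defined only implicitly via the inversion relations \eqref{dMackey1}--\eqref{dMackey2}, I will use \eqref{mixedMackey}. That relation says the sideways mixed crossings are mutually inverse, so conjugating the already-established rightward versions by them transfers the result. As in the earlier lemmas, I would treat one orientation in detail and obtain the others symmetrically by the same moves with orientations reversed.

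\emph{Main obstacle.} The hardest case will be any relation involving leftward cups or caps. These are entries of the inverse matrices in \eqref{dMackey1}--\eqref{dMackey2}, so they cannot be slid directly. I would first try to show that the image of the leftward cap under conjugation by mixed crossings still satisfies the characterizing property of that inverse matrix. Uniqueness of the inverse then forces the two to be equal, in the same way the proof of Lemma~\ref{mixedmackey} argues via mutually inverse morphisms.
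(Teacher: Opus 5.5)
\textbf{There is a genuine gap: the proposal is aimed at the wrong relations.} The paper's proof is one line: the displayed relations follow from the first and third relations of \eqref{mixedHecke}. Both of those involve only upward strands.
\begin{itemize}
\item The first says the two upward mixed crossings are mutually inverse, $\sigma_{\beta\alpha}\circ\sigma_{\alpha\beta}=1$.
\item The third is the single mixed braid relation in which a $\beta$-strand passes from one side of an $\alpha\alpha$-crossing to the other.
\end{itemize}
Since the lemma is derived from these two alone, its relations must be purely upward consequences of them. They are presumably other forms of the mixed braid relation, such as the $\alpha\alpha$-crossing sliding through a $\beta$-strand that crosses both $\alpha$-strands, from either side.

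Your guess that the lemma concerns sideways mixed crossings, downward braids and leftward cups and caps does not match. Compatibility of mixed crossings with cups and caps, and the braid relations with downward strands, are handled in the neighbouring Lemmas~\ref{Lem:adj} and~\ref{Lem:braid}. So your Steps~1--2 and your \emph{main obstacle} (leftward caps as inverse-matrix entries, uniqueness of inverses, \eqref{mixedMackey}) address relations outside this lemma, and none of them is needed.

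What is missing is the actual short argument. Let $T$ be the $\alpha\alpha$-crossing. Let $\sigma^{(i)}_{xy}$ be the upward mixed crossing at positions $(i,i+1)$ whose bottom-left strand has colour $x$. Write $g\circ f$ for $f$ placed below $g$.

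On upward strands coloured $(\alpha,\beta,\alpha)$, the third relation reads $\sigma^{(1)}_{\beta\alpha}\circ T^{(2)}\circ\sigma^{(1)}_{\alpha\beta}=\sigma^{(2)}_{\alpha\beta}\circ T^{(1)}\circ\sigma^{(2)}_{\beta\alpha}$. Precompose with $\sigma^{(2)}_{\alpha\beta}$ and postcompose with $\sigma^{(1)}_{\alpha\beta}$. Then cancel the adjacent pairs $\sigma_{\beta\alpha}\sigma_{\alpha\beta}$ and $\sigma_{\alpha\beta}\sigma_{\beta\alpha}$ using the first relation, applied with both colour orders. This gives $T^{(2)}\circ\sigma^{(1)}_{\alpha\beta}\circ\sigma^{(2)}_{\alpha\beta}=\sigma^{(1)}_{\alpha\beta}\circ\sigma^{(2)}_{\alpha\beta}\circ T^{(1)}$ on strands coloured $(\alpha,\alpha,\beta)$. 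Composing instead with $\sigma^{(1)}_{\beta\alpha}$ below and $\sigma^{(2)}_{\beta\alpha}$ above gives the mirror version on $(\beta,\alpha,\alpha)$.

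No unfolding via \eqref{defcross}, no zig-zag relations, and no inversion relation enter. Your Step~3 invokes the upward braid relation, but without this conjugation-and-cancellation by the first relation, which is the whole content of the lemma.
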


					\begin{proof} These relations follow from the first and third relations of   (\ref{mixedHecke}).
					\end{proof}

					\begin{lemma}\label{Lem:braid}In $\symHeis_d'$, the relations (\ref{twisbraid})-- (\ref{3color}) holds.
					\end{lemma}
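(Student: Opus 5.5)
The relations (\ref{twisbraid})--(\ref{3color}) are braid-type relations: one mixed crossing is slid past another mixed crossing, or past a same-colored crossing, with strands of arbitrary orientation. In $\symHeis_d'$ the only mixed crossings that are generators are the upward ones. The relations among them are (\ref{mixedHecke}); this includes the upward three-color braid relation and the upward mixed braid relation in which one color appears twice. The sideways and downward mixed crossings are defined as mates in (\ref{defcross}).

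The plan is therefore to reduce every relation with some downward strand to an all-upward relation. We do this by closing off downward strands with cups and caps and then using the zigzag relations (\ref{eq:rightadj}) for each color. Lemma~\ref{Lem:adj} already shows that mixed crossings are compatible with cups and caps, i.e.\ they satisfy (\ref{mixedrelation3}). In particular, a mixed crossing can be pulled through a cup or cap of either color.

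\textbf{All strands upward.} Here the relations are exactly the generating relations in (\ref{mixedHecke}), combined with the braid relation of $\Heis_{k_\alpha}$ when two strands share a color. The one exception is a braid move involving a same-colored upward crossing and a mixed strand. We deduce it from the third relation of (\ref{mixedHecke}), which lets a same-colored crossing slide through a mixed crossing, via the preceding lemma whose relations follow from the first and third relations of (\ref{mixedHecke}).

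\textbf{Some strand downward.} Take a relation, say of the form $ABC=CBA$, in which one strand, of color $\beta$, points down. Replace that strand by an upward strand bent with a $\beta$-cup at the bottom and a $\beta$-cap at the top; this is the definition (\ref{defcross}) of the sideways/downward mixed crossings. Both sides then become an upward diagram, obtained by rotating the braid, flanked by a cup and a cap. Using Lemma~\ref{Lem:adj} and Lemma~\ref{Lemma:dot} to move the crossings, both sides reduce to the upward braid relation already proved, sandwiched by the same cup and cap. Straightening with (\ref{eq:rightadj}) then finishes the case. If two or three strands point down, iterate, bending one strand at a time. Relations with a same-colored downward crossing go the same way, since in $\Heis_{k_\alpha}$ the downward crossing is the mate of the upward one by pivotality.

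\textbf{Main obstacle.} The delicate step is the case where the two strands of a sideways crossing have the same color $\alpha$, for example a mixed strand passing through a same-colored sideways crossing. The $\alpha$-sideways crossing is not merely a rotation: it is a component of the isomorphism (\ref{dMackey1}), and its inverse involves the extra cups and caps carrying dots. My first approach is to use only the mate description of the sideways crossing, which is the defining one, so the inverse never enters. If instead the relation involves the leftward cup or cap of color $\alpha$, which is defined through the inverse matrix, I would show that a mixed strand commutes with every entry of the matrix in (\ref{dMackey1}). This uses Lemma~\ref{Lemma:dot} for the dots and Lemma~\ref{Lem:adj} for the cups and caps, so the mixed strand also commutes with the inverse matrix, and hence with each of its entries.
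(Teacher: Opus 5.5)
Your plan works for the all-upward case and for the same-colour sideways crossings (via the inverse of the matrix in \eqref{dMackey1}). The reduction in the case with a downward strand, however, fails for the orientations in which the middle strand points opposite to the two outer ones, e.g.\ source $\uparrow_\alpha\downarrow_\beta\uparrow_\gamma$, and likewise $\downarrow\uparrow\downarrow$.

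These cases must be checked. For instance, naturality of $\sigma_{\uparrow_\alpha,-}$ with respect to $\sigma_{\downarrow_\beta,\uparrow_\gamma}$, expanded by \eqref{symm1}, is exactly a braid relation with source $\uparrow_\alpha\downarrow_\beta\uparrow_\gamma$.

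The three operations you use are bending strands by cups and caps, pulling crossings through them (Lemma~\ref{Lem:adj}), and straightening zigzags. None of them changes the cyclic sequence of the six boundary points with their in/out orientations. For an upward braid relation this sequence is $\mathrm{in},\mathrm{in},\mathrm{in},\mathrm{out},\mathrm{out},\mathrm{out}$. Its rotations give exactly the six non-alternating orientation patterns. The pattern $\uparrow\downarrow\uparrow$ gives the alternating sequence instead.

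So no ``rotated upward braid sandwiched by the same cup and cap'' exists here. Concretely, after substituting \eqref{defcross}, the two sideways crossings on each side become two separate snakes of the $\beta$-strand turning in opposite directions, and they cannot be merged. This is not a technicality. In a single Heisenberg category all of your tools are available, yet the alternating braid relation in general acquires correction terms from the extra summands in \eqref{dMackey1}--\eqref{dMackey2}.

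The missing ingredient is the inversion relation \eqref{mixedMackey}, which your argument never uses: the rightward and leftward mixed crossings are mutually inverse. In an alternating relation the middle strand takes part in the first and the last crossing on each side. In every relation under consideration, at least one of these two pairs of strands has distinct colours, so that crossing is invertible.

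Cancelling it rewrites $s_1s_2s_1=s_2s_1s_2$ as the equivalent relation $s_2^{-1}s_1s_2=s_1s_2s_1^{-1}$ (or $s_2s_1s_2^{-1}=s_1^{-1}s_2s_1$). For the source $\uparrow_\alpha\downarrow_\beta\uparrow_\gamma$, this yields a relation $\downarrow_\beta\uparrow_\alpha\uparrow_\gamma\to\uparrow_\gamma\uparrow_\alpha\downarrow_\beta$. It is built from two leftward mixed crossings and the upward $\alpha\gamma$-crossing, and its boundary pattern is a rotation of the upward one. Your bending argument then applies. The rest of your plan (the all-upward case, the six non-alternating patterns, and the inverse-matrix argument) is fine.
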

					\begin{proof}
						It suffices to treat one orientation.  The defining relations give
						\begin{align}\label{xx}\mathord{
								%
							}
						\end{align*}

						On the other hand we see that
						\begin{align*}\mathord{
								%
							}
						\end{align*}
						This proves the lemma.
					\end{proof}

					\begin{proof}[Proof of Theorem~\ref{thm:equiv}]

							We first check that all of the defining relations (\ref{mixedHecke}) and (\ref{mixedMackey}) of $\symHeis_d'$  from Definition 2.2
							are satisfied in
							$\symHeis_d$, so that there is a strict $k$-linear monoidal functor $\Theta:\symHeis_d'\to \symHeis_d.$ The relations in (\ref{mixedHecke}) are satisfied.
							The relations (\ref{mixedMackey}) are deduced from Lemma \ref{mixedmackey}.

							To see that $\Omega$ is well defined, we must verify the relations (\ref{mixedrelation1})--(\ref{3color}). The relations  (\ref{mixedrelation1}) are checked in Lemma \ref{mixedmackey}.
							The relations  (\ref{mixedrelation2}) are checked in Lemma \ref{Lemma:dot}.
							The relations  (\ref{mixedrelation3}) are checked in Lemma \ref{Lem:adj}.
							The relations (\ref{twisbraid})--(\ref{3color}) are checked in Lemma \ref{Lem:braid}.

							It remains to check that $\Theta$ and $\Omega$ are inverse.
							The equality $\Omega\circ\Theta=\Id$ is immediate, while
							$\Theta\circ\Omega=\Id$ reduces to
							$$\Theta\circ\Omega(			\begin{tikzpicture}[baseline = -1mm]
								\draw[->,blue] (0.2,-.2) to (-0.2,.2);
								\draw[<-,red] (-0.2,-.2) to (0.2,.2);
								\node at (-0.2,-.34) {$\scriptstyle{\red\alpha}$};
								\node at (0.2,-.34) {$\scriptstyle{\blue\beta}$};
							\end{tikzpicture}
							)=\begin{tikzpicture}[baseline = -1mm]
								\draw[->,blue] (0.2,-.2) to (-0.2,.2);
								\draw[<-,red] (-0.2,-.2) to (0.2,.2);
								\node at (-0.2,-.34) {$\scriptstyle{\red\alpha}$};
								\node at (0.2,-.34) {$\scriptstyle{\blue\beta}$};
							\end{tikzpicture}
							,
							\Theta\circ\Omega(\begin{tikzpicture}[baseline = -1mm]
								\draw[<-,blue] (0.2,-.2) to (-0.2,.2);
								\draw[->,red] (-0.2,-.2) to (0.2,.2);
								\node at (-0.2,-.34) {$\scriptstyle{\red\alpha}$};
								\node at (0.2,-.34) {$\scriptstyle{\blue\beta}$};
							\end{tikzpicture})=\begin{tikzpicture}[baseline = -1mm]
								\draw[<-,blue] (0.2,-.2) to (-0.2,.2);
								\draw[->,red] (-0.2,-.2) to (0.2,.2);
								\node at (-0.2,-.34) {$\scriptstyle{\red\alpha}$};
								\node at (0.2,-.34) {$\scriptstyle{\blue\beta}$};
							\end{tikzpicture}\,,
							\Theta\circ\Omega(\begin{tikzpicture}[baseline = -1mm]
								\draw[<-,blue] (0.2,-.2) to (-0.2,.2);
								\draw[<-,red] (-0.2,-.2) to (0.2,.2);
								\node at (-0.2,-.34) {$\scriptstyle{\red\alpha}$};
								\node at (0.2,-.34) {$\scriptstyle{\blue\beta}$};
							\end{tikzpicture})=\begin{tikzpicture}[baseline = -1mm]
								\draw[<-,blue] (0.2,-.2) to (-0.2,.2);
								\draw[<-,red] (-0.2,-.2) to (0.2,.2);
								\node at (-0.2,-.34) {$\scriptstyle{\red\alpha}$};
								\node at (0.2,-.34) {$\scriptstyle{\blue\beta}$};
							\end{tikzpicture},$$
							which follow from Lemma \ref{mixedmackey}.
						\end{proof}

		\begin{corollary}
			The strict monoidal category $\symHeis_d$ is strictly pivotal.
		\end{corollary}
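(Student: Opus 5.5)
To prove that $\symHeis_d$ is strictly pivotal, I would transport the pivotal structure through the equivalence of Theorem~\ref{thm:equiv}, working directly with the generators of $\symHeis_d$. The duality data are the following. Each object $E^{(\red\alpha)}$ has $F^{(\red\alpha)}$ as both a left and a right dual. The units and counits are the cups and caps of the $\red\alpha$-colored copy $\Heis_{k_{\red\alpha}}$, namely the given rightward cup and cap together with the unique leftward ones. Since each $\Heis_{k_{\red\alpha}}$ is strictly pivotal, the left and right mates of every one-colored generating morphism (dot, upward crossing) agree. Strict pivotality of the symmetric product then reduces to one check: for each generating morphism, the right mate and the left mate coincide. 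This is because the mate operations are monoidal and compatible with composition, so it suffices to check generators. By Theorem~\ref{thm:equiv} the generators may be taken to be the one-colored ones together with the mixed crossings.

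The mixed crossings are the only new generators. I plan to show that the two mates of the upward mixed crossing $\sigma_{E^{(\red\alpha)},E^{(\blue\beta)}}$ coincide, and that both equal the corresponding sideways or downward mixed crossing. This uses the relations (\ref{mixedrelation2}) and (\ref{mixedrelation3}), which say that caps and cups of one color slide through strands of another color via $\sigma$. Concretely, the right mate of the upward $\red\alpha\blue\beta$ crossing is obtained by attaching $\red\alpha$ and $\blue\beta$ caps on top and cups on the bottom. I will then use naturality (\ref{symm2}) of $\sigma$ applied to the morphisms $f=$ cup or cap in the $\red\alpha$ factor, together with (\ref{symm1}), which expresses $\sigma$ on tensor products. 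Together these rewrite this mate as $\sigma_{F^{(\red\alpha)},F^{(\blue\beta)}}$. The same argument with the left cups and caps gives the same $\sigma_{F^{(\red\alpha)},F^{(\blue\beta)}}$. The key point is that every $\sigma_{X,Y}$ is natural in both variables, with $X$ and $Y$ ranging over all objects, including downward ones.

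Since $\sigma_{Y,X}=\sigma_{X,Y}^{-1}$, the inverse crossings take care of themselves once the forward crossings are handled. The sideways mixed crossings are then automatically compatible as well, since they are themselves $\sigma$'s between an $E$ and an $F$ of different colors.

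The main obstacle is bookkeeping rather than conceptual. The leftward cups and caps of $\Heis_{k_{\red\alpha}}$ are defined only implicitly, as entries of the inverse of the inversion matrix (\ref{dMackey1}) or (\ref{dMackey2}), in the quantum case (\ref{qinver1}) or (\ref{qinver2}). One must know that $\sigma$ is natural with respect to these implicitly defined morphisms too. Naturality (\ref{symm2}) holds for all morphisms of the factor, so this is automatic, but it must be invoked rather than checked by hand. With that, the slide relations (\ref{mixedrelation3}) for leftward cups and caps hold, and the two mates agree.
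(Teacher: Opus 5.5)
Your argument is correct. It is not quite the route the paper takes, though, and the equivalence of Theorem~\ref{thm:equiv} plays no essential role in it: the generating morphisms of $\symHeis_d$ are already, by definition, the one-colored ones together with the mixed crossings of all four orientations.

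The paper gives no separate proof and presents the corollary as a consequence of Theorem~\ref{thm:equiv}. In the upward presentation $\symHeis_d'$, the sideways mixed crossings are \emph{defined} as mates of the upward ones (\ref{defcross}). The lemmas proved for the equivalence (Lemma~\ref{mixedmackey}, the relations (\ref{pic}), Lemma~\ref{Lem:adj}) then identify these rotations with the $\sigma$'s of $\symHeis_d$ and show that rotating either way gives the same morphism. Combined with strict pivotality of each factor, this yields the claim.

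You instead work directly from the defining relations (\ref{symm1})--(\ref{symm2}). Naturality of $\sigma$ with respect to all cups and caps of a factor, including the implicitly defined leftward ones, together with compatibility with tensor products, gives a general identity: both the left and the right mate of $\sigma_{X,Y}$ equal $\sigma_{X^\vee,Y^\vee}$. Hence the two mate functors agree on every generator.

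What each route buys:
\begin{itemize}
\item Your route is self-contained and proves more: any symmetric product of strictly pivotal categories is strictly pivotal, with no need for the upward presentation.
\item The paper's route reuses computations already needed for the equivalence. It also delivers the statement in the form used later, where only upward mixed crossings and the inversion relations (\ref{mixedMackey}) are checked in module categories.
\end{itemize}

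One small step you should make explicit. Applying (\ref{symm2}) to a cup or cap requires $\sigma_{\unit,Y}=1_Y$. This follows from (\ref{symm1}) with $X_1=X_2=\unit$, which gives $\sigma_{\unit,Y}=\sigma_{\unit,Y}\circ\sigma_{\unit,Y}$, together with the invertibility of $\sigma_{\unit,Y}$.
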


\subsection{Quantum case}
The same construction applies to quantum Heisenberg categories.  Retaining the
notation $\symHeis_d$, set
$$\symHeis_d:=\Heis_{k_1}(z_1,t_1)\odot\cdots \odot \Heis_{k_d}(z_d,t_d)$$
for the symmetric product of
$\Heis_{k_{\red{\alpha}}}(z_{\red{\alpha}},t_{\red{\alpha}})$,
$\red{\alpha}=1,\ldots,d$.
For the quantum presentation put $\KK_d=\kk[t_1^{\pm1},\ldots,t_d^{\pm1}]$ and extend scalars of each factor to $\KK_d$.

\begin{theorem}\label{dmultiHeisenberg}
		The category $\symHeis_d$ is equivalent to  the strict $\KK_d$-linear monoidal category
		generated by the objects
		$$F^{(\red\alpha)}=\begin{tikzpicture}[baseline = -1mm]
			\draw[->,red] (0.08,-.2) to (0.08,.2);
			\node at (0.08,-.3) {$\scriptstyle{\red\alpha}$};
		\end{tikzpicture}\,,\quad E^{(\red\alpha)}=\begin{tikzpicture}[baseline = -1mm]
			\draw[<-,red] (0.08,-.2) to (0.08,.2);
			\node at (0.08,-.3) {$\scriptstyle{\red\alpha}$};
		\end{tikzpicture}$$ for each $\red\alpha\in \{1,\cdots, d\}$
				and
				morphisms $$\begin{tikzpicture}[baseline = -1mm,red]
					\draw[->,red] (0.08,-.2) to (0.08,.2);
					\node at (0.08,0) {$\dott$};
					\node at (0.08,-.3) {$\scriptstyle{\red\alpha}$};
				\end{tikzpicture}\,,\:
				\begin{tikzpicture}[baseline = -1mm]
					\draw[->,red] (0.2,-.2) to (-0.2,.2);
					\draw[-,white,line width=4pt] (-0.2,-.2) to (0.2,.2);
					\draw[->,red] (-0.2,-.2) to (0.2,.2);
					\node at (0.2,-.3) {$\scriptstyle{\red\alpha}$};
					\node at (-0.2,-.3) {$\scriptstyle{\red\alpha}$};
				\end{tikzpicture}\,,\:
				\begin{tikzpicture}[baseline = .75mm]
					\draw[<-,red] (0.3,0.3) to[out=-90, in=0] (0.1,0);
					\draw[-,red] (0.1,0) to[out = 180, in = -90] (-0.1,0.3);
					\node at (0.3,0.38) {$\scriptstyle{\red\alpha}$};
				\end{tikzpicture}\,,\:
				\begin{tikzpicture}[baseline = .75mm]
					\draw[<-,red] (0.3,0) to[out=90, in=0] (0.1,0.3);
					\draw[-,red] (0.1,0.3) to[out = 180, in = 90] (-0.1,0);
					\node at (0.3,-.08) {$\scriptstyle{\red\alpha}$};
				\end{tikzpicture}\,,
				\:\begin{tikzpicture}[baseline = -1mm]
					\draw[->,blue] (0.2,-.2) to (-0.2,.2);
					\draw[->,red] (-0.2,-.2) to (0.2,.2);
					\node at (-0.2,-.34) {$\scriptstyle{\red\alpha}$};
					\node at (0.2,-.34) {$\scriptstyle{\blue\beta}$};
				\end{tikzpicture}
				$$ for all $\red\alpha,\blue\beta\in \{1,\cdots, d\}$ with $\red\alpha\neq \blue\beta$,
					subject to the relations as follows.
					We require that
					$\begin{tikzpicture}[baseline = -1mm,red]
						\draw[->,red] (0.08,-.2) to (0.08,.2);
						\node at (0.08,0) {$\dott$};
						\node at (0.08,-.28) {$\scriptstyle{\red\alpha}$};
					\end{tikzpicture}$,
					$\begin{tikzpicture}[baseline = -1mm]
						\draw[->,red] (0.2,-.2) to (-0.2,.2);
						\draw[-,white,line width=4pt] (-0.2,-.2) to (0.2,.2);
						\draw[->,red] (-0.2,-.2) to (0.2,.2);
						\node at (0.2,-.28) {$\scriptstyle{\red\alpha}$};
						\node at (-0.2,-.28) {$\scriptstyle{\red\alpha}$};
					\end{tikzpicture}\:,$
					$\begin{tikzpicture}[baseline = .75mm]
						\draw[<-,red] (0.3,0.3) to[out=-90, in=0] (0.1,0);
						\draw[-,red] (0.1,0) to[out = 180, in = -90] (-0.1,0.3);
						\node at (0.3,0.38) {$\scriptstyle{\red\alpha}$};
					\end{tikzpicture}\:,$
					$\begin{tikzpicture}[baseline = .75mm]
						\draw[<-,red] (0.3,0) to[out=90, in=0] (0.1,0.3);
						\draw[-,red] (0.1,0.3) to[out = 180, in = 90] (-0.1,0);
						\node at (0.3,-.08) {$\scriptstyle{\red\alpha}$};
					\end{tikzpicture}\:,$ satisfy all relations of quantum Heisenberg category $\Heis_{k_{\red{\alpha}}}(z_{\red{\alpha}},t_{\red{\alpha}}).$
							In addition, for each $\red\alpha\neq \blue\beta$, $\red\alpha\neq \green\gamma$ and $\blue\beta\neq \green\gamma,$ we also have
							\begin{align*}
								\mathord{
									\begin{tikzpicture}[baseline = -1mm]
										\draw[->,thin,red] (0.28,0) to[out=90,in=-90] (-0.28,.6);
										\draw[->,thin,blue] (-0.28,0) to[out=90,in=-90] (0.28,.6);
										\draw[-,thin,red] (-0.28,-.6) to[out=90,in=-90] (0.28,0);
										\draw[-,thin,blue] (0.28,-.6) to[out=90,in=-90] (-0.28,0);
										\node at (-0.28,-.7) {$\scriptstyle{\red\alpha}$};
										\node at (0.28,-.7) {$\scriptstyle{\blue\beta}$};
									\end{tikzpicture}
								}\,=&
								\mathord{
									\begin{tikzpicture}[baseline = -1mm]
										\draw[->,thin,blue] (0.18,-.6) to (0.18,.6);
										\draw[->,thin,red] (-0.18,-.6) to (-0.18,.6);
										\node at (-0.18,-.7) {$\scriptstyle{\red\alpha}$};
										\node at (0.18,-.7) {$\scriptstyle{\blue\beta}$};
									\end{tikzpicture}
								}\:,
								\:&
								\mathord{
									\begin{tikzpicture}[baseline = -.5mm]
										\draw[thin,red,->] (-0.28,-.3) to (0.28,.4);
										\node[red] at (-0.16,-0.15) {$\dott$};
										\draw[->,thin,blue] (0.28,-.3) to (-0.28,.4);
										\node at (-0.28,-.4) {$\scriptstyle{\red\alpha}$};
										\node at (0.28,-.4) {$\scriptstyle{\blue\beta}$};
									\end{tikzpicture}
								}\,=&
								\mathord{
									\begin{tikzpicture}[baseline = -.5mm]
										\draw[->,thin,blue] (0.28,-.3) to (-0.28,.4);
										\draw[thin,red,->] (-0.28,-.3) to (0.28,.4);
										\node[red] at (0.145,0.23) {$\dott$};
										\node at (-0.28,-.4) {$\scriptstyle{\red\alpha}$};
										\node at (0.28,-.4) {$\scriptstyle{\blue\beta}$};
									\end{tikzpicture}
								}\:,&
								\mathord{
									\begin{tikzpicture}[baseline = -1mm]
										\draw[<-,thin,red] (-0.45,.6) to (0.45,-.6);
										\draw[-,line width=4pt,white] (-0.45,-.6) to (0.45,.6);
										\draw[->,thin,red] (-0.45,-.6) to (0.45,.6);
										\draw[-,thin,blue] (0,-.6) to[out=90,in=-90] (-.45,0);
										\draw[->,thin,blue] (-0.45,0) to[out=90,in=-90] (0,0.6);
										\node at (-0.45,-.7) {$\scriptstyle{\red\alpha}$};
										\node at (0.45,-.7) {$\scriptstyle{\red\alpha}$};
										\node at (0,-.7) {$\scriptstyle{\blue\beta}$};
									\end{tikzpicture}
								}
								=&
								\mathord{
									\begin{tikzpicture}[baseline = -1mm]
										\draw[<-,thin,red] (-0.45,.6) to (0.45,-.6);
									\draw[-,line width=4pt,white] (-0.45,-.6) to (0.45,.6);
										\draw[->,thin,red] (-0.45,-.6) to (0.45,.6);
										\draw[-,thin,blue] (0,-.6) to[out=90,in=-90] (.45,0);
										\draw[->,thin,blue] (0.45,0) to[out=90,in=-90] (0,0.6);
										\node at (-0.45,-.7) {$\scriptstyle{\red\alpha}$};
										\node at (0.45,-.7) {$\scriptstyle{\red\alpha}$};
										\node at (0,-.7) {$\scriptstyle{\blue\beta}$};
									\end{tikzpicture}
								}\:,&
								\mathord{
									\begin{tikzpicture}[baseline = -1mm]
										\draw[->,thin,green] (0.45,-.6) to (-0.45,.6);
										\draw[<-,thin,red] (0.45,.6) to (-0.45,-.6);
										\draw[-,thin,blue] (0,-.6) to[out=90,in=-90] (-.45,0);
										\draw[->,thin,blue] (-0.45,0) to[out=90,in=-90] (0,0.6);
										\node at (-0.45,-.7) {$\scriptstyle{\red\alpha}$};
										\node at (0,-.7) {$\scriptstyle{\blue\beta}$};
										\node at (0.45,-.7) {$\scriptstyle{\darkg\gamma}$};
									\end{tikzpicture}
								}
								=&
								\mathord{
									\begin{tikzpicture}[baseline = -1mm]
										\draw[->,thin,green] (0.45,-.6) to (-0.45,.6);
										\draw[<-,thin,red] (0.45,.6) to (-0.45,-.6);
										\draw[-,thin,blue] (0,-.6) to[out=90,in=-90] (.45,0);
										\draw[->,thin,blue] (0.45,0) to[out=90,in=-90] (0,0.6);
										\node at (-0.45,-.7) {$\scriptstyle{\red\alpha}$};
										\node at(0,-.7) {$\scriptstyle{\blue\beta}$};
										\node at (0.45,-.7) {$\scriptstyle{\darkg\gamma}$};
									\end{tikzpicture}
								}\:.
							\end{align*}

							We also introduce the sideways mixed crossings,
							\begin{align*}
								\mathord{
									\begin{tikzpicture}[baseline = -.5mm]
										\draw[->,red] (-0.28,-.3) to (0.28,.4);
										\draw[<-,blue] (0.28,-.3) to (-0.28,.4);
										\node at (-0.28,-.4) {$\scriptstyle{\red\alpha}$};
										\node at (0.28,-.4) {$\scriptstyle{\blue\beta}$};
									\end{tikzpicture}
								}&:=
								\mathord{
									\begin{tikzpicture}[baseline = 0]
										\draw[->,red] (0.3,-.5) to (-0.3,.5);
										\draw[-,blue] (-0.2,-.2) to (0.2,.3);
										\draw[-,blue] (0.2,.3) to[out=50,in=180] (0.5,.5);
										\draw[->,blue] (0.5,.5) to[out=0,in=90] (0.8,-.5);
										\draw[-,blue] (-0.2,-.2) to[out=230,in=0] (-0.6,-.5);
										\draw[-,blue] (-0.6,-.5) to[out=180,in=-90] (-0.85,.5);\
										\node at (0.3,-.6) {$\scriptstyle{\red\alpha}$};
										\node at (0.8,-.6) {$\scriptstyle{\blue\beta}$};
									\end{tikzpicture}
								}\:,
								\:&\mathord{
									\begin{tikzpicture}[baseline = -.5mm]
										\draw[<-,red] (-0.28,-.3) to (0.28,.4);
										\draw[->,blue] (0.28,-.3) to (-0.28,.4);
										\node at (-0.28,-.4) {$\scriptstyle{\red\alpha}$};
										\node at (0.28,-.4) {$\scriptstyle{\blue\beta}$};
									\end{tikzpicture}
								}&:=
								\mathord{
									\begin{tikzpicture}[baseline = 0]
										\draw[-,red] (-0.2,.2) to (0.2,-.3);
										\draw[<-,blue] (0.3,.5) to (-0.3,-.5);
										\draw[-,red] (0.2,-.3) to[out=130,in=180] (0.5,-.5);
										\draw[-,red] (0.5,-.5) to[out=0,in=270] (0.8,.5);
										\draw[-,red] (-0.2,.2) to[out=130,in=0] (-0.5,.5);
										\draw[->,red] (-0.5,.5) to[out=180,in=-270] (-0.8,-.5);
										\node at (-0.8,-.6) {$\scriptstyle{\red\alpha}$};
										\node at (-0.3,-.6) {$\scriptstyle{\blue\beta}$};
									\end{tikzpicture}
								}\:,
							\end{align*} then we require \begin{align}\label{mixedMackeyquantum}
								\mathord{
									\begin{tikzpicture}[baseline = -1mm]
										\draw[->,thin,red] (0.28,0) to[out=90,in=-90] (-0.28,.6);
										\draw[-,thin,blue] (-0.28,0) to[out=90,in=-90] (0.28,.6);
										\draw[-,thin,red] (-0.28,-.6) to[out=90,in=-90] (0.28,0);
										\draw[<-,thin,blue] (0.28,-.6) to[out=90,in=-90] (-0.28,0);
										\node at (-0.28,-.7) {$\scriptstyle{\red\alpha}$};
										\node at (0.28,-.7) {$\scriptstyle{\blue\beta}$};
									\end{tikzpicture}
								}\,=&
								\mathord{
									\begin{tikzpicture}[baseline = -1mm]
										\draw[<-,thin,blue] (0.18,-.6) to (0.18,.6);
										\draw[->,thin,red] (-0.18,-.6) to (-0.18,.6);
										\node at (-0.18,-.7) {$\scriptstyle{\red\alpha}$};
										\node at (0.18,-.7) {$\scriptstyle{\blue\beta}$};
									\end{tikzpicture}
								}\:,\:&
								\mathord{
									\begin{tikzpicture}[baseline = -1mm]
										\draw[-,thin,blue] (0.28,-.6) to[out=90,in=-90] (-0.28,0);
										\draw[->,thin,blue] (-0.28,0) to[out=90,in=-90] (0.28,.6);
										\draw[<-,thin,red] (-0.28,-.6) to[out=90,in=-90] (0.28,0);
										\draw[-,thin,red] (0.28,0) to[out=90,in=-90] (-0.28,.6);
										\node at (-0.28,-.7) {$\scriptstyle{\red\alpha}$};
										\node at (0.28,-.7) {$\scriptstyle{\blue\beta}$};
									\end{tikzpicture}
								}\,=&
								\mathord{
									\begin{tikzpicture}[baseline = -1mm]
										\draw[->,thin,blue] (0.18,-.6) to (0.18,.6);
					\draw[<-,thin,red] (-0.18,-.6) to (-0.18,.6);
								\node at (-0.18,-.7) {$\scriptstyle{\red\alpha}$};
								\node at (0.18,-.7) {$\scriptstyle{\blue\beta}$};
					\end{tikzpicture}
								}\:.\end{align}
						\end{theorem}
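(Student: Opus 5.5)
The plan is to transport the proof of Theorem~\ref{thm:equiv} to the quantum setting, with the same two functors. Write $\symHeis_d'(z,t)$ for the presented category in the statement. Define $\Theta:\symHeis_d'(z,t)\to\symHeis_d$ on generators: the one-colored generators go to the corresponding generators of $\Heis_{k_{\red\alpha}}(z_{\red\alpha},t_{\red\alpha})$, and the upward mixed crossing of colors $\red\alpha,\blue\beta$ goes to the symmetry isomorphism $\sigma_{E^{(\red\alpha)},E^{(\blue\beta)}}$. Define $\Omega:\symHeis_d\to\symHeis_d'(z,t)$ in the other direction. The one-colored generators go to themselves. For the adjoined isomorphisms $\sigma_{X,Y}$, relation \eqref{symm1} shows it is enough to specify them on pairs of generating objects. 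The upward--upward one goes to the upward mixed crossing. The sideways and downward ones go to the sideways mixed crossings defined by rotating with cups and caps, and their inverses.

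First I check that $\Theta$ is well defined. The one-colored relations hold automatically. The mixed upward relations (invertibility, dot sliding, the two braid relations) follow from the naturality \eqref{symm2} and the hexagon-type identities \eqref{symm1}, applied with $f$ a dot or a positive crossing. The mixed Mackey relations \eqref{mixedMackeyquantum} assert that the sideways mixed crossings are inverse to the appropriate $\sigma$'s. To prove them I will repeat the argument of Lemma~\ref{mixedmackey}: write $\sigma_{E^{(\red\alpha)},F^{(\blue\beta)}}$ as a rotation of $\sigma_{E^{(\red\alpha)},E^{(\blue\beta)}}$ using naturality of $\sigma$ with respect to the cup and cap of color $\blue\beta$, then straighten zigzags.

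Next I check that $\Omega$ is well defined. This means verifying the analogues of \eqref{mixedrelation1}--\eqref{3color} in $\symHeis_d'(z,t)$. The analogues of Lemmas~\ref{Lemma:dot}, \ref{Lem:adj} and \ref{Lem:braid} should go through verbatim. The reason is that every mixed relation only involves the mixed crossings, one-colored cups, caps and dots, and one-colored positive or negative crossings. The manipulations used there are slides and zigzag straightening, and these only need the adjunction relations and pivotality, which hold in each quantum factor by \cite[Lemma 4.3]{BSW20b}.

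Two points need extra care. Naturality \eqref{symm2} must now be checked for the positive crossing, its inverse the negative crossing, and negatively labeled dots. Slides through inverses follow formally from slides through the invertible morphisms themselves. Naturality for the leftward cups and caps is the other point. In the quantum category these are defined implicitly as entries of the inverse of the matrix \eqref{qinver1} or \eqref{qinver2}. So I would show that the mixed crossing slides past the whole inversion matrix: it commutes with each entry, since each entry is built from rightward cups and caps, dots, and sideways crossings. Sliding therefore commutes with the inverse matrix, and hence with each of its entries.

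Finally, $\Omega\circ\Theta=\Id$ holds on generators by construction. For $\Theta\circ\Omega=\Id$, the sideways and downward $\sigma$'s must map to themselves, which again reduces to the quantum analogue of Lemma~\ref{mixedmackey}. Since all relations are polynomial in $t_{\red\alpha}^{\pm1}$, working over $\KK_d$ after extending scalars causes no issue. I expect the main obstacle to be the naturality of the mixed crossings with respect to the implicitly defined leftward cups and caps, including the extra normalization relations involving $t^{\pm2}$. The matrix-inverse argument above is what I would try first.
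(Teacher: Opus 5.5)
Your proposal is correct and follows the paper: the paper proves this theorem only by saying it is a rerun of the proof of Theorem~\ref{thm:equiv} with the same functors $\Theta$ and $\Omega$. Your additional points (sliding through negative crossings and inverse dots, and through the implicitly defined leftward cups and caps via invertibility of the inversion matrix) supply details the paper leaves implicit, while the $t^{\pm2}$ normalization relations you flag are single-color relations present in both categories and so impose no extra constraint on the mixed crossings.
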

\begin{proof}
    This theorem follows from a rerun of the proof of Theorem~\ref{thm:equiv} in the quantum case.
\end{proof}

  \begin{remark}
    For both the degenerate and quantum Heisenberg categories, the presentations involve certain inversion relations; see also \cite[Definition 1.1]{B18} and \cite[Definition 3.1]{BSW20b}. These presentations are easier to work with when constructing $\Heis_k$-module categories since it involves fewer
generators and relations. In fact, the relations \eqref{mixedMackeyquantum} can and should be interpreted as the following inversion relations (for any $\red{\alpha}\neq \blue{\beta}$)
\[ \begin{tikzpicture}[baseline = -.5mm]
										\draw[->,red] (-0.28,-.3) to (0.28,.4);
										\draw[<-,blue] (0.28,-.3) to (-0.28,.4);
										\node at (-0.28,-.4) {$\scriptstyle{\red\alpha}$};
										\node at (0.28,-.4) {$\scriptstyle{\blue\beta}$};
						\end{tikzpicture}: E^{(\red{\alpha})}\otimes F^{(\blue{\beta})}\to F^{(\blue{\beta})}\otimes E^{(\red{\alpha})} \text{ has an inverse }\begin{tikzpicture}[baseline = -.5mm]
										\draw[->,blue] (-0.28,-.3) to (0.28,.4);
										\draw[<-,red] (0.28,-.3) to (-0.28,.4);
										\node at (-0.28,-.4) {$\scriptstyle{\blue\beta}$};
										\node at (0.28,-.4) {$\scriptstyle{\red\alpha}$};
							\end{tikzpicture}: F^{(\blue{\beta})}\otimes E^{(\red{\alpha})}\to E^{(\red{\alpha})}\otimes F^{(\blue{\beta})}.
								 \]
\end{remark}

\section{From Heisenberg categories to Kac--Moody categorification}
\label{sec:HeistoKac}
Let $R=\kk$ in the degenerate case and $R=\KK$ in the quantum case, and put
$$\symHeis_d:=\Heis_{k_1}(z_1,t_1)\odot\cdots \odot
\Heis_{k_d}(z_d,t_d).$$
All factors are taken to be degenerate or all to be quantum.

Let $\mathfrak{Cat}_\kk$ be the strict $\kk$-linear 2-category of
$\kk$-linear categories.  A strict 2-representation of $\UU$ consists of
\begin{itemize}
    \item a family $\{\mathcal{R}_\lambda\}_{\lambda\in \UU}$ of $\kk$-linear categories,
    \item a strict $\kk$-linear 2-functor ${\bf R\colon \UU \to \mathfrak{Cat}_\kk}$ such that ${\bf R}(\lambda)=\mathcal{R}_\lambda$ for each $\lambda\in \UU$.
\end{itemize}
Let $\calC$ be a locally finite abelian $R$-linear category equipped with a
$\symHeis_d$-action.  The construction of \cite{BSW20a} extends to this
colored setting and gives a Kac--Moody 2-representation on $\calC$.

\subsection{Colored diagram calculus}\label{sec:colordiagam}
We use the colored version of \cite[\S4.1]{BSW20a}.  Let $L\in\calC$ be
irreducible.  For ${\red{\alpha}}=1,\ldots,d$, let
$m^{(\red{\alpha})}_L(u),n^{(\red{\alpha})}_L(u)\in R[u]$ be the monic
minimal polynomials of the endomorphisms
$\mathord{\begin{tikzpicture}[baseline = -1mm]
		\draw[->,red] (0.08,-.2) to (0.08,.2);
		\node[red] at (0.08,0) {$\dott$};
		\node at (0.08,-.3) {$\scriptstyle{\red\alpha}$};
		\draw[-,darkg,thick] (0.38,.2) to (0.38,-.2);
		\node at (0.55,0) {$\darkg\scriptstyle{L}$};
\end{tikzpicture}}$ and
$\mathord{\begin{tikzpicture}[baseline = -1mm]
		\draw[<-,red] (0.08,-.2) to (0.08,.2);
		\node at (0.08,-.3) {$\scriptstyle{\red\alpha}$};
		\node[red] at (0.08,0.02) {$\dott$};
		\draw[-,darkg,thick] (0.38,.2) to (0.38,-.2);
		\node at (0.55,0) {$\darkg\scriptstyle{L}$};
\end{tikzpicture}}$ respectively.
 There are {injective} homomorphisms
\begin{align*}
	R[u] / (m^{(\red{\alpha})}_L(u)) &\hookrightarrow \End_{\calC} (E^{(\red{\alpha})}L),
	&R[u] / (n^{(\red{\alpha})}_L(u)) &\hookrightarrow \End_{\calC} (F^{(\red{\alpha})}L),\\
	p(u) &\mapsto \mathord{\begin{tikzpicture}[baseline = -1mm]
			\draw[->,red] (0.08,-.3) to (0.08,.4);
			\node[red] at (0.08,0.05) {$\dott$};
			\node at (0.08,-.4) {$\scriptstyle{\red\alpha}$};
			\node at (-0.3,0.05) {$\scriptstyle p(x)$};
			\draw[-,darkg,thick] (0.45,.4) to (0.45,-.3);
			\node at (0.45,-.45) {$\darkg\scriptstyle{L}$};
		\end{tikzpicture}
	},
	&p(u) &\mapsto \mathord{
		\begin{tikzpicture}[baseline = -1mm]
			\draw[<-,red] (0.08,-.3) to (0.08,.4);
			\node[red] at (0.08,0.1) {$\dott$};
			\node at (0.08,-.4) {$\scriptstyle{\red\alpha}$};
			\node at (-0.3,0.1) {$\scriptstyle p(x)$};
			\draw[-,darkg,thick] (0.45,.4) to (0.45,-.3);
			\node at (0.45,-.45) {$\darkg\scriptstyle{L}$};
		\end{tikzpicture}
	}.\notag
\end{align*}
where $\mathord{
\begin{tikzpicture}[baseline = -1.5]
	\draw[red,-] (0.08,-.15) to (0.08,.3);
      \node[red] at (0.08,0.08) {$\dt$};
\node at (0.32,.08) {$\scriptstyle x^n$};
\end{tikzpicture}
}$  denotes a dot of multiplicity $n$; we do this also
for negatively
dotted bubbles using negative values of $n$.

Also, let $\epsilon^{(\red{\alpha})}_i(L)$ and $\phi^{(\red{\alpha})}_i(L)$ denote the multiplicities
of $i^{(\red{\alpha})} \in R$ as the root of the polynomials $m^{(\red{\alpha})}_L(u)$ and $n^{(\red{\alpha})}_L(u)$,
respectively.
By the Chinese remainder theorem, we have
\begin{align*}
	R[u] / (m^{(\red{\alpha})}_L(u)) &\cong \bigoplus_{i \in R} R[u] \big/ \big((u-i)^{\epsilon^{(\red{\alpha})}_i(L)}\big),&
	R[u] / (n^{(\red{\alpha})}_L(u)) &\cong \bigoplus_{i \in R} R[u] \big/
	\big((u-i)^{\phi^{(\red{\alpha})}_i(L)}\big).
\end{align*}
We define $E^{(\red{\alpha})}_i$ and $F^{(\red{\alpha})}_i$ to be the direct
summands of $E^{(\red{\alpha})}$ and $F^{(\red{\alpha})}$ such that $E^{(\red{\alpha})}_i L$ and $F^{(\red{\alpha})}_i L$ are the generalized $i$-eigenspaces of $\mathord{\begin{tikzpicture}[baseline = -1mm]
		\draw[->,red] (0.08,-.2) to (0.08,.2);
		\node at (0.08,0) {$\dott$};
		\node at (0.08,-.3) {$\scriptstyle{\red\alpha}$};
		\draw[-,darkg,thick] (0.38,.2) to (0.38,-.2);
		\node at (0.55,0) {$\darkg\scriptstyle{L}$};
	\end{tikzpicture}
}$ and
$\mathord{\begin{tikzpicture}[baseline = -1mm]
		\draw[<-,red] (0.08,-.2) to (0.08,.2);
		\node at (0.08,-.3) {$\scriptstyle{\red\alpha}$};
		\node at (0.08,0.02) {$\dott$};
		\draw[-,darkg,thick] (0.38,.2) to (0.38,-.2);
		\node at (0.55,0) {$\darkg\scriptstyle{L}$};
	\end{tikzpicture}
}$ for any  $L\in \calC$, respectively.
\smallskip

The identity endomorphisms of the functors $E^{(\red{\alpha})}_i$ and $F^{(\red{\alpha})}_i$ are illustrated by vertical strings in color $i^{(\red{\alpha})}$, as depicted in the first pair of diagrams below. The second pair of diagrams portrays the inclusions $E^{(\red{\alpha})}_i \hookrightarrow E^{(\red{\alpha})}$ and $F^{(\red{\alpha})}_i \hookrightarrow F^{(\red{\alpha})}$. Lastly, the projections $E^{(\red{\alpha})} \twoheadrightarrow E^{(\red{\alpha})}_i$ and $F^{(\red{\alpha})} \twoheadrightarrow F^{(\red{\alpha})}_i$ are represented in the final pair of diagrams.
\begin{align*}
	&\mathord{
		\begin{tikzpicture}[baseline = 0,red]
			\draw[->] (0.08,-.3) to (0.08,.4);
			\node[black] at (0.08,-0.45) {$\scriptstyle{i^{(\red{\alpha})}}$};
		\end{tikzpicture}
	}
	\!:E^{(\red{\alpha})}_i \Rightarrow E^{(\red{\alpha})}_i,
	&&
	\mathord{
		\begin{tikzpicture}[baseline = 0,red]
			\draw[<-] (0.08,-.3) to (0.08,.4);
			\node[black] at (0.08,0.55) {$\scriptstyle{i^{(\red{\alpha})}}$};
		\end{tikzpicture}
	} \!:F^{(\red{\alpha})}_i \Rightarrow F^{(\red{\alpha})}_i,
	&&
	\mathord{
		\begin{tikzpicture}[baseline = 0,red]
			\draw[->] (0.08,-.3) to (0.08,.4);
			\draw[black,-] (0.04,.05) to (0.12,.05);
			\node[black] at (0.08,-0.45) {$\scriptstyle{i^{(\red{\alpha})}}$};
		\end{tikzpicture}
	} \!:E^{(\red{\alpha})}_i \Rightarrow E^{(\red{\alpha})},
	&&
	\mathord{
		\begin{tikzpicture}[baseline = 0,red]
			\draw[<-] (0.08,-.3) to (0.08,.4);
			\draw[black,-] (0.04,.05) to (0.12,.05);
			\node[black] at (0.08,-0.45) {$\scriptstyle{i^{(\red{\alpha})}}$};
		\end{tikzpicture}
	} \!:F^{(\red{\alpha})}_i \Rightarrow F^{(\red{\alpha})},
\end{align*}
\begin{align*}
	\mathord{
		\begin{tikzpicture}[baseline = 0mm,red]
			\draw[->] (0.08,-.3) to (0.08,.4);
			\draw[black,-] (0.04,.05) to (0.12,.05);
			\node[black] at (0.08,0.55) {$\scriptstyle{i^{(\red{\alpha})}}$};
		\end{tikzpicture}
	} \!:E^{(\red{\alpha})} \Rightarrow E^{(\red{\alpha})}_i,
	&&
	\mathord{
		\begin{tikzpicture}[baseline = 0mm,red]
			\draw[<-] (0.08,-.3) to (0.08,.4);
			\draw[black,-] (0.04,.05) to (0.12,.05);
			\node[black] at (0.08,0.55) {$\scriptstyle{i^{(\red{\alpha})}}$};
		\end{tikzpicture}
	} \!:F^{(\red{\alpha})} \Rightarrow F^{(\red{\alpha})}_i.
\end{align*}
 We assign colors to each copy corresponding to the colors representing each ${\red{\alpha}}$. For example we also have the natural transformation
$\:\mathord{
	\begin{tikzpicture}[baseline = -0.8mm,red]
		\draw[->] (0.08,-.25) to (0.08,.35);
		\draw[black,-] (0.04,.15) to (0.12,.15);
		\draw[-,black] (0.04,-.05) to (0.12,-.05);
		\node at (0.3,0.05) {$\scriptstyle{i^{(\red{\alpha})}}$};
	\end{tikzpicture}
}:E^{(\red{\alpha})} \Rightarrow E^{(\red{\alpha})}
$
is the projection of $E^{(\red{\alpha})}$ onto its summand $E^{(\red{\alpha})}_i$, while
\begin{equation*}
	\mathord{
		\begin{tikzpicture}[baseline = 0,red]
			\draw[->] (0.08,-.3) to (0.08,.4);
			\draw[black,-] (0.04,.15) to (0.12,.15);
			\draw[-] (0.04,-0.05) to (0.12,-0.05);
			\node at (0.08,0.55) {$\scriptstyle{\alpha_j}$};
			\node[black] at (0.08,-.45) {$\scriptstyle{i^{(\red{\alpha})}}$};
		\end{tikzpicture}
	}
	= \delta_{i,j}
	\mathord{
		\begin{tikzpicture}[baseline = 0,red]
			\draw[->] (0.08,-.3) to (0.08,.4);
			\node[black] at (0.08,-.45) {$\scriptstyle{i^{(\red{\alpha})}}$};
		\end{tikzpicture}
	}\:\:.
\end{equation*}
It is also clear from the definition that the endomorphisms of
$E^{(\red{\alpha})}$ and $F^{(\red{\alpha})}$
defined by the dots restrict to endomorphisms of the
summands $E^{(\red{\alpha})}_i$
and $F^{(\red{\alpha})}_i$.
Representing these restrictions simply by drawing the dots on a string
colored by $i^{(\red{\alpha})}$, we have that
\begin{align}\label{sizzle}
\mathord{
\begin{tikzpicture}[red,baseline = -1mm]
	\draw[->] (0.08,-.3) to (0.08,.4);
      \node at (0.08,-0.05) {$\dt$};
	\draw[black,-] (0.04,.15) to (0.12,.15);
      \node[black] at (0.08,-0.45) {$\scriptstyle{i^{(\red{\alpha})}}$};
\end{tikzpicture}
}&=
\mathord{
\begin{tikzpicture}[red,baseline = -1mm]
	\draw[->] (0.08,-.3) to (0.08,.4);
      \node at (0.08,0.15) {$\dt$};
	\draw[-,black] (0.04,-.05) to (0.12,-.05);
      \node[black] at (0.08,-0.45) {$\scriptstyle{i^{(\red{\alpha})}}$};
\end{tikzpicture}
}\:\:,
&
\mathord{
\begin{tikzpicture}[red,baseline = -1mm]
	\draw[<-] (0.08,-.3) to (0.08,.4);
      \node at (0.08,-0.05) {$\dt$};
	\draw[black,-] (0.04,.15) to (0.12,.15);
      \node[black] at (0.08,-0.45) {$\scriptstyle{i^{(\red{\alpha})}}$};
\end{tikzpicture}
}&=
\mathord{
\begin{tikzpicture}[red,baseline = -1mm]
	\draw[<-] (0.08,-.3) to (0.08,.4);
      \node at (0.08,0.15) {$\dt$};
	\draw[-,black] (0.04,-.05) to (0.12,-.05);
      \node[black] at (0.08,-0.45) {$\scriptstyle{i^{(\red{\alpha})}}$};
\end{tikzpicture}
}\:\:,&
\mathord{
\begin{tikzpicture}[red,baseline = 1mm]
	\draw[->] (0.08,-.3) to (0.08,.4);
      \node at (0.08,-0.05) {$\dt$};
	\draw[black,-] (0.04,.15) to (0.12,.15);
      \node[black] at (0.08,0.55) {$\scriptstyle{i^{(\red{\alpha})}}$};
\end{tikzpicture}
}&=
\mathord{
\begin{tikzpicture}[red,baseline = 1mm]
	\draw[->] (0.08,-.3) to (0.08,.4);
      \node at (0.08,0.15) {$\dt$};
	\draw[-,black] (0.04,-.05) to (0.12,-.05);
      \node[black] at (0.08,0.55) {$\scriptstyle{i^{(\red{\alpha})}}$};
\end{tikzpicture}
}\:\:,
&
\mathord{
\begin{tikzpicture}[red,baseline = 1mm]
	\draw[<-] (0.08,-.3) to (0.08,.4);
      \node at (0.08,-0.05) {$\dt$};
	\draw[black,-] (0.04,.15) to (0.12,.15);
      \node[black] at (0.08,0.55) {$\scriptstyle{i^{(\red{\alpha})}}$};
\end{tikzpicture}
}&=
\mathord{
\begin{tikzpicture}[red,baseline = 1mm]
	\draw[<-] (0.08,-.3) to (0.08,.4);
      \node at (0.08,0.15) {$\dt$};
	\draw[-,black] (0.04,-.05) to (0.12,-.05);
      \node[black] at (0.08,0.55) {$\scriptstyle{i^{(\red{\alpha})}}$};
\end{tikzpicture}
}
\:\:.
\end{align}

Since the downwards dot is both the left and right mate of the upwards dot,
the adjunctions $(E^{(\red{\alpha})},F^{(\red{\alpha})})$ and $(F^{(\red{\alpha})},E^{(\red{\alpha})})$ induce adjunctions $(E^{(\red{\alpha})}_i, F^{(\red{\alpha})}_i)$
and $(F^{(\red{\alpha})}_i, E^{(\red{\alpha})}_i)$ for all $i^{(\red{\alpha})} \in \kk$.
We draw the units and counits of these adjunctions using cups and caps
colored by $i^{(\red{\alpha})}$.
Again, the various inclusions and projections commute with these morphisms:
\begin{align}
\begin{array}{llll}
\mathord{
\begin{tikzpicture}[red,baseline = 1mm]
	\draw[<-] (0.4,0.4) to[out=-90, in=0] (0.1,0);
	\draw[-,black] (-0.2,.188) to (-0.123,.22);
	\draw[-] (0.1,0) to[out = 180, in = -90] (-0.2,0.4);
      \node[black] at (-0.2,0.55) {$\scriptstyle{i^{(\red{\alpha})}}$};
\end{tikzpicture}
}
\:=
\mathord{
\begin{tikzpicture}[red,baseline = 1mm]
	\draw[<-] (0.4,0.4) to[out=-90, in=0] (0.1,0);
	\draw[-,black] (.402,.188) to (.325,.22);
	\draw[-] (0.1,0) to[out = 180, in = -90] (-0.2,0.4);
      \node[black] at (-0.2,0.55) {$\scriptstyle{i^{(\red{\alpha})}}$};
\end{tikzpicture}
}\:
,
\quad&
\mathord{
\begin{tikzpicture}[red,baseline = 1mm]
	\draw[-] (0.4,0.4) to[out=-90, in=0] (0.1,0);
	\draw[-,black] (-0.2,.188) to (-0.123,.22);
	\draw[->] (0.1,0) to[out = 180, in = -90] (-0.2,0.4);
      \node[black] at (-0.2,0.55) {$\scriptstyle{i^{(\red{\alpha})}}$};
\end{tikzpicture}
}
\;=\;
\mathord{
\begin{tikzpicture}[red,baseline = 1mm]
	\draw[-] (0.4,0.4) to[out=-90, in=0] (0.1,0);
	\draw[-,black] (.402,.188) to (.325,.22);
	\draw[->] (0.1,0) to[out = 180, in = -90] (-0.2,0.4);
      \node[black] at (0.4,0.55) {$\scriptstyle{i^{(\red{\alpha})}}$};
\end{tikzpicture}
}\:
,
\quad&
\mathord{
\begin{tikzpicture}[red,baseline = 1mm]
	\draw[<-] (0.4,0.4) to[out=-90, in=0] (0.1,0);
	\draw[-,black] (-0.2,.188) to (-0.123,.22);
	\draw[-] (0.1,0) to[out = 180, in = -90] (-0.2,0.4);
      \node[black] at (0.4,0.55) {$\scriptstyle{i^{(\red{\alpha})}}$};
\end{tikzpicture}
}
=
\:\mathord{
\begin{tikzpicture}[red,baseline = 1mm]
	\draw[<-] (0.4,0.4) to[out=-90, in=0] (0.1,0);
	\draw[-,black] (.402,.188) to (.325,.22);
	\draw[-] (0.1,0) to[out = 180, in = -90] (-0.2,0.4);
      \node[black] at (0.4,0.55) {$\scriptstyle{i^{(\red{\alpha})}}$};
\end{tikzpicture}
}\:
,
\quad&
\mathord{
\begin{tikzpicture}[red,baseline = 1mm]
	\draw[-] (0.4,0.4) to[out=-90, in=0] (0.1,0);
	\draw[-,black] (-0.2,.188) to (-0.123,.22);
	\draw[->] (0.1,0) to[out = 180, in = -90] (-0.2,0.4);
      \node[black] at (0.4,0.55) {$\scriptstyle{i^{(\red{\alpha})}}$};
\end{tikzpicture}
}
=\:
\mathord{
\begin{tikzpicture}[red,baseline = 1mm]
	\draw[-] (0.4,0.4) to[out=-90, in=0] (0.1,0);
	\draw[-,black] (.402,.188) to (.325,.22);
	\draw[->] (0.1,0) to[out = 180, in = -90] (-0.2,0.4);
      \node[black] at (0.4,0.55) {$\scriptstyle{i^{(\red{\alpha})}}$};
\end{tikzpicture}
}\:
,
\\\\
\mathord{
\begin{tikzpicture}[red,baseline = 1mm]
	\draw[<-] (0.4,0) to[out=90, in=0] (0.1,0.4);
	\draw[-,black] (-0.2,.22) to (-0.123,.188);
	\draw[-] (0.1,0.4) to[out = 180, in = 90] (-0.2,0);
      \node[black] at (-0.2,-0.15) {$\scriptstyle{i^{(\red{\alpha})}}$};
\end{tikzpicture}
}\:=\:
\mathord{
\begin{tikzpicture}[red,baseline = 1mm]
	\draw[<-] (0.4,0) to[out=90, in=0] (0.1,0.4);
	\draw[-,black] (.402,.22) to (.325,.188);
	\draw[-] (0.1,0.4) to[out = 180, in = 90] (-0.2,0);
      \node[black] at (-0.2,-0.15) {$\scriptstyle{i^{(\red{\alpha})}}$};
\end{tikzpicture}
}\:,
\quad&
\mathord{
\begin{tikzpicture}[red,baseline = 1mm]
	\draw[-] (0.4,0) to[out=90, in=0] (0.1,0.4);
	\draw[-,black] (-0.2,.22) to (-0.123,.188);
	\draw[->] (0.1,0.4) to[out = 180, in = 90] (-0.2,0);
      \node[black] at (-0.2,-0.15) {$\scriptstyle{i^{(\red{\alpha})}}$};
\end{tikzpicture}
}\;=\;
\mathord{
\begin{tikzpicture}[red,baseline = 1mm]
	\draw[-] (0.4,0) to[out=90, in=0] (0.1,0.4);
	\draw[-,black] (.402,.22) to (.325,.188);
	\draw[->] (0.1,0.4) to[out = 180, in = 90] (-0.2,0);
      \node[black] at (-0.2,-0.15) {$\scriptstyle{i^{(\red{\alpha})}}$};
\end{tikzpicture}
}\:,
\quad&
\mathord{
\begin{tikzpicture}[red,baseline = 1mm]
	\draw[<-] (0.4,0) to[out=90, in=0] (0.1,0.4);
	\draw[-,black] (-0.2,.22) to (-0.123,.188);
	\draw[-] (0.1,0.4) to[out = 180, in = 90] (-0.2,0);
      \node[black] at (0.4,-0.15) {$\scriptstyle{i^{(\red{\alpha})}}$};
\end{tikzpicture}
}=\:
\mathord{
\begin{tikzpicture}[red,baseline = 1mm]
	\draw[<-] (0.4,0) to[out=90, in=0] (0.1,0.4);
	\draw[-,black] (.402,.22) to (.325,.188);
	\draw[-] (0.1,0.4) to[out = 180, in = 90] (-0.2,0);
      \node[black] at (0.4,-0.15) {$\scriptstyle{i^{(\red{\alpha})}}$};
\end{tikzpicture}
}\:,
\quad&
\mathord{
\begin{tikzpicture}[red,baseline = 1mm]
	\draw[-] (0.4,0) to[out=90, in=0] (0.1,0.4);
	\draw[-,black] (-0.2,.22) to (-0.123,.188);
	\draw[->] (0.1,0.4) to[out = 180, in = 90] (-0.2,0);
      \node[black] at (0.4,-0.15) {$\scriptstyle{i^{(\red{\alpha})}}$};
\end{tikzpicture}
}=\:
\mathord{
\begin{tikzpicture}[red,baseline = 1mm]
	\draw[-] (0.4,0) to[out=90, in=0] (0.1,0.4);
	\draw[-,black] (.402,.22) to (.325,.188);
	\draw[->] (0.1,0.4) to[out = 180, in = 90] (-0.2,0);
      \node[black] at (0.4,-0.15) {$\scriptstyle{i^{(\red{\alpha})}}$};
\end{tikzpicture}
}\:.
\end{array}\label{incpro}
\end{align}

For $i^{({\blue{\beta}})},j^{({\red{\alpha}})},(i')^{({\red{\alpha}})},(j')^{({\blue{\beta}})} \in \kk$, we define (we often omit the upper script ${\red{\alpha}},{\blue{\beta}}$ in crossings)
\begin{align}
\left.
\begin{array}{ll}
\mathord{
\begin{tikzpicture}[baseline = -1mm]
	\draw[->,blue] (0.28,-.28) to (-0.28,.28);
	\draw[->,red] (-0.28,-.28) to (0.28,.28);
\node at (-0.33,-0.43) {$\scriptstyle{j}$};
      \node at (0.33,-0.43) {$\scriptstyle{i}$};
      \node at (-0.33,0.43) {$\scriptstyle{j'}$};
      \node at (0.33,0.43) {$\scriptstyle{i'}$};
\node at (0,-0.01) {$\diamond$};
\end{tikzpicture}
}:=
\mathord{
\begin{tikzpicture}[baseline = -1mm]
	\draw[->,blue] (0.28,-.28) to (-0.28,.28);
	\draw[->,red] (-0.28,-.28) to (0.28,.28);
\node at (-0.33,-0.43) {$\scriptstyle{j}$};
      \node at (0.33,-0.43) {$\scriptstyle{i}$};
      \node at (-0.33,0.43) {$\scriptstyle{j'}$};
      \node at (0.33,0.43) {$\scriptstyle{i'}$};
	\draw[-] (.14,.22) to (.22,.14);
	\draw[-] (.14,-.22) to (.22,-.14);
	\draw[-] (-.14,.22) to (-.22,.14);
	\draw[-] (-.14,-.22) to (-.22,-.14);
\end{tikzpicture}
}
&\text{in the degenerate case and the quantum case when ${\red{\alpha}}\neq{\blue{\beta}}$}\\
\mathord{
\begin{tikzpicture}[baseline = -1mm]
	\draw[->,red] (0.28,-.28) to (-0.28,.28);
	\draw[-,white,line width=3pt] (-0.28,-.28) to (0.28,.28);
	\draw[->,red] (-0.28,-.28) to (0.28,.28);
\node at (-0.33,-0.43) {$\scriptstyle{j}$};
      \node at (0.33,-0.43) {$\scriptstyle{i}$};
      \node at (-0.33,0.43) {$\scriptstyle{j'}$};
      \node at (0.33,0.43) {$\scriptstyle{i'}$};
\node at (0,-0.01) {$\diamond$};
\end{tikzpicture}
}:=
\mathord{
\begin{tikzpicture}[baseline = -1mm]
	\draw[->,red] (0.28,-.28) to (-0.28,.28);
	\draw[-,white,line width=4pt] (-0.28,-.28) to (0.28,.28);
	\draw[->,red] (-0.28,-.28) to (0.28,.28);
    \node at (-0.33,-0.43) {$\scriptstyle{j}$};
      \node at (0.33,-0.43) {$\scriptstyle{i}$};
      \node at (-0.33,0.43) {$\scriptstyle{j'}$};
      \node at (0.33,0.43) {$\scriptstyle{i'}$};
	\draw[-] (.14,.22) to (.22,.14);
	\draw[-] (.14,-.22) to (.22,-.14);
	\draw[-] (-.14,.22) to (-.22,.14);
	\draw[-] (-.14,-.22) to (-.22,-.14);
\end{tikzpicture}
}\:,&\quad
\mathord{
\begin{tikzpicture}[baseline = 0]
	\draw[->,red] (-0.28,-.28) to (0.28,.28);
	\draw[-,white,line width=3pt] (0.28,-.28) to (-0.28,.28);
	\draw[->,red] (0.28,-.28) to (-0.28,.28);
     \node at (-0.33,-0.43) {$\scriptstyle{j}$};
      \node at (0.33,-0.43) {$\scriptstyle{i}$};
      \node at (-0.33,0.43) {$\scriptstyle{j'}$};
      \node at (0.33,0.43) {$\scriptstyle{i'}$};
\node at (0,-0.01) {$\diamond$};
\end{tikzpicture}
}:=
\mathord{
\begin{tikzpicture}[baseline = 0]
	\draw[->,red] (-0.28,-.28) to (0.28,.28);
	\draw[-,line width=4pt,white] (0.28,-.28) to (-0.28,.28);
	\draw[->,red] (0.28,-.28) to (-0.28,.28);
      \node at (-0.33,-0.43) {$\scriptstyle{j}$};
      \node at (0.33,-0.43) {$\scriptstyle{i}$};
      \node at (-0.33,0.43) {$\scriptstyle{j'}$};
      \node at (0.33,0.43) {$\scriptstyle{i'}$};
	\draw[-] (.14,.22) to (.22,.14);
	\draw[-] (.14,-.22) to (.22,-.14);
	\draw[-] (-.14,.22) to (-.22,.14);
	\draw[-] (-.14,-.22) to (-.22,-.14);
\end{tikzpicture}
}
\text{in the quantum case when ${\red{\alpha}}={\blue{\beta}}$.}
\end{array}\right.\label{crossingijab}\end{align}
These natural transformation are defined by first including
the summand $E^{(\red{\alpha})}_j E^{(\blue{\beta})}_i$ into $E^{(\red{\alpha})} E^{(\blue{\beta})}$, then
applying natural transformation $E^{(\red{\alpha})}E^{(\blue{\beta})} \Rightarrow E^{(\blue{\beta})}E^{(\red{\alpha})}$ defined by
the usual crossing (positive or negative in the quantum case), then
projecting
$E^{(\blue{\beta})} E^{(\red{\alpha})}$ onto the summand $E^{(\blue{\beta})}_{j'} E^{(\red{\alpha})}_{i'}$.

There are also sideways and downwards versions of the new crossings,
which may be defined in a similar way, or equivalently by ``rotating'' the upward versions using \eqref{incpro}.

\begin{lemma}\label{essential}
If $\{i,j\}\neq\{i',j'\}$ then the natural transformation (\ref{crossingijab})
is zero.
The same
holds for the rotated versions of these crossings.
\end{lemma}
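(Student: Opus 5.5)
The plan is to work crossing by crossing, and in each case show that the crossing intertwines suitable "dot operators" on the two strands. Once that is in place, the generalized eigenspace decompositions force the projected map to vanish unless the multisets of eigenvalues agree. There are two cases.

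\emph{Case $\red\alpha\neq\blue\beta$.} The mixed crossing is the symmetric-product isomorphism $\sigma$, and by the dot-slide relations \eqref{mixedrelation2} (equivalently \eqref{mixedHecke}) a dot of colour $\red\alpha$ slides through it without correction terms; the same holds for a dot of colour $\blue\beta$. Fix $L$. Write $x_{\red\alpha}$ and $x_{\blue\beta}$ for the dots acting on $E^{(\red\alpha)}E^{(\blue\beta)}L$ and on $E^{(\blue\beta)}E^{(\red\alpha)}L$.
\begin{itemize}
\item Since $x_{\red\alpha}$ and $x_{\blue\beta}$ commute with the crossing, the crossing maps the simultaneous generalized eigenspace with eigenvalues $(j,i)$ into the simultaneous generalized eigenspace with $x_{\red\alpha}$-eigenvalue $j$ and $x_{\blue\beta}$-eigenvalue $i$.
\item Composing with the projection onto $E^{(\blue\beta)}_{j'}E^{(\red\alpha)}_{i'}L$ therefore gives zero unless $j'=i$ and $i'=j$.
\item In particular the map is zero unless $\{i,j\}=\{i',j'\}$, and in this case even the ordered matching is forced.
\end{itemize}

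\emph{Case $\red\alpha=\blue\beta$.} This is the single-colour situation and follows \cite[Lemma 4.2]{BSW20a}. The dots no longer commute with the crossing, so the argument above does not apply directly; instead I will use symmetric polynomials in the two dots.
\begin{itemize}
\item The affine Hecke relations (degenerate or quantum) imply that symmetric polynomials in $x_1,x_2$ commute with the crossing.
\item The projection onto $E_{j'}E_{i'}L$ is the generalized eigenspace of the commuting pair $(x_1,x_2)$ with eigenvalues $(j',i')$. On this subspace, every symmetric polynomial $f$ acts with the single generalized eigenvalue $f(j',i')$; the same holds on the source with eigenvalue $f(j,i)$.
\item If $\{i,j\}\neq\{i',j'\}$, I choose $f=(u-j)(u-i)$ evaluated symmetrically, namely $f(x_1,x_2)=$ the coefficient data of $(u-x_1)(u-x_2)$. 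Since the multiset $\{i,j\}$ is determined by the elementary symmetric functions $x_1+x_2$ and $x_1x_2$, some symmetric $f$ has $f(j,i)\neq f(j',i')$.
\item A map intertwining two operators with disjoint generalized spectra is zero, so the projected crossing vanishes.
\end{itemize}

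For the rotated crossings, each is obtained from an upward one by composing with coloured cups and caps. The inclusions and projections commute with these cups and caps by \eqref{incpro}, so vanishing transfers from the upward crossing to its rotations.

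The main point to check is that a mixed sideways crossing is genuinely a rotation of the upward mixed crossing. This holds by the definition \eqref{defcross}, since the sideways mixed crossings are defined as rotations there.
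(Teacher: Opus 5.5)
Your proposal is correct and follows the paper's proof. The same-colour case is the symmetric-polynomial argument of \cite{BSW20a}; note that the paper cites it as Lemma~4.1 there, not Lemma~4.2. The mixed case rests, exactly as in the paper, on dots of either colour sliding freely through mixed crossings, which forces the ordered matching $i'=j$, $j'=i$; your transfer to the rotated crossings via \eqref{incpro} just spells out what the paper leaves implicit.
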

\begin{proof}
    We give a proof in the degenerate case and the quantum case is similar.
    The case where ${\red{\alpha}}={\blue{\beta}}$ follows from \cite[Lemma 4.1]{BSW20a}. For the case when ${\red{\alpha}}\neq{\blue{\beta}}$, by \eqref{sizzle} and \eqref{mixedrelation2} we have \begin{align}\label{newdotslideab}
\mathord{
\begin{tikzpicture}[baseline = -1mm]
	\draw[->,blue] (0.28,-.28) to (-0.28,.28);
	\draw[->,red] (-0.28,-.28) to (0.28,.28);
      \node at (-0.33,-0.43) {$\scriptstyle{j}$};
      \node at (0.33,-0.43) {$\scriptstyle{i}$};
      \node at (-0.33,0.43) {$\scriptstyle{j'}$};
      \node at (0.33,0.43) {$\scriptstyle{i'}$};
\node at (0,-.01) {$\diamond$};
\node[red] at (-.17,-.19) {$\dt$};
\end{tikzpicture}
}&=
\mathord{
\begin{tikzpicture}[baseline = -1mm]
	\draw[->,blue] (0.28,-.28) to (-0.28,.28);
	\draw[->,red] (-0.28,-.28) to (0.28,.28);
    \node at (-0.33,-0.43) {$\scriptstyle{j}$};
      \node at (0.33,-0.43) {$\scriptstyle{i}$};
      \node at (-0.33,0.43) {$\scriptstyle{j'}$};
      \node at (0.33,0.43) {$\scriptstyle{i'}$};
\node at (0,-.01) {$\diamond$};
\node[red] at (.17,.15) {$\dt$};
\end{tikzpicture},
}&
\mathord{
\begin{tikzpicture}[baseline = -1mm]
	\draw[->,blue] (0.28,-.28) to (-0.28,.28);
	\draw[->,red] (-0.28,-.28) to (0.28,.28);
     \node at (-0.33,-0.43) {$\scriptstyle{j}$};
      \node at (0.33,-0.43) {$\scriptstyle{i}$};
      \node at (-0.33,0.43) {$\scriptstyle{j'}$};
      \node at (0.33,0.43) {$\scriptstyle{i'}$};
\node at (0,-.01) {$\diamond$};
\node[blue] at (-.17,.15) {$\dt$};
\end{tikzpicture}
}&=
\mathord{
\begin{tikzpicture}[baseline = -1mm]
	\draw[->,blue] (0.28,-.28) to (-0.28,.28);
	\draw[->,red] (-0.28,-.28) to (0.28,.28);
    \node at (-0.33,-0.43) {$\scriptstyle{j}$};
      \node at (0.33,-0.43) {$\scriptstyle{i}$};
      \node at (-0.33,0.43) {$\scriptstyle{j'}$};
      \node at (0.33,0.43) {$\scriptstyle{i'}$};
\node at (0,-.01) {$\diamond$};
\node[blue] at (.18,-.19) {$\dt$};
\end{tikzpicture}.
}
\end{align}
Hence the lemma in this case follows from the same reasoning as in the case ${\red{\alpha}}={\blue{\beta}}$.
\end{proof}

\subsubsection{Colored central elements}
Our presentations of the degenerate and quantum Heisenberg categories omit
bubbles from the generating data.  The equivalent bubble presentations
interpret \eqref{dMackey1}--\eqref{dMackey2} and
\eqref{qinver1}--\eqref{qinver2} accordingly; see
\cite[\S\S3.1--3.2]{BSW20a}.  We shall use the resulting negatively dotted
bubbles.

In the degenerate Heisenberg category, we have
\begin{align*}
\mathord{
\begin{tikzpicture}[baseline = 1.25mm]
  \draw[->] (0.2,0.2) to[out=90,in=0] (0,.4);
  \draw[-] (0,0.4) to[out=180,in=90] (-.2,0.2);
\draw[-] (-.2,0.2) to[out=-90,in=180] (0,0);
  \draw[-] (0,0) to[out=0,in=-90] (0.2,0.2);
   \node at (0.2,0.2) {$\dt$};
   \node at (0.7,0.2) {$\scriptstyle{n-k-1}$};
\end{tikzpicture}}
&:=
\left\{\begin{array}{ll}
\det\left(
\mathord{
\begin{tikzpicture}[baseline = 1.25mm]
  \draw[<-] (0,0.4) to[out=180,in=90] (-.2,0.2);
  \draw[-] (0.2,0.2) to[out=90,in=0] (0,.4);
 \draw[-] (-.2,0.2) to[out=-90,in=180] (0,0);
  \draw[-] (0,0) to[out=0,in=-90] (0.2,0.2);
   \node at (-0.2,0.2) {$\dt$};
   \node at (-0.7,0.2) {$\scriptstyle{r-s+k}$};
\end{tikzpicture}
}\:
\right)_{r,s=1,\dots,n}
\phantom{(-1)^{n+1}}&\text{if $k \geq n > 0$,}\\
1_\unit&\text{if $k \geq n=0$,}\\
0&\text{if $k \geq n < 0$,}
\end{array}\right.\\
\mathord{
\begin{tikzpicture}[baseline = 1.25mm]
  \draw[<-] (0,0.4) to[out=180,in=90] (-.2,0.2);
  \draw[-] (0.2,0.2) to[out=90,in=0] (0,.4);
 \draw[-] (-.2,0.2) to[out=-90,in=180] (0,0);
  \draw[-] (0,0) to[out=0,in=-90] (0.2,0.2);
   \node at (-0.2,0.2) {$\dt$};
   \node at (-0.7,0.2) {$\scriptstyle{n+k-1}$};
\end{tikzpicture}
}&:=
\left\{
\begin{array}{ll}
(-1)^{n+1} \det\left(\:\mathord{
\begin{tikzpicture}[baseline = 1.25mm]
  \draw[->] (0.2,0.2) to[out=90,in=0] (0,.4);
  \draw[-] (0,0.4) to[out=180,in=90] (-.2,.2);
\draw[-] (-.2,0.2) to[out=-90,in=180] (0,0);
  \draw[-] (0,0) to[out=0,in=-90] (0.2,0.2);
   \node at (0.2,0.2) {$\dt$};
   \node at (0.7,0.2) {$\scriptstyle{r-s-k}$};
\end{tikzpicture}
}\right)_{r,s=1,\dots,n}&\text{if $-k\geq n > 0$,}\\
-1_\unit&\text{if $-k\geq n=0$,}\\
0&\text{if $-k \geq n < 0$.}
\end{array}\right.
\end{align*}

In the quantum Heisenberg category we have
the $(+)$-bubbles
\begin{equation*}
\begin{aligned}
\mathord{
\begin{tikzpicture}[baseline = 1.25mm]
  \draw[->] (0.2,0.2) to[out=90,in=0] (0,.4);
  \draw[-] (0,0.4) to[out=180,in=90] (-.2,0.2);
\draw[-] (-.2,0.2) to[out=-90,in=180] (0,0);
  \draw[-] (0,0) to[out=0,in=-90] (0.2,0.2);
   \node at (0.5,0.2) {$\scriptstyle{n-k}$};
   \node at (0,0.2) {$+$};
\end{tikzpicture}}
&:=
\left\{\begin{array}{ll}
\mathord{
\begin{tikzpicture}[baseline = 1.25mm]
  \draw[->] (0.2,0.2) to[out=90,in=0] (0,.4);
  \draw[-] (0,0.4) to[out=180,in=90] (-.2,0.2);
\draw[-] (-.2,0.2) to[out=-90,in=180] (0,0);
  \draw[-] (0,0) to[out=0,in=-90] (0.2,0.2);
   \node at (0.2,0.2) {$\dt$};
   \node at (0.6,0.2) {$\scriptstyle{n-k}$};
\end{tikzpicture}}
\hspace{51.8mm}&\text{if $k<n$,}\\
t^{n+1} z^{n-1}\det\left(
\!\mathord{
\begin{tikzpicture}[baseline = 1.25mm]
  \draw[<-] (0,0.4) to[out=180,in=90] (-.2,0.2);
  \draw[-] (0.2,0.2) to[out=90,in=0] (0,.4);
 \draw[-] (-.2,0.2) to[out=-90,in=180] (0,0);
  \draw[-] (0,0) to[out=0,in=-90] (0.2,0.2);
   \node at (-0.2,0.2) {$\dt$};
   \node at (-0.85,0.2) {$\scriptstyle{r-s+k+1}$};
\end{tikzpicture}
}\,
\right)_{r,s=1,\dots,n},&\text{if $k \geq n > 0$,}\\
tz^{-1} 1_\unit&\text{if $k \geq n=0$,}\\
0&\text{if $k \geq n < 0$,}
\end{array}\right.\\
\mathord{
\begin{tikzpicture}[baseline = 1.25mm]
  \draw[<-] (0,0.4) to[out=180,in=90] (-.2,0.2);
  \draw[-] (0.2,0.2) to[out=90,in=0] (0,.4);
 \draw[-] (-.2,0.2) to[out=-90,in=180] (0,0);
  \draw[-] (0,0) to[out=0,in=-90] (0.2,0.2);
   \node at (0,0.2) {$+$};
   \node at (-0.48,0.2) {$\scriptstyle{n+k}$};
\end{tikzpicture}
}&:=
\left\{
\begin{array}{ll}
\mathord{
\begin{tikzpicture}[baseline = 1.25mm]
  \draw[<-] (0,0.4) to[out=180,in=90] (-.2,0.2);
  \draw[-] (0.2,0.2) to[out=90,in=0] (0,.4);
 \draw[-] (-.2,0.2) to[out=-90,in=180] (0,0);
  \draw[-] (0,0) to[out=0,in=-90] (0.2,0.2);
   \node at (-0.2,0.2) {$\dt$};
   \node at (-0.6,0.2) {$\scriptstyle{n+k}$};
\end{tikzpicture}
}&\text{if $-k < n$,}\\
(-1)^{n+1} t^{-n-1} z^{n-1}\det\left(\:\mathord{
\begin{tikzpicture}[baseline = 1.25mm]
  \draw[->] (0.2,0.2) to[out=90,in=0] (0,.4);
  \draw[-] (0,0.4) to[out=180,in=90] (-.2,.2);
\draw[-] (-.2,0.2) to[out=-90,in=180] (0,0);
  \draw[-] (0,0) to[out=0,in=-90] (0.2,0.2);
   \node at (0.2,0.2) {$\dt$};
   \node at (0.85,0.2) {$\scriptstyle{r-s-k+1}$};
\end{tikzpicture}
}\right)_{r,s=1,\dots,n}&\text{if $-k\geq n > 0$,}\\
-t^{-1}z^{-1} 1_\unit
&\text{if $-k \geq n=0$,}\\
0&\text{if $-k\geq n < 0$.}
\end{array}\right.
\end{aligned}
\end{equation*}
(These bubbles are not colored as they belong to just one copy of the Heisenberg category).

 Following \cite{BSW20a}, we define the colored
generating functions in the degenerate case:
\begin{align*}
\begin{tikzpicture}[baseline = 1.25mm]
		\draw[-,red] (0,0.4) to[out=180,in=90] (-.2,0.2);
		\draw[->,red] (0.2,0.2) to[out=90,in=0] (0,.4);
		\draw[-,red] (-.2,0.2) to[out=-90,in=180] (0,0);
		\draw[-,red] (0,0) to[out=0,in=-90] (0.2,0.2);
		\node at (0.0,-.2) {$\scriptstyle{\red\alpha}$};
	\end{tikzpicture}(u) &:= \sum_{r\in\Z}
\mathord{
\begin{tikzpicture}[red,baseline = 1.25mm]
  \draw[-] (0,0.4) to[out=180,in=90] (-.2,0.2);
  \draw[->] (0.2,0.2) to[out=90,in=0] (0,.4);
 \draw[-] (-.2,0.2) to[out=-90,in=180] (0,0);
  \draw[-] (0,0) to[out=0,in=-90] (0.2,0.2);
   \node at (0.2,0.2) {$\dt$};
   \node[black] at (0.45,0.2) {$\scriptstyle{x^r}$};
\end{tikzpicture}
}\: u^{-r-1} \in u^k 1_\unit + u^{k-1}\End_{\Heis_k}(\unit)\llbracket u^{-1}\rrbracket,\\
\begin{tikzpicture}[baseline = 1.25mm]
		\draw[-,red] (0,0.4) to[out=180,in=90] (-.2,0.2);
		\draw[<-,red] (0.2,0.2) to[out=90,in=0] (0,.4);
		\draw[-,red] (-.2,0.2) to[out=-90,in=180] (0,0);
		\draw[-,red] (0,0) to[out=0,in=-90] (0.2,0.2);
		\node at (0.0,-.2) {$\scriptstyle{\red\alpha}$};
	\end{tikzpicture}(u)&:= -\sum_{r \in\Z}
\mathord{
\begin{tikzpicture}[red,baseline = 1.25mm]
  \draw[<-] (0,0.4) to[out=180,in=90] (-.2,0.2);
  \draw[-] (0.2,0.2) to[out=90,in=0] (0,.4);
 \draw[-] (-.2,0.2) to[out=-90,in=180] (0,0);
  \draw[-] (0,0) to[out=0,in=-90] (0.2,0.2);
   \node at (-0.2,0.2) {$\dt$};
   \node[black] at (-0.45,0.2) {$\scriptstyle{x^r}$};
\end{tikzpicture}
} \:u^{-r-1}\in u^{-k} 1_\unit +u^{-k-1}\End_{\Heis_k}(\unit)\llbracket u^{-1}\rrbracket.
\end{align*}
and in the quantum case:
\begin{align*}
	\begin{tikzpicture}[baseline = 1.25mm]
		\draw[-,red] (0,0.4) to[out=180,in=90] (-.2,0.2);
		\draw[->,red] (0.2,0.2) to[out=90,in=0] (0,.4);
		\draw[-,red] (-.2,0.2) to[out=-90,in=180] (0,0);
		\draw[-,red] (0,0) to[out=0,in=-90] (0.2,0.2);
		\node at (0.0,-.2) {$\scriptstyle{\red\alpha}$};
	\end{tikzpicture}
	(u) &:= t_{\red{\alpha}}^{-1}z_{\red{\alpha}} \sum_{r\in\bbZ}
	\mathord{
		\begin{tikzpicture}[baseline = 1.25mm]
			\draw[-,red] (0,0.4) to[out=180,in=90] (-.2,0.2);
			\draw[->,red] (0.2,0.2) to[out=90,in=0] (0,.4);
			\draw[-,red] (-.2,0.2) to[out=-90,in=180] (0,0);
			\draw[-,red] (0,0) to[out=0,in=-90] (0.2,0.2);
			\node at (0,.21) {$+$};
			\node at (0.33,0.2) {$\scriptstyle{r}$};
			\node at (0.0,-.2) {$\scriptstyle{\red\alpha}$};
		\end{tikzpicture}
	}\: u^{-r}
	\in u^k 1_\unit + u^{k-1} \End_{\Heis_k(z,t)}(\unit)[[
	u^{-1} ]]
	,\\
	\begin{tikzpicture}[baseline = 1.25mm]
		\draw[<-,red] (0,0.4) to[out=180,in=90] (-.2,0.2);
		\draw[-,red] (0.2,0.2) to[out=90,in=0] (0,.4);
		\draw[-,red] (-.2,0.2) to[out=-90,in=180] (0,0);
		\draw[-,red] (0,0) to[out=0,in=-90] (0.2,0.2);
		\node at (0.0,-.2) {$\scriptstyle{\red\alpha}$};
	\end{tikzpicture}(u)&:= -t_{\red{\alpha}}z_{\red{\alpha}} \sum_{r\in\Z}
	\mathord{
		\begin{tikzpicture}[baseline = 1.25mm]
			\draw[<-,red] (0,0.4) to[out=180,in=90] (-.2,0.2);
			\draw[-,red] (0.2,0.2) to[out=90,in=0] (0,.4);
			\draw[-,red] (-.2,0.2) to[out=-90,in=180] (0,0);
			\draw[-,red] (0,0) to[out=0,in=-90] (0.2,0.2);
			\node at (0,.21) {$+$};
			\node at (-0.33,0.2) {$\scriptstyle{r}$};
			\node at (0.0,-.2) {$\scriptstyle{\red\alpha}$};
		\end{tikzpicture}
	} \:u^{-r} \in u^{-k}1_\unit +
	u^{-k-1}\End_{\Heis_k(z,t)}(\unit)[[
	u^{-1} ]].
\end{align*}
Then they satisfy \begin{equation*}
	\begin{tikzpicture}[baseline = 1.25mm]
		\draw[-,red] (0,0.4) to[out=180,in=90] (-.2,0.2);
		\draw[->,red] (0.2,0.2) to[out=90,in=0] (0,.4);
		\draw[-,red] (-.2,0.2) to[out=-90,in=180] (0,0);
		\draw[-,red] (0,0) to[out=0,in=-90] (0.2,0.2);
		\node at (0.0,-.2) {$\scriptstyle{\red\alpha}$};
	\end{tikzpicture}
	(u)\; \begin{tikzpicture}[baseline = 1.25mm]
		\draw[<-,red] (0,0.4) to[out=180,in=90] (-.2,0.2);
		\draw[-,red] (0.2,0.2) to[out=90,in=0] (0,.4);
		\draw[-,red] (-.2,0.2) to[out=-90,in=180] (0,0);
		\draw[-,red] (0,0) to[out=0,in=-90] (0.2,0.2);
		\node at (0.0,-.2) {$\scriptstyle{\red\alpha}$};
	\end{tikzpicture}(u)
	= 1_\unit.
\end{equation*}
For convenience, we denote $$\bbO^{(\red{\alpha})}(u):=\begin{tikzpicture}[baseline = 1.25mm]
	\draw[-,red] (0,0.4) to[out=180,in=90] (-.2,0.2);
	\draw[->,red] (0.2,0.2) to[out=90,in=0] (0,.4);
	\draw[-,red] (-.2,0.2) to[out=-90,in=180] (0,0);
	\draw[-,red] (0,0) to[out=0,in=-90] (0.2,0.2);
	\node at (0.0,-.2) {$\scriptstyle{\red\alpha}$};
\end{tikzpicture}
(u)=\big(\begin{tikzpicture}[baseline = 1.25mm]
	\draw[<-,red] (0,0.4) to[out=180,in=90] (-.2,0.2);
	\draw[-,red] (0.2,0.2) to[out=90,in=0] (0,.4);
	\draw[-,red] (-.2,0.2) to[out=-90,in=180] (0,0);
	\draw[-,red] (0,0) to[out=0,in=-90] (0.2,0.2);
	\node at (0.0,-.2) {$\scriptstyle{\red\alpha}$};
\end{tikzpicture}(u)\big)^{-1}.$$

The evaluation $\bbO^{(\red{\alpha})}(u)(L): \unit(L) \to \unit(L)$ of this natural transformation $\bbO^{(\red{\alpha})}(u)\in \calE nd(\unit)((u^{-1}))$
on an irreducible object $L\in \calC$ is
\begin{align*}
	\bbO^{(\red{\alpha})}_L(u)&:=
	\mathord{
		\begin{tikzpicture}[baseline = -1mm]
			\node at (0.08,0) {$\scriptstyle\redanticlock(u)$};
   \node[red] at (.05,-.37){$\scriptstyle \alpha$};
			\draw[-,darkg,thick] (0.68,.2) to (0.68,-.22);
			\node at (0.68,-.37) {$\darkg\scriptstyle{L}$};
		\end{tikzpicture}
	}=\left(
	\mathord{
		\begin{tikzpicture}[baseline = -1mm]
			\node at (0.08,0) {$\scriptstyle\redclock(u)$};
   \node[red] at (.05,-.37){$\scriptstyle \alpha$};
			\draw[-,darkg,thick] (0.68,.2) to (0.68,-.22);
			\node at (0.68,-.37) {$\darkg\scriptstyle{L}$};
		\end{tikzpicture}
	}\right)^{-1}\in \End_{\calC}(L)((u^{-1})).
\end{align*}
For an irreducible object $L \in \calC$,
it was shown in \cite[Lemma 4.17]{BSW20a} that
$$
\bbO^{(\red{\alpha})}_L(u) = m^{(\red{\alpha})}_L(u)/n^{(\red{\alpha})}_L(u),\text{ for each }\alpha=1,\ldots,d.
$$

By the relations \eqref{mixedHecke}, a bubble of color ${\red{\alpha}}$ can slide through any strand of color $\blue{\beta}\neq \red{\alpha}$ freely as follows.
\begin{lemma}
\label{lem:colorbubbleslide}
	We have the following relations:
	\begin{align*}
		&\mathord{\begin{tikzpicture}[baseline=0]
				\draw[<-,thin,red] (0.3,0) to[out=90,in=0] (0,0.3);
				\draw[-,thin,red] (0,0.3) to[out=180,in=90] (-.3,0);
				\draw[-,thin,red] (-.3,0) to[out=-90,in=180] (0,-0.3);
				\draw[-,thin,red] (0,-0.3) to[out=0,in=-90] (0.3,0);
				\draw[->,thin,blue] (0.6,-0.5) to (0.6,.6);
				\node[red] at (-.6,0) {$\color{darkblue}\scriptstyle{x^r}$};
				\node at (-.3,0) {$\dott$};
			\end{tikzpicture}
		}
		=
		\mathord{\begin{tikzpicture}[baseline=0]
				\draw[<-,thin,red] (0.3,0) to[out=90,in=0] (0,0.3);
				\draw[-,thin,red] (0,0.3) to[out=180,in=90] (-.3,0);
				\draw[-,thin,red] (-.3,0) to[out=-90,in=180] (0,-0.3);
				\draw[-,thin,red] (0,-0.3) to[out=0,in=-90] (0.3,0);
				\draw[->,thin,blue] (-1.25,-0.5) to (-1.25,.6);
				\node[red] at (-.6,0) {$\color{darkblue}\scriptstyle{x^r}$};
				\node at (-.3,0) {$\dott$};
			\end{tikzpicture}
		}\:,
		&\mathord{\begin{tikzpicture}[baseline=0]
				\draw[<-,thin,red] (0.3,0) to[out=90,in=0] (0,0.3);
				\draw[-,thin,red] (0,0.3) to[out=180,in=90] (-.3,0);
				\draw[-,thin,red] (-.3,0) to[out=-90,in=180] (0,-0.3);
				\draw[-,thin,red] (0,-0.3) to[out=0,in=-90] (0.3,0);
				\draw[<-,thin,blue] (0.6,-0.5) to (0.6,.6);
				\node[red] at (-.6,0) {$\color{darkblue}\scriptstyle{x^r}$};
				\node at (-.3,0) {$\dott$};
			\end{tikzpicture}
		}
		=
		\mathord{\begin{tikzpicture}[baseline=0]
				\draw[<-,thin,red] (0.3,0) to[out=90,in=0] (0,0.3);
				\draw[-,thin,red] (0,0.3) to[out=180,in=90] (-.3,0);
				\draw[-,thin,red] (-.3,0) to[out=-90,in=180] (0,-0.3);
				\draw[-,thin,red] (0,-0.3) to[out=0,in=-90] (0.3,0);
				\draw[<-,thin,blue] (-1.25,-0.5) to (-1.25,.6);
				\node at (-.6,0) {$\color{darkblue}\scriptstyle{x^r}$};
				\node at (-.3,0) {$\dott$};
			\end{tikzpicture}
		}\:,\\
		&\mathord{\begin{tikzpicture}[baseline=0]
				\draw[->,thin,red] (0.3,0) to[out=90,in=0] (0,0.3);
				\draw[-,thin,red] (0,0.3) to[out=180,in=90] (-.3,0);
				\draw[-,thin,red] (-.3,0) to[out=-90,in=180] (0,-0.3);
				\draw[-,thin,red] (0,-0.3) to[out=0,in=-90] (0.3,0);
				\draw[->,thin,blue] (0.6,-0.5) to (0.6,.6);
				\node at (-.6,0) {$\color{darkblue}\scriptstyle{x^r}$};
				\node at (-.3,0) {$\dott$};
			\end{tikzpicture}
		}
		=
		\mathord{\begin{tikzpicture}[baseline=0]
				\draw[->,thin,red] (0.3,0) to[out=90,in=0] (0,0.3);
				\draw[-,thin,red] (0,0.3) to[out=180,in=90] (-.3,0);
				\draw[-,thin,red] (-.3,0) to[out=-90,in=180] (0,-0.3);
				\draw[-,thin,red] (0,-0.3) to[out=0,in=-90] (0.3,0);
				\draw[->,thin,blue] (-1.25,-0.5) to (-1.25,.6);
				\node at (-.6,0) {$\color{darkblue}\scriptstyle{x^r}$};
				\node at (-.3,0) {$\dott$};
			\end{tikzpicture}
		}\:,
		&\mathord{\begin{tikzpicture}[baseline=0]
				\draw[->,thin,red] (0.3,0) to[out=90,in=0] (0,0.3);
				\draw[-,thin,red] (0,0.3) to[out=180,in=90] (-.3,0);
				\draw[-,thin,red] (-.3,0) to[out=-90,in=180] (0,-0.3);
				\draw[-,thin,red] (0,-0.3) to[out=0,in=-90] (0.3,0);
				\draw[<-,thin,blue] (0.6,-0.5) to (0.6,.6);
				\node at (-.6,0) {$\color{darkblue}\scriptstyle{x^r}$};
				\node at (-.3,0) {$\dott$};
			\end{tikzpicture}
		}
		=
		\mathord{\begin{tikzpicture}[baseline=0]
				\draw[->,thin,red] (0.3,0) to[out=90,in=0] (0,0.3);
				\draw[-,thin,red] (0,0.3) to[out=180,in=90] (-.3,0);
				\draw[-,thin,red] (-.3,0) to[out=-90,in=180] (0,-0.3);
				\draw[-,thin,red] (0,-0.3) to[out=0,in=-90] (0.3,0);
				\draw[<-,thin,blue] (-1.25,-0.5) to (-1.25,.6);
				\node at (-.6,0) {$\color{darkblue}\scriptstyle{x^r}$};
				\node at (-.3,0) {$\dott$};
			\end{tikzpicture}
		}.
	\end{align*}
\end{lemma}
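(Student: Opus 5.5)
We prove the first relation, a clockwise-type bubble of color $\red\alpha$ with $r$ dots passing an upward strand of color $\blue\beta\neq\red\alpha$; the other three cases (changing the orientation of the $\blue\beta$ strand or of the bubble) are handled the same way.

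\emph{Case $r\ge 0$.} The bubble is a genuine diagram built from a cup, a cap and $r$ dots of color $\red\alpha$. We isotope the $\blue\beta$ strand across the bubble: first introduce a pair of mutually inverse mixed crossings via the first relation of \eqref{mixedHecke} or its sideways analogue \eqref{mixedMackey}/\eqref{mixedMackeyquantum}. In the degenerate case these are the relations \eqref{mixedrelation1} of $\symHeis_d$, which are available by Theorem~\ref{thm:equiv}. Then move the $\blue\beta$ strand past the cap and cup of color $\red\alpha$ using naturality of $\sigma$ from \eqref{symm2}. These are the mixed pitchfork relations: cups and caps of one color slide through strands of another color, as in Lemma~\ref{Lem:adj}. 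Next slide the dots through the mixed crossings using the second relation of \eqref{mixedHecke}; unlike the same-color case, there is no correction term, by \eqref{mixedrelation2} and Lemma~\ref{Lemma:dot}. Finally cancel the remaining pair of crossings. Conceptually, all of this is just the statement that $\sigma_{X,Y}$ is a natural isomorphism for $X$ in the $\red\alpha$ factor and $Y$ in the $\blue\beta$ factor, applied to the endomorphism $f$ of $\unit$, viewed as $X_1=X_2=\unit$ in \eqref{symm2}. That gives $(1_Y\otimes f)=(f\otimes 1_Y)$ directly, since $\sigma_{\unit,Y}=1_Y$ by \eqref{symm1}.

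\emph{Case $r<0$.} Negatively dotted bubbles are not primitive diagrams. In the degenerate case they are determinants in positively dotted bubbles of the opposite orientation, together with the constants $0,\pm1_\unit$. In the quantum case the $(+)$-bubbles are similarly built from dotted bubbles, using invertible dots and the inverse-matrix entries of \eqref{qinver1}--\eqref{qinver2}. Any polynomial in elements of $\End(\unit)$ coming from the $\red\alpha$ factor again lies in the image of the $\red\alpha$ factor. So it suffices to know that every element $f\in\End_{\Heis_{k_\alpha}}(\unit)$ — including those defined via inverses of the inversion-relation matrices — commutes with $1_Y$ for $Y$ in another factor. This follows from the same naturality argument, because the functor from the $\red\alpha$-factor into the symmetric product is monoidal and \eqref{symm2} holds for all morphisms $f$ of that factor, not just generators.

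\textbf{Main obstacle.} The only subtle point is ensuring that the "hidden" morphisms (the $\diam$ cups and caps arising from inverting the matrices) also satisfy \eqref{symm2}. The approach is to note that they are morphisms of the factor $\Heis_{k_\alpha}$ itself, so \eqref{symm2} applies verbatim in $\symHeis_d$, and then to transport the statement across the equivalence of Theorem~\ref{thm:equiv} and Theorem~\ref{dmultiHeisenberg}. The generating-function statement for $\bbO^{(\red\alpha)}(u)$ then follows coefficientwise.
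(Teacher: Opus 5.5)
Your proof is correct, and its decisive step is different from, and more general than, the paper's. The paper gives no separate proof. The sentence before Lemma~\ref{lem:colorbubbleslide} only says that, by the mixed relations \eqref{mixedHecke}, a bubble of color $\alpha$ slides freely through a strand of color $\beta\neq\alpha$. That is the diagrammatic isotopy of your first paragraph: invertible mixed crossings, correction-free mixed dot slides, and tacitly the sideways mixed inverses and mixed pitchfork relations. As written, it applies directly only to bubbles that are genuine composites of cups, caps and dots.

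Your conceptual argument instead applies \eqref{symm2} with $X_1=X_2=\unit$ to an arbitrary $f\in\End(\unit)$ of the $\alpha$-factor. It uses $\sigma_{\unit,Y}=1_Y$, which holds because \eqref{symm1} gives $\sigma_{\unit,Y}=\sigma_{\unit,Y}\circ\sigma_{\unit,Y}$ and $\sigma_{\unit,Y}$ is invertible. This yields all four relations at once for every endomorphism of the unit of that factor. In particular, negatively dotted bubbles, $(+)$-bubbles and the generating series $\bbO^{(\alpha)}(u)$ are covered without the determinantal formulas, a point the paper leaves implicit.

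In exchange, the paper's route stays inside the upward presentation, whose relations are the ones actually verified for the group towers. It also shows that the only real input is the absence of a correction term in the mixed dot slide.

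Finally, Section~\ref{sec:HeistoKac} defines $\symHeis_d$ as the symmetric product itself. So your transport across Theorems~\ref{thm:equiv} and~\ref{dmultiHeisenberg} is unnecessary, though harmless.
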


For each quantum color $\alpha$, choose $q_\alpha\in\kk^\times$ with $q_\alpha-q_\alpha^{-1}=z_\alpha$. Define
\begin{equation}
\label{eq:ipm}
	i^{\pm,\alpha} := \left\{\begin{array}{ll}
		q_\alpha^{\pm 2} i&\text{in the quantum case ($z_\alpha\neq 0$),}\\
		i\pm 1&\text{in the degenerate case ($z_\alpha = 0$),}
	\end{array}\right.
\end{equation}
Then we have the following lemma.
\begin{lemma}
\label{lem:choose}
	Suppose that $L\in \calC$ is an irreducible object, and let $M$ be an irreducible subquotient of  $E_i^{(\blue{\beta})}(L)$ for $i\in I^{(\blue{\beta})}.$
	Then for each $\alpha,\beta=1,\cdots,d$, we have
	\begin{align*}
		\bbO^{(\red{\alpha})}_M(u)=\begin{cases}
			\bbO^{(\red{\alpha})}_L(u)\frac{ (u-i^{+,\beta})(u-i^{-,\beta})}{(u-i)^2}
			&\text{if ${\red{\alpha}}={\blue{\beta}}$};\\
			\bbO^{(\red{\alpha})}_L(u)&\text{if ${\red{\alpha}}\neq{\blue{\beta}}$}.
		\end{cases}
	\end{align*}
\end{lemma}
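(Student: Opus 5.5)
The plan splits into the two cases $\red\alpha=\blue\beta$ and $\red\alpha\neq\blue\beta$. The first is imported from the single-colored theory; the second uses bubble sliding. In both cases the key point is that the bubble generating function $\bbO^{(\red\alpha)}(u)$ is a natural transformation of the identity functor, hence acts on every object. On an irreducible object it acts by the scalar-valued series $\bbO^{(\red\alpha)}_L(u)$: by the colored analogue of \cite[Lemma 4.17]{BSW20a}, it is the rational function $m^{(\red\alpha)}_L(u)/n^{(\red\alpha)}_L(u)$ expanded in $u^{-1}$.

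\emph{Case $\red\alpha=\blue\beta$.} Restrict the $\symHeis_d$-action on $\calC$ to the $\red\alpha$-th factor $\Heis_{k_{\red\alpha}}$ (resp. $\Heis_{k_{\red\alpha}}(z_{\red\alpha},t_{\red\alpha})$) via the obvious monoidal functor into the symmetric product. This restricted action is a single-colored Heisenberg action on the same locally finite abelian category. The summand $E^{(\red\alpha)}_i$ is defined purely in terms of that factor, and so is the bubble $\bbO^{(\red\alpha)}(u)$. The statement is then exactly the single-color result of \cite[Lemma 4.18]{BSW20a}, applied with $i^{\pm}$ as in \eqref{eq:ipm}; this is the analogue of Lemma 4.17 cited above.

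For completeness I would recall the mechanism. Sliding a bubble through an upward $\red\alpha$-strand produces the factor $\frac{(u-x^{+})(u-x^{-})}{(u-x)^2}$, where $x$ is the dot; this is the infinite Grassmannian / bubble slide relation from \cite{BSW20a}. Restrict to the summand $E_i$, on which $x-i$ is nilpotent, and pass to a subquotient $M$ of $E_i L$. There the natural transformation $\bbO(u)$ acts on $M$ by the scalar $\bbO_M(u)$. The factor becomes a scalar because the other side of the relation is $\bbO_L(u)$ times a rational function of the dot. On an irreducible subquotient, that rational function has to be evaluated at the scalar $i$: nilpotent parts cannot change the scalar on a simple subquotient.

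\emph{Case $\red\alpha\neq\blue\beta$.} By Lemma~\ref{lem:colorbubbleslide}, the $\red\alpha$-bubbles (all dotted versions, hence all coefficients of $\bbO^{(\red\alpha)}(u)$) commute with an upward $\blue\beta$-strand. Precomposing and postcomposing with the inclusion and projection of $E_i^{(\blue\beta)}$, and using that these are built from $\blue\beta$-dots, which commute with the $\red\alpha$-bubble, we obtain the following: $\bbO^{(\red\alpha)}(u)$ acting on $E^{(\blue\beta)}_iL$ equals $E^{(\blue\beta)}_i$ applied to $\bbO^{(\red\alpha)}(u)_L$. The latter is the scalar series $\bbO^{(\red\alpha)}_L(u)$ times the identity. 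A natural transformation of the identity functor acting by a scalar on $E_i^{(\blue\beta)}L$ induces the same scalar on every subquotient, so $\bbO^{(\red\alpha)}_M(u)=\bbO^{(\red\alpha)}_L(u)$. Naturality with respect to the inclusion and projection maps defining a subquotient is what justifies this last step.

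The main obstacle is the rigorous scalar extraction in the same-color case. The relation identifies $\bbO$ on $E_iL$ with $\bbO_L(u)$ times a rational series in the dot, and that dot is only generalized-eigen with eigenvalue $i$. I would handle this by noting that both sides are natural transformations on $E_iL$ with coefficients in $\End(E_iL)$. Passing to a composition series, the induced endomorphism on the simple factor $M$ is a scalar for $\bbO$ (Schur's lemma, $\kk$ algebraically closed, $\calC$ locally finite). It is also the image of $i+(\text{nilpotent})$ for the dot, and that image must be the scalar $i$ since it is nilpotent-shifted and scalar. The expansion in $u^{-1}$ then converges formally, which yields the stated identity.
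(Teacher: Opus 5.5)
Your proposal is correct and follows the paper's proof: the case $\alpha=\beta$ is the single-color statement of \cite{BSW20a}, which the paper cites as \cite[Lemma 4.5]{BSW20a}, and the case $\alpha\neq\beta$ follows from Lemma~\ref{lem:colorbubbleslide} together with naturality of the bubble. One point in your optional recollection of the same-color mechanism needs rephrasing: the dot need not preserve a composition series of $E_iL$, so extract the scalar from the coefficients of $\bbO(u)-\bbO_L(u)\frac{(u-i^{+,\beta})(u-i^{-,\beta})}{(u-i)^2}$ on $E_iL$ instead — these are polynomials in $x-i$ with no constant term, hence nilpotent, and by naturality they preserve every subobject, so they induce the zero scalar on $M$.
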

\begin{proof}
	The case when $\alpha=\beta$ follows from \cite[Lemma 4.5]{BSW20a}. The case when $\alpha\neq \beta$ follows from Lemma~\ref{lem:colorbubbleslide}.
\end{proof}

\subsubsection{Weight decomposition of category $\calC$}
\label{sec:wtdecompC}
For each $\red{\alpha}=1,\cdots,d,$ we define the {\em $(\red{\alpha})$-spectrum} $I^{(\red{\alpha})}$ of $\calC$ to be the union of the root sets of the minimal polynomials $m^{(\red{\alpha})}_L(u)$ for all irreducible objects $L$ in $\calC$. By exactness and adjunction, the spectrum $I^{(\red{\alpha})}$ is also
the union of the sets of roots of the polynomials $n^{(\red{\alpha})}_L(u)$ for all irreducible $L \in \mathcal{C}$. Consequently,
\begin{align*}
	E^{(\red{\alpha})} &=
	\bigoplus_{i \in I^{(\red{\alpha})}} E^{(\red{\alpha})}_i,&
	F ^{(\red{\alpha})}&= \bigoplus_{i \in I^{(\red{\alpha})}} F^{(\red{\alpha})}_i,
\end{align*}
where each of the endofunctors
$E^{(\red{\alpha})}_i$ and $F^{(\red{\alpha})}_i$ are non-zero.

Recall $i^{\pm,\alpha}$ from \eqref{eq:ipm}. A subset of $R$ is called $\sigma_\alpha$-stable if it is closed under the automorphisms $\sigma_\alpha:i \mapsto i^{+,\alpha}$ and $\sigma_\alpha^{-1}:i\mapsto i^{-,\alpha}$, assuming moreover that it does not contain $0$ in the quantum case. In particular, \cite[Lemma 4.6]{BSW20a} proves that the $(\red{\alpha})$-spectrum $I^{(\red{\alpha})}$ is $\sigma_\alpha$-stable.
The data $(I^{(\red{\alpha})},\sigma_\alpha)$ naturally give rises to a quiver $Q^{(\red{\alpha})}$ .

We define a quiver $Q$ to be the disjoint union of $Q^{(\red{\alpha})}$:
$Q=\bigoplus_{\red{\alpha}=1}^d Q^{(\red{\alpha})},$
where there are no arrows between $Q^{(\red{\alpha})}$ and $Q^{(\blue{\beta})}$ for any $\red{\alpha}\neq \blue{\beta}.$ We denote by $$I=\bigsqcup_{\red{\alpha}=1}^{d}I^{(\red{\alpha})}$$ the
subsequent partition of the vertex set, and we call $I$ the colored spectrum.

In particular, the quiver $Q$ is obtained as follows
\begin{itemize}[leftmargin=8mm]
	\item  the vertex set is $I$, and
	\item there is an arrow $i\to i^{+,\alpha}$ if and only if $i, i^{+,\alpha}\in I^{(\red{\alpha})}$ for some $\red{\alpha}=1,\ldots,d$.
 \end{itemize}
Each connected component of $Q$ is determined by the corresponding $\sigma_\alpha$-orbit, hence has type $A_{e-1}^{(1)}$ for a finite orbit of size $e\ge2$ and type $A_\infty$ for an infinite orbit.

There is an associated generalized Cartan matrix
$(a^{(\red{\alpha})}_{i,j})_{i,j \in I^{(\red{\alpha})}}$ with $a^{(\red{\alpha})}_{i,i} := 2$ for each $i^{(\red{\alpha})} \in I^{(\red{\alpha})}$, and
$a^{(\red{\alpha})}_{i,j}:=-\delta_{i^{+,\alpha},j}-\delta_{i,j^{+,\alpha}}$
for each $i^{(\red{\alpha})} \neq j^{(\red{\alpha})}$.
Let $\g^{(\red{\alpha})}$ be the Kac--Moody Lie algebra over $\C$
generated by $\{e^{(\red{\alpha})}_i, f^{(\red{\alpha})}_i, h^{(\red{\alpha})}_i\:|\:i^{(\red{\alpha})} \in I^{(\red{\alpha})}\}$ subject to the Serre
relations defined from
the Cartan matrix $(a^{(\red{\alpha})}_{i,j})_{i^{(\red{\alpha})}, j^{(\red{\alpha})} \in I^{(\red{\alpha})}}$.
Let $\g$ be the direct sum of $\g^{(\red{\alpha})}$ for all $\red{\alpha}=1,\ldots,d$.

As in \S~\ref{sec:KMCdef}, for each $\red{\alpha}$ and $i^{(\red{\alpha})}\in\scrI^{(\red{\alpha})}$, we let $\alpha_{i^{(\red{\alpha})}},$ $\alpha_{i^{(\red{\alpha})}}^\vee$ be the simple root and coroot
corresponding to $e_{i^{(\red{\alpha})}}$ and let $\Lambda_{i^{(\red{\alpha})}}$ be the $i$-th fundamental weight associated with $\mathfrak{g}^{(\red{\alpha})}$. Define
$$\Y^{(\red{\alpha})}=\bigoplus\limits_{i^{(\red{\alpha})}\in \scrI^{(\red{\alpha})}} \bbZ \alpha_{i^{(\red{\alpha})}}, {(\Y^{(\red{\alpha})})}^\vee = \bigoplus\limits_{i^{(\red{\alpha})}\in \scrI^{(\red{\alpha})}} \bbZ \alpha_{i^{(\red{\alpha})}}^\vee,
\X^{(\red{\alpha})}=\bigoplus\limits_{i^{(\red{\alpha})}\in \scrI^{(\red{\alpha})}}\bbZ \Lambda_{i^{(\red{\alpha})}}.$$

For an irreducible object $L \in \mathcal{C}$, let
\begin{equation*}
	\wt(L) := \sum_{\red{\alpha}=1}^d\sum_{i \in \scrI^{(\red{\alpha})}} (\phi^{(\red{\alpha})}_i(L)-\epsilon^{(\red{\alpha})}_i(L)) \Lambda_{i^{(\red{\alpha})}} \in \X=\bigoplus_{\red{\alpha}=1}^d \X^{(\red{\alpha})}.
\end{equation*}
For $\lambda\in\X$, let $\mathcal C_\lambda$ be the Serre subcategory of
objects whose irreducible subquotients all have weight $\lambda$.
Irreducibles of different weights have different central characters, and the
block decomposition gives
\begin{equation*}
	\textstyle
	\mathcal{C} =
	\bigoplus_{\lambda \in X} \mathcal{C}_\lambda  .
\end{equation*}
This is the {\em weight-space decomposition} of $\mathcal{C}$.

\begin{lemma}\label{pg}
	For $\lambda \in X$ and $i \in \scrI^{(\red{\alpha})}$, the
	restrictions of $E^{(\red{\alpha})}_i$ and $F^{(\red{\alpha})}_i$ to $\mathcal{C}_\lambda$ give functors
	\begin{align*}
		E^{(\red{\alpha})}_i|_{\mathcal{C}_\lambda}&:\mathcal{C}_\lambda \rightarrow \mathcal{C}_{\lambda+\alpha_{i^{(\red{\alpha})}}},&
		F^{(\red{\alpha})}_i|_{\mathcal{C}_\lambda}&:\mathcal{C}_\lambda \rightarrow \mathcal{C}_{\lambda-\alpha_{i^{(\red{\alpha})}}},
	\end{align*}
\end{lemma}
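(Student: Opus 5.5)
The plan is to reduce the claim to a statement about irreducible objects and then read off weights from the colored generating functions $\bbO^{(\red\alpha)}_L(u)$. Since $E^{(\red\alpha)}_i$ and $F^{(\red\alpha)}_i$ are exact (they are summands of functors having both adjoints), and $\mathcal C_\lambda$ is the Serre subcategory of objects all of whose composition factors have weight $\lambda$, it suffices to show the following. For irreducible $L$ of weight $\lambda$, every irreducible subquotient $M$ of $E^{(\red\alpha)}_i L$ has weight $\lambda+\alpha_{i^{(\red\alpha)}}$, and similarly every irreducible subquotient of $F^{(\red\alpha)}_i L$ has weight $\lambda-\alpha_{i^{(\red\alpha)}}$. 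Exactness, together with the finite length of objects in a locally finite abelian category, then propagates this from irreducibles to all of $\mathcal C_\lambda$.

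The key observation is that the weight of an irreducible $L$ is encoded by the rational functions $\bbO^{(\red\beta)}_L(u)=m^{(\red\beta)}_L(u)/n^{(\red\beta)}_L(u)$ for $\red\beta=1,\dots,d$ (this is \cite[Lemma 4.17]{BSW20a}, recalled above). For each color $\red\beta$ and each $j\in I^{(\red\beta)}$, the order of vanishing of $\bbO^{(\red\beta)}_L(u)$ at $u=j$ is $\epsilon^{(\red\beta)}_j(L)-\phi^{(\red\beta)}_j(L)$, which is exactly $-\langle h_{j^{(\red\beta)}},\wt(L)\rangle$. So $\wt(L)$ is determined by the tuple $(\bbO^{(\red\beta)}_L(u))_{\red\beta}$, and conversely.

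Now apply Lemma~\ref{lem:choose} with the color $\red\alpha$ in the role of $\blue\beta$ there.

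For colors $\red\beta\neq\red\alpha$, we get $\bbO^{(\red\beta)}_M=\bbO^{(\red\beta)}_L$. Hence the $\X^{(\red\beta)}$-components of $\wt(M)$ and $\wt(L)$ agree, consistent with $\alpha_{i^{(\red\alpha)}}$ having zero component there because $Q$ has no arrows between different colors.

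For $\red\beta=\red\alpha$, we get
\[
\bbO^{(\red\alpha)}_M(u)=\bbO^{(\red\alpha)}_L(u)\,\frac{(u-i^{+,\alpha})(u-i^{-,\alpha})}{(u-i)^2}.
\]
This changes the vanishing order by $-2$ at $i$ and by $+1$ at $i^{\pm,\alpha}$, with the obvious modification if $i^{+,\alpha}=i^{-,\alpha}$ in a cyclic component of size $2$, where the change is $+2$. This shifts the $\Lambda$-coordinates by $2\Lambda_i-\Lambda_{i^+}-\Lambda_{i^-}=\sum_j a^{(\red\alpha)}_{i,j}\Lambda_{j}=\alpha_{i^{(\red\alpha)}}$. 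Hence $\wt(M)=\lambda+\alpha_{i^{(\red\alpha)}}$.

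For $F^{(\red\alpha)}_i$, use the adjunction $(E^{(\red\alpha)}_i,F^{(\red\alpha)}_i)$. If $M$ is a composition factor of $F^{(\red\alpha)}_iL$, then by exactness and biadjointness $L$ occurs as a composition factor of $E^{(\red\alpha)}_iM$: indeed $\Hom(P_M,F_iL)\neq0$ for a projective cover $P_M$, or one argues via the inverse formula by the symmetric version of \cite[Lemma 4.5]{BSW20a}. The $E$-case then gives $\wt(L)=\wt(M)+\alpha_{i^{(\red\alpha)}}$. Alternatively, one can redo Lemma~\ref{lem:choose} for $F$ directly, where the inverse factor appears.

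The main obstacle is this $F$ step in the generality of a locally finite category that may lack projectives. I would handle it as in \cite[Lemma 4.5]{BSW20a}: run the bubble-slide computation for downward strands, using Lemma~\ref{lem:colorbubbleslide} for the other colors. A minor bookkeeping point is making sure the cyclic case with $e=2$ matches the Cartan matrix convention $a_{i,j}=-\delta_{i^+,j}-\delta_{i,j^+}$; it does.
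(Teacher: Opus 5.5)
Your argument for $E^{(\alpha)}_i$ is the paper's argument. You reduce to irreducible subquotients and read off $\wt(M)-\wt(L)=2\Lambda_i-\Lambda_{i^+}-\Lambda_{i^-}=\alpha_{i^{(\alpha)}}$ from Lemma~\ref{lem:choose} via $\bbO_L=m_L/n_L$. Your sign bookkeeping (the order of $\bbO_L$ at $j$ is $-\langle h_j,\wt(L)\rangle$) and your size-two-orbit remark are both correct.

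For $F^{(\alpha)}_i$, your first justification does not work. Adjunction turns $\Hom(P_M,F_iL)\neq0$ into $\Hom(E_iP_M,L)\neq0$, which only says that $L$ is a quotient of $E_iP_M$. It does not make $L$ a composition factor of $E_iM$. That implication in fact fails for biadjoint exact functors in general: in BGG category $\mathcal{O}$ for $\mathfrak{sl}_2$, translating $L(-1)$ off the wall gives $P(-2)$, which contains $L(0)$, yet translating $L(0)$ onto the wall gives $0$.

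Your fallback is valid and gives a self-contained proof. Sliding the bubble past a downward $\alpha$-strand produces the reciprocal factor $(u-i)^2/\bigl((u-i^+)(u-i^-)\bigr)$, and Lemma~\ref{lem:colorbubbleslide} handles the other colors.

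The paper instead combines adjunction with the block decomposition $\mathcal{C}=\bigoplus_\lambda\mathcal{C}_\lambda$ stated just before the lemma. Take $X\in\mathcal{C}_\lambda$ and $Y\in\mathcal{C}_\mu$ with $\mu\neq\lambda-\alpha_{i^{(\alpha)}}$. Then $\Hom(F_iX,Y)\cong\Hom(X,E_iY)=0$, because $E_iY\in\mathcal{C}_{\mu+\alpha_{i^{(\alpha)}}}$ lies in a different block from $X$. Taking $Y$ to be the $\mathcal{C}_\mu$-component of $F_iX$ shows that this component vanishes.

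This argument needs no projective covers, so the obstacle you flag does not arise. Your $P_M$ idea can be repaired the same way: $P_M$ is indecomposable, hence lies in $\mathcal{C}_{\wt(M)}$. So $E_iP_M$, and with it its quotient $L$, lies in $\mathcal{C}_{\wt(M)+\alpha_{i^{(\alpha)}}}$.
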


\begin{proof}
	For $E^{(\red{\alpha})}_i$, this follows from Lemma~\ref{lem:choose}.
	Then it follows for $F^{(\red{\alpha})}_i$ by adjunction.
\end{proof}
\medskip

\subsection{Heisenberg category to Kac--Moody categorification}

Following \cite{BSW20a}, for a finitely generated $V\in\mathcal C$ denote
the endomorphism
$
\begin{tikzpicture}[red,baseline = -1.7mm]
\draw[->] (0.08,-.2) to (0.08,.2);
\node[black] at (0.08,-0.35) {$\scriptstyle{i^{(\red{\alpha})}}$};
\node at (0.08,-0.02) {$\dott$};
\node[black] at (-0.35,-0.02) {$\scriptstyle{x-i^{(\red{\alpha})}}$};
\draw[-,darkg,thick] (0.4,-.2) to (0.4,.2);
\node at (0.55,0) {$\darkg\scriptstyle{V}$};
\end{tikzpicture}
$
is known to be nilpotent, allowing us to define the following
transformation
\begin{equation*}
\begin{tikzpicture}[red,baseline = -0.8mm]
	\draw[->] (0.08,-.25) to (0.08,.3);
      \node[black] at (0.08,-0.4) {$\scriptstyle{i^{(\red{\alpha})}}$};
      \node at (0.08,0.02) {$\dt$};
      \node[black] at (-0.3,0.02) {$\scriptstyle{p(x)}$};
\end{tikzpicture}:
E^{(\red{\alpha})}_i \Rightarrow E^{(\red{\alpha})}_i
\end{equation*}
for any $\red{\alpha}=1,\ldots,d,\ i^{(\red{\alpha})} \in \kk$ and any $p(x) \in \kk[\![x-i^{(\red{\alpha})}]\!]$.
The same definition can be made for dots on downward strings too.

More generally, suppose that we are given some more complicated string
diagram for a natural
transformation between some endofunctors of $\mathcal{C}$,
together with a sequence of $n$
points $P_1,\dots,P_n$ on strings colored $i^{(\red{\alpha_1})}_1,\dots,i^{(\blue{\alpha_n})}_n \in \kk$ in this diagram.
Then for any $p(x_1,\dots,x_n) \in \kk[\![x_1-i^{(\red{\alpha_1})}_1,\dots,x_n-i^{(\blue{\alpha_n})}_n]\!]$
there is a well-defined natural transformation represented diagrammatically by placing dots on these points and connecting them with a dotted arrow from $P_1$ to $P_n$, labeled by the power series $p(x_1,\dots,x_n)$. Here, $x_1$ denotes the variable corresponding to the first dot, closest to the arrow’s tail, while $x_n$ corresponds to the last dot, near the arrowhead. For example, suppose that $n = 2$ and $i^{(\red{\alpha_1})}_1 \neq i^{(\blue{\alpha_2})}_2$.
Set $c :=
(i^{(\blue{\alpha_2})}_2-i^{(\red{\alpha_1})}_1)^{-1}$ so that $(x_2-x_1)^{-1} \in \kk[\![x_1-i^{(\red{\alpha_1})}_1,x_2-i^{(\blue{\alpha_2})}_2]\!]$
has power series expansion
$c
+c^2(x_1-i_1)
- c^2(x_2-i_2) +$ (higher order terms).
Then we have
\begin{align*}
\begin{tikzpicture}[baseline = -0.8mm]
	\draw[->,densely dotted] (0.55,.03) to (0.15,.03);
	\draw[->,red] (0.58,-.25) to (0.58,.3);
	\draw[->,blue] (0.08,-.25) to (0.08,.3);
      \node at (0.08,-0.5) {$\scriptstyle{i^{(\blue{\alpha_2})}_2}$};
      \node[blue] at (0.08,0.02) {$\dt$};
      \node at (0.58,-0.5) {$\scriptstyle{i^{(\red{\alpha_1})}_1}$};
      \node[red] at (0.58,0.02) {$\dt$};
      \node at (1.3,0.04) {$\scriptstyle{(x_2-x_1)^{-1}}$};
\end{tikzpicture}&=
c\begin{tikzpicture}[baseline = -0.8mm]
	\draw[->,red] (0.58,-.25) to (0.58,.3);
	\draw[->,blue] (0.08,-.25) to (0.08,.3);
      \node at (0.08,-0.5) {$\scriptstyle{i^{(\blue{\alpha_2})}_2}$};
      \node at (0.58,-0.5) {$\scriptstyle{i^{(\red{\alpha_1})}_1}$};
\end{tikzpicture}
+c^2\begin{tikzpicture}[baseline = -0.8mm]
	\draw[->,red] (0.58,-.25) to (0.58,.3);
	\draw[->,blue] (0.08,-.25) to (0.08,.3);
      \node at (0.08,-0.5) {$\scriptstyle{i^{(\blue{\alpha_2})}_2}$};
      \node at (0.58,-0.5) {$\scriptstyle{i^{(\red{\alpha_1})}_1}$};
      \node[red] at (0.58,0.02) {$\dt$};
      \node at (1.23,0.04) {$\scriptstyle x-i^{(\red{\alpha_1})}_1$};
\end{tikzpicture}
-c^2\begin{tikzpicture}[baseline = -0.8mm]
	\draw[->,red] (0.58,-.25) to (0.58,.3);
	\draw[->,blue] (0.08,-.25) to (0.08,.3);
      \node at (0.08,-0.5) {$\scriptstyle{i^{(\blue{\alpha_2})}_2}$};
      \node at (0.58,-0.5) {$\scriptstyle{i^{(\red{\alpha_1})}_1}$};
      \node at (0.08,0.02) {$\dt$};
      \node at (-.42,0.04) {$\scriptstyle x-i^{(\blue{\alpha_2})}_2$};
\end{tikzpicture}
+\cdots.
\end{align*}

\begin{lemma}\label{banach1}
	For $i \in I^{(\blue{\beta})}$ and $j\in \scrI^{(\red{\alpha})}$ with $i^{(\blue{\beta})}\neq j^{(\red{\alpha})}$,
	the natural transformations
	\begin{align*}
		\begin{tikzpicture}[baseline = -1mm]
			\draw[->,blue] (0.28,-.28) to (-0.28,.28);
			\draw[<-,red] (-0.28,-.28) to (0.28,.28);
			\node at (-0.33,-0.43) {$\scriptstyle{j}$};
			\node at (0.33,-0.43) {$\scriptstyle{i}$};
			\node at (-0.33,0.43) {$\scriptstyle{i}$};
			\node at (0.33,0.43) {$\scriptstyle{j}$};
			\node at (0,-.01) {$\diamond$};
		\end{tikzpicture}
		:F^{(\red{\alpha})}_j E^{(\blue{\beta})}_i \Rightarrow E^{(\blue{\beta})}_i F^{(\red{\alpha})}_j&
		,&
		\begin{tikzpicture}[baseline = -1mm]
			\draw[<-,red] (0.28,-.28) to (-0.28,.28);
			\draw[->,blue] (-0.28,-.28) to (0.28,.28);
			\node at (-0.33,-0.43) {$\scriptstyle{i}$};
			\node at (0.33,-0.43) {$\scriptstyle{j}$};
			\node at (-0.33,0.43) {$\scriptstyle{j}$};
			\node at (0.33,0.43) {$\scriptstyle{i}$};
			\node at (0,-.01) {$\diamond$};
		\end{tikzpicture}
		: E^{(\blue{\beta})}_i F^{(\red{\alpha})}_j \Rightarrow F^{(\red{\alpha})}_j E^{(\blue{\beta})}_i&
	\end{align*}
	are mutually inverse
	isomorphisms.
	(Here, we have drawn the crossings in the degenerate
	case; in the quantum case they should be interpreted as
	positive or negative crossings, it does not matter which is chosen.)
\end{lemma}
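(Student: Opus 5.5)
We split into the two cases $\red\alpha=\blue\beta$ and $\red\alpha\neq\blue\beta$.

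\emph{Case $\red\alpha=\blue\beta$.} Only one copy of the Heisenberg category is involved, so the statement is exactly \cite[Lemma 4.7]{BSW20a} applied to the $\Heis_{k_{\red\alpha}}$ (resp.\ $\Heis_{k_{\red\alpha}}(z_{\red\alpha},t_{\red\alpha})$) action obtained by restricting the $\symHeis_d$-action. For later uniformity, recall the mechanism. Compose the inversion relation \eqref{dMackey1}/\eqref{dMackey2} (resp.\ \eqref{qinver1}/\eqref{qinver2}) with the idempotent projections onto $E_i$ and $F_j$. The extra summands $\unit^{\oplus k}$ are built from cups and caps joining an $i$-string to a $j$-string. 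By \eqref{incpro} and \eqref{sizzle}, such a cup or cap is zero when $i\ne j$, since the projections $i$ and $j$ on the same string are orthogonal. What survives is the statement that the $i,j$-colored sideways crossing is invertible, and its inverse is the colored sideways crossing in the opposite direction. In the quantum case, the positive and negative sideways crossings differ by a multiple of a cup-cap diagram on that same string, which again vanishes after projection. This is why the choice of crossing does not matter.

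\emph{Case $\red\alpha\ne\blue\beta$.} Here the argument is direct. By the remark following Theorem~\ref{dmultiHeisenberg}, the relations \eqref{mixedMackey} (resp.\ \eqref{mixedMackeyquantum}) say that the uncolored mixed sideways crossings $E^{(\blue\beta)}F^{(\red\alpha)}\Rightarrow F^{(\red\alpha)}E^{(\blue\beta)}$ and $F^{(\red\alpha)}E^{(\blue\beta)}\Rightarrow E^{(\blue\beta)}F^{(\red\alpha)}$ are mutually inverse. Now insert projections. The colored crossing $\diamond$ is the uncolored crossing pre-composed with inclusions and post-composed with projections. So the composite of the two colored crossings equals
\[
\mathrm{proj}\circ(\text{crossing})\circ(\text{inclusion}\circ\text{projection onto } F_jE_i)\circ(\text{crossing})\circ\mathrm{incl}.
\]
We must remove the middle idempotent. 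By Lemma~\ref{essential}, the crossing with input labels $(i,j)$ and output labels $(i',j')$ vanishes unless $\{i',j'\}=\{i,j\}$. Using \eqref{newdotslideab}, dots on the two strands slide through the mixed crossing without correction terms. Hence the crossing intertwines the generalized eigenspace decompositions strand by strand, so the only surviving output labelling is $(j,i)$ with the colors preserved on each strand. This holds even when $i=j$ as numbers, since the strands have different colors $\red\alpha\ne\blue\beta$. Therefore summing over all middle labels, i.e.\ inserting the identity, gives the same result as inserting the single idempotent. The composite then equals the projection of the identity, which is the identity of $E^{(\blue\beta)}_iF^{(\red\alpha)}_j$, and similarly in the other order. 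In the quantum case the mixed crossings carry no positive/negative distinction, so nothing further is needed.

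The main obstacle is the bookkeeping in the case $\red\alpha=\blue\beta$ with $i\ne j$ in the quantum setting. There one must check that the difference between positive and negative crossings, namely $z$ times a cup-cap composite, is killed by the mismatched projections. This follows from \eqref{incpro} as indicated above. Everything else reduces to \cite{BSW20a} and the mixed relations already established.
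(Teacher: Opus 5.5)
Your proof is correct and takes the same route as the paper. The case $\red\alpha=\blue\beta$ is reduced to the one-color result of \cite{BSW20a} (the paper cites it as Lemma~4.8), and the case $\red\alpha\neq\blue\beta$ follows from the mixed inversion relations \eqref{mixedMackey}, which the paper simply asserts, while your removal of the intermediate idempotent via Lemma~\ref{essential} and \eqref{newdotslideab} supplies the step the paper leaves implicit.
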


\begin{proof}
	For the case when $\red{\alpha}=\blue{\beta}$, one checks that the compositions both ways around are the identities as shown in \cite[Lemma 4.8]{BSW20a}. When $\red{\alpha}\neq\blue{\beta}$, the lemma follows from \eqref{mixedMackey}.
\end{proof}

Recall that $\calC$ is a locally finite
module category over $\symHeis_d$. Let $E_i^{(\red{\alpha})}$ and $F_i^{(\red{\alpha})}$ be the
eigenfunctors defined \S~\ref{sec:colordiagam} for each $\red{\alpha}=1,\ldots,d$, and recall the various diagrams representing natural transformations between these functors introduced there. Let $I$ be the spectrum of $\calC$
as in \S~\ref{sec:wtdecompC}, and
$\UU(\g)$ be the corresponding Kac--Moody 2-category as in \S~\ref{sec:KMCdef}.

For $i\in \scrI^{(\red{\alpha})}$ and $j\in I^{(\blue{\beta})}$ of the same
color, the required generating 2-morphisms are those of
\cite[Theorem~4.11]{BSW20a}.  The mixed-color crossings constructed above
supply the remaining generators.

\begin{theorem}
\label{thm:HtoK}
	  Associated to $\mathcal{C}$,
	there is a unique $2$-representation
	$\Phi:\UU(\g)\rightarrow
	\mathfrak{Cat}_\kk$
	defined on objects by
	$\lambda\mapsto \mathcal{C}_\lambda$,
	on generating
	1-morphisms by
	$E_i^{(\red{\alpha})} 1_\lambda\mapsto E^{(\red{\alpha})}_i|_{\mathcal{C}_\lambda}$
	and $F^{(\red{\alpha})}_i 1_\lambda\mapsto F^{(\red{\alpha})}_i|_{\mathcal{C}_\lambda}$ for $i^{(\red{\alpha})}\in  \scrI^{(\red{\alpha})}$,
	and on generating 2-morphisms by
	\begin{align*}
		\begin{tikzpicture}[baseline = -0.8mm]
			\draw[->,thick] (0.08,-.25) to (0.08,.3);
			\node at (0.08,-0.4) {$\scriptstyle{i^{(\red{\alpha})}}$};
			\node at (0.08,0.02) {$\bullet$};
			\node at (0.3,0.02) {$\color{gray}\scriptstyle\lambda$};
		\end{tikzpicture}
		&\mapsto
		\begin{tikzpicture}[baseline = -0.8mm]
			\draw[->,red] (0.08,-.25) to (0.08,.3);
			\node at (0.08,-0.4) {$\scriptstyle{i^{(\red{\alpha})}}$};
			\node[red] at (0.08,0.02) {$\dott$};
			\node at (0.5,0.02) {$\scriptstyle{x-i^{(\red{\alpha})}}$};
		\end{tikzpicture}
		,
		&
		\begin{tikzpicture}[baseline = 1mm]
			\draw[<-,thick] (0.4,0.4) to[out=-90, in=0] (0.1,0);
			\draw[-,thick] (0.1,0) to[out = 180, in = -90] (-0.2,0.4);
			\node at (-0.2,0.55) {$\scriptstyle{i^{(\red{\alpha})}}$};
			\node at (0.6,0.2) {$\color{gray}\scriptstyle\lambda$};
		\end{tikzpicture}
		&\mapsto
		\begin{tikzpicture}[baseline = 1mm]
			\draw[<-,red] (0.4,0.4) to[out=-90, in=0] (0.1,0);
			\draw[-,red] (0.1,0) to[out = 180, in = -90] (-0.2,0.4);
			\node at (-0.2,0.55) {$\scriptstyle{i^{(\red{\alpha})}}$};
		\end{tikzpicture}\:,
		&
		\begin{tikzpicture}[baseline = 1mm]
			\draw[<-,thick] (0.4,0) to[out=90, in=0] (0.1,0.4);
			\draw[-,thick] (0.1,0.4) to[out = 180, in = 90] (-0.2,0);
			\node at (-0.2,-0.15) {$\scriptstyle{i^{(\red{\alpha})}}$};
			\node at (0.6,0.2) {$\color{gray}\scriptstyle\lambda$};
		\end{tikzpicture}
		&\mapsto
		\begin{tikzpicture}[baseline = 1mm]
			\draw[<-,red] (0.4,0) to[out=90, in=0] (0.1,0.4);
			\draw[-, red] (0.1,0.4) to[out = 180, in = 90] (-0.2,0);
			\node at (-0.2,-0.15) {$\scriptstyle{i^{(\red{\alpha})}}$};
		\end{tikzpicture},
	\end{align*}

	\begin{align*}
 \begin{tikzpicture}[baseline = 0]
	\draw[->,thick] (0.28,-.3) to (-0.28,.4);
	\draw[->,thick] (-0.28,-.3) to (0.28,.4);
   \node at (-.3,-.45) {$\scriptstyle{j^{(\red{\alpha})}}$};
   \node at (.3,-.45) {$\scriptstyle{i^{(\red{\alpha})}}$};
   \node at (0.5,0.05) {$\color{gray}\scriptstyle{\lambda}$};
\end{tikzpicture}
\mapsto
\left\{
\begin{array}{ll}
\begin{tikzpicture}[baseline = 0]
	\draw[->,red] (0.38,-.4) to (-0.38,.5);
	\draw[->,red] (-0.38,-.4) to (0.38,.5);
      \node at (0,0.05) {$\diamond$};
	\draw[->,densely dotted] (0.26,-.24) to (-0.19,-.24);
   \node at (-.4,-.55) {$\scriptstyle{i}$};
   \node at (.4,-.55) {$\scriptstyle{i}$};
   \node at (-.4,.65) {$\scriptstyle{i}$};
   \node at (.4,.65) {$\scriptstyle{i}$};
      \node[red] at (-0.26,-0.25) {$\dt$};
      \node[red] at (0.26,-0.25) {$\dt$};
      \node at (1.1,-0.2) {$\scriptstyle{(q_{\red{\alpha}}x_2-q_{\red{\alpha}}^{-1}x_1)^{-1}}$};
\end{tikzpicture}
+\begin{tikzpicture}[baseline = 0mm]
	\draw[->,densely dotted] (0.55,.06) to (0.14,.06);
	\draw[->,red] (0.58,-.4) to (0.58,.5);
	\draw[->,red] (0.08,-.4) to (0.08,.5);
      \node at (0.08,-0.55) {$\scriptstyle{i}$};
      \node at (0.58,-0.55) {$\scriptstyle{i}$};
      \node[red] at (0.08,0.05) {$\dt$};
      \node[red] at (0.58,0.05) {$\dt$};
      \node at (1.4,0.1) {$\scriptstyle{(x_2-x_1+1)^{-1}}$};
\end{tikzpicture}
&\text{if $j^{(\red{\alpha})}=i^{(\red{\alpha})}$,}\\
		\begin{tikzpicture}[baseline = 0]
	\draw[->,red] (0.38,-.4) to (-0.38,.5);
	\draw[->,red] (-0.38,-.4) to (0.38,.5);
	\draw[->,densely dotted] (0.26,-.24) to (-0.19,-.24);
      \node at (0,0.05) {$\diamond$};
   \node at (-.4,-.55) {$\scriptstyle{q_{\red{\alpha}}^2i}$};
   \node at (-.4,.65) {$\scriptstyle{i}$};
   \node at (.4,.65) {$\scriptstyle{i+1}$};
   \node at (.4,-.55) {$\scriptstyle{i}$};
      \node[red] at (-0.26,-0.25) {$\dt$};
      \node[red] at (0.26,-0.25) {$\dt$};
      \node at (.77,-0.2) {$\scriptstyle{x_2-x_1}$};
\end{tikzpicture}
&\text{if $j^{(\red{\alpha})}={(i^{(\red{\alpha})})}^+$,}\\
-\begin{tikzpicture}[baseline = 0]
	\draw[->,red] (0.38,-.4) to (-0.38,.5);
      \node at (0,0.05) {$\diamond$};
	\draw[->,red] (-0.38,-.4) to (0.38,.5);
	\draw[->,densely dotted] (0.26,-.24) to (-0.19,-.24);
   \node at (-.4,-.55) {$\scriptstyle{j}$};
   \node at (.4,-.55) {$\scriptstyle{i}$};
   \node at (.4,.65) {$\scriptstyle{j}$};
   \node at (-.4,.65) {$\scriptstyle{i}$};
      \node[red] at (-0.26,-0.25) {$\dt$};
      \node[red] at (0.26,-0.25) {$\dt$};
      \node at (1.57,-0.2) {$\scriptstyle{(x_2-x_1)(x_2-x_1-1)^{-1}}$};
\end{tikzpicture}
&\text{if $j^{(\red{\alpha})} \neq i^{(\red{\alpha})},{(i^{(\red{\alpha})})}^+$}
\end{array}\right.
	\end{align*}

	\begin{align*}
		&\begin{tikzpicture}[baseline = 0]
			\draw[->,thick] (0.28,-.3) to (-0.28,.4);
			\draw[->,thick] (-0.28,-.3) to (0.28,.4);
			\node at (-.3,-.45) {$\scriptstyle{j^{(\blue{\beta})}}$};
			\node at (.3,-.45) {$\scriptstyle{i^{(\red{\alpha})}}$};
			\node at (0.5,0.05) {$\color{gray}\scriptstyle{\lambda}$};
		\end{tikzpicture}
		\quad\mapsto\quad
		\begin{tikzpicture}[baseline = 0]
			\draw[->,red] (0.38,-.4) to (-0.38,.5);
			\draw[->,blue] (-0.38,-.4) to (0.38,.5);
			\node at (0,0.05) {$\diamond$};
			\node at (-.4,-.55) {$\scriptstyle{j}$};
			\node at (.4,-.55) {$\scriptstyle{i}$};
			\node at (-.4,.65) {$\scriptstyle{i}$};
			\node at (.4,.65) {$\scriptstyle{j}$};
		\end{tikzpicture},
	\end{align*}
	for  $\red{\alpha},\blue{\beta}=1,\ldots,d$ and $\red{\alpha}\neq\blue{\beta}$ in the degenerate case, or
    \begin{align*}
		\begin{tikzpicture}[baseline = -0.8mm]
			\draw[->,thick] (0.08,-.25) to (0.08,.3);
			\node at (0.08,-0.4) {$\scriptstyle{i^{(\red{\alpha})}}$};
			\node at (0.08,0.02) {$\bullet$};
			\node at (0.3,0.02) {$\color{gray}\scriptstyle\lambda$};
		\end{tikzpicture}
		&\mapsto
		\begin{tikzpicture}[baseline = -0.8mm]
			\draw[->,red] (0.08,-.25) to (0.08,.3);
			\node at (0.08,-0.4) {$\scriptstyle{i^{(\red{\alpha})}}$};
			\node[red] at (0.08,0.02) {$\dott$};
			\node at (0.5,0.02) {$\scriptstyle{\frac{x}{i^{(\red{\alpha})}}}-1$};
		\end{tikzpicture}
		,
		&
		\begin{tikzpicture}[baseline = 1mm]
			\draw[<-,thick] (0.4,0.4) to[out=-90, in=0] (0.1,0);
			\draw[-,thick] (0.1,0) to[out = 180, in = -90] (-0.2,0.4);
			\node at (-0.2,0.55) {$\scriptstyle{i^{(\red{\alpha})}}$};
			\node at (0.6,0.2) {$\color{gray}\scriptstyle\lambda$};
		\end{tikzpicture}
		&\mapsto
		\begin{tikzpicture}[baseline = 1mm]
			\draw[<-,red] (0.4,0.4) to[out=-90, in=0] (0.1,0);
			\draw[-,red] (0.1,0) to[out = 180, in = -90] (-0.2,0.4);
			\node at (-0.2,0.55) {$\scriptstyle{i^{(\red{\alpha})}}$};
		\end{tikzpicture}\:,
		&
		\begin{tikzpicture}[baseline = 1mm]
			\draw[<-,thick] (0.4,0) to[out=90, in=0] (0.1,0.4);
			\draw[-,thick] (0.1,0.4) to[out = 180, in = 90] (-0.2,0);
			\node at (-0.2,-0.15) {$\scriptstyle{i^{(\red{\alpha})}}$};
			\node at (0.6,0.2) {$\color{gray}\scriptstyle\lambda$};
		\end{tikzpicture}
		&\mapsto
		\begin{tikzpicture}[baseline = 1mm]
			\draw[<-,red] (0.4,0) to[out=90, in=0] (0.1,0.4);
			\draw[-, red] (0.1,0.4) to[out = 180, in = 90] (-0.2,0);
			\node at (-0.2,-0.15) {$\scriptstyle{i^{(\red{\alpha})}}$};
		\end{tikzpicture},
	\end{align*}

	\begin{align*}
 \begin{tikzpicture}[baseline = 0]
	\draw[->,thick] (0.28,-.3) to (-0.28,.4);
	\draw[->,thick] (-0.28,-.3) to (0.28,.4);
   \node at (-.3,-.45) {$\scriptstyle{j^{(\red{\alpha})}}$};
   \node at (.3,-.45) {$\scriptstyle{i^{(\red{\alpha})}}$};
   \node at (0.5,0.05) {$\color{gray}\scriptstyle{\lambda}$};
\end{tikzpicture}
\mapsto
\left\{
\begin{array}{ll}
i^{(\red{\alpha})}\begin{tikzpicture}[baseline = 0]
	\draw[->,red] (0.38,-.4) to (-0.38,.5);
    \draw[wipe] (-0.38,-.4) to (0.38,.5);
	\draw[->,red] (-0.38,-.4) to (0.38,.5);
      \node at (0,0.05) {$\diamond$};
	\draw[->,densely dotted] (0.26,-.24) to (-0.19,-.24);
   \node at (-.4,-.55) {$\scriptstyle{i}$};
   \node at (.4,-.55) {$\scriptstyle{i}$};
   \node at (-.4,.65) {$\scriptstyle{i}$};
   \node at (.4,.65) {$\scriptstyle{i}$};
      \node[red] at (-0.26,-0.25) {$\dt$};
      \node[red] at (0.26,-0.25) {$\dt$};
      \node at (1.1,-0.2) {$\scriptstyle{(x_2-x_1+1)^{-1}}$};
\end{tikzpicture}
+q_{\red{\alpha}}^{-1}i^{(\red{\alpha})}\begin{tikzpicture}[baseline = 0mm]
	\draw[->,densely dotted] (0.55,.06) to (0.14,.06);
	\draw[->,red] (0.58,-.4) to (0.58,.5);
	\draw[->,red] (0.08,-.4) to (0.08,.5);
      \node at (0.08,-0.55) {$\scriptstyle{i}$};
      \node at (0.58,-0.55) {$\scriptstyle{i}$};
      \node[red] at (0.08,0.05) {$\dt$};
      \node[red] at (0.58,0.05) {$\dt$};
      \node at (1.6,0.1) {$\scriptstyle{(q_{\red{\alpha}}x_2-q_{\red{\alpha}}^{-1}x_1)^{-1}}$};
\end{tikzpicture}
&\text{if $j^{(\red{\alpha})}=i^{(\red{\alpha})}$,}\\
		q_{\red{\alpha}}^{-1}(i^{(\red{\alpha})})^{-1}\begin{tikzpicture}[baseline = 0]
	\draw[->,red] (0.38,-.4) to (-0.38,.5);
    \draw[wipe]  (-0.38,-.4) to (0.38,.5);
	\draw[->,red] (-0.38,-.4) to (0.38,.5);
	\draw[->,densely dotted] (0.26,-.24) to (-0.19,-.24);
      \node at (0,0.05) {$\diamond$};
   \node at (-.4,-.55) {$\scriptstyle{i+1}$};
   \node at (-.4,.65) {$q^2\scriptstyle{i}$};
   \node at (.4,.65) {$\scriptstyle{i+1}$};
   \node at (.4,-.55) {$q^2\scriptstyle{i}$};
      \node[red] at (-0.26,-0.25) {$\dt$};
      \node[red] at (0.26,-0.25) {$\dt$};
      \node at (.77,-0.2) {$\scriptstyle{x_2-x_1}$};
\end{tikzpicture}
&\text{if $j^{(\red{\alpha})}={(i^{(\red{\alpha})})}^+$,}\\
-\begin{tikzpicture}[baseline = 0]
	\draw[->,red] (0.38,-.4) to (-0.38,.5);
    \draw[wipe] (-0.38,-.4) to (0.38,.5);
      \node at (0,0.05) {$\diamond$};
	\draw[->,red] (-0.38,-.4) to (0.38,.5);
	\draw[->,densely dotted] (0.26,-.24) to (-0.19,-.24);
   \node at (-.4,-.55) {$\scriptstyle{j}$};
   \node at (.4,-.55) {$\scriptstyle{i}$};
   \node at (.4,.65) {$\scriptstyle{j}$};
   \node at (-.4,.65) {$\scriptstyle{i}$};
      \node[red] at (-0.26,-0.25) {$\dt$};
      \node[red] at (0.26,-0.25) {$\dt$};
      \node at (1.7,-0.2) {$\scriptstyle{(x_2-x_1)(q_{\red{\alpha}}^{-1}x_2-q_{\red{\alpha}}x_1)^{-1}}$};
\end{tikzpicture}
&\text{if $j^{(\red{\alpha})} \neq i^{(\red{\alpha})},{(i^{(\red{\alpha})})}^+$}
\end{array}\right.
	\end{align*}

	\begin{align*}
		&\begin{tikzpicture}[baseline = 0]
			\draw[->,thick] (0.28,-.3) to (-0.28,.4);
			\draw[->,thick] (-0.28,-.3) to (0.28,.4);
			\node at (-.3,-.45) {$\scriptstyle{j^{(\blue{\beta})}}$};
			\node at (.3,-.45) {$\scriptstyle{i^{(\red{\alpha})}}$};
			\node at (0.5,0.05) {$\color{gray}\scriptstyle{\lambda}$};
		\end{tikzpicture}
		\quad\mapsto\quad
		\begin{tikzpicture}[baseline = 0]
			\draw[->,red] (0.38,-.4) to (-0.38,.5);
			\draw[->,blue] (-0.38,-.4) to (0.38,.5);
			\node at (0,0.05) {$\diamond$};
			\node at (-.4,-.55) {$\scriptstyle{j}$};
			\node at (.4,-.55) {$\scriptstyle{i}$};
			\node at (-.4,.65) {$\scriptstyle{i}$};
			\node at (.4,.65) {$\scriptstyle{j}$};
		\end{tikzpicture},
	\end{align*}
	for  $\red{\alpha},\blue{\beta}=1,\ldots,d$ and $\red{\alpha}\neq\blue{\beta}$ in the quantum case
\end{theorem}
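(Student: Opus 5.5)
The plan is to reduce almost everything to the single-color result \cite[Theorem~4.11]{BSW20a} and to check only the genuinely new, mixed-color relations by hand. Since $\UU(\g)$ for $\g=\bigoplus_{\alpha}\g^{(\alpha)}$ is presented by generators and relations, a strict 2-functor is determined by the images of the generating 2-morphisms. It therefore suffices to check that the proposed images satisfy the defining relations \eqref{KM1}--\eqref{KMrightadj} together with the inversion relations \eqref{KMinverse1}--\eqref{KMinverse3}. Uniqueness is automatic once the images of the generators are prescribed. Lemma~\ref{pg} guarantees that $E^{(\alpha)}_i,F^{(\alpha)}_i$ map $\mathcal C_\lambda$ to $\mathcal C_{\lambda\pm\alpha_{i^{(\alpha)}}}$, so the assignment on 1-morphisms is well defined.

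We split the relations according to the colors of the strands involved.

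\emph{Single-color relations.} Every relation whose strands all carry labels in one $I^{(\alpha)}$ involves only the generators of $\Heis_{k_\alpha}(z_\alpha,t_\alpha)$ acting on $\calC$, restricted to eigen-summands. Such a relation is exactly one of those verified in \cite[Theorem~4.11]{BSW20a}. The inversion relation \eqref{KMinverse3} needs some care, since the weight $\langle h_{i^{(\alpha)}},\lambda\rangle$ is read off from the $\alpha$-component of $\wt(L)$. That component is computed from $m^{(\alpha)}_L,n^{(\alpha)}_L$ exactly as in the one-color setting, so the proof there applies verbatim to the $\calC$ regarded as a $\Heis_{k_\alpha}$-module via restriction. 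We also need $\calC$ to remain locally finite as a module over each factor, which is immediate.

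\emph{Mixed-color relations.} For $\alpha\neq\beta$ we have $a_{i^{(\alpha)},j^{(\beta)}}=0$, so the required relations are the following:
\begin{itemize}
\item dot-sliding through a mixed crossing with no correction term;
\item the mixed quadratic relation \eqref{KM2}, asserting that the double crossing equals the identity;
\item the braid relations \eqref{KM3} with zero right-hand side whenever the colors are not all equal;
\item invertibility of the mixed sideways crossing $E_iF_j\Rightarrow F_jE_i$ from \eqref{KMinverse1}.
\end{itemize}
Each of these will be obtained from the corresponding relation in $\symHeis_d$ (the relations \eqref{mixedHecke}, \eqref{mixedrelation2}, \eqref{3color} and their quantum analogues) by pre- and post-composing with the idempotent projections. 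The key input is Lemma~\ref{essential}: a projected mixed crossing vanishes unless the labels are preserved, and by \eqref{newdotslideab} dots commute with the projections. Consequently, inserting $\sum_k$ of idempotents between two mixed crossings collapses to a single term. Concretely, the projected double crossing equals the projection of the unprojected double crossing, which is the identity. The projected braid relation is handled the same way.

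\emph{Relations mixing a same-color crossing with a third strand of another color.} Here the image of the same-color crossing is a combination of dotted crossings with power-series coefficients in the dots. Using the dot-slide \eqref{newdotslideab} and the mixed braid relation in \eqref{mixedHecke}, the foreign-colored strand can be passed through each summand, so both sides of \eqref{KM3} agree, the right-hand side being zero. Finally, Lemma~\ref{banach1} gives \eqref{KMinverse1} directly in the mixed case.

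The main obstacle I expect is consistency of the sideways and pivotal data. The 2-category $\UU(\g)$ defines sideways mixed crossings as mates of the upward ones using the cups and caps coming from \cite{BSW20a}. These cups and caps were rescaled there, and in the quantum case they involve the normalizations in $t_\alpha, q_\alpha$. I must check that the mates of the mixed upward crossings coincide with the projected sideways crossings \eqref{defcross} of $\symHeis_d$, so that Lemma~\ref{banach1} really supplies the inverse demanded in \eqref{KMinverse1}. I would first verify that the BSW rescalings act by invertible scalars or power series on a single-color cup or cap. Such factors can be slid past a strand of another color by Lemma~\ref{lem:colorbubbleslide} and the dot-slide, so they cancel in the zigzag computation. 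This reduces the check to the strict pivotality of $\symHeis_d$ (the Corollary following Theorem~\ref{thm:equiv}).
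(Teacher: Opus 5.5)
Your proposal is correct and follows essentially the paper's route: the same-color relations are delegated to \cite[Theorem~4.11]{BSW20a}, and the mixed-color relations are checked using Lemma~\ref{essential}, the dot-slide \eqref{newdotslideab}, the mixed relations of $\symHeis_d$, and Lemma~\ref{banach1} for the inversion relation. The pivotal-consistency issue you flag as the main obstacle does not actually arise. The sideways crossing in \eqref{KMinverse1} is built only from the generating rightward cup and cap, which $\Phi$ sends to the Heisenberg ones without rescaling, so its image is exactly the projected mixed sideways crossing, and Lemma~\ref{banach1} already supplies an inverse (the relation only asks for invertibility).
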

\begin{proof}
	We only prove for the degenerate case as the quantum case follows similarly. To that end, we need to verify the defining relations (\ref{KM1})--(\ref{KMrightadj}) and (\ref{KMinverse1})--(\ref{KMinverse3}). By \cite[Theorem 4.11]{BSW20a}, it suffices to verify these relations for crossings of the form $\begin{tikzpicture}[baseline = 0]
			\draw[->,thick] (0.28,-.3) to (-0.28,.4);
            \draw[wipe] (-0.28,-.3) to (0.28,.4);
			\draw[->,thick] (-0.28,-.3) to (0.28,.4);
			\node at (-.3,-.45) {$\scriptstyle{j^{(\blue{\beta})}}$};
			\node at (.3,-.45) {$\scriptstyle{i^{(\red{\alpha})}}$};
			\node at (0.5,0.05) {$\color{gray}\scriptstyle{\lambda}$};
		\end{tikzpicture}$ where $\red{\alpha}\neq \blue{\beta}$.  Suppose that $i^{(\red{\alpha})}\in \scrI^{(\red{\alpha})}$, $j^{(\blue{\beta})}\in I^{(\blue{\beta})}$ where $\red{\alpha}\neq \blue{\beta}$. The verification of \eqref{KM1}  follows from \eqref{newdotslideab}.

  To verify \eqref{KM2}, it follows from Lemma~\ref{essential} that
	\begin{align*}\mathord{
			\begin{tikzpicture}[baseline = -1mm]
				\draw[->,red] (0.28,-.28) to (-0.28,.28);
				\draw[->,blue] (-0.28,-.28) to (0.28,.28);
				\node at (-0.33,-0.43) {$\scriptstyle{j}$};
				\node at (0.33,-0.43) {$\scriptstyle{i}$};
				\node at (-0.33,0.43) {$\scriptstyle{i'}$};
				\node at (0.33,0.43) {$\scriptstyle{j'}$};
				\node at (0,-0.01) {$\diamond$};
			\end{tikzpicture}
		}=
		0,\quad \text{and}\quad
		\mathord{
			\begin{tikzpicture}[baseline = -1mm]
				\draw[->,blue] (0.28,-.28) to (-0.28,.28);
				\draw[->,red] (-0.28,-.28) to (0.28,.28);
				\node at (-0.33,-0.43) {$\scriptstyle{i}$};
				\node at (0.33,-0.43) {$\scriptstyle{j}$};
				\node at (-0.33,0.43) {$\scriptstyle{j'}$};
				\node at (0.33,0.43) {$\scriptstyle{i'}$};
				\node at (0,-0.01) {$\diamond$};
			\end{tikzpicture}
		}=
		0,\quad \text{unless $(i')^{(\red{\alpha})}=i^{(\red{\alpha})}\in \scrI^{(\red{\alpha})}$ and $j^{(\blue{\beta})}=(j')^{(\blue{\beta})}\in I^{(\blue{\beta})}$}.
	\end{align*}
	Hence we have \begin{align*}
		\mathord{
			\begin{tikzpicture}[baseline = -1mm]
				\node at (-.23,.75){$\scriptstyle i$};
				\node at (.23,.75){$\scriptstyle j$};
				\node at (-.23,-.75){$\scriptstyle i$};
				\node at (.23,-.75){$\scriptstyle j$};
				\node at (-.35,0){$\scriptstyle j$};
				\node at (.35,0){$\scriptstyle i$};
				\draw[->,red] (0.23,0) to[out=90,in=-90] (-0.23,.6);
				\draw[->,blue] (-0.23,0) to[out=90,in=-90] (0.23,.6);
				\draw[-,blue] (0.23,-.6) to[out=90,in=-90] (-0.23,0);
				\draw[-,red] (-0.23,-.6) to[out=90,in=-90] (0.23,0);
				\node at (0,.3){$\diamond$};
				\node at (0,-.3){$\diamond$};
			\end{tikzpicture}
		}
		&
		=
		\mathord{
			\begin{tikzpicture}[baseline = -1mm]
				\node at (-.23,.75){$\scriptstyle i$};
				\node at (.23,.75){$\scriptstyle j$};
				\node at (-.23,-.75){$\scriptstyle i$};
				\node at (.23,-.75){$\scriptstyle j$};
				\draw[->,red] (0.23,0) to[out=90,in=-90] (-0.23,.6);
				\draw[->,blue] (-0.23,0) to[out=90,in=-90] (0.23,.6);
				\draw[-,blue] (0.23,-.6) to[out=90,in=-90] (-0.23,0);
				\draw[-,red] (-0.23,-.6) to[out=90,in=-90] (0.23,0);
				\draw[-] (-.2,.4) to (-.13,.45);
				\draw[-] (-.2,-.4) to (-.13,-.45);
				\draw[-] (.2,.4) to (.13,.45);
				\draw[-] (.2,-.4) to (.13,-.45);
			\end{tikzpicture}
		}
		=
		\mathord{
			\begin{tikzpicture}[baseline = -1mm]
				\node at (-.28,-.75){$\scriptstyle i$};
				\node at (.08,-.75){$\scriptstyle j$};
				\draw[->,blue] (0.08,-.6) to (0.08,.6);
				\draw[->,red] (-0.28,-.6) to (-0.28,.6);
			\end{tikzpicture}
		},
		&
		\mathord{
			\begin{tikzpicture}[baseline = -1mm]
				\node at (-.23,.75){$\scriptstyle j$};
				\node at (.23,.75){$\scriptstyle i$};
				\node at (-.23,-.75){$\scriptstyle j$};
				\node at (.23,-.75){$\scriptstyle i$};
				\node at (-.35,0){$\scriptstyle i$};
				\node at (.35,0){$\scriptstyle j$};
				\draw[->,blue] (0.23,0) to[out=90,in=-90] (-0.23,.6);
				\draw[->,red] (-0.23,0) to[out=90,in=-90] (0.23,.6);
				\draw[-,red] (0.23,-.6) to[out=90,in=-90] (-0.23,0);
				\draw[-,blue] (-0.23,-.6) to[out=90,in=-90] (0.23,0);
				\node at (0,.3){$\diamond$};
				\node at (0,-.3){$\diamond$};
			\end{tikzpicture}
		}
		=&
		\mathord{
			\begin{tikzpicture}[baseline = -1mm]
				\node at (-.23,.75){$\scriptstyle j$};
				\node at (.23,.75){$\scriptstyle i$};
				\node at (-.23,-.75){$\scriptstyle j$};
				\node at (.23,-.75){$\scriptstyle i$};
				\draw[->,blue] (0.23,0) to[out=90,in=-90] (-0.23,.6);
				\draw[->,red] (-0.23,0) to[out=90,in=-90] (0.23,.6);
				\draw[-,red] (0.23,-.6) to[out=90,in=-90] (-0.23,0);
				\draw[-,blue] (-0.23,-.6) to[out=90,in=-90] (0.23,0);
				\draw[-] (-.2,.4) to (-.13,.45);
				\draw[-] (-.2,-.4) to (-.13,-.45);
				\draw[-] (.2,.4) to (.13,.45);
				\draw[-] (.2,-.4) to (.13,-.45);
			\end{tikzpicture}
		}
		=
		\mathord{
			\begin{tikzpicture}[baseline = -1mm]
				\node at (-.28,-.75){$\scriptstyle j$};
				\node at (.08,-.75){$\scriptstyle i$};
				\draw[->,red] (0.08,-.6) to (0.08,.6);
				\draw[->,blue] (-0.28,-.6) to (-0.28,.6);
			\end{tikzpicture}
		}.
	\end{align*}
The verification of \eqref{KM3} and \eqref{KMrightadj} follow similarly.

 To verify \eqref{KMinverse1}--\eqref{KMinverse3}, we just need to prove that the natural transformations
	\begin{align*}
		\begin{tikzpicture}[baseline = -1mm]
			\draw[->,red] (0.28,-.28) to (-0.28,.28);
			\draw[<-,blue] (-0.28,-.28) to (0.28,.28);
			\node at (-0.33,-0.43) {$\scriptstyle{j}$};
			\node at (0.33,-0.43) {$\scriptstyle{i}$};
			\node at (-0.33,0.43) {$\scriptstyle{i}$};
			\node at (0.33,0.43) {$\scriptstyle{j}$};
			\node at (0,-.01) {$\diamond$};
		\end{tikzpicture}
		:F_{j} E_i \Rightarrow E_i F_{j}&
		,&
		\begin{tikzpicture}[baseline = -1mm]
			\draw[<-,blue] (0.28,-.28) to (-0.28,.28);
			\draw[->,red] (-0.28,-.28) to (0.28,.28);
			\node at (-0.33,-0.43) {$\scriptstyle{i}$};
			\node at (0.33,-0.43) {$\scriptstyle{j}$};
			\node at (-0.33,0.43) {$\scriptstyle{j}$};
			\node at (0.33,0.43) {$\scriptstyle{i}$};
			\node at (0,-.01) {$\diamond$};
		\end{tikzpicture}
		: E_i F_{j} \Rightarrow F_{j} E_i&
	\end{align*}
	are mutually inverse
	isomorphisms. In fact, we have
	\begin{align*}
		\mathord{
			\begin{tikzpicture}[baseline = -1mm]
				\node at (-.23,.75){$\scriptstyle i$};
				\node at (.23,.75){$\scriptstyle j$};
				\node at (-.23,-.75){$\scriptstyle i$};
				\node at (.23,-.75){$\scriptstyle j$};
				\node at (-.35,0){$\scriptstyle j$};
				\node at (.35,0){$\scriptstyle i$};
				\draw[-,red] (0.23,0) to[out=90,in=-90] (-0.23,.6);
				\draw[->,blue] (-0.23,0) to[out=90,in=-90] (0.23,.6);
				\draw[-,blue] (0.23,-.6) to[out=90,in=-90] (-0.23,0);
				\draw[<-,red] (-0.23,-.6) to[out=90,in=-90] (0.23,0);
				\node at (0,.3){$\diamond$};
				\node at (0,-.3){$\diamond$};
			\end{tikzpicture}
		}
		&
		=
		\mathord{
			\begin{tikzpicture}[baseline = -1mm]
				\node at (-.23,.75){$\scriptstyle i$};
				\node at (.23,.75){$\scriptstyle j$};
				\node at (-.23,-.75){$\scriptstyle i$};
				\node at (.23,-.75){$\scriptstyle j$};
				\draw[-,red] (0.23,0) to[out=90,in=-90] (-0.23,.6);
				\draw[->,blue] (-0.23,0) to[out=90,in=-90] (0.23,.6);
				\draw[-,blue] (0.23,-.6) to[out=90,in=-90] (-0.23,0);
				\draw[<-,red] (-0.23,-.6) to[out=90,in=-90] (0.23,0);
				\draw[-] (-.2,.4) to (-.13,.45);
				\draw[-] (-.2,-.4) to (-.13,-.45);
				\draw[-] (.2,.4) to (.13,.45);
				\draw[-] (.2,-.4) to (.13,-.45);
			\end{tikzpicture}
		}
		=
		\mathord{
			\begin{tikzpicture}[baseline = -1mm]
				\node at (-.28,.75){$\scriptstyle i$};
				\node at (.08,-.75){$\scriptstyle i'$};
				\draw[->,blue] (0.08,-.6) to (0.08,.6);
				\draw[<-,red] (-0.28,-.6) to (-0.28,.6);
			\end{tikzpicture}
		},
		&
		\mathord{
			\begin{tikzpicture}[baseline = -1mm]
				\node at (-.23,.75){$\scriptstyle j$};
				\node at (.23,.75){$\scriptstyle i$};
				\node at (-.23,-.75){$\scriptstyle j$};
				\node at (.23,-.75){$\scriptstyle i$};
				\node at (-.35,0){$\scriptstyle i$};
				\node at (.35,0){$\scriptstyle j$};
				\draw[-,blue] (0.23,0) to[out=90,in=-90] (-0.23,.6);
				\draw[->,red] (-0.23,0) to[out=90,in=-90] (0.23,.6);
				\draw[-,red] (0.23,-.6) to[out=90,in=-90] (-0.23,0);
				\draw[<-,blue] (-0.23,-.6) to[out=90,in=-90] (0.23,0);
				\node at (0,.3){$\diamond$};
				\node at (0,-.3){$\diamond$};
			\end{tikzpicture}
		}
		&
		=
		\mathord{
			\begin{tikzpicture}[baseline = -1mm]
				\node at (-.23,.75){$\scriptstyle j$};
				\node at (.23,.75){$\scriptstyle i$};
				\node at (-.23,-.75){$\scriptstyle j$};
				\node at (.23,-.75){$\scriptstyle i$};
				\draw[-,blue] (0.23,0) to[out=90,in=-90] (-0.23,.6);
				\draw[->,red] (-0.23,0) to[out=90,in=-90] (0.23,.6);
				\draw[-,red] (0.23,-.6) to[out=90,in=-90] (-0.23,0);
				\draw[<-,blue] (-0.23,-.6) to[out=90,in=-90] (0.23,0);
				\draw[-] (-.2,.4) to (-.13,.45);
				\draw[-] (-.2,-.4) to (-.13,-.45);
				\draw[-] (.2,.4) to (.13,.45);
				\draw[-] (.2,-.4) to (.13,-.45);
			\end{tikzpicture}
		}
		=
		\mathord{
			\begin{tikzpicture}[baseline = -1mm]
				\node at (-.28,.75){$\scriptstyle j$};
				\node at (.08,-.75){$\scriptstyle i$};
				\draw[->,red] (0.08,-.6) to (0.08,.6);
				\draw[<-,blue] (-0.28,-.6) to (-0.28,.6);
			\end{tikzpicture}
		}.
	\end{align*}
 This completes the proof.
\end{proof}

\section{Big categorification on representation category of finite classical groups: quantum case}
We apply Section~\ref{sec:HeistoKac} to finite classical groups and study the
associated colored weight functions.  \label{sec:bigquantum}
\subsection{Basic definitions}
\subsubsection{Finite classical groups and Witt towers}
Throughout the paper, $G$ denotes one of the finite classical groups
\[
 \GL_n(q),\qquad \GU_n(q),\qquad \Sp_{2n}(q),\qquad
 \O_{2n+1}(q),\qquad \O_{2n}^{\pm}(q).
\]
Here $\GL_n(q)$ is the group of invertible linear transformations of an
$n$-dimensional vector space over $\bbF_q$, and $\Sp_{2n}(q)$ is the
isometry group of a non-degenerate alternating form on a
$2n$-dimensional vector space over $\bbF_q$.  The groups
$\O_{2n+1}(q)$ and $\O_{2n}^{\pm}(q)$ are the full isometry groups of
non-degenerate quadratic spaces.  In even dimension the signs $+$ and
$-$ distinguish the split and non-split Witt types, while in odd
dimension there is a single type.

For unitary groups we use a field-size convention.  Write $q=q_0^2$ and
let $\GU_n(q)$ denote the isometry group of a non-degenerate Hermitian
form on $\bbF_q^n$ with respect to the involution
$a\mapsto a^{q_0}$.  Thus our $\GU_n(q)$ is the group usually denoted
$\GU_n(q_0)$ when the parameter records the size of the fixed field.
With this convention, $q$ is the size of the field on which the natural
module is defined, and the general-linear factors in the standard Levi
subgroups are uniformly written as $\GL_d(q)$.

We work with full orthogonal groups throughout.  In characteristic two,
orthogonal groups are defined as isometry groups of quadratic forms.  In
particular,
\[
 \O_{2n+1}(q)\cong\Sp_{2n}(q)\qquad(q\text{ even}),
\]
and we use this identification for the odd orthogonal tower; the two
even orthogonal towers are treated directly as quadratic spaces.  All
parabolic constructions below are therefore valid in every
characteristic.

Let $\bbH$ denote a hyperbolic plane.  For each unitary, symplectic, or
orthogonal Witt tower, fix its anisotropic kernel $V_a$ and write the
member of Witt rank $n$ as
\[
 V_n=V_a\perp\bbH^{\oplus n}
    =V_n^+\oplus V_a\oplus V_n^-,
\]
where $\bbH^{\oplus n}$ denotes the orthogonal sum of $n$ hyperbolic
planes.
Here $a=0$ or $1$ for the two unitary towers, $a=0$ for the symplectic
and split even orthogonal towers, $a=1$ for the odd orthogonal tower,
and $a=2$ for the non-split even orthogonal tower.  Choose isotropic
vectors
\[
 v_1,\ldots,v_n,v_{-1},\ldots,v_{-n}
\]
with $\langle v_i,v_{-i}\rangle=1$, and put
\[
 V_r^+=\langle v_1,\ldots,v_r\rangle,
 \qquad
 V_r^-=\langle v_{-1},\ldots,v_{-r}\rangle,
 \qquad
 V_r=V_r^+\oplus V_a\oplus V_r^-.
\]
The stabilizer of the standard maximal isotropic flag has Levi factor
\[
 \GL_1(q)^n\times G(V_a).
\]
Its relative Weyl group in the full isometry group is the signed
permutation group
$W_n\cong(\bbZ/2\bbZ)^n\rtimes\frakS_n$; for $\GL_n(q)$ the Weyl group
is $\frakS_n$.  We use these Weyl groups only through the explicit
parabolic double-coset representatives of \S\ref{prop:TypeA}.

	\subsubsection{Levi subgroups}
For a composition $n=m_1+\cdots+m_t$, let
\[
L_{m_1,\ldots,m_t}
 =\GL_{m_1}(q)\times\cdots\times\GL_{m_t}(q)
 \subseteq\GL_n(q)
\]
be the standard block-diagonal Levi subgroup, let $P_{m_1,\ldots,m_t}$ be the
corresponding block upper-triangular parabolic, and let
$V_{m_1,\ldots,m_t}$ be its unipotent radical.

For a non-linear Witt tower, write $V_j=V_a\oplus\bbH^{\oplus j}$.  If
$n=r+m_1+\cdots+m_t$, the standard parabolic of $G(V_n)$ stabilizing the
successive isotropic blocks of dimensions $m_t,\ldots,m_1$ has Levi factor
\[
L_{r,m_1,\ldots,m_t}
 \cong G(V_r)\times\GL_{m_1}(q)\times\cdots\times\GL_{m_t}(q).
\]
With respect to the fixed Witt basis its Levi embedding is
\[
(B,A_1,\ldots,A_t)\longmapsto
\diag(A_t,\ldots,A_1,B,A_1',\ldots,A_t'),
\]
where $A_i'=J_{m_i}A_i^{-\sfH}J_{m_i}$ in the unitary case and
$A_i'=J_{m_i}A_i^{-\sfT}J_{m_i}$ in the symplectic and orthogonal cases.

We write
\[
 P_{r,d}=L_{r,d}\ltimes V_{r,d},\qquad
 \prescript{t}{}{V}_{r,d}=\operatorname{Rad}_u(P_{r,d}^{-}),
\]
where $P_{r,d}^{-}$ is the opposite Witt parabolic.  The opposition elements,
block swaps and signed-permutation representatives below are the standard ones
obtained from the fixed Witt basis.  These conventions apply uniformly to the
unitary, symplectic and full orthogonal Witt towers, with orthogonal spaces
understood through their quadratic forms.

\subsubsection{Semisimple Elements and Conjugacy Classes}
	A semisimple element of a linear algebraic group is an element diagonalizable over $\Fqb$. For finite classical groups, conjugacy classes of semisimple elements are parameterized by multiplicities of irreducible polynomials, subject to group-specific involution constraints. We use the standard primary-polynomial parametrization; for the unitary, symplectic, and orthogonal groups see \cite{Wall63}, and in odd defining characteristic we use the conventions of \cite{FS89}.

	Let $\Irr(\bbF_q[x])$ denote the set of all monic irreducible polynomials in $\bbF_q[x]$.  Set
\[
\calF := \Irr(\bbF_q[x]) \setminus \{x\},
\]	i.e., the set of monic irreducible polynomials over $\bbF_q$ with non-zero constant term. For $\Delta \in \calF$, let $d_\Delta = \deg(\Delta)$ be the degree of $\Delta$.

	\begin{enumerate}
		\item \textbf{$\GL_n(q)$}:
		A semisimple conjugacy class in $\GL_n(q)$ is uniquely determined by a function
		$m:\calF\to\bbN$ satisfying
		\[
		\sum_{\Delta\in\calF}m_\Delta d_\Delta=n,
		\]
		where $m_\Delta$ is the multiplicity of the irreducible polynomial $\Delta$ in the
		characteristic polynomial of any element of the class.

		\item \textbf{$\GU_n(q)$}:
		Let $q = q_0^2$. Define an involution on $\calF$ by $\Delta \mapsto \overline{\Delta}$, where $\overline{\Delta}$ is the unique monic polynomial whose roots are the $(-q_0)$-th powers of the roots of $\Delta$. Let
		\[
		\calF^{\GU}_1 = \{ \Delta \in \calF \mid \overline{\Delta} = \Delta \}, \quad
		\calF^{\GU}_2 = \{ \Delta \overline{\Delta} \mid \Delta \in \calF, \overline{\Delta} \neq \Delta \}, \quad
		\calF^{\GU} = \calF^{\GU}_1 \cup \calF^{\GU}_2.
		\]
		Elements $\Gamma\in\calF^{\GU}$ are the unitary primary polynomials.  Following the standard convention, put $d_\Gamma:=\deg\Gamma$.
		Semisimple conjugacy classes in $\GU_n(q)$ are uniquely parameterized by
		functions $m:\calF^{\GU}\to\bbN$ satisfying
		\[
		\sum_{\Gamma\in\calF^{\GU}}m_\Gamma d_\Gamma=n.
		\]

		\item {$\Sp_{2n}(q)$ and ${\rm O}^{\pm}_m(q)$}:
		Define an involution on $\calF$ by $\Delta \mapsto \widetilde{\Delta}$, where $\widetilde{\Delta}$ is the unique monic polynomial whose roots are the inverses of the roots of $\Delta$.
        Let $\rm G=\Sp$ or $\rm O.$
        If $2\nmid q$, we let
		\[
		\calF^{\rm G}_0 = \{ X-1, X+1 \}, \quad
		\calF^{\rm G}_1 = \{ \Delta \in \calF \setminus \calF^{\rm G}_0 \mid \widetilde{\Delta} = \Delta \}, \quad
		\calF^{\rm G}_2 = \{ \Delta \widetilde{\Delta} \mid \Delta \in \calF, \widetilde{\Delta} \neq \Delta \}.
		\]
        If $2\mid q$, let
		\[
		\calF^{\rm G}_0 = \{ X-1\}, \quad
		\calF^{\rm G}_1 = \{ \Delta \in \calF \setminus \calF^{\rm G}_0 \mid \widetilde{\Delta} = \Delta \}, \quad
		\calF^{\rm G}_2 = \{ \Delta \widetilde{\Delta} \mid \Delta \in \calF, \widetilde{\Delta} \neq \Delta \}.
		\]
        In both cases, we write $\calF^{\rm G}=\calF^{\rm G}_0\cup\calF^{\rm G}_1\cup\calF^{\rm G}_2$.

        We use the standard notation recalled in \cite[\S4.1.5]{LSZ25}.
        For $\Gamma\in\calF^{\rm G}$ let
        \[
          d_\Gamma:=\deg\Gamma
        \]
        be its degree and let $\delta_\Gamma$ be its reduced degree:
        \[
          \delta_\Gamma:=
          \begin{cases}
            d_\Gamma,&\Gamma\in\calF^{\rm G}_0,\\
            \frac12d_\Gamma,&\Gamma\in\calF^{\rm G}_1\cup\calF^{\rm G}_2.
          \end{cases}
        \]
        For $\Gamma\in\calF^{\rm G}_1\cup\calF^{\rm G}_2$ also put
        \[
          \epsilon_\Gamma:=
          \begin{cases}
            -1,&\Gamma\in\calF^{\rm G}_1,\\
            1,&\Gamma\in\calF^G_2.
          \end{cases}
        \]
        Thus the dimension condition is
        \[
          \sum_{\Gamma\in\calF^{\rm G}}m_\Gamma d_\Gamma=\dim V,
        \]
        and for $\Gamma\in\calF^{\rm G}_1\cup\calF^{\rm G}_2$ the corresponding type-$A$ centralizer factor is
        \[
          \GL_{m_\Gamma}(\epsilon_\Gamma q^{\delta_\Gamma}),
        \]
        with the standard formal convention
        \(\GL_m(-Q)=\mathrm U_m(Q)\), where \(\mathrm U_m(Q)\) denotes the
        unitary group over \(\bbF_{Q^2}\) with fixed field \(\bbF_Q\).
        Here \(Q\) is the centralizer parameter, distinct from the field-size
        parameter used for the ambient \(\GU\)-tower.  In particular,
        $\calF^{\rm G}_1$ gives a unitary factor and $\calF^{\rm G}_2$ a general-linear factor.

        For $\Sp_{2n}(q)$ the right-hand side is $2n$, and the multiplicities of both $X-1$ and $X+1$ are even, since the corresponding generalized eigenspaces are non-degenerate symplectic spaces.
			For orthogonal groups, the multiplicity function is supplemented by the isometry types (equivalently, the relevant discriminant data) of the $\pm1$-primary non-degenerate spaces. We retain this type data as part of the semisimple conjugacy parameter \cite{Wall63,FS89}; the multiplicity function alone does not in general determine a conjugacy class in a fixed full orthogonal group.

		\end{enumerate}

\subsection{Big Categorification}
\subsubsection{Cuspidal modules for $\GL_n(q)$}\label{subsec:cus}

Fix a prime $\ell$ with $\ell\nmid q(q-1)$.  Put
\(\mathbb K=\overline{\mathbb Q}_\ell\), let \(\mathcal O\) be its
valuation ring, and let \(\mathbb F=\overline{\mathbb F}_\ell\) be the
residue field.  We write \(\k\in\{\mathbb K,\mathcal O,\mathbb F\}\).
The ring $\mathcal O$ is the filtered union of the valuation rings of finite
extensions of $\mathbb Q_\ell$.  Every integral construction below involves
only finitely many coefficients: it is first made over a sufficiently large
finite splitting discrete valuation ring and then extended to $\mathcal O$.
In particular, $q$, $q-1$, and $2$ are units, and the required normalizing
roots may be fixed compatibly before scalar extension.  The same convention
applies whenever a block-theoretic statement is used.

The cuspidal representations of general linear groups serve as the fundamental building blocks from which all other representations are constructed. The complex characters of cuspidal representations were constructed by Green~\cite{Green55}.
The construction of cuspidal representations over a field $\mathbb K$ was established by Gelfand~\cite{Gel70}, based on Green's characters. Let $\bfG =\mathrm{GL}_n(\overline\bbF_q)  $ and $F:\bfG\to \bfG$ be the standard Frobenius map such that $G=\bfG^F=\GL_n(q).$
Deligne--Lusztig theory \cite{DL76} shows that every cuspidal character of $G$ is of the form $\pm R_{\mathbf{T}_c}^{\mathbf{G}}(\theta)$, where $F$ is the standard Frobenius map, $R_{\mathbf{T}}^{\mathbf{G}}$ is Deligne--Lusztig induction, $\mathbf{T}_c$ is a Coxeter torus of $G$, and $\theta$ is a regular character of $\mathbf{T}_c^F$.
James~\cite{Jam86} constructed an $\ell$-integral form of these representations.

For each \(\Delta \in \calF\), let \(d_\Delta\) denote the degree associated with \(\Delta\). The irreducible cuspidal modules for the family of group algebras \(\{\bbK \mathrm{GL}_d(q)\}_{d \in \mathbb{N}}\) are parametrized in the standard way by \(\calF\).
For each \(\Delta \in \calF\), let \(\M_\Delta=\M_\Delta^\bbK\) be the corresponding irreducible cuspidal \(\bbK \mathrm{GL}_{d_\Delta}(q)\)-module.
Let \(\M^\calO_\Delta\) denote the \(\calO\)-lattice constructed by James \cite{Jam86}, and let \(\M^\bbF_\Delta\) denote the reduction of \(\M^\calO_\Delta\) modulo \(\ell\). We omit these superscripts when no confusion arises.
We write \(\omega_\Delta\) for the central character of \(\M_\Delta\), so that
\[
 \rho_\Delta(aI_{d_\Delta})=\omega_\Delta(aI_{d_\Delta})\,\id_{\M_\Delta}
 \qquad(a\in\bbF_q^\times).
\]
We use the same notation for its integral and modular realizations.
Moreover, we have
\[
\dim(\M_\Delta)= (q-1)(q^2-1)\cdots (q^{d_\Delta-1}-1).
\]
The reduction need not be irreducible for arbitrary $\Delta$; the defect-zero colors used below are covered by the following lemma.
\begin{lemma}\label{Lem:ellnmidq}
  If $\ell\nmid q^{d_\Delta}-1$, then $\M^\bbF_\Delta$ is a simple projective $\bbF\GL_{d_\Delta}(q)$-module; equivalently, it lies in a defect-zero block.  The integral lattice $\M^\calO_\Delta$ belongs to the corresponding block idempotent over $\calO$.
  \end{lemma}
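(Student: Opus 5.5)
The plan is to apply the defect-zero criterion from block theory: an irreducible $\bbK G$-module $M$ (with $G=\GL_{d_\Delta}(q)$) whose dimension satisfies $v_\ell(\dim M)=v_\ell(|G|)$ lies in a block of defect zero. For such a block, the $\ell$-modular reduction of any $\calO$-lattice in $M$ is simple and projective, and the lattice is a projective $\calO G$-module living in the corresponding block idempotent. This is Brauer's theorem on blocks of defect zero (see e.g. Curtis--Reiner or Navarro). So the whole proof reduces to comparing $\ell$-adic valuations of $\dim \M_\Delta$ and $|G|$.

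First write
\[
|\GL_{d}(q)| = q^{d(d-1)/2}\prod_{i=1}^{d}(q^i-1),\qquad \dim\M_\Delta=\prod_{i=1}^{d-1}(q^i-1),
\]
with $d=d_\Delta$. Since $\ell\nmid q$, we get
\[
v_\ell(|G|)-v_\ell(\dim\M_\Delta)=v_\ell(q^{d}-1),
\]
and this is $0$ exactly under the hypothesis $\ell\nmid q^{d_\Delta}-1$. Hence $\M^\bbK_\Delta$ has full $\ell$-defect-zero degree. It is absolutely irreducible because $\bbK=\overline{\mathbb Q}_\ell$ is algebraically closed.

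Next I identify James's lattice with the canonical one. In a defect-zero block $B$, the algebra $\calO G e_B$ is a matrix algebra over $\calO$. All $\calO$-lattices in $\M^\bbK_\Delta$ are therefore isomorphic, and each is projective. So $\M^\calO_\Delta=e_B\M^\calO_\Delta$ for the block idempotent $e_B\in Z(\calO G)$, which exists over $\calO$ by the defect-zero theory after passing to a sufficiently large finite splitting DVR, as in the convention of \S\ref{subsec:cus}. Its reduction $\M^\bbF_\Delta$ is then the unique simple module of $\bbF G\bar e_B\cong \M_{\dim}(\bbF)$, which is both simple and projective.

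The only point needing care is that James's construction yields an honest $\calO$-lattice, i.e.\ a full-rank $G$-stable $\calO$-form of $\M^\bbK_\Delta$. Once that is granted, the matrix-algebra structure of the block makes the choice of lattice irrelevant. The dimension formula is stated in the paper, so no further obstacle is expected.
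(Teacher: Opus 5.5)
Your proposal is correct and follows the same route as the paper. You show that $v_\ell(|\GL_{d_\Delta}(q)|)-v_\ell(\dim\M_\Delta)=v_\ell(q^{d_\Delta}-1)$ and then apply Brauer's defect-zero theory. The paper deduces simplicity of the reduction from the decomposition number being one, while you use that $\calO G e_B$ is a matrix algebra over a finite splitting DVR, so every lattice is isomorphic; this is the same fact stated more explicitly.
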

  \begin{proof}
Let $\chi_\Delta$ be the ordinary cuspidal character afforded by
$\M_\Delta^\bbK$.  Its degree is
\[
 \chi_\Delta(1)=\prod_{i=1}^{d_\Delta-1}(q^i-1),
\]
whereas
\[
 |\GL_{d_\Delta}(q)|
 =q^{d_\Delta(d_\Delta-1)/2}
   \prod_{i=1}^{d_\Delta}(q^i-1).
\]
Thus
\[
 \frac{|\GL_{d_\Delta}(q)|}{\chi_\Delta(1)}
 =q^{d_\Delta(d_\Delta-1)/2}(q^{d_\Delta}-1).
\]
Because $\ell\nmid q$, the ordinary character $\chi_\Delta$ has
$\ell$-defect zero exactly when $\ell\nmid q^{d_\Delta}-1$.  A defect-zero block has a unique simple modular module, it is projective, and the decomposition number of the defect-zero ordinary character is one.  Hence the reduction of the chosen integral lattice has exactly this one composition factor and is therefore the unique simple projective module in the block.  The block idempotent lifts to $\calO$, and $\M_\Delta^\calO$ lies in that block.
  \end{proof}

Let $e$ be the multiplicative order of $q$ in \(\bbF_\ell^\times\). By the definition of multiplicative order, $q^{d_\Delta}\equiv1\pmod\ell$ holds exactly when $e\mid d_\Delta$; hence $\ell\nmid q^{d_\Delta}-1$ is equivalent to $e\nmid d_\Delta$.
We define \(\calF(\ell) \subseteq \calF\) to be the subset consisting of all \(\Delta \in \calF\) with $e\nmid d_\Delta$. In particular, $|\calF(\ell)|=\infty$ if $e>1.$
By Lemma \ref{Lem:ellnmidq}, we know that for any $\Delta\in \calF(\ell)$, $\M^\bbF_{\Delta}$ is a projective-simple module of $\bbF\GL_{d_\Delta}(q).$
For $\k\in\{\bbK,\calO,\bbF\},$
we will write
$$\calF(\k)=\begin{cases}
    \calF&\text{if $\k=\bbK;$}\\
    \calF(\ell)&\text{if $\k=\calO,\bbF.$}\\
\end{cases}$$
\subsubsection{Functors}
Fix a tower and index it by Witt rank rather than ambient dimension:
\[
G_r=\begin{cases}
\GL_r(q),&\GL,\\
\GU_{2r+a}(q),&\GU\text{ of parity }a\in\{0,1\},\\
\Sp_{2r}(q),&\Sp,\\
\O_{2r+1}(q),&\O_{\mathrm{odd}},\\
\O^+_{2r}(q),&\O^+_{\mathrm{even}},\\
\O^-_{2r+2}(q),&\O^-_{\mathrm{even}}.
\end{cases}
\]
In particular the base anisotropic group is \(G_0\), and adding a
\(\GL_d(q)\)-factor increases the tower index from \(r\) to \(r+d\).
This convention is used in every functor and Levi formula below.

We define the category of representations
\[
\k G_\bullet\text{-mod} := \bigoplus_{n \geq 0} \k G_n\text{-mod},
\]
where \(\k G_n\text{-mod}\) denotes the category of finitely generated left \(\k G_n\)-modules.

For \(d \in \bbN_{>0}\), we always identify \(G_r\) as a subgroup of \(G_{r+d}\) via the standard embedding induced by the Witt tower. Let \(P_{r,d}\) be the standard parabolic subgroup of \(G_{r+d}\) stabilizing the added totally isotropic \(d\)-space, with Levi complement \(L_{r,d} \cong G_r \times \GL_d(q)\). Then we have the semidirect product decomposition \(P_{r,d} = L_{r,d} \ltimes V_{r,d}\).

Following the ideas of \cite{CR08,DVV17,DVV19,LLZ}, we define the following functors:
\[
\begin{array}{rcrclcl}
F^\Delta_{r+d_\Delta,r} &=& \k G_{r+d_\Delta} \cdot e_{V_{r,d_\Delta}} \ot_{\GL_{d_\Delta}} \M_\Delta \ot_{\k G_r} - &:& \k G_r\text{-mod} &\to& \k G_{r+d_\Delta}\text{-mod}, \\
E^\Delta_{r,r+d_\Delta} &=& \M_\Delta^* \ot_{\GL_{d_\Delta}} e_{V_{r,d_\Delta}}\cdot \k G_{r+d_\Delta} \ot_{\k G_{r+d_\Delta}} - &:& \k G_{r+d_\Delta}\text{-mod} &\to& \k G_r\text{-mod}.
\end{array}
\]
We remind the reader that $d_\Delta$ denotes the degree of the polynomial $\Delta.$

For \(X \in \k G_r\text{-mod}\) and \(Y \in \k G_{r+d_\Delta}\text{-mod}\), we have
\begin{align*}
F^\Delta_{r+d_\Delta,r}(X) &\cong R_{L_{r,d_\Delta} \leq P_{r,d_\Delta}}^{G_{r+d_\Delta}} (X \boxtimes \M_\Delta), \\
E^\Delta_{r,r+d_\Delta}(Y) &\cong \Hom_{\GL_{d_\Delta}(q)} \left( \M_\Delta, {^*R}_{L_{r,d_\Delta} \leq P_{r,d_\Delta}}^{G_{r+d_\Delta}} (Y) \right)
\end{align*}
where \(R_{L_{r,d_\Delta} \leq P_{r,d_\Delta}}^{G_{r+d_\Delta}}\) (resp. \({^*R}_{L_{r,d_\Delta} \leq P_{r,d_\Delta}}^{G_{r+d_\Delta}}\)) denotes the Harish--Chandra induction functor (resp. restriction functor).

We abbreviate
\(F^\Delta:=\bigoplus_{r\ge0}F^\Delta_{r+d_\Delta,r}\) and
\(E^\Delta:=\bigoplus_{r\ge0}E^\Delta_{r,r+d_\Delta}\); thus
\(\deg F^\Delta=d_\Delta\) and \(\deg E^\Delta=-d_\Delta\).
The dual polynomial is denoted by \(\Delta^*\): it is \(\Delta\) for
\(G=\GL\), \(\overline\Delta\) for \(G=\GU\), and \(\widetilde\Delta\)
for \(G=\Sp,\O\).

\begin{lemma}\label{Lem:isomfunctors}
If \(\Delta\in\mathcal F(\k)\) and \(\Delta^*\ne\Delta\), then
\(F^\Delta\cong F^{\Delta^*}\) and \(E^\Delta\cong E^{\Delta^*}\).
\end{lemma}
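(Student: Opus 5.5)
The case $G=\GL$ is vacuous, since there $\Delta^*=\Delta$. So fix a unitary, symplectic or orthogonal Witt tower, $\Delta\in\calF(\k)$ and $d=d_\Delta$. I would compare the two functors rank by rank using a Weyl group element that normalizes the Levi $L_{r,d}\cong G_r\times\GL_d(q)$ and acts on the $\GL_d$-factor by the twist $A\mapsto A'$ from the Levi embedding. Concretely, take $w\in G_{r+d}$ to be the signed-permutation representative that fixes $V_r$ pointwise and sends $v_i\mapsto v_{-i}$, $v_{-i}\mapsto \pm v_i$ for the $d$ added isotropic vectors (signs chosen so that $w$ is an isometry). Then $w$ normalizes $L_{r,d}$, is trivial on $G_r$, and acts on $\GL_d(q)$ by $A\mapsto A'$, where $A'=J A^{-\sfT}J$ in the symplectic and orthogonal cases and $A'=JA^{-\sfH}J$ in the unitary case. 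It also carries $P_{r,d}$ to the opposite Witt parabolic $P^-_{r,d}$, with unipotent radical $\prescript{t}{}{V}_{r,d}$.

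The argument then has three steps. First, conjugation by $w$ gives an isomorphism of functors
\[
R^{G_{r+d}}_{L_{r,d}\le P_{r,d}}(X\boxtimes \M_\Delta)\cong R^{G_{r+d}}_{L_{r,d}\le P^-_{r,d}}\bigl(X\boxtimes {}^{w}\M_\Delta\bigr).
\]
This is the bimodule isomorphism $\k G e_{V_{r,d}}\cong \k G e_{\prescript{t}{}{V}_{r,d}}$, $x\mapsto xw^{-1}$, which intertwines the $L$-actions up to the twist by $w$. Second, I would invoke independence of Harish--Chandra induction from the choice of parabolic with given Levi, due to Howlett--Lehrer and Dipper--Du. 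This holds over any coefficient ring in which $q$ is invertible, so for all $\k\in\{\bbK,\calO,\bbF\}$, and yields $R_{L\le P^-}\cong R_{L\le P}$. Third, I would identify ${}^{w}\M_\Delta\cong \M_{\Delta^*}$.

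For the third step over $\bbK$ I would use characters. Twisting by $A\mapsto A^{-\sfT}$ turns a cuspidal character into its contragredient, and the contragredient of $\pm R_{\mathbf T_c}^{\mathbf G}(\theta)$ is $\pm R_{\mathbf T_c}^{\mathbf G}(\theta^{-1})$, so the label $\Delta$ is replaced by $\widetilde\Delta$. Conjugation by $J$ is inner and changes nothing. In the unitary case one additionally applies the field automorphism $a\mapsto a^{q_0}$, which transforms Deligne--Lusztig characters through the action of Frobenius on $\theta$. The combined effect raises the roots to the $(-q_0)$-th power, which gives $\overline{\Delta}$.

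Over $\calO$ and $\bbF$ the key input is Lemma~\ref{Lem:ellnmidq}. For $\Delta\in\calF(\ell)$ both $\Delta$ and $\Delta^*$ have the same degree $d$, so both lie in defect-zero blocks, and $\calF(\k)$ is stable under $\Delta\mapsto\Delta^*$. A defect-zero block over $\calO$ is a matrix algebra over $\calO$, so any two $\calO$-lattices in the same irreducible $\bbK$-module are isomorphic. Hence ${}^{w}\M^\calO_\Delta\cong\M^\calO_{\Delta^*}$, and reducing mod $\ell$ gives the statement over $\bbF$.

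Combining the three steps gives $F^\Delta_{r+d,r}\cong F^{\Delta^*}_{r+d,r}$ for every $r$, hence $F^\Delta\cong F^{\Delta^*}$. For $E$ I would pass to adjoints. $E^\Delta$ is right adjoint to $F^\Delta$ by the tensor--Hom adjunction, so $E^\Delta\cong E^{\Delta^*}$ by uniqueness of adjoints. Alternatively, one can run the same conjugation argument directly on $\Hom_{\GL_d}(\M_\Delta,{}^*R(-))$.

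I expect the main obstacle to be the second step over $\calO$ and $\bbF$, namely independence of the parabolic for non-split-type Witt towers including full orthogonal groups. I would cite the Howlett--Lehrer/Dipper--Du intertwining isomorphism $e_{V}\k G e_{V'}$ and check that its proof only uses that $q$ is a unit and the standard double-coset representatives. A secondary obstacle is tracking the precise normalization of the unitary twist, where signs and the $q_0$-power must match the convention $\overline\Delta$.
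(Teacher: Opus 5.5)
Your proof is correct, but it gets invertibility from a different source than the paper does. The paper never cites independence of the parabolic. It writes down explicit comparison maps $Y^{\Delta,\Delta^*}$ and $Y^{\Delta^*,\Delta}$: right multiplication by $e_Ve_{V^-}\dot x e_V$, resp.\ $|V|e_Ve_{V^-}\dot x^{-1}e_V$ (with $\dot x=\dot x_{r,d}$), tensored with a fixed intertwiner $a_{\Delta,\Delta^*}\colon\M_\Delta\to\M_{\Delta^*}$. The existence of that intertwiner is your Step~3, which the paper asserts without your character computation. Lemma~\ref{Lem:YYX} shows the two composites are the dots $\mathrm{X}^\Delta$ and $\mathrm{X}^{\Delta^*}$. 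Invertibility of the $Y$'s then comes from Proposition~\ref{prop:invertible}, i.e.\ from the rank induction through the Hecke and dot--slide relations and the cuspidal computation of Proposition~\ref{Prop:dim2}. The statement for $E$ follows by adjunction, as in your last step.

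The two arguments actually produce the same map. Take $w=\dot x^{-1}$, which differs from your $w$ by an element of $L_{r,d}$. Then your Step~1 followed by the parabolic-independence isomorphism $\k G e_{V^-}\to\k Ge_V$, $z\mapsto ze_V$, is right multiplication by $e_V\dot x e_V=e_Ve_{V^-}\dot xe_V$, which is $Y^{\Delta,\Delta^*}$. Likewise the paper's dot $|V|e_Ve_{V^-}e_V$ is $|V|$ times the composite $\k Ge_V\to\k Ge_{V^-}\to\k Ge_V$ of the two parabolic-independence maps, so its invertibility is a special case of the theorem you import.

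What each route buys: yours is independent of the later categorical machinery (no forward reference to Proposition~\ref{prop:invertible}), at the cost of an external theorem. The paper's proves the needed case internally and records $Y^{\Delta^*,\Delta}Y^{\Delta,\Delta^*}=\mathrm{X}^\Delta$, which it uses to transport the dot between the two representatives of a paired color.

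The obstacle you anticipate for full orthogonal groups is mild. Let $G^\circ$ be $\SO$, or the kernel of the Dickson invariant in characteristic two. Then $V$ and $V^-$ lie in $G^\circ$, so $\k Ge_V=\bigoplus_{g\in G/G^\circ}g\,\k G^\circ e_V$. Right multiplication by $e_{V^-}$ respects this decomposition, so the connected case suffices. Finally, since $\calO$ is not Noetherian, run your lattice argument over a finite splitting discrete valuation ring and then extend scalars, as the paper does.
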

The proof appears at the end of \S\ref{sec:decomposition}.  Fix one
representative of each primary \(*\)-orbit and use it in the categorical
notation below.

For \(\dagger\in\{\GU,\rm O,\Sp\}\), set
\(\mathcal F_1^\dagger(\k)=\mathcal F_1^\dagger\cap\mathcal F(\k)\) and
\(\mathcal F_2^\dagger(\k)=\{\Delta\Delta^*\mid
\Delta\in\mathcal F(\k),\ \Delta^*\ne\Delta\}\).  For
\(\dagger\in\{\rm O,\Sp\}\) also set
\(\mathcal F_0^\dagger(\k)=\mathcal F_0^\dagger\cap\mathcal F(\k)\).
For notational uniformity, when the fixed tower is unitary we write
\(\mathcal F_i^G(\k):=\mathcal F_i^{\GU}(\k)\) for \(i=1,2\); for
\(G=\Sp,\O\) we write \(\mathcal F_i^G(\k):=\mathcal F_i^{\rm G}(\k)\).
Thus
\[
 \mathcal F^G(\k)=
 \begin{cases}
  \mathcal F_1^G(\k)\cup\mathcal F_2^G(\k),&G=\GU,\\
  \mathcal F_0^G(\k)\cup\mathcal F_1^G(\k)\cup\mathcal F_2^G(\k),&G=\Sp,\O.
 \end{cases}
\]
Lemma~\ref{Lem:isomfunctors} therefore reduces the study of the creation
functors to the indices \(\Gamma\in\mathcal F^G(\k)\) occurring in the primary decomposition.

We keep the two kinds of indices separate.  The symbol \(\Delta\) always denotes an irreducible polynomial in \(\mathcal F(\k)\), hence the cuspidal label for a general-linear factor.  It determines an index \(\Gamma\in\mathcal F^G(\k)\) by
\[
 \Gamma=\Delta\quad\text{if }\Delta=\Delta^*,
 \qquad
 \Gamma=\Delta\Delta^*\quad\text{if }\Delta\ne\Delta^*.
\]
Conversely, a paired index \(\Gamma\in\mathcal F^G_2(\k)\) has the two irreducible representatives \(\Delta\) and \(\Delta^*\), and we fix one of them only when an explicit cuspidal module is needed.  Thus the Harish--Chandra rank increment is \(d_\Gamma\) for a self-dual index and \(d_\Gamma/2\) for a paired index.  For \(G=\Sp,\O\), the latter equals \(\delta_\Gamma\), whereas for \(\Gamma\in\calF_1^{\rm G}\) one has \(d_\Gamma=2\delta_\Gamma\).

We write the relative Hecke parameter directly as a power of $q$: it is \(q^{d_\Gamma}\) for a self-dual index and \(q^{d_\Gamma/2}\) for a paired index.  Accordingly, define
\[
z^\Gamma:=
\begin{cases}
q^{d_\Gamma/2}-q^{-d_\Gamma/2},&\Gamma\text{ is self-dual},\\[2pt]
q^{d_\Gamma/4}-q^{-d_\Gamma/4},&\Gamma\text{ is paired}.
\end{cases}
\]
and
\begin{align}\label{def:t}
t^\Gamma:=
\begin{cases}
\sqrt{-1}
 & \text{if $\Gamma\in\calI_{-1}(\k)$};\\
\sqrt{-q^{-d_\Gamma/2}}
 & \text{if $G=\GU$ and $\Gamma\in\calF^{\GU}_1(\k)$, or if $G=\Sp,\rm O$ and $\Gamma\in\calF^{\rm G}_1(\k)$};\\
\sqrt{-q^{-1}}
 & \text{if $\Gamma=X-1$ and $G=\O_{\mathrm{odd}},\Sp$};\\
\sqrt{-1}
 & \text{if $\Gamma=X-1$ and $G=\O_{\mathrm{even}}$};\\
\sqrt{-q^{-1}}
 & \text{if $\Gamma=X+1$, $G=\O_{\mathrm{odd}}$ and $2\nmid q$};\\
\sqrt{-1}
 & \text{if $\Gamma=X+1$, $G=\Sp,\O_{\mathrm{even}}$ and $2\nmid q$}.
\end{cases}
\end{align}
When a relation is written using an irreducible cuspidal label \(\Delta\), the symbols \(z^\Delta\) and \(t^\Delta\) mean the parameters attached to the corresponding index \(\Gamma\).  In \eqref{eq:ipm} the two adjacent eigenvalues are
\[
i^{+,\Gamma}=\begin{cases}q^{d_\Gamma}i,&\Gamma\text{ self-dual},\\ q^{d_\Gamma/2}i,&\Gamma\text{ paired},\end{cases}
\qquad
i^{-,\Gamma}=\begin{cases}q^{-d_\Gamma}i,&\Gamma\text{ self-dual},\\ q^{-d_\Gamma/2}i,&\Gamma\text{ paired}.\end{cases}
\]
In our convention, the primary polynomial $\Gamma=X+1$ for $G=\Sp$ or $\O$ occurs only when $q$ is odd.
\subsubsection{Adjunctions}
\label{sec:adj}
For $\Delta\in \calF(\k)$, since $(E^{\Delta}, F^{\Delta})$ is a bi-adjoint pair, there exist four adjunction maps for each pair. Note that these adjunction maps are unique up to a nonzero scalar.
Whenever a formula below contains $G_{r-d_\Delta}$, it is understood for $r\ge d_\Delta$; for $r<d_\Delta$ the corresponding component involving $E^\Delta$ is zero.
Let $\M^*_\Delta:=\Hom_\k(\M_\Delta,\k)\in \text{mod-}\k\GL_{d_\Delta}$ be the dual module of $\M_\Delta$ with \emph{right} action of $\GL_{d_\Delta}.$
Let $\{v_i\}^{\dim(\M_\Delta)}_{i=1}$ be a basis of $\M_\Delta$ and
$\{f^*_i\}^{\dim(\M_\Delta)}_{i=1}$ its dual basis in $\M^*_\Delta$.
The four adjunction maps are represented by the following bimodule maps.

\begin{itemize}
\item[$(a)$]
For a group element $a\in G_{r+d_\Delta}$, we define
$$
\Proj_{L_{r,d_\Delta}}(e_{V_{r,d_\Delta}} \cdot a\cdot e_{V_{r,d_\Delta}})=
\begin{cases}
\overline{a} & \text{if } a\in P_{r,d_\Delta},\\
0 & \text{if } a\notin P_{r,d_\Delta},
\end{cases}
$$
where $\overline{a}$ is the image of $a$ under the quotient map $P_{r,d_\Delta}\twoheadrightarrow L_{r,d_\Delta}\cong G_r\times \GL_{d_\Delta}.$
Extending by $\k$-linearity, this defines a $(\k G_{r},\k G_{r})$-bimodule map
$$
\varepsilon^\Delta_L: \M^*_\Delta\ot_{\GL_{d_\Delta}}e_{V_{r,d_\Delta}}\k G_{r+d_\Delta} e_{V_{r,d_\Delta}}\ot_{\GL_{d_\Delta}}\M_\Delta\to \k G_r,
$$
given by
$$
n^*\otimes e_{V_{r,d_\Delta}}a e_{V_{r,d_\Delta}}\otimes m \mapsto \delta_{a \in P_{r,d_\Delta}} n^*(c m)\cdot b,
$$
where, if $a\in P_{r,d_\Delta}$, its Levi projection is uniquely written as
$\overline a=b\times c$ with $b\in G_r$ and $c\in\GL_{d_\Delta}$;
if $a\notin P_{r,d_\Delta}$, the displayed image is understood to be zero.

\item[$(b)$]
Let $G_{r}=\coprod\limits_{i=1}^{s}P_{r-d_\Delta,d_\Delta}\cdot g_i$ be a decomposition of $G_{r}$ into left $P_{r-d_\Delta,d_\Delta}$-cosets.
We have the following element in $(\k G_r{e_{V_{r-d_\Delta,d_\Delta}}}\ot_{\GL_{d_\Delta}}
\M_\Delta)\ot_{G_{r-d_\Delta}}
({\M^*_\Delta} \ot_{\GL_{d_\Delta}}  e_{V_{r-d_\Delta,d_\Delta}}\k G_{r}):$
$$
Z^\Delta_r=\sum\limits_{i=1}^s\left(g_i^{-1} e_{V_{r-d_\Delta,d_\Delta}}\otimes \left(\sum_{j=1}^{\dim(\M_\Delta)} v_j\otimes f^*_j\right)\otimes e_{V_{r-d_\Delta,d_\Delta}}g_i\right).
$$
This element satisfies $aZ^\Delta_r=Z^\Delta_ra$ for any $a\in G_{r}$ and does not depend on the choice of representatives $\{g_i\}_{i=1}^{s}$.
\smallskip

Then we define the $(\k G_r , \k G_r )$-bimodule map $\eta^\Delta_L$ as follows:
$$
\eta^\Delta_L: \k G_{r}\to (\k G_{r}\cdot e_{V_{r-d_\Delta,d_\Delta}}\ot_{\GL_{d_\Delta}}
\M_\Delta)\ot_{G_{r-d_\Delta}}(\M^*_\Delta\ot_{\GL_{d_\Delta}} e_{V_{r-d_\Delta,d_\Delta}} \k G_{r}),
$$
$$
a\mapsto a\cdot Z^\Delta_r=Z^\Delta_r \cdot a
$$
for any $a\in  \k G_r .$

\item[$(c)$]
Let $B_\Delta$ denote the block component containing $\M_\Delta$, and let
$f_\Delta$ be its central block idempotent.  For $\k=\mathbb K$ or
$\mathbb F$, this is a full matrix algebra with unique simple module
$\M_\Delta$; for $\k=\mathcal O$, the defect-zero block is the corresponding
full matrix algebra over $\mathcal O$, with $\M_\Delta^{\mathcal O}$ as its
standard Morita generator.  This follows over a finite splitting modular
subsystem from Lemma~\ref{Lem:ellnmidq} and then by scalar extension.  We fix the corresponding Morita identification
\[
\Phi_\Delta:\M_\Delta\otimes_\k\M_\Delta^*
   \xrightarrow{\sim}\End_\k(\M_\Delta)
   \xrightarrow{\sim}B_\Delta,
\qquad
m\otimes n^*\longmapsto\bigl(w\mapsto n^*(w)m\bigr).
\]
With the matrix-algebra identification normalized by the action on
$\M_\Delta$, the identity endomorphism corresponds to $f_\Delta$; hence
\[
\Phi_\Delta\left(\sum_{j=1}^{\dim \M_\Delta}v_j\otimes f_j^*\right)=f_\Delta.
\]
(Over $\bbK$ this is the usual primitive-central-idempotent formula; over
$\calO$ and $\bbF$ we use the integral block idempotent and its reduction,
rather than dividing by $|\GL_{d_\Delta}(q)|$.)

Define the $(\k G_r,\k G_r)$-bimodule map
\[
\varepsilon^\Delta_R:
(\k G_r e_{V_{r-d_\Delta,d_\Delta}}\otimes_{\GL_{d_\Delta}}\M_\Delta)
\otimes_{G_{r-d_\Delta}}
(\M_\Delta^*\otimes_{\GL_{d_\Delta}}
 e_{V_{r-d_\Delta,d_\Delta}}\k G_r)
\longrightarrow \k G_r
\]
by
\[
(ae_V\otimes m)\otimes(n^*\otimes e_Vb)
\longmapsto
 -t^\Delta z^\Delta\,ae_V\Phi_\Delta(m\otimes n^*)e_Vb.
\]

\item[$(d)$]
The balanced evaluation map
\[
\operatorname{ev}_\Delta:
\M_\Delta^*\otimes_{\k\GL_{d_\Delta}(q)}\M_\Delta\longrightarrow\k,
\qquad f^*\otimes v\longmapsto f^*(v),
\]
is an isomorphism: the source is free of rank one by projectivity and
Schur's lemma, and the displayed map is nonzero.  In particular,
$f_i^*\otimes v_i=f_j^*\otimes v_j$ in the balanced tensor product for all
$i,j$, because both evaluate to $1$.

We therefore define the $(\k G_r,\k G_r)$-bimodule map
\[
\eta^\Delta_R:\k G_r\longrightarrow
\M_\Delta^*\otimes_{\GL_{d_\Delta}}
 e_{V_{r,d_\Delta}}\k G_{r+d_\Delta}e_{V_{r,d_\Delta}}
 \otimes_{\GL_{d_\Delta}}\M_\Delta
\]
by
\[
a\longmapsto-(t^\Delta z^\Delta)^{-1}
 f_1^*\otimes e_{V_{r,d_\Delta}}a e_{V_{r,d_\Delta}}\otimes v_1.
\]
The formula is independent of the chosen dual-basis index by the preceding
balanced-tensor identity.
\end{itemize}

For $\Gamma\in\mathcal F^G(\k)$, let $\Delta\in\mathcal F(\k)$ be an irreducible representative of $\Gamma$ when an explicit cuspidal label is required.  All categorical maps are henceforth those attached to this fixed factor; in particular
\(
\varepsilon_L^\Gamma,\varepsilon_R^\Gamma,
\eta_L^\Gamma,\eta_R^\Gamma,t^\Gamma,z^\Gamma
\)
mean the corresponding objects for $\Delta$.

\subsubsection{Natural transformations $\mathrm{X}^\Delta$ and $\mathrm{T}^{\Delta,\Delta'}$ for general linear group}

For non-negative integers $r, d, d'$ and $n = r + d + d'$, define the element
\begin{equation*}
    \dot{s}_{r, d, d'} \in \GL_n(q)
\end{equation*}
by its action on the standard basis vectors:
\begin{align*}
    \dot{s}_{r, d, d'}(e_i) =
    \begin{cases}
        e_i & \text{if } 1 \le i \le r, \\[4pt]
        e_{i+d'} & \text{if } r+1 \le i \le r+d, \\[4pt]
        e_{i-d} & \text{if } r+d+1 \le i \le r+d+d'.
    \end{cases}
\end{align*}

We define the natural transformation
\[
\mathrm{X}^{\Delta}_{r+d_\Delta,r} \in \End(F^{\Delta}_{r+d_\Delta,r})
\]
as the $(\GL_{r+d_\Delta}, \GL_r)$-bimodule map:
\[
\begin{aligned}
    \k \GL_{r+d_\Delta}\cdot  e_{V_{r,d_\Delta}} \ot_{\GL_{d_\Delta}} \M_\Delta &\to \k \GL_{r+d_\Delta} \cdot e_{V_{r,d_\Delta}} \ot_{\GL_{d_\Delta}} \M_\Delta, \\
    g e_{V_{r,d_\Delta}} \otimes m &\mapsto q^{rd_\Delta} g e_{V_{r,d_\Delta}} e_{\prescript{t}{}{V}_{r,d_\Delta}} e_{V_{r,d_\Delta}} \otimes m,
\end{aligned}
\]
where $g \in \GL_{r+d_\Delta}(q)$, $m \in \M_\Delta$, and $\prescript{t}{}{V}_{r,d_\Delta}$ denotes the transpose of the subgroup $V_{r,d_\Delta}$.

For any $\Delta, \Delta' \in \mathcal{F}(\k)$, the composite functor $F^{\Delta'} \circ F^\Delta$ is represented by the $(\GL_{r+d_\Delta+d_{\Delta'}}, \GL_r)$-bimodule
\[
\k \GL_{r+d_\Delta+d_{\Delta'}} \cdot e_{V_{r,d_\Delta,d_{\Delta'}}} \ot_{\GL_{d_\Delta} \times \GL_{d_{\Delta'}}} (\M_\Delta \boxtimes \M_{\Delta'}).
\]

Define the natural transformation
\[
\mathrm{T}_{r+d_\Delta+d_{\Delta'},r}^{\Delta,\Delta'}: F^\Delta \circ F^{\Delta'} \to F^{\Delta'} \circ F^\Delta
\]
as the $(\GL_{r+d_\Delta+d_{\Delta'}}(q), \GL_r(q))$-bimodule map:
\[
\begin{aligned}
    \k \GL_{r+d_\Delta+d_{\Delta'}}\cdot e_{V_{r,d_{\Delta'},d_\Delta}} \ot_{\GL_{d_{\Delta'}} \times \GL_{d_\Delta}} (\M_{\Delta'} \boxtimes \M_\Delta) &\to \k \GL_{r+d_\Delta+d_{\Delta'}}\cdot e_{V_{r,d_\Delta,d_{\Delta'}}} \ot_{\GL_{d_\Delta} \times \GL_{d_{\Delta'}}} (\M_\Delta \boxtimes \M_{\Delta'}), \\
    y e_{V_{r,d_{\Delta'},d_\Delta}} \ot (m' \boxtimes m) \mapsto & y \widehat{\mathrm{T}}^{\Delta,\Delta'}_{r,d_\Delta,d_{\Delta'}} \ot (m \boxtimes m'),
\end{aligned}
\]
where $y \in \GL_{r+d_\Delta+d_{\Delta'}}$, $m \in \M_\Delta$, $m'\in \M_{\Delta'}$ and
\[
\widehat{\mathrm{T}}^{\Delta,\Delta'}_{r,d_\Delta,d_{\Delta'}}:=\begin{cases}
    \omega_\Delta(-\id_{d_\Delta}) q^{\frac{d_\Delta ^2}{2}}  e_{V_{r,d_{\Delta},d_\Delta}} \dot{s}_{r,d_\Delta,d_{\Delta'}} e_{V_{r,d_{\Delta},d_\Delta}} & \text{if $\Delta=\Delta'$}\\
    q^{\frac{d_\Delta d_{\Delta'}}{2}}  e_{V_{r,d_{\Delta'},d_\Delta}} \dot{s}_{r,d_\Delta,d_{\Delta'}} e_{V_{r,d_\Delta,d_{\Delta'}}} & \text{if $\Delta\neq \Delta'$}\\

\end{cases}
.\]

We set
\[
\mathrm{X}^\Delta = \bigoplus_{r \in \mathbb{N}} \mathrm{X}^\Delta_{r+d_\Delta,r}, \qquad
\mathrm{T}^{\Delta,\Delta'} = \bigoplus_{r \in \mathbb{N}} \mathrm{T}^{\Delta,\Delta'}_{r+d_\Delta+d_{\Delta'},r}.
\]
Thus $\mathrm{X}^\Delta \in \End(F^\Delta)$ and $\mathrm{T}^{\Delta,\Delta'} \in \Hom(F^\Delta \circ F^{\Delta'}, F^{\Delta'} \circ F^\Delta)$.
The source of $\mathrm T^{\Delta,\Delta'}$ has inducing blocks
$(\Delta',\Delta)$, since functors compose from right to left.
Put $d=d_\Delta$ and $d'=d_{\Delta'}$.  For the displayed block swap,
$\diag(I_r,h',h)\dot s_{r,d,d'}=\dot s_{r,d,d'}\diag(I_r,h,h')$.
Thus right multiplication by $\dot s_{r,d,d'}$, followed by the coefficient
flip, is balanced from this source to the stated target.  With source
$(\Delta,\Delta')$ the right multiplier would instead be
$\dot s_{r,d,d'}^{-1}=\dot s_{r,d',d}$; these are equal only when $d=d'$.

\begin{remark}
For $\Delta=X-1$, this construction is exactly the original construction of
Chuang and Rouquier \cite{CR08}.
\end{remark}

\subsubsection{Natural transformations $\mathrm{X}^\Delta$ and $\mathrm{T}^{\Delta,\Delta'}$ for other classical groups}

An endomorphism of $F^\Delta_{r+d_\Delta, r}$ can be represented by a $(\k G_{r+d_\Delta}, \k G_r)$-bimodule endomorphism of
\[
\k G_{r+d_\Delta}\cdot e_{V_{r,d_\Delta}} \otimes_{\GL_{d_\Delta}} \M_\Delta.
\]

The composite functor $F^{\Delta'}_{r+d_\Delta+d_{\Delta'},r+d_\Delta} F^\Delta_{r+d_\Delta, r}$ is represented by the $(\k G_{r+d_\Delta+d_{\Delta'}}, \k G_r)$-bimodule
\[
\k G_{r+d_\Delta+d_{\Delta'}} e_{V_{r,d_\Delta,d_{\Delta'}}} \otimes_{\GL_{d_\Delta} \times \GL_{d_{\Delta'}}} (\M_\Delta \otimes \M_{\Delta'}).
\]

Let $n=r+d$.  We choose the opposition element $\dot x_{r,d}\in G_n$
to fix $V_r$ pointwise and, for $r<j\le n$, to act on the added
hyperbolic pairs by
\[
\begin{array}{c|cc}
G & \dot x_{r,d}(v_j) & \dot x_{r,d}(v_{-j})\\ \hline
\GU & -v_{-n-r-1+j} & -v_{n+r+1-j}\\
\Sp & -v_{-n-r-1+j} & \phantom{-}v_{n+r+1-j}\\
\O  & -v_{-n-r-1+j} & -v_{n+r+1-j}.
\end{array}
\]
These are compatible choices for the Hermitian, alternating and quadratic
hyperbolic forms, respectively.  In particular, the orthogonal formula is
valid in every characteristic; when $q$ is even its two minus signs are of
course invisible.

Let $n = r + d + d'$. We similarly define $\dot{s}_{r,d,d'} \in G_n$ by:
\[
\dot{s}_{r,d,d'}:
\begin{cases}
v \mapsto v, & \text{for all } v \in V_r, \\
v_j \mapsto v_{j+d'}, & \text{if } r < j \leq r+d, \\
v_{-j} \mapsto v_{-j-d'}, & \text{if } r < j \leq r+d, \\
v_j \mapsto v_{j-d}, & \text{if } r+d < j \leq n, \\
v_{-j} \mapsto v_{-j+d}, & \text{if } r+d < j \leq n.
\end{cases}
\]

Both $\dot{x}_{r,d_\Delta} \in G_{r+d_\Delta}$ and $\dot{s}_{r,d_\Delta,d_{\Delta'}} \in G_{r+d_\Delta+d_{\Delta'}}$ centralize $G_r$: by definition they are the identity on the nondegenerate summand on which $G_r$ acts, and they act only on the added hyperbolic blocks, while the standard embedding of $G_r$ is the identity on those blocks.  For $n = r + d_\Delta$, in the ordered basis
\[
\mathcal{B} = \{v_{-n}, v_{-(n-1)}, \dots, v_{-1}, \mathcal{B}_a, v_1, v_2, \dots, v_n\},
\]
where $\mathcal{B}_a$ is a basis for $V_a$, the matrix of $\dot{x}_{r,d_\Delta}$ on the added two blocks is,
respectively,
\[
\begin{pmatrix} & & -\id_{d_\Delta} \\ & \id_{G_r} & \\ -\id_{d_\Delta} & & \end{pmatrix},
\quad
\begin{pmatrix} & & -\id_{d_\Delta} \\ & \id_{G_r} & \\ \id_{d_\Delta} & & \end{pmatrix},
\quad
\begin{pmatrix} & & -\id_{d_\Delta} \\ & \id_{G_r} & \\ -\id_{d_\Delta} & & \end{pmatrix}
\]
for the unitary, symplectic and orthogonal towers, respectively.  Thus its square on the general-linear block is
$+I$, $-I$, $+I$, respectively, exactly as recorded by $\kappa_d$ below.

For $\Delta \in \mathcal{F}(\k)$, we define a natural transformation for each functor $F^\Delta$, distinguishing two cases:

\textbf{Case 1: $\Delta^* \neq \Delta$.}
In this case, the construction is analogous to the $\GL_n(q)$ case. We define the natural transformation
\[
\mathrm{X}^{\Delta}_{r+d_\Delta,r} \in \End(F^{\Delta}_{r+d_\Delta,r})
\]
as the $(G_{r+d_\Delta}, G_r)$-bimodule map:
\[
\begin{aligned}
\k G_{r+d_\Delta}\cdot  e_{V_{r,d_\Delta}} \ot_{\GL_{d_\Delta}(q)} \M_\Delta &\to \k G_{r+d_\Delta} \cdot e_{V_{r,d_\Delta}} \ot_{\GL_{d_\Delta}(q)} \M_\Delta, \\
g e_{V_{r,d_\Delta}} \ot m &\mapsto   |V_{r,d_\Delta}|\cdot  ge_{V_{r,d_\Delta}} e_{\prescript{t}{}{V}_{r,d_\Delta}} e_{V_{r,d_\Delta}}\ot m,
\end{aligned}
\]
where
$\prescript{t}{}{V}_{r,d_\Delta}$ denotes the transpose of the subgroup $V_{r,d_\Delta}$.

For a paired color $\Gamma=\Delta\Delta^*$, where $\Delta$ is an irreducible representative of $\Gamma$, the categorical dot is
$\mathrm{X}^\Gamma:=\mathrm{X}^\Delta$.  The raw
operator attached to the opposite representative is compared with it by the extra
isomorphism constructed in \S\ref{sec:decomposition}.
\begin{remark}
    In Case 1, the construction is similar to the construction of $\GL_n(q).$
\end{remark}
\textbf{Case 2: $\Delta^*=\Delta$.}
The opposition element does not always define a split extension of the Levi.
Put
\[
\widehat H_{r,d}:=\langle L_{r,d},\dot x_{r,d}\rangle\subseteq G_{r+d}
\qquad(d=d_\Delta).
\]
Then $L_{r,d}\lhd \widehat H_{r,d}$ and
$\widehat H_{r,d}/L_{r,d}\cong C_2$, but in the symplectic case the chosen
lift $\dot x_{r,d}$ has nontrivial square in the Levi.  More precisely, with
$L_{r,d}\cong G_r\times\GL_d(q)$ as above,
\begin{equation*}
 \dot x_{r,d}^{\,2}=(1,\kappa_d),\qquad
 \kappa_d=
 \begin{cases}
 -I_d,&G=\Sp,\\
 I_d,&G=\GU\text{ or }G=\rm O.
 \end{cases}
\end{equation*}
For the fixed color $\Delta$ we abbreviate $\kappa_\Delta:=\kappa_{d_\Delta}$.
Let $\dagger=\mathsf H$ in the unitary case and $\dagger=\mathsf T$ in the
symplectic and orthogonal cases.  Conjugation by $\dot x_{r,d}$ induces on
the general-linear factor the involution
\begin{equation*}
 \theta_d(g)=J_dg^{-\dagger}J_d^{-1}.
\end{equation*}
Accordingly, define the crossed extension
\begin{equation}\label{eq:twisted-extension}
 \widehat H_d^{\dagger}(\kappa_d)
 :=\left\langle \GL_d(q),\sigma_d\ \middle|\
 \sigma_dg\sigma_d^{-1}=\theta_d(g),\quad
 \sigma_d^2=\kappa_d\right\rangle .
\end{equation}
There is a surjective homomorphism
\begin{equation}\label{eq:twisted-extension-projection}
 \pi_{r,d}:\widehat H_{r,d}\twoheadrightarrow
 \widehat H_d^{\dagger}(\kappa_d),\qquad
 (a,b)\longmapsto b,\quad \dot x_{r,d}\longmapsto\sigma_d,
\end{equation}
whose kernel contains the factor $G_r$.

First work over $\mathbb K$.  Since $\Delta$ is self-dual,
$\M_\Delta^{\mathbb K}$ is stable under $\theta_d$, and Schur's lemma makes
the corresponding intertwining space one-dimensional.  After rescaling an
intertwiner, we may impose the square relation below; the two resulting
choices differ by sign.  The resulting representation of the finite crossed
extension $\widehat H_d^{\dagger}(\kappa_d)$ admits an
$\widehat H_d^{\dagger}(\kappa_d)$-stable $\mathcal O$-lattice (take the
$\mathcal O$-span of the orbit of any lattice).  We henceforth choose
$\M_\Delta^{\mathcal O}$ to be such a lattice; this does not change the
ordinary representation, and in the defect-zero colors its reduction is the
same unique simple projective module.  The stable lattice supplies compatible
integral and modular intertwiners.

\begin{definition}\label{def:cuspidal-extension}
For every self-dual color $\Delta$, fix once and for all the compatible
intertwiners $A_\Delta$ over $\mathbb K$, $\mathcal O$, and $\mathbb F$
satisfying
\begin{equation*}
 A_\Delta\rho_\Delta(g)A_\Delta^{-1}
 =\rho_\Delta(\theta_d(g)),\qquad
 A_\Delta^2=\rho_\Delta(\kappa_d)
 =\omega_\Delta(\kappa_d)\id_{\M_\Delta}.
\end{equation*}
We write $\widetilde\rho_\Delta$ for the resulting extension of
$\rho_\Delta$ to $\widehat H_d^{\dagger}(\kappa_d)$, so that
$\widetilde\rho_\Delta(\sigma_d)=A_\Delta$.  Pulling back along
\eqref{eq:twisted-extension-projection}, we use the same notation for the
representation of $\widehat H_{r,d}$.  The other extension is obtained by
$A_\Delta\mapsto-A_\Delta$.
\end{definition}

We define
\[
\mathrm{X}^{\Delta}_{r+d,r}\in\End(F^{\Delta}_{r+d,r})
\]
by the bimodule map
\[
\begin{aligned}
\k G_{r+d}e_{V_{r,d}}\ot_{\GL_d(q)}\M_\Delta
&\longrightarrow
\k G_{r+d}e_{V_{r,d}}\ot_{\GL_d(q)}\M_\Delta,\\
ge_{V_{r,d}}\otimes m
&\longmapsto
 g\,\widehat{\mathrm{X}}^\Delta_{r,d}\otimes A_\Delta m,
\end{aligned}
\]
where
\begin{equation*}
 \widehat{\mathrm{X}}^\Delta_{r,d}
 :=\beta_\Delta q^{rd}e_{V_{r,d}}\dot x_{r,d}e_{V_{r,d}}.
\end{equation*}
Here $\beta_\Delta\in\k^\times$ is independent of $r$.  Put
$\omega=\omega_\Delta(\kappa_d)$.  The group-algebra factor contributes
$\omega$ to the identity double-coset component of the square, and the
coefficient factor contributes $A_\Delta^2=\omega\id$.
Since $\kappa_d^2=1$, we have $\omega^2=1$.  We therefore normalize by
\begin{equation}\label{eq:beta-normalization}
 \frac{\beta_\Delta^2q^{2rd}}{|V_{r,d}|}=-(t^\Delta)^2.
\end{equation}
The quantity $|V_{r,d}|q^{-2rd}$ is independent of $r$, so one scalar
$\beta_\Delta$ works throughout the Witt tower.  Equation
\eqref{eq:beta-normalization} fixes $\beta_\Delta$ up to sign.  We choose
the pair $(A_\Delta,\beta_\Delta)$ once and for all; the resulting
normalization is recorded in Proposition~\ref{prop:all-bottom-bubbles}.

\begin{remark}
\label{lem:extension-sign-independence}
Changing to the other cuspidal extension sends $A_\Delta$ to $-A_\Delta$;
changing simultaneously $\beta_\Delta$ to $-\beta_\Delta$ leaves
$\beta_\Delta A_\Delta$, hence $\mathrm{X}^\Delta$ and $\bbO^\Delta(u)$, unchanged.
\end{remark}

\begin{remark}
When $\kappa_d=I_d$, the crossed extension is split and may be identified
with the usual outer extension after conjugating the outer generator by
$J_d$.  When $G=\Sp$, however, $\kappa_d=-I_d$ and the lift has square
$-I_d$.  Both this square and the coefficient square $A_\Delta^2$ must
be retained when computing the dot.  Their two central-character factors
cancel in \eqref{eq:beta-normalization}; retaining only one gives an
incorrect normalization.
\end{remark}

Similarly, for any $\Delta, \Delta' \in \mathcal{F}(\k)$, we define the natural transformation
\[
\mathrm{T}_{r+d_\Delta+d_{\Delta'},r}^{\Delta,\Delta'}: F^\Delta F^{\Delta'} \to F^{\Delta'} F^\Delta
\]
as the $(G_{r+d_\Delta+d_{\Delta'}}, G_r)$-bimodule map:
\[
\begin{aligned}
\k G_{r+d_\Delta+d_{\Delta'}} \cdot e_{V_{r,d_{\Delta'},d_\Delta}} \otimes_{\GL_{d_{\Delta'}} \times \GL_{d_\Delta}} (\M_{\Delta'} \boxtimes \M_\Delta) &\to \k G_{r+d_\Delta+d_{\Delta'}} \cdot e_{V_{r,d_\Delta,d_{\Delta'}}} \otimes_{\GL_{d_\Delta} \times \GL_{d_{\Delta'}}} (\M_\Delta \boxtimes \M_{\Delta'}), \\
y e_{V_{r,d_{\Delta'},d_\Delta}} \otimes (m' \boxtimes m) \mapsto &y \widehat{\mathrm{T}}^{\Delta,\Delta'}_{r,d_\Delta,d_{\Delta'}} \otimes (m \boxtimes m'),
\end{aligned}
\]
where $y \in G_{r+d_\Delta+d_{\Delta'}}$, $m \in \M_\Delta$, $m'\in \M_{\Delta'}$ and
\[
\widehat{\mathrm{T}}^{\Delta,\Delta'}_{r,d_\Delta,d_{\Delta'}}:=\begin{cases}
    \omega_\Delta(-\id_{d_\Delta}) q^{\frac{d_\Delta ^2}{2}}  e_{V_{r,d_{\Delta},d_\Delta}} \dot{s}_{r,d_\Delta,d_{\Delta'}} e_{V_{r,d_{\Delta},d_\Delta}} & \text{if $\Delta=\Delta'$}\\
    q^{\frac{d_\Delta d_{\Delta'}}{2}}  e_{V_{r,d_{\Delta'},d_\Delta}} \dot{s}_{r,d_\Delta,d_{\Delta'}} e_{V_{r,d_\Delta,d_{\Delta'}}} & \text{if $\Delta\neq \Delta'$}\\

\end{cases}
.\]

We set
\[
\mathrm{X}^\Delta = \bigoplus_{r \in \mathbb{N}} \mathrm{X}^\Delta_{r+d_\Delta,r}, \qquad
\mathrm{T}^{\Delta,\Delta'} = \bigoplus_{r \in \mathbb{N}} \mathrm{T}^{\Delta,\Delta'}_{r+d_\Delta+d_{\Delta'},r}.
\]

The same conjugation calculation for the block transposition holds on both halves of the Witt basis, so the preceding right-multiplier convention and balancing verification apply here as well.

At the level of the categorical action, the superscript $\Gamma$ records the corresponding index in the primary decomposition.  For fixed $\Gamma$, let $\Delta$ be an irreducible representative when the explicit cuspidal label is needed, and set
\[
F^\Gamma:=F^\Delta,\qquad
E^\Gamma:=E^\Delta,\qquad
\mathrm{X}^\Gamma:=\mathrm{X}^\Delta,
\]
and similarly for $\mathrm{T}$, the four adjunction maps, and all their mates.  In a
paired color the extra isomorphism
$Y^{\Delta,\Delta^*}$ identifies this choice with the functor constructed from the other factor.
Thus $\Gamma$ denotes the primary polynomial/color.  The symbol
$\Delta$ is retained only inside the explicit cuspidal bimodule and Howlett--Lehrer
computations.  Whenever a relation below is first verified with superscripts
$\Delta,\Delta'$ on chosen representatives, the corresponding categorical relation
with superscripts $\Gamma,\Gamma'$ is its transport through these fixed identifications.

\begin{remark}
For $\Delta=X-1$, this construction is exactly the original construction of
Dudas--Varagnolo--Vasserot \cite{DVV17,DVV19}.
\end{remark}

\subsubsection{A big quantum Heisenberg action}
Let $\k \in \{\bbK,\calO,\bbF\}.$
Recall that $\ell=\mathrm{char}(\bbF)$ with $\ell\nmid q(q-1)$ and  $\mathrm{char}(\bbK)=0.$
Denote \begin{align*}
   \calI_{-1}(\k):=\begin{cases}
 \calF(\k) \,\,&\text{if}\,\,G=\GL;\\ \calF^{\GU}_2(\k) \,\,&\text{if}\,\,G=\GU;\\\
 \calF^{G}_2(\k)\,\,&\text{if} \,\,G=\Sp, {\rm O},
\end{cases}\end{align*}
and
 \begin{align*}
   \calI_{-2}(\k):=\begin{cases}
 \emptyset \,\,&\text{if}\,\,G=\GL;\\ \calF^{\GU}_1(\k) \,\,&\text{if}\,\,G=\GU;\\
 \calF^{G}_0(\k)\sqcup \calF^{G}_1(\k)\,\,&\text{if} \,\,G=\Sp, {\rm O}.
\end{cases}\end{align*}
Denote  $$\calI(\k):= \calI_{-1}(\k)\sqcup \calI_{-2}(\k),$$
and put $k_\Gamma=-1$ or $-2$ according as
$\Gamma\in\calI_{-1}(\k)$ or $\calI_{-2}(\k)$.  Write
\[
 \symHeis_{\calI(\k)}:=
 \bigodot_{\Gamma\in\calI(\k)}
 \Heis^{(\Gamma)}_{k_\Gamma}(z^\Gamma,t^\Gamma)
\]
for the corresponding symmetric product of colored quantum Heisenberg
categories.

\begin{theorem}\label{Thm:quantumheisenberg}
There exists a strict $\k$-linear monoidal functor
\[
\Psi:\symHeis_{\calI(\k)}
\longrightarrow
\mathcal{E}nd_\k\!\left(\k G_{\bullet}(q)\mod\right)
\]
which
sends the generating objects $\mathord{
\begin{tikzpicture}[baseline = -1mm]
 	\draw[<-,red] (0.68,.28) to (0.68,-.22);
     \node at (0.68,-.37) {$\scriptstyle{\red{\Gamma}}$};
\end{tikzpicture}
}$ and $\mathord{
\begin{tikzpicture}[baseline = -1mm]
 	\draw[->,red] (0.68,.28) to (0.68,-.22);
     \node at (0.68,-.37) {$\scriptstyle{\red\Gamma}$};
\end{tikzpicture}
}$ to endo-functors $F^{\Gamma}$ and $E^{\Gamma}$ of $\k G_{\bullet}(q)\mod $ for any $\Gamma\in \calI(\k)$, respectively. Moreover,
$\Psi$ sends the generating morphisms  to the corresponding natural transformations
in  $ \k G_{\bullet}\mod $ as follows:
\begin{itemize}
\item
$\Psi(\begin{tikzpicture}[baseline = -1mm]
	\draw[->,red] (0.08,-.2) to (0.08,.2);
      \node[red] at (0.08,0) {$\dott$};
      \node at (0.08,-.37) {$\scriptstyle{\red{\Gamma}}$};
\end{tikzpicture})=\mathrm{X}^{\Gamma}\::\;F^{\Gamma} \Rightarrow F^{\Gamma}\,,\quad\Psi(
\begin{tikzpicture}[baseline = -1mm]
	\draw[->,red] (0.2,-.2) to (-0.2,.2);
    \draw[-,white,line width=4pt] (-0.2,-.2) to (0.2,.2);
    \draw[->,red] (-0.2,-.2) to (0.2,.2);
 \node at (0.2,-.37) {$\scriptstyle{\red{\Gamma}}$};
  \node at (-0.2,-.37) {$\scriptstyle{\red{\Gamma}}$};
\end{tikzpicture})=\mathrm{T}^{\Gamma,\Gamma}\::\; (F^{\Gamma})^2 \Rightarrow (F^{\Gamma})^2\,;$
\item
$\Psi(\begin{tikzpicture}[baseline = .75mm]
	\draw[<-,red] (0.3,0) to[out=90, in=0] (0.1,0.3);
	\draw[-,red] (0.1,0.3) to[out = 180, in = 90] (-0.1,0);
  \node at (-0.1,-.2) {$\scriptstyle{\red{\Gamma}}$};
\end{tikzpicture})=\varepsilon^{\Gamma}_R\::\;F^{\Gamma}\otimes E^{\Gamma} \Rightarrow \unit
\,,\quad\Psi(\begin{tikzpicture}[baseline = .75mm]
	\draw[<-,red] (0.3,0.3) to[out=-90, in=0] (0.1,0);
	\draw[-,red] (0.1,0) to[out = 180, in = -90] (-0.1,0.3);
 \node at (-0.1,0.42) {$\scriptstyle{\red{\Gamma}}$};
\end{tikzpicture})=\eta^{\Gamma}_R\::\;\unit\Rightarrow E^{\Gamma} \circ F^{\Gamma}\,;$
\item
$\Psi(\begin{tikzpicture}[baseline = .75mm]
	\draw[-,red](0.1,0.3) to[out=0, in=90](0.3,0);
	\draw[<-,red] (-0.1,0)to[out =90 , in = 180](0.1,0.3);
\node at (0.3,-.2) {$\scriptstyle{\red{\Gamma}}$};
\end{tikzpicture})=\varepsilon^{\Gamma}_L\::\;E^{\Gamma}\circ F^{\Gamma} \Rightarrow \unit
\,,\quad\Psi(\begin{tikzpicture}[baseline = .75mm]
	\draw[-,red] (0.1,0)to[out=0, in=-90](0.3,0.3);
	\draw[<-,red] (-0.1,0.3)to[out =-90, in =180] (0.1,0);
\node at (0.3,0.42) {$\scriptstyle{\red{\Gamma}}$};
\end{tikzpicture})=\eta^{\Gamma}_L\::\;\unit\Rightarrow F^{\Gamma}\circ E^{\Gamma}\,;$

\item
$\Psi(\begin{tikzpicture}[baseline = -1mm]
	\draw[->,blue] (0.2,-.2) to (-0.2,.2);
	\draw[->,red] (-0.2,-.2) to (0.2,.2);
\node at (0.2,-.37) {$\blue{\scriptstyle{\Gamma'}}$};
  \node at (-0.2,-.37) {$\red{\scriptstyle{\Gamma}}$};
\end{tikzpicture})=\mathrm{T}^{\Gamma,\Gamma'}\::\;F^{\Gamma}\otimes F^{\Gamma'} \Rightarrow F^{\Gamma'}\circ F^{\Gamma}\,\, \text{for}\,\, \Gamma\neq\Gamma'.$
\end{itemize}
\end{theorem}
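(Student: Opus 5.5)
We use the upward presentation of the symmetric product, Theorem~\ref{dmultiHeisenberg}. A strict monoidal functor out of $\symHeis_{\calI(\k)}$ is then determined by two kinds of data. First, for each color $\Gamma$, a functor out of $\Heis^{(\Gamma)}_{k_\Gamma}(z^\Gamma,t^\Gamma)$ sending the upward dot, crossing, cups and caps to $\mathrm{X}^\Gamma$, $\mathrm{T}^{\Gamma,\Gamma}$, $\eta^\Gamma_R,\varepsilon^\Gamma_R,\eta^\Gamma_L,\varepsilon^\Gamma_L$. Second, the upward mixed crossings $\mathrm{T}^{\Gamma,\Gamma'}$ for $\Gamma\neq\Gamma'$, subject to the mixed relations listed in Theorem~\ref{dmultiHeisenberg}. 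The proof therefore splits into a one-color part and a mixed part.

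\textbf{One-color part.} For a fixed $\Gamma$ with representative $\Delta$, the Harish--Chandra functor $F^\Delta$ is a cuspidal induction. By Howlett--Lehrer theory, $\End(F^\Delta_{r+nd,r}\cdots)$ on the pure-$\Delta$ part is governed by a relative Hecke algebra. In the $\GL$ case this is an affine Hecke algebra of type $A$ with parameter $q^{d_\Delta}$. In the self-dual classical case it is of type $B$, with the $\mathrm{X}^\Delta$ coming from $\dot x_{r,d}$. The plan is as follows.
\begin{enumerate}
\item Check the affine Hecke relations: the quadratic relation $\mathrm{T}-\mathrm{T}^{-1}=z^\Gamma$, the braid relation, and $\mathrm{T}\mathrm{X}_1\mathrm{T}=\mathrm{X}_2$. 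These are computations of products $e_V\dot s e_V\dot s e_V$ in double cosets $P\backslash G/P$, reduced to the Levi via the Mackey formula. They recover the computations of \cite{CR08,DVV17,DVV19,LLZ} for $\Delta=X-1$, with $q$ replaced by $q^{d_\Delta}$ (or $q^{d_\Delta/2}$ in the paired case).
\item Check the zig-zag relations for the four adjunction maps of \S\ref{sec:adj}. These follow from the coset-sum description of $Z^\Delta_r$ together with the facts $\Phi_\Delta(\sum v_j\otimes f_j^*)=f_\Delta$ and $\operatorname{ev}_\Delta$ an isomorphism. The scalars $-t^\Delta z^\Delta$ and $-(t^\Delta z^\Delta)^{-1}$ cancel.
\item Verify the inversion relation \eqref{qinver2} with $k_\Gamma=-1$ or $-2$. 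Following \cite{BSW20a}, we reduce this to a Mackey decomposition:
\[
E^\Delta F^\Delta\cong F^\Delta E^\Delta\oplus(\text{$-k_\Gamma$ copies of the identity}).
\]
On the bimodule side this comes from decomposing $P_{r,d}\backslash G_{r+d}/P_{r,d}$. For $\GL$ there are two relevant double cosets, the identity and $\dot s$, giving one copy of the identity. For the self-dual classical case the opposition element $\dot x$ adds a second one, giving $-k_\Gamma=2$. The cuspidality of $\M_\Delta$ kills all other double cosets. Finally we check that the explicit maps in the matrix are the isomorphism (not just that the objects are abstractly isomorphic), and that the extra bubble relation holds with the normalization \eqref{eq:beta-normalization} and the chosen $t^\Gamma$.
\end{enumerate}

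\textbf{Mixed part.} For $\Gamma\neq\Gamma'$ we need three things.
\begin{itemize}
\item $\mathrm{T}^{\Gamma',\Gamma}\mathrm{T}^{\Gamma,\Gamma'}=\id$. This holds because the product lands in the identity double coset, where the normalizations $q^{dd'/2}$ cancel.
\item Dot sliding through mixed crossings. This is conjugation of $\dot x$ by $\dot s$.
\item The mixed braid relations, via reduced expressions in the relative Weyl group.
\end{itemize}
Next, the mixed Mackey relation \eqref{mixedMackeyquantum} says the sideways mixed crossing $E^\Gamma F^{\Gamma'}\to F^{\Gamma'}E^\Gamma$ is invertible. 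This again follows from the Mackey formula: since $\M_\Delta$ and $\M_{\Delta'}$ are non-isomorphic cuspidals (and not dual to each other, since we have fixed one representative per $*$-orbit), the identity double-coset term vanishes. So $E^\Gamma F^{\Gamma'}\cong F^{\Gamma'}E^\Gamma$, and we check that this isomorphism is the one given by the sideways crossing.

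Over $\calO$ and $\bbF$, all the computations above are integral. Projectivity of $\M_\Delta$ for $\Delta\in\calF(\ell)$ (Lemma~\ref{Lem:ellnmidq}) guarantees that the Mackey decompositions persist after reduction. Lemma~\ref{Lem:isomfunctors} makes the result independent of the choice of representative.

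\textbf{Main obstacle.} We expect the main obstacle to be the self-dual classical case of the one-color part: proving the inversion relation together with the bubble normalization. This requires the explicit quadratic relation for $\mathrm{X}^\Delta$ coming from the crossed extension $\widehat H_d^\dagger(\kappa_d)$, that is,
\[
(\mathrm{X}-a)(\mathrm{X}-b)=0\quad\text{on }F^\Delta|_{G_0\text{-cuspidal}},
\]
with eigenvalues determined by $t^\Gamma$. It also requires careful tracking of signs: $\omega_\Delta(\kappa_d)$, the choice of $\beta_\Delta$, and the $q$-powers from $|V_{r,d}|$. We would first compute the degenerate bubble $\bbO^\Gamma(u)$ on $G_0$-cuspidal modules, using the Howlett--Lehrer parameters, and then match it with $(t^\Gamma)^{\pm2}$.
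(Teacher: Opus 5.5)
Your architecture matches the paper's: Howlett--Lehrer affine Hecke relations, explicit biadjunctions, a double-coset Mackey decomposition of $E^\Gamma F^\Gamma$ in which cuspidality leaves only the identity, opposition and open cosets, and non-isomorphic, non-dual cuspidals for the mixed relations. The gap is that you never verify that $\mathrm X^\Gamma$ is invertible.

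\textbf{The missing invertibility.} In $\Heis_k(z,t)$ the dot is a generator required to be invertible. This does not follow from the quadratic, braid and dot-slide relations you list. It is needed even to state the extra $k<0$ relation, which contains a dot labelled $-1$. It also underlies Lemma~\ref{Lem:isomfunctors}, which you invoke for independence of the representative, because the comparison maps compose to $\mathrm X^\Delta$ and $\mathrm X^{\Delta^*}$. The paper proves it separately, in Proposition~\ref{prop:invertible}, by rank induction:
\begin{enumerate}
\item By exactness, reduce to a simple $M$.
\item If $E^\Gamma M=0$, then $\End(F^\Gamma M)$ has dimension $1$ or $2$ according to the level. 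The closures of $1,\mathrm X,\mathrm X^2$ then give $\mathrm X=1$ in level $-1$ and $\mathrm X^2+\mu_\Gamma(M)\mathrm X+(t^\Gamma)^2=0$ in level $-2$ (Proposition~\ref{Prop:dim2}).
\item Otherwise $M$ is a quotient of $F^\Gamma N$ with $N$ of smaller rank. On $(F^\Gamma)^2N$ the relation $X_2=TX_1T$ transfers invertibility.
\item Over $\calO$, invertibility follows because the determinant is a unit.
\end{enumerate}

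\textbf{The normalization plan.} Your method for the normalization step is aimed at the wrong object. On an $F^\Gamma$-cuspidal module the two eigenvalues of $\mathrm X^\Gamma$ are not determined by $t^\Gamma$. Only their product $(t^\Gamma)^2$ is; $\mu_\Gamma(M)$ depends on $M$ (Lemma~\ref{Lem:Key}, and compare the different bottom values in Proposition~\ref{prop:all-bottom-bubbles}). Moreover, the extra relation is an identity of natural transformations in every rank and over $\calO$. Matching bubbles on $G_0$-cuspidal modules against Howlett--Lehrer parameters would therefore not establish it.

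What is needed is the rank-uniform bimodule computation of Lemma~\ref{lrdots}:
\begin{itemize}
\item only $v=1$ survives the Levi projection of $\widehat{\mathrm X}^{2}$;
\item the coefficient square $A_\Delta^2=\omega_\Delta(\kappa_\Delta)\id$ cancels the group-side sign;
\item since $|V_{r,d}|q^{-2rd}$ is independent of $r$, \eqref{eq:beta-normalization} gives $\LX^\Gamma(\xi_X)=-(t^\Gamma)^2$ in every rank.
\end{itemize}
This single value fixes both the explicit inverse in Lemma~\ref{lem:quantumMackey} and the extra relation.

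A minor point: with $\Delta$ the irreducible representative, the relative Hecke parameter is $q^{d_\Delta}$ in the paired case as well, not $q^{d_\Delta/2}$.
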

A detailed proof of Theorem~\ref{Thm:quantumheisenberg} will be given in the next section.

\subsection{Proof of the Main Theorem}

\subsubsection{Lemmas on Double Cosets of Weyl Groups}\label{prop:TypeA}
First, we need the following theorem, which is \cite[Theorem 1.3.10]{JK81}.
\begin{theorem}\label{Lem:james}
Let $n=r+d=r'+d'$, and let $\mathfrak{S}_n$ denote the symmetric group on $n$ letters. Define the standard parabolic subgroups
\[
Q_{r,d} = \mathfrak{S}_{[1,r]} \times \mathfrak{S}_{[r+1,n]}, \qquad
Q_{r',d'} = \mathfrak{S}_{[1,r']} \times \mathfrak{S}_{[r'+1,n]}.
\]
Then the following statements hold:
\begin{enumerate}
    \item The double coset space
    \[
    Q_{r', d'} \setminus \mathfrak{S}_n / Q_{r,d}
    \]
    is in bijection with the set of integers
    \[
    a \in \bigl\{ \max(0,r+r'-n),\dots,\min(r,r') \bigr\}.
    \]
    The bijection is encoded by the $2\times 2$ intersection matrix
    \[
    M(a) =
    \begin{pmatrix}
        a & r-a \\
        r'-a & n-r-r'+a
    \end{pmatrix},
    \]
    whose entries count the sizes of the indicated intersections:
    $|\sigma(A)\cap B|=a$,
    $|\sigma(A)\cap B^\complement|=r-a$,
    $|\sigma(A^\complement)\cap B|=r'-a$,
    and $|\sigma(A^\complement)\cap B^\complement|=n-r-r'+a$,
    where $A=\{1,\dots,r\}$ and $B=\{1,\dots,r'\}$.

    \item Each double coset contains a unique shortest-length representative $\sigma_a$ with length $l(\sigma_a) = (r-a)(r'-a)$, given explicitly by:
    \[
    \sigma_a(i) =
    \begin{cases}
        i, & 1\leq i\leq a, \\
        r' + i - a, & a+1\leq i\leq r, \\
        a + i - r, & r+1\leq i\leq r+r'-a, \\
        i, & r+r'-a+1\leq i\leq n.
    \end{cases}
    \]

    \item The intersection of the parabolic subgroups satisfies:
    \[
    Q_{r',d'}^{\sigma_a} \cap Q_{r,d} \cong \mathfrak{S}_{[1,a]} \times \mathfrak{S}_{[a+1,r]} \times \mathfrak{S}_{[r+1,r+r'-a]} \times \mathfrak{S}_{[r+r'-a+1,n]}.
    \]
\end{enumerate}
\end{theorem}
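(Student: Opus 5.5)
The plan is to prove Theorem~\ref{Lem:james} by the standard orbit-counting argument for the action of a Young subgroup on the set of $r$-element subsets of $\{1,\dots,n\}$. First, identify $\mathfrak S_n/Q_{r,d}$ with the set $\binom{[n]}{r}$ of $r$-subsets via $\sigma Q_{r,d}\mapsto \sigma(A)$. This works because $Q_{r,d}$ is exactly the stabilizer of $A=\{1,\dots,r\}$. Double cosets $Q_{r',d'}\backslash\mathfrak S_n/Q_{r,d}$ then correspond to $Q_{r',d'}$-orbits on $r$-subsets. Since $Q_{r',d'}=\mathfrak S_B\times\mathfrak S_{B^\complement}$ acts independently on $B$ and $B^\complement$, two $r$-subsets lie in the same orbit if and only if they meet $B$ in the same number $a$ of elements. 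The remaining three entries of $M(a)$ are then forced by $|\sigma(A)|=r$, $|B|=r'$ and $|[n]|=n$. Finally, nonnegativity of the four entries gives exactly $\max(0,r+r'-n)\le a\le\min(r,r')$, and each such $a$ is realized. This proves (1).

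For (2), check directly that the displayed $\sigma_a$ sends $A$ to $\{1,\dots,a\}\cup\{r'+1,\dots,r'+r-a\}$, which meets $B$ in exactly $a$ elements, so $\sigma_a$ lies in the $a$-th double coset. Next, compute its length by counting inversions. The map $\sigma_a$ is increasing on each of the four intervals $[1,a]$, $[a+1,r]$, $[r+1,r+r'-a]$, $[r+r'-a+1,n]$. The only inversions are pairs $(i,j)$ with $i\in[a+1,r]$ and $j\in[r+1,r+r'-a]$, which gives $(r-a)(r'-a)$ of them.

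For uniqueness and minimality, use the standard characterization of minimal double coset representatives: $w$ is minimal exactly when $w$ is increasing on each block of $Q_{r,d}$ and $w^{-1}$ is increasing on each block of $Q_{r',d'}$. Both conditions hold for $\sigma_a$ by inspection. A minimal representative is determined by the data $w^{-1}(B)\cap A$, $w^{-1}(B)\cap A^\complement$ together with these monotonicity conditions, which pins it down uniquely. The step I expect to need the most care is justifying that characterization. Rather than citing it, one can argue directly: any element of the double coset has at least $(r-a)(r'-a)$ inversions, since each of the $r-a$ elements of $A$ sent into $B^\complement$ forms an inversion with each of the $r'-a$ elements of $A^\complement$ sent into $B$. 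Equality then forces the monotone shape above.

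For (3), compute $Q_{r',d'}^{\sigma_a}\cap Q_{r,d}$, interpreted as $\sigma_a^{-1}Q_{r',d'}\sigma_a\cap Q_{r,d}$. This is the subgroup preserving both the partition $\{A,A^\complement\}$ and the partition $\{\sigma_a^{-1}(B),\sigma_a^{-1}(B^\complement)\}$, so it is the Young subgroup of their common refinement. From the explicit formula, $\sigma_a^{-1}(B)=[1,a]\cup[r+1,r+r'-a]$. The common refinement is therefore the four intervals listed, which yields the stated product decomposition.
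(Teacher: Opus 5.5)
Your proof is correct. It differs from the paper mainly in that the paper gives no argument at all: it simply cites \cite[Theorem~1.3.10]{JK81}.

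For (1), your orbit-counting argument is the standard one behind the cited result. You identify $\mathfrak S_n/Q_{r,d}$ with $r$-subsets via $\sigma\mapsto\sigma(A)$, then classify $Q_{r',d'}$-orbits by $|\sigma(A)\cap B|$. This is exactly the method the paper itself uses for the signed analogue, Theorem~\ref{Prop:TypeBC}, where right cosets are identified with signed subsets $w(B)$.

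For (2), your inversion count makes the length claim self-contained. Every $\sigma$ in the $a$-th double coset has at least $(r-a)(r'-a)$ forced inversions: each $i\in A$ with $\sigma(i)>r'$ forms one with each $j\in A^\complement$ with $\sigma(j)\le r'$. Together with the equality case, this gives minimality and uniqueness at once, without invoking the Coxeter-theoretic characterization of distinguished double coset representatives. Where you write that equality ``forces the monotone shape,'' say explicitly that every remaining pair $i<j$ must then be a non-inversion. That forces $\sigma(A)\cap B=[1,a]$, $\sigma(A^\complement)\cap B=[a+1,r']$, and $\sigma(A)\cap B^\complement=[r'+1,r'+r-a]$. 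With monotonicity on $A$ and $A^\complement$, this pins down $\sigma=\sigma_a$.

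For (3), you fix the convention $Q_{r',d'}^{\sigma_a}=\sigma_a^{-1}Q_{r',d'}\sigma_a$. This matches the paper's later usage (compare $K_{r,d}\cap w_{a,t}^{-1}K_{r',d'}w_{a,t}$ in Theorem~\ref{Prop:TypeBC}).

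In short, the citation buys brevity but leaves the reader to match JK81's conventions to the paper's. Your argument directly verifies the exact normalization the paper relies on in Lemma~\ref{lem:finite-bimodule-decomposition}: the explicit $\sigma_a$, the side of the action, and the block sizes $a$, $r-a$, $r'-a$, $n-r-r'+a$.
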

\begin{proof}
This is \cite[Theorem~1.3.10]{JK81}.
\end{proof}

Let $W_n\cong (\mathbb Z/2\mathbb Z)^n\rtimes\mathfrak S_n$ be the signed-permutation group.  For the symplectic and odd-orthogonal towers it is the usual relative Weyl group; for a full even orthogonal group it is the extended relative Weyl group realized by permutations and swaps of hyperbolic pairs. Thus
\[
w(e_i)=\varepsilon_i e_{\sigma(i)},\qquad
\varepsilon_i\in\{\pm1\},\quad \sigma\in\mathfrak S_n.
\]
Let $n=r+d=r'+d'$ and set
\[
K_{r,d}=W_{[1,r]}\times\mathfrak S_{[r+1,n]},
\qquad
K_{r',d'}=W_{[1,r']}\times\mathfrak S_{[r'+1,n]}.
\]
Here the $W$-factor acts by signed permutations, whereas the symmetric-group factor acts without changing signs.

\begin{theorem}\label{Prop:TypeBC}
The double cosets $K_{r',d'}\backslash W_n/K_{r,d}$ are parametrized by pairs $(a,t)$ satisfying
\[
\max(0,r+r'-n)\le a\le \min(r,r'),
\qquad
m:=r+r'-a\le t\le n.
\]
For such a pair, a convenient representative is
\[
w_{a,t}(i)=
\begin{cases}
 i, & 1\le i\le a,\\
 r'+i-a, & a+1\le i\le r,\\
 a+i-r, & r+1\le i\le m,\\
 -(t+m+1-i), & m+1\le i\le t,\\
 i, & t+1\le i\le n.
\end{cases}
\]
Moreover,
\[
K_{r,d}\cap w_{a,t}^{-1}K_{r',d'}w_{a,t}
=
W_{[1,a]}
\times\mathfrak S_{[a+1,r]}
\times\mathfrak S_{[r+1,m]}
\times\mathfrak S_{[m+1,t]}
\times\mathfrak S_{[t+1,n]}.
\]
\end{theorem}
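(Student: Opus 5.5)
The plan is to identify both coset spaces with explicit combinatorial data on signed subsets of $[n]=\{1,\dots,n\}$, read off a complete set of invariants for the left $K_{r',d'}$-action, and then compute the intersection by examining where $w_{a,t}$ sends each block. Let $W_n$ act on $\{\pm1,\dots,\pm n\}$ with $w(\pm i)=\pm\varepsilon_i\sigma(i)$.

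\emph{Step 1: the right cosets.} Right multiplication by $W_{[1,r]}$ permutes and re-signs the images of $1,\dots,r$. Right multiplication by $\mathfrak S_{[r+1,n]}$ only permutes the images of $r+1,\dots,n$ and never changes a sign. Hence the coset $wK_{r,d}$ is determined by the pair $(S,\epsilon)$, where $S=\sigma(\{1,\dots,r\})\subseteq[n]$ is an unsigned set of size $r$ and $\epsilon:[n]\setminus S\to\{\pm1\}$ records the sign with which each $j\notin S$ occurs in $w(\{r+1,\dots,n\})$. Conversely, every such pair $(S,\epsilon)$ arises from some $w$. So $W_n/K_{r,d}$ is in bijection with these pairs.

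\emph{Step 2: invariants of the left action.} Set $B=\{1,\dots,r'\}$ and let $K_{r',d'}$ act on pairs $(S,\epsilon)$ from the left. The factor $W_{[1,r']}$ permutes $B$ arbitrarily and flips signs inside $B$. The factor $\mathfrak S_{[r'+1,n]}$ permutes $B^\complement$ and preserves signs. It follows that two quantities are invariant: $a:=|S\cap B|$, and the number $t-m$ of negative values of $\epsilon$ on $S^\complement\cap B^\complement$, where $m=r+r'-a$. Completeness of these invariants proceeds as follows. First use $W_{[1,r']}$ to make every sign on $S^\complement\cap B$ positive and to put $S\cap B$ into a standard position. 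Then use $\mathfrak S_{[r'+1,n]}$ to move $S\cap B^\complement$, the negative part of $S^\complement\cap B^\complement$, and the positive part into standard consecutive positions. The possible values of $a$ are those of Theorem~\ref{Lem:james}, and the number of negatives ranges over $0,\dots,|S^\complement\cap B^\complement|=n-m$. This gives exactly the index set $m\le t\le n$.

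\emph{Step 3: the representatives.} Next I check that $w_{a,t}$ realizes the invariants $(a,t)$. By its definition, $\sigma(\{1,\dots,r\})=\{1,\dots,a\}\cup\{r'+1,\dots,m\}$, so $|S\cap B|=a$. The negative images $-(t+m+1-i)$ for $m<i\le t$ land in $\{m+1,\dots,t\}\subseteq B^\complement\setminus S$, giving $t-m$ negatives there. This reproduces the unsigned pattern of Theorem~\ref{Lem:james}, with a sign block appended.

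\emph{Step 4: the intersection.} Finally I compute $K_{r,d}\cap w^{-1}K_{r',d'}w$ for $w=w_{a,t}$. An element $k\in K_{r,d}$ lies in it exactly when $wkw^{-1}$ preserves the signed block structure of $K_{r',d'}$. That is, $wkw^{-1}$ must preserve $\pm B$, may re-sign only inside $B$, and must act by an honest permutation on $B^\complement$. Pulling these conditions back through $w$ yields the following.
\begin{itemize}
\item On $[1,a]$ (sent into $B$, and also inside the $W$-part of $K_{r,d}$) arbitrary signed permutations survive, giving $W_{[1,a]}$.
\item On $[a+1,r]$ (sent into $B^\complement$) sign changes are forbidden, leaving $\mathfrak S_{[a+1,r]}$.
\item On $[r+1,m]$ the image lies in $B$, but $K_{r,d}$ already forbids signs there, giving $\mathfrak S_{[r+1,m]}$.
\item The remaining two blocks must each be preserved, because $w$ sends one to negative and one to positive vectors of $B^\complement$. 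This gives $\mathfrak S_{[m+1,t]}\times\mathfrak S_{[t+1,n]}$.
\end{itemize}
I expect this last step to be the main obstacle, specifically the bookkeeping on the block $[m+1,t]$. There $w$ both negates and reverses the order, so one must check that conjugation by $w$ carries an unsigned permutation of $[m+1,t]$ to an unsigned permutation of $\{m+1,\dots,t\}$. It does, since the global sign cancels under conjugation. One must also check that no element of $K_{r,d}$ mixes the positive and negative parts of $B^\complement$. For the even orthogonal case, where $W_n$ is realized through hyperbolic-pair swaps, I would reduce to the same signed-permutation model via the identification fixed above the statement.
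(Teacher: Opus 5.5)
Your proposal is correct and is essentially the paper's proof. The paper encodes $wK_{r,d}$ by the signed $d$-subset $w([r+1,n])$, which is equivalent to your pair $(S,\epsilon)$. It classifies the left $K_{r',d'}$-orbits by the same two invariants (its $b=r'-a$ and $s=t-m$) and uses the same canonical representative $w_{a,t}$. It computes the intersection the same way, via the common refinement of the two block partitions, here $[1,a],[a+1,r],[r+1,m],[m+1,n]$, and then splits $[m+1,n]$ by sign, since mixing $[m+1,t]$ with $[t+1,n]$ would force a sign change on the unsigned block.
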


\begin{proof}
Put
\[
A=[1,r],\qquad B=[r+1,n],\qquad
C=[1,r'],\qquad D=[r'+1,n].
\]
We first classify the double cosets.  For $w\in W_n$, the right coset
$wK_{r,d}$ is completely determined by the signed $d$-subset
\[
S_w:=w(B)\subset\{\pm1,\ldots,\pm n\},
\]
where no two elements have the same absolute value.  Indeed,
$W_{[1,r]}$ acts only on the complementary coordinates, while
$\mathfrak S_{[r+1,n]}$ merely permutes the elements of $B$; conversely,
if $S_w=S_{w'}$, then $w^{-1}w'\in K_{r,d}$.

Take orbits under the left action of
$K_{r',d'}=W_C\times\mathfrak S_D$.  The factor $W_C$ may arbitrarily
permute the coordinates with absolute value in $C$ and change their
signs, whereas $\mathfrak S_D$ may permute the coordinates in $D$ but
preserves their signs.  Hence the orbit of $S_w$ is determined by the two
integers
\[
b:=\#\{x\in S_w:|x|\le r'\},
\qquad
s:=\#\{x\in S_w:x<0,\ |x|>r'\}.
\]
These invariants are complete: every orbit contains the unique canonical
signed subset
\[
\{a+1,\ldots,r'\}
\sqcup
\{-(m+1),\ldots,-t\}
\sqcup
\{t+1,\ldots,n\},
\]
where
\[
a:=r'-b,\qquad
m:=r+r'-a,\qquad
t:=m+s.
\]
Since $S_w$ has $d$ elements while $C$ and $D$ have cardinalities
$r'$ and $d'$, respectively, one has
\[
\max(0,d-d')\le b\le\min(d,r').
\]
Substituting $a=r'-b$ gives
\[
\max(0,r+r'-n)\le a\le\min(r,r').
\]
Moreover,
\[
d-b=n-r-r'+a=n-m,
\]
so $0\le s\le d-b$ is equivalent to $m\le t\le n$.
The displayed element $w_{a,t}$ sends $B$ precisely to this canonical
signed subset.  Thus the pairs $(a,t)$ give a complete and nonredundant
set of double-coset parameters.

It remains to compute the intersection subgroup.  Decompose
\[
[1,n]=I_1\sqcup I_2\sqcup I_3\sqcup I_4\sqcup I_5
\]
with
\[
I_1=[1,a],\quad I_2=[a+1,r],\quad I_3=[r+1,m],
\quad I_4=[m+1,t],\quad I_5=[t+1,n].
\]
By construction, $w_{a,t}$ maps $I_1$ and $I_3$ into $C$, and maps
$I_2,I_4,I_5$ into $D$; the only negative signs occur on $I_4$.
Therefore an element of $K_{r,d}$ lying in
$w_{a,t}^{-1}K_{r',d'}w_{a,t}$ must preserve the five intervals above.
Indeed, the two parabolics already force preservation of the common
refinement of the partitions $A\sqcup B$ and
$w_{a,t}^{-1}(C)\sqcup w_{a,t}^{-1}(D)$, while a permutation mixing
$I_4$ with $I_5$ would, after conjugation by $w_{a,t}$, introduce a sign
change in the unsigned $D$-block.

Finally, signs are allowed exactly on $I_1$: the right parabolic permits
sign changes on $A=I_1\sqcup I_2$, whereas the conjugated left parabolic
permits them on $w_{a,t}^{-1}(C)=I_1\sqcup I_3$.  Their intersection
therefore permits arbitrary signed permutations on $I_1$ and only
ordinary permutations on the remaining four blocks.  Hence
\[
K_{r,d}\cap w_{a,t}^{-1}K_{r',d'}w_{a,t}
=
W_{I_1}\times\mathfrak S_{I_2}\times\mathfrak S_{I_3}
\times\mathfrak S_{I_4}\times\mathfrak S_{I_5},
\]
which is the asserted formula.
\end{proof}

\subsubsection{Decompositions of bimodules}\label{sec:decomposition}

The Harish--Chandra Mackey formula gives a direct-sum decomposition of the
representing bimodule by parabolic double cosets.  After tensoring with the
cuspidal modules, its surviving summands can be identified explicitly; see \cite{DM}.

\begin{theorem}\label{thm:HC-Mackey}
Let \(P=LU\) and \(Q=MV\) be standard parabolic subgroups of \(G\), with
compatible Levi complements \(L\) and \(M\). Choose representatives \(\dot w\)
from the corresponding relative Weyl group, so that
\(L\cap{}^wM\) and \(M\cap{}^{w^{-1}}L\) are Levi complements in
\(L\cap{}^wQ\) and \(M\cap{}^{w^{-1}}P\), respectively.
Then for every \(X\in \k M\text{-mod}\) there is
a natural isomorphism
\[
{}^*\!R^G_{L\le P}\,R^G_{M\le Q}(X)
\cong
\bigoplus_{w\in P\backslash G/Q}
R^L_{L_w\le L\cap{}^wQ}
\circ \operatorname{ad}(w)\circ
{}^*\!R^M_{M_w\le M\cap{}^{w^{-1}}P}(X),
\]
where
\[
L_w:=L\cap{}^wM,\qquad M_w:=M\cap{}^{w^{-1}}L,
\]
and \(\operatorname{ad}(w):M_w\xrightarrow{\sim}L_w\) is conjugation by a
chosen compatible representative \(\dot w\in G\) of the double coset.
\end{theorem}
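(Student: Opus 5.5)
This is the standard Mackey formula for Harish--Chandra induction and restriction, and I will prove it at the level of representing bimodules. Since $|U|$ and $|V|$ are invertible in $\k$, the composite ${}^*\!R^G_{L\le P}R^G_{M\le Q}$ is represented by the $(\k L,\k M)$-bimodule $e_U\,\k G\,e_V$. The plan has three steps.

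\textbf{Step 1: decompose by double cosets.} The Bruhat-type decomposition $G=\coprod_{w}P\dot wQ$ over the chosen representatives gives a bimodule decomposition
\[
e_U\,\k G\,e_V=\bigoplus_{w\in P\backslash G/Q} e_U\,\k[P\dot wQ]\,e_V .
\]
Each summand is stable under left $\k L$ and right $\k M$, because $L\subseteq P$ and $M\subseteq Q$. This produces the direct sum in the statement, so it remains to identify a single summand.

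\textbf{Step 2: identify one summand.} Write $P\dot wQ=U L\dot w M V$. Using compatibility of $\dot w$ (so that $L\cap{}^wQ$ and $M\cap{}^{w^{-1}}P$ are parabolics of $L$ and $M$ with Levi complements $L_w$ and $M_w$), I will show
\[
e_U\,\k[P\dot wQ]\,e_V\;\cong\;\k L\, e_{U_w}\otimes_{\k L_w}{}^{\dot w}\!\bigl(e_{V_w}\,\k M\bigr),
\]
where $U_w=L\cap{}^wV$ is the unipotent radical of $L\cap{}^wQ$, and $V_w=M\cap{}^{w^{-1}}U$ is the unipotent radical of $M\cap{}^{w^{-1}}P$. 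The twist ${}^{\dot w}$ transports $L_w$-actions to $M_w$-actions through $\operatorname{ad}(w)$.

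To establish this, I define the map $x e_{U_w}\otimes e_{V_w}y\mapsto e_U x\dot w y e_V$ and check that it is balanced over $L_w$. Surjectivity is immediate. For injectivity I compare ranks: a basis of the left side is indexed by $L/(L\cap{}^wQ)\times \dots$, while the right side is indexed by $U\backslash P\dot wQ/V$. The standard factorization ${}^wU\cap \dots$ (the uniqueness of expression $P\cap{}^wQ=(L\cap{}^wM)(L\cap{}^wV)(U\cap{}^wM)(U\cap{}^wV)$) makes the counts match.

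\textbf{Step 3: conclude.} Tensoring the bimodule isomorphism of Step 2 with $X$ gives exactly $R^L_{L_w\le L\cap{}^wQ}\circ\operatorname{ad}(w)\circ{}^*\!R^M_{M_w\le M\cap{}^{w^{-1}}P}(X)$. Naturality in $X$ is automatic, since everything is a bimodule isomorphism.

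The main obstacle is the group-theoretic factorization lemma in Step 2 for the non-connected full orthogonal and unitary cases, including characteristic two. My first attempt will be the standard argument of Digne--Michel \cite{DM} (Proposition 5.1.7 / Theorem 5.2.1 there), which proves the factorization for connected reductive groups using the BN-pair. I will then transfer it to the full isometry groups by replacing the Weyl group with the extended signed-permutation group from Theorem~\ref{Prop:TypeBC}. The point to check is that the representatives $w_{a,t}$ normalize the torus and satisfy the compatibility hypothesis, since the relevant intersections were computed explicitly there.
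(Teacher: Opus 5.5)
Your proposal is correct and is essentially the standard Digne--Michel argument the paper relies on: the paper states this theorem with only a citation to \cite{DM}, and its Lemma~\ref{lem:finite-double-coset-bimodule} uses the same decomposition of $e_U\,\k G\,e_V$ by double cosets, followed by removal of the unipotent averages. The paper does not separately treat the extension of the factorization of $P\cap{}^wQ$ to the non-connected full orthogonal groups either, so you are right to flag it as the point to verify (the unitary groups are connected, so only the orthogonal towers are at issue).
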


For the rest of this subsection put
\[
d=d_\Delta,\qquad d'=d_{\Delta'},\qquad
n=r+d=r'+d',
\]
and write
\[
H=\GL_d(q),\qquad H'=\GL_{d'}(q),
\]
\[
P=P_{r,d}=L_{r,d}U,\qquad
P'=P_{r',d'}=L_{r',d'}U',
\]
with
\[
U=V_{r,d},\qquad U'=V_{r',d'}.
\]
Thus
\[
L_{r,d}=G_r\times H,
\qquad
L_{r',d'}=G_{r'}\times H'.
\]

We use the shorthand
\[
\mathscr B_{\Delta',\Delta}(r',r)
:=
\M_{\Delta'}^*
\otimes_{\k H'}
e_{U'}\k G_ne_U
\otimes_{\k H}
\M_\Delta.
\]
This is a \((\k G_{r'},\k G_r)\)-bimodule representing the functor
\[
E^{\Delta'}_{r',n}F^\Delta_{n,r}.
\]

\begin{lemma}
\label{lem:finite-double-coset-bimodule}
Let \(\mathcal D\) be the set of compatible representatives chosen above for
\[
P'\backslash G_n/P.
\]
For \(w\in\mathcal D\), set
\[
\mathscr B_w
:=
\M_{\Delta'}^*
\otimes_{\k H'}
e_{U'}\,\k[P'\dot wP]\,e_U
\otimes_{\k H}
\M_\Delta,
\]
where \(\k[P'\dot wP]\) denotes the \(\k\)-span of the indicated double
coset. Then
\begin{equation}\label{eq:bimodule-double-coset-direct-sum}
\mathscr B_{\Delta',\Delta}(r',r)
=
\bigoplus_{w\in\mathcal D}\mathscr B_w
\end{equation}
as an internal direct sum of
\((\k G_{r'},\k G_r)\)-sub-bimodules.

Moreover, for every \(X\in\k G_r\text{-mod}\), the functor
\[
\mathscr B_w\otimes_{\k G_r}X
\]
is naturally isomorphic to the \(w\)-summand in the Harish--Chandra Mackey
decomposition of \(E^{\Delta'}F^\Delta(X)\).
\end{lemma}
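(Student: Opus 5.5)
The plan is to reduce both assertions to elementary facts about group algebras and then compare with the standard proof of Theorem~\ref{thm:HC-Mackey}.

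\emph{Direct sum.} Since $G_n=\bigsqcup_{w\in\mathcal D}P'\dot wP$, we have $\k G_n=\bigoplus_{w}\k[P'\dot wP]$ as $\k$-modules. Each summand is stable under left multiplication by $P'$ and right multiplication by $P$. Because $e_{U'}$ and $e_U$ are averaging idempotents over $U'\subseteq P'$ and $U\subseteq P$ (here $|U|,|U'|$ are powers of $q$, hence invertible in $\k$), left multiplication by $e_{U'}$ and right multiplication by $e_U$ preserve each $\k[P'\dot wP]$. Therefore
\[
e_{U'}\k G_ne_U=\bigoplus_w e_{U'}\k[P'\dot wP]e_U
\]
as $(\k L_{r',d'},\k L_{r,d})$-bimodules; note that $L$ normalizes $U$, so $e_U$ commutes with $\k L$. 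The functors $\M_{\Delta'}^*\otimes_{\k H'}-$ and $-\otimes_{\k H}\M_\Delta$ are additive, and $G_{r'}$, $G_r$ commute with $H'$, $H$ inside the Levi factors. Hence the decomposition passes to $\mathscr B_{\Delta',\Delta}(r',r)$ as an internal direct sum of $(\k G_{r'},\k G_r)$-sub-bimodules; internality holds because tensor products commute with finite direct sums.

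\emph{Identification with the Mackey summand.} Next I would use the standard bimodule proof of Theorem~\ref{thm:HC-Mackey}, as in \cite{DM}. The $(\k L',\k L)$-bimodule $e_{U'}\k[P'\dot wP]e_U$ is isomorphic to
\[
\k L'e_{U'\cap{}^wL}\otimes_{\k(L'\cap{}^wL)}{}^{w}\!\bigl(e_{L\cap{}^{w^{-1}}U'}\k L\bigr).
\]
This holds when $\dot w$ is chosen compatibly, so that $P'\cap{}^wP$ has Levi decomposition $(L'\cap{}^wL)\cdot(\text{unipotent part})$ and the unipotent radicals factor. The map is $x e_{U'}\dot w e_U y\mapsto x\otimes \dot w y$, and invertibility comes from a counting or freeness argument using the factorization
\[
P'\dot wP=U'\,(L'\cap{}^w\!P)\cdots
\]
of the double coset. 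Tensoring with $\M_{\Delta'}^*$ and $\M_\Delta$ then identifies $\mathscr B_w\otimes_{\k G_r}X$ with the functor
\[
\M_{\Delta'}^*\otimes_{H'}R^{L'}\circ\operatorname{ad}(w)\circ{}^*R^{L}\bigl(X\boxtimes\M_\Delta\bigr).
\]
This is exactly the image of the $w$-summand of ${}^*R^{G_n}_{L'}R^{G_n}_{L}(X\boxtimes \M_\Delta)$ under $\Hom_{H'}(\M_{\Delta'},-)\cong\M_{\Delta'}^*\otimes_{H'}-$. The last isomorphism holds because $\M_{\Delta'}$ is projective in a split block, by Lemma~\ref{Lem:ellnmidq} and the Morita identification $\Phi_{\Delta'}$. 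Naturality in $X$ is automatic, since every identification is a bimodule isomorphism.

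\emph{Main obstacle.} I expect the hardest step to be the bimodule isomorphism for a single double coset, in particular checking that the chosen representatives $\dot w$ make $e_{U'}\dot we_U$ factor through the intersection of Levi factors. The first approach I would try is to write $U=(U\cap{}^{w^{-1}}P')\cdot(U\cap{}^{w^{-1}}U'^-)$ and use $\dot w$ from Theorems~\ref{Lem:james} and~\ref{Prop:TypeBC}, lifted to the Witt basis. This gives the explicit factorization; the rest follows by comparing dimensions over $\k$, which works integrally because all indices involved are powers of $q$.
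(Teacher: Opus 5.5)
Your proposal follows the paper's route. The direct sum comes from $G_n=\bigsqcup_w P'\dot wP$, from the fact that $e_{U'}\in\k P'$ and $e_U\in\k P$ preserve each double-coset span, and from distributivity of the two coefficient tensor products over a finite sum. The identification is the standard bimodule form of the $w$-term of Theorem~\ref{thm:HC-Mackey} for a compatible $\dot w$, followed by $\Hom_{H'}(\M_{\Delta'},-)\cong\M_{\Delta'}^*\otimes_{\k H'}-$, and you spell this out in more detail than the paper does.

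One slip needs fixing: in your single-coset isomorphism the left factor should be $\k L'e_{L'\cap{}^wU}$, where $L'\cap{}^wU$ is the unipotent radical of the parabolic $L'\cap{}^wP$ of $L'$. It should not be $e_{U'\cap{}^wL}$, which does not lie in $\k L'$ and would give ordinary induction instead of Harish--Chandra induction.
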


\begin{proof}
The disjoint double-coset spans are stable under left multiplication by
\(L_{r',d'}\) and right multiplication by \(L_{r,d}\).  Since
\(e_{U'}\in\k P'\) and \(e_U\in\k P\) preserve each span,
\[
e_{U'}\k G_ne_U
=\bigoplus_{w\in\mathcal D}e_{U'}\k[P'\dot wP]e_U.
\]
Tensoring with \(\M_{\Delta'}^*\) over \(\k H'\) and with
\(\M_\Delta\) over \(\k H\) gives
\eqref{eq:bimodule-double-coset-direct-sum}; only distributivity over a
finite direct sum is used, not semisimplicity.

For a fixed \(w\), write \(p'=l'u'\in P'\) and \(p=lu\in P\).
The identities \(e_{U'}u'=e_{U'}\) and \(ue_U=e_U\), together with
Levi invariance of the averages, remove the unipotent factors in
\(e_{U'}p'\dot wp e_U\).  The remaining balanced tensors realize the
\(w\)-term of Theorem~\ref{thm:HC-Mackey}, applied to
\(X\boxtimes\M_\Delta\), followed by
\(\Hom_{H'}(\M_{\Delta'},-)\).  This identifies
\(\mathscr B_w\otimes_{\k G_r}X\) with the asserted Mackey summand,
naturally in \(X\).
\end{proof}

The following observation will be used repeatedly.

For \(r\ge d\), define the \((\k G_r,\k G_r)\)-bimodule
\begin{equation*}
\mathscr C_\Delta(r)
:=
\left(
\k G_r e_{V_{r-d,d}}
\otimes_{\k H}\M_\Delta
\right)
\otimes_{\k G_{r-d}}
\left(
\M_\Delta^*
\otimes_{\k H}
e_{V_{r-d,d}}\k G_r
\right).
\end{equation*}
It represents
\[
F^\Delta_{r,r-d}E^\Delta_{r-d,r}.
\]

More generally, if \(a_0:=r-d'=r'-d\ge0\), define
\begin{equation*}
\mathscr C_{\Delta,\Delta'}(r',r)
:=
\left(
\k G_{r'}e_{V_{a_0,d}}\otimes_{\k H}\M_\Delta
\right)
\otimes_{\k G_{a_0}}
\left(
\M_{\Delta'}^*\otimes_{\k H'}e_{V_{a_0,d'}}\k G_r
\right).
\end{equation*}
This represents \(F^\Delta_{r',a_0}E^{\Delta'}_{a_0,r}\).

\begin{lemma}
\label{lem:finite-bimodule-decomposition}
Choose a basis and its dual for each coefficient module.
In (1) and (2), write \(v_1\in\M_\Delta\) and \(f_1^*\in\M_\Delta^*\)
for the first dual pair.  In (3), \(v_1\in\M_\Delta\) and
\(f_1^*\in\M_{\Delta'}^*\) belong to the two different coefficient modules.
\begin{enumerate}
\item[(1)]
Suppose \(\Delta'=\Delta\) and either \(G=\GL\) or
\(\Delta\ne\Delta^*\). Then
\[
\mathscr B_{\Delta,\Delta}(r,r)
\cong \k G_r\oplus\mathscr C_\Delta(r),
\]
where the second summand is omitted when \(r<d\). Under the double-coset
 decomposition the two summands are generated respectively by
\[
\xi_1:=f_1^*\otimes e_{V_{r,d}}\otimes v_1,
\qquad
\xi_T:=f_1^*\otimes
\widehat{\mathrm{T}}^{\Delta,\Delta}_{r-d,d,d}\otimes v_1.
\]

\item[(2)]
Suppose \(\Delta'=\Delta=\Delta^*\) and \(G\ne\GL\). Then
\[
\mathscr B_{\Delta,\Delta}(r,r)
\cong \k G_r\oplus\k G_r\oplus\mathscr C_\Delta(r),
\]
where again the last summand is omitted if \(r<d\). The three summands are
 generated respectively by
\begin{align*}
\xi_1&:=f_1^*\otimes e_{V_{r,d}}\otimes v_1,\\
\xi_X&:=f_1^*\otimes\widehat{\mathrm{X}}^\Delta_{r,d}\otimes
\widetilde\rho_\Delta(\dot x_{r,d})v_1,\\
\xi_T&:=f_1^*\otimes\widehat{\mathrm{T}}^{\Delta,\Delta}_{r-d,d,d}\otimes v_1.
\end{align*}
The first two summands are each isomorphic to the regular
\((\k G_r,\k G_r)\)-bimodule.

\item[(3)]
Suppose \(\Delta'\ne\Delta\) and \(\Delta'\ne\Delta^*\), and put
\(a_0=r-d'=r'-d\). If \(a_0<0\), then
\(\mathscr B_{\Delta',\Delta}(r',r)=0\). If \(a_0\ge0\), then
\[
\mathscr B_{\Delta',\Delta}(r',r)
\cong\mathscr C_{\Delta,\Delta'}(r',r),
\]
and this bimodule is generated by
\[
\xi_T^{\Delta,\Delta'}
:=f_1^*\otimes\widehat{\mathrm{T}}^{\Delta',\Delta}_{a_0,d',d}\otimes v_1.
\]
\end{enumerate}
\end{lemma}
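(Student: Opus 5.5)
The plan starts from the double-coset decomposition of Lemma~\ref{lem:finite-double-coset-bimodule}, $\mathscr B_{\Delta',\Delta}(r',r)=\bigoplus_{w\in\mathcal D}\mathscr B_w$. The work is to decide which summands $\mathscr B_w$ survive after tensoring with the cuspidal coefficient modules, and to identify each survivor. The relevant double cosets $P'\backslash G_n/P$ are indexed by the relative Weyl group double cosets. For $G=\GL$ these are given by Theorem~\ref{Lem:james}, indexed by $a$. For the other towers they are given by Theorem~\ref{Prop:TypeBC}, indexed by $(a,t)$.

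For each $w$, the Mackey formula (Theorem~\ref{thm:HC-Mackey}) expresses $\mathscr B_w\otimes X$ as an induction--adjoint--restriction term. The restriction ${}^*R^M_{M_w}$ is applied to $X\boxtimes\M_\Delta$. By cuspidality of $\M_\Delta$, it vanishes unless $M_w$ contains the whole factor $H=\GL_d(q)$. In the intersection subgroups of Theorems~\ref{Lem:james}/\ref{Prop:TypeBC}, this forces the $\GL_d$-block $[r+1,n]$ to be carried unbroken. That leaves only two options: either into the $H'$-block (the interval $I_3$ or $I_4$ is the whole block), or into the $G_{r'}$-part (the interval $I_5$ or $I_3$ is the whole block). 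The same argument applied from the left, using $\Hom_{H'}(\M_{\Delta'},-)$ and cuspidality of $\M_{\Delta'}$, forces the $H'$-block to be carried unbroken.

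The surviving $w$ are then listed as follows.
\begin{enumerate}
\item[(i)] The identity coset, with $d=d'$ and the block mapped identically.
\item[(ii)] For non-$\GL$ towers, the coset of $\dot x_{r,d}$, with $t=n$ and $m=r$, in which the block is sent to itself with a sign flip.
\item[(iii)] The coset of the block swap $\dot s$, with $a=a_0=r-d'$, in which the two blocks pass through $G_{a_0}$.
\end{enumerate}

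On coset (i), the Levi $H'$ acts on the $H$-coefficient through conjugation by $w$. This term survives only if $\Hom_H(\M_{\Delta'},\M_\Delta)\neq0$, i.e.\ $\Delta'=\Delta$. On coset (ii), $H$ acts twisted by $\theta_d$, so $\M_\Delta$ is replaced by $\M_{\Delta^*}$, and the term survives iff $\Delta'=\Delta^*$. In case (1) with $\Delta\neq\Delta^*$, only (i) contributes among these. In case (3), neither (i) nor (ii) contributes. In case (2), both contribute.

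For each surviving coset with $G_r$ on both sides, the summand is $\M^*\otimes_{H}\M\otimes\k G_r$. By the balanced evaluation isomorphism (d) it is free of rank one as a $(\k G_r,\k G_r)$-bimodule, generated by $\xi_1$ or $\xi_X$. For $\xi_X$, the generator uses the intertwiner $A_\Delta=\widetilde\rho_\Delta(\dot x)$ to identify the twisted module with $\M_\Delta$.

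For coset (iii), I will write $e_{U'}\k[P'\dot sP]e_U$ as an induced bimodule from the parabolic of $G_{a_0}$. The coefficient modules then split off onto the two $\GL$-blocks. This produces $\mathscr C_{\Delta,\Delta'}(r',r)$, generated by the element containing $\widehat{\mathrm T}$. If $a_0<0$ the coset does not exist, and that part of the statement is immediate.

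The main obstacle is this last identification. It requires checking that the map
\[
\mathscr C_{\Delta,\Delta'}(r',r)\to\mathscr B_w,\qquad (ge_V\otimes m)\otimes(n^*\otimes e_Vh)\mapsto n^*\otimes e_{U'}g\dot s h e_U\otimes m
\]
is well defined, i.e.\ balanced over $\k G_{a_0}$ and over the two $\GL$-factors. This uses that $\dot s$ centralizes $G_{a_0}$ and the conjugation identity for the block swap noted before. It also requires checking bijectivity, which I would do by a Bruhat-type count of $P'\dot sP$ against $P'\times_{P'\cap{}^{\dot s}P}P$ together with the unipotent-averaging identities. The nonzero scalars $q^{dd'/2}$ and $\omega$ only rescale generators.
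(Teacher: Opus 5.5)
Your proposal is correct and follows essentially the paper's route: the double-coset decomposition of Lemma~\ref{lem:finite-double-coset-bimodule}, cuspidality on both sides leaving only the identity, opposition and block-exchange cosets, Schur's lemma (twisted by $\theta_d$ via $A_\Delta$) for the first two, and the Mackey identification for the exchange term. For the exchange term you write down the map $\mathscr C_{\Delta,\Delta'}(r',r)\to\mathscr B_w$ explicitly, whereas the paper quotes the abstract Mackey isomorphism.

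Two small points. First, single generation by $\xi_T^{\Delta,\Delta'}$ is asserted rather than proved in your plan; it still needs the paper's matrix-unit (cyclicity) argument, namely $E_{j1}v_1=v_j$ and $f_1^*E_{1j}=f_j^*$ moved into the group-algebra factors by balancing. Second, in Theorem~\ref{Prop:TypeBC} it is $I_3$ that lands in the $G_{r'}$-part and $I_4,I_5$ that land in the $H'$-block, not as you labeled them; your resulting list (i)--(iii) is nevertheless correct.
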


\begin{proof}
We apply cuspidality to the summands of
Lemma~\ref{lem:finite-double-coset-bimodule}.

\smallskip
\noindent\textbf{Step 1: the general linear case.}
For \(w=\sigma_a\) in Theorem~\ref{Lem:james}, put \(c=n-r-r'+a\).
The two cuspidal factors are cut into blocks \((r'-a,c)\) and
\((r-a,c)\).  If both entries of either pair are positive, a proper
Harish--Chandra restriction occurs and the summand vanishes.
If \(c>0\), this forces \(r=r'=a\), \(d=d'=c\), and \(w=1\).
The term is \(X\otimes_\k\Hom_H(\M_{\Delta'},\M_\Delta)\), hence is
\(X\) precisely when \(\Delta'=\Delta\), by Schur's lemma.
If \(c=0\), then \(a=r-d'=r'-d=:a_0\); the exchange term is
\(F^\Delta_{r',a_0}E^{\Delta'}_{a_0,r}(X)\), represented by
\(\mathscr C_{\Delta,\Delta'}(r',r)\), and is absent if \(a_0<0\).

\smallskip
\noindent\textbf{Step 2: the unitary, symplectic and orthogonal cases.}
For \(w=w_{a,t}\) in Theorem~\ref{Prop:TypeBC}, put \(m=r+r'-a\).
The two block decompositions are
\[
(b_1,b_2,b_3)=(r'-a,t-m,n-t),\qquad
(c_1,c_2,c_3)=(r-a,t-m,n-t),
\]
with sums \(d,d'\).  Cuspidality permits at most one nonzero entry in
each triple, leaving exactly the following cases.

\smallskip
\noindent\emph{(i) The identity double coset.}
If \(b_3=d\), \(c_3=d'\), then \(a=r=r'\), \(m=t=r\), and
\(w_{r,r}=1\).  As in Step~1, this contributes \(\k G_r\) exactly
when \(\Delta'=\Delta\).

\smallskip
\noindent\emph{(ii) The opposition/duality double coset.}
If \(b_2=d\), \(c_2=d'\), then \(a=r=r'\), \(t=n\), and
\(w_{r,n}=x_{r,d}\).  Conjugation by \(\dot x_{r,d}\) fixes \(G_r\)
and sends \(\M_\Delta\) to \(\M_{\Delta^*}\).  Thus the term is
\(X\otimes_\k\Hom_H(\M_{\Delta'},\M_{\Delta^*})\), and survives
exactly when \(\Delta'=\Delta^*\).  In the self-dual case, the chosen
extension in Definition~\ref{def:cuspidal-extension} spans this
one-dimensional intertwining space, giving a second copy of \(\k G_r\).

\smallskip
\noindent\emph{(iii) The block-exchange double coset.}
If \(b_1=d\), \(c_1=d'\), then \(t=m=n\), \(a=a_0=r-d'=r'-d\),
and \(w_{a_0,n}=s_{a_0,d',d}\).  This gives the same exchange bimodule
as in Step~1, again omitted for \(a_0<0\).  No other summand survives.

\smallskip
\noindent\textbf{Step 3: identification of the explicit generators.}
Using \(e_U\k Pe_U\cong\k G_r\otimes\k H\), contraction
\[
f^*\otimes e_U(g,h)e_U\otimes m\longmapsto f^*(hm)g
\]
sends \(\xi_1\) to \(1\), since \(f_1^*(v_1)=1\).
For the opposition summand, the twisted identification is
\(\k G_r\otimes\Hom_H(\M_\Delta,{}^{\dot x}\M_\Delta)\);
its Hom-factor is spanned by \(\widetilde\rho_\Delta(\dot x_{r,d})\),
so the stated \(\xi_X\) generates it.

The exchange bimodule is generated by
\((e_{V_{a_0,d}}\otimes v_1)\otimes(f_1^*\otimes e_{V_{a_0,d'}})\).
Indeed, over each coefficient ring the matrix-unit identities
\(E_{j1}v_1=v_j\) and \(f_1^*E_{1j}=f_j^*\), moved into the adjacent
group-algebra factors by balancing, generate every coefficient tensor.
The Mackey identification sends this generator, with the fixed
normalization of \(\mathrm T^{\Delta',\Delta}\), to
\[
f_1^*\otimes\widehat{\mathrm T}^{\Delta',\Delta}_{a_0,d',d}\otimes v_1
=\xi_T^{\Delta,\Delta'}.
\]
Here the underlying representative is \(\dot s_{a_0,d',d}\), between
\(e_{V_{r',d'}}\) and \(e_{V_{r,d}}\); for \(\Delta'=\Delta\) this is
\(\xi_T\).  Together with
\eqref{eq:bimodule-double-coset-direct-sum}, these identifications prove
all three assertions.
\end{proof}

\subsubsection{Extra isomorphisms}
Recall
\[
\Delta^*=\begin{cases}
\Delta,&G=\GL,\\
\overline\Delta,&G=\GU,\\
\widetilde\Delta,&G=\Sp,\O.
\end{cases}
\]
Assume $\Delta^*\neq\Delta$ and put $d=d_\Delta=d_{\Delta^*}$,
$V=V_{r,d}$, $V^-={}^tV$, $e=e_V$, $e^-=e_{V^-}$, and
$x=\dot x_{r,d}$.  Conjugation by $x$ identifies the two primary Levi
data.  Choose an isomorphism
\[
a_{\Delta,\Delta^*}:\M_\Delta\xrightarrow{\sim}\M_{\Delta^*}
\]
satisfying
\begin{equation}\label{eq:a-Gamma-dual}
a_{\Delta,\Delta^*}\rho_\Delta(h)
 =\rho_{\Delta^*}(x^{-1}hx)\,a_{\Delta,\Delta^*}
\qquad(h\in\GL_d(q)).
\end{equation}
It is unique up to a nonzero scalar.  We take
$a_{\Delta^*,\Delta}=a_{\Delta,\Delta^*}^{-1}$ with the inverse conjugation
convention.

We define the comparison maps directly on the induced bimodules by
\begin{align}
Y^{\Delta,\Delta^*}_{r+d,r}(ge\otimes m)
 &:=g\,e e^-x e\otimes a_{\Delta,\Delta^*}(m),
 \label{eq:Y-direct}\\
Y^{\Delta^*,\Delta}_{r+d,r}(ge\otimes m')
 &:=|V|\,g\,e e^-x^{-1}e\otimes a_{\Delta^*,\Delta}(m').
 \label{eq:Y-direct-reverse}
\end{align}
These formulas are balanced over $\GL_d(q)$.  Indeed, since the Levi
normalizes both $V$ and $V^-$, the idempotents $e,e^-$ commute with its
$\GL_d(q)$-factor.  Thus, for $h\in\GL_d(q)$,
\begin{align*}
Y^{\Delta,\Delta^*}(ge h\otimes m)
 &=g e h e^-x e\otimes a_{\Delta,\Delta^*}(m)\\
 &=g e e^-x e\,(x^{-1}hx)\otimes a_{\Delta,\Delta^*}(m)\\
 &=g e e^-x e\otimes
   \rho_{\Delta^*}(x^{-1}hx)a_{\Delta,\Delta^*}(m)\\
 &=Y^{\Delta,\Delta^*}(ge\otimes\rho_\Delta(h)m),
\end{align*}
where the last equality is \eqref{eq:a-Gamma-dual}.  The reverse map is
checked in the same way.  They are also
$G_{r+d}$--$G_r$ bimodule maps since $x$ centralizes $G_r$.  Thus they define
natural transformations $F^\Delta\to F^{\Delta^*}$ and
$F^{\Delta^*}\to F^\Delta$.
Conceptually, the factor $ee^-$ is the same opposite-unipotent averaging
that occurs in the definition of $\mathrm{X}$, while $x$ only transports the
$\Gamma$-primary Levi datum to the $\Delta^*$-primary datum.

We use
$\begin{tikzpicture}[baseline = -1mm]
\draw[-,red] (0.08,-.2) to (0.08,0);
\draw[->,blue] (0.08,0) to (0.08,.3);
\node at (0.08,0) {$\dott$};
\node at (0.08,-.37) {$\scriptstyle{\red{\Delta}}$};
\node at (0.08,0.47) {$\scriptstyle{\blue{\Delta^*}}$};
\end{tikzpicture}$
to denote $Y^{\Delta,\Delta^*}$.

\begin{lemma}\label{Lem:YYX}
With the normalization \eqref{eq:Y-direct}--\eqref{eq:Y-direct-reverse},
\[
Y^{\Delta^*,\Delta}Y^{\Delta,\Delta^*}=\mathrm{X}^\Delta,
\qquad
Y^{\Delta,\Delta^*}Y^{\Delta^*,\Delta}=\mathrm{X}^{\Delta^*}.
\]
Equivalently, the first identity is
$\begin{tikzpicture}[baseline = -1mm]
\draw[-,red] (0.08,-.4) to (0.08,-.2);
\draw[-,blue] (0.08,-.2) to (0.08,.2);
\draw[->,red] (0.08,.2) to (0.08,.5);
\node at (0.08,-.2) {$\dott$};
\node at (0.08,.2) {$\dott$};
\node at (0.08,-.57) {$\scriptstyle{\red{\Delta}}$};
\node at (0.08,.67) {$\scriptstyle{\red{\Delta}}$};
\node at (.2,.0) {$\scriptstyle{\,\,\blue{\Delta^*}}$};
\end{tikzpicture}
=
\begin{tikzpicture}[baseline = -1mm]
\draw[->,red] (0.08,-.2) to (0.08,.2);
\node[red] at (0.08,0) {$\dott$};
\node at (0.08,-.37) {$\scriptstyle{\red{\Delta}}$};
\end{tikzpicture}$.
\end{lemma}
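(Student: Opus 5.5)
Compose the two maps directly at the level of the representing bimodule $\k G_{r+d}e\otimes_{\GL_d(q)}\M_\Delta$. For $ge\otimes m$, first apply \eqref{eq:Y-direct} and then \eqref{eq:Y-direct-reverse}:
\[
Y^{\Delta^*,\Delta}Y^{\Delta,\Delta^*}(ge\otimes m)
=|V|\,g\,ee^-xe\,e^-x^{-1}e\otimes a_{\Delta^*,\Delta}a_{\Delta,\Delta^*}(m)
=|V|\,g\,ee^-x\,e\,e^-x^{-1}e\otimes m,
\]
using the convention $a_{\Delta^*,\Delta}=a_{\Delta,\Delta^*}^{-1}$. The goal is to reduce this to $|V|\,g\,ee^-e\otimes m$, which is exactly the formula for $\mathrm X^\Delta$ in Case~1.

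The key step is to move $x$ across the idempotents. Conjugation by the opposition element $x=\dot x_{r,d}$ swaps $V=V_{r,d}$ and $V^-={}^tV$, so $xex^{-1}=e^-$ and $xe^-x^{-1}=e$. Hence
\[
x\,e\,e^-\,x^{-1}=e^-e .
\]
Substituting, the group-algebra factor becomes $ee^-\cdot e^-e\cdot e=ee^-e$, because $e^-$ and $e$ are idempotents. This gives $|V|\,g\,ee^-e\otimes m=\mathrm X^\Delta(ge\otimes m)$.

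One point needs care: I slid $x^{-1}$ directly past $e$ and $e^-$ without passing anything through the balanced tensor. So no twist by $\omega_\Delta(\kappa_d)$ or by $x^2$ appears. This is consistent with Case~1, where no extension $A_\Delta$ is used; the only thing to confirm is that the intertwining property \eqref{eq:a-Gamma-dual} and its inverse convention make the coefficient composite exactly $\id_{\M_\Delta}$, which follows from the stated convention.

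The second identity is proved the same way, with the roles of $\Delta$ and $\Delta^*$ exchanged. Composing in the other order gives $|V|\,g\,ee^-x^{-1}e\,e^-x\,e\otimes m$. Since $x^{-1}$ also swaps $V$ and $V^-$ (by the explicit matrix of $x$, we have $x^2=\pm1$ on the block, and it is central there), we get $x^{-1}ee^-x=e^-e$. The composite then reduces to $|V|\,ge e^-e\otimes m=\mathrm X^{\Delta^*}$.

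The main obstacle is the precise conjugation identity $xVx^{-1}=V^-$ for each tower, especially the unitary and orthogonal sign conventions. I would check this on the matrix form of $\dot x_{r,d}$ in the basis $\mathcal B$: conjugating a block upper-unitriangular matrix by the antidiagonal block matrix gives a block lower-unitriangular one. The signs $\pm\id_d$ only rescale the off-diagonal block, and that scaling preserves the subgroup.
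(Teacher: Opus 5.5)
Your proposal is correct and follows the paper's proof: the cuspidal intertwiners cancel by the convention $a_{\Delta^*,\Delta}=a_{\Delta,\Delta^*}^{-1}$, and the conjugation identities $xex^{-1}=e^-$ and $xe^-x^{-1}=e$ collapse $|V|\,ee^-x\,ee^-x^{-1}e$ to $|V|\,ee^-e$, which is the Case~1 formula for $\mathrm{X}^\Delta$. For the second composite you do not need the remark about $x^2$ being central, because $x^{-1}ex=e^-$ and $x^{-1}e^-x=e$ follow directly by inverting those same two identities.
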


\begin{proof}
The cuspidal intertwiners cancel.  Since
$xex^{-1}=e^-$ and $xe^-x^{-1}=e$, the first composite is multiplication by
\[
|V|\,ee^-x\,ee^-x^{-1}e
 =|V|\,ee^-(e^-e)e
 =|V|\,ee^-e,
\]
which is exactly the defining operator for $\mathrm{X}^\Delta$ in the case
$\Delta^*\ne\Delta$.  The second identity is identical with
$\Delta$ and $\Delta^*$ interchanged.
\end{proof}

\begin{proof}[Proof of Lemma~\ref{Lem:isomfunctors}]
Assume $\Delta^*\ne\Delta$.  By Lemma~\ref{Lem:YYX}, the two composites of
the comparison maps are $\mathrm{X}^\Delta$ and $\mathrm{X}^{\Delta^*}$, respectively.
Proposition~\ref{prop:invertible} shows that these dots are invertible, hence
so are the comparison maps.  Explicitly,
\[
\bigl(Y^{\Delta,\Delta^*}\bigr)^{-1}
=(\mathrm{X}^\Delta)^{-1}Y^{\Delta^*,\Delta}
=Y^{\Delta^*,\Delta}(\mathrm{X}^{\Delta^*})^{-1}.
\]
Therefore $F^\Delta\cong F^{\Delta^*}$, and taking either adjoint gives
$E^\Delta\cong E^{\Delta^*}$.
\end{proof}

\subsubsection{Affine Hecke relations and mixed affine Hecke relations}\label{subsec:repdatumN}

Write
\[
M\odot N:=R^{\GL_{d+d'}(q)}_{\GL_d(q)\times\GL_{d'}(q)}(M\boxtimes N)
\]
for Harish--Chandra induction on the general-linear factors.  The following
lemma gives the Howlett--Lehrer relations used below; see \cite{HL80}.

\begin{lemma}\label{lem:HLintertwiners}
Let \(\Delta,\Delta',\Delta''\in\calF(\k)\), and let the operators \(\mathrm{T}\) be
the normalized block intertwiners defined above.
\begin{enumerate}
\item[(1)] On \(\M_\Delta\odot\M_\Delta\),
\[
(\mathrm{T}^{\Delta,\Delta})^2=z^\Delta \mathrm{T}^{\Delta,\Delta}+1,
\]
and on three equal blocks the usual braid relation holds.
\item[(2)] If \(\Delta\ne\Delta'\), the opposite block intertwiners are
mutual inverses:
\[
\mathrm{T}^{\Delta',\Delta}\mathrm{T}^{\Delta,\Delta'}=1,
\qquad
\mathrm{T}^{\Delta,\Delta'}\mathrm{T}^{\Delta',\Delta}=1.
\]
\item[(3)] For arbitrary three colors, the normalized block intertwiners
satisfy the braid relation
\[
(\mathrm{T}^{\Delta',\Delta''}\otimes1)
(1\otimes \mathrm{T}^{\Delta,\Delta''})
(\mathrm{T}^{\Delta,\Delta'}\otimes1)
=
(1\otimes \mathrm{T}^{\Delta,\Delta'})
(\mathrm{T}^{\Delta,\Delta''}\otimes1)
(1\otimes \mathrm{T}^{\Delta',\Delta''}),
\]
with source and target obtained by the evident permutations of the three
inducing blocks.  No equality among \(d_\Delta,d_{\Delta'},d_{\Delta''}\) is
assumed.
\end{enumerate}
\end{lemma}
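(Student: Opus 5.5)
The plan is to reduce everything to computations in the endomorphism algebra $\End_{\k\GL_n(q)}(\M_{\Delta_1}\odot\cdots\odot\M_{\Delta_k})$. By Howlett--Lehrer theory \cite{HL80} this algebra has a basis indexed by the relative Weyl group elements $w$ with ${}^w(\Delta_1,\dots,\Delta_k)=(\Delta_{w(1)},\dots)$. Each basis element $B_w$ is given by the explicit averaging element $e_V\dot w e_V$ tensored with the coefficient flip. The normalized operators $\mathrm{T}^{\Delta,\Delta'}$ are, up to the displayed scalars $q^{dd'/2}$ and $\omega_\Delta(-\id)$, exactly the $B_{s}$ for the block transposition $s=\dot s_{r,d,d'}$.

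For (2) I would use the two-block Mackey computation, i.e. Lemma~\ref{lem:finite-bimodule-decomposition}(3) with $r=0$ read on the general-linear factor. When $\Delta\ne\Delta'$, the endomorphism algebra of $\M_\Delta\odot\M_{\Delta'}$ is one-dimensional by Schur's lemma, and $\Hom(\M_\Delta\odot\M_{\Delta'},\M_{\Delta'}\odot\M_\Delta)$ is one-dimensional as well. It follows that $\mathrm{T}^{\Delta',\Delta}\mathrm{T}^{\Delta,\Delta'}$ is a scalar, which can be computed by evaluating the coefficient of the identity double coset in $e_V\dot s_{d',d}e_V\dot s_{d,d'}e_V$. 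The product $\dot s\,e_V\,\dot s^{-1}$ equals the opposite averaging $e_{V^-}$. Then $e_Ve_{V^-}e_V$ projected to the Levi part gives $|V|^{-1}$ times the identity, plus terms in other double cosets, which vanish by cuspidality. Since $|V|=q^{dd'}$, this cancels against $q^{dd'/2}\cdot q^{dd'/2}$ to give $1$. The reverse composite is handled the same way.

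For (1) I would invoke the classical result for a single cuspidal $\M_\Delta$: $\End(\M_\Delta\odot\M_\Delta)$ is two-dimensional, and the standard generator satisfies a quadratic relation $(B_s-p)(B_s+1)=0$ with parameter $p=q^{d}$, the relative Hecke parameter. I would check that the normalization $\omega_\Delta(-\id)q^{-d^2/2}\cdot q^{d^2}$ rescales this to $T^2=(q^{d/2}-q^{-d/2})T+1$, i.e. coefficient $z^\Delta$. The sign $\omega_\Delta(-\id)$ is the Howlett--Lehrer cocycle correction; I would determine it by computing the coefficient of the identity in $B_s^2$ via $e_Ve_{V^-}e_V$ as above, where $\dot s^2=\diag(\ -\ )$ contributes the central character. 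The braid relation on three equal blocks is the standard fact that the normalized $B_s$ generate a Hecke algebra of type $A_2$; the $2$-cocycle is trivial here by \cite{HL80}.

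For (3) both sides lie in $\Hom(\M_{\Delta}\odot\M_{\Delta'}\odot\M_{\Delta''},\M_{\Delta''}\odot\M_{\Delta'}\odot\M_\Delta)$ and are products of averaged permutation elements. Both are reduced words for the longest block permutation $w_0$, and lengths add. Using $e_V\dot w_1e_V\dot w_2e_V=e_V\dot w_1\dot w_2e_V$ when $\ell(w_1w_2)=\ell(w_1)+\ell(w_2)$, which follows from the standard unipotent factorization $V\cap{}^{w}V^-$, each side equals the same scalar times $e_V\dot w_0e_V$. The scalars $q^{dd'/2}q^{dd''/2}q^{d'd''/2}$ agree, and any $\omega(-\id)$ factors occur symmetrically. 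I expect this length-additivity step to be the main obstacle when the blocks have unequal sizes, since the block permutations are not simple reflections of $\mathfrak S_n$. My first attempt would be to verify it directly: compute the inversion sets and confirm that the unipotent groups factor as $V\cap {}^{w_1w_2}V^-=(V\cap{}^{w_1}V^-)\cdot{}^{w_1}(V\cap{}^{w_2}V^-)$, which gives the additivity.
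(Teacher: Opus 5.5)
Your route is more hands-on than the paper's. The paper only observes that, by transitivity of Harish--Chandra induction, every crossing already lives in the Levi $G_r\times\GL_{d_\Delta+d_{\Delta'}+d_{\Delta''}}(q)$ and acts trivially on $G_r$. It then quotes the Howlett--Lehrer relations there and descends from $\mathbb K$ to $\mathcal O$ and $\mathbb F$ by torsion-freeness. You instead compute with the averaging elements themselves.

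Your (2) is correct. The middle average should be over the radical of the other block order, since the two radicals differ when $d\neq d'$. Its conjugate by the block swap is indeed the opposite radical, only the closed double coset survives (by cuspidality and $\Delta\neq\Delta'$), and $q^{dd'}|V|^{-1}=1$.

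Your (3) also works, and the obstacle you flag is harmless. An adjacent block transposition inverts exactly one pair of blocks, and each of the two words inverts the three pairs once each. So you can write an intermediate radical as $V_2=(V_2\cap\dot w_1^{-1}V_1\dot w_1)(V_2\cap\dot w_1^{-1}V_1^-\dot w_1)$, absorb the first factor into $e_{V_1}$ on the left, and absorb the second into $\dot w_2e_{V_3}$ on the right. Both sides then become $q^{(dd'+dd''+d'd'')/2}$ times the same $\omega$-factors times $e\,\dot w_0\,e$. Here $\dot w_0$ is the same block-reversal permutation matrix for both words, because the $\dot s$ are honest permutation matrices.

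The gap is in (1), at the one place where the explicit normalization matters: the sign $\omega_\Delta(-\id_d)$. You propose to detect it in the identity coefficient of $B_s^2$, through a central character coming from $\dot s^2$. But $\dot s=\dot s_{r,d,d}$ is a permutation matrix, so $\dot s^2=1$ and that coefficient is simply $|V|^{-1}=q^{-d^2}$. More fundamentally, the constant term cannot see this sign. Replacing $\mathrm T$ by $-\mathrm T$ keeps the constant term of $\mathrm T^2$ equal to $1$ and only turns $z^\Delta$ into $-z^\Delta$. Quoting the Howlett--Lehrer parameter does not settle it either, because their generator is pinned down only by requiring $(T-q^d)(T+1)=0$. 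The sign relating it to your explicit element is exactly what must be computed.

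The sign sits in the open-cell part of $e_Ve_{V^-}e_V$. With $u(y)=\begin{pmatrix}I&y\\0&I\end{pmatrix}\in V$, one has for every $c\in\GL_d(q)$
\[
\begin{pmatrix}I&0\\c&I\end{pmatrix}=u(c^{-1})\,\dot s\,\diag(c,-c^{-1})\,u(c^{-1}).
\]
The singular nonzero $c$ contribute nothing by cuspidality. Schur orthogonality gives $\sum_{c}\rho_\Delta(c)\otimes\rho_\Delta(c^{-1})=\frac{|\GL_d(q)|}{\dim\M_\Delta}\,\mathrm{flip}=q^{d(d-1)/2}(q^d-1)\,\mathrm{flip}$. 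Together these yield
\[
B_s^2=q^{-d^2}+\omega_\Delta(-\id_d)\,q^{-d^2}q^{d(d-1)/2}(q^d-1)\,B_s,
\]
where $B_s$ is right multiplication by $e_V\dot se_V$ followed by the coefficient flip.

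Thus the Howlett--Lehrer generator is $\omega_\Delta(-\id_d)q^{d(d+1)/2}B_s$, and $\mathrm T=\omega_\Delta(-\id_d)q^{d^2/2}B_s$ is $q^{-d/2}$ times it. This gives exactly $\mathrm T^2=1+(q^{d/2}-q^{-d/2})\mathrm T$. The factor $\omega_\Delta(-\id_d)$ is there to cancel the sign produced by $-c^{-1}$ in the linear term. Already for $d=1$ and a character $\theta$ with $\theta(-1)=-1$, the unsigned normalization would give $-z^\Delta$.

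This computation also produces the parameter $q^d$ from the cuspidal degree, so the characteristic-zero Howlett--Lehrer input becomes unnecessary. Like your (2) and (3), it holds over $\mathcal O$, hence over $\mathbb F$ by reduction, for the defect-zero colors. Your proposal does not otherwise address the $\mathcal O$ and $\mathbb F$ cases.
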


\begin{proof}
By transitivity of Harish--Chandra induction, the \emph{block intertwiners}
occurring in this lemma are already defined in the intermediate Levi
\[
 G_r\times\GL_{d_\Delta+d_{\Delta'}+d_{\Delta''}}(q),
\]
and act trivially on the factor $G_r$.  Thus the relations among crossings follow from
the general-linear Howlett--Lehrer calculation \cite{HL80}; no
connectedness assumption on the ambient full isometry group is involved.  In
particular, this reduction is unchanged for even orthogonal groups in
characteristic two.

For (1), the relative Weyl group on the equal general-linear blocks is of type
$A_1$ in rank two and $A_2$ in rank three.  The equal-cuspidal-block Hecke parameter is \(q^{d_\Delta}\).
Our $\mathrm{T}^{\Delta,\Delta}$ is the symmetric normalization of that generator, so
\[
 \mathrm{T}^2=(q^{d_\Delta/2}-q^{-d_\Delta/2})\mathrm{T}+1
      =z^\Delta\mathrm{T}+1,
\]
and the braid relation holds.  For distinct cuspidal blocks the block
transposition has no equal-color Hecke factor, so its normalized intertwiner
is invertible and the opposite transposition is its inverse, proving (2).
Finally both sides of (3) are the normalized standard intertwiner for the
same permutation of three inducing blocks, which proves (3).

The idempotents and normalized intertwiners are defined over $\mathcal O$
since $\ell\nmid q(q-1)$.  After extension of scalars
to $\mathbb K$ the preceding Howlett--Lehrer identities hold.  The relevant
$\mathcal O$-bimodules are direct summands of finite free group algebras
(tensored with the chosen free cuspidal lattices), hence are
$\mathcal O$-torsion-free; the identities therefore already hold over
$\mathcal O$, and reduction gives the same identities over $\mathbb F$.
\end{proof}

\begin{lemma}\label{lem:quantum12relations}
The natural transformations  $\mathrm{X}^\Delta=\begin{tikzpicture}[baseline = -1mm]
	\draw[->,red] (0.08,-.2) to (0.08,.2);
      \node at (0.08,0) {$\dott$};
      \node at (0.08,-.37) {$\scriptstyle{\red{\Delta}}$};
      \end{tikzpicture}$,
$\mathrm{T}^{\Delta,\Delta}=\begin{tikzpicture}[baseline = -1mm]
	\draw[->,red] (0.2,-.2) to (-0.2,.2);
	\draw[-,white,line width=4pt] (-0.2,-.2) to (0.2,.2);
	\draw[->,red] (-0.2,-.2) to (0.2,.2);
\node at (0.2,-.37) {$\scriptstyle{\red{\Delta}}$};
  \node at (-0.2,-.37) {$\scriptstyle{\red{\Delta}}$};
\end{tikzpicture}\:,$
$\mathrm{T}^{\Delta,\Delta'}=\begin{tikzpicture}[baseline = -1mm]
	\draw[->,red] (0.2,-.2) to (-0.2,.2);
	\draw[->,blue] (-0.2,-.2) to (0.2,.2);
\node at (0.2,-.37) {$\scriptstyle{\red{\Delta}}$};
  \node at (-0.2,-.37) {$\scriptstyle{\blue{\Delta'}}$};
\end{tikzpicture}\:$
satisfy the defining same-color affine-Hecke relations (quadratic, braid, and dot--slide relations) and, for distinct colors, the invertible mixed crossing, mixed dot--slide, and three-color braid relations required by the colored Heisenberg presentation.

\end{lemma}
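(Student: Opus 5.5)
The plan is to sort the required relations into three groups. Those involving only crossings are obtained from Lemma~\ref{lem:HLintertwiners}. Those involving dots need a new computation, namely a dot--slide. Every relation is first checked over $\mathcal O$ and then passed to $\mathbb K$ and $\mathbb F$ by the torsion-freeness argument already used at the end of the proof of Lemma~\ref{lem:HLintertwiners}.

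\textbf{Crossing-only relations.}
\begin{enumerate}
\item The same-color quadratic relation $\mathrm{T}^{\Delta,\Delta}-(\mathrm{T}^{\Delta,\Delta})^{-1}=z^\Delta$ is Lemma~\ref{lem:HLintertwiners}(1), rewritten using invertibility.
\item The same-color braid relation also comes from Lemma~\ref{lem:HLintertwiners}(1).
\item For $\Gamma\neq\Gamma'$, the invertibility of the mixed crossing, $\mathrm T^{\Delta',\Delta}\mathrm T^{\Delta,\Delta'}=1$, is Lemma~\ref{lem:HLintertwiners}(2).
\item The mixed braid relations with two or three colors are Lemma~\ref{lem:HLintertwiners}(3), specialized to the appropriate color triples.
\end{enumerate}
All four reduce to identities inside the Levi $G_r\times\GL_{d+d'+d''}(q)$, as the proof of Lemma~\ref{lem:HLintertwiners} shows. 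So the only remaining task here is to check that our normalizations of $\widehat{\mathrm T}$ agree with the symmetric Howlett--Lehrer normalization. This uses the scalars $q^{dd'/2}$ and $\omega_\Delta(-\id)$.

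\textbf{Dot--slide relations.} These are the genuine work. Both sides of the relation are bimodule maps on $F^\Delta F^{\Delta'}$, which is represented by $\k G_n e_{V_{r,d,d'}}\otimes(\M_\Delta\boxtimes\M_{\Delta'})$. Such a map is determined by the image of the generator $e_{V_{r,d,d'}}\otimes(v\boxtimes v')$. I would compute these images as elements of the Hecke-type algebra $e\,\k G_n e$, expanded in parabolic double cosets using Theorem~\ref{Lem:james} in the $\GL$ case and Theorem~\ref{Prop:TypeBC} otherwise.

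The dot on the outer strand, $\mathrm X^\Delta$ applied at rank $r+d'$, is given by $e\,e_{{}^tV}e$, or in Case~2 by $e\dot x e$ twisted by $A_\Delta$. Conjugating it by $\dot s$ moves it onto the inner strand. For $\Gamma\neq\Gamma'$ I would show the two composites agree exactly, by checking in a Bruhat-type decomposition of the relevant unipotent averages that all correction terms lie in double cosets killed by cuspidality of $\M_\Delta$ and $\M_{\Delta'}$. The vanishing criterion is the one in the proof of Lemma~\ref{lem:finite-bimodule-decomposition}: a proper Harish--Chandra restriction of a cuspidal is zero. For $\Delta=\Delta'$ the surviving correction term is a multiple of the identity double coset. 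Its scalar is fixed by $q^{rd}$, by $|V_{r,d}|$ and by the normalization \eqref{eq:beta-normalization}, and gives the quantum relation $\mathrm T\mathrm X_1\mathrm T=\mathrm X_2$.

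\textbf{Main obstacle and first reduction.} I expect the main obstacle to be this dot--slide computation in the self-dual case for non-linear towers. There $\mathrm X$ involves the opposition element $\dot x_{r,d}$, with $\dot x^2=(1,\kappa_d)$, together with the intertwiner $A_\Delta$, and the signs must cancel correctly. My first step there is to reduce to the rank-two Levi $G_r\times\GL_{d}\times\GL_{d'}$ inside $G_{r+d+d'}$. There $\dot s_{r,d,d'}\,\dot x_{r+d',d}\,\dot s_{r,d,d'}$ can be compared directly with $\dot x_{r,d}$, and the relation reduces to a type $B_2$ (respectively $A_1\times B_1$) Howlett--Lehrer computation.
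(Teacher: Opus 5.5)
Your skeleton matches the paper's: the crossing-only relations come from Lemma~\ref{lem:HLintertwiners}, and the dot--slide comes from the block-Weyl identity $\dot s\,\dot x_{r,d}\,\dot s^{-1}=\dot x_{r+d,d}$ followed by a rank-two computation. The gap is in how you kill the correction terms in the mixed dot--slide. You say they lie in double cosets killed by cuspidality, i.e.\ by vanishing of proper Harish--Chandra restrictions. This cannot work when $d_\Delta=d_{\Delta'}$. There the would-be corrections are supported on cosets of the relative Weyl group of $G_r\times\GL_d\times\GL_d$: the block transposition and, in the classical towers, block sign changes. These cosets normalize the Levi, so cuspidality imposes nothing on them. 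Already for $\GL_2(q)$, $r=0$, $d=d'=1$ and distinct characters $\theta\neq\theta'$ of $\GL_1(q)$, the outer dot $q\,e_Ue_{U^-}e_U$ on $\Ind_B^{\GL_2}(\theta'\boxtimes\theta)$ is the identity plus terms supported on $BsB$. Those terms vanish only because $\theta\neq\theta'$. The missing input is the one the paper uses: the two cuspidal blocks are non-isomorphic. By Schur's lemma the mixed crossing therefore has no same-color rank-one Hecke summand; equivalently, $\mathrm T^{\Delta',\Delta}\mathrm T^{\Delta,\Delta'}=1$ in Lemma~\ref{lem:HLintertwiners}(2) has no linear term. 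Transporting the dot through the block swap then produces no correction.

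Your same-color description is also off, and it leaves out what actually fixes the scalar. The quantum dot--slide has no additive identity-coset correction; that is the degenerate picture. The relation $\mathrm T\mathrm X_1\mathrm T=\mathrm X_2$ is equivalent to $\mathrm X_2\mathrm T=\mathrm T\mathrm X_1+z^\Delta\mathrm X_2$, so the deviation from naive commutation is a multiple of the dot. For $\Sp_4(q)$, $r=0$, $\Delta=X-1$, it is simply the length-additive identity $T_sT_tT_s=T_{sts}$ in the type $B_2$ Iwahori--Hecke algebra, with $t=x_1$, $sts=x_2$, and no correction at all. The scalar comes from the equal-block Howlett--Lehrer parameter $q^{d}$. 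The unnormalized intertwiner satisfies $\mathsf T_\Delta\mathrm X_{\mathrm{in}}\mathsf T_\Delta=q^{d}\mathrm X_{\mathrm{out}}$, and one then uses $\mathrm T^{\Delta,\Delta}=q^{-d/2}\mathsf T_\Delta$. The normalization \eqref{eq:beta-normalization} plays no role, because $\beta_\Delta$ appears once on each side and cancels. For the same reason $\kappa_d$ never enters: each side contains a single $\dot x$, the identity $\dot s\,\dot x_{r,d}\,\dot s^{-1}=\dot x_{r+d,d}$ holds exactly, and the swap carries $A_\Delta$ across with no scalar. So the sign problem you single out as the main obstacle does not arise.
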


\begin{proof}
We first treat upward strands.  The quadratic, braid, and dot--slide
relations are the normalized Howlett--Lehrer affine-Hecke relations.  The
quadratic and braid relations follow from Lemma~\ref{lem:HLintertwiners};
for completeness, we record the normalization of the dot--slide relation.

Put $d=d_\Delta$.  On two consecutive equal
$\GL_d$-blocks, let $s=\dot s_{r,d,d}$ be the block transposition.  Denote by
$x_1$ the opposition element defining the dot on the first added block and by
$x_2$ the corresponding opposition element after that block has been moved
past the second one.  With the embeddings fixed above one has the elementary
block-Weyl identity
\begin{equation}\label{eq:block-weyl-dot-slide}
 s x_1s^{-1}=x_2.
\end{equation}
In the general-linear case $x_i$ means the opposite-unipotent correspondence
$e_Ve_{{}^tV}e_V$ rather than an opposition matrix; the same identity is the
usual rank-two parabolic convolution identity.  In the self-dual classical
case, the factor $A_\Delta$ in Definition~\ref{def:cuspidal-extension} is
carried from the first tensor factor to the second by the block swap without
a scalar factor.

Insert the unipotent averages on both sides of
\eqref{eq:block-weyl-dot-slide}.  The standard rank-two root-subgroup
calculation gives, for the \emph{unnormalized} equal-color intertwiner
$\mathsf T_\Delta$,
\begin{equation}\label{eq:unnormalized-block-dot-slide}
 \mathsf T_\Delta(1\otimes \mathrm{X}^\Delta)\mathsf T_\Delta
   =q^d(\mathrm{X}^\Delta\otimes1).
\end{equation}
This is the block form of the calculation in
\cite[Proposition~4.2(c),(f)]{LLZ}; see also
\cite[Proposition~2.9]{LSZ25}.  For a degree-$d$ cuspidal
$\GL_d$-block, the equal-block Howlett--Lehrer endomorphism algebra is the
Hecke algebra with parameter $q^d$; see \cite[\S2.5]{BDK01}.  Thus, after
applying the cuspidal projector to the relative-rank-one convolution, the
scalar in \eqref{eq:unnormalized-block-dot-slide} is precisely
$q^d$.  The block-normalizer identity
\eqref{eq:block-weyl-dot-slide} is independent of this scalar.  Our crossing
is the symmetric normalization
$\mathrm{T}^{\Delta,\Delta}=q^{-d/2}\mathsf T_\Delta$.
Consequently \eqref{eq:unnormalized-block-dot-slide} becomes
\[
 \mathrm{T}^{\Delta,\Delta}(1\otimes \mathrm{X}^\Delta)\mathrm{T}^{\Delta,\Delta}
   =\mathrm{X}^\Delta\otimes1,
\]
which is the required same-color dot--slide relation.  Together with
$\mathrm{T}^2=z^\Delta\mathrm{T}+1$, where
$z^\Delta=q^{d/2}-q^{-d/2}$, this is the affine-Hecke
presentation in our normalization.

Now suppose \(\Delta\ne\Delta'\).  The defining opposition/opposite-parabolic
correspondence for the $\Delta$-dot is transported by the block transposition
to the corresponding $\Delta$-correspondence on the other side.  Since the
two cuspidal blocks are nonisomorphic, the mixed intertwiner has no
same-color rank-one Hecke summand and hence no correction term.  Thus
\[
\mathrm{T}^{\Delta,\Delta'}(\mathrm{X}^\Delta\otimes1)
=(1\otimes \mathrm{X}^\Delta)\mathrm{T}^{\Delta,\Delta'},
\qquad
\mathrm{T}^{\Delta,\Delta'}(1\otimes \mathrm{X}^{\Delta'})
=(\mathrm{X}^{\Delta'}\otimes1)\mathrm{T}^{\Delta,\Delta'}.
\]
Invertibility of the mixed crossing and the three-color braid relation are
parts~(2) and~(3) of Lemma~\ref{lem:HLintertwiners}.

The remaining orientations are obtained by taking left and right mates with
respect to Lemma~\ref{lem:adjointpair}.  Taking mates preserves equality of
natural transformations, so the sideways and downward relations follow from
the upward ones.  This proves all affine and mixed affine-Hecke relations used
in Theorem~\ref{Thm:quantumheisenberg}.
\end{proof}

\smallskip

\subsubsection{Adjunctions}
\begin{lemma}
\label{lem:adjointpair}
The maps $\varepsilon^\Delta_R$, $\eta^\Delta_R$, $\varepsilon^\Delta_L$, $\eta^\Delta_L$ defined above make $(E^\Delta,F^\Delta)$ into a biadjoint pair.
\end{lemma}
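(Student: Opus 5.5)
We verify the four zigzag (snake) identities directly at the level of the representing bimodules. Two adjunctions have to be checked: $(E^\Delta,F^\Delta)$ with unit $\eta^\Delta_R$ and counit $\varepsilon^\Delta_R$, and $(F^\Delta,E^\Delta)$ with unit $\eta^\Delta_L$ and counit $\varepsilon^\Delta_L$. For each, it suffices to show that the composite $F^\Delta\Rightarrow F^\Delta E^\Delta F^\Delta\Rightarrow F^\Delta$ is the identity, and likewise for $E^\Delta$. Since every map involved is a bimodule homomorphism out of a cyclic bimodule, it is enough to evaluate each composite on the generator $e_{V_{r,d}}\otimes m$ of $\k G_{r+d}e_{V_{r,d}}\otimes_{\GL_d}\M_\Delta$, or on $f^*\otimes e_{V_{r,d}}$ for $E^\Delta$. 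We fix $d=d_\Delta$ throughout.

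\emph{The pair $(\eta^\Delta_L,\varepsilon^\Delta_L)$.} This is the classical Harish--Chandra adjunction $R\dashv{}^*R$, twisted by $\Hom_{\GL_d}(\M_\Delta,-)$. On the generator $e_V\otimes m$, the unit $\eta^\Delta_L$ inserts $Z^\Delta_{r+d}=\sum_i g_i^{-1}e_V\otimes(\sum_j v_j\otimes f_j^*)\otimes e_Vg_i$. The counit $\varepsilon^\Delta_L$ then applies $\Proj_{L}$ to the product $e_Vg_ie_V$. This projection is nonzero only when $g_i\in P_{r,d}$, which forces the single coset $g_i=1$. The remaining sum $\sum_j v_jf_j^*(m)=m$ returns the generator, so the zigzag holds. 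The other zigzag is checked in the same way, using the fact that $Z^\Delta$ is independent of the choice of coset representatives and is central.

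\emph{The pair $(\eta^\Delta_R,\varepsilon^\Delta_R)$.} Here the scalars $-t^\Delta z^\Delta$ and $-(t^\Delta z^\Delta)^{-1}$ cancel, so the computation reduces to an unnormalized one. Applying $\eta^\Delta_R$ to $e_V\otimes m$ produces $e_V\otimes m\otimes f_1^*\otimes e_V e_V\otimes v_1$. Applying $\varepsilon^\Delta_R$ to the outer factors then yields $e_V\Phi_\Delta(m\otimes f_1^*)e_V\otimes v_1$. Since $e_V$ commutes with the Levi factor $\GL_d$ and is idempotent, and $\Phi_\Delta(m\otimes f_1^*)\in B_\Delta\subset\k\GL_d$, we can move this element across the balanced tensor. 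The result is $e_V\otimes\Phi_\Delta(m\otimes f_1^*)v_1=e_V\otimes f_1^*(v_1)m=e_V\otimes m$. The zigzag on $E^\Delta$ is handled symmetrically, using the balanced-tensor identity $f_i^*\otimes v_i=f_j^*\otimes v_j$ and $\operatorname{ev}_\Delta$ from (d).

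The main obstacle is the interaction with the central block idempotent: over $\calO$ and $\bbF$ we cannot divide by $|\GL_d(q)|$. The argument therefore has to use only the Morita identification $\Phi_\Delta$ and the relation $\Phi_\Delta(\sum_j v_j\otimes f_j^*)=f_\Delta$, which acts as the identity on $\M_\Delta$. We plan to verify everything over a finite splitting discrete valuation ring and then pass to $\calO$, $\bbK$ and $\bbF$ by scalar extension and reduction. A secondary subtlety is that the unit $\eta^\Delta_L$ lands in $\mathscr C_\Delta(r)$. Showing that $Z^\Delta_r$ is well defined and central requires checking that $v_j\otimes f_j^*$ is $\GL_d$-balanced, which follows because $\sum_j v_j\otimes f_j^*$ is $\GL_d$-invariant.
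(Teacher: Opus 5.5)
Your proposal is correct and follows essentially the same route as the paper: a direct verification of the triangle identities on the representing bimodules. For $(\eta^\Delta_L,\varepsilon^\Delta_L)$, the Levi projection kills every term of the coset sum except the trivial coset, and the dual-basis tensor contracts to the identity; for $(\eta^\Delta_R,\varepsilon^\Delta_R)$, the contractions $\Phi_\Delta(m\otimes f_1^*)v_1=m$ and $f_1^*\Phi_\Delta(v_1\otimes n^*)=n^*$ do the work, and the scalars $-t^\Delta z^\Delta$ and $-(t^\Delta z^\Delta)^{-1}$ cancel. (One harmless slip: $(\eta^\Delta_R,\varepsilon^\Delta_R)$ exhibits $F^\Delta$ as the left adjoint of $E^\Delta$, and $(\eta^\Delta_L,\varepsilon^\Delta_L)$ exhibits $E^\Delta$ as the left adjoint of $F^\Delta$, so your ordered pairs are reversed, but the zigzag composites you actually compute are the correct ones.)
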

\begin{proof}
Harish--Chandra induction and restriction are biadjoint, and tensoring with the
finite projective module \(\M_\Delta\) is biadjoint to contraction with
\(\M_\Delta^*\).  Thus the abstract functors \(F^\Delta\) and \(E^\Delta\)
are biadjoint.  It remains to check that the four concrete maps defined above
are the corresponding units and counits with the stated normalization.

On the representing bimodules the four triangle identities are
\begin{align*}
(1_{F^\Delta}\varepsilon_L^\Delta)
(\eta_L^\Delta1_{F^\Delta})&=1_{F^\Delta},&
(\varepsilon_L^\Delta1_{E^\Delta})
(1_{E^\Delta}\eta_L^\Delta)&=1_{E^\Delta},\\
(\varepsilon_R^\Delta1_{F^\Delta})
(1_{F^\Delta}\eta_R^\Delta)&=1_{F^\Delta},&
(1_{E^\Delta}\varepsilon_R^\Delta)
(\eta_R^\Delta1_{E^\Delta})&=1_{E^\Delta}.
\end{align*}
For the left pair, insert the coset sum \(Z_r^\Delta\) appearing in
\(\eta_L^\Delta\).  In the subsequent application of
\(\varepsilon_L^\Delta\), every term whose middle group element lies outside
\(P_{r-d_\Delta,d_\Delta}\) is killed by the Levi projection.  On the unique
surviving coset, the dual-basis tensor
\(\sum_jv_j\otimes f_j^*\) contracts to the identity of \(\M_\Delta\).
With the canonical coset-sum normalization used in \(Z_r^\Delta\), no extra
scalar remains, so the first two composites are identities.

For the right pair, the class inserted by the unit is
\(f_1^*\otimes v_1\), with \(f_1^*(v_1)=1\).  The two coefficient
contractions are
\[
 \Phi_\Delta(m\otimes f_1^*)v_1=m,
 \qquad
 f_1^*\Phi_\Delta(v_1\otimes n^*)=n^*.
\]
These identities hold over all three coefficient rings.  The scalar in \(\eta_R^\Delta\) is
\(-(t^\Delta z^\Delta)^{-1}\), while the scalar in \(\varepsilon_R^\Delta\)
is \(-t^\Delta z^\Delta\); these two factors cancel in each triangle
composite, giving the last two identities.
Thus the four displayed maps make \((E^\Delta,F^\Delta)\) a biadjoint pair
with the stated normalization.
\end{proof}
\smallskip

\subsubsection{Rightward crossings}
Define the rightward crossings by
\begin{align*}
\Rcross^{\Gamma,\Gamma}:=\mathord{
\begin{tikzpicture}[baseline = -.5mm]
	\draw[thin,red,->] (-0.28,-.3) to (0.28,.4);
	\draw[line width=4pt,white,-] (0.28,-.3) to (-0.28,.4);
	\draw[<-,thin,red] (0.28,-.3) to (-0.28,.4);
\node at (0.28,-.5) {$\scriptstyle{\red{\Gamma}}$};
  \node at (-0.28,-.5)
{$\scriptstyle{\red{\Gamma}}$};
\end{tikzpicture}
}:=
\mathord{
\begin{tikzpicture}[baseline = 0]
	\draw[->,thin,red] (0.3,-.5) to (-0.3,.5);
	\draw[line width=4pt,-,white] (-0.2,-.2) to (0.2,.3);
	\draw[-,thin,red] (-0.2,-.2) to (0.2,.3);
        \draw[-,thin,red] (0.2,.3) to[out=50,in=180] (0.5,.5);
        \draw[->,thin,red] (0.5,.5) to[out=0,in=90] (0.8,-.5);
        \draw[-,thin,red] (-0.2,-.2) to[out=230,in=0] (-0.6,-.5);
        \draw[-,thin,red] (-0.6,-.5) to[out=180,in=-90] (-0.85,.5);
         \node at (0.3,-.7) {$\scriptstyle{\red{\Gamma}}$};
  \node at (0.8,-.7) {$\scriptstyle{\red{\Gamma}}$};
  \end{tikzpicture}}.
\end{align*}
By the definition, $\Rcross^{\Gamma,\Gamma}=(1_{E^\Gamma}\otimes1_{F^\Gamma}\otimes\varepsilon^\Gamma_R)\circ(1_{E^\Gamma}\otimes \mathrm{T}^{\Gamma,\Gamma}\otimes 1_{E^\Gamma})\circ(\eta^\Gamma_R\otimes 1_{F^\Gamma}\otimes 1_{E^\Gamma}).$
If $\Gamma\neq \Gamma',$
we also define the rightward mixed crossings:
\begin{align*}
\Rcross^{\Gamma,\Gamma'}:=\mathord{
\begin{tikzpicture}[baseline = -.5mm]
	\draw[thin,blue,->] (-0.28,-.3) to (0.28,.4);
	\draw[<-,thin,red] (0.28,-.3) to (-0.28,.4);
\node at (0.28,-.5) {$\scriptstyle{\red{\Gamma}}$};
  \node at (-0.28,-.5) {$\scriptstyle{\blue{\Gamma'}}$};
\end{tikzpicture}
}:=
\mathord{
\begin{tikzpicture}[baseline = 0]
	\draw[->,thin,blue] (0.3,-.5) to (-0.3,.5);
	\draw[-,thin,red] (-0.2,-.2) to (0.2,.3);
        \draw[-,thin,red] (0.2,.3) to[out=50,in=180] (0.5,.5);
        \draw[->,thin,red] (0.5,.5) to[out=0,in=90] (0.8,-.5);
        \draw[-,thin,red] (-0.2,-.2) to[out=230,in=0] (-0.6,-.5);
        \draw[-,thin,red] (-0.6,-.5) to[out=180,in=-90] (-0.85,.5);
\node at (0.3,-.7) {$\scriptstyle{\blue{\Gamma'}}$};
  \node at (0.8,-.7) {$\scriptstyle{\red{\Gamma}}$};
\end{tikzpicture}
}.
\end{align*}
By the definition, $\Rcross^{\Gamma,\Gamma'}=(1_{E^\Gamma}\otimes1_{F^{\Gamma'}}\otimes \varepsilon^\Gamma_R)\circ(1_{E^\Gamma}\otimes \mathrm{T}^{\Gamma,\Gamma'}\otimes 1_{E^\Gamma})\circ(\eta^\Gamma_R\otimes 1_{F^{\Gamma'}}\otimes 1_{E^\Gamma})$.

\begin{lemma}\label{lcross}
Let \(\Gamma,\Gamma'\in\calI(\k)\), and let \(\Delta,\Delta'\) be irreducible representatives of \(\Gamma,\Gamma'\), respectively.  Assume
\(n=r+d_\Delta=r'+d_{\Delta'}\), and put
\(a_0=r-d_{\Delta'}=r'-d_\Delta\).  If \(a_0<0\), the source functor
and the rightward crossing are zero.  If \(a_0\ge0\), the color-level rightward crossing
\[
\Rcross^{\Gamma,\Gamma'}:
F^{\Gamma'}_{r,r-d_{\Delta'}}E^\Gamma_{r'-d_\Delta,r'}
\Longrightarrow
E^\Gamma_{r,n}F^{\Gamma'}_{n,r'}
\]
is represented by the \((\k G_r,\k G_{r'})\)-bimodule map
\begin{align*}
&\k G_r e_{V_{r-d_{\Delta'},d_{\Delta'}}}
 \ot_{\GL_{d_{\Delta'}}}\M_{\Delta'}
 \ot_{\k G_{r'-d_\Delta}}
 \M_\Delta^*\ot_{\GL_{d_\Delta}}
 e_{V_{r'-d_\Delta,d_\Delta}}\k G_{r'}\\
&\hspace{18mm}\longrightarrow
\M_\Delta^*\ot_{\GL_{d_\Delta}}e_{V_{r,d_\Delta}}
 \k G_n e_{V_{r',d_{\Delta'}}}
 \ot_{\GL_{d_{\Delta'}}}\M_{\Delta'},
\end{align*}
which on elementary tensors is
\begin{align*}
&a e_{V_{r-d_{\Delta'},d_{\Delta'}}}\otimes m\otimes n^*
 \otimes e_{V_{r'-d_\Delta,d_\Delta}}b\\
&\qquad\longmapsto
n^*\otimes e_{V_{r,d_\Delta}}
 a\widehat{\mathrm{T}}^{\Delta,\Delta'}_{a_0,d_\Delta,d_{\Delta'}}
 b e_{V_{r',d_{\Delta'}}}\otimes m,
\end{align*}
where \(a\in G_r\), \(b\in G_{r'}\),
\(m\in\M_{\Delta'}\) and \(n^*\in\M_\Delta^*\).
\end{lemma}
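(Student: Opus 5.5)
We compute the composite defining \(\Rcross^{\Gamma,\Gamma'}\) directly on representing bimodules. By definition,
\[
\Rcross^{\Gamma,\Gamma'}=(1_{E^\Gamma}1_{F^{\Gamma'}}\varepsilon^\Gamma_R)\circ(1_{E^\Gamma}\mathrm{T}^{\Gamma,\Gamma'}1_{E^\Gamma})\circ(\eta^\Gamma_R1_{F^{\Gamma'}}1_{E^\Gamma}),
\]
and each factor is given by an explicit bimodule map: \(\eta^\Delta_R\) in \S\ref{sec:adj}(d), \(\mathrm{T}^{\Delta,\Delta'}\) by right multiplication by \(\widehat{\mathrm T}^{\Delta,\Delta'}\) followed by the coefficient flip, and \(\varepsilon^\Delta_R\) in \S\ref{sec:adj}(c). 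So the plan is to take an elementary tensor
\[
a e_{V_{r-d',d'}}\otimes m\otimes n^*\otimes e_{V_{r'-d,d}}b
\]
and push it through the three maps, with \(d=d_\Delta\) and \(d'=d_{\Delta'}\). The vanishing when \(a_0<0\) is immediate, since the source functor \(F^{\Gamma'}_{r,a_0}E^\Gamma_{a_0,r'}\) passes through \(G_{a_0}\). It also matches Lemma~\ref{lem:finite-bimodule-decomposition}(3) on the target side.

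\textbf{Step 1: insert the unit.} Apply \(\eta_R\) on the left, which inserts
\[
-(t^\Delta z^\Delta)^{-1}f_1^*\otimes e_{V_{r,d}}\,\cdot\, e_{V_{r,d}}\otimes v_1.
\]
The result lies in \(E^\Delta F^\Delta F^{\Delta'}E^\Delta\), whose \(F^\Delta F^{\Delta'}\) part is represented by \(\k G_n e_{V_{a_0,d',d}}\otimes(\M_{\Delta'}\boxtimes\M_\Delta)\). Here one uses transitivity: \(e_{V_{r,d}}\,a\,e_{V_{a_0,d'}}=a\,e_{V_{a_0,d',d}}\) for \(a\in G_r\), because \(G_r\) normalizes \(V_{r,d}\).

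\textbf{Step 2: apply the crossing.} Apply \(\mathrm T^{\Delta,\Delta'}\), which right-multiplies by \(\widehat{\mathrm T}^{\Delta,\Delta'}_{a_0,d,d'}\) and swaps the coefficients to \(v_1\boxtimes m\).

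\textbf{Step 3: contract with the counit.} Apply \(\varepsilon_R\) to the rightmost \(F^\Delta E^\Delta\). This contracts \(v_1\) against \(n^*\) through \(\Phi_\Delta\) and contributes the scalar \(-t^\Delta z^\Delta\), which cancels the scalar from \(\eta_R\). The factor \(\Phi_\Delta(v_1\otimes n^*)\) is then moved by balancing across \(e_{V_{a_0,d}}\) into the inducing \(\GL_d\)-block. Combined with the leading \(f_1^*\), the identity \(f_1^*\Phi_\Delta(v_1\otimes n^*)=n^*\) (used in the proof of Lemma~\ref{lem:adjointpair}) replaces \(f_1^*\) by \(n^*\). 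The leftover \(b\) joins the group element, and \(e_{V_{r',d'}}\) appears on the right by the analogous absorption.

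\textbf{Main obstacle.} The delicate point is the bookkeeping of unipotent idempotents and the ordering of blocks. One must check two things:
\begin{itemize}
\item \(e_{V_{r,d}}\,a\,\widehat{\mathrm T}^{\Delta,\Delta'}_{a_0,d,d'}\,b\,e_{V_{r',d'}}\) really equals the composite product after the extra averages \(e_{V_{a_0,d}}\) and \(e_{V_{a_0,d'}}\) are absorbed. This uses that the block swap \(\dot s_{a_0,d,d'}\) conjugates the relevant unipotent radicals into one another, that \(a\in G_r\) and \(b\in G_{r'}\) normalize them, and that \(\widehat{\mathrm T}\) already carries the outer idempotents.
\item The right-multiplier convention \(\dot s_{a_0,d,d'}\) versus its inverse matches the source ordering \((\Delta',\Delta)\) discussed after the definition of \(\mathrm T\).
\end{itemize}
For non-\(\GL\) towers the same argument applies verbatim, since \(\dot s\) acts symmetrically on both halves of the Witt basis. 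In each case I would first verify the idempotent identities for \(G=\GL\) by matrix computation and then transport them.
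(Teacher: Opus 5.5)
Your proposal is correct and follows essentially the same route as the paper. Both arguments expand the defining composite, let the right unit insert $f_1^*\otimes v_1$ with scalar $-(t^\Delta z^\Delta)^{-1}$, apply the block intertwiner $\widehat{\mathrm T}^{\Delta,\Delta'}_{a_0,d,d'}$, and contract through $\Phi_\Delta$ in the right counit so that the scalars cancel. They then transport the matrix coefficient through the swap and use $f_1^*\Phi_\Delta(v_1\otimes n^*)=n^*$.

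One small correction to your ``main obstacle''. The only thing the swap $\dot s_{a_0,d,d'}$ has to do is conjugate the $\GL_d$ Levi factor carrying $\Phi_\Delta(v_1\otimes n^*)$ from the first added block to the last one. It does not conjugate $V_{a_0,d,d'}$ onto $V_{a_0,d',d}$, since the $d\times d'$ block becomes lower triangular, and you do not need that. The idempotent bookkeeping only uses three facts: $\widehat{\mathrm T}$ already carries both outer idempotents, Levi factors normalize the radicals, and $V_{r,d}\subset V_{a_0,d',d}$ and $V_{a_0,d}\subset V_{a_0,d,d'}$.
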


\begin{proof}
Expand the defining composite
\[
(1_{E^\Gamma}1_{F^{\Gamma'}}\varepsilon_R^\Gamma)
(1_{E^\Gamma}\mathrm{T}^{\Gamma,\Gamma'}1_{E^\Gamma})
(\eta_R^\Gamma1_{F^{\Gamma'}}1_{E^\Gamma}).
\]
The unit inserts the class \((f_1^\Delta)^*\otimes v_1^\Delta\) in the
one-dimensional balanced tensor product.  The middle map is the block
intertwiner \(\widehat{\mathrm T}^{\Delta,\Delta'}\).  The final counit applies the matrix coefficient
\(\Phi_\Delta(v_1^\Delta\otimes n^*)\).
Transporting this matrix coefficient through the swap gives
\[
 (f_1^\Delta)^*\Phi_\Delta(v_1^\Delta\otimes n^*)=n^*,
\]
because its value at \(w\) is
\((f_1^\Delta)^*(v_1^\Delta)n^*(w)=n^*(w)\).
The factors \(-(t^\Delta z^\Delta)^{-1}\) and \(-t^\Delta z^\Delta\)
therefore cancel, giving the displayed formula.
Conjugation by the block transposition moves the \(\GL_{d_{\Delta'}}\)-action from the source vector to the target vector, and the \(\GL_{d_\Delta}\)-action from the source covector to the target covector.
The common \(G_{a_0}\)-factor commutes with the swap.  These observations
verify all three balancing relations and the \(G_r\)--\(G_{r'}\)
bimodule structure; the outer projectors are
\(e_{V_{r,d_\Delta}}\) and \(e_{V_{r',d_{\Delta'}}}\), as required.
\end{proof}

\subsubsection{Leftward crossings}
Define the leftward crossings by
\begin{align*}
\Lcross^{\Gamma,\Gamma}:=\mathord{
\begin{tikzpicture}[baseline = -.5mm]
	\draw[<-,red] (-0.28,-.3) to (0.28,.4);
	\draw[line width=4pt,white,-] (0.28,-.3) to (-0.28,.4);
	\draw[->,red] (0.28,-.3) to (-0.28,.4);
\node at (-0.28,-.5) {$\scriptstyle{\red{\Gamma}}$};
  \node at (0.28,-.5) {$\scriptstyle{\red{\Gamma}}$};
\end{tikzpicture}
}:=
\mathord{
\begin{tikzpicture}[baseline = 0]
	\draw[-,red] (-0.2,.2) to (0.2,-.3);
	\draw[-,line width=4pt,white] (0.3,.5) to (-0.3,-.5);
	\draw[<-,red] (0.3,.5) to (-0.3,-.5);
        \draw[-,red] (0.2,-.3) to[out=130,in=180] (0.5,-.5);
        \draw[-,red] (0.5,-.5) to[out=0,in=270] (0.8,.5);
        \draw[-,red] (-0.2,.2) to[out=130,in=0] (-0.5,.5);
        \draw[->,red] (-0.5,.5) to[out=180,in=-270] (-0.8,-.5);
\node at (-0.3,-.7) {$\scriptstyle{\red{\Gamma}}$};
  \node at (-0.8,-.7) {$\scriptstyle{\red{\Gamma}}$};
\end{tikzpicture}
}.
\end{align*}
By the definition, $\Lcross^{\Gamma,\Gamma}=(\varepsilon^\Gamma_L\otimes 1_{F^\Gamma}\otimes 1_{E^\Gamma} )\circ(1_{E^\Gamma}\otimes \mathrm{T}^{\Gamma,\Gamma} \otimes 1_{E^\Gamma})\circ(1_{E^\Gamma}\otimes 1_{F^\Gamma}\otimes\eta^\Gamma_L).$
We also define the leftward mixed crossings:
$$
\Lcross^{\Gamma,\Gamma'}:=\mathord{
\begin{tikzpicture}[baseline = 0]
	\draw[->,red] (0.28,-.3) to (-0.28,.4);
	\draw[<-,blue] (-0.28,-.3) to (0.28,.4);
\node at (-0.28,-.5) {$\scriptstyle{\blue{\Gamma'}}$};
  \node at (0.28,-.5) {$\scriptstyle{\red{\Gamma}}$};
\end{tikzpicture}
}
:=
\mathord{
\begin{tikzpicture}[baseline = 0]
	\draw[<-, red] (0.3,.5) to (-0.3,-.5);
	\draw[-,blue] (-0.2,.2) to (0.2,-.3);
        \draw[-,blue] (0.2,-.3) to[out=130,in=180] (0.5,-.5);
        \draw[-,blue] (0.5,-.5) to[out=0,in=270] (0.9,.5);
        \draw[-,blue] (-0.2,.2) to[out=130,in=0] (-0.6,.5);
        \draw[->,blue] (-0.6,.5) to[out=180,in=-270] (-0.9,-.5);
\node at (-0.3,-.7) {$\scriptstyle{\red{\Gamma}}$};
  \node at (-0.9,-.7) {$\scriptstyle{\blue{\Gamma'}}$};
\end{tikzpicture}
}.$$
By the definition, $\MixLcross^{\Gamma,\Gamma'}=(\varepsilon^{\Gamma'}_L\otimes 1_{F^\Gamma}\otimes 1_{E^{\Gamma'}} )\circ(1_{E^{\Gamma'}}\otimes \mathrm{T}^{\Gamma,\Gamma'} \otimes 1_{E^{\Gamma'}})\circ(1_{E^{\Gamma'}}\otimes 1_{F^\Gamma}\otimes\eta^{\Gamma'}_L)$.

\begin{lemma}\label{Lem:Rcrossing}
Let \(\Gamma\in\calI(\k)\) and let \(\Delta\) be an irreducible representative of \(\Gamma\).  In the mixed
case let \(\Gamma'\in\calI(\k)\) and let \(\Delta'\) be an irreducible representative of \(\Gamma'\).
\begin{enumerate}
\item[(a)] If \(\Gamma\in\calI_{-1}(\k)\), then
\(\Lcross^{\Gamma,\Gamma}:E^\Gamma F^\Gamma\Rightarrow F^\Gamma E^\Gamma\)
is represented in rank \(r\) by
\[
\M_\Delta^*\ot_{\GL_{d_\Delta}}e_{V_{r,d_\Delta}}
\k G_{r+d_\Delta}e_{V_{r,d_\Delta}}
\ot_{\GL_{d_\Delta}}\M_\Delta
\longrightarrow \mathscr C_\Delta(r),
\]
with
\[
n^*\otimes e_{V_{r,d_\Delta}}\otimes m\longmapsto0,
\qquad
n^*\otimes\widehat{\mathrm{T}}^{\Delta,\Delta}_{r-d_\Delta,d_\Delta,d_\Delta}
\otimes m
\longmapsto
\sum_j n^*(v_j^\Delta)e_{V_{r-d_\Delta,d_\Delta}}
\otimes m\otimes(f_j^\Delta)^*\otimes
 e_{V_{r-d_\Delta,d_\Delta}}.
\]
\item[(b)] If \(\Gamma\in\calI_{-2}(\k)\), the same formula holds and, in
addition,
\[
n^*\otimes\widehat{\mathrm{X}}^\Delta_{r,d_\Delta}\otimes m\longmapsto0.
\]
\item[(c)] If \(\Gamma\ne\Gamma'\) and
\(n=r+d_\Delta=r'+d_{\Delta'}\), then the color-level mixed leftward crossing
\(\Lcross^{\Gamma,\Gamma'}\) is represented by the map
\begin{align*}
&\M_{\Delta'}^*\ot_{\GL_{d_{\Delta'}}}e_{V_{r',d_{\Delta'}}}
 \k G_n e_{V_{r,d_\Delta}}\ot_{\GL_{d_\Delta}}\M_\Delta
 \longrightarrow \mathscr C_{\Delta,\Delta'}(r',r),\\
&n^*\otimes
 \widehat{\mathrm{T}}^{\Delta',\Delta}_{r-d_{\Delta'},d_{\Delta'},d_\Delta}
 \otimes m\\
&\qquad\longmapsto
\sum_{j=1}^{\dim\M_{\Delta'}}
 n^*(v_j^{\Delta'})e_{V_{r'-d_\Delta,d_\Delta}}
 \otimes m\otimes(f_j^{\Delta'})^*
 \otimes e_{V_{r-d_{\Delta'},d_{\Delta'}}}.
\end{align*}
Here the contraction basis in the mixed case belongs to \(\M_{\Delta'}\),
which is forced by \(n^*\in\M_{\Delta'}^*\).  The open-generator formulas
are omitted when the smaller tower index is negative.
\end{enumerate}
\end{lemma}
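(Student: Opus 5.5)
The plan is to expand the defining composite
\[
\Lcross^{\Gamma,\Gamma'}=(\varepsilon^{\Gamma'}_L\otimes 1\otimes 1)\circ(1\otimes \mathrm{T}^{\Gamma,\Gamma'}\otimes 1)\circ(1\otimes 1\otimes\eta^{\Gamma'}_L)
\]
directly on the representing bimodules and evaluate it on the generators supplied by Lemma~\ref{lem:finite-bimodule-decomposition}. The same-color statements (a) and (b) are the special case $\Delta'=\Delta$. By that lemma, the source bimodule $\mathscr B_{\Delta',\Delta}(r',r)$ decomposes over double cosets, so each summand is generated by one explicit element: $\xi_1$, $\xi_X$ (only for self-dual $\Gamma\in\calI_{-2}(\k)$), and $\xi_T$. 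A bimodule map is therefore determined by its values on these generators. The argument then reduces to three computations: the composite kills $\xi_1$, it kills $\xi_X$, and it sends $\xi_T$ to the stated element of $\mathscr C_{\Delta,\Delta'}(r',r)$.

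First I will compute the unit $\eta_L$. Evaluated at the relevant rank, it inserts the coset sum $Z^{\Delta'}$, which is $\sum_i g_i^{-1}e\otimes\sum_j v_j\otimes f_j^*\otimes e\,g_i$ over left $P_{a_0,d_{\Delta'}}$-cosets. Next I apply $\mathrm{T}^{\Delta,\Delta'}$, i.e.\ right multiplication by $\widehat{\mathrm T}^{\Delta,\Delta'}$, on the middle strand. Finally I apply $\varepsilon_L$ to the outer factors. The counit is the Levi projection onto $P$-double-coset content. Only terms whose middle group element lies in the parabolic survive, and for them the coefficients contract via $n^*(c\,v_j)$.

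On a generator $n^*\otimes e\,w\,e\otimes m$, the middle product is essentially $w\,g_i^{-1}\,\dot s$ for the block transposition $\dot s$. I will track which double coset this product lies in, using the parametrizations of Theorems~\ref{Lem:james} and \ref{Prop:TypeBC}.
\begin{itemize}
\item For $w=1$ ($\xi_1$), every term contains a single factor of $\dot s$, so the product never lands in $P$. The Levi projection therefore kills all terms, giving $0$.
\item For $w=\dot x$ ($\xi_X$), the same argument applies with the opposition element present: $\dot x\,g_i^{-1}\dot s$ is never in the parabolic, so again the result is $0$.
\item For $w=\widehat{\mathrm T}^{\Delta',\Delta}$, the two transpositions cancel on exactly one coset (the identity coset, $g_i=1$). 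On that coset the Levi projection is the identity, the normalization $q^{dd'/2}$ factors cancel against the inverse normalization, and the sum over $j$ of $n^*(v_j)$ remains. This produces exactly $\sum_j n^*(v_j^{\Delta'})\,e\otimes m\otimes (f_j^{\Delta'})^*\otimes e$.
\end{itemize}
In (c) the contraction basis belongs to $\M_{\Delta'}$, since the unit and counit used are those of color $\Gamma'$.

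The main obstacle is this double-coset bookkeeping: showing that on $\xi_1$ and $\xi_X$ no term survives the Levi projection, and that on $\xi_T$ exactly one term survives with scalar exactly $1$. This requires checking that $\dot s\,\dot s^{-1}$ cancels the $q^{\pm dd'/2}$ normalizations together with the unipotent averages, which I would verify using $e_Ue_{U}=e_U$ and $|U|$-counting as in Lemma~\ref{Lem:YYX}. For the self-dual case (b), I expect the vanishing on $\xi_X$ to come from the fact that $\dot x\dot s$ lies in the double coset $w_{a,t}$ with $t>m$, which the projection onto $P$ annihilates. I would first check this in the rank-one case $r=d$ and then reduce the general case to it by transitivity of Harish--Chandra induction through the Levi $G_{a_0}\times\GL_{d+d'}$, as in Lemma~\ref{lem:HLintertwiners}.
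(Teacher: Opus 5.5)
Your proposal is correct and is essentially the paper's argument: you expand the left unit, crossing and left counit on the generators $\xi_1,\xi_X,\xi_T$ of Lemma~\ref{lem:finite-bimodule-decomposition}; the Levi projection in $\varepsilon_L$ then kills every term coming from $\xi_1$ and $\xi_X$, because a product $p\,\dot s\,p'$ or $\dot x\,p\,\dot s$ with $p,p'\in P_{r,d_\Delta}$ never lies in $P_{r,d_\Delta}$ (since $\dot s\notin P_{r,d_\Delta}$ and $P_{r,d_\Delta}\dot x P_{r,d_\Delta}\neq P_{r,d_\Delta}\dot s^{-1}P_{r,d_\Delta}$), while on $\xi_T$ only the identity coset of $Z_r$ survives and leaves the dual-basis contraction. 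Two of your side remarks are inaccurate but do not affect the argument: $\dot s^{-1}\dot x\dot s$ lies in $G_r\subset P_{r,d_\Delta}$, so $\dot x\dot s$ is in the block-exchange double coset ($t=m$), not a $t>m$ one; and on $\xi_T$ the two factors $q^{d_\Delta d_{\Delta'}/2}$ cancel against the factor $q^{-d_\Delta d_{\Delta'}}$ coming from the middle unipotent average, not against an inverse normalization.
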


\begin{proof}
Substitute the definitions of the color-level \(\Lcross\), \(\mathrm{T}\), and the
left unit and counit, using irreducible representatives \(\Delta\) and \(\Delta'\) in the explicit bimodules.  The counit contains the Levi projection
\(\Proj_{L_{r,d_\Delta}}\) in (a) and (b), and
\(\Proj_{L_{r',d_{\Delta'}}}\) in (c).  On the closed double-coset generator
\(e_{V_{r,d_\Delta}}\), this projection vanishes after the crossing, giving
the first zero in (a).  In level \(-2\), the extra generator
\(\widehat{\mathrm{X}}^\Delta_{r,d_\Delta}\) is also killed; this is the zero projection
computed explicitly in Lemma~\ref{lrdots}.  On the open Bruhat generator
\(\widehat{\mathrm{T}}\), the Levi projection survives and the coevaluation--evaluation
pair contracts to
\(\sum_j v_j^\Delta\otimes(f_j^\Delta)^*\), which is the identity of
\(\M_\Delta\).  This gives (a) and (b).

For distinct colors, Lemma~\ref{lem:finite-bimodule-decomposition}(3) shows that no \(\unit\)-summand occurs.  The covector belongs to
\(\M_{\Delta'}^*\), so the contraction must use a basis of
\(\M_{\Delta'}\); carrying this dual-basis tensor through the unique open
double coset gives exactly the formula in (c).  Thus all three maps are the
claimed mates of the upward crossings.
\end{proof}
\subsubsection{Mackey formula relations}

If $\Gamma\in \calI_{-2}(\k)$, let $\Delta$ be an irreducible representative of $\Gamma$.  We define
$$
\RX^\Gamma:=\mathord{
\begin{tikzpicture}[baseline = -0.9mm]
	\draw[<-,thin,red] (0.4,0.2) to[out=-90, in=0] (0.1,-.2);
	\draw[-,thin,red] (0.1,-.2) to[out = 180, in = -90] (-0.2,0.2);
      \node at (0.38,0) {$\dott$};
      \node at (-0.2, 0.35) {$\scriptstyle{\red{\Gamma}}$};
  \end{tikzpicture}
}$$
By definition, $\RX^\Gamma=(1_{E^\Gamma}\otimes \mathrm{X}^\Gamma )\circ\eta^\Gamma_R$.
For the same $\Gamma$, we also define
$$
\LX^\Gamma:=\mathord{
\begin{tikzpicture}[baseline = 1mm]
	\draw[-,thin,red] (0.4,0) to[out=90, in=0] (0.1,0.4);
	\draw[->,thin,red] (0.1,0.4) to[out = 180, in = 90] (-0.2,0);
      \node at (0.35,0.2) {$\dott$};
\node at (0.4, -0.15) {$\scriptstyle{\red{\Gamma}}$};
\end{tikzpicture}
}.
$$
By definition, $\LX^\Gamma=\varepsilon^\Gamma_L\circ(1_{E^\Gamma}\otimes \mathrm{X}^\Gamma)$.

\begin{lemma}\label{lrdots}
\begin{itemize}
Assume that $\Gamma\in \calI_{-2}(\k)$ and let $\Delta$ be an irreducible representative of $\Gamma$.
\item[$(a)$]
 The natural transformation $\LX^\Gamma$ is given by the $( \k G_r , \k G_r )$-bimodule map
\begin{align*} \M^*_\Delta\otimes_{\GL_{d_\Delta}}e_{V_{r,d_\Delta}} \cdot  \k G_{r+d_\Delta}  \cdot e_{V_{r,d_\Delta}}\otimes_{\GL_{d_\Delta}} \M_\Delta  &\to  \k G_r ,\\
  n^*\otimes e_{V_{r,d_\Delta}}
\otimes m&\mapsto 0,\\
 n^*\otimes \widehat{\mathrm{X}}^\Delta_{r,d_\Delta}\otimes m &\mapsto\,-(t^\Gamma)^{2}\omega_\Delta(\kappa_\Delta)\cdot n^*(A_\Delta m),\\
n^*\otimes \widehat{\mathrm{T}}^{\Delta,\Delta}_{r-d_\Delta,d_\Delta,d_\Delta}\otimes m&\mapsto 0,\end{align*}
for $m\in\M_\Delta$ and $n^*\in \M_\Delta^*$; the third formula is used only when $r\ge d_\Delta$.
\item[$(b)$]
The natural transformation $\RX^\Gamma$ is  given by the $( \k G_r , \k G_r )$-bimodule map
\begin{align*} \k G_r  &\to \M^*_\Delta\ot_{\GL_{d_\Delta}}e_{V_{r,d_\Delta}} \cdot  \k G_{r+d_\Delta}  \cdot e_{V_{r,d_\Delta}}\ot_{\GL_{d_\Delta}} \M_\Delta ,\\
1&\mapsto  -({t^\Gamma z^\Gamma})^{-1} f^*_1 \otimes \widehat{\mathrm{X}}^\Delta_{r,d_\Delta}\otimes\tilde{\rho}_\Delta(\dot{x}_{r,d_\Delta})\cdot v_1 .\end{align*}
\end{itemize}
\end{lemma}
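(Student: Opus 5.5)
The plan is to compute both composites directly on the representing bimodules, using the generators of Lemma~\ref{lem:finite-bimodule-decomposition}(2). For (a), $\LX^\Gamma=\varepsilon_L^\Gamma\circ(1_{E^\Gamma}\otimes\mathrm{X}^\Gamma)$. We first apply $\mathrm{X}^\Delta$ to the $F$-factor of each generator $\xi_1,\xi_X,\xi_T$, and then apply the Levi projection $\Proj_{L_{r,d_\Delta}}$ contained in $\varepsilon_L^\Delta$.

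For $\xi_1$, the dot produces
\[
n^*\otimes\widehat{\mathrm{X}}^\Delta_{r,d}\otimes A_\Delta m .
\]
Its middle element is $e_{V}\dot x_{r,d}e_{V}$ times a scalar. Since $\dot x_{r,d}\notin P_{r,d}$ and $e_V\dot x e_V$ is supported on the double coset $P\dot xP$, which is disjoint from $P$, the projection kills it. This gives the value $0$.

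For $\xi_T$, the product $\widehat{\mathrm T}\cdot\widehat{\mathrm X}$ is supported in double cosets other than $P$. Using Theorem~\ref{Prop:TypeBC}, such products lie in the span of $P\,w\,P$ with $w\neq1$. A cuspidality argument as in Step~2 of Lemma~\ref{lem:finite-bimodule-decomposition} shows the only survivors are the open cell and the opposition cell, and neither meets $P$. So the projection is again $0$.

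For $\xi_X$, the dot gives $\widehat{\mathrm X}^2$ together with coefficient $A_\Delta^2 m=\omega_\Delta(\kappa_\Delta)m$. The key computation is the identity-coset component of
\[
\beta_\Delta^2q^{2rd}\,e_V\dot x e_V\dot x e_V .
\]
Write $e_V\dot x e_V\dot x e_V=e_V\,(\dot x e_V\dot x^{-1})\,\dot x^2 e_V=e_Ve_{V^-}\dot x^2e_V$ with $\dot x^2=(1,\kappa_d)$. The $P$-component of $e_Ve_{V^-}e_V$ is $|V|^{-1}e_V$, because only the identity of $V^-$ lies in $P$. Hence the projection yields
\[
\beta_\Delta^2q^{2rd}|V|^{-1}\cdot(1,\kappa_d).
\]
On $\M_\Delta$, the element $\kappa_d$ acts by $\omega_\Delta(\kappa_\Delta)$. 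By \eqref{eq:beta-normalization} the scalar $\beta_\Delta^2q^{2rd}|V|^{-1}$ equals $-(t^\Gamma)^2$. Combining this with the remaining coefficient $n^*(A_\Delta\cdot)$ from the tensor factor gives the stated value $-(t^\Gamma)^2\omega_\Delta(\kappa_\Delta)\,n^*(A_\Delta m)$.

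The main obstacle is bookkeeping of which factor carries $A_\Delta$ versus $A_\Delta^2$. The generator $\xi_X$ already carries $\widetilde\rho(\dot x)$, so applying the dot yields $A^2$, and one $\omega$ factor is absorbed. I will track this carefully against the balancing $\dot x h=\theta(h)\dot x$.

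For (b), $\RX^\Gamma=(1_{E^\Gamma}\otimes\mathrm X^\Gamma)\circ\eta_R^\Gamma$. By definition (d), $\eta_R^\Gamma(1)=-(t^\Gamma z^\Gamma)^{-1}f_1^*\otimes e_V\otimes v_1$. Applying $\mathrm X^\Delta$ replaces $e_V\otimes v_1$ by $\widehat{\mathrm X}^\Delta_{r,d}\otimes A_\Delta v_1$. Since $A_\Delta=\widetilde\rho_\Delta(\dot x_{r,d})$, this is exactly the stated formula, so (b) is immediate.
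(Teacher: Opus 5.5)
Your proposal follows the paper's proof step for step. You apply $1_{E^\Gamma}\otimes\mathrm X^\Gamma$ to the three generators and then the Levi projection in $\varepsilon_L^\Gamma$. Your identity $e_V\dot xe_V\dot xe_V=e_Ve_{V^-}e_V\,\dot x^2$, together with the fact that only $1\in V^-$ lies in $P$, is the paper's observation that only $v=1$ survives; it gives $\Proj_{L_{r,d}}(\widehat{\mathrm X}^2)=\beta_\Delta^2q^{2rd}|V|^{-1}(1,\kappa_d)$. Part (b) is identical to the paper's.

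For the crossing generator, the cuspidality detour is unnecessary. The product $\widehat{\mathrm T}\,\widehat{\mathrm X}$ is supported on $P\dot sP\dot xP$. This set meets $P$ only if $\dot x\in P\dot s^{-1}P$, which Theorem~\ref{Prop:TypeBC} excludes.

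The one thing to repair is the coefficient bookkeeping you flagged: as written, your $\xi_X$ paragraph does not produce the stated value.

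\begin{itemize}
\item The formula in (a) concerns the bare tensor $n^*\otimes\widehat{\mathrm X}\otimes m$. There the dot contributes a single $A_\Delta$, giving $n^*\otimes\widehat{\mathrm X}^2\otimes A_\Delta m$. The only factor $\omega_\Delta(\kappa_\Delta)$ is the one produced by $\kappa_d$ in the Levi projection. So the counit yields $-(t^\Gamma)^2n^*(\kappa_dA_\Delta m)=-(t^\Gamma)^2\omega_\Delta(\kappa_\Delta)n^*(A_\Delta m)$, as stated.
\item Your step ``the dot gives coefficient $A_\Delta^2m=\omega_\Delta(\kappa_\Delta)m$'' is instead the computation on $\xi_X$, whose coefficient is already $A_\Delta v_1$. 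There the two factors $\omega_\Delta(\kappa_\Delta)$ cancel, and one gets $\LX^\Gamma(\xi_X)=-(t^\Gamma)^2$ with no surviving $A_\Delta$. This is the value used later in Lemma~\ref{lem:quantumMackey}.
\end{itemize}

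Your write-up mixes the two. With coefficient $A_\Delta^2m$, your displayed steps give $-(t^\Gamma)^2n^*(m)$, and the extra factor $n^*(A_\Delta\,\cdot\,)$ in your last sentence has no source. If you keep the two computations separate, both agree with the statement.
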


\begin{proof}
For (a), apply
\(\LX^\Gamma=\varepsilon_L^\Gamma\circ(1_{E^\Gamma}\otimes\mathrm X^\Gamma)\)
to the three generators.  Write \(V=V_{r,d_\Delta}\),
\(\dot x=\dot x_{r,d_\Delta}\), and
\(\widehat X=\widehat{\mathrm X}^\Delta_{r,d_\Delta}\).
The identity generator maps to
\(n^*\otimes\widehat X\otimes A_\Delta m\), which the Levi projection
kills because \(\dot x\) represents the nonidentity double coset.

For the dot generator, expand
\(e_V\dot x e_V\dot x e_V
=|V|^{-1}\sum_{v\in V}e_V\dot x v\dot x e_V\).
Only \(v=1\) has support on \(P_{r,d_\Delta}\), so
\[
 \Proj_{L_{r,d_\Delta}}(\widehat X^2)
 =\frac{\beta_\Delta^2q^{2rd_\Delta}}{|V|}
       (1_{G_r}\times\kappa_\Delta).
\]
Contracting the coefficient factors and using
\eqref{eq:beta-normalization} gives
\[
 \LX^\Gamma(n^*\otimes\widehat X\otimes m)
 =-(t^\Gamma)^2\omega_\Delta(\kappa_\Delta)n^*(A_\Delta m).
\]
This is the value on the bare generator.  On
\(\xi_X=f_1^*\otimes\widehat X\otimes A_\Delta v_1\), both square
factors occur:
\[
 \LX^\Gamma(\xi_X)
 =-(t^\Gamma)^2\omega_\Delta(\kappa_\Delta)f_1^*(A_\Delta^2v_1)
 =-(t^\Gamma)^2.
\]
Finally,
\(\widehat{\mathrm T}^{\Delta,\Delta}_{r-d_\Delta,d_\Delta,d_\Delta}
\widehat X\) has nonidentity relative Bruhat support, hence zero Levi
projection.  This proves (a).

For (b), apply \(1_{E^\Gamma}\otimes\mathrm X^\Gamma\) to the right
unit \(\eta_R^\Gamma(1)=-(t^\Gamma z^\Gamma)^{-1}
f_1^*\otimes e_V\otimes v_1\).  It replaces \(e_V\otimes v_1\) by
\(\widehat X\otimes A_\Delta v_1\), giving the asserted formula.
\end{proof}

\begin{lemma}\label{lem:quantumMackey}
\begin{itemize}

\item[$(a)$](level $-1$)
If $\Gamma\in \calI_{-1}(\k)$,
the natural transformation\begin{align*}
\begin{array}{rl}
\left[\!\!\!\!\!
\begin{array}{l}\,\,\,\mathord{
\begin{tikzpicture}[baseline = 0]
	\draw[<-,thin,red] (-0.2,-.3) to (0.4,.4);
	\draw[-,line width=4pt,white] (0.4,-.3) to (-0.2,.4);
	\draw[->,thin,red] (0.4,-.3) to (-0.2,.4);
    \node at (0.4,-.45) {${\scriptstyle{\red{\Gamma}}}$};
  \node at (-0.2,-.45) {${\scriptstyle{\red{\Gamma}}}$};
  \end{tikzpicture}
}\\\,\,\,\mathord{
\begin{tikzpicture}[baseline = 1mm]
	\draw[-,thin,red] (0.4,0) to[out=90, in=0] (0.1,0.4);
	\draw[->,thin,red] (0.1,0.4) to[out = 180, in = 90] (-0.2,0);
    \node at (0.4,-.2) {${\scriptstyle{\red{\Gamma}}}$};
  \node at (-0.2,-.2) {${\scriptstyle{\red{\Gamma}}}$};    \end{tikzpicture}
}
\end{array}
\right]
:
E^\Gamma F^\Gamma \Rightarrow
F^\Gamma E^\Gamma \oplus \unit
\end{array}
\end{align*}

is an isomorphism of functors. It has a two-sided inverse
\begin{align*}
\begin{array}{rl}
\left[
\:\mathord{
\begin{tikzpicture}[baseline = 0]
	\draw[->,thin,red] (-0.2,-.3) to (0.4,.4);
	\draw[-,line width=4pt,white] (0.4,-.3) to (-0.2,.4);
	\draw[<-,thin,red] (0.4,-.3) to (-0.2,.4);
 \node at (0.4,-0.45) {${\scriptstyle{\red{\Gamma}}}$};
  \node at (-0.2,-0.45) {${\scriptstyle{\red{\Gamma}}}$};
\end{tikzpicture}
}
\:\:\:\:-t^\Gamma z^\Gamma
\mathord{
\begin{tikzpicture}[baseline = -0.9mm]
	\draw[<-,thin,red] (0.4,0.2) to[out=-90, in=0] (0.1,-.2);
	\draw[-,thin,red] (0.1,-.2) to[out = 180, in = -90] (-0.2,0.2);
    \node at (0.4,0.3) {${\scriptstyle{\red{\Gamma}}}$};
  \node at (-0.2,0.3) {${\scriptstyle{\red{\Gamma}}}$};
  \end{tikzpicture}
}
\right]
:F^\Gamma E^\Gamma \oplus
\unit
\Rightarrow E^\Gamma F^\Gamma.
&
\end{array}
\end{align*}

\item[$(b)$](level $-2$)
If $\Gamma\in \calI_{-2}(\k)$, then the natural transformation\begin{align*}
\begin{array}{rl}
\left[\!\!\!\!\!
\begin{array}{l}\,\,\,\mathord{
\begin{tikzpicture}[baseline = 0]
	\draw[<-,thin,red] (-0.2,-.3) to (0.4,.4);
	\draw[-,line width=4pt,white] (0.4,-.3) to (-0.2,.4);
	\draw[->,thin,red] (0.4,-.3) to (-0.2,.4);
    \node at (0.4,-.45) {${\scriptstyle{\red{\Gamma}}}$};
  \node at (-0.2,-.45) {${\scriptstyle{\red{\Gamma}}}$};
  \end{tikzpicture}
}\\\,\,\,\mathord{
\begin{tikzpicture}[baseline = 1mm]
	\draw[-,thin,red] (0.4,0) to[out=90, in=0] (0.1,0.4);
	\draw[->,thin,red] (0.1,0.4) to[out = 180, in = 90] (-0.2,0);
    \node at (0.4,-.2) {${\scriptstyle{\red{\Gamma}}}$};
  \node at (-0.2,-.2) {${\scriptstyle{\red{\Gamma}}}$};
  \end{tikzpicture}
}
\\
\,\,\,\mathord{
\begin{tikzpicture}[baseline = 1mm]
	\draw[-,thin,red] (0.4,0) to[out=90, in=0] (0.1,0.4);
	\draw[->,thin,red] (0.1,0.4) to[out = 180, in = 90] (-0.2,0);
      \node at (0.35,0.2) {$\dott$};
      \node at (0.4,-.2) {${\scriptstyle{\red{\Gamma}}}$};
  \node at (-0.2,-.2) {${\scriptstyle{\red{\Gamma}}}$};
  \end{tikzpicture}
}
\end{array}
\right]
:
E^\Gamma F^\Gamma \Rightarrow
F^\Gamma E^\Gamma \oplus \unit^{\oplus 2}
\end{array}
\end{align*}

is an isomorphism of functors. It has a two-sided inverse
\begin{align*}
\begin{array}{rl}
\left[
\:\mathord{
\begin{tikzpicture}[baseline = 0]
	\draw[->,thin,red] (-0.2,-.3) to (0.4,.4);
	\draw[-,line width=4pt,white] (0.4,-.3) to (-0.2,.4);
	\draw[<-,thin,red] (0.4,-.3) to (-0.2,.4);
\node at (0.4,-0.45) {${\scriptstyle{\red{\Gamma}}}$};
  \node at (-0.2,-0.45) {${\scriptstyle{\red{\Gamma}}}$};
\end{tikzpicture}
}
\:\:\:\:-t^\Gamma z^\Gamma
\mathord{
\begin{tikzpicture}[baseline = -0.9mm]
	\draw[<-,thin,red] (0.4,0.2) to[out=-90, in=0] (0.1,-.2);
	\draw[-,thin,red] (0.1,-.2) to[out = 180, in = -90] (-0.2,0.2);
\node at (0.4,0.3) {${\scriptstyle{\red{\Gamma}}}$};
  \node at (-0.2,0.3) {${\scriptstyle{\red{\Gamma}}}$};
\end{tikzpicture}
}
\:\:\:\:\:\frac{z^\Gamma}{t^\Gamma}
\mathord{
\begin{tikzpicture}[baseline = -0.9mm]
	\draw[<-,thin,red] (0.4,0.2) to[out=-90, in=0] (0.1,-.2);
	\draw[-,thin,red] (0.1,-.2) to[out = 180, in = -90] (-0.2,0.2);
      \node at (0.38,0) {$\dott$};
\node at (0.4,0.3) {${\scriptstyle{\red{\Gamma}}}$};
  \node at (-0.2,0.3) {${\scriptstyle{\red{\Gamma}}}$};
\end{tikzpicture}
}
\right]
:F^\Gamma E^\Gamma\oplus
\unit^{\oplus 2}
\Rightarrow E^\Gamma F^\Gamma.
&
\end{array}
\end{align*}

\item[$(c)$]
If $\Gamma\neq \Gamma'$, then the natural transformations
\begin{align*}
\begin{array}{rl}
\mathord{
\begin{tikzpicture}[baseline = -4]
	\draw[->,thin,blue] (-0.28,-.3) to (0.28,.4);
	\draw[<-,thin,red] (0.28,-.3) to (-0.28,.4);
    \node at (-0.28,-.5) {$\scriptstyle{\blue{\Gamma'}}$};
  \node at (0.28,-.5) {$\scriptstyle{\red{\Gamma}}$};
  \end{tikzpicture}
}
:F^{\Gamma'} E^\Gamma
\Rightarrow
 E^{\Gamma}  F^{\Gamma'}
\end{array}
~\text{and}~
\begin{array}{rl}
\mathord{
\begin{tikzpicture}[baseline = -4]
	\draw[->,blue] (0.28,-.3) to (-0.28,.4);
	\draw[<-,red] (-0.28,-.3) to (0.28,.4);
    \node at (-0.28,-.5) {$\scriptstyle{\red{\Gamma}}$};
  \node at (0.28,-.5) {$\scriptstyle{\blue{\Gamma'}}$};
  \end{tikzpicture}
}
:E^\Gamma F^{\Gamma'}
\Rightarrow
 F^{\Gamma'} E^\Gamma
\end{array}
\end{align*}
are two-sided inverses.
\end{itemize}
\end{lemma}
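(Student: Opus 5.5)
The plan is to verify everything on the representing bimodules, using the explicit double-coset decomposition of Lemma~\ref{lem:finite-bimodule-decomposition}. That lemma is the essential input. It shows that $\mathscr B_{\Delta,\Delta}(r,r)$, which represents $E^\Gamma F^\Gamma$ in rank $r$, splits as a direct sum of a copy of $\k G_r$ (generated by $\xi_1$), possibly a second copy (generated by $\xi_X$, present exactly in the self-dual case, i.e.\ level $-2$), and $\mathscr C_\Delta(r)$ (generated by $\xi_T$). For a paired color in level $-1$ the extra isomorphism $Y^{\Delta,\Delta^*}$ identifies the $\Delta^*$-summand with a $\Delta$-summand, so in every case the source has the expected number of $\unit$-summands. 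Since source and target of each proposed map are then abstractly isomorphic, it suffices to compute the image of each generator under the column map and check that the resulting matrix is invertible.

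For (a), Lemma~\ref{Lem:Rcrossing}(a) gives the leftward crossing. It kills $\xi_1$ and sends $\xi_T$ to the canonical generator of $\mathscr C_\Delta(r)$; indeed the contraction $\sum_j n^*(v_j)\,(\ldots)\otimes(f_j)^*$ is the identity on the coefficient factor, so on this summand the map is the identity isomorphism onto $\mathscr C_\Delta(r)$. The cap $\varepsilon_L^\Gamma$ sends $\xi_1$ to $f_1^*(v_1)\cdot 1=1$ and kills $\xi_T$, because the block swap has zero Levi projection. Hence the column map is diagonal with identity blocks with respect to these decompositions, and so it is an isomorphism. To confirm that the displayed row is the two-sided inverse, I would compute the row applied to the target generators. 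The cup $\eta_R^\Gamma$ scaled by $-t^\Gamma z^\Gamma$ sends $1\mapsto\xi_1$. The rightward crossing, by Lemma~\ref{lcross}, sends the generator of $\mathscr C_\Delta(r)$ to $\xi_T$ up to a correction lying in the $\xi_1$-summand; I expect the correction to vanish, or to be absorbed, because the closed coset is orthogonal to the open Bruhat generator. The composites are then identities on generators, and since all maps are bimodule maps, the identities hold everywhere.

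For (b), I add the third row $\LX^\Gamma$ and the third column $(z^\Gamma/t^\Gamma)\RX^\Gamma$. By Lemma~\ref{lrdots}(a), $\LX^\Gamma$ kills $\xi_1$ and $\xi_T$ and sends $\xi_X\mapsto-(t^\Gamma)^2$. By Lemma~\ref{Lem:Rcrossing}(b) and the computation already done for $\varepsilon_L^\Gamma$, the leftward crossing and the cap both kill $\xi_X$. The column matrix is therefore diagonal with entries $1,1,-(t^\Gamma)^2$, and it is invertible. On the row side, Lemma~\ref{lrdots}(b) gives $(z^\Gamma/t^\Gamma)\RX^\Gamma(1)=-(t^\Gamma)^{-2}\xi_X$, which composes with $\LX^\Gamma$ to $1$. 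The remaining composites are checked in the same way as in (a).

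For (c), Lemma~\ref{lem:finite-bimodule-decomposition}(3) identifies $E^\Gamma F^{\Gamma'}$ with $\mathscr C_{\Delta,\Delta'}$, generated by $\xi_T^{\Delta,\Delta'}$. Lemma~\ref{Lem:Rcrossing}(c) and Lemma~\ref{lcross} give explicit formulas showing that the two mixed crossings exchange the canonical generators of the two bimodules, with the scalars canceling as in the proof of Lemma~\ref{lcross}. Each composite therefore fixes a generator, and so each composite is the identity.

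The main obstacle is the careful scalar bookkeeping in the two composites for the same-color cases. In particular, I must show that the rightward crossing composed with the leftward crossing produces no stray $\xi_1$- or $\xi_X$-components. I would first try to settle this by reducing the composite to the Hecke relation $\mathrm T^2=z^\Gamma\mathrm T+1$ from Lemma~\ref{lem:HLintertwiners}. The $z^\Gamma$-term of that relation should be exactly what the $-t^\Gamma z^\Gamma$ cup normalization compensates.
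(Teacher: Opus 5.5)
Your framework matches the paper's. You decompose $E^\Gamma F^\Gamma$ by Lemma~\ref{lem:finite-bimodule-decomposition} and check that the forward map is diagonal on $\xi_T,\xi_1,\xi_X$, with entries the identity onto $\mathscr C_\Delta(r)$, $1$, and $-(t^\Gamma)^2$. That part is correct and shows the forward map is an isomorphism. Part (c) is also fine as you have it.

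\textbf{The gap.} It lies in identifying the displayed map as the inverse. You leave open whether $\Rcross^{\Gamma,\Gamma}$ sends the generator of $\mathscr C_\Delta(r)$ to $\xi_T$ ``up to a correction in the $\xi_1$-summand''. You then propose to remove that correction by reducing to $\mathrm T^2=z^\Gamma\mathrm T+1$, and that reduction cannot work, for two reasons.
\begin{enumerate}
\item The sideways inversion cannot be obtained by manipulating the upward relations alone; it is precisely the Mackey input being proved.
\item The factor $-t^\Gamma z^\Gamma$ in the inverse row has nothing to do with the Hecke $z^\Gamma$-term. It only converts $\eta_R^\Gamma(1)=-(t^\Gamma z^\Gamma)^{-1}\xi_1$ into $\xi_1$.
\end{enumerate}
The Hecke relation would matter only if the rightward crossing were the rotation of $\mathrm T^{-1}=\mathrm T-z^\Gamma$. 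That rotation equals $\Rcross^{\Gamma,\Gamma}-z^\Gamma\,\eta_R^\Gamma\circ\varepsilon_R^\Gamma$. Its extra term is a nonzero element of the $\xi_1$-summand, and nothing in the inverse row cancels it. So invoking the Hecke relation here would only introduce a stray component, not remove one.

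\textbf{The missing step.} It is double-coset support, which your remark about the closed coset being orthogonal to the open generator only gestures at. Put $d=d_\Delta$.
\begin{itemize}
\item By Lemma~\ref{lcross}, the factors $-(t^\Gamma z^\Gamma)^{\pm1}$ cancel, and $\Rcross^{\Gamma,\Gamma}$ sends $e_{V_{r-d,d}}\otimes v_1\otimes f_1^*\otimes e_{V_{r-d,d}}$ exactly to $f_1^*\otimes\widehat{\mathrm T}^{\Delta,\Delta}_{r-d,d,d}\otimes v_1=\xi_T$.
\item More generally, every elementary tensor goes to $n^*\otimes e_{V_{r,d}}a\widehat{\mathrm T}be_{V_{r,d}}\otimes m$ with $a,b\in G_r\subseteq L_{r,d}$. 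This element is supported on the block-exchange double coset $P_{r,d}\dot sP_{r,d}$.
\item By the internal direct sum \eqref{eq:bimodule-double-coset-direct-sum}, it therefore has no component in the identity or opposition summands, so no correction term exists.
\item Consequently $\varepsilon_L^\Gamma\circ\Rcross^{\Gamma,\Gamma}=0$ and $\LX^\Gamma\circ\Rcross^{\Gamma,\Gamma}=0$ by Levi projection, and $\Lcross^{\Gamma,\Gamma}\circ\Rcross^{\Gamma,\Gamma}$ fixes the generator of $\mathscr C_\Delta(r)$.
\end{itemize}
Since you already know the forward map is invertible, it now suffices to check that (forward)$\circ$(inverse) is the identity on the three target generators. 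Their images under the inverse are $\xi_T$, $-t^\Gamma z^\Gamma\eta_R^\Gamma(1)=\xi_1$, and $\frac{z^\Gamma}{t^\Gamma}\RX^\Gamma(1)=-(t^\Gamma)^{-2}\xi_X$. This finishes (a) and (b) with no Hecke input, which is how the paper argues.

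Your appeal to $Y^{\Delta,\Delta^*}$ is unnecessary. For a paired color, $E^\Gamma F^\Gamma$ is built from a single representative $\Delta$. The opposition summand of $\mathscr B_{\Delta,\Delta}(r,r)$ already vanishes, because $\Hom_H(\M_\Delta,\M_{\Delta^*})=0$.
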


\begin{proof}
Work rank by rank on the representing \((\k G_r,\k G_{r'})\)-bimodules,
with \(r=r'\) in the same-color case.
Lemma~\ref{lem:finite-bimodule-decomposition} gives the Mackey decomposition
before any choice of maps.  In level \(-1\) it is
\[
E^\Gamma F^\Gamma\cong F^\Gamma E^\Gamma\oplus\unit,
\]
whereas in level \(-2\) it is
\[
E^\Gamma F^\Gamma\cong F^\Gamma E^\Gamma\oplus\unit^{\oplus2}.
\]
For distinct indices there is only the summand corresponding to the open double coset.

The displayed morphisms agree with these summands.  On a nonzero summand corresponding to the open double coset, put \(a_0=r-d_{\Delta'}=r'-d_\Delta\), with \(\Delta,\Delta'\) irreducible representatives of the two indices.  On an elementary tensor, Lemmas~\ref{lcross} and
\ref{Lem:Rcrossing} give
\[
 (ae_{V_{a_0,d_{\Delta'}}}\otimes m)\otimes
       (n^*\otimes e_{V_{a_0,d_\Delta}}b)
 \longmapsto
 \sum_j (ae_{V_{a_0,d_{\Delta'}}}\otimes m)\otimes
       \bigl(n^*(v_j^\Delta)(f_j^\Delta)^*
                    \otimes e_{V_{a_0,d_\Delta}}b\bigr),
\]
which is the original tensor.  Thus the leftward mate composed with the
rightward mate is the identity.  In the opposite order, the rightward mate
inserts the same coevaluation tensor and the leftward mate contracts it by
\[
 \sum_j n^*(v_j^\Delta)(f_j^\Delta)^*=n^*.
\]
Hence the reverse composite is also the identity on the summand corresponding to the open double coset. The composites with the \(\unit\)-summands vanish by the corresponding Levi projections.  In particular, this argument does not apply the
distinct-color inverse relation of Lemma~\ref{lem:HLintertwiners}(2)
to a same-color crossing.

On the \(\unit\)-summands, use the basis \((\xi_1,\xi_X)\) of
Lemma~\ref{lem:finite-bimodule-decomposition}(2).  The undotted counit gives
\[
 \varepsilon_L^\Gamma(\xi_1)=1,
 \qquad
 \varepsilon_L^\Gamma(\xi_X)=0,
\]
the second equality being the nonidentity-Bruhat projection.  Lemma~\ref{lrdots}
gives for the dotted counit
\[
 \LX^\Gamma(\xi_1)=0,
 \qquad
 \LX^\Gamma(\xi_X)=-(t^\Gamma)^2.
\]
Here $\xi_X$ includes the coefficient $A_\Delta v_1$; hence the second
entry uses $\omega_\Delta(\kappa_\Delta)A_\Delta^2=\id$, not only the
Levi scalar on the bare group-algebra generator.
Thus the restriction of the forward map in (b) to these \(\unit\)-summands is the diagonal matrix
\[
 \begin{pmatrix}1&0\\0&-(t^\Gamma)^2\end{pmatrix}.
\]
On the other hand, the explicit right unit is
\(\eta_R^\Gamma(1)=-(t^\Gamma z^\Gamma)^{-1}\xi_1\), while
Lemma~\ref{lrdots}(b) gives
\(\RX^\Gamma(1)=-(t^\Gamma z^\Gamma)^{-1}\xi_X\).  Hence the two displayed
inverse columns are
\[
 -t^\Gamma z^\Gamma\,\eta_R^\Gamma(1)=\xi_1,
 \qquad
 \frac{z^\Gamma}{t^\Gamma}\RX^\Gamma(1)
 =-(t^\Gamma)^{-2}\xi_X,
\]
which are exactly the inverse diagonal matrix.  This proves both composites
are the identity on the two \(\unit\)-summands.  In level \(-1\), only \(\xi_1\)
occurs, and the same calculation reduces to
\(-t^\Gamma z^\Gamma\eta_R^\Gamma(1)=\xi_1\).  Hence the maps in (a) and (b)
are two-sided inverses on every Mackey summand.

Finally, if \(\Gamma\ne\Gamma'\),
Lemma~\ref{lem:finite-bimodule-decomposition}(3) leaves only the summand corresponding to the open double coset; the two mixed crossings are \(\Rcross^{\Gamma,\Gamma'}\) and \(\Lcross^{\Gamma',\Gamma}\).  The same computation on this summand therefore proves that they are inverse.  This proves (c) and
completes the Mackey relations.
\end{proof}

\begin{lemma}\label{lem:curl}
We have $\mathord{
\begin{tikzpicture}[baseline = -0.5mm]
	\draw[<-,red] (0,0.6) to (0,0.3);
	\draw[-,red] (-0.3,-0.2) to [out=180,in=-90](-.5,0);
	\draw[-,red] (-0.5,0) to [out=90,in=180](-.3,0.2);
	\draw[-,red] (-0.3,.2) to [out=0,in=90](0,-0.3);
	\draw[-,red] (0,-0.3) to (0,-0.6);
	\draw[-,line width=4pt,white] (0,0.3) to [out=-90,in=0] (-.3,-0.2);
	\draw[-,red] (0,0.3) to [out=-90,in=0] (-.3,-0.2);
    \node at (0,-0.75) {${\scriptstyle{\red{\Gamma}}}$};
  \end{tikzpicture}
}=
0$.
\end{lemma}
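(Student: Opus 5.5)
The curl in the statement is the composite
\[
(1_{F^\Gamma}\otimes\varepsilon^\Gamma_L)\circ(\mathrm{T}^{\Gamma,\Gamma}\otimes 1_{E^\Gamma})\circ(1_{F^\Gamma}\otimes\eta^\Gamma_R),
\]
read as a natural transformation $F^\Gamma\Rightarrow F^\Gamma$. Equivalently, it is the leftward crossing $\Lcross^{\Gamma,\Gamma}$ followed by the leftward cap. We compute it on the representing bimodules, rank by rank, exactly as in Lemmas~\ref{lcross}, \ref{Lem:Rcrossing} and \ref{lrdots}.

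\emph{Step 1: rewrite the curl.} Writing the curl as the cap $\varepsilon^\Gamma_L$ applied after the crossing reduces the claim to a statement about $\varepsilon^\Gamma_L$ evaluated on elements of $\mathscr B_{\Delta,\Delta}(r,r)$ that are images of the right unit. The right unit inserts
\[
-(t^\Gamma z^\Gamma)^{-1}f_1^*\otimes e_{V_{r,d}}\otimes v_1=-(t^\Gamma z^\Gamma)^{-1}\xi_1 .
\]
The upward crossing then multiplies the middle group-algebra factor by $\widehat{\mathrm{T}}^{\Delta,\Delta}$.

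\emph{Step 2: locate the support.} The resulting element lies in the summand of Lemma~\ref{lem:finite-bimodule-decomposition} generated by $\xi_T$, namely the open (block-exchange) double coset. The factor $\dot s$ sitting between the $V$-averages has nontrivial relative Bruhat support, because the strand coming from the unit is exchanged with the other $F$-strand.

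\emph{Step 3: apply the Levi projection.} The left counit $\varepsilon^\Gamma_L$ contains $\Proj_{L_{r,d}}$. This projection kills every group element outside $P_{r,d}$. By Lemma~\ref{Lem:Rcrossing}(a),(b), this is the same mechanism that already makes the leftward cap vanish on $\xi_T$ (and on $\xi_X$ in level $-2$). So the composite is zero on each generator, hence zero as a bimodule map, and therefore zero as a natural transformation.

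The main obstacle is Step 2. One must check that, after the unipotent averages $e_Ve_{V^-}$ are expanded, no term of $e_{V}\widehat{\mathrm{T}}e_{V}$ lands in $P_{r,d}$. I would handle this with the same double-coset bookkeeping used in Lemma~\ref{lrdots}(a): expand $e_V\dot s e_V$ as a sum over $v\in V$ and observe that $\dot s v$ never lies in $P$, because $\dot s\notin P$ and $V\subset P$.

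A fallback argument is available. Sandwich the curl between the inverse isomorphism of Lemma~\ref{lem:quantumMackey}(a) or (b). In the Heisenberg category the curl is a known multiple of bubbles, and this multiple vanishes because the level is negative, $k_\Gamma\in\{-1,-2\}$. Comparing components then forces the curl to be zero, since its component into the $\unit$-summands is computed by Lemma~\ref{Lem:Rcrossing}.
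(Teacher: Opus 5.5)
Your argument is correct and is essentially the paper's proof. The paper observes that $\eta^\Gamma_R$ lands on the closed generator $\xi_1$ and that $\Lcross^{\Gamma,\Gamma}$ kills it by Lemma~\ref{Lem:rcrossing}, and then remarks equivalently that the only possible term is the Levi projection of the crossing class $\widehat{\mathrm{T}}$, which vanishes because $\dot s\notin P$; that is exactly your Steps~2--3.

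One bookkeeping fix: in the paper's convention ($\otimes$ is composition, leftmost outermost) the curl is $(\varepsilon^\Gamma_L\otimes 1_{F^\Gamma})\circ(1_{E^\Gamma}\otimes\mathrm{T}^{\Gamma,\Gamma})\circ(\eta^\Gamma_R\otimes 1_{F^\Gamma})$. The composite as you wrote it does not type-check.
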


\begin{proof}
Expand the curl using its defining cup, crossing, and cap.  On the representing
bimodule, the cup lands in the closed double-coset generator.  The leftward
crossing sends that generator to zero by
Lemma~\ref{Lem:Rcrossing}(a) in level \(-1\), and by
Lemma~\ref{Lem:Rcrossing}(b) in level \(-2\).  Therefore the whole composite
is zero.  Equivalently, in the explicit Levi-projection calculation the only
possible term is the projection of the crossing class \(\widehat{\mathrm{T}}\), which is
zero (cf. Lemma~\ref{lrdots}).
\end{proof}

\smallskip

\subsubsection{Invertibility of $\mathrm{X}^\Gamma$}\label{sub:invertible}
\begin{definition}Let $\k=\bbK$ or $\bbF$ and  $M\in  \calC=\k G_\bullet\mod $. We say that $M$ is $F^\Gamma$-cuspidal if $E^\Gamma(M)=0$.
\end{definition}

\begin{proposition}\label{Prop:dim2}
Let $\k=\bbK$ or $\bbF$ be an algebraically closed field.  Let
$\Gamma\in\calI(\k)$ and let $M$ be an irreducible $F^\Gamma$-cuspidal
module.
\begin{itemize}
\item[$(1)$] We have
\[
\dim_\k\End_{\mathcal C}(F^\Gamma M)=
\begin{cases}
1,&\Gamma\in\calI_{-1}(\k),\\
2,&\Gamma\in\calI_{-2}(\k).
\end{cases}
\]
The polynomial evaluation homomorphism
$\k[u]\longrightarrow\End_{\mathcal C}(F^\Gamma M)$,
$u\longmapsto\mathrm X^\Gamma(M)$, is surjective.
\item[$(2)$] If $\Gamma\in\calI_{-1}(\k)$, then
$\mathrm X^\Gamma(M)=\id_{F^\Gamma M}$.  If
$\Gamma\in\calI_{-2}(\k)$, then
\[
 (\mathrm X^\Gamma(M))^2+\mu_\Gamma(M)\mathrm X^\Gamma(M)
              +(t^\Gamma)^2\id_{F^\Gamma M}=0
\]
for some $\mu_\Gamma(M)\in\k$.  In either case
$\mathrm X^\Gamma(M)$ is invertible.
\end{itemize}
\end{proposition}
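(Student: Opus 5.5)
Part (1) reduces, by adjunction, to the Mackey decomposition of Lemma~\ref{lem:finite-bimodule-decomposition}. Since $(E^\Gamma,F^\Gamma)$ is a biadjoint pair (Lemma~\ref{lem:adjointpair}), we have
\[
\End_{\mathcal C}(F^\Gamma M)\cong\Hom_{\mathcal C}(M,E^\Gamma F^\Gamma M).
\]
Applying the representing bimodule of $E^\Gamma F^\Gamma$ to $M$ splits the latter as follows:
\begin{itemize}
\item in level $-1$, $E^\Gamma F^\Gamma M\cong M\oplus F^\Gamma E^\Gamma M$;
\item in level $-2$, $E^\Gamma F^\Gamma M\cong M\oplus M\oplus F^\Gamma E^\Gamma M$.
\end{itemize}
Cuspidality, $E^\Gamma M=0$, kills the exchange summand $\mathscr C_\Delta(r)\otimes M$. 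Schur's lemma over the algebraically closed field $\k$ then gives $\End(M)=\k$, so the Hom-space has dimension $1$ or $2$ respectively.

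For surjectivity in level $-1$ there is nothing to do: the image of $\k[u]$ contains $\id$, which spans a one-dimensional space. In level $-2$ we must show that $\id$ and $\mathrm X^\Gamma(M)$ are linearly independent. Under the adjunction, $\mathrm X^\Gamma(M)$ corresponds to the dotted unit $\RX^\Gamma$ evaluated at $M$. By Lemma~\ref{lrdots}(b) this is a nonzero multiple of the generator $\xi_X$, whereas $\id$ corresponds to $\eta_R^\Gamma$, a nonzero multiple of $\xi_1$. Since $\xi_1$ and $\xi_X$ generate distinct Mackey summands, the two endomorphisms are independent and hence span $\End(F^\Gamma M)$.

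For part (2), the level $-1$ case goes as follows. Both $\mathrm X^\Gamma(M)$ and $\id$ lie in the one-dimensional space, so $\mathrm X^\Gamma(M)=c\cdot\id$ for some scalar $c$. To determine $c$, apply the counit $\varepsilon_L^\Gamma$ after $1\otimes\mathrm X^\Gamma$, i.e.\ compute the clockwise degree-one bubble on $M$, and compare with its value under $\id$. I expect this to be the main obstacle, because the normalization of $\mathrm X^\Delta$ is asserted (by construction, following \cite{CR08}) to give exactly $1$ on $F$-cuspidal objects.

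My first attempt would be a direct computation. Write $e_Ve_{{}^tV}e_V$ in the double-coset basis: only the identity coset survives cuspidality, with coefficient $|V|^{-1}$ times the $V^-$ contribution landing in $P$. This coefficient is cancelled by $q^{rd}=|V_{r,d}|$ (respectively the factor $|V_{r,d_\Delta}|$ in Case~1). The remaining terms factor through a proper Harish--Chandra restriction of $M\boxtimes\M_\Delta$, which vanishes because $M$ is $E^\Gamma$-cuspidal and $\M_\Delta$ is cuspidal.

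In level $-2$, the space $\End(F^\Gamma M)$ is a two-dimensional commutative algebra generated by $\mathrm X:=\mathrm X^\Gamma(M)$, so $\mathrm X$ satisfies a monic quadratic $\mathrm X^2+\mu\mathrm X+\nu=0$. To identify the constant term $\nu=(t^\Gamma)^2$, apply $\LX^\Gamma$ and $\varepsilon_L^\Gamma$ after $1\otimes\mathrm X$ to the relation. Lemma~\ref{lrdots}(a) gives $\LX^\Gamma(\xi_X)=-(t^\Gamma)^2$ and $\varepsilon_L^\Gamma(\xi_X)=0$, while the corresponding values on $\xi_1$ are $0$ and $1$. Writing $\mathrm X^2$ in the basis $\{\xi_1,\xi_X\}$ through these pairings forces $\nu=(t^\Gamma)^2$; this is essentially the determinant of the matrix $\mathrm{diag}(1,-(t^\Gamma)^2)$ in Lemma~\ref{lem:quantumMackey}(b).

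Invertibility then follows in both cases. In level $-1$, $\mathrm X=\id$. In level $-2$, $t^\Gamma\neq0$, so
\[
\mathrm X\cdot\bigl(-(\mathrm X+\mu)(t^\Gamma)^{-2}\bigr)=\id .
\]
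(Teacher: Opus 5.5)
Your proposal is correct and is essentially the paper's proof. The common steps are: the dimension count via biadjunction, the Mackey decomposition of Lemma~\ref{lem:finite-bimodule-decomposition} and $E^\Gamma M=0$; in level $-1$, comparing the closures $\tau_M(a)=\varepsilon_L^\Gamma(M)\circ E^\Gamma(a)\circ\eta_R^\Gamma(M)$ at $a=\mathrm X^\Gamma(M)$ and $a=1$; and in level $-2$, using the values of Lemma~\ref{lrdots} to pin down the constant term $(t^\Gamma)^2$. One correction in level $-1$: the non-identity terms of $|V|e_Ve_{V^-}e_V$ vanish because the Levi projection in $\varepsilon_L^\Gamma$ kills everything outside $P_{r,d_\Delta}$ and $V^-\cap P_{r,d_\Delta}=1$, not because of cuspidality. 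The only real variation is how you get linear independence of $\id$ and $\mathrm X^\Gamma(M)$: you note that $\eta_R^\Gamma(1)$ and $\RX^\Gamma(1)$ are nonzero multiples of $\xi_1$ and $\xi_X$, which lie in different double-coset summands, whereas the paper uses the closure values $\tau_M(1)\neq0$, $\tau_M(\mathrm X)=0$, $\tau_M(\mathrm X^2)\neq0$; both arguments work.
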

\begin{proof}
Since $E^\Gamma M=0$, biadjunction and the abstract decomposition of
Lemma~\ref{lem:finite-bimodule-decomposition} give
\[
 \End_{\mathcal C}(F^\Gamma M)
 \cong\Hom_{\mathcal C}(E^\Gamma F^\Gamma M,M)
 \cong\begin{cases}\k,&\Gamma\in\calI_{-1}(\k),\\
                     \k^2,&\Gamma\in\calI_{-2}(\k),
       \end{cases}
\]
as vector spaces, independently of the inverse matrices in Lemma~\ref{lem:quantumMackey}.  Put $X=\mathrm X^\Gamma(M)$ and define
$\tau_M(a)=\varepsilon_L^\Gamma(M)\circ E^\Gamma(a)\circ\eta_R^\Gamma(M)$.
The explicit adjunctions give the nonzero value
$\tau_M(1)=-(t^\Gamma z^\Gamma)^{-1}$.

In level $-1$, the dot is right multiplication by
$|V|e_Ve_{V^-}e_V$, where $V=V_{r,d_\Delta}$ and $V^-$ is its opposite.
Since $V^-\cap P_{r,d_\Delta}=1$ and $|V^-|=|V|$, only the identity in
the middle average survives Levi projection, giving the Levi identity.
Hence $\tau_M(X)=\tau_M(1)$.  As $X$ is scalar, $X=1$, proving both assertions.

In level $-2$, the identity and opposition projections of
Lemma~\ref{lrdots} give the explicit closures
\begin{equation}\label{eq:closed-bubbles-dim2}
 \coloredbubble{0}{\Gamma}=-(t^\Gamma z^\Gamma)^{-1},\qquad
 \coloredbubble{1}{\Gamma}=0,\qquad
 \coloredbubble{2}{\Gamma}=t^\Gamma/z^\Gamma.
\end{equation}
These are the direct evaluations $\tau_M(1),\tau_M(X),\tau_M(X^2)$
obtained from the adjunction maps.  Applying $\tau_M$ to $a1+bX=0$ and to its
product with $X$ gives $a=b=0$.  Thus $1,X$ form a basis, proving
polynomial surjectivity.  Write $X^2=c1+dX$; its closure gives
$c=-(t^\Gamma)^2$.  Setting $\mu_\Gamma(M)=-d$ proves the relation and
$X^{-1}=-(t^\Gamma)^{-2}(X+\mu_\Gamma(M)1)$.
\end{proof}
\begin{remark}
    For $\Gamma\in\calI_{-2}(\k)$, the coefficient $\mu_{\Gamma}(M)$ is
    $-\frac{\mathord{
\begin{tikzpicture}[baseline = -1mm]
  \draw[-,thin] (0,0.2) to[out=180,in=90] (-.2,0);
  \draw[->,thin] (0.2,0) to[out=90,in=0] (0,.2);
 \draw[-,thin] (-.2,0) to[out=-90,in=180] (0,-0.2);
  \draw[-,thin] (0,-0.2) to[out=0,in=-90] (0.2,0);
      \node at (0.2,0) {$\dott$};
      \node at (0.4,0) {$\color{darkblue}\scriptstyle 3$};
      \node at (0.0,-.33) {$\scriptstyle{\Gamma}$};
      \draw[-,darkg,thick] (0.55,.35) to (0.55,-.3);
     \node at (0.55,-.45) {$\darkg\scriptstyle{M}$};
\end{tikzpicture}
}}{ \mathord{\begin{tikzpicture}[baseline = -1mm]
  \draw[-,thin] (0,0.2) to[out=180,in=90] (-.2,0);
  \draw[->,thin] (0.2,0) to[out=90,in=0] (0,.2);
 \draw[-,thin] (-.2,0) to[out=-90,in=180] (0,-0.2);
  \draw[-,thin] (0,-0.2) to[out=0,in=-90] (0.2,0);
  \node at (0.4,0) {$\color{darkblue}\scriptstyle 2$};
      \node at (0.2,0) {$\dott$};
      \node at (0.0,-.33) {$\scriptstyle{\Gamma}$};
      \draw[-,darkg,thick] (0.55,.35) to (0.55,-.3);
     \node at (0.55,-.45) {$\darkg\scriptstyle{M}$};
\end{tikzpicture}
}}$.  Lemma~\ref{Lem:Key} gives the corresponding character formula.
\end{remark}

\begin{proposition}\label{prop:invertible}
The natural transformation
$\mathrm{X}^\Gamma\in \End(F^\Gamma)$ is invertible.
\end{proposition}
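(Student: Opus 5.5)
Over the field $\k=\bbK$ or $\bbF$, the plan is to reduce invertibility of $\mathrm X^\Gamma$ to the cuspidal case of Proposition~\ref{Prop:dim2}, inducting on the $F^\Gamma$-Harish--Chandra depth. A natural transformation of exact endofunctors on a category whose objects have finite length is invertible as soon as its component is invertible on every irreducible object. Moreover, $\mathrm X^\Gamma(N)$ for a finite-dimensional $N$ is an endomorphism of a finite-dimensional space, so injectivity suffices. By the five lemma applied to a composition series of $N$ (using exactness of $F^\Gamma$), it is enough to show that $\mathrm X^\Gamma(L)$ is invertible for every irreducible $L\in\k G_\bullet\mod$.

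For irreducible $L$, let $k$ be maximal with $(E^\Gamma)^kL\neq0$; this is finite because $E^\Gamma$ lowers rank by $d_\Gamma$. We induct on $k$. If $k=0$, then $L$ is $F^\Gamma$-cuspidal, and Proposition~\ref{Prop:dim2}(2) gives the result. If $k>0$, pick an irreducible quotient $M$ of $E^\Gamma L$. Then $(E^\Gamma)^{k}M=0$, and by adjunction there is a nonzero map, hence a surjection, $F^\Gamma M\twoheadrightarrow L$. By induction $\mathrm X^\Gamma(M)$ is invertible. The dot--slide relation of Lemma~\ref{lem:quantum12relations} relates $\mathrm X^\Gamma F^\Gamma$ and $F^\Gamma\mathrm X^\Gamma$ through $\mathrm T^{\Gamma,\Gamma}$, which is invertible by the quadratic relation $\mathrm T^2=z^\Gamma\mathrm T+1$. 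From $\mathrm T(1\otimes \mathrm X)\mathrm T=\mathrm X\otimes 1$ we get that $\mathrm X^\Gamma F^\Gamma$ is invertible on $F^\Gamma M$, since it is conjugate via the invertible $\mathrm T$ to $F^\Gamma(\mathrm X^\Gamma(M))$ composed with $\mathrm T^2$. By naturality, $\mathrm X^\Gamma(L)$ is then a quotient of the invertible endomorphism $\mathrm X^\Gamma(F^\Gamma M)$ along the surjection $F^\Gamma F^\Gamma M\twoheadrightarrow F^\Gamma L$. A surjective image of an injective endomorphism need not be injective, but in finite dimensions an endomorphism of a quotient induced by an automorphism is surjective, hence bijective. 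So $\mathrm X^\Gamma(L)$ is invertible.

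The step I expect to need the most care is the one just described: the dot--slide relation in Lemma~\ref{lem:quantum12relations} reads $\mathrm T(1\otimes\mathrm X)\mathrm T=\mathrm X\otimes1$ with a specific tensor order, and I have to check that the strand carrying $\mathrm X$ in $F^\Gamma F^\Gamma M$ is the one corresponding to $F^\Gamma(\mathrm X(M))$. If the bookkeeping comes out the other way, I would instead use the Mackey isomorphism of Lemma~\ref{lem:quantumMackey} to write $E^\Gamma F^\Gamma L$ in terms of $F^\Gamma E^\Gamma L$ plus copies of $L$, and argue invertibility on the $F^\Gamma E^\Gamma L$ part by induction.

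Over $\calO$, invertibility follows because all constructions are defined integrally on torsion-free bimodules. $\mathrm X^\Gamma$ satisfies a monic polynomial with unit constant term on each rank, namely the product of the minimal polynomials over $\bbK$, which have roots $\pm t^\Gamma$-powers, so the constant term is a unit. Alternatively, an endomorphism of a finitely generated free $\calO$-bimodule is invertible if its reduction mod $\ell$ is invertible (Nakayama), and this reduction is the $\bbF$ case.
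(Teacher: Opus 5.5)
Your route is essentially the paper's. Both reduce to simple objects using a composition series and exactness of $F^\Gamma$, and handle the $F^\Gamma$-cuspidal case by Proposition~\ref{Prop:dim2}. Both then transfer invertibility from $F^\Gamma(\mathrm X^\Gamma(\cdot))$ to $\mathrm X^\Gamma(F^\Gamma\,\cdot)$ via $\mathrm T^{-1}=\mathrm T-z^\Gamma$ and the dot--slide relation, push it along $F^\Gamma$ of a map relating $L$ and $F^\Gamma M$ using naturality and finite-dimensionality, and obtain the $\calO$ case from the $\bbF$ case by a determinant/Nakayama argument. The paper inducts on rank rather than on $E^\Gamma$-depth, which makes no difference.

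One step is wrong as written. If $M$ is an irreducible \emph{quotient} of $E^\Gamma L$, the adjunction $E^\Gamma\dashv F^\Gamma$ turns $E^\Gamma L\twoheadrightarrow M$ into a nonzero map $L\to F^\Gamma M$. That is an injection $L\hookrightarrow F^\Gamma M$, not a surjection $F^\Gamma M\twoheadrightarrow L$. Over $\bbF$ the category is not semisimple, so nothing guarantees $\Hom(F^\Gamma M,L)\cong\Hom(M,E^\Gamma L)\neq0$. There are two easy fixes.
\begin{enumerate}
\item Take $M$ to be a simple \emph{submodule} of $E^\Gamma L$, as the paper does. Then $F^\Gamma\dashv E^\Gamma$ gives the surjection you want.
\item Keep the quotient and use the injection instead. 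Then $F^\Gamma L\hookrightarrow F^\Gamma F^\Gamma M$ is an $\mathrm X^\Gamma(F^\Gamma M)$-stable subspace on which this map restricts to $\mathrm X^\Gamma(L)$. An injective endomorphism restricted to a finite-dimensional invariant subspace is bijective.
\end{enumerate}

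Your worry about tensor order is unnecessary. Since $\mathrm T$ is invertible, $\mathrm T(1\otimes\mathrm X)\mathrm T=\mathrm X\otimes 1$ makes either dot invertible as soon as the other is.

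Of your two $\calO$ arguments, only the reduction-mod-$\ell$ one (which is the paper's) is available at this point. The first needs spectral information about the dot over $\bbK$ that has not yet been established, and in general the roots are not powers of $\pm t^\Gamma$.
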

\begin{proof}
We use rank induction as in \cite[Proposition~2.18]{LSZ25}, with the
cuspidal calculation supplied by Proposition~\ref{Prop:dim2} for both
levels.  We use only the same-color polynomial dot-slide and
Hecke quadratic relations of Lemma~\ref{lem:quantum12relations}.

First work over either coefficient field.  By exactness of $F^\Gamma$ and
naturality of $\mathrm X^\Gamma$, it suffices to treat simple modules:
a composition series then makes the dot a block triangular linear map
with invertible diagonal blocks.  Let $M$ be simple at rank $r$ and
assume the result at smaller ranks.  If $E^\Gamma M=0$, use
Proposition~\ref{Prop:dim2}.  Otherwise choose a simple submodule
$N\subset E^\Gamma M$.  Biadjunction gives a nonzero map
$F^\Gamma N\longrightarrow M$, which is surjective since $M$ is simple.
The rank of $N$ is $r-d_\Delta<r$.

On $(F^\Gamma)^2N$, label the two dot operators by
\[
 X_1=F^\Gamma(\mathrm X^\Gamma(N)),\qquad
 X_2=\mathrm X^\Gamma(F^\Gamma N).
\]
The induction hypothesis makes $X_1$ invertible.  The Hecke quadratic
relation gives $T^{-1}=T-z^\Gamma$, and the same-color dot-slide relation
$TX_1T=X_2$ therefore makes $X_2$ invertible.  Exactness gives a surjection
$(F^\Gamma)^2N\twoheadrightarrow F^\Gamma M$ intertwining $X_2$ and
$\mathrm X^\Gamma(M)$.  Its kernel is $X_2$-stable and finite-dimensional,
so it is also $X_2^{-1}$-stable.  The induced operator on the quotient is
invertible.  This completes the rank induction.  The inverses are natural,
since the original transformations are natural.

For the integral form, at each rank the representing induction bimodule
is finite free over $\calO$.  Its reduction is one of the preceding
invertible operators over $\bbF$.  Thus its determinant is a unit in
$\calO$, so its inverse exists over $\calO$ and is again a bimodule map.
\end{proof}

\begin{proof}[Proof of Theorem~\ref{Thm:quantumheisenberg}]
The defining relations are verified in the order in which they were developed
above.  The same-color and mixed affine-Hecke relations are
Lemma~\ref{lem:quantum12relations}.  The four cups and caps satisfy the
triangle identities by Lemma~\ref{lem:adjointpair}.  The inversion/Mackey
relations are Lemma~\ref{lem:quantumMackey}; the sideways crossings used there
are the explicit mates computed in Lemmas~\ref{lcross} and
\ref{Lem:Rcrossing}.  The last columns of the inverse matrices in
Lemma~\ref{lem:quantumMackey} also give the additional quantum normalization,
equivalently the initial bubble values used in Proposition~\ref{Prop:dim2}.
The curl relation is Lemma~\ref{lem:curl}, and the dot is invertible by
Proposition~\ref{prop:invertible}.  All defining relations involve only finitely many colors.  Hence the
preceding assignments define the claimed strict monoidal functor on
$\symHeis_{\calI(\k)}$.
\end{proof}

\subsection{Application to ordinary representations of finite classical groups}
\subsubsection{Jordan decompositions compatible with categorifications}\label{subsec:jordan}

For the linear and unitary towers we use the standard Jordan decomposition,
chosen compatibly with Harish--Chandra induction.  For the symplectic and
orthogonal towers we use the corresponding Harish--Chandra-compatible primary
Jordan parametrization.  When $q$ is odd this is the parametrization of
\cite[Theorem~4.4]{LSZ25}, with the full-orthogonal conventions used there.
When $q$ is even, we use the corresponding parametrization in terms of quadratic forms: only $X-1$ remains and the same two-runner symbol graph and bubble recursion apply.  Thus in every defining characteristic an
irreducible ordinary character has a semisimple parameter $s$ and primary
labels $\mu=(\mu_\Gamma)_\Gamma$, consisting of partitions on the type-$A$
factors and Lusztig symbols on the classical factors.  Denote
the parameter set by $\Psi(s)$ and the character by $\rho_{s,\mu}$.

For the odd orthogonal tower (with \(q\) odd),
\[
 \O_{2n+1}(q)=\SO_{2n+1}(q)\times\langle-I_{2n+1}\rangle.
\]
Accordingly the Jordan parameter has one additional sign
\(\varepsilon\in\{\pm1\}\), characterized by
\(\rho_{s,\mu,\varepsilon}(-I_{2n+1})=\varepsilon\,\rho_{s,\mu,\varepsilon}(1)\),
and
\[
 \Psi(s)\times\{\pm1\}\xrightarrow{\sim}\mathcal E(\O_{2n+1}(q),(s)),
 \qquad(\mu,\varepsilon)\longmapsto\rho_{s,\mu,\varepsilon}.
\]
This is the extra \(\{\pm1\}\)-factor of
\cite[\S4.1.7]{LSZ25}.  We suppress \(\varepsilon\) only when it plays no role.

\subsubsection{Harish--Chandra induction}

Let $M$ be a split
$F$-stable Levi subgroup of $G$. Then $M^*$ is a split $F^*$-stable Levi subgroup
of $G^*$. Let $s$ be an $F^*$-stable semisimple element of $M^*$.
The Jordan decomposition of characters
 commutes with Lusztig induction, and in particular Harish--Chandra induction.
 Hence the computation of Harish--Chandra induction $R_{M}^G$
 reduces to Harish--Chandra induction
 $R_{C_{M^*}(s)^*}^{C_{G^*}(s)^*}$ with unipotent characters.

Under Lusztig's parametrization, the functor \(F^\Gamma\) changes only the
\(\Gamma\)-primary unipotent coordinate, apart from the extension data in the
full orthogonal towers, which are transported as described below.  Write
\(\Delta\) for an irreducible representative of \(\Gamma\).  If
\(G_r=\O_{2r+1}(q)\), then under the standard Levi embedding
\[
 L_{r,d_\Delta}\cong\O_{2r+1}(q)\times\GL_{d_\Delta}(q)
 \hookrightarrow\O_{2(r+d_\Delta)+1}(q)
\]
one has
\[
 -I_{2(r+d_\Delta)+1}
 \longleftrightarrow
 (-I_{2r+1},-I_{d_\Delta}).
\]
Hence the inducing module \(\rho_{s,\mu,\varepsilon}\boxtimes\M_\Delta\)
has central sign \(\varepsilon\,\omega_\Delta(-I_{d_\Delta})\).  Since the left-hand
scalar matrix is central in the larger odd orthogonal group, every irreducible
constituent of \(F^\Gamma(\rho_{s,\mu,\varepsilon})\) has sign
\begin{equation}\label{eq:Oodd-sign-transport}
 \varepsilon' = \varepsilon\,\omega_\Delta(-I_{d_\Delta}).
\end{equation}
The same rule holds for \(E^\Gamma\).

For $\Gamma\in\mathcal F^G(\k)$, with irreducible representative $\Delta$, put
\[
 \sigma_\Gamma=
 \begin{cases}
  1,&\Delta\ne\Delta^*,\\
  (-1)^{d_\Delta},&\Delta=\Delta^*.
 \end{cases}
\]
When $q$ is odd, let $\zeta$ be the quadratic character of
$\bbF_q^\times$ and define
\[
 \Gamma^-(X):=(-1)^{d_\Gamma}\Gamma(-X).
\]
Thus $\Gamma^-$ is the index for which, for irreducible representatives,
\(\M_{\Delta^-}\cong(\zeta\circ\det)\otimes\M_\Delta\), where
\(\Delta^-(X)=(-1)^{d_\Delta}\Delta(-X)\).  If $\Delta=\Delta^*$, set
\[
 \eta_\Gamma:=\omega_\Delta(\delta I_{d_\Delta})\in\{\pm1\},
 \qquad \delta\in\bbF_q^\times\text{ nonsquare},
\]
and set $\eta_\Gamma=1$ if $\Delta\ne\Delta^*$.  The value is independent
of the choice of $\delta$, since the central character of a self-dual
cuspidal module has order at most two.  We choose the signs in
Definition~\ref{def:cuspidal-extension} compatibly on the orbits of
$\Gamma\mapsto\Gamma^-$.

The following lemma records the extra symmetries.
\begin{lemma}\label{lem:extra-symmetries}
Let $\rho\in\Irr(G_n)$ and let $\Gamma\in\calI(\bbK)$.
\begin{enumerate}
\item If $q$ is odd and $G_n$ is orthogonal, then
\[
 \bbO^\Gamma(u)(\mathrm{det}_n\otimes\rho)
 =\bbO^\Gamma(\sigma_\Gamma u)(\rho).
\]
If $q$ is even and $G_n$ belongs to a full even orthogonal tower, put
$\delta_n(g)=(-1)^{D_n(g)}$, where
$D_n(g)=\operatorname{rank}(g-1)\pmod2$.  Then the same formula holds with
$\mathrm{det}_n$ replaced by $\delta_n$.

\item Suppose that $q$ is odd and $G_n$ is orthogonal.  If $\sp_n$ denotes
the spinor character, then
\[
 \bbO^\Gamma(u)(\sp_n\otimes\rho)
 =\bbO^{\Gamma^-}(u)(\rho).
\]

\item Suppose that $q$ is odd and that $G_n$ is symplectic or full even
orthogonal.  Let $c_n$ be the diagonal automorphism defined by conjugation
with a similitude of nonsquare multiplier.  Then
\[
 \bbO^\Gamma(u)(\rho^{c_n})
 =\bbO^\Gamma(\eta_\Gamma u)(\rho).
\]
\end{enumerate}
Equivalently, on an $r$-dot bubble the first and third symmetries act by
$\sigma_\Gamma^r$ and $\eta_\Gamma^r$, respectively, while the spinor
symmetry carries the $\Gamma$-bubble to the $\Gamma^-$-bubble.  These
formulas also determine every product of the extra symmetries.
\end{lemma}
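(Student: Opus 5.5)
The plan is to compare the two sides through the recursion that characterizes $\bbO^\Gamma(u)$ on irreducibles: the cuspidal value together with the transport rule of Lemma~\ref{lem:choose}. Tensoring by a linear character $\lambda$ of the tower (such as $\det_n$, $\delta_n$ or $\sp_n$), or twisting by $c_n$, is an autoequivalence $\Phi$ of $\bbK G_\bullet$-mod. The first step is to determine how $\Phi$ intertwines the functors $F^\Gamma$, $E^\Gamma$ and the dot $\mathrm X^\Gamma$.

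For $\lambda$ restricted along the Levi embedding $L_{r,d}\cong G_r\times\GL_d(q)$ we have $\lambda|_{L_{r,d}}=\lambda_r\boxtimes\lambda'_d$. By the projection formula for Harish--Chandra induction, $\lambda\otimes F^\Delta(X)\cong F^{\Delta'}(\lambda_r\otimes X)$, where $\M_{\Delta'}=\lambda'_d\otimes\M_\Delta$. The three cases then work out as follows.
\begin{itemize}
\item For $\det_n$, the factor $\lambda'_d$ is $\det\cdot\det^{-\dagger}$, which is trivial on $\GL_d$. So $\Delta'=\Delta$, but the opposition element $\dot x_{r,d}$ has determinant $(-1)^d$ on the added blocks. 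Hence $\mathrm X^\Delta$, which is built from $\widehat{\mathrm X}=\beta_\Delta q^{rd}e_V\dot x e_V$ with $A_\Delta$, is transported to $(-1)^d\mathrm X^\Delta$ in the self-dual case. In the paired case $\mathrm X^\Delta$ involves only $e_Ve_{V^-}e_V$, which has determinant $1$. This produces exactly $\sigma_\Gamma$.
\item For $\sp_n$, restriction to $\GL_d$ is $\zeta\circ\det$, giving $\Delta'=\Delta^-$. The spinor norm of $\dot x$ is compensated by the compatible choice of signs on $\Gamma\mapsto\Gamma^-$, so the dot is carried to the $\Gamma^-$-dot.
\item For $c_n$, conjugation by a similitude $g_\delta$ preserves $P_{r,d}$ and acts on the Levi by a diagonal twist. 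It fixes $\M_\Delta$ up to isomorphism but rescales the representative $\dot x$ by the central element $\delta I_d$. The dot therefore picks up $\omega_\Delta(\delta I_d)=\eta_\Gamma$ in the self-dual case and is untouched otherwise.
\end{itemize}
The cups and caps are carried to cups and caps up to scalars that cancel in closed diagrams. This is where the normalizations $\beta_\Delta$ and $A_\Delta$ from Definition~\ref{def:cuspidal-extension} must be tracked.

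Given this, rescaling the dot by a sign $s$ sends the $r$-dot bubble to $s^r$ times itself. In generating function terms, the $(+)$-bubbles satisfy $\bbO(u)\mapsto\bbO(su)$: for $k_\Gamma=-2$ the leading term $u^{-2}$ is invariant since $s^2=1$, and for $k_\Gamma=-1$ the dot is the identity on cuspidals, so nothing changes. Evaluating on $\Phi(\rho)$ and using naturality gives $\bbO^\Gamma(u)(\Phi\rho)=\bbO^\Gamma(su)(\rho)$ for (1) and (3), and the color change for (2). Products then follow by composing the transport maps.

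The main obstacle is the sign bookkeeping in the self-dual case. There one must verify that conjugating the chosen extension $\widetilde\rho_\Delta$ by $\Phi$ yields $\pm A_\Delta$ exactly as predicted, including the $\kappa_d$ and $\omega_\Delta(\kappa_d)$ factors in the symplectic case and the characteristic-two substitute $\delta_n$. I would do this first on the rank-one cuspidal closures \eqref{eq:closed-bubbles-dim2}, where $\coloredbubble{1}{\Gamma}=0$ makes the sign visible only through higher bubbles.
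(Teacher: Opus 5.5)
Your proposal is correct and is essentially the paper's argument: restrict $\det_n$ (or $\delta_n$), $\sp_n$ and the similitude twist to the Witt Levi, note that the paired-color dot is built only from unipotent averages, and compute the effect on the self-dual dot. That dot picks up $\det(\dot x_{r,d})=(-1)^d$ (Dickson invariant $d \bmod 2$), changes color to $\Gamma^-$ via $\zeta\circ\det$, or picks up $\omega_\Delta(\delta I_d)=\eta_\Gamma$; closing the $r$-th power of the rescaled dot then gives $u\mapsto su$.

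Two small corrections. For level $-1$ colors nothing changes because these colors are paired, so $s=1$; the fact that the dot is the identity on cuspidals is not the reason. Also, adjunction scalars do not cancel automatically in a bubble, since the closure pairs $\eta_R$ with $\varepsilon_L$. Here, however, the twists transport the explicit adjunction maps exactly; alternatively, $\bbO_L(u)=m_L(u)/n_L(u)$ shows that the bubble depends only on the dots.
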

\begin{proof}
The functorial identifications are those of \cite[\S3]{LSZ25}.  Their
extension to the present colors follows directly from the restriction of
the three characters and automorphisms to a Witt Levi.  For odd $q$,
\[
 \mathrm{det}_{n+d}|_{G_n\times\GL_d(q)}
   =\mathrm{det}_n\boxtimes1,
 \qquad
 \sp_{n+d}|_{G_n\times\GL_d(q)}
   =\sp_n\boxtimes(\zeta\circ\det).
\]
In even characteristic the unipotent radicals lie in the kernel of the
Dickson invariant, and on the Levi the two opposite general-linear blocks
give
\[
 \operatorname{rank}(A-1)+\operatorname{rank}(A^{-T}-1)
 =2\operatorname{rank}(A-1).
\]
Hence
$\delta_{n+d}|_{\O(V_n)\times\GL_d(q)}=\delta_n\boxtimes1$.

For a paired color, the dot is formed from the two unipotent radicals and is
unchanged by the determinant or Dickson twist.  For a self-dual color, the
opposition representative has determinant $(-1)^d$; in even characteristic
its Dickson invariant is $d\pmod2$.  This gives $\sigma_\Gamma$.  The
spinor restriction formula carries $\M_\Delta$ to $\M_{\Delta^-}$, and the
compatible choices of the cuspidal intertwiners carry $\mathrm{X}^\Gamma$ to
$\mathrm{X}^{\Gamma^-}$.

For the diagonal automorphism choose compatible similitudes at ranks $n$
and $n+d$.  Their quotient on the added hyperbolic block has general-linear
component $\delta I_d$.  It fixes the paired-color dot.  On the opposition
dot for a self-dual color it acts by
$\omega_\Delta(\delta I_d)=\eta_\Gamma$.  Closing the $r$-th power of the
dot gives the stated factors and hence the substitutions in the bubble
series.  For $\Gamma=X-1,X+1$ one has, respectively,
\[
 \sigma_\Gamma=-1,\qquad
 \Gamma^- =X+1,X-1,\qquad
 \eta_\Gamma=1,-1,
\]
which recovers \cite[Proposition~4.11]{LSZ25}.  The same calculation shows
that $\sigma_\Gamma=-1$ only for $X\pm1$ when $q$ is odd, and only for
$X-1$ when $q$ is even.  This proves the stated transformation laws.  A
transformation law alone does not imply that a twist is separated when the
bubble series is even; separation is established separately in the $q$-odd
parametrization used below.
\end{proof}

When $q$ is odd, for a paired index the scalar in
\eqref{eq:Oodd-sign-transport} is independent of the choice of irreducible representative: the
opposite factor has inverse central character, and at \(-I_{d_\Delta}\) the
inverse value is the same.  In even defining characteristic the Dickson
character transports the same two-runner coordinate of the Lusztig symbol.  Suppressing
the semisimple label and the extension transport, the branching formula is
\[
 F^\Gamma(\rho_{\mu})=\sum_{\nu}\rho_{\nu},
\]
where the sum runs over the primary-label tuples \(\nu\) satisfying
\(\nu_{\Gamma'}=\mu_{\Gamma'}\) for every \(\Gamma'\ne\Gamma\), while
\(\nu_\Gamma\) is obtained from \(\mu_\Gamma\) by one rank-one
Harish--Chandra branching step in the local \(\Gamma\)-primary factor.
When $q$ is odd, for the two
rank-one functors of \cite[(4.21)--(4.22)]{LSZ25}, this specializes to
multiplication of the sign by \(1\) and by \(\zeta(-1)\), respectively.
Equivalently, after passing
to the standard local Fock-space coordinate, one adds a single Fock node; for
a level-$1$ type-$A$ factor this is the usual addition of one box, while for a
level-$2$ factor it is read in the corresponding two-runner, bipartition, or
symbol coordinate.
\begin{lemma}\label{Lem:cuspidal}
Work over $\bbK$, using the Lusztig-symbol convention above when $q$ is
even.  Let $\rho\in\Irr(G_n)$.
\begin{itemize}
    \item[$(1)$] If $G_n=\GL_n(q)$, then $\rho$ is
 $F^\Gamma$-cuspidal for every $\Gamma\in\calI(\bbK)$ if and only if
 $n=0$ and $\rho={\bf 1}_{\GL_0(q)}$.

 \item[$(2)$] If $G_n$ belongs to the unitary, symplectic, or orthogonal
 tower, then $\rho$ is $F^\Gamma$-cuspidal for every
 $\Gamma\in\calI(\bbK)$ if and only if $\rho$ is a cuspidal character of
 $G_n$ (for the fixed Witt tower).
 \end{itemize}
\end{lemma}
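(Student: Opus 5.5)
The plan is to reduce both parts to Harish--Chandra theory via the description of $E^\Gamma$. For $\rho\in\Irr(G_n)$ over $\bbK$, the definition gives
\[
E^\Gamma(\rho)\cong\Hom_{\GL_{d_\Delta}(q)}\bigl(\M_\Delta,{}^*R^{G_n}_{L_{n-d_\Delta,d_\Delta}}(\rho)\bigr),
\]
where $\Delta$ is an irreducible representative of $\Gamma$. So $\rho$ is $F^\Gamma$-cuspidal for every $\Gamma\in\calI(\bbK)$ exactly when, for every $d\ge1$, the restriction ${}^*R^{G_n}_{G_{n-d}\times\GL_d(q)}(\rho)$ has no constituent of the form $\pi\boxtimes\M_\Delta$ with $\M_\Delta$ cuspidal. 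By Lemma~\ref{Lem:isomfunctors}, the two representatives $\Delta,\Delta^*$ of a paired index give isomorphic functors. Hence letting $\Gamma$ range over $\calI(\bbK)$ covers every $\Delta\in\calF=\calF(\bbK)$, that is, every cuspidal module of every $\GL_d(q)$.

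The direction ``cuspidal $\Rightarrow$ all $E^\Gamma\rho=0$'' is immediate. In case (1) the only candidate is $n=0$, where $E^\Gamma$ lands in negative rank and so vanishes. In case (2), a cuspidal $\rho$ has ${}^*R^{G_n}_{L}(\rho)=0$ for every proper split Levi $L$, and in particular for $L=L_{n-d,d}$ with $d\ge1$.

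For the converse, suppose $\rho$ is not cuspidal, respectively $n>0$ in the $\GL$ case. Take $(L,\sigma)$ to be the Harish--Chandra cuspidal support of $\rho$. Up to conjugacy, $L=L_{r,m_1,\ldots,m_t}\cong G_r\times\GL_{m_1}(q)\times\cdots\times\GL_{m_t}(q)$ with $t\ge1$; in type $\GL$ there is no $G_r$ factor, or $r=0$. The cuspidal $\sigma$ then factors as $\sigma_0\boxtimes\M_{\Delta_1}\boxtimes\cdots\boxtimes\M_{\Delta_t}$. Then $\rho\hookrightarrow R^{G_n}_L(\sigma)$, and by adjunction ${}^*R^{G_n}_L(\rho)\ne0$ with $\sigma$ as a quotient. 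Factor the restriction through the intermediate Levi $G_{n-m_t}\times\GL_{m_t}(q)$ by transitivity, ordering the blocks so that the $\GL_{m_t}$ factor is the outermost isotropic block as in the Levi embedding above. This shows that ${}^*R^{G_n}_{L_{n-m_t,m_t}}(\rho)$ has a constituent whose $\GL_{m_t}$-part has cuspidal support $\M_{\Delta_t}$. Since $\M_{\Delta_t}$ is cuspidal, that constituent restricts to $\GL_{m_t}$ as an isotypic sum of $\M_{\Delta_t}$, so $E^{\Gamma_t}(\rho)\ne0$, where $\Gamma_t$ is the index determined by $\Delta_t$.

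The main obstacle is justifying this last step carefully. The Levi $L$ must be conjugated into the standard form with a chosen $\GL$-block placed last. In the classical case this uses the relative Weyl group $W_n$ of Theorem~\ref{Prop:TypeBC}: a signed permutation can reorder the $\GL$-blocks, possibly replacing $\M_{\Delta_i}$ by $\M_{\Delta_i^*}$, which is harmless by Lemma~\ref{Lem:isomfunctors}. With the ordering fixed, semisimplicity over $\bbK$ together with the Mackey formula (Theorem~\ref{thm:HC-Mackey}) shows that restricting to the intermediate Levi preserves the nonvanishing of the relevant $\M_{\Delta_t}$-isotypic component. Finally, in case (1) this argument also shows that the only $\rho$ with trivial cuspidal support, meaning $t=0$, is $\mathbf 1_{\GL_0(q)}$.
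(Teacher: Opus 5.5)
Your argument is correct, but it does not follow the paper's route.

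\textbf{The paper's route.} The paper proves the lemma through the Jordan decomposition. It invokes the branching rule by which $F^\Gamma$ adds one node to the $\Gamma$-primary Lusztig label. It then reads simultaneous $F^\Gamma$-cuspidality as the absence of any removable node in every primary label. From this it concludes that for $\GL$ only the vacuum survives, and asserts that for the other towers this absence ``is precisely'' Harish--Chandra cuspidality.

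\textbf{Your route.} You stay entirely inside Harish--Chandra theory, using only:
\begin{enumerate}
\item[(i)] the formula $E^\Gamma(\rho)\cong\Hom_{\GL_{d_\Delta}(q)}\bigl(\M_\Delta,{}^*R^{G_n}_{L_{n-d_\Delta,d_\Delta}}(\rho)\bigr)$;
\item[(ii)] existence of a cuspidal support;
\item[(iii)] transitivity of Harish--Chandra restriction through $L_{r,m_1,\ldots,m_t}\subseteq L_{n-m_t,m_t}$;
\item[(iv)] the fact that $\{\M_\Delta\}_{\Delta\in\calF}$ exhausts the cuspidal modules of all $\GL_d(q)$;
\item[(v)] Lemma~\ref{Lem:isomfunctors} for paired indices whose fixed representative is $\Delta_t^*$.
\end{enumerate}

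\textbf{Comparison.} Your approach is more elementary and self-contained. It needs neither Lusztig's parametrization nor the full-orthogonal and even-$q$ conventions. It also actually proves the equivalence with Harish--Chandra cuspidality that the paper's final sentence only asserts. The paper's route has a different advantage: it identifies the simultaneous highest-weight objects directly in the combinatorial labels used afterwards, as in Lemma~\ref{lem:Fgamma-cuspidal-seeds}.

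\textbf{Your ``main obstacle'' is not really there.} In the paper's Levi embedding $\diag(A_t,\ldots,A_1,B,A_1',\ldots,A_t')$, the $\GL_{m_t}$-block is already outermost, and $P_{r,m_1,\ldots,m_t}\subseteq P_{n-m_t,m_t}$. So once the cuspidal pair is conjugated to standard position, transitivity alone shows that some constituent of ${}^*R_{L_{n-m_t,m_t}}(\rho)$ has $\GL_{m_t}$-factor exactly $\M_{\Delta_t}$. No Weyl-group reordering or Mackey formula is needed.
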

\begin{proof}
By adjunction, \(E^\Gamma(\rho)\ne0\) if and only if \(\rho\) occurs as a
composition factor of \(F^\Gamma(\sigma)\) for some representation \(\sigma\)
of smaller rank.  The branching rule above says exactly that this happens when
the \(\Gamma\)-primary label admits the corresponding removable rank-one
move.  Hence simultaneous \(F^\Gamma\)-cuspidality is equivalent to the
absence of every such removable primary factor.

For \(\GL_n(q)\), every non-zero-rank simple module has a non-empty primary
label and hence is obtained from a smaller rank by removing the last creation
step associated with one of the indices $\Gamma$ in its primary decomposition; the only simultaneous highest
weight object in the full colored tower is therefore the vacuum
\(\mathbf1_{\GL_0(q)}\).  For the other classical Witt towers, the absence of
all general-linear rank-one removals is precisely the generalized
Harish--Chandra cuspidality condition.  Thus the simultaneous
\(F^\Gamma\)-cuspidal simples are exactly the ordinary cuspidal characters,
as claimed.
\end{proof}

\subsubsection{Kac--Moody algebra attached to the dot spectrum}
For \(\k\in\{\bbK,\bbF\}\) and \(\Gamma\in\calI(\k)\), let
\(m^\Gamma_\rho(u)\) and \(n^\Gamma_\rho(u)\) be the monic minimal
polynomials of the dots on \(F^\Gamma\rho\) and \(E^\Gamma\rho\),
respectively; the polynomial of the endomorphism of the zero object is \(1\).
Let \(I_\Gamma(\k)\) be the union of the root sets of \(m^\Gamma_\rho(u)\),
as \(\rho\) runs through the irreducible objects of
\(\k G_\bullet\mathrm{-mod}\).  When \(\k=\bbK\) we write simply
\(I_\Gamma\).  Here the upward and downward functors are named
\(F^\Gamma\) and \(E^\Gamma\), respectively, so their names are reversed
relative to Section~\ref{sec:HeistoKac}; the upward and downward minimal
polynomials are still denoted by \(m\) and \(n\).

For the full color set put
\[
 \widehat X:=\prod_{\Gamma\in\calI(\k)}X_\Gamma,
 \qquad
 X_\Gamma:=\bigoplus_{i\in I_\Gamma(\k)}\mathbb Z\Lambda_i.
\]
Thus each color coordinate has finite support in its local spectrum, but no
finite-support condition is imposed across the set of colors.  For an
irreducible object $L$, write $\epsilon_i^\Gamma(L)$ and
$\phi_i^\Gamma(L)$ for the multiplicities of $i$ in the minimal polynomials
of the $\Gamma$-dot on $E^\Gamma L$ and $F^\Gamma L$, respectively,
so $\epsilon_i^\Gamma(L)=\operatorname{ord}_{u=i}n_L^\Gamma(u)$ and
$\phi_i^\Gamma(L)=\operatorname{ord}_{u=i}m_L^\Gamma(u)$, and define
\[
 \operatorname{wt}_\Gamma(L)
 :=\sum_{i\in I_\Gamma(\k)}
   (\phi_i^\Gamma(L)-\epsilon_i^\Gamma(L))\Lambda_i,
 \qquad
 \operatorname{wt}(L):=(\operatorname{wt}_\Gamma(L))_\Gamma
 \in\widehat X.
\]
For $\lambda\in\widehat X$, let $\mathcal C_\lambda$ be the Serre
subcategory generated by the simples of full weight $\lambda$.  If two
simples have different full weights, they differ in some color $\Gamma$;
the one-color weight-decomposition argument of Section~\ref{sec:HeistoKac}
then gives different bubble central characters.  Hence they lie in different
blocks, and therefore
\[
 \k G_\bullet\mathrm{-mod}
 =\bigoplus_{\lambda\in\widehat X}\mathcal C_\lambda.
\]
Moreover, Lemma~\ref{pg}, applied to any finite color set containing
$\Gamma$, shows that $E_i^\Gamma$ and $F_i^\Gamma$ change only the
$\Gamma$-coordinate, by $+\alpha_i$ and $-\alpha_i$, respectively,
after the preceding exchange of functor names and negation of the
Section~\ref{sec:HeistoKac} weight.  Thus the present weight is
$\sum_i\operatorname{ord}_{u=i}(m_L^\Gamma/n_L^\Gamma)\Lambda_i$ in
each color.  We write
\(
\frakU_{\widehat X}(\bigoplus_\Gamma
\fraks\frakl'_{I_\Gamma(\k)})
\)
for the strict 2-category presented by the object set $\widehat X$,
the local generators $E_i^\Gamma,F_i^\Gamma$, and all Kac--Moody
relations supported on finite sets of colors.  Equivalently, it is the
filtered 2-colimit of the finite-color 2-categories under the canonical
generator-preserving 2-functors; every 1- and 2-morphism has finite color
support.  Its
root lattice is the direct sum of the local root lattices, whereas its object
lattice is the product $\widehat X$.

\begin{theorem}\label{ThmB}
Let \(\k\in\{\bbK,\bbF\}\).  With the spectra \(I_\Gamma(\k)\), the
generalized eigenfunctors of the colored dots define a strict \(\k\)-linear
Kac--Moody 2-representation
\[
\Psi':\frakU_{\widehat X}\!\left(
\bigoplus_{\Gamma\in\calI(\k)}
\fraks\frakl'_{I_\Gamma(\k)}\right)
\longrightarrow
\mathfrak{Cat}_\k,\qquad \lambda\longmapsto\mathcal C_\lambda.
\]
\end{theorem}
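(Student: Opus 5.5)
The plan is to deduce Theorem~\ref{ThmB} from Theorem~\ref{Thm:quantumheisenberg} combined with the general Heisenberg-to-Kac--Moody construction of Theorem~\ref{thm:HtoK}, applied colour by colour and then assembled over the infinite colour set.

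\textbf{Finite colour sets.} For a finite subset $S\subseteq\calI(\k)$, restrict the functor $\Psi$ of Theorem~\ref{Thm:quantumheisenberg} along the inclusion
\[
\symHeis_S:=\bigodot_{\Gamma\in S}\Heis^{(\Gamma)}_{k_\Gamma}(z^\Gamma,t^\Gamma)\hookrightarrow\symHeis_{\calI(\k)}.
\]
This makes $\calC=\k G_\bullet\mathrm{-mod}$ a module category over $\symHeis_S$. The hypotheses of Section~\ref{sec:HeistoKac} then need to be checked.
\begin{itemize}
\item Over a field $\k\in\{\bbK,\bbF\}$, each $\k G_n\mathrm{-mod}$ is a finite-length abelian category with finite-dimensional Hom spaces, so $\calC$ is locally finite abelian.
\item The functors $F^\Gamma$ and $E^\Gamma$ are exact, since they are Harish--Chandra induction and restriction tensored with a projective cuspidal module.
\item The dots are invertible by Proposition~\ref{prop:invertible}. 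Hence the spectra lie in $\k^\times$ and are $\sigma_\Gamma$-stable, by the argument of \cite[Lemma~4.6]{BSW20a}.
\end{itemize}
Theorem~\ref{thm:HtoK}, in its quantum version, then gives a 2-representation $\Psi'_S$ of the Kac--Moody 2-category for $\bigoplus_{\Gamma\in S}\mathfrak{sl}'_{I_\Gamma(\k)}$ on the $S$-weight decomposition of $\calC$. Here the generators act by the generalized eigenfunctors and by the dot, cup, cap and crossing formulas listed in that theorem.

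Before this step, the conventions must be matched. Upward strands are $F^\Gamma$ here, while in Section~\ref{sec:HeistoKac} they are $E$. The swap is handled by exchanging $E\leftrightarrow F$ and negating weights, as explained just before the statement. The spectrum $I_\Gamma(\k)$ defined in the statement is exactly the $(\Gamma)$-spectrum of Section~\ref{sec:wtdecompC}.

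\textbf{Refining to full weights.} The weight decomposition for $S$ is coarser than the decomposition $\calC=\bigoplus_{\lambda\in\widehat X}\calC_\lambda$. I refine it by the following steps.
\begin{enumerate}
\item The functors $E_i^\Gamma$ and $F_i^\Gamma$ with $\Gamma\in S$ preserve each $\Gamma'$-coordinate for $\Gamma'\notin S$. This follows from Lemma~\ref{lem:choose} in the case $\alpha\neq\beta$, which is bubble sliding (Lemma~\ref{lem:colorbubbleslide}).
\item Since the images of the generating 2-morphisms are natural transformations, they restrict to the Serre subcategories $\calC_\lambda$.
\item So $\Psi'_S$ refines to a 2-functor whose objects are the full weights $\lambda\in\widehat X$, with $\lambda\mapsto\calC_\lambda$.
\end{enumerate}

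\textbf{Passing to the colimit.} For $S\subseteq S'$, the restriction of $\Psi'_{S'}$ to the generators coloured by $S$ agrees with $\Psi'_S$. This holds because the images are given by explicit formulas in the dots, crossings and eigenprojections of those colours alone, and these do not depend on the ambient finite colour set. Since $\frakU_{\widehat X}$ is the filtered 2-colimit of its finite-colour sub-2-categories, the compatible family $(\Psi'_S)_S$ induces a unique strict 2-functor $\Psi'$, as claimed.

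\textbf{Main obstacle.} The key step is applying Theorem~\ref{thm:HtoK} here. Its proof leans on \cite[Theorem~4.11]{BSW20a}, which is stated for Heisenberg categories of arbitrary level $k$, with the mixed-colour relations handled separately. What has to be checked is that nothing in that argument needs $k\ge0$ or a particular normalization of $t$. In particular, the inversion relations \eqref{KMinverse3} for $\langle h_i,\lambda\rangle$ of either sign must come from the level $-1$ and level $-2$ Mackey isomorphisms of Lemma~\ref{lem:quantumMackey}, via the eigenspace decomposition of the bubble generating function $\bbO^\Gamma_L(u)=m^\Gamma_L(u)/n^\Gamma_L(u)$. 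I would first verify this by comparing the degree of $m/n$, which is $k_\Gamma$, with $\sum_i\langle h_i,\operatorname{wt}_\Gamma(L)\rangle$ after the sign convention swap.
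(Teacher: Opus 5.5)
Your proposal is correct and follows essentially the same route as the paper: apply the finite-colour Heisenberg-to-Kac--Moody theorem of Section~\ref{sec:HeistoKac} to each finite colour set, use the full weight decomposition over $\widehat X$ (which the paper sets up just before the statement via bubble central characters and Lemma~\ref{pg}), and glue the compatible finite-colour 2-representations using that every generator and relation has finite colour support. The ``main obstacle'' you flag is not a real one, since the argument of \cite{BSW20a} that the paper invokes is uniform in the central charge $k\in\mathbb Z$, so it covers the levels $-1$ and $-2$ used here.
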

\begin{proof}
Theorem~\ref{Thm:quantumheisenberg} makes
\(\k G_\bullet\mathrm{-mod}\) a locally finite module category over the
colored symmetric product of quantum Heisenberg categories.  Section~\ref{sec:HeistoKac}
is stated for finitely many colors.  Its construction extends to the full
color set because every generating 2-morphism and every defining relation involves only finitely
many colors (in fact at most three).  The full weight decomposition above
makes the object part of the action well defined.  For each finite
\(J\subset\calI(\k)\), the theorem of Section~\ref{sec:HeistoKac} gives
the required 2-representation on the corresponding coordinates; these
structures agree under inclusions \(J\subset J'\), since they use the same
eigenfunctors, weight shifts, and local diagrams.  Consequently the local
actions define the displayed 2-representation on the full weight set
\(\widehat X\) over either coefficient field.
\end{proof}

\subsubsection{Deligne--Lusztig extension and outer-class input}
For a self-dual cuspidal character
\(\chi_\theta=\pm R_{T_c}^{\GL_d}(\theta)\), with \(T_c\) a Coxeter torus,
we use the non-connected Deligne--Lusztig extension
\[
 R_{T_c\sigma}^{\GL_d\sigma}(\widetilde\theta),
\]
whose value is the trace of \(g\sigma\) on the corresponding
Deligne--Lusztig cohomology with the \(\widetilde\theta\)-isotypic torus
factor; see \cite{DM94,Shu22}.  On a supported component, for \(u\) unipotent
in the connected centralizer of \(\sigma\), the character formula has the
form
\begin{equation}\label{eq:DL-extension-Green}
 \widetilde\chi_\theta(\sigma u)
   =\widetilde\theta(\sigma)\,Q_\sigma(u),
\end{equation}
where \(Q_\sigma\) is the corresponding Green function.  After fixing the
extension sign, \(\widetilde\theta(\sigma)\) is fixed; hence the values used
below on unipotent classes are independent of the regular torus character
\(\theta\), equivalently of the self-dual color.  This Deligne--Lusztig
statement is valid for every finite field, including characteristic two.
The assumption that \(q\) is odd enters below only through Feit's concrete
outer-class description and the matrix model used for the numerical sum.

For that odd-characteristic calculation put
\[
 \tau(g)=g^{-\mathsf T},\qquad
 H_{2n}^{\mathsf T}(q)=\GL_{2n}(q)\rtimes\langle\tau\rangle.
\]

We use \cite[Lemmas~3E(ii) and~3F]{Feit} for the two outer-class facts below and \cite[Theorem~10J]{Feit}, together with \cite{Shu22}, for the extension-character support formula.

\begin{lemma}\label{3E}
Assume \(q\) is odd.  The elements \(\tau x\) with
\(x=-x^{\mathsf T}\in\GL_{2n}(q)\) form the distinguished order-four
outer class.  Moreover, if \(x\) is nonsingular skew-symmetric,
\(c\in\mathbb F_q^\times\), and \(v\ne0\), then
\(\tau(x+cvv^{\mathsf T})\) is conjugate to \(\sigma_0u\), where
\(\sigma_0\) is in this order-four class and \(u\) is a commuting
transvection.
\end{lemma}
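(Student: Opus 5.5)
The statement is Lemma~\ref{3E}, which the paper attributes to \cite[Lemmas~3E(ii) and~3F]{Feit}. The plan is to reduce it to an elementary computation in the coset $\GL_{2n}(q)\tau$ and then invoke Feit for the classification facts, reproving as much as possible directly.

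\textbf{Step 1: the square map.} Write the coset element as $\tau x$, meaning $g\mapsto x\,g^{-\mathsf T}x^{-1}$ composed appropriately. One computes
\[
 (\tau x)^2=x^{-\mathsf T}x
\]
up to the chosen convention. For $x$ skew-symmetric this equals $-1$, so $\tau x$ has order four, and its square is the central element $-I$.

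\textbf{Step 2: the skew elements form one class.} Conjugation by $g$ sends $\tau x$ to $\tau\, g^{\mathsf T}xg$, that is, it acts on $x$ as congruence. Since $q$ is odd, all nonsingular alternating forms on $\mathbb F_q^{2n}$ are congruent. Hence the elements $\tau x$ with $x$ skew-symmetric form a single $\GL_{2n}(q)$-class. The centralizer of such an element is $\Sp_{2n}(q)$ (times the center), which matches the distinguished order-four class in Feit's description. Whether this class is the unique one whose square is $-I$, rather than merely a class with that property, is a classification statement; I would cite \cite[Lemma~3E(ii)]{Feit} for it.

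\textbf{Step 3: the perturbed element.} Fix nonsingular skew $x$ and put $y=x+cvv^{\mathsf T}$, which is a symmetric rank-one perturbation. The aim is to write $\tau y=\sigma_0u$ with $\sigma_0=\tau x$.
\begin{itemize}
 \item We have $(\tau x)^{-1}\tau y=x^{-1}y=1+c\,x^{-1}vv^{\mathsf T}$.
 \item Since $v^{\mathsf T}x^{-1}v=0$ by skewness, the rank-one term $c\,x^{-1}vv^{\mathsf T}$ squares to zero. So $u=1+c\,x^{-1}vv^{\mathsf T}$ is a transvection.
 \item Commutation of $u$ with $\sigma_0$ amounts to $u$ lying in $\Sp(x)$, that is, $u^{\mathsf T}xu=x$. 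Expanding, the cross terms are $c\,vv^{\mathsf T}$ and $c\,v(x^{-\mathsf T}v)^{\mathsf T}x$, up to signs. These cancel because $x^{-\mathsf T}=-x^{-1}$, and the quadratic term vanishes again since $v^{\mathsf T}x^{-1}v=0$.
 \item Symplectic transvections are the standard unipotent elements, so $u$ is a commuting transvection in the connected centralizer, as required.
\end{itemize}
Depending on the ordering convention for $\tau x$ versus $x\tau$, the factorization may only hold up to conjugation; this is where the lemma's ``conjugate to'' is used.

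\textbf{Main obstacle.} The delicate point is sign and convention bookkeeping: fixing the exact action of $\tau$ and the ordering in $\tau x$ so that both the square formula in Step~1 and the commutation check in Step~3 come out consistently. I would fix these conventions first, then verify the identity $u^{\mathsf T}xu=x$ explicitly, and defer to Feit only for the uniqueness of the outer class.
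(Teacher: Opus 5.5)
Your argument is correct. It takes a different route from the paper, which gives no computation for this lemma and simply imports it from \cite[Lemmas~3E(ii) and~3F]{Feit}.

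Your direct verification proves everything the lemma asserts, and the one appeal to Feit you keep is unnecessary. Since $(\tau x)^2=x^{-\mathsf T}x$, one has $(\tau x)^2=-I$ if and only if $x^{\mathsf T}=-x$. So the skew elements are exactly the elements of the outer coset of $H_{2n}^{\mathsf T}(q)$ with square $-I$, which is a conjugation-invariant condition because $-I$ is central there. Congruence of nondegenerate alternating forms then makes them a single class. Its centralizer in $\GL_{2n}(q)$ is exactly $\Sp(x)$, since the scalars $\pm I$ already lie in it.

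Likewise, no conjugation is needed in the second part. With $u=I+c\,x^{-1}vv^{\mathsf T}$ one has $\tau(x+cvv^{\mathsf T})=(\tau x)u$ on the nose. Since $u^{\mathsf T}=I-c\,vv^{\mathsf T}x^{-1}$, the cross terms in $u^{\mathsf T}xu$ are $+c\,vv^{\mathsf T}$ and $-c\,vv^{\mathsf T}$. (The second should be written $c\,v(x^{-1}v)^{\mathsf T}x$, not $c\,v(x^{-\mathsf T}v)^{\mathsf T}x$.) The quadratic term vanishes because $v^{\mathsf T}x^{-1}v=0$, so $u^{\mathsf T}xu=x$. Thus $(\tau x)u$ is the Jordan decomposition, and $u$ is the symplectic transvection $w\mapsto w-c\,B(x^{-1}v,w)\,x^{-1}v$ for $B(a,b)=a^{\mathsf T}xb$.

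What the citation buys the paper is the match between this class and the labelled class $\sigma_0$ in Feit's classification. Lemma~\ref{thm:ext} and the normalization of $\widetilde\chi_\Delta(\sigma_0)$ rely on that match. What your computation buys is a self-contained check of the actual content: the order, the single class, and an explicit commuting unipotent factor. It uses $q$ odd exactly where it matters, namely skew $=$ alternating and $-I\neq I$.
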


\begin{lemma}\label{thm:ext}
Assume \(q\) is odd and let \(\widetilde\chi_\theta\) be an extension of a
\(\tau\)-stable cuspidal character of \(\GL_{2n}(q)\).  On the outer
classes in Feit's support,
\[
 \widetilde\chi_\theta(\sigma x)
   =\widetilde\theta(\sigma)\,\zeta_\theta(x)
\]
for the corresponding cuspidal character \(\zeta_\theta\) of
\(Z_G(\sigma)\); on the outer classes excluded by the support criterion the
extension character vanishes.
\end{lemma}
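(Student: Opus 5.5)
This lemma is essentially a citation to Feit's Theorem~10J together with the non-connected Deligne--Lusztig formalism of \cite{DM94,Shu22}, so the plan is to assemble those inputs rather than to compute from scratch.

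\emph{Step 1: realize the extension character.} Write \(G=\GL_{2n}(q)\) and \(\widetilde G=H_{2n}^{\mathsf T}(q)=G\rtimes\langle\tau\rangle\). The \(\tau\)-stable cuspidal \(\chi_\theta=\pm R_{T_c}^{G}(\theta)\) extends to \(\widetilde G\); up to the fixed extension sign this extension is \(\widetilde\chi_\theta=\pm R_{T_c\sigma}^{G\sigma}(\widetilde\theta)\), where \(T_c\) is chosen \(\sigma\)-stable and \(\widetilde\theta\) extends the regular character \(\theta\).

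\emph{Step 2: reduce to connected centralizers.} Apply the character formula for non-connected Deligne--Lusztig induction (the quasi-semisimple Jordan decomposition formula of \cite{DM94}) to an outer element \(\sigma x\). Put it in the form \(\sigma x=\sigma_s u\) with \(\sigma_s\) quasi-semisimple and \(u\) unipotent in \(Z_G(\sigma_s)^\circ\). The formula then expresses \(\widetilde\chi_\theta(\sigma_s u)\) as a sum over \(G\)-conjugates of \(T_c\sigma\) containing \(\sigma_s\) of \(\widetilde\theta\) times Green functions of \(Z_G(\sigma_s)^\circ\). This makes the answer a Deligne--Lusztig character of the centralizer, which is of symplectic/orthogonal type.

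\emph{Step 3: determine the support.} If no conjugate of the Coxeter torus \(T_c\sigma\) contains \(\sigma_s\), the sum is empty and \(\widetilde\chi_\theta\) vanishes; these are the classes excluded by Feit's support criterion. Otherwise, Lemma~\ref{3E} identifies \(\sigma_s\) with an element \(\sigma\) of the distinguished order-four class. The torus \(T_c\cap Z_G(\sigma)\) is then a Coxeter-type torus of \(Z_G(\sigma)\). Combining this with the regularity of \(\theta\), the surviving sum equals \(\widetilde\theta(\sigma)\,\varepsilon R_{T_c\cap Z_G(\sigma)}^{Z_G(\sigma)}(\theta|)(x)\), which is \(\widetilde\theta(\sigma)\zeta_\theta(x)\) with \(\zeta_\theta\) cuspidal.

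\emph{Step 4: fix the signs.} Match the sign \(\varepsilon\) with the chosen extension and with the sign of \(\zeta_\theta\). This is exactly Feit's Theorem~10J, which I would quote for the final identification.

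The main obstacle is Steps 3 and 4: checking that the restricted regular character stays in general position, so that \(\zeta_\theta\) is irreducible cuspidal, and that the signs agree. I would handle both by deferring to \cite[Theorem~10J]{Feit} and \cite{Shu22}, and verify only that our normalization of \(\widetilde\theta(\sigma)\) matches theirs.
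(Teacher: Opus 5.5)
Your bottom line matches the paper. The paper gives no independent proof of this lemma: it records the statement as a quotation of \cite[Theorem~10J]{Feit}, with \cite[Lemmas~3E and~3F]{Feit}, and \cite{Shu22}. Since you ultimately defer to the same sources, at the level you actually rely on you are doing the same thing.

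The sketch around the citation, however, contains a step that would fail if you tried to make the \cite{DM94} route self-contained.

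\emph{The statement uses a different decomposition from yours.} The formula is organized by the $2$-part/odd-part decomposition of an outer element. Here $\sigma$ is the order-four $2$-part and $x\in Z_G(\sigma)$ is the whole odd part, which may have a nontrivial semisimple component. Lemma~\ref{lem:Ominus-rank-two-sum} uses exactly the case of odd-order semisimple $x$, where it needs $\zeta_\theta(x)=0$. In your Step~2 the quasi-semisimple part $\sigma_s$ absorbs that semisimple component, so $\sigma_s$ is in general not in the order-four class.

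\emph{Lemma~\ref{3E} cannot place $\sigma_s$ in that class.} It only describes the order-four class and its transvection perturbations; it says nothing about which quasi-semisimple elements lie in conjugates of $T_c\sigma$. Take the $\sigma_0$-stable Coxeter torus, on which $\sigma_0$ acts by $t\mapsto t^{-q^n}$, and recall $\sigma_0^2=-1$. Then the coset $T_c\sigma_0$ contains $s\sigma_0$ for any odd-order $\sigma_0$-fixed $s\neq1$, which is not in the order-four class. It also contains outer involutions $t\sigma_0$ with $t^{q^n-1}=-1$, since $(t\sigma_0)^2=t\cdot t^{-q^n}\cdot(-1)=1$. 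So ``$\sigma_s$ lies in a conjugate of $T_c\sigma$'' is not the same condition as ``the $2$-part is conjugate to $\sigma_0$''. Your identification of the empty-sum locus with the complement of Feit's support is therefore unproved.

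\emph{What your sketch actually yields.} As written, Steps~2--3 give the formula only for unipotent $x$. To reach the stated formula you would still need two things:
\begin{enumerate}
\item Compare the non-connected character formula at $s\sigma_0\cdot u$ with the ordinary Deligne--Lusztig formula for $\zeta_\theta$ at $su$ in $Z_G(\sigma_0)$. Use $C_{\mathbf G}(s\sigma_0)^\circ=C_{Z_{\mathbf G}(\sigma_0)}(s)^\circ$ for $s$ of odd order, which holds because $\sigma_0$ and $s$ are powers of $s\sigma_0$.
\item Determine separately which quasi-semisimple classes in $T_c\sigma_0$ make up Feit's support.
\end{enumerate}
That work is precisely the content of the theorem of Feit that both you and the paper quote.
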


\begin{remark}
When \(q\) is even, the non-connected Deligne--Lusztig construction gives the
cuspidal extension, and \eqref{eq:DL-extension-Green} remains valid.  The only linear classical elementary divisor is then \(X-1\).  Feit's outer-class
description in Lemmas~\ref{3E}--\ref{thm:ext} is used only for odd $q$.
\end{remark}

\subsubsection{Bubble normalization}
\label{sec:normalization}

This subsection compares the categorical normalization with the numerical
character calculation.  Assume throughout that $\Delta^*=\Delta$.
Thus $\Gamma=\Delta$, and we use $\Delta$ throughout the character and matrix calculations in this subsection.
We write $\widetilde\rho_\Delta$ and $\widetilde\chi_\Delta$ for the
chosen extension of Definition~\ref{def:cuspidal-extension} and its
character.  By Remark~\ref{lem:extension-sign-independence}, all formulas
below are unchanged if the opposite extension is used together with the
opposite sign of $\beta_\Delta$.  Put
\[
d=d_\Delta,\qquad
\dot{x}=\dot{x}_{r,d},\qquad
P=P_{r,d},\qquad
V=V_{r,d},\qquad e=e_V,
\]
and let $\sigma=\sigma_d$ be the generator of the crossed extension
\eqref{eq:twisted-extension}.  For $a\in P$, write
\[
a=b(a)\times c(a)\in G_r\times\GL_d(q)
\]
for its Levi projection.

\begin{lemma}\label{Lem:Key}
Let $V_\chi\in\Irr(G_r)$ be irreducible and $F^\Delta$-cuspidal, with
character $\chi$, and assume the corresponding index $\Gamma=\Delta$ lies in $\calI_{-2}(\bbK)$.  Then
\begin{equation*}
\bbO^\Delta(u)(V_\chi)
=u^2+\mu_\Delta(V_\chi)u+(t^\Delta)^2,
\end{equation*}
where
\begin{equation}\label{eq:mu-exact}
\mu_\Delta(V_\chi)
=-\frac{\beta_\Delta q^{rd}\omega_\Delta(\kappa_\Delta)}
{|V|\chi(1)\chi_\Delta(1)}
\sum_{\substack{u,u'\in V\\
\dot{x}u\dot{x}u'\dot{x}\in P}}
\chi\!\left(b(\dot{x}u\dot{x}u'\dot{x})\right)
\widetilde\chi_\Delta\!\left(
 c(\dot{x}u\dot{x}u'\dot{x})\sigma\right).
\end{equation}
\end{lemma}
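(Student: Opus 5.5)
The plan is to combine Proposition~\ref{Prop:dim2} with the identity $\bbO_L(u)=m_L(u)/n_L(u)$ from \cite[Lemma~4.17]{BSW20a}. After that, one trace computation identifies the linear coefficient.

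First, since $V_\chi$ is $F^\Delta$-cuspidal, $E^\Delta V_\chi=0$, so $n_{V_\chi}(u)=1$. Hence $\bbO^\Delta(u)(V_\chi)=m_{V_\chi}(u)$, the minimal polynomial of $X=\mathrm X^\Delta(V_\chi)$ on $F^\Delta V_\chi$. Proposition~\ref{Prop:dim2} shows that $1$ and $X$ form a basis of $\End(F^\Delta V_\chi)$. It also gives $X^2+\mu X+(t^\Delta)^2=0$, so this quadratic is the minimal polynomial, with $\mu=\mu_\Delta(V_\chi)$. This settles the shape $u^2+\mu u+(t^\Delta)^2$. It remains to evaluate $\mu$.

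To compute $\mu$, I will pass to a trace, using that $F^\Delta V_\chi\cong V_\chi\boxtimes \M_\Delta$ induced from $P$. The endomorphism algebra is two-dimensional and spanned by $1$ and $X$, and over $\bbK$ it is semisimple. The two roots of $u^2+\mu u+(t^\Delta)^2$ give the eigenvalues of $X$ on two distinct simple constituents. Rather than locating the constituents, I will use the closure $\tau_M$ from the proof of Proposition~\ref{Prop:dim2}. Writing $X^2=c+dX$ and applying $\tau_M$ to $X^3=cX+dX^2$, the values $\tau(1)=-(t z)^{-1}$, $\tau(X)=0$ and $\tau(X^2)=t/z$ give $\tau(X^3)=d\,t/z$. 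Hence $\mu=-d=-\tau(X^3)\,z/t$, which matches the remark following Proposition~\ref{Prop:dim2}.

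The main work is to evaluate $\tau(X^3)=\varepsilon_L\circ E(X^3)\circ\eta_R$ explicitly, with $\eta_R$ the right cup. The cup inserts $-(tz)^{-1}f_1^*\otimes e\otimes v_1$. Applying $X$ three times replaces $e$ by $\widehat{\mathrm X}^3=\beta^3q^{3rd}e\dot x e\dot x e\dot x e$ and $v_1$ by $A^3v_1=\omega A v_1$, where $\omega=\omega_\Delta(\kappa_\Delta)$. I then expand $e\dot x e\dot x e\dot x e$ as $|V|^{-2}\sum_{u,u'}e\,\dot x u\dot x u'\dot x\,e$. The Levi projection in $\varepsilon_L$ keeps only the terms with $\dot xu\dot xu'\dot x\in P$ and sends each of them to $b\times c$. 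On $V_\chi\otimes\M_\Delta$ this produces the operator $\chi$-side $b$ tensored with $c$ followed by $A$.

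Here is the step I expect to be the real obstacle. The resulting endomorphism of $V_\chi$ is central by Schur, so I take its trace and divide by $\chi(1)$. I then contract $\M_\Delta$, using that the balanced tensor identifies $f_1^*(\,\cdot\,v_1)$ with the normalized trace $\chi_\Delta(1)^{-1}\operatorname{tr}$. The operator $\rho_\Delta(c)A_\Delta$ equals $\widetilde\rho_\Delta(c\sigma)$, so its trace is $\widetilde\chi_\Delta(c\sigma)$. The scalars to collect are $\beta^3q^{3rd}|V|^{-2}$, the cup factor, $\omega$, and the factor $z/t$. Using \eqref{eq:beta-normalization} to trade $\beta^2q^{2rd}/|V|$ for $-(t^\Delta)^2$ should leave the prefactor $-\beta q^{rd}\omega/(|V|\chi(1)\chi_\Delta(1))$, which gives \eqref{eq:mu-exact}. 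The delicate points are the sign bookkeeping and the order of $A$ relative to $\rho(c)$, that is, whether the element is $c\sigma$ or $\sigma c$. I will check these by re-running the case of the dot generator from Lemma~\ref{lrdots}(a).
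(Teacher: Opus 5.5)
Your proposal is correct and is essentially the paper's proof: closing the quadratic relation of Proposition~\ref{Prop:dim2} against one more dot gives $\mu_\Delta(V_\chi)=-(z^\Delta/t^\Delta)$ times the three-dot bubble. That bubble is then evaluated by expanding $e\dot xe\dot xe\dot xe$, applying the Levi projection, using $A_\Delta^3=\omega_\Delta(\kappa_\Delta)A_\Delta$ (so the element is indeed $c\sigma$, because $\rho_\Delta(c)A_\Delta=\widetilde\rho_\Delta(c\sigma)$), and trading $\beta_\Delta^2q^{2rd}/|V|$ for $-(t^\Delta)^2$ via \eqref{eq:beta-normalization}. Your derivation of the quadratic shape from $\bbO=m/n$ with $n=1$ fills in a point the paper leaves implicit, and the contraction step you flagged is justified as follows: moving $h\in\GL_d(q)$ through $e\dot xe\dot xe\dot xe$ produces $\theta_d(h)$ on the right, which $\sigma$ absorbs, so $\sum\chi(b)\widetilde\rho_\Delta(c\sigma)$ commutes with $\GL_d(q)$, acts by a scalar on $\M_\Delta$, and hence $f_1^*(\,\cdot\,v_1)$ agrees with $\chi_\Delta(1)^{-1}\operatorname{tr}$ on it.
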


\begin{proof}
By Proposition~\ref{Prop:dim2}, on $F^\Delta(V_\chi)$ one has
\begin{equation}\label{eq:X-quadratic-correct}
(\mathrm{X}^\Delta)^2
=-\mu_\Delta(V_\chi)\mathrm{X}^\Delta-(t^\Delta)^2\id.
\end{equation}
Close the free $\Delta$-strand by the adjunction cup and cap.  By
definition, the closure of $(\mathrm{X}^\Delta)^m$ is the clockwise $\Delta$-bubble
with $m$ dots.  In the following displayed identities, every bubble is
understood as its scalar action on $V_\chi$.  Multiplying
\eqref{eq:X-quadratic-correct} by $\mathrm{X}^\Delta$ and then closing gives
\[
\coloredbubble{3}{\Delta}
=-\mu_\Delta(V_\chi)\coloredbubble{2}{\Delta}
-(t^\Delta)^2\coloredbubble{1}{\Delta}.
\]
The bubble relations give
\[
\coloredbubble{1}{\Delta}=0,
\qquad
\coloredbubble{2}{\Delta}=\frac{t^\Delta}{z^\Delta},
\]
hence
\begin{equation}\label{eq:mu-bubble-three}
\mu_\Delta(V_\chi)
=-\frac{z^\Delta}{t^\Delta}\coloredbubble{3}{\Delta}.
\end{equation}

The three-dot clockwise $\Delta$-bubble on $V_\chi$ is computed from
the representing bimodules.  The right unit contributes
$-(t^\Delta z^\Delta)^{-1}$, every dot contributes
$\beta_\Delta q^{rd}e\dot{x}e$, and
\[
e\dot{x}e\dot{x}e\dot{x}e
=\frac1{|V|^2}\sum_{u,u'\in V}
e\dot{x}u\dot{x}u'\dot{x}e.
\]
The left counit kills the summands for which
$\dot{x}u\dot{x}u'\dot{x}\notin P$ and takes the Levi projection on the
remaining summands.  Therefore
\[
\coloredbubble{3}{\Delta}
=-\frac{(\beta_\Delta q^{rd})^3}
{t^\Delta z^\Delta |V|^2\chi(1)\chi_\Delta(1)}
\sum_{\substack{u,u'\in V\\
\dot{x}u\dot{x}u'\dot{x}\in P}}
\chi\!\left(b(\dot{x}u\dot{x}u'\dot{x})\right)
\widetilde\chi_\Delta\!\left(
 c(\dot{x}u\dot{x}u'\dot{x})(\sigma)^3\right).
\]
Here the factor $\chi_\Delta(1)^{-1}$ is the contraction on the cuspidal
factor.  Since $\sigma^2=\kappa_\Delta$ in the crossed extension,
\[
\widetilde\chi_\Delta\!\left(c\sigma^3\right)
=\omega_\Delta(\kappa_\Delta)
\widetilde\chi_\Delta\!\left(c\sigma\right).
\]
Substituting this identity into \eqref{eq:mu-bubble-three} and using
\eqref{eq:beta-normalization} gives exactly \eqref{eq:mu-exact}.
\end{proof}

\medskip
\noindent
For the bottom matrix calculation, put $r=0$ and write $a=\dim(V_a)$.  The block-matrix formulas below
are written in terms of Gram matrices for the underlying sesquilinear or
bilinear form.  They apply to the unitary cases in every characteristic and
to the orthogonal cases in odd characteristic.  In characteristic two, the
$X-1$ normalization in Proposition~\ref{prop:all-bottom-bubbles} follows from
the rank-one Witt-parabolic calculation.  The matrix calculation below is
used only in the indicated odd-characteristic orthogonal cases.
  When $a=0$, set
\[
c_G=\begin{cases}
1,&G=\Sp_{2d},\\
-1,&G=\GU_{2d}\text{ or }\O^+_{2d},
\end{cases}
\qquad
\dagger=\begin{cases}
\mathsf H,&G=\GU_{2d},\\
\mathsf T,&G=\Sp_{2d}\text{ or }\O^+_{2d},
\end{cases}
\]
and define
\begin{equation}\label{eq:Y-G-space}
\mathscr Y_G(d):=
\{Y\in M_d(\bbF_q)\mid Y^\dagger J_d=c_GJ_dY\},
\qquad
\mathscr Y_G(d)^\times:=\mathscr Y_G(d)\cap\GL_d(q).
\end{equation}
Then the map
\[
Y\longmapsto u(Y):=\begin{pmatrix}I_d&Y\\0&I_d\end{pmatrix}
\]
identifies $\mathscr Y_G(d)$ with $V_{0,d}$.  Thus the unitary,
symplectic, and split-orthogonal form conditions are respectively
$Y^\mathsf HJ_d=-J_dY$, $Y^\mathsf TJ_d=J_dY$, and
$Y^\mathsf TJ_d=-J_dY$.
For $a\in\{1,2\}$ write
\[
u=\begin{pmatrix}
I_d&X_u&Y_u\\
0&I_a&Z_u\\
0&0&I_d
\end{pmatrix}.
\]
Take $J_a=(1)$ when $a=1$.  For $G=\O^-_{2d+2}(q)$, fix a
nonsquare $\delta\in\bbF_q^\times$ and take
\[
J_a=J_2^-:=\diag(1,-\delta).
\]
With $\dagger=\mathsf{H}$ in the unitary case and
$\dagger=\mathsf{T}$ in the orthogonal case, the form equations are
\begin{equation}\label{eq:bottom-form-eqs}
X_u=-J_d^{-1}Z_u^\dagger J_a,
\qquad
Y_u^\dagger J_d+J_dY_u+Z_u^\dagger J_aZ_u=0.
\end{equation}
Set
\[
W_u=Y_u-X_uZ_u.
\]
Then
\begin{equation}\label{eq:W-adjoint}
W_u=-J_d^{-1}Y_u^\dagger J_d,
\end{equation}
so $W_u$ is invertible if and only if $Y_u$ is invertible.

\begin{lemma}\label{lem:xuxvx-all-cases}
Let $\dot{x}=\dot{x}_{0,d}$, $V=V_{0,d}$ and $P=P_{0,d}$.
\begin{enumerate}
\item If $a=0$, with $c_G$ and $\mathscr Y_G(d)$ as in
\eqref{eq:Y-G-space}, one has
\[
\dot{x}u(Y_u)\dot{x}u(Y_v)\dot{x}\in P
\iff
Y_u\in\GL_d(q)\text{ and }Y_v=c_GY_u^{-1},
\]
and
\begin{equation}\label{eq:bottom-product-a0}
\dot{x}u(Y_u)\dot{x}u(c_GY_u^{-1})\dot{x}
=
\begin{pmatrix}
-c_GY_u^{-1}&c_GI_d\\
0&-Y_u
\end{pmatrix}.
\end{equation}
With our Levi convention this gives
\begin{equation}\label{eq:bottom-Levi-a0}
c\!\left(\dot{x}u(Y_u)\dot{x}u(c_GY_u^{-1})\dot{x}\right)
=-c_GY_u^{-1}.
\end{equation}
In particular, the upper-left block in the symplectic case is
$-Y_u^{-1}$.

\item If $a\in\{1,2\}$, then
\[
\bigl(\exists v\in V:\ \dot{x}u\dot{x}v\dot{x}\in P\bigr)
\iff W_u\in\GL_d(q).
\]
When this holds, the contributing $v$ is unique and satisfies
\begin{equation}\label{eq:bottom-v-solution}
Y_v=-W_u^{-1},\qquad
Z_v=-Z_uW_u^{-1},\qquad
X_v=Y_u^{-1}X_u.
\end{equation}
Moreover,
\begin{equation}\label{eq:bottom-product-a12}
\dot{x}u\dot{x}v\dot{x}
=
\begin{pmatrix}
W_u^{-1}&Y_u^{-1}X_u&-I_d\\
0&I_a-Z_uY_u^{-1}X_u&Z_u\\
0&0&-Y_u
\end{pmatrix}.
\end{equation}
Thus the Levi components relevant to Lemma~\ref{Lem:Key} are
\begin{equation}\label{eq:bottom-Levi-a12}
c(\dot{x}u\dot{x}v\dot{x})=W_u^{-1},\qquad
b(\dot{x}u\dot{x}v\dot{x})=I_a-Z_uY_u^{-1}X_u.
\end{equation}
\end{enumerate}
\end{lemma}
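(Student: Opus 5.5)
This is a direct block-matrix computation in the ordered Witt basis $\mathcal B$ of the paper, done once for $a=0$ and once for $a\in\{1,2\}$. In this basis $\dot x=\dot x_{0,d}$ is the antidiagonal block matrix
\[
\dot x=\begin{pmatrix}0&0&-I_d\\0&I_a&0\\ \epsilon I_d&0&0\end{pmatrix},
\]
with $\epsilon=+1$ in the symplectic case and $\epsilon=-1$ otherwise; for $a=0$ the middle row and column are absent. We have $u\in V$ block upper unitriangular, and $P$ is the block upper triangular subgroup. The test for membership in $P$ is that the bottom-left $d\times d$ block vanishes, together with the middle-left block when $a>0$. We will first multiply out $\dot x u\dot x$, then multiply by $v$ and by $\dot x$, and read off the lower-left blocks. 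The Levi components $b,c$ are then the diagonal blocks, according to the embedding convention $\diag(A_t,\dots,B,\dots,A_t')$: here $c$ is the upper-left block, and $b$ is the middle block.

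\textbf{Case $a=0$.} The product $\dot x u(Y_u)\dot x$ is a $2\times2$ block matrix whose lower-right block is a multiple of $Y_u$ (up to the sign $\epsilon$). Multiplying on the right by $u(Y_v)\dot x$, the lower-left block becomes an expression of the form $\pm I_d\pm \epsilon Y_uY_v$. This vanishes exactly when $Y_u$ is invertible and $Y_v=\pm\epsilon Y_u^{-1}$. We will then check that $\epsilon$ and the sign arising here combine to give $c_G$: we have $c_G=1$ when $\epsilon=1$ (symplectic) and $c_G=-1$ when $\epsilon=-1$ (unitary and split orthogonal), so the condition is $Y_v=c_GY_u^{-1}$. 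Substituting back yields the displayed matrix \eqref{eq:bottom-product-a0}, and reading off its upper-left block gives \eqref{eq:bottom-Levi-a0}. We will also verify that $c_GY_u^{-1}\in\mathscr Y_G(d)$. This follows from the defining relation $Y^\dagger J_d=c_GJ_dY$ by inverting, so the $v$ found genuinely lies in $V$.

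\textbf{Case $a\in\{1,2\}$.} Here $\epsilon=-1$, since this case is unitary or orthogonal. We compute $\dot xu\dot x$ as a $3\times3$ block matrix. Its bottom row has blocks built from $Y_u$, $Z_u$ and $I_d$, for instance $(Y_u,\,-Z_u,\,I_d)$ up to signs, and its middle row contains $X_u$. Multiplying by $v$ and then by $\dot x$, the vanishing of the lower-left and middle-left blocks gives two linear equations in $(X_v,Y_v,Z_v)$. Eliminating $X_v$ from them produces an equation $W_uY_v=-I$ with $W_u=Y_u-X_uZ_u$. Hence a solution exists iff $W_u$ is invertible, and it is then unique, giving \eqref{eq:bottom-v-solution}.

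We must also check that this solution $v$ satisfies the form equations \eqref{eq:bottom-form-eqs}, so that $v\in V$. For this we use the adjoint identity \eqref{eq:W-adjoint}, which is derived from \eqref{eq:bottom-form-eqs} by substituting $X_u=-J_d^{-1}Z_u^\dagger J_a$ into the second equation. Uniqueness then also gives the equivalence $W_u$ invertible $\iff$ $Y_u$ invertible. Finally, we substitute the solution into the product to obtain \eqref{eq:bottom-product-a12}, and read off the diagonal blocks to get \eqref{eq:bottom-Levi-a12}. The upper-left block simplifies to $W_u^{-1}$ by a short manipulation using $X_v=Y_u^{-1}X_u$.

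\textbf{Main obstacle.} The main difficulty is sign and order bookkeeping: the $J_d$ conjugations and the $\dagger$ involution must line up with the sign conventions of $\dot x$, so that $c_G$ emerges correctly. The other delicate point is confirming that the solved $v$ satisfies the form equations. We will handle both by always reducing with \eqref{eq:bottom-form-eqs} and \eqref{eq:W-adjoint}, rather than by expanding entries.
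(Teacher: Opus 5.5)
Your route is the paper's: multiply the block matrices out, impose vanishing below the diagonal, solve, and read off the diagonal blocks. It is correct in outline. Your explicit checks that $c_GY_u^{-1}\in\mathscr Y_G(d)$ and that the solved $v$ satisfies \eqref{eq:bottom-form-eqs} supply a point the paper's proof passes over.

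However, several of the intermediate shapes you predict are wrong, and one of the errors matters logically. In fact
\[
\dot xu(Y_u)\dot x=\begin{pmatrix}-c_GI_d&0\\ Y_u&-c_GI_d\end{pmatrix}\quad(a=0),
\qquad
\dot xu\dot x=\begin{pmatrix}I_d&0&0\\ -Z_u&I_a&0\\ Y_u&-X_u&I_d\end{pmatrix}\quad(a\in\{1,2\}).
\]
So $Y_u$ sits in the lower-left block, not the lower-right. It is $X_u$ that lies in the bottom row, and $Z_u$ in the middle row. Hence the two first-column conditions on $\dot xu\dot xv\dot x$ are
\[
Z_v=Z_uY_v,\qquad Y_uY_v-X_uZ_v+I_d=0 .
\]
They do not involve $X_v$ at all, and $W_uY_v=-I_d$ comes from eliminating $Z_v$, not $X_v$. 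These equations determine only $Y_v=-W_u^{-1}$ and $Z_v=-Z_uW_u^{-1}$; they do not give $X_v=Y_u^{-1}X_u$.

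You therefore need one more input for $X_v$. The paper uses the $(3,2)$ block condition $Y_uX_v=X_u$, which your membership test omits. In your setup you can instead take $X_v=-J_d^{-1}Z_v^\dagger J_a$ from the first equation of \eqref{eq:bottom-form-eqs}, and check with \eqref{eq:W-adjoint} that it equals $Y_u^{-1}X_u$. That computation also shows that the $(3,2)$ block $Y_uX_v-X_u$ vanishes, as it must. Testing only the first block column is legitimate, because the product lies in $G$ and an isometry stabilizing the isotropic block also stabilizes its orthogonal complement; but you should say so explicitly.

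Two smaller corrections. The upper-left block of \eqref{eq:bottom-product-a12} is simply $-Y_v=W_u^{-1}$, with no manipulation involving $X_v$; $X_v$ enters only through the middle block $I_a-Z_uX_v$. And the equivalence ``$W_u$ invertible $\iff$ $Y_u$ invertible'' is read off directly from \eqref{eq:W-adjoint}, not from uniqueness of $v$.
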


\begin{proof}
For $a=0$ write
\[
\dot{x}_c=\begin{pmatrix}0&-I_d\\cI_d&0\end{pmatrix},
\qquad
c=\begin{cases}1,&G=\Sp,\\-1,&G=\GU,\O^+.
\end{cases}
\]
A direct multiplication gives
\[
\dot{x}_cu(Y_u)\dot{x}_cu(Y_v)\dot{x}_c
=
\begin{pmatrix}
-Y_v&cI_d\\
cY_uY_v-I_d&-Y_u
\end{pmatrix}.
\]
Thus the lower-left block vanishes exactly when
$Y_v=cY_u^{-1}$, proving \eqref{eq:bottom-product-a0}.

For $a\in\{1,2\}$ direct block multiplication gives
\[
\dot{x}u\dot{x}v\dot{x}
=
\begin{pmatrix}
-Y_v&X_v&-I_d\\
Z_uY_v-Z_v&I_a-Z_uX_v&Z_u\\
-(Y_uY_v-X_uZ_v+I_d)&Y_uX_v-X_u&-Y_u
\end{pmatrix}.
\]
Membership in $P$ is equivalent to
\[
Z_v=Z_uY_v,
\qquad
Y_uX_v=X_u,
\qquad
(Y_u-X_uZ_u)Y_v=-I_d.
\]
Using \eqref{eq:W-adjoint}, these equations have a solution exactly when
$W_u$ is invertible, and then the solution is
\eqref{eq:bottom-v-solution}.  Substitution proves
\eqref{eq:bottom-product-a12}.
\end{proof}

\begin{lemma}\label{lem:Ominus-rank-two-sum}
Assume \(q\) is odd, \(G_a=\O^-_2(q)\), \(n\ge2\), and
\(\Delta=\Delta^*\in\calF_1^{\rm G}(\bbK)\) has degree \(d=2n\).  Put
\(P_j=\prod_{i=1}^j(q^{2i}-1)\), \(P_0=1\), and identify the split crossed
extension with \(\GL_d(q)\rtimes\langle\tau\rangle\) as above.  Normalize
its sign by
\begin{equation}\label{eq:Ominus-extension-normalization}
 \widetilde\chi_\Delta(\sigma_0)=(q^n-1)P_{n-1}.
\end{equation}
Then
\begin{equation}\label{eq:Ominus-total-sum}
\Sigma_\Delta^-:=
\sum_{\substack{u,u'\in V_{0,d}\\
\dot{x}u\dot{x}u'\dot{x}\in P_{0,d}}}
\widetilde\chi_\Delta\!\left(
 c(\dot{x}u\dot{x}u'\dot{x})\sigma\right)
=q^{n^2+n}(q^n-1)\chi_\Delta(1).
\end{equation}
Consequently
\begin{equation}\label{eq:beta-Ominus-sign}
 \beta_\Delta=q^{n^2+n},
\end{equation}
and \(\mu_\Delta({\bf1}_{\O^-_2})=q^{-n}-1\).
\end{lemma}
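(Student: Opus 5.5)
The plan is to evaluate $\Sigma_\Delta^-$ with Lemma~\ref{lem:xuxvx-all-cases}(2) for $a=2$, $J_a=J_2^-$, and then use Lemma~\ref{thm:ext} to turn each term into a character value on an outer class.

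\textbf{Reduction to a sum over $u$.} For each $u\in V_{0,d}$ with $W_u$ invertible there is exactly one contributing $u'$, and the Levi component is $c=W_u^{-1}$. Therefore
\[
\Sigma_\Delta^-=\sum_{u:\,W_u\in\GL_d(q)}\widetilde\chi_\Delta(W_u^{-1}\sigma).
\]
Under the identification of the crossed extension with $\GL_d(q)\rtimes\langle\tau\rangle$, $\sigma$ corresponds to $\tau J_d$ up to the fixed conjugation. Using \eqref{eq:W-adjoint}, $W_u^{-1}\sigma$ becomes $\tau x$ with $x=J_dY_u$ up to sign and transpose. The form equations \eqref{eq:bottom-form-eqs} make $x+x^{\mathsf T}=-Z_u^{\mathsf T}J_2^-Z_u$. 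Write $x=s+\tfrac12 (x+x^{\mathsf T})$ with $s$ skew. The correction term $Z^{\mathsf T}J_2^-Z$ has rank at most $2$, and it is anisotropic in the sense that it is the Gram matrix of the rank-two nonsplit form pulled back along $Z$.

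\textbf{Classifying the outer classes.} I would split the $u$ according to the rank of $Z_u\in M_{2\times d}$, which is $0$, $1$ or $2$.
\begin{itemize}
\item If $Z_u=0$, then $x$ is nonsingular skew and lies in Feit's order-four class by Lemma~\ref{3E}. The value is $\widetilde\chi_\Delta(\sigma_0)=(q^n-1)P_{n-1}$ by \eqref{eq:Ominus-extension-normalization}.
\item If $Z_u$ has rank one, the correction is $c\,vv^{\mathsf T}$. When $c\neq0$, Lemma~\ref{3E} gives the class $\sigma_0u$ with $u$ a transvection. Lemma~\ref{thm:ext} then evaluates the term as $\widetilde\theta(\sigma_0)\zeta_\theta(\text{transvection})$. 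Since $Z_{G}(\sigma_0)\cong\Sp_{2n}(q)$, this is a Green-function value of the cuspidal (Coxeter) character. When $c=0$, the term reduces to the previous case.
\item If $Z_u$ has rank two, the correction is a nondegenerate anisotropic rank-two form. I expect these classes to lie outside Feit's support, so the extension vanishes there by Lemma~\ref{thm:ext}. The hypothesis $n\ge2$ should be exactly what makes this support argument clean.
\end{itemize}

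\textbf{Counting.} Next I count the $u$ in each stratum for which $W_u$ is invertible. This means counting skew $s$ of the appropriate corank, together with $Z$ of the given rank, weighted by the number of $c\neq0$ represented by the anisotropic form; the nonsquare $\delta$ enters through these represented values. The weighted total should collapse to $q^{n^2+n}(q^n-1)\prod_{i=1}^{2n-1}(q^i-1)$.

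\textbf{Consequences.} With \eqref{eq:Ominus-total-sum} established, I compute $\mu_\Delta(\mathbf 1)$ from \eqref{eq:mu-exact} with $r=0$ and $\chi=\mathbf 1$. I then impose the level-$-2$ constraint that the roots of $u^2+\mu u+(t^\Delta)^2$, with $(t^\Delta)^2=-q^{-n}$, are $-1$ and $q^{-n}$. This is consistent with the Hecke parameter $q^{d/2}$ and the expected eigenvalues $\{-1,q^{-n}\}$. Combining this with \eqref{eq:beta-normalization}, which fixes $\beta_\Delta^2=q^{2n^2+2n}$ because $|V_{0,d}|=q^{n(2n-1)+2\cdot2n}$, should pin the sign to $\beta_\Delta=+q^{n^2+n}$ and give $\mu=q^{-n}-1$.

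\textbf{Main obstacle.} The hardest step is the rank-one stratum. There I need the exact transvection value of the Coxeter-type cuspidal character of $\Sp_{2n}(q)$ appearing in Lemma~\ref{thm:ext}, and I have to verify the cancellations in the count. I would first try to get the transvection value from Green-function degree formulas for Coxeter tori, using $Q(u)=\pm(q^n-1)$-type values, and then check the whole computation numerically for $n=2$.
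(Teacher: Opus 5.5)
The genuine gap is the rank-two stratum. You expect it to lie outside Feit's support and vanish, but part of it does not, and without that part \eqref{eq:Ominus-total-sum} is false. Your reduction, the rank-zero term and the rank-one term agree with the paper. For rank one, use Feit's transvection value $\zeta_\Delta(u_1)=-P_{n-1}$, with $(q+1)(q^{2n}-1)$ rank-one $Z$ for each nondegenerate alternating $A$; there is no $c=0$ case, since $J_2^-$ is anisotropic. Together these two strata give $q^{n(n-1)}\chi_\Delta(1)\bigl[(q^n-1)-(q+1)(q^{2n}-1)\bigr]$, which is negative. The target is $q^{n(n-1)}\chi_\Delta(1)\,q^{2n}(q^n-1)$, so the missing amount $q^{n(n-1)}(q^{2n}-1)(q^n+q)\chi_\Delta(1)$ has to come from rank-two $Z$.

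To find it, write $C=A+S_Z$ with $A$ alternating and $S_Z=\tfrac12Z^{\mathsf T}J_2^-Z$. If $A$ is nondegenerate, put $r=z_1A^{-1}z_2^{\mathsf T}$ for the rows $z_1,z_2$ of $Z$; then $\det C=\det A\,(1-\delta r^2/4)\neq0$ because $\delta$ is a nonsquare. For $r=0$, the matrix $K=A^{-1}S_Z$ satisfies $K^2=0$ and $(C^{-1}\tau)^2=-(I-2K)$. Hence $C^{-1}\tau=\sigma_0u$ with $u$ unipotent of type $(2^2,1^{2n-4})$. Its rational form on $\bbF_q^{2n}/\ker Z$ is the pullback of $-J_2^-$, hence anisotropic. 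This is the class $u_{2,-}$, which \emph{is} in the support, with $\zeta_\Delta(u_{2,-})=(q^{n-1}+1)P_{n-2}$ \cite{Lus76Green}. Counting $(q^{2n}-1)(q^{2n-1}-q)$ such $Z$ for each nondegenerate $A$ gives exactly the missing term.

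Only two subcases of the rank-two stratum actually vanish. One is $r\neq0$: there the $2$-part is of mixed type, or there is a nontrivial odd semisimple part fixing a subspace of codimension at most $2$, which cannot lie in the Coxeter torus. The other is singular $A$: then $\operatorname{corank}A=2$ and the $2$-part has centralizer of type $\Sp_{2n-2}\times\O^-_2$. This is also where $n\ge2$ is used: it is needed for the class $(2^2,1^{2n-4})$ and for $P_{n-2}$, not to make a vanishing argument work.

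Your last step also contains a sign slip. The bottom value is $\bbO^\Delta(u)(\mathbf{1}_{\O^-_2})=(u-1)(u+q^{-n})$, with roots $1$ and $-q^{-n}$, and this is what gives $\mu_\Delta=q^{-n}-1$. The roots $\{-1,q^{-n}\}$ you state would give $\mu_\Delta=1-q^{-n}$. Since $\Sigma_\Delta^->0$, that would force $\beta_\Delta=-q^{n^2+n}$, contradicting the claim. With the correct target $1+(t^\Delta)^2=1-q^{-n}$ in Proposition~\ref{prop:normalization}(3), the computed sum gives $\beta_\Delta=(1-q^{-n})q^{2n^2+3n}/\bigl(q^{n^2+n}(q^n-1)\bigr)=q^{n^2+n}$. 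This is consistent with $\beta_\Delta^2=q^{2n^2+2n}$ from \eqref{eq:beta-normalization}.
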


\begin{proof}
Put \(J_2^-=\operatorname{diag}(1,-\delta)\).  For \(u\in V_{0,d}\), set
\[
 S_Z=\tfrac12Z^TJ_2^-Z,\qquad A=J_dY+S_Z,\qquad C=J_dW=A+S_Z.
\]
Equation~\eqref{eq:bottom-form-eqs} gives \(A^T=-A\), and
\(J_d^2=I_d\) gives \(C^{-1}=W^{-1}J_d\).
Lemma~\ref{lem:xuxvx-all-cases} therefore rewrites the sum as
\[
 \Sigma_\Delta^-=
 \sum_{\substack{A^T=-A,\ Z\in M_{2,d}(q)\\
 C=A+\frac12Z^TJ_2^-Z\in\GL_d(q)}}
 \widetilde\chi_\Delta(C^{-1}\tau).
\]
Let \(\zeta_\Delta\) be the cuspidal character of \(\Sp_{2n}(q)\) in
Feit's extension formula for the order-four \(2\)-part \(\sigma_0\).
Lemma~\ref{thm:ext}, with \eqref{eq:Ominus-extension-normalization}, gives
\(\widetilde\chi_\Delta(\sigma_0x)=\zeta_\Delta(x)\).
The required Coxeter Green-function values are
\begin{align}
\zeta_\Delta(1)&=(q^n-1)P_{n-1},\notag\\
\zeta_\Delta(u_1)&=-P_{n-1},
\label{eq:Green-rank-one}\\
\zeta_\Delta(u_{2,-})&=(q^{n-1}+1)P_{n-2}.
\label{eq:Green-rank-two-minus}
\end{align}
Here \(u_1\) is a transvection and \(u_{2,-}\) is the anisotropic rational
class of type \((2^2,1^{2n-4})\); see \cite[Corollary~10K]{Feit} for the
first two values and \cite[\S38]{Lus76Green} for the third.
By \eqref{eq:DL-extension-Green}, these values depend only on \(q\) and
the unipotent class, not on \(\Delta\).  This Green-function statement
also holds in characteristic two, although the present matrix realization
uses odd characteristic.  In particular, the rank-one value has the
opposite sign from the identity value.

The number of nondegenerate alternating matrices and the cuspidal degree
satisfy
\[
 N_A=\frac{|\GL_{2n}(q)|}{|\Sp_{2n}(q)|}
 =q^{n(n-1)}\prod_{i=1}^n(q^{2i-1}-1),\qquad
 N_AP_{n-1}=q^{n(n-1)}\chi_\Delta(1),
\]
since \(\chi_\Delta(1)=\prod_{j=1}^{2n-1}(q^j-1)\).

\smallskip
\noindent\emph{Rank zero.}
For \(Z=0\), \(C=A\) and \(C^{-1}\tau\) lies in the order-four class, so
\begin{equation}\label{eq:Sigma0}
\Sigma_0
=N_A(q^n-1)P_{n-1}
=q^{n(n-1)}(q^n-1)\chi_\Delta(1).
\end{equation}

\smallskip
\noindent\emph{Rank one.}
Now \(S_Z=cvv^T\) with \(c\ne0\).  Even corank prevents this rank-one
perturbation from making a singular alternating \(A\) invertible; for
invertible \(A\), the determinant lemma gives
\(\det(A+cvv^T)=\det A\), since \(v^TA^{-1}v=0\).
There are \((q+1)(q^{2n}-1)\) rank-one matrices \(Z\).
Lemma~\ref{3E}(ii) identifies the odd part of each supported element as a
transvection, and \eqref{eq:Green-rank-one} yields
\begin{equation}\label{eq:Sigma1}
\Sigma_1
=-q^{n(n-1)}(q+1)(q^{2n}-1)\chi_\Delta(1).
\end{equation}

\smallskip
\noindent\emph{Rank two with $A$ nondegenerate.}
Write the rows of \(Z\) as \(z_1,z_2\), and put \(P=A^{-1}\),
\(r=z_1Pz_2^T\).  Thus
\(ZPZ^T=\left(\begin{smallmatrix}0&r\\-r&0\end{smallmatrix}\right)\),
and Sylvester's identity gives
\[
 \det C=\det A\det\left(I_2+\tfrac12J_2^-ZPZ^T\right)
       =\det A(1-\delta r^2/4)\ne0,
\]
because \(\delta\) is a nonsquare.
If \(r=0\), then \(K=PS_Z\) satisfies \(K^2=0\) and
\[
 (C^{-1}\tau)^2=C^{-1}C^T=-(I+K)^{-1}(I-K)=-(I-2K).
\]
The odd unipotent part has type \((2^2,1^{2n-4})\).  For
\(N=-PZ^TJ_2^-Z\), its rational form on \(\bbF_q^{2n}/\ker N\) is
\[
 b_N(\bar v,\bar w)=v^TANw=-(Zv)^TJ_2^-(Zw).
\]
Since \(Z\) induces an isomorphism of this quotient with \(\bbF_q^2\),
the form is anisotropic, giving \(u_{2,-}\).
For each \(A\), there are \((q^{2n}-1)(q^{2n-1}-q)\) such matrices
\(Z\): choose \(z_1\ne0\), then
\(z_2\in z_1^\perp\setminus\langle z_1\rangle\).
Consequently \eqref{eq:Green-rank-two-minus} gives
\begin{align}
\Sigma_2
&=N_A(q^{2n}-1)(q^{2n-1}-q)
  (q^{n-1}+1)P_{n-2}\notag\\
&=q^{n(n-1)}(q^{2n}-1)(q^n+q)
  \chi_\Delta(1).
\label{eq:Sigma2}
\end{align}

For \(r\ne0\), the nonzero eigenvalues of \(K\) are \(\pm\kappa\),
where \(\kappa^2=\delta r^2/4\) and \(\kappa^q=-\kappa\).
Set \(z=(1-\kappa)/(1+\kappa)\); then \(z^{q+1}=1\), \(z\ne1\),
and \((C^{-1}\tau)^2\) has eigenvalues
\(-z,-z^{-1},-1,\ldots,-1\).
If the \(2\)-part of \(z\) is nontrivial, the \(2\)-part of
\(C^{-1}\tau\) has mixed-type connected centralizer and lies outside the
support of Lemma~\ref{thm:ext}.
Otherwise \(C^{-1}\tau=\sigma_0x\), with \(x\ne1\) odd-order semisimple
in \(\Sp_{2n}(q)\) and fixed space of dimension at least \(2n-2\).
The Coxeter torus defining \(\zeta_\Delta\) acts irreducibly, so \(x\)
is not conjugate into it; the Deligne--Lusztig formula gives
\(\zeta_\Delta(x)=0\).  Thus all \(r\ne0\) terms vanish.

\smallskip
\noindent\emph{Singular $A$.}
If \(C\) is invertible, even corank and \(\rank S_Z\le2\) force
\(\operatorname{corank}A=\rank Z=2\).
For \(R=\ker A\), the map \(Z|_R:R\to\bbF_q^2\) is an isomorphism.
Adjust a complement \(U\) by vectors in \(R\) so that \(Z|_U=0\),
without changing \(A|_U\).  In suitable bases,
\[
 A=\begin{pmatrix}A_0&0\\0&0\end{pmatrix},\qquad
 Z=\begin{pmatrix}0&I_2\end{pmatrix},\qquad
 C=\begin{pmatrix}A_0&0\\0&\frac12J_2^-\end{pmatrix}.
\]
Here \(A_0\) is nondegenerate alternating.  The \(2\)-part of
\(C^{-1}\tau\) has an order-four block on \(U\) and an involutory
orthogonal block on \(R\), hence mixed centralizer
\(\Sp_{2n-2}\times\O^-_2\).  Lemma~\ref{thm:ext} again gives zero.

Combining \eqref{eq:Sigma0}--\eqref{eq:Sigma2} gives
\begin{align*}
\Sigma_\Delta^-
&=q^{n(n-1)}\chi_\Delta(1)
\Bigl[(q^n-1)-(q+1)(q^{2n}-1)
 +(q^{2n}-1)(q^n+q)\Bigr]\\
&=q^{n^2+n}(q^n-1)\chi_\Delta(1),
\end{align*}
proving \eqref{eq:Ominus-total-sum}.
Finally, \(|V_{0,d}|=q^{\binom d2+2d}=q^{2n^2+3n}\),
\(\kappa_\Delta=I_d\), and \((t^\Delta)^2=-q^{-n}\), so
\eqref{eq:beta-normalization} gives
\(\beta_\Delta^2=q^{-n}|V_{0,d}|=q^{2n^2+2n}\).
The sign fixed by \eqref{eq:Ominus-extension-normalization} and
\eqref{eq:beta-Ominus-sign}, together with Lemma~\ref{Lem:Key}, yields
\[
 \mu_\Delta({\bf1}_{\O^-_2})
 =-\frac{\beta_\Delta}{|V_{0,d}|\chi_\Delta(1)}\Sigma_\Delta^-
 =q^{-n}-1.
\]
\end{proof}

\begin{proposition}
\label{prop:normalization}
Let $\Delta=\Delta^*$ be a level-$2$ color and let $G_a$ be the bottom
member of the corresponding Witt tower.  Put $d=d_\Delta$ and
$V=V_{0,d}$.  The following are equivalent.
\begin{enumerate}
\item The chosen normalization gives
\[
\bbO^\Delta(u)({\bf1}_{G_a})
=(u-1)\bigl(u-(t^\Delta)^2\bigr).
\]
\item
\[
\mu_\Delta({\bf1}_{G_a})=-\bigl(1+(t^\Delta)^2\bigr).
\]
\item The explicit rank-zero class sum satisfies
\begin{equation}\label{eq:bottom-class-sum-criterion}
\frac{\beta_\Delta\omega_\Delta(\kappa_\Delta)}{|V|\chi_\Delta(1)}
\sum_{\substack{u,u'\in V\\
\dot{x}u\dot{x}u'\dot{x}\in P_{0,d}}}
\widetilde\chi_\Delta\!\left(
 c(\dot{x}u\dot{x}u'\dot{x})\sigma\right)
=1+(t^\Delta)^2.
\end{equation}
\end{enumerate}
Moreover, the double sum in \eqref{eq:bottom-class-sum-criterion}
reduces to a single sum.  If $a=0$, it is
\begin{equation}\label{eq:bottom-single-sum-a0}
\sum_{Y\in\mathscr Y_G(d)^\times}
\widetilde\chi_\Delta(-c_GY^{-1}\sigma).
\end{equation}
If $a\in\{1,2\}$, it is
\begin{equation}\label{eq:bottom-single-sum-a12}
\sum_{\substack{u\in V_{0,d}\\W_u\in\GL_d(q)}}
\widetilde\chi_\Delta(W_u^{-1}\sigma).
\end{equation}
\end{proposition}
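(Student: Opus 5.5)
The three conditions are linked through Lemma~\ref{Lem:key}, and the single-sum reductions through Lemma~\ref{lem:xuxvx-all-cases}. I would proceed in that order.

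\emph{(1)$\Leftrightarrow$(2).} The trivial character ${\bf1}_{G_a}$ of the bottom group is $F^\Delta$-cuspidal, since $E^\Delta$ lowers the Witt rank below $0$. Hence Lemma~\ref{Lem:Key} applies with $r=0$ and gives
\[
\bbO^\Delta(u)({\bf1}_{G_a})=u^2+\mu_\Delta({\bf1}_{G_a})u+(t^\Delta)^2 .
\]
On the other hand,
\[
(u-1)\bigl(u-(t^\Delta)^2\bigr)=u^2-\bigl(1+(t^\Delta)^2\bigr)u+(t^\Delta)^2 .
\]
The constant terms agree automatically, so the two monic quadratics coincide if and only if their linear coefficients do. This is exactly (2).

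\emph{(2)$\Leftrightarrow$(3).} Substitute $r=0$, $\chi={\bf1}_{G_a}$ into \eqref{eq:mu-exact}. Then $\chi(1)=1$, $q^{rd}=1$, and $\chi(b(\cdot))=1$, so
\[
\mu_\Delta({\bf1}_{G_a})=-\frac{\beta_\Delta\omega_\Delta(\kappa_\Delta)}{|V|\chi_\Delta(1)}\sum_{u,u'}\widetilde\chi_\Delta\bigl(c(\dot xu\dot xu'\dot x)\sigma\bigr).
\]
Negating both sides shows that (2) is literally \eqref{eq:bottom-class-sum-criterion}. I expect no obstacle in these two steps beyond confirming that $b$ lands in $G_0=G_a$, where the trivial character is $1$.

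\emph{Reduction to a single sum.} Apply Lemma~\ref{lem:xuxvx-all-cases} to the double sum.

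For $a=0$, identify $V$ with $\mathscr Y_G(d)$ via $Y\mapsto u(Y)$. By part~(1) of that lemma, the pair $(u(Y_u),u(Y_v))$ contributes only when $Y_u$ is invertible and $Y_v=c_GY_u^{-1}$. Two things must be checked here. First, $c_GY_u^{-1}$ again lies in $\mathscr Y_G(d)$: from $Y^\dagger J=c_GJY$ one gets $(Y^{-1})^\dagger J=c_G^{-1}JY^{-1}$ after inverting, and $c_G^{-1}=c_G$. Second, the Levi component of the product is $-c_GY_u^{-1}$ by \eqref{eq:bottom-Levi-a0}. Together these give \eqref{eq:bottom-single-sum-a0}.

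For $a\in\{1,2\}$, part~(2) of the lemma says that for each $u$ there is at most one contributing $v$, existing exactly when $W_u\in\GL_d(q)$, and that then $c=W_u^{-1}$ by \eqref{eq:bottom-Levi-a12}. This gives \eqref{eq:bottom-single-sum-a12}. The only point to verify is that the $v$ in \eqref{eq:bottom-v-solution} actually lies in $V$, i.e.\ satisfies \eqref{eq:bottom-form-eqs}. I would check this using \eqref{eq:W-adjoint}, or alternatively observe that the product lies in $P\subset G$ with $\dot x,u\in G$, which forces $v\in G\cap(\text{upper unipotent})=V$. This membership check is the main, though modest, obstacle.
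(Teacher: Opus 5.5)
Your proposal is correct and follows the paper's proof step for step. Lemma~\ref{Lem:Key} with $r=0$ gives (1)$\Leftrightarrow$(2), the specialization of \eqref{eq:mu-exact} to $\chi={\bf1}_{G_a}$ gives (2)$\Leftrightarrow$(3), and Lemma~\ref{lem:xuxvx-all-cases} eliminates $u'$ in both cases $a=0$ and $a\in\{1,2\}$. Your extra membership checks are harmless but already contained in that lemma, whose part (2) asserts that a unique contributing $v\in V$ exists. However, your ``alternative'' argument for $v\in V$ is circular: the product is known to lie in $G$ only once $v\in G$ is known. Rely on \eqref{eq:W-adjoint} or on the lemma itself instead.
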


\begin{proof}
By Lemma~\ref{Lem:Key},
\[
\bbO^\Delta(u)({\bf1}_{G_a})
=u^2+\mu_\Delta({\bf1}_{G_a})u+(t^\Delta)^2.
\]
Comparing with
\[
(u-1)(u-(t^\Delta)^2)
=u^2-\bigl(1+(t^\Delta)^2\bigr)u+(t^\Delta)^2
\]
proves the equivalence of (1) and (2).  For the trivial character of
$G_a$ and $r=0$, formula \eqref{eq:mu-exact} becomes
\[
\mu_\Delta({\bf1}_{G_a})
=-\frac{\beta_\Delta\omega_\Delta(\kappa_\Delta)}{|V|\chi_\Delta(1)}
\sum_{\substack{u,u'\in V\\
\dot{x}u\dot{x}u'\dot{x}\in P_{0,d}}}
\widetilde\chi_\Delta\!\left(
 c(\dot{x}u\dot{x}u'\dot{x})\sigma\right),
\]
so (2) and (3) are equivalent.

It remains only to remove the redundant variable $u'$.  For $a=0$,
write $u=u(Y)$ with $Y\in\mathscr Y_G(d)$.  By
Lemma~\ref{lem:xuxvx-all-cases}(1), a term contributes exactly when
$Y\in\mathscr Y_G(d)^\times$, in which case $u'$ is uniquely determined by
$Y_{u'}=c_GY^{-1}$; the general-linear Levi component is
\eqref{eq:bottom-Levi-a0}.  This gives
\eqref{eq:bottom-single-sum-a0}.  For $a\in\{1,2\}$,
Lemma~\ref{lem:xuxvx-all-cases}(2) says that a term contributes exactly
when $W_u$ is invertible, determines $u'$ uniquely, and
\eqref{eq:bottom-Levi-a12} gives the general-linear Levi component.
This proves \eqref{eq:bottom-single-sum-a12}.
\end{proof}

\begin{proposition}
\label{prop:all-bottom-bubbles}
Work over $\bbK$, and write $G_a=G_0$ for the bottom member of the fixed
Witt tower.  Then, for every $\Gamma\in\calI(\bbK)$, one has
\[
\bbO^\Gamma(u)({\bf1}_{G_a})=
\begin{cases}
 u-1,
   &\Gamma\in\calI_{-1}(\bbK),\\[2pt]
 (u-1)(u+q^{-d_\Gamma/2}),
   &G=\GU,\ \Gamma\in\calF_1^{\GU}(\bbK),\
    (G,\Gamma)\neq(\GU_{\mathrm{odd}},X-1),\\[2pt]
 (u-1)(u+q^{-d_\Gamma/2}),
   &G=\Sp,\O,\ \Gamma\in\calF_1^{\rm G}(\bbK),\\[2pt]
 (u+q^{1/2})(u-q^{-1}),
   &G=\GU_{\mathrm{odd}},\ \Gamma=X-1,\\[2pt]
 (u-1)(u+q^{-1}),
   &G=\Sp,\ \Gamma=X-1,\\[2pt]
 (u-1)(u+1),
   &G=\Sp,\ \Gamma=X+1,\\[2pt]
 (u-1)(u+q^{-1}),
   &G=\O_{\mathrm{odd}},\ \Gamma=X\pm1,\\[2pt]
 (u-1)(u+1),
   &G=\O^+_{\mathrm{even}},\ \Gamma=X\pm1,\\[2pt]
 (u-q)(u+q^{-1}),
   &G=\O^-_{\mathrm{even}},\ \Gamma=X-1,\\[2pt]
 (u-1)(u+1),
   &G=\O^-_{\mathrm{even}},\ \Gamma=X+1.
\end{cases}
\]
Equivalently, apart from the two anisotropic exceptions
\[
 (G,\Gamma)=(\GU_{\mathrm{odd}},X-1),\qquad
 (G,\Gamma)=(\O^-_{\mathrm{even}},X-1),
\]
for every level-$2$ index the value on the bottom object has the uniform form
\[
 \bbO^\Gamma(u)({\bf1}_{G_a})
 =(u-1)\bigl(u-(t^\Gamma)^2\bigr).
\]
\end{proposition}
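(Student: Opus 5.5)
We split by level, treating level $-1$ directly and level $-2$ through the normalization criterion of Proposition~\ref{prop:normalization}.

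\textbf{Level $-1$.} Let $\Gamma\in\calI_{-1}(\bbK)$. The trivial module $\mathbf 1_{G_a}$ is $F^\Gamma$-cuspidal, because $E^\Gamma$ lowers the Witt rank below $0$. Proposition~\ref{Prop:dim2}(2) then gives $\mathrm X^\Gamma(\mathbf 1_{G_a})=\id$, so $m^\Gamma_{\mathbf 1}(u)=u-1$. Since $E^\Gamma\mathbf 1_{G_a}=0$, we also have $n^\Gamma_{\mathbf 1}=1$. The formula $\bbO_L=m_L/n_L$ of \cite[Lemma~4.17]{BSW20a} now gives $u-1$.

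\textbf{Level $-2$, reduction.} For $\Gamma\in\calI_{-2}(\bbK)$, Lemma~\ref{Lem:key} and Proposition~\ref{prop:normalization} give
\[
\bbO^\Gamma(u)(\mathbf 1_{G_a})=u^2+\mu_\Gamma u+(t^\Gamma)^2 .
\]
So the task is to compute $\mu_\Gamma(\mathbf 1_{G_a})$ from the single class sums \eqref{eq:bottom-single-sum-a0} and \eqref{eq:bottom-single-sum-a12}. The constant term is already $(t^\Gamma)^2$, and by \eqref{def:t} this equals $-q^{-d_\Gamma/2}$, $-q^{-1}$ or $-1$ according to the case. This reproduces the second factor of every displayed product except in the two anisotropic exceptions.

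In the generic cases the roots are $1$ and $(t^\Gamma)^2$. The plan is to establish \eqref{eq:bottom-class-sum-criterion}, with the sign of $\beta_\Delta$, which is fixed only up to $\pm$ by \eqref{eq:beta-normalization}, chosen so that it holds. Proposition~\ref{prop:normalization} records this as the chosen normalization, so we must check that the class sum is nonzero and has the right absolute value. For this we follow the template of Lemma~\ref{lem:Ominus-rank-two-sum}.

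\begin{itemize}
\item Parametrize $u$ by $Y$ (when $a=0$) or by $(Z,Y)$ (when $a\in\{1,2\}$) using Lemma~\ref{lem:xuxvx-all-cases}.
\item Stratify the sum by $\operatorname{rank}Z$.
\item Identify each surviving element $c\sigma$ as $\sigma_0x$ with $x$ unipotent, using Lemmas~\ref{3E}--\ref{thm:ext} (odd $q$) or \eqref{eq:DL-extension-Green}.
\item Evaluate with the Coxeter Green-function values \eqref{eq:Green-rank-one} and \eqref{eq:Green-rank-two-minus}, together with the counts of invertible $Y$ in $\mathscr Y_G(d)$.
\end{itemize}

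For $a=0$ only the rank-zero stratum exists, so the sum is $|\mathscr Y_G(d)^\times|\cdot\widetilde\chi_\Delta(\sigma_0)$. For $a=1$ the strata are rank $0$ and rank $1$.

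\textbf{Level $-2$, the rank-one Witt colors $X\pm1$.} Here $d=1$, so $\mathbf 1\boxtimes\M_\Delta$ is a character of $\GL_1$ and the sums are explicit sums over $\bbF_q$ weighted by $\omega_\Delta$. These reduce to the classical rank-one Hecke parameters of \cite{DVV17,DVV19,LSZ25}. The character $\omega_\Delta$ being trivial for $X-1$ and quadratic for $X+1$ is what produces the differences between $\Sp$, $\O_{\mathrm{odd}}$ and $\O^+_{\mathrm{even}}$. In characteristic two we would instead use the direct rank-one Witt-parabolic computation.

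\textbf{The exceptions, and the main obstacle.} For $(\O^-_{\mathrm{even}},X-1)$ and $(\GU_{\mathrm{odd}},X-1)$ the class sum is not $1+(t^\Gamma)^2$, because the anisotropic kernel shifts the two-runner charge. We will compute $\mu$ directly, by the analogue of Lemma~\ref{lem:Ominus-rank-two-sum} with $d=1$, and read off the stated roots: $\{q,-q^{-1}\}$ and $\{-q^{1/2},q^{-1}\}$ respectively. We expect the main obstacle to be the higher-degree self-dual colors with $a\in\{1,2\}$. There the rank-two strata must be shown to contribute only through the anisotropic $u_{2,-}$ class, or to vanish via Feit's support criterion, exactly as in Lemma~\ref{lem:Ominus-rank-two-sum}. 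The uniform form of the answer must then come out of a cancellation of Green-function values, and proving that cancellation is the hardest step.
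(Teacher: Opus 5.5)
Your level $-1$ argument is the paper's, and the degree-one part of your plan works. For $d_\Gamma=1$ the single sums of Proposition~\ref{prop:normalization} are elementary. For $(\GU_{\mathrm{odd}},X-1)$ one gets $W_u=-Y_u^{\mathsf H}$, hence $|V|-1=q^{3/2}-1$ contributing terms. For $(\O^-_{\mathrm{even}},X-1)$ one gets $W_u=\tfrac12Z_u^{\mathsf T}J_2^-Z_u$, hence $q^2-1$ terms. These agree with the paper's rank-one identity $|U|h^2=(|U|-1)h+1$.

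The gap is the self-dual colors in $\calF^{\GU}_1(\bbK)$ and $\calF^{\rm G}_1(\bbK)$ of degree $d_\Gamma\ge2$, which is most of the statement. There you describe a program and leave its decisive step, the Green-function cancellation, unproved. The inputs you cite also cannot carry it out:
\begin{itemize}
\item Lemmas~\ref{3E}, \ref{thm:ext} and the values \eqref{eq:Green-rank-one}--\eqref{eq:Green-rank-two-minus} concern $\GL_{2n}(q)\rtimes\langle\tau\rangle$ with $\tau(g)=g^{-\mathsf T}$ and $q$ odd. They reach only the orthogonal towers in odd characteristic.
\item For $\GU$ the extension is by $g\mapsto J_dg^{-\mathsf H}J_d^{-1}$, which involves the Frobenius.
\item For $\Sp$ it is the crossed extension \eqref{eq:twisted-extension} with $\kappa_d=-I_d$. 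Every lift $z\sigma_d$ of $\theta_d$ squares to $-I_d$, whereas the lifts $zJ_d\tau$ in $\GL_d(q)\rtimes\langle\tau\rangle$ square to $I_d$.
\item For even $q$ no class description of this kind is available at all.
\end{itemize}

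The $\Sp$ case also breaks your reduction for $a=0$. Conjugation by $h\in\GL_d(q)$ acts by $cJ_d\mapsto h(cJ_d)h^{\mathsf T}$, and for $c=-Y^{-1}$ the matrix $cJ_d=-(J_dY)^{-1}$ is symmetric. So the terms of \eqref{eq:bottom-single-sum-a0} split into two classes according to the discriminant of $J_dY$, with centralizers $\O^\pm_d(q)$, not one. Concretely, take $d=2n$ and $P_j=\prod_{i=1}^j(q^{2i}-1)$. Your formula $|\mathscr Y_{\Sp}(d)^\times|\,\widetilde\chi_\Delta(\sigma_0)$ with Feit's value $(q^n-1)P_{n-1}$ gives $\Sigma/\chi_\Delta(1)=\pm q^{n^2+n}(q^n-1)$. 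But \eqref{eq:bottom-class-sum-criterion} with \eqref{eq:beta-normalization} requires $\pm q^{n^2}(q^n-1)$.

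The paper evaluates no character sum for these colors. For the unitary-type Jordan factors it takes the two eigenvalues of the dot on the bottom object from the rank-one model of \cite[Theorem~4.12]{DVV19}. These are $(-q^{d_\Gamma/2})^c$ and $(-q^{d_\Gamma/2})^{-1-c}$, with charge $c=0$ at the bottom and $c=1$ for $(\GU_{\mathrm{odd}},X-1)$. For $X-1$ it uses the convolution identity above, and for $X+1$ it cites \cite{DVV17,LSZ25}. Lemma~\ref{lem:Ominus-rank-two-sum} serves only as a cross-check for $\O^-$ with $n\ge2$.

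The idea you are missing is that no sum is needed. By Proposition~\ref{Prop:dim2}, $\End(F^\Gamma{\bf1}_{G_a})$ is the two-dimensional rank-one Howlett--Lehrer algebra. Since $\mathrm X^\Gamma$ is supported on the opposition double coset, it is a scalar multiple of the generator $T_s$. Hence its two roots have ratio $-p_\Gamma^{\pm1}$, where $p_\Gamma$ is the known relative Hecke parameter: $q^{d_\Gamma/2}$ for the self-dual colors above, and $|U|$ for $X-1$. Their product is $(t^\Gamma)^2$, and the chosen sign of $\beta_\Gamma$ fixes the remaining simultaneous sign.
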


\begin{proof}
The level-$1$ formula is the defining degree-one normalization.  It remains
to discuss level $-2$ colors.

First suppose either $G=\GU$ and $\Gamma\in\calF_1^{\GU}(\bbK)$, or $G=\Sp,\O$ and $\Gamma\in\calF_1^{\rm G}(\bbK)$.  The corresponding local Jordan factor is unitary.  The characteristic-zero rank-one unitary model has
eigenvalues on the cuspidal source
\[
 (-q^{d_\Gamma/2})^c,\qquad (-q^{d_\Gamma/2})^{-1-c}.
\]
for the unitary charge $c$; see \cite[Theorem~4.12]{DVV19}.  On the bottom
object the local primary multiplicity is zero, hence $c=0$, except in the
odd unitary tower for $\Gamma=X-1$.  Therefore, in every non-exceptional
self-dual case, the two roots are
\[
 1,\qquad -q^{-d_\Gamma/2},
\]
which gives
\[
 \bbO^\Gamma(u)({\bf1}_{G_a})
 =(u-1)(u+q^{-d_\Gamma/2}).
\]
In the nonsplit even orthogonal tower this formula also agrees with the
independent Deligne--Lusztig class-sum calculation whenever
$d_\Gamma=2n$ with $n\ge2$: Lemma~\ref{lem:Ominus-rank-two-sum}
gives \(\mu_\Gamma({\bf1}_{\O^-_2})=q^{-n}-1\), and
Proposition~\ref{prop:normalization} gives the same polynomial.  The cases
outside this auxiliary verification are already covered by the local-unitary
normalization used in the first paragraph of the proof.

Now let $G=\GU_{\mathrm{odd}}$ and $\Gamma=X-1$.  The bottom object is
${\bf1}_{\GU_1(q)}$, so the local unitary factor is the cuspidal
$\GU_1$-vertex, corresponding to charge $c=1$ (equivalently $c=-2$).
With the field-size convention $q=q_0^2$ used in this paper, \cite[Theorem~4.12]{DVV19} gives the unordered pair of eigenvalues
\[
 -q^{1/2},\qquad q^{-1},
\]
and hence
\[
 \bbO^{X-1}(u)({\bf1}_{\GU_1(q)})
 =(u+q^{1/2})(u-q^{-1}).
\]

It remains to treat the classical colors.  The $X-1$ calculation is
characteristic-free.  On the bottom Harish--Chandra induction let
\[
 h=e_U\dot x e_U.
\]
The rank-one double-coset multiplication gives
\[
 |U|h^2=(|U|-1)h+1,
\]
so the two eigenvalues of $h$ are $1$ and $-|U|^{-1}$.  For the
symplectic bottom vertex one has $|U|=q$ and
\eqref{eq:beta-normalization} gives $\beta_{X-1}=1$; hence the roots are
$1,-q^{-1}$.  The same formula holds for the odd orthogonal tower (and in
characteristic two is transported through
$\O_{2n+1}(q)\cong\Sp_{2n}(q)$).

For the split and nonsplit even orthogonal towers the standard rank-one
Witt-parabolic orders give, respectively,
$|U|=|V_{0,1}|=1$ and $|U|=|V_{0,1}|=q^2$.  Since
$\kappa_1=I_1$ and
$(t^{X-1})^2=-1$, equation \eqref{eq:beta-normalization} gives respectively
$\beta_{X-1}=1$ and $\beta_{X-1}=q$ with our chosen sign.  Therefore
\[
 \bbO^{X-1}(u)({\bf1}_{\O^+_0})=(u-1)(u+1),
 \qquad
 \bbO^{X-1}(u)({\bf1}_{\O^-_2})=(u-q)(u+q^{-1}),
\]
for every $q$, including characteristic two.

When $q$ is odd, the remaining classical color $X+1$ is the standard
normalization of \cite[Theorem~6.5]{DVV17} and
\cite[Lemmas~4.6--4.7 and Corollary~4.8]{LSZ25}; it gives the displayed
$X+1$ rows.  This exhausts
$\calI_{-1}(\bbK)\sqcup\calI_{-2}(\bbK)$.
\end{proof}

\subsubsection{Colored weight functions and separation of characters}\label{sec:completeinvariants}

Fix a simple object \(\rho\) which is \(F^\Gamma\)-cuspidal for every
\(\Gamma\in\calI(\bbK)\), and let \((\bbK G_\bullet,\rho)\mathrm{-mod}\) be the
Serre subcategory generated from \(\rho\) by the eigenfunctors of
Theorem~\ref{ThmB}.  The following recursion is the part of the colored-weight
calculation that follows formally from the categorical action.

\begin{theorem}\label{TheoremC}
Let \(L\) be a simple object in
\((\bbK G_\bullet,\rho)\mathrm{-mod}\).  Choose a chain of simple
subquotients
\[
\rho=L_0,L_1,\dots,L_m=L,
\qquad
L_j\text{ a subquotient of }F^{\Gamma_j}_{i_j}(L_{j-1}).
\]
Then for every color \(\Gamma\),
\[
{
\bbO_L^\Gamma(u)=\bbO_\rho^\Gamma(u)
\prod_{\substack{1\le j\le m\\\Gamma_j=\Gamma}}
\frac{(u-i_j^+)(u-i_j^-)}{(u-i_j)^2}.}
\]
Here \(i_j^\pm\) are the neighboring vertices of \(i_j\) in the
rank-two Hecke quiver.  The resulting rational function is independent of the
chosen chain.
\end{theorem}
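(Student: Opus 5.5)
The plan is induction on the chain length \(m\), with the single-step bubble formula of Lemma~\ref{lem:choose} as the inductive step. For \(m=0\) there is nothing to prove. For the inductive step, suppose the formula holds for \(L_{j-1}\), and let \(L_j\) be a simple subquotient of \(F^{\Gamma_j}_{i_j}(L_{j-1})\). We apply Lemma~\ref{lem:choose} to the module category \(\bbK G_\bullet\mathrm{-mod}\) over a finite symmetric product containing the colors \(\Gamma\) and \(\Gamma_j\). This is legitimate by Theorem~\ref{Thm:quantumheisenberg}, and by the finite-color-support remark in the proof of Theorem~\ref{ThmB}.

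Two points of bookkeeping have to be handled here. First, in the present section the upward functor is named \(F^\Gamma\), the reverse of Section~\ref{sec:HeistoKac}; so Lemma~\ref{lem:choose} is applied to the upward eigenfunctor \(F^{\Gamma_j}_{i_j}\), which is the \(E\) there. Second, the neighbours are \(i^{\pm,\Gamma}\) as recorded after \eqref{eq:ipm}, namely \(q^{\pm d_\Gamma}i\) or \(q^{\pm d_\Gamma/2}i\). These are the neighbouring vertices \(i_j^\pm\) of the rank-two Hecke quiver. With these in place, Lemma~\ref{lem:choose} gives:

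\begin{itemize}
\item if \(\Gamma_j=\Gamma\), then \(\bbO^\Gamma_{L_j}(u)=\bbO^\Gamma_{L_{j-1}}(u)\,(u-i_j^+)(u-i_j^-)/(u-i_j)^2\);
\item if \(\Gamma_j\neq\Gamma\), then \(\bbO^\Gamma_{L_j}(u)=\bbO^\Gamma_{L_{j-1}}(u)\).
\end{itemize}

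The second case rests on the bubble-slide Lemma~\ref{lem:colorbubbleslide}. Multiplying these factors along the chain yields the displayed product.

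Independence of the chain is then immediate. By \cite[Lemma~4.17]{BSW20a}, quoted above, \(\bbO^\Gamma_L(u)=m^\Gamma_L(u)/n^\Gamma_L(u)\). The right-hand side therefore depends only on \(L\), and so does the left-hand side of the formula, whichever chain was chosen.

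The one step needing care is the justification that Lemma~\ref{lem:choose} applies to simple subquotients, not only submodules, of \(F^{\Gamma_j}_{i_j}(L_{j-1})\). I would argue this as in \cite[Lemma~4.5]{BSW20a}. The coefficients of the bubble series are central natural transformations, so on any finite-length module they act on each composition factor by that factor's scalars. The computation of the bubble on \(F_i L\) via the dot-slide and bubble-slide relations then gives the same rational function on every composition factor. That computation uses only the generalized-eigenspace projection and the Mackey relation, both available from Theorem~\ref{Thm:quantumheisenberg}.
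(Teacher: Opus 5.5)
Your proof is correct and follows the paper's argument: the one-step change is the bubble--dot relation of Lemma~\ref{lem:choose}, with the other colors left unchanged by Lemma~\ref{lem:colorbubbleslide}; this is iterated along the chain, and path independence holds because $\bbO^\Gamma_L(u)$ is attached intrinsically to $L$. The step you flagged as needing care, passing to subquotients, is already covered, since Lemma~\ref{lem:choose} is stated for irreducible subquotients.
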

\begin{proof}
At one eigenfunctor step, the bubble--dot relation in the
Heisenberg-to-Kac--Moody construction changes only the color being added and
gives
\[
\frac{\bbO_{L_j}^{\Gamma_j}(u)}
     {\bbO_{L_{j-1}}^{\Gamma_j}(u)}
=
\frac{(u-i_j^+)(u-i_j^-)}{(u-i_j)^2};
\]
all other colored bubble series are unchanged.  Iterating gives the displayed
product.  Path independence is not an additional combinatorial assertion: the
product equals the intrinsically defined central bubble series
\(\bbO_L^\Gamma(u)\), so two chains ending at the same simple object must give
the same rational function.
\end{proof}

\begin{remark}
For a level-$1$ factor, the product in Theorem~\ref{TheoremC} is the usual
charged-content boundary product. For a level-$2$ factor the same formula is
read in the two-runner/bipartition coordinate, and for the classical
$X\pm1$ factors equivalently on the symbol crystal.
\end{remark}

\subsubsection{Jordan reduction of the colored bubbles}
\label{subsec:jordan-bubbles}
Fix the Jordan decompositions from \S\ref{subsec:jordan}, compatibly with
Harish--Chandra induction.  Each color changes exactly one primary Jordan
factor; the dictionary is Table~\ref{tab:jordan-colored-bubbles}.  Here
$\delta_\Gamma$ is the reduced degree defined above, and
$\mathrm U_m(Q)$ denotes the unitary group over $\mathbb F_{Q^2}$ with fixed
field $\mathbb F_Q$.

\begin{table}[H]
\centering
\small
\renewcommand{\arraystretch}{1.18}
\caption{Jordan dictionary for the colored bubbles}
\label{tab:jordan-colored-bubbles}
\begin{tabular}{@{}p{.11\textwidth}p{.23\textwidth}p{.27\textwidth}p{.30\textwidth}@{}}
\hline
Tower & color $\Gamma$ & $\Gamma$-primary Jordan factor & local datum read by $\bbO^\Gamma$ \\
\hline
$\GL$
& $\Gamma\in\calF$ (level $-1$)
& $\GL_{m_\Gamma(s)}(q^{d_\Gamma})$
& partition $\lambda_\Gamma$; one-runner charged coordinate \\
$\GU$
& $\Gamma\in\calF^{\GU}_2$ (level $-1$)
& $\GL_{m_\Gamma(s)}(q^{d_\Gamma/2})$
& partition $\lambda_\Gamma$; one-runner charged coordinate \\
$\GU$
& $\Gamma\in\calF^{\GU}_1$ (level $-2$)
& $\mathrm U_{m_\Gamma(s)}(q^{d_\Gamma/2})$
& partition $\lambda_\Gamma$ in its two-runner Fock realization \\
$\Sp,\O$
& $\Gamma\in\calF^G_2$ (level $-1$)
& $\GL_{m_\Gamma(s)}(q^{\delta_\Gamma})$
& partition $\lambda_\Gamma$; one-runner charged coordinate \\
$\Sp,\O$
& $\Gamma\in\calF^G_1$ (level $-2$)
& $\mathrm U_{m_\Gamma(s)}(q^{\delta_\Gamma})$
& partition $\lambda_\Gamma$ in its two-runner Fock realization \\
$\Sp,\O$
& $\Gamma=X\pm1$ (level $-2$; only $X-1$ if $q$ is even)
& corresponding classical factor
& Lusztig symbol $\Lambda_\Gamma$ and the parameters attached to its cuspidal source \\
\hline
\end{tabular}
\end{table}

For $q$ even only the $X-1$ classical color occurs.  In the full odd
orthogonal tower the additional central sign is transported separately by
\eqref{eq:Oodd-sign-transport}.  The table lists the Jordan labels for the elementary-divisor factors used in the separation argument.

For a color \(\Gamma\), let \(\Delta\) be an irreducible representative as above. If \(\lambda\) is a partition and
\(\xi\in\bbK^\times\), define
\begin{equation}\label{eq:local-residue-function}
\mathsf R^{\Gamma}_{\lambda,\xi}(u):=
\frac{\displaystyle\prod_{b\in A(\lambda)}(u-\xi q^{d_\Delta\ct(b)})}
     {\displaystyle\prod_{b\in R(\lambda)}(u-\xi q^{d_\Delta\ct(b)})},
\end{equation}
where \(A(\lambda)\) and \(R(\lambda)\) are the addable and removable boxes.
For a level-$2$ Harish--Chandra series we use the product of two such functions,
one for each runner of its charged bipartition. This is the rescaled
\((q,\pm)\)-residue function of \cite[\S4.3.3]{LSZ25}.

\begin{lemma}\label{lem:Fgamma-cuspidal-seeds}
Let \(\rho^0\) be a simple object which is \(F^\Gamma\)-cuspidal.
\begin{enumerate}
\item If \(\Gamma\in\calI_{-1}(\bbK)\), the \(\Gamma\)-primary partition is
empty and
\[
 m_{\rho^0}^\Gamma(u)=u-1.
\]
\item Suppose that \(\Gamma\in\calI_{-2}(\bbK)\) is nonclassical.  The \(\Gamma\)-primary factor is unitary, and its
cuspidal unipotent label is the triangular partition
\(\lambda_t=(t,t-1,\ldots,1)\) for a unique \(t\ge0\).  There is a common
sign \(\varepsilon\in\{\pm1\}\) such that
\[
 m_{\rho^0}^\Gamma(u)
 =\bigl(u-\varepsilon(-q^{d_\Gamma/2})^t\bigr)
  \bigl(u-\varepsilon(-q^{d_\Gamma/2})^{-1-t}\bigr).
\]
The Hecke relation and the Jordan degree ratio do not determine
\(\varepsilon\).  Nevertheless the unordered root pair determines \(t\),
since the quotient of the two roots is
\((-q^{d_\Gamma/2})^{\pm(2t+1)}\), and the roots themselves determine the initial root on each runner.
\item For the classical colors \(X\pm1\) with odd \(q\), lower
simultaneously in the two classical colors.  The resulting source is both
\(F^{X-1}\)- and \(F^{X+1}\)-cuspidal, and its exact colored functions are
those of \cite[Theorem~5.13 and Table~4]{LSZ25}.  If \(q\) is even, only
\(X-1\) occurs.  Fix the standard pair of initial roots in the two-runner realization of the Lusztig symbol, \(\{\xi^0_1,\xi^0_2\}\), with
\(\xi^0_1\in q^{\mathbb Z}\) and
\(\xi^0_2\in-q^{\mathbb Z}\), determined by the cuspidal Lusztig symbol and the Witt tower.  Then there is a sign \(\epsilon(\rho^0)\in\{\pm1\}\) such that
\[
 m_{\rho^0}^{X-1}(u)
 =\bigl(u-\epsilon(\rho^0)\xi^0_1\bigr)
  \bigl(u-\epsilon(\rho^0)\xi^0_2\bigr).
\]
The rank-one Hecke relation determines the pair only up to simultaneous multiplication by this sign.  The calculation at the bottom of the Witt tower fixes the sign only for the cuspidal source there.
\end{enumerate}
\end{lemma}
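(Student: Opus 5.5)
The plan is to reduce each part to the one-colour cuspidal analysis of Proposition~\ref{Prop:dim2} and then read off the roots. Write $m^\Gamma_{\rho^0}(u)$ for the minimal polynomial of $\mathrm X^\Gamma$ on $F^\Gamma\rho^0$. Since $E^\Gamma\rho^0=0$, we have $n^\Gamma_{\rho^0}=1$, so $\bbO^\Gamma_{\rho^0}(u)=m^\Gamma_{\rho^0}(u)$ by the identity $\bbO_L=m_L/n_L$ of \cite[Lemma~4.17]{BSW20a}. By Proposition~\ref{Prop:dim2}, this polynomial has degree one at level $-1$ and degree two at level $-2$. It therefore suffices to identify its roots, and for that I will use the Jordan reduction of Table~\ref{tab:jordan-colored-bubbles}, together with the fact that $F^\Gamma$ changes only the $\Gamma$-primary Jordan factor.

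For (1), Proposition~\ref{Prop:dim2}(2) already gives $\mathrm X^\Gamma(\rho^0)=\id$, so $m^\Gamma_{\rho^0}(u)=u-1$. The branching rule then says that $E^\Gamma\rho^0=0$ exactly when the level-one $\Gamma$-primary factor $\GL_{m_\Gamma}$ has no removable box. For a partition this forces the $\Gamma$-primary partition to be empty.

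For (2), the local factor is $\mathrm U_{m_\Gamma}(Q)$ with $Q=q^{d_\Gamma/2}$ (in the $\Sp,\O$ case, $Q=q^{\delta_\Gamma}$). $F^\Gamma$-cuspidality means the unipotent label admits no rank-one Harish--Chandra removal. Unipotent cuspidals of unitary groups are exactly the staircase partitions $\lambda_t$, which gives existence and uniqueness of $t$. For the roots, I will transport through Jordan decomposition to the local unitary tower and invoke the charged two-runner eigenvalue computation \cite[Theorem~4.12]{DVV19}. For the cuspidal source of charge $t$ it gives the eigenvalues $(-Q)^t$ and $(-Q)^{-1-t}$, matching the bottom computation in Proposition~\ref{prop:all-bottom-bubbles} at $t=0$.

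Globally the dot differs from the local dot by a central character coming from the non-$\Gamma$ Jordan factors and the extension choice $A_\Delta$. I expect this to be the main obstacle. The constraint available is the Hecke/Mackey relation: Proposition~\ref{Prop:dim2} fixes the constant term $(t^\Gamma)^2=-Q^{-1}$. This is consistent with the product of the two roots, $(-Q)^t(-Q)^{-1-t}=-Q^{-1}$, but it only pins the roots down up to a common sign $\varepsilon$, in line with Remark~\ref{lem:extension-sign-independence}. I will therefore only assert existence of $\varepsilon$. Since $t$ can be read off from the ratio of the two roots, $(-Q)^{\pm(2t+1)}$, the unordered pair of roots still determines $t$, and each root gives the initial residue on its runner.

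For (3), I will lower simultaneously by $E^{X-1}$ and $E^{X+1}$ until both vanish, which must terminate since the rank decreases. The resulting source is a cuspidal pair in both classical colours, and its two colour functions are the ones computed in \cite[Theorem~5.13, Table~4]{LSZ25}, transported via the odd-$q$ Jordan parametrisation of \S\ref{subsec:jordan}; for even $q$ only $X-1$ occurs. The quadratic relation with constant term $(t^{X-1})^2$ fixes the product of the roots, and the two-runner symbol realisation places one root in $q^{\mathbb Z}$ and the other in $-q^{\mathbb Z}$. This again leaves an overall sign $\epsilon(\rho^0)$. I will note explicitly that this sign is fixed only for the bottom cuspidal source, by Proposition~\ref{prop:all-bottom-bubbles}, and is not fixed in general.
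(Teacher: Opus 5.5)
Your proposal is correct and follows essentially the same route as the paper. Part (1) comes from Proposition~\ref{Prop:dim2} together with the removable-node branching rule. Part (2) uses the staircase cuspidals and \cite[Theorem~4.12]{DVV19}, with the roots fixed only up to a simultaneous sign by comparing the categorical dot with the standard one. Part (3) cites \cite[Theorem~5.13 and Table~4]{LSZ25} and repeats the same comparison for $X-1$.

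Your explicit check that the Hecke constant term $(t^\Gamma)^2=-Q^{-1}$ equals $(-Q)^t(-Q)^{-1-t}$ makes the ``up to sign'' step slightly clearer than the paper's phrasing. In (3), however, the product together with the orbits $q^{\mathbb Z}$ and $-q^{\mathbb Z}$ does not by itself leave only an overall sign. You should state explicitly that the root ratio comes from the comparison with the local standard operator, as in your argument for (2).
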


\begin{proof}
By the Jordan-compatible branching rule, \(E^\Gamma\) removes one node from
the crystal corresponding to the \(\Gamma\)-factor.  In level \(-1\), every nonempty
partition has a removable node, so an \(F^\Gamma\)-cuspidal source has empty
\(\Gamma\)-primary partition; Proposition~\ref{Prop:dim2} gives
\(X^\Gamma(\rho^0)=1\).

For a level \(-2\) index \(\Gamma\notin\{X-1,X+1\}\), the corresponding factor is unitary and its
highest-weight vertices are the triangular \(2\)-cores \(\lambda_t\).  In
the standard unitary normalization the two roots are \((-q^{d_\Gamma/2})^t\) and
\((-q^{d_\Gamma/2})^{-1-t}\); see \cite[Theorem~4.12]{DVV19}.  Comparing our fixed
categorical dot with this standard rank-one operator determines the two roots only up to simultaneous multiplication by a sign.  This is exactly the ambiguity left by the product-and-ratio
calculation: the product and quotient of the roots are fixed, but simultaneous
negation is not.  This simultaneous sign cancels in the quotient, so the unordered roots still determine \(t\); no additional sign is introduced into the invariant.

For the classical colors with odd \(q\), the complete simultaneous cuspidal
calculation, including these signs, is precisely
\cite[Theorem~5.13 and Table~4]{LSZ25}.  For even \(q\), the same
product-and-ratio calculation leaves the same simultaneous sign ambiguity for the \(X-1\) source.
\end{proof}

\begin{lemma}\label{lem:jordan-bubble}
Let $\rho=\rho_{s,\mu}\in\mathcal E(G_n,(s))$, with the usual extension
label in the odd orthogonal case and the $X-1$ Lusztig-symbol label when $q$ is
even.  Fix $\Gamma\in\calI(\bbK)$.  Let $\rho^0_\Gamma$ be the
$F^\Gamma$-cuspidal source of the local $\Gamma$-Harish--Chandra series, and
let $\xi_{\Gamma,a}$ be the roots of $m_{\rho^0_\Gamma}^\Gamma(u)$.  Write the corresponding labeling data as
$(\lambda_\Gamma,\xi_{\Gamma,1})$ in level $-1$ and as
$((\lambda_{\Gamma,1},\xi_{\Gamma,1}),
  (\lambda_{\Gamma,2},\xi_{\Gamma,2}))$ in level $-2$.  Then
\begin{align}
\bbO^\Gamma(u)(\rho)
 &=\mathsf R^\Gamma_{\lambda_\Gamma,\xi_{\Gamma,1}}(u),
 &&\Gamma\in\calI_{-1}(\bbK),\label{eq:jordan-bubble-one}\\
\bbO^\Gamma(u)(\rho)
 &=\mathsf R^\Gamma_{\lambda_{\Gamma,1},\xi_{\Gamma,1}}(u)
   \mathsf R^\Gamma_{\lambda_{\Gamma,2},\xi_{\Gamma,2}}(u),
 &&\Gamma\in\calI_{-2}(\bbK).\label{eq:jordan-bubble-two}
\end{align}
\end{lemma}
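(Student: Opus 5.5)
The plan is to combine the recursion of Theorem~\ref{TheoremC} with the Jordan-compatible branching rule and the seed values of Lemma~\ref{lem:Fgamma-cuspidal-seeds}, and then to identify the resulting telescoping product with the residue functions $\mathsf R^\Gamma$. Fix $\Gamma$ and $\rho=\rho_{s,\mu}$.

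\emph{Reduction to a chain in colour $\Gamma$.} By the branching formula, $F^{\Gamma'}$ with $\Gamma'\ne\Gamma$ changes only the $\Gamma'$-primary label. By Lemma~\ref{lem:choose} (the $\alpha\ne\beta$ case, via Lemma~\ref{lem:colorbubbleslide}), such a step leaves $\bbO^\Gamma$ unchanged. I would therefore remove, using $E^{\Gamma'}$ for every $\Gamma'\ne\Gamma$, all the other primary labels; the value of $\bbO^\Gamma$ is unaffected. Lowering further in colour $\Gamma$ leads to the $F^\Gamma$-cuspidal source $\rho^0_\Gamma$, obtained by removing nodes from $\lambda_\Gamma$, or from the two runners in level $-2$. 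Reading this backwards gives a chain as in Theorem~\ref{TheoremC}, with all steps of colour $\Gamma$. Each step adds one box $b$ to $\lambda_\Gamma$, or to one runner $\lambda_{\Gamma,a}$, and has eigenvalue $i_j=\xi_{\Gamma,a}q^{d_\Delta\ct(b)}$.

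\emph{Identifying the eigenvalue of each step.} This is the main obstacle. One must show that the generalized eigenvalue of $\mathrm X^\Gamma$ on the summand of $F^\Gamma$ producing the box $b$ is exactly $\xi_{\Gamma,a}q^{d_\Delta\ct(b)}$, with the same normalization as the seed. The plan is an induction along the chain. The spectrum is $\sigma_\Gamma$-stable, with $i^{\pm}=q^{\pm d_\Delta}i$ for the Hecke parameter attached to $\Gamma$. By the Kac--Moody action of Theorem~\ref{ThmB}, the weight of each simple object is determined by the orders of $m/n$. Under the Jordan dictionary of Table~\ref{tab:jordan-colored-bubbles}, the crystal of the local type-$A$ factor (level $-1$) or of its two-runner Fock space (level $-2$) is the $\widehat{\mathfrak{sl}}$ or $\mathfrak{sl}_\infty$ Fock crystal of charge $\xi_{\Gamma,a}$. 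Harish--Chandra branching then forces the added node to carry residue $\xi q^{d\,\ct(b)}$. The base case of the induction is supplied by the root data of Lemma~\ref{lem:Fgamma-cuspidal-seeds}, and the sign ambiguity there is absorbed into the definition of $\xi_{\Gamma,a}$.

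\emph{Telescoping.} In level $-1$ the seed is $\bbO^\Gamma_{\rho^0}(u)=u-1=\mathsf R^\Gamma_{\emptyset,1}(u)$. In level $-2$ the seed is $\prod_a(u-\xi_{\Gamma,a})=\prod_a\mathsf R^\Gamma_{\emptyset,\xi_{\Gamma,a}}(u)$. Adding a box $b$ of content $c$ replaces, in $\mathsf R_{\lambda,\xi}$, the addable factor at content $c$ by addable factors at $c\pm1$ and a removable factor at $c$. This multiplies $\mathsf R$ by exactly $(u-i^+)(u-i^-)/(u-i)^2$, where $i=\xi q^{dc}$. This is the standard addable/removable box identity, checked by cases on whether the neighbouring boxes become addable or stop being removable. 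Multiplying these factors along the chain gives \eqref{eq:jordan-bubble-one} and \eqref{eq:jordan-bubble-two}. Independence of the chosen chain is automatic from Theorem~\ref{TheoremC}.
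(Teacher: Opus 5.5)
Your proposal follows the paper's proof: start from the $F^\Gamma$-cuspidal source with $\xi_{\Gamma,a}$ defined as the roots of $m^\Gamma_{\rho^0_\Gamma}(u)$, so the sign ambiguity is absorbed; use Jordan-compatible Harish--Chandra branching to identify each $\Gamma$-step with adding one node; match the factor of Theorem~\ref{TheoremC} with the addable/removable change in $\mathsf R^\Gamma$; and iterate. Your preliminary stripping of the other colours is unnecessary, since Theorem~\ref{TheoremC} already ignores them and for level $-2$ colours it could only go down to their cuspidal cores; the residue identification you flag as the main obstacle is not proved separately in the paper either, but is taken directly from the Jordan-compatible branching.
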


\begin{proof}
Choose the $F^\Gamma$-cuspidal source for the $\Gamma$-factor.  When $\Gamma\notin\{X-1,X+1\}$ and the level is $-2$, Lemma~\ref{lem:Fgamma-cuspidal-seeds} determines the standard pair only up to simultaneous multiplication by a sign, but $\xi_{\Gamma,1},\xi_{\Gamma,2}$ are defined here to be the roots of $m_{\rho^0_\Gamma}^\Gamma(u)$ itself, so no comparison sign is chosen.  For the
$q$-even classical color $X-1$ they are
$\epsilon(\rho^0_\Gamma)\xi^0_1$ and
$\epsilon(\rho^0_\Gamma)\xi^0_2$ as in
Lemma~\ref{lem:Fgamma-cuspidal-seeds}.  Compatibility
of Jordan decomposition with Harish--Chandra induction identifies each
subsequent $\Gamma$-colored branching step with adding one node in the crystal corresponding to the $\Gamma$-factor, while the labels for all other elementary-divisor factors remain fixed.

At one such step Theorem~\ref{TheoremC} multiplies the bubble by
\[
 \frac{(u-i^+)(u-i^-)}{(u-i)^2},
\]
exactly the change of the addable/removable boundary in
\eqref{eq:local-residue-function}.  Iterating from $m_{\rho^0_\Gamma}^\Gamma(u)$
gives \eqref{eq:jordan-bubble-one} and \eqref{eq:jordan-bubble-two}.
\end{proof}

\begin{lemma}\label{lem:local-injectivity}
For $\Gamma\notin\{X-1,X+1\}$, the function $\bbO^\Gamma(u)(\rho)$ determines the charged partition labeling the $\Gamma$-factor in level $-1$, and in level $-2$ it determines the charged bipartition together with the triangular partition labeling its cuspidal source, up to interchange of the two runners.  For $q$ even and $\Gamma=X-1$, it determines the two-runner labeling of the Lusztig symbol together with the sign $\epsilon(\rho^0_\Gamma)$, modulo the involution interchanging the two runners.
\end{lemma}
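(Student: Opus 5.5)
The plan is to read the rational function $\bbO^\Gamma(u)(\rho)$ through Lemma~\ref{lem:jordan-bubble}, which writes it as $\mathsf R^\Gamma_{\lambda_\Gamma,\xi_{\Gamma,1}}(u)$ in level $-1$ and as $\mathsf R^\Gamma_{\lambda_{\Gamma,1},\xi_{\Gamma,1}}(u)\,\mathsf R^\Gamma_{\lambda_{\Gamma,2},\xi_{\Gamma,2}}(u)$ in level $-2$. We then recover the combinatorial data from the zeros and poles. The basic observation concerns a single runner. The addable and removable boxes of a partition alternate along the content line, with addable contents $a_0>r_1>a_1>\dots>r_k>a_k$. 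The numerator and denominator of $\mathsf R_{\lambda,\xi}$ are therefore coprime, and the roots $\xi q^{d_\Delta\ct(b)}$ are pairwise distinct because $q$ is not a root of unity in $\bbK$. So the reduced fraction recovers the multiset of addable and removable contents together with the scale $\xi$. Indeed, $\xi$ is the unique $q^{d_\Delta\mathbb Z}$-coset containing all roots. The charge is fixed by the relation $\sum a_i-\sum r_i=0$ for the charged contents, which holds because the boundary of a partition telescopes. Finally, a partition is determined by its sequence of corners, i.e.\ by its alternating addable/removable contents. This settles level $-1$: the reduced function determines $(\lambda_\Gamma,\xi_{\Gamma,1})$, that is, the charged partition.

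For level $-2$ with $\Gamma\notin\{X\pm1\}$, the two runner roots lie in different cosets. By Lemma~\ref{lem:Fgamma-cuspidal-seeds}(2), $\xi_{\Gamma,1}/\xi_{\Gamma,2}=(-q^{d_\Gamma/2})^{\pm(2t+1)}$, which is a negative number times an odd half-power. Hence the two sets of roots sit in disjoint cosets of $q^{d_\Delta\mathbb Z}$, provided the runner step $q^{d_\Delta}$ for each runner is coarser than the ratio between runners. This is the point to check carefully in the rescaled residue convention of \cite[\S4.3.3]{LSZ25}. Assuming this, no cancellation can occur between the two factors. We split the zeros and poles by coset and apply the single-runner argument to each piece, which recovers both charged partitions. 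The ratio of the two base roots then recovers $t$, i.e.\ the triangular partition $\lambda_t$. Which coset carries "runner 1" is not intrinsic, and this is the stated ambiguity up to interchange of runners.

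For $q$ even and $\Gamma=X-1$ we argue the same way. By Lemma~\ref{lem:Fgamma-cuspidal-seeds}(3), the roots are $\epsilon(\rho^0_\Gamma)\xi^0_1\in\pm q^{\mathbb Z}$ and $\epsilon(\rho^0_\Gamma)\xi^0_2\in\mp q^{\mathbb Z}$, so the two runners are separated by sign. Reading off each runner gives its charged partition, and therefore the two-runner Lusztig-symbol data. The sign $\epsilon$ is recovered by comparing the actual root cosets with the fixed standard cosets of $\xi^0_1,\xi^0_2$. This comparison is only meaningful modulo swapping the runners, which again accounts for the stated involution.

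The main obstacle is the coset-separation step: we must show that zeros and poles from the two runners can never cancel or interleave. This requires the ratio $\xi_{\Gamma,1}/\xi_{\Gamma,2}$ to avoid $q^{d_\Delta\mathbb Z}$. We would first check this directly from the explicit root formulas in Lemma~\ref{lem:Fgamma-cuspidal-seeds}, using that the ratio involves the factor $-1$ (or an odd power of $q^{d_\Gamma/2}$), together with $\operatorname{char}\bbK=0$.
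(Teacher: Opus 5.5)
Your proposal is correct and follows the paper's proof. Both arguments split the zeros and poles of $\bbO^\Gamma(u)(\rho)$ by $q^{d_\Gamma\mathbb Z}$-orbit, where the quotient $(-q^{d_\Gamma/2})^{\pm(2t+1)}$ of the initial roots (or, for $q$ even, the sign separating $q^{\mathbb Z}$ from $-q^{\mathbb Z}$) rules out cancellation; both then apply the one-runner argument on each orbit and read $t$ off the quotient of the recovered initial roots, with the runner swap (together with $\epsilon(\rho^0_\Gamma)$ for $X-1$) as the only residual ambiguity. The one difference is that you spell out the one-runner interlacing argument via coprimality and the telescoping identity fixing the charge, where the paper cites \cite[Corollary~4.19]{LSZ25}; and the separation check you flag as the main obstacle is immediate, since $-q^{d_\Gamma(2t+1)/2}$ carries both a sign and a half-integral exponent.
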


\begin{proof}
In level $-1$ this is the interlacing argument of
\cite[Corollary~4.19]{LSZ25}.  When $\Gamma\notin\{X-1,X+1\}$ and the level is $-2$, the two roots attached to the chosen cuspidal source lie in distinct $q^{d_\Gamma\mathbb Z}$-orbits: their quotient is
$(-q^{d_\Gamma/2})^{\pm(2t+1)}$, which is not in
$q^{d_\Gamma\mathbb Z}$.  A simultaneous sign change does
not alter this quotient.  Hence the zeros and poles of
\eqref{eq:jordan-bubble-two} split canonically between the two $q^{d_\Gamma\mathbb Z}$-orbits containing these roots.  The one-runner interlacing argument recovers each charged partition and the initial root on the corresponding runner; the quotient of these two initial roots then determines $t$.  Thus the comparison sign is determined by the function $\bbO^\Gamma(u)(\rho)$ and need not be fixed separately.

Now let $q$ be even and $\Gamma=X-1$.  The two standard initial roots lie in the disjoint orbits $q^{\mathbb Z}$ and $-q^{\mathbb Z}$, and the roots for the chosen cuspidal source are obtained by multiplying both by $\epsilon(\rho^0_\Gamma)$.  Hence the zero and pole sets distinguish the two runners according to these orbits.  Replacing this sign by its negative interchanges the two orbits and sends $\bbO^{X-1}(u)$ to $\bbO^{X-1}(-u)$.  If the two functions coincide, one-runner injectivity
forces the two-runner labeling to be fixed by the involution interchanging the two runners; this is the same Lusztig-symbol label.  Thus this sign is determined by the weight function $\bbO^{X-1}(u)$ and no distinct Lusztig-symbol label is lost.
\end{proof}

\begin{proposition}
\label{prop:recover-primary-jordan}
For every $\Gamma\in\calI(\bbK)$, the rational function
$\bbO^\Gamma(u)(\rho)$ determines the Jordan label attached to $\Gamma$ up to the
standard finite symmetries at the classical $X\pm1$ factors.  Taken over all
colors, the family $\{\bbO^\Gamma(u)(\rho)\}_\Gamma$ determines the full
Jordan parameter.
\end{proposition}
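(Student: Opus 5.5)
The plan is to reduce everything to the local statements already proved. I would begin with Lemma~\ref{lem:jordan-bubble}: for each color $\Gamma$, $\bbO^\Gamma(u)(\rho)$ is a product of residue functions $\mathsf R^\Gamma$ built from the $\Gamma$-primary label and the initial roots of the $F^\Gamma$-cuspidal source. The first step therefore sorts the colors into three groups: level $-1$ colors, nonclassical level $-2$ colors, and the classical colors $X\pm1$.

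For the first two groups, Lemma~\ref{lem:local-injectivity} applies directly. It recovers the charged partition, respectively the charged bipartition together with the triangular cuspidal label $\lambda_t$, up to interchange of runners. I would then observe that this runner interchange is harmless. The two runners lie in distinct $q^{d_\Gamma\mathbb Z}$-orbits, fixed by the initial roots themselves, so the unitary Jordan label (a partition with its $2$-core $\lambda_t$ and $2$-quotient) is determined. This holds because the $2$-quotient ordering is fixed by which runner carries the root $\varepsilon(-q^{d_\Gamma/2})^t$.

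The multiplicity $m_\Gamma(s)$ is also determined. In level $-1$ it is $|\lambda_\Gamma|$. In level $-2$ it is $|\lambda_t|+2(|\lambda_{\Gamma,1}|+|\lambda_{\Gamma,2}|)$, read off from the degree data of the rational function. Hence the semisimple class $s$ is recovered on every non-$X\pm1$ factor; a color with $m_\Gamma(s)=0$ has trivial function, namely the bottom value of Proposition~\ref{prop:all-bottom-bubbles}.

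For the classical colors I would split on the parity of $q$. If $q$ is even, only $X-1$ occurs, and Lemma~\ref{lem:local-injectivity} already recovers the Lusztig symbol modulo the runner involution. That involution preserves the symbol class, so the label is determined. If $q$ is odd, I would invoke the simultaneous two-color calculation cited in Lemma~\ref{lem:Fgamma-cuspidal-seeds}(3), namely \cite[Theorem~5.13, Table~4]{LSZ25}. Using the functions $\bbO^{X-1}$ and $\bbO^{X+1}$ jointly, it recovers the pair of symbols up to the finite symmetries of Lemma~\ref{lem:extra-symmetries}: the determinant, spinor and diagonal twists, which act by $u\mapsto\sigma_\Gamma u$, $\Gamma\leftrightarrow\Gamma^-$, and $u\mapsto\eta_\Gamma u$. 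This is precisely the ``standard finite symmetries'' caveat in the statement.

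The last step assembles the local pieces. The primary decomposition $m=(m_\Gamma)_\Gamma$ determines $(s)$; in the orthogonal case the $\pm1$ type data follow from the symbol defects. The collection of local labels is then $\mu\in\Psi(s)$, so the Jordan bijection of \S\ref{subsec:jordan} gives the full parameter. One point needs care. Infinitely many colors occur, but only finitely many have nontrivial labels, and every color $\Gamma$ with $m_\Gamma(s)=0$ is detected by its function equalling the $F^\Gamma$-cuspidal bottom value.

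I expect the main obstacle to be the classical $X\pm1$ step for odd $q$. The dot normalization fixes roots there only up to a simultaneous sign, and the symmetries can identify distinct labels. I would first try to transport the separation statement of \cite{LSZ25} verbatim, checking that our normalization of Proposition~\ref{prop:all-bottom-bubbles} matches theirs on bottom objects, and leave the twist ambiguity as the allowed finite symmetry.
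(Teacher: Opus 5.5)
Your treatment of the non-$X\pm1$ colors and of $X-1$ for even $q$ matches the paper's. The odd-$q$ classical step, however, leaves a genuine gap.

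You conclude that $\bbO^{X\pm1}$ recover the classical labels only up to the determinant, spinor and diagonal twists, and you explicitly keep that ambiguity as the permitted caveat. The caveat in the statement applies only to its first sentence, which concerns one color at a time. The second sentence asserts that the whole family determines the \emph{full} Jordan parameter. Your assembly step (``the collection of local labels is then $\mu\in\Psi(s)$'') tacitly assumes the ambiguity has disappeared, and nothing in your argument removes it.

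By Lemma~\ref{lem:extra-symmetries}(1), the determinant twist leaves every $\bbO^\Gamma$ with $\Gamma\neq X\pm1$ unchanged. Indeed $\sigma_\Gamma=1$ there, since self-dual colors other than $X\pm1$ have even degree. It replaces $\bbO^{X\pm1}(u)$ by $\bbO^{X\pm1}(-u)$. In $\O_{2n+1}(q)$ the characters $\rho$ and $\mathrm{det}\otimes\rho$ are distinct: they differ exactly in the central sign $\varepsilon$ of \S\ref{subsec:jordan}, which your proposal never recovers. Whenever both classical functions are even in $u$, your argument cannot tell these two characters apart. The proof of Lemma~\ref{lem:extra-symmetries} itself warns that a transformation law gives no separation in that case. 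So Theorem~\ref{thm:invariants} would not follow from what you prove.

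What is missing is a separation statement, not a normalization check against Proposition~\ref{prop:all-bottom-bubbles}. The paper obtains it from \cite[Theorem~4.22]{LSZ25} together with \cite[Theorem~5.13 and Table~4]{LSZ25}, applied after the non-$X\pm1$ labels have already been fixed. These results show that the joint classical functions separate the determinant, spinor, diagonal and odd-orthogonal extension symmetries. The central sign is carried along the tower by \eqref{eq:Oodd-sign-transport}. You need to invoke (or reprove) this separation rather than quotient by the symmetries.

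A smaller point: in level $-2$, detect $m_\Gamma(s)=0$ through $t=0$ and the absence of poles. Do not detect it by equality with the bottom value of Proposition~\ref{prop:all-bottom-bubbles}, because the simultaneous sign in Lemma~\ref{lem:Fgamma-cuspidal-seeds}(2) is not fixed by the paper.
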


\begin{proof}
For every $\Gamma\notin\{X-1,X+1\}$, Lemmas~\ref{lem:jordan-bubble} and~\ref{lem:local-injectivity} recover the corresponding partition or bipartition and its cuspidal core.  In level $-2$, the simultaneous sign of the two roots is not an omitted datum: it is determined by the roots of the colored function, while the core parameter $t$ is recovered from their quotient.

For odd $q$, the two classical colors $X\pm1$ are handled directly by
\cite[Theorem~5.13, Table~4 and Theorem~4.22]{LSZ25}, after the labels for the already recovered non-$X\pm1$ factors are fixed.  These results also separate
the determinant, spinor, diagonal, and odd-orthogonal extension symmetries,
with the central sign transported by \eqref{eq:Oodd-sign-transport}.  For even
$q$, Lemmas~\ref{lem:Fgamma-cuspidal-seeds} and
\ref{lem:local-injectivity} retain the sign attached to the $X-1$ roots instead of suppressing it.  Hence all Jordan labels for the elementary-divisor factors are recovered.
\end{proof}

\begin{theorem}
\label{thm:invariants}
Assume that \(\operatorname{char}\bbK=0\). Then
\[
   \{\bbO^\Gamma(u)(-)\}_{\Gamma\in\calI(\bbK)}
\]
is a complete invariant of irreducible ordinary characters in the tower.
\end{theorem}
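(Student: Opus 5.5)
The plan is to reduce the theorem to Proposition~\ref{prop:recover-primary-jordan} together with Lusztig's Jordan decomposition.

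Let $\rho,\rho'$ be irreducible ordinary characters in the tower with $\bbO^\Gamma(u)(\rho)=\bbO^\Gamma(u)(\rho')$ for every $\Gamma\in\calI(\bbK)$. The goal is to show $\rho\cong\rho'$, and I would proceed in three steps.

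\emph{Step 1: recover the rank and the semisimple parameter.} The semisimple class $(s)$ is encoded by the multiplicity function $m_\Gamma(s)$ on $\calI(\bbK)$; for the orthogonal groups it also carries the type data at $X\pm1$. Table~\ref{tab:jordan-colored-bubbles} and Lemma~\ref{lem:jordan-bubble} express $\bbO^\Gamma(u)(\rho)$ through the local residue function of the $\Gamma$-primary label. So, by Lemma~\ref{lem:local-injectivity} and Proposition~\ref{prop:recover-primary-jordan}, each bubble series determines that label, and in particular its size. This size equals $m_\Gamma(s)$, up to the fixed contribution of the cuspidal source $\rho^0_\Gamma$.

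Two points need care here. First, the degree $d_\Gamma$ is part of the color, so the rank $n=\sum_\Gamma m_\Gamma d_\Gamma$ (or its Witt-rank analogue) is also recovered. Second, $\bbO^\Gamma$ is trivial exactly when the local label is empty with its standard seed. Lemma~\ref{lem:Fgamma-cuspidal-seeds} describes the seeds, and from it I would check that a color with $m_\Gamma(s)=0$ gives the bottom normalization of Proposition~\ref{prop:all-bottom-bubbles}, whereas a nonempty label or a nontrivial cuspidal source changes the function. Hence $s$ and $s'$ have the same primary decomposition and lie in the same group $G_n$. For the orthogonal type data at $X\pm1$, I would invoke the $X\pm1$ part of Proposition~\ref{prop:recover-primary-jordan}, which rests on \cite{LSZ25}.

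\emph{Step 2: recover the full Jordan label.} Proposition~\ref{prop:recover-primary-jordan} gives $\mu_\Gamma=\mu'_\Gamma$ for all $\Gamma$. At the factors $X\pm1$ this holds only modulo the standard finite symmetries: determinant, spinor, diagonal automorphism, and the odd-orthogonal central sign $\varepsilon$. Lemma~\ref{lem:extra-symmetries} records how each symmetry acts on the bubble series. For odd $q$, the separation of these symmetries is supplied by \cite[Theorem~4.22]{LSZ25}, as already used in the proof of Proposition~\ref{prop:recover-primary-jordan}. For the odd-orthogonal sign, \eqref{eq:Oodd-sign-transport} expresses $\varepsilon$ as the sign of the cuspidal source times the product of the central characters $\omega_\Delta(-I)$ along any creation chain. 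That cuspidal source is itself recovered by the $X\pm1$ calculation.

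\emph{Step 3: conclude.} The parameters $(s,\mu,\varepsilon)$ coincide, so $\rho=\rho_{s,\mu,\varepsilon}=\rho'$ by the bijection of Lusztig's Jordan decomposition $\Psi(s)\to\mathcal E(G,(s))$ (with the extra $\{\pm1\}$-factor in the odd-orthogonal case).

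The main obstacle is Step 1's claim that the family of bubbles detects the semisimple class, not just each local label given $s$. The difficulty sits in level $-2$. There a nonempty cuspidal source such as $\lambda_t$ must be told apart from the empty case, and the anisotropic towers have exceptional bottom values (Proposition~\ref{prop:all-bottom-bubbles}). I would handle this first by comparing the root pairs of Lemma~\ref{lem:Fgamma-cuspidal-seeds} with the bottom values, using the ratio $(-q^{d_\Gamma/2})^{\pm(2t+1)}$ to read off $t$ and hence the contribution to $m_\Gamma(s)$.
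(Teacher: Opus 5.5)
Your proposal is correct and is essentially the paper's argument. The paper's proof cites Proposition~\ref{prop:recover-primary-jordan} to get equal semisimple primary data, equal unipotent labels on every centralizer factor and equal full-orthogonal extension data, and then concludes by the Jordan-decomposition bijection; your Steps 1--3 are this argument with the proposition's ingredients written out (label sizes giving $m_\Gamma(s)$, \cite{LSZ25} for the $X\pm1$ symmetries, \eqref{eq:Oodd-sign-transport} for the central sign).

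One auxiliary claim should be dropped: that $m_\Gamma(s)=0$ reproduces the bottom value of Proposition~\ref{prop:all-bottom-bubbles}. In level $-2$ this is not justified, since Lemma~\ref{lem:Fgamma-cuspidal-seeds} fixes the seed roots only up to a simultaneous sign, and in the anisotropic towers the bottom object already carries a nonzero $X-1$ label. Instead, read off the label (and hence $m_\Gamma(s)$) directly, via Lemma~\ref{lem:local-injectivity} for $\Gamma\notin\{X-1,X+1\}$ (root ratio for $t$, interlacing on each runner), as you end up doing anyway.
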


\begin{proof}
If two irreducible characters have the same colored weight functions,
Proposition~\ref{prop:recover-primary-jordan} gives the same semisimple
primary data, the same unipotent label on every centralizer factor, and the
same full-orthogonal extension data.  Their complete Jordan parameters
therefore coincide, so the two characters are isomorphic.
\end{proof}

\subsubsection{Center of the group algebra}\label{subsec:center}
\begin{theorem}\label{Thm:centersurjective}
For each rank \(n\), let
\(A_n\) be the unital subalgebra of \(Z(\bbK G_n)\) generated by the
coefficients of all evaluated colored bubble series.  Then
\[
A_n=Z(\bbK G_n).
\]
Equivalently, evaluation is surjective on centers in every finite rank.
\end{theorem}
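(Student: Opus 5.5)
The plan is to use the Artin--Wedderburn structure of $Z(\bbK G_n)$ and reduce the surjectivity to the separation result, Theorem~\ref{thm:invariants}. Since $\operatorname{char}\bbK=0$ and $\bbK$ is algebraically closed, $\bbK G_n$ is semisimple. Hence
\[
Z(\bbK G_n)=\bigoplus_{\rho\in\Irr(G_n)}\bbK e_\rho,
\]
with the $e_\rho$ the primitive central idempotents. The colored bubbles are endomorphisms of the identity functor of $\bbK G_\bullet\mathrm{-mod}$. Evaluated in rank $n$, each coefficient of $\bbO^\Gamma(u)$ is therefore a natural endomorphism of the identity functor of $\bbK G_n\mathrm{-mod}$, that is, an element of $Z(\bbK G_n)$. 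On a simple module $\rho$ it acts by the scalar coefficient of the rational function $\bbO^\Gamma_\rho(u)$. So $A_n$ is a unital subalgebra of a finite product $\prod_\rho\bbK$ of copies of $\bbK$.

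Next I would apply the elementary fact that a unital subalgebra of $\bbK^N$ which separates the $N$ coordinates equals $\bbK^N$. Concretely, suppose that for each pair $\rho\ne\rho'$ there is $a_{\rho,\rho'}\in A_n$ whose scalars $a(\rho)$ and $a(\rho')$ differ. Then the product
\[
\prod_{\rho'\ne\rho}\frac{a_{\rho,\rho'}-a_{\rho,\rho'}(\rho')}{a_{\rho,\rho'}(\rho)-a_{\rho,\rho'}(\rho')}
\]
lies in $A_n$, because $A_n$ is unital and so contains the scalars. This product equals $e_\rho$. Hence $A_n$ contains every $e_\rho$ and is all of $Z(\bbK G_n)$.

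It remains to produce the separating elements. Let $\rho\ne\rho'$ be in $\Irr(G_n)$. By Theorem~\ref{thm:invariants} there is a color $\Gamma$ with $\bbO^\Gamma_\rho(u)\ne\bbO^\Gamma_{\rho'}(u)$ as elements of $\bbK((u^{-1}))$. Two distinct Laurent series differ in some coefficient, say that of $u^{-r}$. The corresponding coefficient is an evaluated bubble; up to the normalizing scalar it is the $r$-dotted $(+)$-bubble. It lies in $A_n$ and takes different scalar values on $\rho$ and $\rho'$. Only finitely many pairs occur, so finitely many colors and coefficients suffice in each rank.

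The main point needing care is that the bubble coefficients really act centrally on each rank-$n$ summand by the scalars read off from $\bbO^\Gamma_\rho(u)$. I would argue this through naturality of $\End(\unit)$, together with Schur's lemma for simple $\rho$. The underlying input is the identification $\bbO^\Gamma_L(u)=m_L/n_L$ recalled from \cite[Lemma 4.17]{BSW20a}. Everything substantive is then carried by the completeness statement, Theorem~\ref{thm:invariants}.
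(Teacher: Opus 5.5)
Your proposal is correct and follows the paper's proof essentially step for step. You identify $Z(\bbK G_n)$ with $\prod_{\rho}\bbK$ via central characters, observe that evaluated bubbles are natural endomorphisms of the identity functor and hence central, use Theorem~\ref{thm:invariants} to obtain a separating bubble coefficient for each pair $\rho\neq\sigma$, and build each primitive central idempotent by the same Lagrange-interpolation product; as in the paper, all the substantive content sits in the completeness of the colored weight functions.
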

\begin{proof}
An endomorphism of the monoidal unit evaluates to a natural endomorphism of
the identity functor.  Hence every colored bubble acts by an element of
\(Z(\bbK G_n)\), and \(A_n\subseteq Z(\bbK G_n)\).  Since \(\bbK G_n\) is split semisimple, central characters give an
algebra isomorphism
\[
Z(\bbK G_n)\xrightarrow{\sim}
\prod_{\rho\in\Irr(G_n)}\bbK,
\qquad
z\longmapsto(\omega_\rho(z))_\rho.
\]
By Theorem~\ref{thm:invariants}, for every two distinct simples
\(\rho\ne\sigma\) some coefficient \(b_{\rho,\sigma}\in A_n\) of a colored
bubble has
\(\omega_\rho(b_{\rho,\sigma})\ne
\omega_\sigma(b_{\rho,\sigma})\).  Therefore the primitive central idempotent
of \(\rho\) is obtained by finite Lagrange interpolation:
\[
e_\rho=
\prod_{\sigma\ne\rho}
\frac{b_{\rho,\sigma}-
      \omega_\sigma(b_{\rho,\sigma})1}
     {\omega_\rho(b_{\rho,\sigma})-
      \omega_\sigma(b_{\rho,\sigma})}
\in A_n.
\]
The primitive central idempotents \(e_\rho\) form a basis of
\(Z(\bbK G_n)\).  Hence \(A_n=Z(\bbK G_n)\), as required.
\end{proof}

\begin{remark}\label{Rem:computebubbles}
The wreath-product analogue is Theorem~\ref{Thm:expofbubbles} in
\S\ref{sub:coloredweight}.  For double covers of symmetric groups, a spin
analogue should instead involve the super Kac--Moody $2$-categories of
\cite{BE17}; the same phenomenon is expected for towers of spin groups.
\end{remark}

\section{Categorification on representations of wreath products $ H \wr \frakS_n$: degenerate case}
\label{sec:degenerate}
\subsection{Wreath products $ H \wr \frakS_n$}
Let $H$ be a finite group, $H_n=H\wr\frak S_n$ for $n\geq1$, and
$H_0=\{1\}$.  Write $H^{(i)}$ for the $i$-th factor of
$H^n\triangleleft H_n$.

Let $$\k H_\bullet\mod:=\bigoplus\limits_{n\in \bbN}\k H _n\mod.$$

Throughout this section, $\k$ is a splitting field for $H$ containing the
required $|H|$-th roots of unity, and $|H|$ is invertible in $\k$.
Let $\widehat H$ be the set of isomorphism classes of irreducible
$\k H$-modules and put $d=|\widehat H|$.  For $\chi\in\widehat H$, define
$$\Ind^{\chi}:\k H _n\mod\to \k H _{n+1}\mod,$$
$$M\mapsto \Ind^{ H _{n+1}}_{ H _n\times  H ^{(n+1)}}(M\boxtimes V_\chi);$$
$$\Res^{\chi}:\k H _{n+1}\mod\to \k H _n\mod,$$
$$N\mapsto \Hom_{\k H ^{(n+1)}}( V_\chi,\Res^{ H _{n+1}}_{ H _n\times  H ^{(n+1)}}(N));$$
where $V_\chi$ represents $\chi$.  Fix an enumeration
$\widehat H=\{\chi_1,\ldots,\chi_d\}$.

\begin{remark}\label{rem:frobenius-wreath}
Over $\bbK$, the action is the color decomposition of the Frobenius
Heisenberg action for $F=\bbK H$ from \cite[Theorem~7.4]{RS17}; see also
\cite[Theorem~1.4(b)]{Sav19}.  Since $\bbK H$ is split semisimple,
\cite[Theorem~8.3]{BSW21} identifies its Karoubi envelope with the symmetric
product indexed by $\Irr_{\bbK}(H)$ after choosing one primitive idempotent
in each simple block; compare \cite[Corollary~4.10]{Gan20}.  The objects
$(\uparrow,e_\chi)$ and $(\downarrow,e_\chi)$ then act by the bimodules
defining $F^\chi$ and $E^\chi$.  For a non-semisimple Frobenius algebra the
corresponding category has color-changing tokens and is not a symmetric
product of ordinary Heisenberg categories.
\end{remark}

The group algebra $\k H $ decomposes as a direct product of matrix algebras
$$\k H \cong M_{n_1}(\k)\times \cdots\times M_{n_d}(\k)$$
where each distinct irreducible representation $\chi_i$ is realized as an irreducible representation
of one of the matrix algebras $M_{n_i}(\k)$. Let $f_{\chi_1},\cdots, f_{\chi_d}$ denote the distinct  orthogonal central idempotents of $\k H $ such that
$$1=\sum_{i=1}^{d}f_{\chi_i}.$$

The above is not, in general, a minimal decomposition of 1 as a sum of orthogonal idempotents, since
the idempotents $f_{\chi_i}$ themselves are not minimal if $\dim(V_{\chi_i}) > 1$. For each $i$ and $s = 1, . . . , n_i$, let $e_{\chi_i,s}$
denote the matrix unit of $M_{n_i}(\k)$ whose $(s, s)$ entry is equal to 1 and whose other entries are 0. Then
$e_{\chi_i,s}$ are minimal orthogonal idempotents in $\k H $, with
$$f_{\chi_i} =\sum_{s=1}^{n_i}e_{\chi_i,s}.$$
We fix $e_{\chi_i}:=e_{\chi_i,1}$. Then we have $\k H \cdot e_{\chi_i}\cong V_{\chi_i}.$
Its dual module $V^*_{\chi_i}=\Hom_{\k H }(V_{\chi_i},\k H )$,  which is a right $\k H $ module, can be identified with $e_{\chi_i}\cdot \k H $ via the canonical isomorphism
$$\Hom_{\k H }(\k H  \cdot e_{\chi_i},\k H )\cong e_{\chi_i}\cdot \k H .$$
Then for each $\chi_i\in \hat H ,$
the functor $F^{\chi_i}$ is represented by the $( H _{r+1}, H _{r})$-bimodule $$\k H _{r+1}\cdot e^{(r+1)}_{\chi_i}\cong \k H _{r+1}\otimes_{ \k H ^{(r+1)}} (\k H ^{(r+1)}\cdot  e^{(r+1)}_{\chi_i}).$$
Similarly, the functor $E^{\chi_i}$ is represented by $( H _{r}, H _{r+1})$-bimodule $$e^{(r+1)}_{\chi_i}\cdot \k H _{r+1}\cong (e^{(r+1)}_{\chi_i}\cdot \k H ^{(r+1)}  )\otimes_{ \k H ^{(r+1)}}\k H _{r+1}.$$

\subsubsection{Jucys--Murphy elements}
Let $s_1,\ldots,s_{n-1}$ be the simple reflections of $\mathfrak S_n$.
The wreath-product Jucys--Murphy elements of Pushkarev \cite{Pus99} and Wang
\cite{W04a,W04b} are
\begin{align*}
	L_j=\sum\limits_{i<j}\sum_{g\in  H }g^{(i)}(g^{-1})^{(j)}(i,j).
\end{align*}
where $(i,j)$ interchanges $i$ and $j$.  Put
$M^{(i,j)}=\sum_{g\in H}g^{(i)}(g^{-1})^{(j)}$.  For $H=1$ these are the
usual Jucys--Murphy elements.
\begin{lemma}[\cite{Pus99}] The Jucys--Murphy elements $L_j$ ($j=1,2,\cdots, n$) pairwise commute. Moreover, each $L_j$ commutes with elements in  $ H ^n$. Furthermore, we have the relation
	$$s_iL_i-L_{i+1} s_i=-M^{(i,i+1)}.$$
\end{lemma}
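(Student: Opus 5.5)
The plan is a direct computation in the group algebra $\k H_n$, using only the multiplication rules of the semidirect product $H^n\rtimes\frakS_n$. The key commutation rules are: for $g\in H$ and a transposition $(i,j)$, one has $(i,j)\,g^{(k)}\,(i,j)=g^{((i,j)k)}$, and $\sigma h\sigma^{-1}$ permutes the factors of $h\in H^n$. Throughout, $M^{(i,j)}=\sum_g g^{(i)}(g^{-1})^{(j)}$ is symmetric in $i,j$, because $g\mapsto g^{-1}$ is a bijection of $H$.

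First I show that each $L_j$ commutes with $H^n$. Take $h=(h_1,\ldots,h_n)\in H^n$ and conjugate a single summand $g^{(i)}(g^{-1})^{(j)}(i,j)$ by $h$:
\[
h\,g^{(i)}(g^{-1})^{(j)}(i,j)\,h^{-1}
=(h_ig h_j^{-1})^{(i)}(h_jg^{-1}h_i^{-1})^{(j)}(i,j),
\]
since $(i,j)h^{-1}(i,j)$ swaps the $i$ and $j$ coordinates and all other coordinates cancel. Reindexing by $g'=h_igh_j^{-1}$ gives back $M^{(i,j)}(i,j)$, so $L_j$ commutes with $H^n$.

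Next comes the relation $s_iL_i-L_{i+1}s_i=-M^{(i,i+1)}$. Write $L_{i+1}=\sum_{k<i}M^{(k,i+1)}(k,i+1)+M^{(i,i+1)}(i,i+1)$. Then
\[
L_{i+1}s_i=\sum_{k<i}M^{(k,i+1)}(k,i+1)s_i+M^{(i,i+1)} .
\]
Moving $s_i$ to the left in the sum gives $s_iM^{(k,i)}(k,i)$, since $s_i(k,i)s_i=(k,i+1)$ and $s_iM^{(k,i)}s_i=M^{(k,i+1)}$. So the sum equals $s_iL_i$, and the relation follows.

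Finally I prove pairwise commutativity. I show by induction on $j$ that $L_{j}$ commutes with $L_1,\ldots,L_{j-1}$; note that $L_1=0$. The cleanest route is the standard one: $L_j$ commutes with $\k H_{j-1}$, because $L_j$ is $C_j-C_{j-1}$, where $C_m=\sum_{i<k\le m}M^{(i,k)}(i,k)$ is central in $\k H_m$ (a class sum). Since $L_1,\ldots,L_{j-1}\in\k H_{j-1}$, commutativity follows.

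The main obstacle is checking that $C_m$ is central. Invariance under $H^m$ follows from the first step applied to each pair. Invariance under $\frakS_m$ follows because conjugation by $\sigma$ sends $M^{(i,k)}(i,k)$ to $M^{(\sigma i,\sigma k)}(\sigma i,\sigma k)$, using the symmetry of $M$. Together these give centrality, which completes the plan.
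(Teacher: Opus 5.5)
Your proof is correct and complete: the conjugation computation for $H^n$, the identity $s_iM^{(k,i)}(k,i)=M^{(k,i+1)}(k,i+1)s_i$ for $k<i$ that gives $s_iL_i-L_{i+1}s_i=-M^{(i,i+1)}$, and the Okounkov--Vershik device $L_j=C_j-C_{j-1}$ with $C_m$ central in $\k H_m$ (it is in fact the class sum of $(1,2)$) for pairwise commutativity all check out. The paper gives no argument of its own and only cites \cite{Pus99}; as far as I know that source proceeds by essentially this same direct verification.
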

It is known by elementary Clifford theory that
for $1 \leq k < l \leq d$, $$\Ind^{\k H _2}
_{\k H ^2}(V_{\chi_k}\boxtimes V_{\chi_l}) = (V_{\chi_k}\boxtimes V_{\chi_l})\oplus (V_{\chi_l}\boxtimes V_{\chi_k})$$ is a simple $\k H _2$-module where
$s_1=(1 ,2)$ acts as the permuting operator $P$.
\begin{lemma}[\cite{WW08}]
	\begin{itemize}\label{Lemma:ortho}

		\item[$(1)$] $M^{(1,2)} = 0$ when acting on a simple $\k H ^2$-module $V_{\chi_i}\boxtimes V_{\chi_j}$ for $i\neq j$.

		\item[$(2)$] $M^{(1,2)}= c_kP$ when acting on the $\k H ^2$-module $V_{\chi_k}^{\boxtimes 2} $, where the scalar $c_k=\frac{|H|}{\chi_k(1)}$.
	\end{itemize}
\end{lemma}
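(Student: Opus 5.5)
The plan is a direct computation of $M^{(1,2)}=\sum_{g\in H}g^{(1)}(g^{-1})^{(2)}$ acting on $V_{\chi_i}\boxtimes V_{\chi_j}$. This operator lies in $\k H^2=\k H\otimes\k H$, so on the tensor product it acts as
\[
\sum_{g\in H}\rho_i(g)\otimes\rho_j(g^{-1}).
\]
Using the identification $V_{\chi_i}\boxtimes V_{\chi_j}\cong\Hom_\k(V_{\chi_j}^*,V_{\chi_i})$, I will rewrite this operator in terms of the averaging map
\[
\phi\longmapsto\sum_{g}\rho_i(g)\,\phi\,\rho_j(g)^{-1}.
\]
It is convenient to write $\Phi(\phi):=\sum_g\rho_i(g)\phi\rho_j(g^{-1})$ for $\phi\in\Hom_\k(V_{\chi_j},V_{\chi_i})$. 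Using a basis $\{w_b\}$ of $V_{\chi_j}$ with dual basis $\{w_b^*\}$, the operator on $V_{\chi_i}\otimes V_{\chi_j}$ is expressed through the Schur averages of the matrix units. Concretely,
\[
\Big(\sum_g\rho_i(g)\otimes\rho_j(g^{-1})\Big)(v\otimes w)=\sum_b \Phi(v\otimes w_b^*)\big(w\big)\ldots
\]
It is cleaner to argue coordinatewise. The $(a,b),(c,e)$ matrix entry equals $\sum_g\rho_i(g)_{ac}\rho_j(g^{-1})_{be}$. Schur orthogonality of matrix coefficients, which is valid since $\k$ is a splitting field with $|H|$ invertible, gives $\frac{|H|}{\chi_k(1)}\delta_{ij}\delta_{ae}\delta_{bc}$.

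For part (1), $i\ne j$, the orthogonality relation gives $0$ directly. For part (2), $i=j=k$, the operator sends $v_c\otimes v_e$ to $\frac{|H|}{\chi_k(1)}\,v_e\otimes v_c$, which is $c_kP$, where $P$ is the flip; this identifies the flip with the action of $s_1$ under the stated convention.

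The main obstacle is getting the index order in Schur orthogonality right in the form $\sum_g\rho(g)_{ac}\rho(g^{-1})_{be}=\frac{|H|}{\dim}\delta_{ae}\delta_{bc}$, since the flip rather than the identity is what emerges. This form follows from $\sum_g\rho(g)E_{cb}\rho(g^{-1})=\frac{|H|}{\dim}\operatorname{tr}(E_{cb})\,\mathrm{id}$, by Schur's lemma together with taking traces.
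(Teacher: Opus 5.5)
Your argument is correct: the paper gives no proof of its own and simply cites \cite{WW08}, and your coordinatewise Schur-orthogonality computation is the standard argument for it. The computation is $\sum_{g}\rho(g)_{ac}\rho'(g^{-1})_{be}=\delta_{\rho\rho'}\frac{|H|}{\chi_k(1)}\delta_{ae}\delta_{bc}$, which sends $v_c\otimes v_e\mapsto c_k\,v_e\otimes v_c$ and kills everything when $\rho\not\cong\rho'$.

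Two small points of hygiene. First, delete the abandoned display ending in ``$\ldots$''. Second, in characteristic $\ell\nmid|H|$, say explicitly that dividing by $\chi_k(1)$ is legitimate: your trace step with $c=b$ gives $\lambda\,\chi_k(1)=|H|\neq 0$ in $\k$.
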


\subsubsection{Adjunctions}
\label{sec:adjwreath}
Since $(E^{\chi_i},F^{\chi_i})$ is a bi-adjoint pair, there exist four adjunction maps for each pair. Note that these adjunction maps are unique up to a scalar.
Fix matrix units $E^{\chi_i}_{ab}$ in the $\chi_i$-block, with
$e_{\chi_i}=E^{\chi_i}_{11}$ and $1\leq a,b\leq n_i$.  We choose the
following bimodule representatives.
\begin{itemize}
	\item[$(a)$]
	Since $ e_{\chi_i}\cdot \k H  \cdot e_{\chi_i}\cong V^\vee_{\chi_i}\otimes_{\k H } V_{\chi_i}\cong  \Hom_{\k H }(V_{\chi_i},V_{\chi_i})=\k,$ we know that $e_{\chi_i}\cdot \k H  \cdot e_{\chi_i}=\k\cdot e_{\chi_i}.$ We denote by $\delta: \k H  \to \k$  such that $e_{\chi_i}g e_{\chi_i}=\delta(g)\cdot e_{\chi_i}$.
	We define  $(\k H  _{r},\k H  _{r})$-bimodule
	maps
	$$\varepsilon^{\chi_i}_L: e^{(r+1)}_{\chi_i}\cdot \k H  _{r+1}\cdot e^{(r+1)}_{\chi_i} \to  \k H  _r,$$
	\begin{align*}
		e^{(r+1)}_{\chi}\cdot g\cdot e^{(r+1)}_{\chi}\mapsto
		\begin{cases}
			\delta(h')h &\text{if $g=h\times h' \in H_r\times H^{(r+1)}$;}\\
			0&\text{otherwise.}
		\end{cases}
	\end{align*}

	\item[$(b)$]
	Let $H_r=\coprod_{a=1}^{r}(H_{r-1}\times H^{(r)})g_a$ be a decomposition
	of $H_r$ into left cosets.  Put
	\[
	Z^{\chi_i}_r=\sum_{a=1}^{r}\sum_{b=1}^{n_i}
	g_a^{-1}(E^{\chi_i}_{b1})^{(r)}\otimes_{\k H_{r-1}}
	(E^{\chi_i}_{1b})^{(r)}g_a.
	\]
	Then $Z^{\chi_i}_r\in \k H_r e^{(r)}_{\chi_i}\otimes_{\k H_{r-1}}
	e^{(r)}_{\chi_i}\k H_r$, it is independent of the chosen representatives,
	and $aZ^{\chi_i}_r=Z^{\chi_i}_ra$ for $a\in H_r$.
	\smallskip

	Then we define $( \k H _r , \k H _r )$-bimodule maps as follows:
	$$\eta^{\chi_i}_L: \k H _r \to  \k H _{r}\cdot e^{(r)}_{\chi_i}\otimes_{\k H _{r-1}} e^{(r)}_{\chi_i}\cdot \k H _r  ,\quad a\mapsto aZ_r^{\chi_i}=Z^{\chi_i}_ra$$
	for any $a\in  \k H _r .$

	\item[$(c)$]

	We define by $\varepsilon^{\chi_i}_R$ the following $( \k H _r , \k H _r )$-bimodule maps:
	$$\varepsilon^{\chi_i}_R: \k H _{r}\cdot e^{(r)}_{\chi_i}\otimes_{\k H _{r-1}} e^{(r)}_{\chi_i}\cdot \k H _r  \to  \k H _r ,\quad g  e^{(r)}_{\chi_i} \otimes e^{(r)}_{\chi_i} h\mapsto g e^{(r)}_{\chi_i} h,$$
	for any $g,h\in  \k H _r $.
	\item[$(d)$] Finally we define by $\eta^{\chi_i}_R$  the following $( \k H _r , \k H _r )$-bimodule maps:
	$$\eta^{\chi_i}_R:  \k H _r \to  e^{(r+1)}_{\chi_i}\cdot \k H _{r+1}\cdot e^{(r+1)}_{\chi_i} ,\quad g\mapsto  g e^{(r+1)}_{\chi_i} $$
	for all $g\in  \k H _r .$
\end{itemize}
The matrix-unit identities
$E_{1b}^{\chi_i}E_{c1}^{\chi_i}=\delta_{bc}E_{11}^{\chi_i}$ and
$\sum_bE_{b1}^{\chi_i}E_{1b}^{\chi_i}=f_{\chi_i}$ give the two triangle
identities for the left adjunction; the right adjunction is immediate from
multiplication by $e_{\chi_i}$.  Thus these are the stated adjunction maps.
\smallskip
\subsubsection{Dots}

Define $c_i:=\frac{|H|}{\chi_i(1)}$.
This normalization is also the one suggested by the Frobenius-Heisenberg description above. Over $\bbK$, let
\[
\operatorname{tr}_H\!\left(\sum_{h\in H}a_hh\right)=a_1
\]
be the coefficient-of-the-identity Frobenius trace on $\bbK H$. For a rank-one primitive idempotent $e_{\chi_i}$ in the $\chi_i$-block one has
$\operatorname{tr}_H(e_{\chi_i})=\chi_i(1)/|H|=c_i^{-1}$. Lemma~5.3 of \cite{BSW21} shows that the Frobenius Heisenberg category is unchanged up to isomorphism when the Frobenius trace is changed. After normalizing the trace on the one-dimensional corner $\bbK e_{\chi_i}$, the ordinary Heisenberg dot corresponds to the normalized Jucys--Murphy operator $c_i^{-1}L_{r+1}$.
We define $\mathrm{X}_r^{\chi_i}:F_r^{\chi_i}\to F_r^{\chi_i}$ by right multiplication by the normalized Jucys--Murphy element $c_i^{-1}e^{(r+1)}_{\chi_i}L_{r+1}e^{(r+1)}_{\chi_i}.$ More precisely, $\mathrm{X}_r^{\chi_i}:F_r^{\chi_i}\to F_r^{\chi_i}$ is given by the following $(\k H _{r+1},\k H _r)$-bimodule maps:
$$\k H _{r+1}\cdot e^{(r+1)}_{\chi_i}\to \k H _{r+1}\cdot e^{(r+1)}_{\chi_i},$$
$$g e^{(r+1)}_{\chi_i}\mapsto c_i^{-1}\cdot gL_{r+1}e^{(r+1)}_{\chi_i}.$$
Moreover, $\mathrm{X}_r^{\chi_i}:F_r^{\chi_i}\to F_r^{\chi_i}$ is also given by the following $(\k H _{r+1},\k H _r)$-bimodule maps:
$$\k H _{r+1}\otimes_{ \k H ^{(r+1)}}V_{\chi_i}\to \k H _{r+1}\otimes_{ \k H ^{(r+1)}}V_{\chi_i},$$
$$g\otimes m\mapsto c_i^{-1}\cdot gL_{r+1}\otimes m.$$
\subsubsection{Crossings}
We define $\mathrm{T}_r^{\chi_i,\chi_j}:F_{r+1}^{\chi_j}F_r^{\chi_i}\to F_{r+1}^{\chi_i}F_r^{\chi_j}$ by right multiplication by the elements $e^{(r+2)}_{\chi_i}(r+1,r+2)e^{(r+2)}_{\chi_j}.$ More precisely, $\mathrm{T}_r^{\chi_i,\chi_j}:F_{r+1}^{\chi_j}F_r^{\chi_i}\to F_{r+1}^{\chi_i}F_r^{\chi_j}$ is given by the following $(\k H _{r+2},\k H _{r+2})$-bimodule maps:
$$\k H _{r+2}\cdot e^{(r+1)}_{\chi_i}e^{(r+2)}_{\chi_j}\to \k H _{r+2}\cdot e^{(r+1)}_{\chi_j}e^{(r+2)}_{\chi_i},$$
$$ge^{(r+1)}_{\chi_i}e^{(r+2)}_{\chi_j} \mapsto g(r+1,r+2)e^{(r+1)}_{\chi_j}e^{(r+2)}_{\chi_i}.$$
Moreover, $\mathrm{T}_n^{\chi_i,\chi_j}:F_{r+1}^{\chi_j}F_r^{\chi_i}\to F_{r+1}^{\chi_i}F_r^{\chi_j}$ is also given by the following $(\k H _{r+2},\k H _{r})$-bimodule maps:
$$\k H _{r+2}\otimes_{\k( H ^{(r+1)}\times  H ^{(r+2)})}(V_{\chi_i}\otimes V_{\chi_j})\to \k H _{r+2}\otimes_{\k( H ^{(r+1)}\times H ^{(r+2)})}(V_{\chi_j}\otimes V_{\chi_i}),$$
$$g\otimes m_1\otimes m_2\mapsto g(r+1,r+2)\otimes m_2\otimes m_1.$$

\subsubsection{Main theorem}
Write $\Irr(H)=\{\chi_1,\ldots,\chi_d\}$.  We identify the $d$ colors of $\symHeis_d$ with these irreducible characters and take $k_i=-1$ for every $i$; thus
\[
\symHeis_d=\Heis^{(\chi_1)}_{-1}\odot\cdots\odot\Heis^{(\chi_d)}_{-1}.
\]
\begin{theorem}\label{Thm:doubleheisenberg}
There exists a strict $\k$-linear monoidal functor
$$
\Psi:\symHeis_d \longrightarrow \mathcal{E}nd_\k
\left(\k H _{\bullet} \mod\right)
$$ which
sends the generating objects $\mathord{
\begin{tikzpicture}[baseline = -1mm]
	\draw[<-,red] (0.68,.28) to (0.68,-.22);
	\node at (0.68,-.37) {$\scriptstyle{\red{\chi_i}}$};
\end{tikzpicture}
}$ and $\mathord{
\begin{tikzpicture}[baseline = -1mm]
	\draw[->,red] (0.68,.28) to (0.68,-.22);
	\node at (0.68,-.37) {$\scriptstyle{\red{\chi_i}}$};
\end{tikzpicture}
}$ to endo-functors $F^{\chi_i}$ and $E^{\chi_i}$ of $ \k H _{\bullet}\mod $, respectively. Moreover,
$\Psi$ sends the generating morphisms  to the corresponding natural transformations
in  $ \k H _{\bullet}\mod $ as follows:
\begin{itemize}
\item
$\Psi(\begin{tikzpicture}[baseline = -1mm]
	\draw[->,red] (0.08,-.2) to (0.08,.2);
	\node[red] at (0.08,0) {$\dott$};
	\node at (0.08,-.37) {$\scriptstyle{\red{\chi_i}}$};
\end{tikzpicture})=\mathrm{X}^{\chi_i}\::\;F^{\chi_i} \Rightarrow F^{\chi_i}\,,\quad\Psi(
\begin{tikzpicture}[baseline = -1mm]
	\draw[->,red] (0.2,-.2) to (-0.2,.2);
	\draw[->,red] (-0.2,-.2) to (0.2,.2);
	\node at (0.2,-.37) {$\scriptstyle{\red{\chi_i}}$};
	\node at (-0.2,-.37) {$\scriptstyle{\red{\chi_i}}$};
\end{tikzpicture})=\mathrm{T}^{\chi_i,\chi_i}\::\; (F^{\chi_i})^2 \Rightarrow (F^{\chi_i})^2\,,$
\item
$\Psi(\begin{tikzpicture}[baseline = .75mm]
	\draw[<-,red] (0.3,0) to[out=90, in=0] (0.1,0.3);
	\draw[-,red] (0.1,0.3) to[out = 180, in = 90] (-0.1,0);
	\node at (-0.1,-.2) {$\scriptstyle{\red{\chi_i}}$};
\end{tikzpicture})=\varepsilon^{\chi_i}_R\::\;F^{\chi_i}\otimes E^{\chi_i} \Rightarrow \unit
\,,\quad\Psi(\begin{tikzpicture}[baseline = .75mm]
	\draw[<-,red] (0.3,0.3) to[out=-90, in=0] (0.1,0);
	\draw[-,red] (0.1,0) to[out = 180, in = -90] (-0.1,0.3);
	\node at (-0.1,0.42) {$\scriptstyle{\red{\chi_i}}$};
\end{tikzpicture})=\eta^{\chi_i}_R\::\;\unit\Rightarrow E^{\chi_i} \otimes F^{\chi_i}\,,$
\item
$\Psi(\begin{tikzpicture}[baseline = .75mm]
	\draw[-,red](0.1,0.3) to[out=0, in=90](0.3,0);
	\draw[<-,red] (-0.1,0)to[out =90 , in = 180](0.1,0.3);
	\node at (0.3,-.2) {$\scriptstyle{\red{\chi_i}}$};
\end{tikzpicture})=\varepsilon^{\chi_i}_L\::\;E^{\chi_i}\otimes F^{\chi_i} \Rightarrow \unit
\,,\quad\Psi(\begin{tikzpicture}[baseline = .75mm]
	\draw[-,red] (0.1,0)to[out=0, in=-90](0.3,0.3);
	\draw[<-,red] (-0.1,0.3)to[out =-90, in =180] (0.1,0);
	\node at (0.3,0.42) {$\scriptstyle{\red{\chi_i}}$};
\end{tikzpicture})=\eta^{\chi_i}_L\::\;\unit\Rightarrow F^{\chi_i}\otimes E^{\chi_i}\,,$

\item
$\Psi(\begin{tikzpicture}[baseline = -1mm]
	\draw[->,blue] (0.2,-.2) to (-0.2,.2);
	\draw[->,red] (-0.2,-.2) to (0.2,.2);
	\node at (0.2,-.37) {$\blue{\scriptstyle{\chi_j}}$};
	\node at (-0.2,-.37) {$\red{\scriptstyle{\chi_i}}$};
\end{tikzpicture})=\mathrm{T}^{\chi_i,\chi_j}\::\;F^{\chi_i}\otimes F^{\chi_j} \Rightarrow F^{\chi_j}\otimes F^{\chi_i}.$
\end{itemize}
\end{theorem}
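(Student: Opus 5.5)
To prove Theorem~\ref{Thm:doubleheisenberg} we use the upward presentation of $\symHeis_d$ from Definition~\ref{DoubleHeisenberg} and Theorem~\ref{thm:equiv}. It then suffices to check four groups of relations on the representing bimodules.
\begin{enumerate}
\item[(i)] The degenerate affine Hecke relations \eqref{dAHA} hold for each color, with $k=-1$.
\item[(ii)] The mixed relations \eqref{mixedHecke} hold for distinct colors.
\item[(iii)] The maps of \S\ref{sec:adjwreath} give biadjunctions.
\item[(iv)] The inversion relations hold: \eqref{dMackey2} for $k=-1$, and \eqref{mixedMackey} for distinct colors.
\end{enumerate}
For (iii), the triangle identities were already sketched after the definition of the adjunction maps, via the matrix-unit identities. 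It remains to confirm compatibility with the chosen bimodule realizations.

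\textbf{Hecke and mixed relations.} All of these reduce to computations in $\k H_{r+2}$ and $\k H_{r+3}$ acting on the last two or three tensor slots. The quadratic and braid relations for $\mathrm{T}$ are immediate, because $\mathrm{T}$ is right multiplication by a transposition composed with the coefficient flip. The same argument gives the mixed invertibility $\mathrm{T}^{\chi_j,\chi_i}\mathrm{T}^{\chi_i,\chi_j}=1$ and the three-color braid relation.

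For the dot--slide relation we use Pushkarev's relation $s_iL_i-L_{i+1}s_i=-M^{(i,i+1)}$. After cutting down by $e^{(r+1)}_{\chi_i}e^{(r+2)}_{\chi_j}$, Lemma~\ref{Lemma:ortho} gives the following.
\begin{itemize}
\item If $i\neq j$, then $M^{(r+1,r+2)}$ acts by $0$, which yields the mixed dot--slide relation.
\item If $i=j$, then $M^{(r+1,r+2)}$ acts by $c_iP$. After the normalization $\mathrm{X}=c_i^{-1}L$, this becomes exactly the identity correction term in \eqref{dAHA}.
\end{itemize}
Commutativity of the $L_j$ with each other and with $H^n$ ensures that $\mathrm{X}$ is a well-defined bimodule endomorphism.

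\textbf{Inversion relations.} Apply the Mackey decomposition for $H_{r+1}=\coprod(H_r\times H^{(r+1)})\,w\,(H_r\times H^{(r+1)})$, which has two double cosets: $1$ and $(r,r+1)$. This gives, as bimodules,
\begin{equation*}
e^{(r+1)}_{\chi_j}\k H_{r+1}e^{(r+1)}_{\chi_i}\cong \delta_{ij}\,\k H_r\;\oplus\;\k H_r e^{(r)}_{\chi_i}\otimes_{\k H_{r-1}}e^{(r)}_{\chi_j}\k H_r .
\end{equation*}
The identity coset contributes $\delta_{ij}\k H_r$, by Schur's lemma, since $e_{\chi_j}\k He_{\chi_i}=\delta_{ij}\k e_{\chi_i}$.

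We then compute the sideways crossings explicitly as mates, as in Lemmas~\ref{lcross} and~\ref{Lem:Rcrossing}. The leftward crossing kills the identity summand and is the identity on the transposition summand. The cup $\eta_R$ lands in the identity summand. It follows that $[\text{rightward crossing},\ \eta_R]$ is inverse to $[\text{leftward crossing};\ \varepsilon_L]$ for $i=j$. For $i\ne j$ only the transposition summand survives, and the two mixed sideways crossings are mutually inverse.

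Because $k=-1$, the relation for same-color curls and bubbles needs no further normalization: $\varepsilon_L\circ\eta_R$ gives $1$ on $\k H_r$, so the degree-zero clockwise bubble is correct.

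\textbf{Main obstacle.} We expect the main difficulty to be the bookkeeping of scalars. The left cup $Z_r$ carries a sum over $b$ of matrix units, while the dot carries the normalization $c_i^{-1}$. One must check that the mate-defined sideways crossing matches the crossing that appears in \eqref{dMackey2} with coefficient exactly $1$. We would first verify this on the generator $e_{\chi_i}^{(r)}\otimes e_{\chi_i}^{(r)}$ for $r=1$, where everything reduces to $\k H_2$ and Lemma~\ref{Lemma:ortho}, and then extend to general $r$ by $H_r$-equivariance.
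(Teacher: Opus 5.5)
Your proposal is correct and follows essentially the same route as the paper: the Hecke and mixed relations come from Pushkarev's Jucys--Murphy identity together with Lemma~\ref{Lemma:ortho}, the adjunctions come from the matrix-unit identities, and the inversion relations come from the two-double-coset decomposition and the explicit sideways crossings of Lemmas~\ref{lcrosswreath} and~\ref{rcrosswreath}. Your identification of the open-coset summand with $\k H_r e^{(r)}_{\chi_i}\otimes_{\k H_{r-1}}e^{(r)}_{\chi_j}\k H_r$ is slightly more explicit than the paper's Proposition~\ref{mkd}, and the scalar issue you flag is milder than you fear: at $k=-1$ no dots enter the inversion matrix, and the rightward crossing is by definition the mate through $\eta_R$ and $\varepsilon_R$.
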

\subsection{Proof of Theorem~\ref{Thm:doubleheisenberg}}
It remains to verify the defining relations of $\symHeis_d$ for the maps defining $\Psi$.

\subsubsection{Double cosets decomposition}\label{sub:doublecosets}

\begin{lemma}\label{doublecoset}
\begin{itemize}
\item[$(a)$] The set of the distinguished representatives for double coset $ H _{n}\backslash  H _{n+1}/ H _n$ is $\{x,(n,n+1)|x\in  H ^{(n+1)}\}.$
\item[$(b)$] The set of the distinguished representatives for double coset $( H _{n}\times H ^{(n+1)} ) \backslash  H _{n+1}/( H _n\times H ^{(n+1)})$ is $\{1,(n,n+1)\}.$
\end{itemize}
\end{lemma}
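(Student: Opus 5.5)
We reduce both statements to double cosets of the symmetric group, using the semidirect product structure $H_{n+1}=H^{n+1}\rtimes\mathfrak S_{n+1}$ together with the fact that $H_n=H^n\rtimes\mathfrak S_n$ contains the full base group on the first $n$ coordinates.

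For (b), put $K=H_n\times H^{(n+1)}=H^{n+1}\rtimes\mathfrak S_n$. Since $K$ contains the normal subgroup $H^{n+1}$, the projection $H_{n+1}\to\mathfrak S_{n+1}$ induces a bijection
\[
K\backslash H_{n+1}/K\;\longleftrightarrow\;\mathfrak S_n\backslash\mathfrak S_{n+1}/\mathfrak S_n .
\]
We then invoke Theorem~\ref{Lem:james} with $r=r'=n$ and $d=d'=1$. The parameter $a$ ranges over $\{n-1,n\}$, and the two shortest representatives are $\sigma_n=1$ and $\sigma_{n-1}=(n,n+1)$. This gives the set $\{1,(n,n+1)\}$ stated in (b).

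For (a), we refine each of these two double cosets by the $H^{(n+1)}$-coordinate, which is no longer absorbed on either side. An element of $H_{n+1}$ can be written uniquely as $h\,x\,\sigma$ with $h\in H^n$, $x\in H^{(n+1)}$ and $\sigma\in\mathfrak S_{n+1}$.

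\emph{The coset $H_n H^{(n+1)} H_n=K$.} Here $H^{(n+1)}$ commutes with $H_n$, since $H_n$ acts only on coordinates $1,\dots,n$. Consequently $H_n x H_n=xH_n$, and $K$ splits into the double cosets $H_nxH_n$ for $x\in H^{(n+1)}$. These are pairwise distinct because the $(n+1)$-st coordinate of any element of $H_nxH_n$ is exactly $x$.

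\emph{The coset $K(n,n+1)K$.} Let $g=h\,(n,n+1)\,h'$ with $h,h'\in H_n$. Left multiplication by $H_n$ can change the $(n+1)$-coordinate only through its permutation part, and in fact it fixes that coordinate. Right multiplication by $H_n$, however, can move the $H$-label from position $n$ into position $n+1$, because $(n,n+1)$ carries coordinate $n$ to coordinate $n+1$. Concretely, for $x\in H^{(n+1)}$,
\[
x\,(n,n+1)=(n,n+1)\,x',\qquad x'\in H^{(n)}\subset H_n ,
\]
so $H_n\,x\,(n,n+1)\,H_n=H_n(n,n+1)H_n$, and the nonidentity double coset of (b) remains a single $(H_n,H_n)$ double coset.

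The main obstacle is this second computation: one must check that the $H^{(n+1)}$-label really is absorbed in the $(n,n+1)$ coset while it is not absorbed in the identity coset. Once that is established, the two cases give the representatives $\{x\mid x\in H^{(n+1)}\}\cup\{(n,n+1)\}$, and these are distinguished (minimal length) because their $\mathfrak S_{n+1}$-parts are the minimal representatives $1$ and $(n,n+1)$ from Theorem~\ref{Lem:james}.
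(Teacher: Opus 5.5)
Your proof is correct and follows the same route as the paper, which only records that everything reduces to the standard double cosets of $\mathfrak S_n$ in $\mathfrak S_{n+1}$. You spell this out via the projection $H_{n+1}\to\mathfrak S_{n+1}$, then split the identity coset by the $H^{(n+1)}$-label and show that label is absorbed in the $(n,n+1)$-coset. One small gap: you absorb the label only on the left. You should also note the mirror identity $(n,n+1)\,y^{(n+1)}=y^{(n)}\,(n,n+1)$, so that a general element $x\,(n,n+1)\,y$ with $x,y\in H^{(n+1)}$ lies in $H_n\,x\,(n,n+1)$; then all of $K(n,n+1)K$, and not just each $H_n\,x\,(n,n+1)\,H_n$, equals $H_n(n,n+1)H_n$.
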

\begin{proof}
The two decompositions follow from the standard coset representatives of
$\mathfrak S_n$ in $\mathfrak S_{n+1}$.
\end{proof}

\begin{proposition}\label{mkd}
\begin{itemize}
\item[$(a)$] As an $( \k H _r , \k H _r )$-bimodule, $ e^{(r+1)}_{\chi_i} \cdot  \k H _{r+1}
\cdot e^{(r+1)}_{\chi_i} $ is generated by $ e^{(r+1)}_{\chi_i} , e^{(r+1)}_{\chi_i}(r,r+1)e^{(r+1)}_{\chi_i} $,i.e., we have the following $( \k H _r , \k H _r )$-bimodule isomorphism
\begin{align*} e^{(r+1)}_{\chi_i}\cdot  \k H _{r+1} \cdot e^{(r+1)}_{\chi_i} \cong  \k H _r\cdot  e^{(r+1)}_{\chi_i}  \oplus  \k H _r  \cdot  e^{(r+1)}_{\chi_i}(r,r+1)e^{(r+1)}_{\chi_i}\cdot \k H _r.
\end{align*}
\item[$(b)$]For $i\neq j$, as an $( \k H _r , \k H _r )$-bimodule, $ e^{(r+1)}_{\chi_i} \cdot  \k H _{r+1}
\cdot e^{(r+1)}_{\chi_j} $ is generated by $ e^{(r+1)}_{\chi_i}(r,r+1)e^{(r+1)}_{\chi_j} $,
equivalently, we have the following $( \k H _r , \k H _r )$-bimodule isomorphisms
\begin{align*} e^{(r+1)}_{\chi_i}\cdot  \k H _{r+1} \cdot e^{(r+1)}_{\chi_j} \cong   \k H _r  \cdot  e^{(r+1)}_{\chi_i}(r,r+1)e^{(r+1)}_{\chi_j}\cdot \k H _r.\end{align*}
\end{itemize}
\end{proposition}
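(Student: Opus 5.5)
We decompose $e^{(r+1)}_{\chi_i}\,\k H_{r+1}\,e^{(r+1)}_{\chi_j}$ according to the double cosets of Lemma~\ref{doublecoset}(b), with $K:=H_r\times H^{(r+1)}$. Since $e^{(r+1)}_{\chi_i}\in \k H^{(r+1)}\subseteq \k K$, left multiplication by $e^{(r+1)}_{\chi_i}$ and right multiplication by $e^{(r+1)}_{\chi_j}$ preserve each span $\k[KwK]$. Therefore
\[
e^{(r+1)}_{\chi_i}\k H_{r+1}e^{(r+1)}_{\chi_j}
=e^{(r+1)}_{\chi_i}\k K e^{(r+1)}_{\chi_j}\;\oplus\;
e^{(r+1)}_{\chi_i}\k[K(r,r+1)K]e^{(r+1)}_{\chi_j},
\]
as an internal direct sum of $(\k H_r,\k H_r)$-bimodules. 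This is the same argument as Lemma~\ref{lem:finite-double-coset-bimodule}.

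For the identity summand, $\k K=\k H_r\otimes\k H^{(r+1)}$, and $e^{(r+1)}_{\chi_i}$ commutes with $H_r$. So this summand equals $\k H_r\otimes e_{\chi_i}\k H e_{\chi_j}$. For $i=j$ we have $e_{\chi_i}\k He_{\chi_i}=\k e_{\chi_i}$, which gives the free rank-one bimodule $\k H_r e^{(r+1)}_{\chi_i}$. For $i\ne j$ the two idempotents lie in different blocks, so $e_{\chi_i}\k He_{\chi_j}=0$. This settles the first summand in (a) and shows there is no such summand in (b).

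For the other summand, write $s=(r,r+1)$. I would use the decomposition $K sK=H_r\,s\,K$, refined as $H_r\, s\, H^{(r+1)}\,(H_{r-1}\times H^{(r)})\backslash H_r$ or similar. Every element of $KsK$ can be written as $a\,h\,s\,h'\,b$ with $a,b\in H_r$ and $h,h'\in H^{(r+1)}$. Absorbing $h$ and $h'$ into the idempotents gives
\[
e_{\chi_i}^{(r+1)}\,a\,h\,s\,h'\,b\,e^{(r+1)}_{\chi_j}
=a\,(e_{\chi_i}h)^{(r+1)}\, s\,(h'e_{\chi_j})^{(r+1)}\,b .
\]
Next, $(e_{\chi_i}h)^{(r+1)}s=s\,(e_{\chi_i}h)^{(r)}$ and $(e_{\chi_i}h)^{(r)}\in\k H_r$. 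Using the matrix-unit identities $E_{a1}e_{\chi_i}=E_{a1}$ and $\k H e_{\chi_i}=\mathrm{span}\{E_{a1}\}$ (the same balancing trick as in Step~3 of Lemma~\ref{lem:finite-bimodule-decomposition}), each such term lies in $\k H_r\, e^{(r+1)}_{\chi_i} s e^{(r+1)}_{\chi_j}\,\k H_r$. Hence this generator generates the second summand, and this gives both the generation statement and the second summand in (b).

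The main obstacle is promoting ``generated by'' to the displayed isomorphism, namely confirming that $\k H_r\, e s e'\,\k H_r$ has the expected size. In the statement this is written as the cyclic bimodule itself, so it suffices to check that it is not smaller than the double-coset summand, i.e.\ that the decomposition is direct. That directness is already guaranteed by the disjointness of the double cosets. If a concrete model is needed, I would compare dimensions: the second summand should be identified with $\k H_r e^{(r)}_{\chi_j}\otimes_{\k H_{r-1}} e^{(r)}_{\chi_i}\k H_r$ via the Mackey isomorphism of Theorem~\ref{thm:HC-Mackey} specialized to wreath products, and one would then count dimensions on both sides.
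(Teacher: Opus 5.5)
Your proposal is correct and follows the paper's proof: you split $e^{(r+1)}_{\chi_i}\k H_{r+1}e^{(r+1)}_{\chi_j}$ along the two double cosets of Lemma~\ref{doublecoset}(b), identify the closed summand as $\k H_r e^{(r+1)}_{\chi_i}$ (or $0$ when $i\neq j$), and show the open summand is generated by $e^{(r+1)}_{\chi_i}(r,r+1)e^{(r+1)}_{\chi_j}$, a step the paper only asserts and you actually justify. Two small corrections to that justification. First, before moving the outer factors across $s$, write $e_{\chi_i}h=e_{\chi_i}(e_{\chi_i}h)$ and $h'e_{\chi_j}=(h'e_{\chi_j})e_{\chi_j}$, so that a copy of each idempotent stays adjacent to $s$; idempotence and commutation then suffice, with no matrix units needed. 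Second, since both summands are sub-bimodules of the left-hand side, the internal direct sum already gives the displayed isomorphism, and no dimension count is required.
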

\begin{proof}

$(a)$ By Lemma \ref{doublecoset}$(b)$, we have the following double coset decomposition$$ H _{r+1}= H _{r}\times  H ^{(r+1)}\sqcup  H _{r}\times  H ^{(r+1)}\cdot (r,r+1)\cdot  H _{r}\times  H ^{(r+1)}.$$
So we have the following $(\k H _r,\k H _r)$-bimodule decomposition  of $e^{(r+1)}_{\chi}\cdot   \k H _{r+1} \cdot  e^{(r+1)}_{\chi}$
$$e^{(r+1)}_{\chi}\cdot \k(H_r\times H^{(r+1)}) \cdot e^{(r+1)}_{\chi} \oplus  e^{(r+1)}_{\chi}\cdot \k( H_r\times H^{(r+1)}(r,r+1)H_r\times H^{(r+1)})\cdot  e^{(r+1)}_{\chi} .$$

Since $ e^{(r+1)}_{\chi} $ commutes with $ H _r$ and $e^{(r+1)}_{\chi} \cdot \k H ^{(r+1)}\cdot  e^{(r+1)}_{\chi}=\k\cdot e^{(r+1)}_{\chi}$, we have $$ e^{(r+1)}_{\chi} \cdot \k(H_r\times H^{(r+1)})\cdot e^{(r+1)}_{\chi} = \k H _r \cdot  e^{(r+1)}_{\chi}.$$

Finally,  $$ e^{(r+1)}_{\chi}\cdot \k( H_r\times H^{(r+1)}\cdot(r,r+1)\cdot H_r\times H^{(r+1)})\cdot  e^{(r+1)}_{\chi} = \k H _r  \cdot e^{(r+1)}_{\chi} (r,r+1)
e^{(r+1)}_{\chi} \cdot  \k H _r . $$
\smallskip

$(b)$ Similarly, for $i\neq j,$  we have the following $(\k H _r,\k H _r)$-bimodule decomposition  of $e^{(r+1)}_{\chi_i}\cdot   \k H _{r+1} \cdot  e^{(r+1)}_{\chi_j}$:
$$e^{(r+1)}_{\chi_i}\cdot  \k( H _{r}\times  H ^{(r+1)}) \cdot e^{(r+1)}_{\chi_j} \oplus  e^{(r+1)}_{\chi_i}\cdot  \k( H _{r}\times  H ^{(r+1)}(r,r+1) H _{r}\times  H ^{(r+1)})\cdot  e^{(r+1)}_{\chi_j} .$$
The only difference is that $e^{(r+1)}_{\chi_i} \cdot \k H ^{(r+1)}\cdot  e^{(r+1)}_{\chi_j}\cong\Hom_{ H }(V_{\chi_i},V_{\chi_j})=0.$
The remaining proof is similar to $(a).$
\end{proof}

\smallskip

\subsubsection{Degenerate-affine-Hecke-type relations}

\begin{lemma}\label{lem:12relations}

The natural transformations  $\mathrm{X}^{\chi_i}=\begin{tikzpicture}[baseline = -1mm]
\draw[->,red] (0.08,-.2) to (0.08,.2);
\node at (0.08,0) {$\dott$};\node at (0.08,-.3) {${\scriptstyle{\red{\chi_i}}}$};

\end{tikzpicture}$,
$\mathrm{T}^{\chi_i,\chi_i}=\begin{tikzpicture}[baseline = -1mm]
\draw[->,red] (0.2,-.2) to (-0.2,.2);
\draw[->,red] (-0.2,-.2) to (0.2,.2);
\node at (0.2,-.37) {${\scriptstyle{\red{\chi_i}}}$};
\node at (-0.2,-.37) {${\scriptstyle{\red{\chi_i}}}$};
\end{tikzpicture}\:,$
$\mathrm{T}^{\chi_i,\chi_j}=\begin{tikzpicture}[baseline = -1mm]
\draw[->,blue] (0.2,-.2) to (-0.2,.2);
\draw[->,red] (-0.2,-.2) to (0.2,.2);
\node at (0.2,-.37) {${\scriptstyle{\blue{\chi_j}}}$};
\node at (-0.2,-.37) {${\scriptstyle{\red{\chi_i}}}$};
\end{tikzpicture}\:$ ($i\neq j$)
satisfy degenerate affine Hecke relations \eqref{dAHA} for all $1\leq i\leq d$ and mixed degenerate affine-Hecke-type relations \eqref{mixedHecke} for all $1\leq i\neq j \leq d$.

\end{lemma}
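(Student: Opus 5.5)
Since every natural transformation involved is right multiplication on an induced bimodule $\k H_{r+2}\cdot e^{(r+1)}_{\chi}e^{(r+2)}_{\chi'}$ (or $\k H_{r+1}\cdot e^{(r+1)}_{\chi}$), I will translate each relation into an identity in $\k H_{r+2}$ or $\k H_{r+3}$ between elements sandwiched by the relevant idempotents, and then verify it there. Throughout I use the Pushkarev relations: the $L_j$ commute pairwise and with $H^n$, and $s_iL_i-L_{i+1}s_i=-M^{(i,i+1)}$. I also use Lemma~\ref{Lemma:ortho} to evaluate $M^{(i,i+1)}$ against the idempotents.

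I will treat the relations in three groups.

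\emph{Crossing relations.} For the quadratic relation, $\mathrm{T}^{\chi_j,\chi_i}\mathrm{T}^{\chi_i,\chi_j}$ is right multiplication by $(r+1,r+2)e^{(r+1)}_{\chi_j}e^{(r+2)}_{\chi_i}(r+1,r+2)e^{(r+1)}_{\chi_i}e^{(r+2)}_{\chi_j}$. Conjugation by $(r+1,r+2)$ swaps the factors $H^{(r+1)}$ and $H^{(r+2)}$, so this element collapses to $s^2e^{(r+1)}_{\chi_i}e^{(r+2)}_{\chi_j}=e^{(r+1)}_{\chi_i}e^{(r+2)}_{\chi_j}$. This gives both the same-color quadratic relation and invertibility of the mixed crossing. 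The braid relations, including the three-color one, reduce in the same way to the braid relation $s_{r+1}s_{r+2}s_{r+1}=s_{r+2}s_{r+1}s_{r+2}$ in $\frakS_{r+3}$. In each case the idempotents are transported along consistently by conjugation.

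\emph{Dot-slide relations.} Here I must compute $s\,c^{-1}L_{r+1}$ versus $c^{-1}L_{r+2}\,s$, with $s=(r+1,r+2)$, sandwiched between $e^{(r+1)}_{\chi_i}e^{(r+2)}_{\chi_j}$ and the swapped idempotent. By the Pushkarev relation the difference is $\pm c^{-1}M^{(r+1,r+2)}$ times these idempotents.
\begin{itemize}
\item If $i\neq j$, Lemma~\ref{Lemma:ortho}(1) says $M^{(r+1,r+2)}$ kills $V_{\chi_i}\boxtimes V_{\chi_j}$. I will recheck this as the idempotent identity $e^{(r+1)}_{\chi_j}e^{(r+2)}_{\chi_i}M\,e^{(r+1)}_{\chi_i}e^{(r+2)}_{\chi_j}=0$. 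This gives the mixed dot-slide relation \eqref{mixedHecke}, and the relation for the other strand is symmetric.
\item If $i=j$, Lemma~\ref{Lemma:ortho}(2) gives $M^{(r+1,r+2)}=c_iP$ on $V_{\chi_i}^{\boxtimes2}$. Since $P$ is the action of the swap, the normalization $c_i^{-1}$ turns the correction term into exactly the identity term of \eqref{dAHA}.
\end{itemize}

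\textbf{Main obstacle.} The delicate point is this last identification at the level of bimodules. I need $e^{(r+1)}_{\chi_i}e^{(r+2)}_{\chi_i}M^{(r+1,r+2)}e^{(r+1)}_{\chi_i}e^{(r+2)}_{\chi_i}$ to equal $c_i\,e\,s\,e$ in the sandwiched form. Then, after the map is transported through $\k H_{r+2}\otimes_{\k H^{2}}(V_{\chi_i}\otimes V_{\chi_i})$, it must become precisely $g\otimes m_1\otimes m_2\mapsto g\otimes m_1\otimes m_2$, with the correct sign. This requires care with how $s$ acts on $V\otimes V$ (the flip) relative to the crossing convention.

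My approach is to work in the tensor-product description $\k H_{r+2}\otimes_{\k H^{2}}(V\otimes V)$. There I would use the identity $\sum_{g}\rho(g)\otimes\rho(g^{-1})=\frac{|H|}{\chi(1)}\,\mathrm{flip}$, which follows from Schur orthogonality. Finally I would fix the sign convention for the dot against \eqref{dAHA} by checking the $H=1$ case, where these reduce to the classical Jucys--Murphy and Hecke relations.
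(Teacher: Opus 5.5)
You follow the paper's route. The crossing relations are group-algebra identities, which the paper calls trivial, and both dot--slides reduce via Pushkarev's relation to evaluating $M^{(r+1,r+2)}$ with Lemma~\ref{Lemma:ortho}. Your mixed case is fine; indeed $e^{(r+1)}_{\chi_j}e^{(r+2)}_{\chi_i}M^{(r+1,r+2)}e^{(r+1)}_{\chi_i}e^{(r+2)}_{\chi_j}=0$ already follows from $e_{\chi_j}\k H e_{\chi_i}=0$.

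However, the identity you single out as the main obstacle is false. Put $e=e^{(r+1)}_{\chi_i}e^{(r+2)}_{\chi_i}$. Then $e\,M^{(r+1,r+2)}e=c_i\,e$, not $c_i\,e\,s\,e=c_i\,s\,e$. For $H=1$ your version reads $1=s$. Moreover, if it held, the correction term in the dot--slide would be the crossing rather than the identity strand.

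The operator $P$ of Lemma~\ref{Lemma:ortho}(2) is the flip of the two coefficient factors of $V_{\chi_i}\otimes V_{\chi_i}$, not right multiplication by $s$ on the group-algebra factor. In the tensor model the crossing already sends $m_1\otimes m_2$ to $m_2\otimes m_1$, and $M^{(r+1,r+2)}=c_iP$ flips it back; this is where the identity term comes from. In the idempotent model, use $e_\chi h e_\chi=\rho(h)_{11}e_\chi$ together with $\sum_h\rho(h)_{11}\rho(h^{-1})_{11}=|H|/\chi(1)$.

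The sign also cannot be settled by a convention on the dot, since the statement fixes the dot as right multiplication by $+c_i^{-1}L$. It is decided by which induction the bottom-left strand represents. Write all maps as right multiplications. If the left strand is the outer induction (position $r+2$), the difference of the two sides of the same-color dot--slide is right multiplication by $c_i^{-1}(L_{r+2}s-sL_{r+1})=c_i^{-1}M^{(r+1,r+2)}$. If it is the inner one, the difference is $c_i^{-1}(L_{r+1}s-sL_{r+2})=-c_i^{-1}M^{(r+1,r+2)}$.

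So \eqref{dAHA}, with $+$ identity, holds exactly under the convention $F\otimes G=F\circ G$; for $H=1$ this is $L_2s-sL_1=1$ in $\k\mathfrak S_2$. The paper's proof does not settle this. Its display equates $s_{r+1}L_{r+1}-L_{r+2}s_{r+1}$ with $+M^{(r+1,r+2)}$, whereas the quoted Pushkarev relation gives $-M^{(r+1,r+2)}$. Also, read literally, the source $F^{\chi_j}_{r+1}F^{\chi_i}_r$ of $\mathrm{T}^{\chi_i,\chi_j}_r$ in Theorem~\ref{Thm:doubleheisenberg} makes the bottom-left strand the inner one.

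You therefore need to fix the composition convention explicitly. Once that is done and the target identity is corrected to $e\,M^{(r+1,r+2)}e=c_i\,e$, your argument goes through.
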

\begin{proof}The only non-trivial relations to check are:$\mathord{
\begin{tikzpicture}[baseline = -1mm]
	\draw[<-,red] (0.25,.3) to (-0.25,-.3);
	\draw[->,red] (0.25,-.3) to (-0.25,.3);
	\node at (-0.12,-0.145) {$\dt$};\node at (0.2,-.37) {${\scriptstyle{\red{\chi_i}}}$};
	\node at (-0.2,-.37) {${\scriptstyle{\red{\chi_i}}}$};
\end{tikzpicture}
}
=
\mathord{
\begin{tikzpicture}[baseline = -1mm]
	\draw[<-,red] (0.25,.3) to (-0.25,-.3);
	\draw[->,red] (0.25,-.3) to (-0.25,.3);
	\node at (0.12,0.135) {$\dt$};\node at (0.2,-.37) {${\scriptstyle{\red{\chi_i}}}$};
	\node at (-0.2,-.37) {${\scriptstyle{\red{\chi_i}}}$};
	\end{tikzpicture}}
	+\:\mathord{
\begin{tikzpicture}[baseline = -1mm]
	\draw[->,red] (0.08,-.3) to (0.08,.3);
	\draw[->,red] (-0.28,-.3) to (-0.28,.3);\node at (0.2,-.37) {${\scriptstyle{\red{\chi_i}}}$};
	\node at (-0.2,-.37) {${\scriptstyle{\red{\chi_i}}}$};
\end{tikzpicture}
}$
and $\mathord{
\begin{tikzpicture}[baseline = -1mm]
	\draw[<-,red] (0.25,.3) to (-0.25,-.3);
	\draw[->,blue] (0.25,-.3) to (-0.25,.3);
	\node at (-0.12,-0.145) {$\dt$};\node at (0.2,-.37) {${\scriptstyle{\blue{\chi_j}}}$};
	\node at (-0.2,-.37) {${\scriptstyle{\red{\chi_i}}}$};
\end{tikzpicture}
}
=
\mathord{
\begin{tikzpicture}[baseline = -1mm]
	\draw[<-,red] (0.25,.3) to (-0.25,-.3);
	\draw[->,blue] (0.25,-.3) to (-0.25,.3);
	\node at (0.12,0.135) {$\dt$};\node at (0.2,-.37) {${\scriptstyle{\blue{\chi_j}}}$};
	\node at (-0.2,-.37) {${\scriptstyle{\red{\chi_i}}}$};
	\end{tikzpicture}}$

	By a direct computation, the natural transformation
$\mathord{
	\begin{tikzpicture}[baseline = -1mm]
		\draw[<-,red] (0.25,.3) to (-0.25,-.3);
		\draw[->,blue] (0.25,-.3) to (-0.25,.3);
		\node at (-0.12,-0.145) {$\dt$};\node at (0.2,-.37) {${\scriptstyle{\blue{\chi_j}}}$};
		\node at (-0.2,-.37) {${\scriptstyle{\red{\chi_i}}}$};
	\end{tikzpicture}
}
-
\mathord{
	\begin{tikzpicture}[baseline = -1mm]
		\draw[<-,red] (0.25,.3) to (-0.25,-.3);
		\draw[->,blue] (0.25,-.3) to (-0.25,.3);
		\node at (0.12,0.135) {$\dt$};\node at (0.2,-.37) {${\scriptstyle{\blue{\chi_j}}}$};
		\node at (-0.2,-.37) {${\scriptstyle{\red{\chi_i}}}$};
\end{tikzpicture}}$ (for all $1\leq i,j\leq d$)
can be represented by
$$\k H _{r+2}\otimes_{\k( H ^{(r+1)}\times H ^{(r+2)})}(V_{\chi_i}\otimes V_{\chi_j})\to \k H _{r+2}\otimes_{\k( H ^{(r+1)}\times H ^{(r+2)})}(V_{\chi_i}\otimes V_{\chi_j}),$$
$$g\otimes m_1\otimes m_2\mapsto g(s_{r+1}\frac{L_{r+1}}{c_i}-\frac{L_{r+2}}{c_i}s_{r+1})\otimes m_2\otimes m_1=g\frac{M^{(r+1,r+2)}}{c_i}m_2\otimes m_1.$$

By Lemma \ref{Lemma:ortho}, we know that $M^{(r+1,r+2)}$ acts on $V_{\chi_i}\boxtimes V_{\chi_j}$ by $0$  and on $V_{\chi_i}^{\boxtimes 2}$  by $c_iP$, hence the lemma is proved.
\end{proof}

\subsubsection{Rightward crossings}
Define the rightward crossings by
\begin{align*}
\Rcross^{\chi_i,\chi_i}:=\mathord{
	\begin{tikzpicture}[baseline = -.5mm]
		\draw[thin,red,->] (-0.28,-.3) to (0.28,.4);
		\draw[<-,thin,red] (0.28,-.3) to (-0.28,.4);\node at (0.28,-.5) {${\scriptstyle{\red{\chi_i}}}$};
		\node at (-0.28,-.5) {${\scriptstyle{\red{\chi_i}}}$};
	\end{tikzpicture}
}:=
\mathord{
	\begin{tikzpicture}[baseline = 0]
		\draw[->,thin,red] (0.3,-.5) to (-0.3,.5);
		\draw[-,thin,red] (-0.2,-.2) to (0.2,.3);
		\draw[-,thin,red] (0.2,.3) to[out=50,in=180] (0.5,.5);
		\draw[->,thin,red] (0.5,.5) to[out=0,in=90] (0.8,-.5);
		\draw[-,thin,red] (-0.2,-.2) to[out=230,in=0] (-0.6,-.5);
		\draw[-,thin,red] (-0.6,-.5) to[out=180,in=-90] (-0.85,.5);
		\node at (0.8,-.6) {${\scriptstyle{\red{\chi_i}}}$};
		\node at (0.3,-.6) {${\scriptstyle{\red{\chi_i}}}$};
	\end{tikzpicture}
}\:\quad\text{ and}\quad
&
\Rcross^{\chi_i,\chi_j}:=\mathord{
	\begin{tikzpicture}[baseline = -.5mm]
		\draw[thin,red,->] (-0.28,-.3) to (0.28,.4);
		\draw[<-,thin,blue] (0.28,-.3) to (-0.28,.4);\node at (0.28,-.5) {${\scriptstyle{\blue{\chi_j}}}$};
		\node at (-0.28,-.5) {${\scriptstyle{\red{\chi_i}}}$};
	\end{tikzpicture}
}:=
\mathord{
	\begin{tikzpicture}[baseline = 0]
		\draw[->,thin,red] (0.3,-.5) to (-0.3,.5);
		\draw[-,thin,blue] (-0.2,-.2) to (0.2,.3);
		\draw[-,thin,blue] (0.2,.3) to[out=50,in=180] (0.5,.5);
		\draw[->,thin,blue] (0.5,.5) to[out=0,in=90] (0.8,-.5);
		\draw[-,thin,blue] (-0.2,-.2) to[out=230,in=0] (-0.6,-.5);
		\draw[-,thin,blue] (-0.6,-.5) to[out=180,in=-90] (-0.85,.5);
		\node at (0.8,-.6) {${\scriptstyle{\blue{\chi_j}}}$};
		\node at (0.3,-.6) {${\scriptstyle{\red{\chi_i}}}$};
	\end{tikzpicture}
}\:.
\end{align*}
In other words, we define $\Rcross^{\chi_i,\chi_j}=
(1_{E^{\chi_j}}\otimes1_{F^{\chi_i}}\otimes\varepsilon^{\chi_j}_R)
\circ(1_{E^{\chi_j}}\otimes \mathrm{T}^{\chi_i,\chi_j}\otimes 1_{E^{\chi_j}})\circ(\eta^{\chi_j}_R\otimes 1_{F^{\chi_i}}\otimes 1_{E^{\chi_j}})$.

\begin{lemma}\label{lcrosswreath}
For $r\geq1$, the natural transformation
$\Rcross^{\chi_i,\chi_j}:F^{\chi_i}_{r-1}E^{\chi_j}_{r-1}
\Rightarrow E^{\chi_j}_rF^{\chi_i}_r$ is the $(\k H_r,\k H_r)$-bimodule map
\[
 \k H_r e^{(r)}_{\chi_i}\otimes_{\k H_{r-1}}e^{(r)}_{\chi_j}\k H_r
 \longrightarrow e^{(r+1)}_{\chi_j}\k H_{r+1}e^{(r+1)}_{\chi_i}
\]
given by
\[
 ae^{(r)}_{\chi_i}\otimes e^{(r)}_{\chi_j}b
 \longmapsto e^{(r+1)}_{\chi_j}a(r,r+1)b e^{(r+1)}_{\chi_i}.
\]
At $r=0$ its source is zero.
\end{lemma}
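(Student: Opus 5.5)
We compute the composite defining $\Rcross^{\chi_i,\chi_j}$ directly on representing bimodules, following the proof of Lemma~\ref{lcross} in the quantum case. The three factors are
\[
(1_{E^{\chi_j}}1_{F^{\chi_i}}\varepsilon^{\chi_j}_R)\circ
(1_{E^{\chi_j}}\mathrm{T}^{\chi_i,\chi_j}1_{E^{\chi_j}})\circ
(\eta^{\chi_j}_R1_{F^{\chi_i}}1_{E^{\chi_j}}).
\]
The source $F^{\chi_i}_{r-1}E^{\chi_j}_{r-1}$ is represented by $\k H_re^{(r)}_{\chi_i}\otimes_{\k H_{r-1}}e^{(r)}_{\chi_j}\k H_r$. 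The target $E^{\chi_j}_rF^{\chi_i}_r$ is represented by $e^{(r+1)}_{\chi_j}\k H_{r+1}e^{(r+1)}_{\chi_i}$. At $r=0$ the source vanishes because $E^{\chi_j}$ kills $\k H_0$-mod, so assume $r\ge1$.

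\textbf{Step 1 (unit).} Apply $\eta^{\chi_j}_R$ on the left. By definition it sends $g\mapsto ge^{(\cdot)}_{\chi_j}$ in the appropriate rank. So $ae^{(r)}_{\chi_i}\otimes e^{(r)}_{\chi_j}b$ goes to an element of the bimodule representing $E^{\chi_j}F^{\chi_j}F^{\chi_i}E^{\chi_j}$:
\[
e^{(r+1)}_{\chi_j}\otimes ae^{(r)}_{\chi_i}\otimes e^{(r)}_{\chi_j}b.
\]
Here the new strand sits in position $r+1$. When we concatenate these into $\k H_{r+1}$, the middle part reads $e^{(r+1)}_{\chi_j}\,a\,e^{(r)}_{\chi_i}e^{(r+1)}_{\chi_j}$. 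This uses that $e^{(r+1)}_{\chi_j}$ commutes with $H_r$.

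\textbf{Step 2 (crossing).} The middle factor $F^{\chi_j}F^{\chi_i}$ now carries the element $a\,e^{(r)}_{\chi_i}e^{(r+1)}_{\chi_j}$. Apply $\mathrm{T}^{\chi_i,\chi_j}$, which is right multiplication by $(r,r+1)$ followed by swapping the idempotent labels. This gives $a(r,r+1)e^{(r)}_{\chi_j}e^{(r+1)}_{\chi_i}$.

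\textbf{Step 3 (counit).} Apply $\varepsilon^{\chi_j}_R$ to the rightmost $F^{\chi_j}E^{\chi_j}$. This is the multiplication map $ge^{(r)}_{\chi_j}\otimes e^{(r)}_{\chi_j}h\mapsto ge^{(r)}_{\chi_j}h$. It contracts $e^{(r)}_{\chi_j}\cdot e^{(r)}_{\chi_j}b$ to $e^{(r)}_{\chi_j}b$. Moving $e^{(r)}_{\chi_j}$ through $(r,r+1)$ turns it into $e^{(r+1)}_{\chi_j}$. Collecting terms, we get $e^{(r+1)}_{\chi_j}a(r,r+1)be^{(r+1)}_{\chi_i}$, with both outer idempotents in position $r+1$. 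Here we use that $b\in H_r$ commutes with $e^{(r+1)}_{\chi_i}$.

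\textbf{Checks and main obstacle.} The map is well defined: it is balanced over $\k H_{r-1}$ because $H_{r-1}$ commutes with $(r,r+1)$, and it is compatible with the idempotents $e^{(r)}$ on either side. These are routine.

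The main obstacle is bookkeeping. One must track which tensor position each idempotent occupies and which functor's counit is applied. In particular one must check that no scalar survives: $\eta_R$ and $\varepsilon_R$ carry no normalizing constants in the wreath case, unlike the quantum case where the factors $-t z$ cancel. One must also check that the strand labelling conventions of $\mathrm{T}$ put $e_{\chi_j}$ on the left and $e_{\chi_i}$ on the right. I would do this carefully on elementary tensors with $a,b$ group elements and extend linearly.
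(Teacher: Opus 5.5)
Your proposal is correct and is essentially the paper's argument: the paper likewise expands the right unit $\eta^{\chi_j}_R$, the upward crossing $\mathrm{T}^{\chi_i,\chi_j}$ and the right counit $\varepsilon^{\chi_j}_R$. The only difference is that the paper evaluates on the bimodule generator $e^{(r)}_{\chi_i}\otimes e^{(r)}_{\chi_j}$ and extends by the bimodule structure, whereas you carry $a$ and $b$ through explicitly. Your bookkeeping checks out: right multiplication by $(r,r+1)$ turns $e^{(r)}_{\chi_i}$ into $e^{(r+1)}_{\chi_i}$, and $e^{(r)}_{\chi_j}$ is moved to $e^{(r+1)}_{\chi_j}$ and absorbed on the left, giving exactly the stated formula.
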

\begin{proof}
The source is generated as a bimodule by
$e^{(r)}_{\chi_i}\otimes e^{(r)}_{\chi_j}$.  Expanding the right unit,
upward crossing, and right counit sends this generator to
$e^{(r+1)}_{\chi_j}(r,r+1)e^{(r+1)}_{\chi_i}$, which gives the formula.
\end{proof}

\subsubsection{Leftward crossings}
We define by $\Lcross^{\chi_i,\chi_j}=(\varepsilon^{\chi_j}_L\otimes 1_{F^{\chi_i}}\otimes 1_{E^{\chi_j}} )\circ(1_{E^{\chi_j}}\otimes \mathrm{T}^{\chi_j,\chi_i} \otimes 1_{E^{\chi_j}})\circ(1_{E^{\chi_j}}\otimes 1_{F^{\chi_i}}\otimes\eta^{\chi_j}_L).$
In terms of diagrams, we have
\begin{align*}
\Lcross^{\chi_i,\chi_i}:=\mathord{
	\begin{tikzpicture}[baseline = -.5mm]
		\draw[<-,red] (-0.28,-.3) to (0.28,.4);
		\draw[->,red] (0.28,-.3) to (-0.28,.4);\node at (0.28,-.5) {${\scriptstyle{\red{\chi_i}}}$};
		\node at (-0.28,-.5) {${\scriptstyle{\red{\chi_i}}}$};
	\end{tikzpicture}
}:=
\mathord{
	\begin{tikzpicture}[baseline = 0]
		\draw[-,red] (-0.2,.2) to (0.2,-.3);
		\draw[<-,red] (0.3,.5) to (-0.3,-.5);
		\draw[-,red] (0.2,-.3) to[out=130,in=180] (0.5,-.5);
		\draw[-,red] (0.5,-.5) to[out=0,in=270] (0.8,.5);
		\draw[-,red] (-0.2,.2) to[out=130,in=0] (-0.5,.5);
		\draw[->,red] (-0.5,.5) to[out=180,in=-270] (-0.8,-.5);
		\node at (-0.8,-.6) {${\scriptstyle{\red{\chi_i}}}$};
		\node at (-0.3,-.6) {${\scriptstyle{\red{\chi_i}}}$};
	\end{tikzpicture}
}\:
\quad\text{ and}\quad
&\Lcross^{\chi_i,\chi_j}:=\mathord{
	\begin{tikzpicture}[baseline = -.5mm]
		\draw[<-,blue] (-0.28,-.3) to (0.28,.4);
		\draw[->,red] (0.28,-.3) to (-0.28,.4);\node at (0.28,-.5) {${\scriptstyle{\red{\chi_i}}}$};
		\node at (-0.28,-.5) {${\scriptstyle{\blue{\chi_j}}}$};
	\end{tikzpicture}
}:=
\mathord{
	\begin{tikzpicture}[baseline = 0]
		\draw[-,blue] (-0.2,.2) to (0.2,-.3);
		\draw[<-,red] (0.3,.5) to (-0.3,-.5);
		\draw[-,blue] (0.2,-.3) to[out=130,in=180] (0.5,-.5);
		\draw[-,blue] (0.5,-.5) to[out=0,in=270] (0.8,.5);
		\draw[-,blue] (-0.2,.2) to[out=130,in=0] (-0.5,.5);
		\draw[->,blue] (-0.5,.5) to[out=180,in=-270] (-0.8,-.5);
		\node at (-0.8,-.6) {${\scriptstyle{\blue{\chi_j}}}$};
		\node at (-0.3,-.6) {${\scriptstyle{\red{\chi_i}}}$};
	\end{tikzpicture}
}\:.
\end{align*}

\begin{lemma}\label{rcrosswreath}
For $r\geq1$, the natural transformation
$\Lcross^{\chi_i,\chi_j}:E^{\chi_j}_rF^{\chi_i}_r
\Rightarrow F^{\chi_i}_{r-1}E^{\chi_j}_{r-1}$ is the
$(\k H_r,\k H_r)$-bimodule map
\[
 e^{(r+1)}_{\chi_j}\k H_{r+1}e^{(r+1)}_{\chi_i}
 \longrightarrow \k H_r e^{(r)}_{\chi_i}\otimes_{\k H_{r-1}}
 e^{(r)}_{\chi_j}\k H_r
\]
with
\[
 e^{(r+1)}_{\chi_j}(r,r+1)e^{(r+1)}_{\chi_i}
 \longmapsto e^{(r)}_{\chi_i}\otimes e^{(r)}_{\chi_j},
 \qquad e^{(r+1)}_{\chi_i}\longmapsto0\quad(i=j).
\]
At $r=0$ this map is zero.
\end{lemma}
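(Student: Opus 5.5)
The plan is to compute $\Lcross^{\chi_i,\chi_j}$ directly from its defining composite
\[
(\varepsilon^{\chi_j}_L\otimes 1\otimes 1)\circ(1\otimes \mathrm{T}^{\chi_j,\chi_i}\otimes 1)\circ(1\otimes 1\otimes\eta^{\chi_j}_L)
\]
on the representing bimodules. By Proposition~\ref{mkd}, the source bimodule $e^{(r+1)}_{\chi_j}\k H_{r+1}e^{(r+1)}_{\chi_i}$ is generated as a $(\k H_r,\k H_r)$-bimodule by $e^{(r+1)}_{\chi_j}(r,r+1)e^{(r+1)}_{\chi_i}$, together with $e^{(r+1)}_{\chi_i}$ when $i=j$. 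The composite is a bimodule map, so it suffices to evaluate it on these generators. At $r=0$ the target $F^{\chi_i}_{-1}E^{\chi_j}_{-1}$ is zero, so the map vanishes there.

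\textbf{Step 1: the unit.} First apply $\eta^{\chi_j}_L$ at rank $r$, which inserts the central element
\[
Z^{\chi_j}_r=\sum_{a,b} g_a^{-1}(E_{b1}^{\chi_j})^{(r)}\otimes (E_{1b}^{\chi_j})^{(r)}g_a,
\]
where $g_a$ runs over coset representatives of $(H_{r-1}\times H^{(r)})$ in $H_r$, e.g.\ $g_a=(a,r)$.

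\textbf{Step 2: the crossing.} Next apply the upward crossing $\mathrm{T}^{\chi_j,\chi_i}$ on the middle two strands. This is right multiplication by the transposition $(r,r+1)$, together with swapping the idempotents on positions $r$ and $r+1$.

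\textbf{Step 3: the counit.} Finally apply $\varepsilon^{\chi_j}_L$ at rank $r$. This kills every term whose middle group element does not lie in $H_r\times H^{(r+1)}$, and applies $\delta$ to the $H^{(r+1)}$-component of each surviving term.

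\textbf{Evaluation on the generators.} On $e^{(r+1)}_{\chi_j}(r,r+1)e^{(r+1)}_{\chi_i}$, the crossing contributes a second factor $(r,r+1)$, and the two transpositions cancel exactly when the coset representative is $g_a=1$. For $g_a=(a,r)$ with $a<r$, the product involves a permutation moving $r+1$, so these terms fall outside $H_r\times H^{(r+1)}$ and are killed by the Levi projection. On the surviving term, the matrix-unit sum contracts via $E_{1b}E_{c1}=\delta_{bc}E_{11}$ and $\delta(e_{\chi_j})=1$, leaving $e^{(r)}_{\chi_i}\otimes e^{(r)}_{\chi_j}$.

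On $e^{(r+1)}_{\chi_i}$ with $i=j$, the crossing leaves a single transposition $(r,r+1)$ in the middle, up to coset conjugates. No such element lies in $H_r\times H^{(r+1)}$, so $\varepsilon_L$ gives $0$.

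\textbf{Main obstacle.} The step requiring the most care is Step~3 on the first generator: tracking exactly which coset representatives $g_a$, after conjugation by $(r,r+1)$, land in $H_r\times H^{(r+1)}$. I would handle it by choosing $g_a=(a,r)$ explicitly and checking that $(r,r+1)(a,r)(r,r+1)=(a,r+1)$ lies in $H_r\times H^{(r+1)}$ only when $a=r$. I would also check that the idempotent placements $e^{(r)}$ and $e^{(r+1)}$ get exchanged correctly by the transposition, so that the colors $\chi_i$ and $\chi_j$ end up on the stated tensor factors.
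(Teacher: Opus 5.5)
Your proposal is correct and is essentially the paper's argument: the paper likewise inserts the matrix-unit expression for $Z_r^{\chi_j}$ into the defining mate, uses the Levi projection in $\varepsilon^{\chi_j}_L$ to kill the closed-coset generator and every nonidentity coset representative on the open one, and keeps only the identity representative together with the matrix unit $E^{\chi_j}_{11}$. Your explicit choice $g_a=(a,r)$ (with $g_r=1$), together with the observation that left multiplication by $g_a^{-1}\in H_r$ cannot bring an element fixing $r+1$ out of the open double coset, simply spells out the bookkeeping that the paper leaves implicit.
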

\begin{proof}
Insert the matrix-unit formula for $Z_r^{\chi_j}$ in the defining mate.
The parabolic projection kills the closed double coset.  On the open double
coset only the identity coset representative and the matrix unit
$E^{\chi_j}_{11}$ survive, giving the two displayed values.
\end{proof}

\subsubsection{Mackey formula relations}
\begin{lemma}\label{lem:Mackey}
\begin{itemize}
	\item[$(a)$]
	The natural transformation\begin{align*}
		\begin{array}{rl}
			\left[\!\!\!\!\!
			\begin{array}{l}\,\,\,\mathord{
					\begin{tikzpicture}[baseline = 0]
						\draw[<-,thin,red] (-0.28,-.3) to (0.28,.4);
						\draw[->,thin,red] (0.28,-.3) to (-0.28,.4);\node at (0.28,-.4) {${\scriptstyle{\red{\chi_i}}}$};
						\node at (-0.28,-.4) {${\scriptstyle{\red{\chi_i}}}$};
					\end{tikzpicture}
				}\\\,\,\,\mathord{
					\begin{tikzpicture}[baseline = 1mm]
						\draw[-,thin,red] (0.4,0) to[out=90, in=0] (0.1,0.4);
						\draw[->,thin,red] (0.1,0.4) to[out = 180, in = 90] (-0.2,0);\node at (0.4,-.3) {${\scriptstyle{\red{\chi_i}}}$};
						\node at (-0.2,-.3) {${\scriptstyle{\red{\chi_i}}}$};
					\end{tikzpicture}
				}
			\end{array}
			\right]
			:
			E^{\chi_i}F^{\chi_i} \Rightarrow
			F^{\chi_i}E^{\chi_i} \oplus \unit
		\end{array}
	\end{align*}

	is an isomorphism of functors. It has a two-sided inverse
	\begin{align*}
		\begin{array}{rl}
			\left[
			\:\mathord{
				\begin{tikzpicture}[baseline = 0]
					\draw[->,thin,red] (-0.28,-.3) to (0.28,.4);
					\draw[<-,thin,red] (0.28,-.3) to (-0.28,.4);\node at (0.28,-.4) {${\scriptstyle{\red{\chi_i}}}$};
					\node at (-0.28,-.4) {${\scriptstyle{\red{\chi_i}}}$};
				\end{tikzpicture}
			}
			\:\:\:\:
			\mathord{
				\begin{tikzpicture}[baseline = -0.9mm]
					\draw[<-,thin,red] (0.4,0.2) to[out=-90, in=0] (0.1,-.2);
					\draw[-,thin,red] (0.1,-.2) to[out = 180, in = -90] (-0.2,0.2);
					\node at (0.4,.3) {${\scriptstyle{\red{\chi_i}}}$};
					\node at (-0.2,.3) {${\scriptstyle{\red{\chi_i}}}$};
				\end{tikzpicture}
			}
			\right]
			:F^{\chi_i}E^{\chi_i}\oplus
			\unit
			\Rightarrow E^{\chi_i}F^{\chi_i}.
			&
		\end{array}
	\end{align*}

	\item[$(b)$]
	For $i\ne j$, the natural transformations
	\begin{align*}
		\begin{array}{rl}
			\mathord{
				\begin{tikzpicture}[baseline = 0]
					\draw[->,thin,blue] (-0.28,-.3) to (0.28,.4);
					\draw[<-,thin,red] (0.28,-.3) to (-0.28,.4);\node at (0.28,-.4) {${\scriptstyle{\red{\chi_i}}}$};
					\node at (-0.28,-.4) {${\scriptstyle{\blue{\chi_j}}}$};
				\end{tikzpicture}
			}
			:F^{\chi_j} E^{\chi_i}
			\Rightarrow
			E^{\chi_i}  F^{\chi_j}
		\end{array}
		~\text{and}~
		\begin{array}{rl}
			\mathord{
				\begin{tikzpicture}[baseline = 0]
					\draw[->,blue] (0.28,-.3) to (-0.28,.4);
					\draw[<-,red] (-0.28,-.3) to (0.28,.4);\node at (0.28,-.4) {${\scriptstyle{\blue{\chi_j}}}$};
					\node at (-0.28,-.4) {${\scriptstyle{\red{\chi_i}}}$};
				\end{tikzpicture}
			}
			:E^{\chi_i} F^{\chi_j}
			\Rightarrow
			F^{\chi_j} E^{\chi_i}
		\end{array}
	\end{align*}
	are two-sided inverse of each other.

\end{itemize}
\end{lemma}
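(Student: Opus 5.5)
We argue rank by rank on the representing bimodules, mirroring the proof of Lemma~\ref{lem:quantumMackey} but in the simpler wreath setting. By Proposition~\ref{mkd}, the bimodule $e^{(r+1)}_{\chi_i}\k H_{r+1}e^{(r+1)}_{\chi_j}$ representing $E^{\chi_i}F^{\chi_j}$ in rank $r$ decomposes over the two double cosets of Lemma~\ref{doublecoset}(b). For $i=j$ it is $\k H_r e^{(r+1)}_{\chi_i}\oplus \k H_r e^{(r+1)}_{\chi_i}(r,r+1)e^{(r+1)}_{\chi_i}\k H_r$; the first summand is the regular bimodule $\k H_r$, and the second we compare with $\k H_r e^{(r)}_{\chi_i}\otimes_{\k H_{r-1}}e^{(r)}_{\chi_i}\k H_r$, the bimodule of $F^{\chi_i}E^{\chi_i}$. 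For $i\ne j$ only the open summand survives, because $e_{\chi_i}\k H e_{\chi_j}=0$.

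The explicit formulas for the sideways crossings are already available. Lemma~\ref{lcrosswreath} says $\Rcross$ sends $ae^{(r)}_{\chi}\otimes e^{(r)}_{\chi'}b$ to $e^{(r+1)}_{\chi'}a(r,r+1)be^{(r+1)}_{\chi}$. Lemma~\ref{rcrosswreath} says $\Lcross$ sends the open generator $e_{\chi'}^{(r+1)}(r,r+1)e^{(r+1)}_{\chi}$ to $e^{(r)}_{\chi}\otimes e^{(r)}_{\chi'}$ and kills the closed generator $e^{(r+1)}_{\chi_i}$. Evaluating on generators, $\Lcross\circ\Rcross$ is the identity of $F^{\chi}E^{\chi'}$. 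The composite $\Rcross\circ\Lcross$ fixes the open generator, so it is the projection onto the open summand. Since both maps are bimodule maps and the generators generate, these identities hold on the whole bimodules. This proves (b) at once; we use that $(r,r+1)$ commutes with $H_{r-1}$ and that balancing over $\k H_{r-1}$ matches the parabolic $H_{r-1}\times H^{(r)}\times H^{(r+1)}$ stabilizer of the open coset.

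For (a) we also need the cup and cap. By definition, the left counit $\varepsilon^{\chi_i}_L$ sends $e^{(r+1)}_{\chi_i}$ to $1$ (since $\delta(1)=1$) and kills $e^{(r+1)}_{\chi_i}(r,r+1)e^{(r+1)}_{\chi_i}$, because $(r,r+1)\notin H_r\times H^{(r+1)}$. The right unit $\eta^{\chi_i}_R(1)=e^{(r+1)}_{\chi_i}$ is exactly the closed generator. So, with respect to the decomposition into the open and closed summands, the forward column $[\Lcross;\varepsilon_L]$ and the backward row $[\Rcross,\ \eta_R]$ are both block diagonal with identity blocks. Both composites are therefore identities. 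Note that $\Rcross$ lands in the open summand and $\eta_R$ in the closed one, so the off-diagonal terms vanish; this needs no normalization, unlike the quantum case.

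The main obstacle is making sure the open summand really is isomorphic to the bimodule of $F^{\chi_i}E^{\chi_i}$, i.e.\ that $\Rcross$ is injective and not just surjective onto it. We would verify this by a rank count: by Mackey, both sides have $\k$-dimension $|H_r|^2\dim(V_\chi)^2/|H_{r-1}|$ after accounting for $e_\chi\k He_\chi=\k e_\chi$. Alternatively, it follows directly from the fact that $\Lcross\circ\Rcross=\mathrm{id}$, which we checked on generators. The boundary case $r=0$, where the source of $\Rcross$ vanishes and only $\unit$ remains, is handled separately and is trivial.
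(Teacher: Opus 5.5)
Your proposal is correct and follows the same route as the paper. The paper also reduces both parts to the explicit crossing formulas of Lemmas~\ref{lcrosswreath} and~\ref{rcrosswreath} and a check on the generators of the two double-coset summands (Proposition~\ref{mkd}), with $\varepsilon_L$ and $\eta_R$ accounting for the closed summand in (a).

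The dimension count in your last paragraph is wrong by a factor of $|H|^2$; the correct value is $r\,|H_r|\dim(V_\chi)^2/|H|$. It is not needed, though: $\Lcross\circ\Rcross=\mathrm{id}$, together with $\Rcross\circ\Lcross$ being the projection onto the open summand, already gives the isomorphism.
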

\begin{proof}
Both statements follow from Lemmas \ref{lcrosswreath}, \ref{rcrosswreath} and a straightforward computation on generators.
\end{proof}

\begin{proof}[Proof of Theorem~\ref{Thm:doubleheisenberg}]
The degenerate affine Hecke relations \eqref{dAHA} and the mixed relations
\eqref{mixedHecke} are Lemma~\ref{lem:12relations}.  The four maps in
\S\ref{sec:adjwreath} are adjunction maps, so they satisfy
\eqref{eq:rightadj}.  Finally, Lemma~\ref{lem:Mackey} gives
\eqref{dMackey1}--\eqref{dMackey2} and \eqref{mixedMackey}.
\end{proof}

							\subsection{Application to representation theory of wreath product groups}
							\subsubsection{Partitions and $\ell$-cores}
                            Write $b(x,y)$ for the box in $\Z_{>0}\times \Z_{>0}$ at position $(x,y)$. A \emph{partition} of $n$ is a non-increasing sequence of non-negative integers
							$\lambda = (\lambda_1 \geqslant \lambda_2 \geqslant \cdots)$ with $\sum_i \lambda_i=n$,
							to which one
							associates the so-called \emph{Young diagram}
							$Y(\lambda)=\{b(x,y)\in\bbZ_{>0}\times\bbZ_{>0}\,\mid\,y\leqslant \lambda_x\}.$
							We write $\scrP=\bigsqcup_n\scrP_n$ for the set
							of all partitions, where $\scrP_n$ is the set of partitions of $n$.
							For $\lambda\in \scrP$, we denote by $|\lambda|$ the \emph{weight}
							of $\lambda$ and by $\lambda^*$ the partition  conjugate to $\lambda$.

							\smallskip

							Let $X=\{x_1,\cdots, x_l\}$ be a finite set. An \emph{$X$-partition} of $n$ is an $l$-tuple $\tuple\lambda=(\lambda^{(x_1)},\ldots,\lambda^{(x_l)})$ of partitions whose weights add up to $n$,
							and its Young diagram is the set $$Y(\tuple\lambda)=\bigsqcup_{x\in X}Y(\lambda^{(x)})\times\{x\}.$$
							The integer $|\tuple\lambda|=\sum\limits_{x\in X}|\lambda^{(x)}|$ is called the weight of the $X$-partition.
							We write $\scrP(X)=\bigsqcup_n\scrP_n(X)$ for the set of all $X$-partitions, where
							$\scrP_n(X)$ is the set of $X$-partitions of $n$.

							\smallskip

							Let  $\lambda$ be a partition.
							Then it is uniquely determined by
							the set $\beta(\lambda)=\{\lambda_u+1-u\,\mid
							\,u\geqslant 1\}$ of the  so-called \emph{$\beta$-numbers}.
							For a positive integer $l$,  an \emph{$l$-hook} of $\lambda$ is a pair $(x,x+l)$ such
							that $x+l\in \beta(\lambda)$ and $x\not\in \beta(\lambda)$, and
							an \emph{$l$-core} of $\lambda$
							is the partition obtained by recursively removing $l$-hooks
							$(x,x+l)$
							(i.e., replacing $x+l$ with $x$ in the set of $\beta$-numbers).

							\smallskip
							The \emph{content} of the box in the $i$-th row from the top and $j$-th column from the left is defined to be $j-i \in \mathbb{Z}$. If we label the boxes of a Young diagram  by positive integers, we call it a \emph{Young tableau}.

							Suppose that $\lambda \vdash n$. If a Young diagram $Y(\lambda)$ is labelled by $1,\ldots,n$ such that the numbers strictly increase in each column from top to bottom and in each row from left to right, then the corresponding Young tableau is called a \emph{standard Young tableau} (of shape $\lambda$). For a standard Young tableau $T$ of shape $\lambda$, an element $r \in \{1,\ldots,n\}$ labels a unique box of $T$; we write $c_T(r)$ for the content of the box in $T$ labelled $r$. For example, if $T$ is the standard Young tableau of shape $(3,3,1)$ below,
							\begin{align*}
								\ytableausetup{centertableaux}\begin{ytableau}
									1 & 3 & 4 \\
									2 & 6 & 7 \\
									5
								\end{ytableau}
							\end{align*}
							\noindent
							then $c_T(5) = 1-3 = -2$, $c_T(6) = 2-2 = 0$, and $c_T(7) = 3-2 = 1$.

							\subsubsection{Content functions and their modulo $\ell$ reductions}\label{sub:contentfunction}
							Let $\lambda\in \scrP$ be a partition. Recall that we denote by $Y(\lambda)$ the Young diagram associated with $\lambda$. A box $b$ is said to be addable (resp. removable) to $Y(\lambda)$ if $b\notin Y(\lambda)$ (resp. $b\in Y(\lambda)$) and $Y(\lambda)\cup \{b\}$ (resp. $Y(\lambda)\backslash \{b\}$) is still a Young diagram.
							Then we define the content function of $\lambda$ by
							$$\bfO_{\lambda}(u)=\frac{\prod\limits_{\text{$b$  addable }}(u-\text{cont}(b))}{\prod\limits_{\text{$b$  removable }}(u-\text{cont}(b))}.$$

							\begin{example}
								Let $\lambda=(4,1,1)$.
								The Young diagram of $\lambda$ with its  contents is
								$$
								\begin{array}{|c|c|c|c|}
									\hline
									0& 1  &2 &3   \\
									\hline
									-1 \\
									\cline{1-1}
									-2 \\
									\cline{1-1}
								\end{array}.$$
								Then we have  $\bfO_{\lambda}(u)=\frac
								{(u-4)(u)(u+3)}{(u-3)(u+2)}.$
							\end{example}

							\begin{lemma}
								We have the following identity:

								$$\bfO_{\lambda}(u)=u\prod\limits_{b\in \,Y(\lambda)}(1-(u-\mathrm{cont}(b))^{-2}).$$
							\end{lemma}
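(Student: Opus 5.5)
We prove the identity by induction on $|\lambda|$, adding one box at a time and tracking how both sides change.

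For the base case $\lambda=\varnothing$, the only addable box is $b(1,1)$, of content $0$, and there are no removable boxes. So $\bfO_\varnothing(u)=u$, which agrees with the empty product on the right-hand side.

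For the inductive step, let $\mu=\lambda\cup\{b\}$, where $b$ is an addable box of $\lambda$ with content $c$. The right-hand side gets multiplied by
\[
1-(u-c)^{-2}=\frac{(u-c-1)(u-c+1)}{(u-c)^2}.
\]
It therefore suffices to show that
\[
\bfO_\mu(u)=\bfO_\lambda(u)\,\frac{(u-c-1)(u-c+1)}{(u-c)^2}.
\]
This is the same boundary-update rule that appears in Lemma~\ref{lem:choose} and Theorem~\ref{TheoremC} in the degenerate case, with $i^{\pm}=i\pm1$.

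The main step is the local combinatorics of adding $b$. Let $b=b(x,y)$, so $c=y-x$. Three things happen.
\begin{itemize}
\item $b$ stops being addable, which removes a factor $(u-c)$ from the numerator.
\item $b$ becomes removable, which adds a factor $(u-c)$ to the denominator.
\item The boxes $b(x,y+1)$ and $b(x+1,y)$, of contents $c+1$ and $c-1$, change status. Each one either becomes newly addable, or (if it could not be added before) the box that was blocking it stops being removable.
\end{itemize}

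For the third point, take $b(x,y+1)$. It becomes addable exactly when $x=1$ or $\lambda_{x-1}>y$. Otherwise $\lambda_{x-1}=y$, and then $b(x-1,y)$, of content $c+1$, was removable in $\lambda$ but is no longer removable in $\mu$. In both cases the net change is a factor $(u-c-1)$ in the numerator: either a new addable factor, or a removable factor cancelled from the denominator. The symmetric argument for $b(x+1,y)$ gives a factor $(u-c+1)$. No other boxes change status, since addability and removability depend only on the immediate neighbours.

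Multiplying these changes together gives exactly the factor $(u-c-1)(u-c+1)/(u-c)^2$, which completes the induction. The only point that needs care is this case analysis on the neighbouring boxes.
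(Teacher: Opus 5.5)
Your proof is correct and follows the paper's route: induction on $|\lambda|$, showing that a one-box change multiplies $\bfO$ by $(u-c-1)(u-c+1)/(u-c)^2$. The only difference is bookkeeping. The paper removes the minimal-content box $b(t,\lambda_t)$ and checks four shape cases, while you add an arbitrary addable box and treat its right and lower neighbours as two independent yes/no cases, which gives the same four cases.
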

							\begin{proof}
								The identity trivially holds when $\lambda=\emptyset$.
								We proceed by an induction on $n=|\lambda|.$
								Let $b_{\min}$ denote the removable box of $\lambda$ with minimal content and set $\mu=\lambda\setminus \{b_\min\}$ to be the partition obtained from $\lambda$ by removing the box $b_\min.$ By the induction hypothesis, $\mu$ satisfies
								$\bfO_{\mu}(u)=u\prod\limits_{b\in Y(\mu)}(1-(u-\mathrm{cont}(b))^{-2}).$
                                Compare the addable and removable boxes of $\lambda$ with those of $\mu.$ If $\lambda=(\lambda_1,\cdots,\lambda_t)$ with $\lambda_t\neq 0,$ then $b_\min=b(t,\lambda_t)$ and $\mu=(\lambda_1,\cdots,\lambda_t-1).$
								There are four cases based on the shape of $\lambda$ as follows:
								\begin{itemize}
									\item[$(1)$] $t=1$ or $t>1$ with $ \lambda_{t-1}>\lambda_{t}$ and $\lambda_t>1$;
									\item[$(2)$] $t=1$ or $t>1$ with $\lambda_{t-1}>\lambda_{t}$ and $\lambda_t=1$;
									\item[$(3)$] $t>1$ with $ \lambda_{t-1}=\lambda_{t}$ and $\lambda_t>1$;
									\item[$(4)$] $t>1$ with $\lambda_{t-1}=\lambda_{t}$ and $\lambda_t=1$.
								\end{itemize}

								In all cases, the differences in addable and removable boxes are captured by sets $X$ and $Y$ as follows:
								$$ \{\text{addable boxes of $\lambda$}\}\setminus X=\{\text{addable boxes of $\mu$}\}\setminus\{b_\min\}$$
								and $$ \{\text{removable boxes of $\lambda$}\}\setminus\{b_\min\}=\{\text{removable boxes of $\mu$}\}\setminus Y.$$
								\begin{center}
									\begin{table}[h]\label{diag:extrasymm}
										\begin{tabular}{c|c|c}
											\hline
											Cases&$X$&$Y$\\\hline
											$(1)$&$ \{b(t,\lambda_t+1)\}$&$\{b(t,\lambda_t-1)\}$\\\hline
											$(2)$&$\{b(t+1,\lambda_{t}),b(t,\lambda_t+1)\}$&$\emptyset$\\\hline    $(3)$&$\emptyset$&$\{b(t,\lambda_{t}-1),b(t-1,\lambda_t)\}$\\\hline
											$(4)$&$\{b(t+1,\lambda_t)\}$&$\{b(t-1,\lambda_t)\}$\\\hline
										\end{tabular}
									\end{table}
								\end{center}
								Note that $\mathrm{cont}(b(t,\lambda_t\pm1))=\mathrm{cont}(b_\min)\mp1,$ and $\mathrm{cont}(b(t\pm1,\lambda_t))=\mathrm{cont}(b_\min)\pm1.$ Since $Y(\lambda)=Y(\mu)\sqcup b_\min$, in all cases, we have:
								\begin{align*}
									\bfO_{\lambda}(u)=&\bfO_{\mu}(u)\cdot \frac{(u-\mathrm{cont}(b_\min)-1)(u-\mathrm{cont}(b_\min)+1)}{(u-\mathrm{cont}(b_\min))^2}\\
									=&(u\prod\limits_{b\in Y(\mu)}(1-(u-\mathrm{cont}(b))^{-2}))\cdot (1-(u-\mathrm{cont}(b_\min))^{-2})\\
									=&u\prod\limits_{b\in Y(\lambda)}(1-(u-\mathrm{cont}(b))^{-2}).
								\end{align*} This completes the proof.
							\end{proof}

									Since $\bbO_{\mu}(u)=\frac{\prod_{i=1}^n(u-x_i)}{\prod_{j=1}^{n-1}(u-y_j)} \in \bbZ(u)$ for any $\mu,$ we can define its modulo $\ell$ reduction by $$\overline{\bfO}_{\mu}(u)=\frac{\prod_{i=1}^n(u-\overline{x_i})}{\prod_{j=1}^{n-1}(u-\overline{y_j})}$$
									where $\overline{(\bullet)}:\bbZ\to \bbF_\ell$ is the modulo $\ell$ reduction homomorphism. Hence for any partition $\mu,$ we have $\overline{\bbO}_{\mu}(u)\in \bbF_\ell(u).$

									For a partition $\lambda$ and $i\in\bbZ/\ell\bbZ$ we define
									\begin{align*}
										&N_i(\lambda ) = \sharp\{ \text{ addable $i$-nodes of $\lambda$}\}
										- \sharp\{ \text{ removable $i$-nodes of $\lambda$} \}.
									\end{align*}

									Then by the definition, we have
									$$\overline{\bfO}_{\lambda}(u)=\prod_{i\in \bbZ/\ell\bbZ}(u-i)^{{N_i}(\lambda)}.$$

									We know that $\lambda$ and $\mu$ have the same $\ell$-core if and only if $\lambda$ and $\mu$ have the same modulo $\ell$ content multisets.
									Fayers \cite{F06} showed that $\lambda$ and $\mu$ have the same modulo $\ell$ content multisets if and only if  $N_i(\lambda)=N_{i}(\mu)$ for all $i\in \bbZ/\ell\bbZ.$ So we have the following lemma.

									\begin{lemma}[cf.~\cite{F06}]
										Let $\lambda$ and $\mu$ be two partitions of $n$.
										Then $\overline{\bfO}_{\lambda}(u)=\overline{\bfO}_{\mu}(u)$ if and only if $\lambda$ and $\mu$ have the same $\ell$-core.
									\end{lemma}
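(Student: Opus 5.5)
The statement asserts that $\overline{\bfO}_\lambda(u)=\overline{\bfO}_\mu(u)$ holds exactly when $\lambda$ and $\mu$ have the same $\ell$-core. I would prove it by passing through the exponents $N_i$. The factorization
\[
\overline{\bfO}_{\lambda}(u)=\prod_{i\in\bbZ/\ell\bbZ}(u-i)^{N_i(\lambda)}
\]
(a product over $\bbF_\ell$) is an identity in $\bbF_\ell(u)$. Its factors $u-i$, for $i\in\bbF_\ell$, are pairwise distinct irreducibles in the UFD $\bbF_\ell[u]$. Hence two such rational functions coincide if and only if all exponents agree. The first step is therefore
\[
\overline{\bfO}_\lambda(u)=\overline{\bfO}_\mu(u)\iff N_i(\lambda)=N_i(\mu)\ \text{for all } i\in\bbZ/\ell\bbZ .
\]
To justify the factorization, reduce each content of an addable or removable box modulo $\ell$ and group the factors by residue. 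A residue $i$ then appears with net exponent (number of addable $i$-nodes) minus (number of removable $i$-nodes), which is $N_i(\lambda)$. Cancellation in $\bbF_\ell(u)$ is harmless, since only net exponents matter.

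The second step links the $N_i$ to the content multiset modulo $\ell$. I would use the identity from the preceding lemma,
\[
\bfO_\lambda(u)=u\prod_{b\in Y(\lambda)}\bigl(1-(u-\mathrm{cont}(b))^{-2}\bigr)=u\prod_{b}\frac{(u-c_b-1)(u-c_b+1)}{(u-c_b)^2},
\]
with $c_b=\mathrm{cont}(b)$. This holds in $\bbQ(u)$ with integer roots, so it reduces modulo $\ell$. Writing $m_i$ for the number of boxes of residue $i$, it gives $N_i(\lambda)=\delta_{i,0}+m_{i-1}+m_{i+1}-2m_i$. The vector $(N_i)$ is thus an affine image of $(m_i)$ under the Cartan matrix of $\widehat{\mathfrak{sl}}_\ell$, or of $\mathfrak{sl}_2$-type when $\ell=2$.

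This map is injective on vectors with fixed total $\sum m_i=n$, because the kernel of the affine Cartan matrix is spanned by $(1,\dots,1)$. Since $|\lambda|=|\mu|=n$, equality of all $N_i$ is therefore equivalent to equality of the residue multisets. I expect this to be the main point needing care, especially for $\ell=2$, where the matrix is $\begin{pmatrix}2&-2\\-2&2\end{pmatrix}$ and its kernel is again $(1,1)$. Alternatively, I would simply cite Fayers \cite{F06} here, as the text above indicates.

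The final step is the classical fact that two partitions of the same size have the same $\ell$-core if and only if they have the same multiset of contents modulo $\ell$ (Nakayama's conjecture, in residue form). Combining the three equivalences gives the statement.
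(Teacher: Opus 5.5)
Your proof is correct and follows the same chain of equivalences as the paper: the factorization $\overline{\bfO}_\lambda(u)=\prod_{i\in\bbZ/\ell\bbZ}(u-i)^{N_i(\lambda)}$ in $\bbF_\ell(u)$, then equality of all $N_i$ versus equality of residue multisets, then equality of residue multisets versus equality of $\ell$-cores. The one difference is in the middle link: the paper simply cites Fayers, whereas you prove it from the preceding product formula via $N_i=\delta_{i,0}+m_{i-1}+m_{i+1}-2m_i$ and the corank-one property of the affine Cartan matrix, correctly using $|\lambda|=|\mu|=n$ to exclude a nonzero multiple of the null vector $(1,\dots,1)$, which makes that step self-contained.
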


									\subsubsection{Fock spaces}

									The \emph{Fock space} $\mathcal{F}$
									is a $\bbC$-vector space with a linear basis parametrized by the set of all partitions. Thus we write
									\[
									\mathcal{F}=\bigoplus_{{\boldsymbol{\lambda}}\in\calP
									}\bbC{\boldsymbol{\lambda}}.
									\]
									The Fock space $\mathcal{F}$ may be endowed with a structure of
									$\widehat{\mathfrak{sl}}_{\ell}$ and $\mathfrak{sl}_{\infty}$-modules. Let $\lambda$ be a partition (identified with its Young diagram).

									\begin{theorem}[\cite{Ug99}]\label{thm:Fock}
										The Fock space
										$\mathcal{F}$ has a structure of an integrable
										$\widehat{\mathfrak{sl}}'_{\ell}$-module $\mathcal{F}_{\ell}$ defined
										by
										\begin{gather*}
											e_{i}{\lambda}=\sum_{\text{res}(\lambda/\mu)=i}{\mu},\quad{f_{i}{\lambda}=\sum_{\text{res}(\mu/\lambda
													)=i}\mu},\\[5pt]
											\qquad h_{i}{\lambda}={N_{i}(\lambda)}
											{\lambda},
										\end{gather*}
										for $i\in\mathbb{Z}/\ell\mathbb{Z}$.
									\end{theorem}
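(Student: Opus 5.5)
The plan is to realize $\mathcal F$ as the Grothendieck group of the symmetric-group tower in positive characteristic, and to read the $\widehat{\mathfrak{sl}}'_\ell$-action off the Kac--Moody 2-representation of Section~\ref{sec:HeistoKac}. The only alternative would be a direct check of the Serre relations, which we avoid.

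First, take $H=1$, so $d=1$, and let $\k=\bbF$ with $\operatorname{char}\bbF=\ell$. Theorem~\ref{Thm:doubleheisenberg} makes $\bbF\frakS_\bullet\mod$ a locally finite module category over $\Heis_{-1}$. Theorem~\ref{thm:HtoK} then gives a 2-representation of $\UU(\g)$, where $\g=\widehat{\sl}'_\ell$: the dot is the Jucys--Murphy element, whose spectrum is $\bbZ/\ell\bbZ$, and this forms a single cyclic $\sigma$-orbit $i\mapsto i+1$. The functors $E_i,F_i$ are $i$-restriction and $i$-induction. Passing to $K_0$ (exact functors) yields an integrable $\g$-module on $\bbC\otimes K_0(\bbF\frakS_\bullet\mod)$.

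Second, transfer this action to $\mathcal F$ through the decomposition map $d:\mathcal F=K_0(\bbK\frakS_\bullet)\otimes\bbC\to K_0(\bbF\frakS_\bullet)\otimes\bbC$. It is cleaner to work over $\bbK$ with the \emph{same} $\ell$-reduced eigenvalue projections. The Jucys--Murphy elements are defined over $\bbZ$, and their eigenvalues on the Specht module $S^\lambda$ are the contents $c_T(r)$. Consequently the generalized $\bar i$-eigenspace projection of $L_{n+1}$ (as an idempotent in $\bbK\frakS_{n+1}$ obtained from the $\bbZ$-form, whose central character is a polynomial in the $L$'s and so lifts) splits $\Ind S^\lambda=\bigoplus_{\mu=\lambda+b}S^\mu$ according to the residue of $b$. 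This gives $f_i\lambda=\sum_{\mathrm{res}(\mu/\lambda)=i}\mu$, and dually $e_i$ by adjunction (the branching rule). For $h_i$, note that the weight of $\lambda$ from \S\ref{sec:wtdecompC} is read off the bubble $\bbO_\lambda(u)$. By Lemma~\ref{lem:choose}, iterated from $\emptyset$ where $\bbO=u$, $\bbO_\lambda$ equals the content function $\bfO_\lambda(u)$ of \S\ref{sub:contentfunction}. Its reduction is $\overline{\bfO}_\lambda=\prod_i(u-i)^{N_i(\lambda)}$, which gives $h_i\lambda=N_i(\lambda)\lambda$.

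Third, it remains to show that these operators satisfy the $\widehat{\sl}'_\ell$ relations on $\mathcal F$, and this is the main obstacle, since $\mathcal F$ is the characteristic-zero lattice and not literally $K_0$ of the $\bbF$-category. The route is to observe that the eigenprojections above are defined over $\calO$. Hence $e_i,f_i$ on $\mathcal F$ are induced by exact functors $E_i,F_i$ on $\calO\frakS_\bullet$-lattices. These functors satisfy the same Heisenberg relations, so Theorem~\ref{thm:HtoK}'s relations $[E_i,F_j]=\delta_{ij}h_i$ (from \eqref{KMinverse1}--\eqref{KMinverse3}) and the Serre relations (from the KLR nil-Hecke action on $E_i^{(1-a_{ij})}E_j$) hold at the level of $K_0$ of lattices, which is $\mathcal F$ after tensoring with $\bbK$.

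If the integral lift proves awkward, the fallback is to verify $[e_i,f_j]=\delta_{ij}h_i$ combinatorially: the commutator counts addable minus removable $i$-nodes. Serre relations then follow from integrability, since each $e_i,f_i$ is locally nilpotent because $\lambda$ has finitely many addable and removable nodes, together with the standard Kac lemma for integrable modules over $\widehat{\sl}'_\ell$.
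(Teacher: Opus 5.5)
Your main categorical line does not reach $\mathcal F$. Step 1 only produces the module $K_0(\bbF\frakS_\bullet\mod)\cong L(\Lambda_0)$. The decomposition map $\mathcal F\to K_0(\bbF\frakS_\bullet\mod)$ has a nonzero kernel in every rank $n\ge\ell$ (rank $p(n)$ versus the number of $\ell$-regular partitions), so nothing can be pulled back along it.

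Step 3 then applies Theorem~\ref{thm:HtoK} to residue-class eigenfunctors on $\calO\frakS_\bullet$-lattices. That theorem, like \cite[Theorem~4.11]{BSW20a} on which it rests, is only available for locally finite abelian categories over an algebraically closed field. The spectrum, the weight decomposition and the inversion relations \eqref{KMinverse1}--\eqref{KMinverse3} are all built from simple objects $L$ and the minimal polynomials $m_L,n_L$ of the dots on them, and these have no meaning for $\calO$-lattices. Applied over $\bbK$, the theorem gives the spectrum $\bbZ$ with infinite $\sigma$-orbits, i.e.\ an $\mathfrak{sl}_\infty$-action; the folded functors $\bigoplus_{j\equiv i}E_j$ are not its eigenfunctors. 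So ``these functors satisfy the same Heisenberg relations, so the relations of Theorem~\ref{thm:HtoK} hold on $K_0$ of lattices'' amounts to an unproved integral Heisenberg-to-Kac--Moody theorem. The main line has a genuine gap exactly there.

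Your fallback, on the other hand, is a complete proof: it is the classical combinatorial argument of Hayashi and Misra--Miwa. It is a different route from the paper, which gives no argument and simply cites \cite{Ug99}, with the folding $e_i=\sum_{j\equiv i}e^{(\infty)}_j$ of Proposition~\ref{Prop_compa_action} behind it. Three points need to be stated correctly.

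For the commutator, take $b\neq b'$. Each of the conditions ``$b$ addable to $\lambda$ and $b'$ removable from $\lambda\cup b$'' and ``$b'$ removable from $\lambda$ and $b$ addable to $\lambda\setminus b'$'' rules out $b,b'$ being edge-adjacent. Since addability and removability only involve edge-neighbours, the two conditions are then equivalent. Hence all off-diagonal terms cancel and $[e_i,f_j]\lambda=\delta_{ij}N_i(\lambda)\lambda$. Removing a $j$-node changes $N_i$ by $a_{ij}$ (also for $\ell=2$), which gives $[h_i,e_j]=a_{ij}e_j$.

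For Serre, the lemma you need is not about integrable $\widehat{\mathfrak{sl}}'_\ell$-modules (that is circular). It is about modules over the algebra with only the Chevalley relations. Suppose the $h_i$ act diagonalizably with integer eigenvalues and the $e_i,f_i$ act locally nilpotently. Take any finite-dimensional $\mathfrak{sl}_2^{(i)}$-submodule $A$, and a finite-dimensional $\mathfrak{sl}_2^{(i)}$-submodule $W\supseteq e_j(A)$. Then $e_j|_A\in\Hom(A,W)$ is a lowest weight vector of weight $a_{ij}$ for the adjoint action, because $[f_i,e_j]=0$. Hence $(\mathrm{ad}\,e_i)^{1-a_{ij}}e_j$ kills $A$.

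For integrability, local nilpotence of $f_i$ holds because adding an $i$-node creates no new addable $i$-node when $\ell\ge2$. Finiteness of the set of addable nodes alone does not give this.

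If you want to keep the categorical flavour, there is a cheap repair. Take the $\mathfrak{sl}_\infty$-action from Theorem~\ref{thm:HtoK} over $\bbK$, fold it, and obtain the Serre relations from the same lemma. No integral lift is needed.
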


									The following result is implicit in \cite[Proposition~3.5]{Ug99}.

									\begin{proposition}[\cite{Ug99}]
										\label{Prop_compa_action}The $\widehat{\mathfrak
											{sl}}_{\ell}'$ and $\mathfrak{sl}_{\infty}$-module
										structures $\mathcal{F}_{\ell}$ and $\mathcal{F}_{\infty
										}$ are compatible in the sense that we may write
										the action of $e_{i}, f_{i}$ and $h_{i}$, for $i\in\mathbb{Z}/\ell\mathbb{Z}$, as follows:
										\begin{align*}
											e^{(\ell)}_{i} &  =\sum_{j\in\mathbb{Z},j\equiv i(\text{mod }\ell)}  e^{(\infty)}_{j},\\
											f^{(\ell)}_{i} &  =\sum_{j\in\mathbb{Z},j\equiv i(\text{mod }\ell)} f^{(\infty)}_{j},\\
											h^{(\ell)}_{i} &  =\sum_{j\in\mathbb{Z},j\equiv i(\text{mod }\ell)}h^{(\infty)}_{j}.
										\end{align*}
									\end{proposition}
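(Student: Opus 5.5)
The statement to prove is Proposition~\ref{Prop_compa_action}: for each $i\in\mathbb Z/\ell\mathbb Z$, the operators $e_i,f_i,h_i$ on $\mathcal F_\ell$ are the sums over $j\equiv i \pmod \ell$ of the corresponding $\mathfrak{sl}_\infty$-operators on $\mathcal F_\infty$. Since $\mathcal F_\ell$ and $\mathcal F_\infty$ share the basis of partitions, I will check the three identities on a single basis vector $\lambda$. For $\mathcal F_\infty$ I use the standard action: $e^{(\infty)}_j\lambda$ is the sum of all $\mu$ with $\lambda/\mu$ a single box of content $j$, $f^{(\infty)}_j\lambda$ is the analogous sum over addable boxes, and $h^{(\infty)}_j\lambda=N^\infty_j(\lambda)\lambda$, where $N^\infty_j$ counts addable minus removable boxes of content exactly $j$.

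For the $e$ and $f$ identities, note that a partition has at most one removable box and at most one addable box of any given content, since all such boxes lie on distinct diagonals. For fixed $\lambda$, only finitely many $j$ give nonzero terms, so the sum $\sum_{j\equiv i} e^{(\infty)}_j\lambda$ is finite. It equals the sum over all removable boxes $b$ with $\mathrm{cont}(b)\equiv i$ of $\lambda\setminus\{b\}$. The residue of a box is by definition its content reduced mod $\ell$, so this is exactly the sum over $\mu$ with $\mathrm{res}(\lambda/\mu)=i$ in Theorem~\ref{thm:Fock}. The same reasoning applied to addable boxes gives the identity for $f_i$.

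For $h_i$, I will use
\[
N_i(\lambda)=\sum_{j\equiv i}N^\infty_j(\lambda),
\]
which follows by grouping the addable and removable $i$-nodes according to their exact content; the sum is finite. The only subtlety is one of conventions: I must confirm that "residue" in Theorem~\ref{thm:Fock} means content mod $\ell$ with the same sign convention as in \S\ref{sub:contentfunction}.

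The infinite sums are to be read as acting locally finitely, and this reading is consistent because each operator acts on a basis vector with finite support. With that, the compatibility is termwise, and I expect no genuine obstacle beyond this bookkeeping.
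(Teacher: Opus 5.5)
Your proposal is correct. The paper gives no argument of its own for this proposition. It only remarks that the statement is implicit in \cite[Proposition~3.5]{Ug99}, which belongs to Uglov's $q$-deformed, higher-level Fock-space framework.

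Your route is a direct, self-contained verification. Both module structures are given by explicit box-adding and box-removing formulas on the common partition basis: Theorem~\ref{thm:Fock} uses residues, and its $\mathfrak{sl}_\infty$ analogue uses contents. Each of the three identities therefore reduces to sorting the addable or removable boxes of residue $i$ by their exact content $j\equiv i \pmod{\ell}$. Local finiteness on each basis vector makes the infinite sums meaningful.

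Compared with the citation, your argument is transparent and needs none of Uglov's machinery. The cost is generality: it works because the operators here are the undeformed ($q=1$) ones, with every coefficient equal to $1$. In a $q$-deformed Fock space the $\widehat{\mathfrak{sl}}_\ell$ operators carry $q$-power coefficients that depend on the other $i$-nodes, so a termwise identification of this plain form is not automatic there.

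Since the paper never writes down $\mathcal{F}_\infty$ explicitly, fixing its definition and the convention $\mathrm{res}=\mathrm{cont} \bmod \ell$ is exactly what makes the statement precise. Your remark that each content occurs at most once among the addable and removable boxes is true but not needed.
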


									\begin{remark}
										The infinite sums in the proposition reduce in fact to
										finite ones since the number of nodes in ${\lambda}$ is finite.
									\end{remark}

									The empty multipartition $\mathbf{\emptyset}$ is a highest weight vector in
									$\mathcal{F}_{\ell}$ and $\mathcal{F}_{\infty}$ of
									weight $\Lambda_{0}$. We
									then define $V_{\ell}(0)$ and $V_{\infty}(0)$ as the corresponding
									highest weight modules.
									By the previous proposition,
									it follows that $V_{\infty}(0)$ is endowed
									with the structure of a
									$\widehat{\mathfrak{sl}}'_{\ell}$-module and
									$V_{\ell}(0)$ coincides with the $\widehat{\mathfrak{sl}}_{\ell}$-submodule of $V_{\infty
									}(0)$ generated by the highest weight vector $\mathbf{\emptyset}$.

The following lemma gives an explicit formula for bubbles.
									\begin{lemma}
										\begin{equation*}
											\mathord{
												\begin{tikzpicture}[baseline = -1mm]
													\draw[red, <-] (0.08,-.4) to (0.08,.4);
													\node at (-0.6,0) {$\blue\anticlock \scriptstyle (u)$};
													\node at (0.08,-.5) {${\scriptstyle{\red{\alpha}}}$};
													\node at (-0.7,-.45) {${\scriptstyle{\blue{\beta}}}$}      ;\end{tikzpicture}
											}=\left\{
											\begin{array}{ll}
												\mathord{
													\begin{tikzpicture}[baseline = -1mm]
														\draw[red, <-] (0.08,-.4) to (0.08,.4);      \node at (0.8,0) {$\blue\anticlock \scriptstyle (u)$};
														\node at (.08,0) {$\dt$};
														\node at (-.6,0) {$\scriptstyle{1-(u-x)^{-2}}$};
														\node at (0.08,-.5) {${\scriptstyle{\red{\alpha}}}$};
														\node at (0.7,-.45) {${\scriptstyle{\blue{\beta}}}$}      ;\end{tikzpicture}}&
												\text{if $\red \alpha= \blue \beta$,}\\\\
												\mathord{
													\begin{tikzpicture}[baseline = -1mm]
														\draw[red, <-] (0.08,-.4) to (0.08,.4);
														\node at (0.8,0) {$\blue\anticlock \scriptstyle (u)$};
														\node at (0.08,-.5) {${\scriptstyle{\red{\alpha}}}$};
														\node at (0.7,-.45) {${\scriptstyle{\blue{\beta}}}$};\end{tikzpicture}} &\text{if $\red \alpha\neq \blue \beta$,}
											\end{array}\right.
										\end{equation*}
									\end{lemma}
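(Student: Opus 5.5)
The plan is to prove the bubble-slide formula in the degenerate symmetric product $\symHeis_d$ (all $k=-1$) by treating the two cases $\red\alpha\neq\blue\beta$ and $\red\alpha=\blue\beta$ separately.

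\textbf{Case $\red\alpha\neq\blue\beta$.} This is Lemma~\ref{lem:colorbubbleslide}. Each coefficient of the generating function $\blue\anticlock(u)$ is a dotted $\blue\beta$-bubble. That bubble slides through a $\red\alpha$-strand of either orientation, using the mixed relations \eqref{mixedHecke}, the dot-commutation relations \eqref{mixedrelation2}, and the invertibility \eqref{mixedMackey}. Summing over coefficients gives the second line.

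\textbf{Case $\red\alpha=\blue\beta$.} Here everything lives inside a single copy of $\Heis_{-1}$, so the statement is the one-color infinite Grassmannian / bubble-slide relation of \cite{B18} (see also \cite[\S3]{BSW20a}).
\begin{enumerate}
\item Start from the standard degenerate relation
\[
\anticlock(u)\,\up=\up\,\anticlock(u)\,\frac{(u-x-1)(u-x+1)}{(u-x)^2},
\]
with $x$ the dot on the upward strand. This relation is valid for every level $k$.
\item Take mates with respect to the pivotal structure (Corollary after Theorem~\ref{thm:equiv}). This converts it to the downward strand $\red\alpha$ drawn in the statement.
\item Rewrite the rational factor as $1-(u-x)^{-2}$, using
\[
(u-x-1)(u-x+1)=(u-x)^2-1 .
\]
Expanded in $u^{-1}$, this factor is a well-defined power series in the dot.
\end{enumerate}

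\textbf{Main obstacle.} The delicate point is the orientation and sign bookkeeping in the equal-color case. One must check that the counterclockwise bubble passing leftward across the downward strand produces the factor $1-(u-x)^{-2}$, and not its inverse. The inverse would arise if one wrongly used the clockwise series $\bbO(u)^{-1}$.

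To settle this, I would verify the lowest-order terms directly. Expand both sides modulo $u^{-3}$ and compare, using the curl relation and \eqref{dMackey2} at $k=-1$. The mates of the affine Hecke relation \eqref{dAHA} then fix the sign. The expected check is that the coefficient of $u^{-2}$ on the left is $-1$ times the identity strand, which agrees with the expansion of $1-(u-x)^{-2}$.

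As a consistency check with Lemma~\ref{lem:choose}, evaluating on simples and taking $i=x$ should give the multiplicative factor
\[
\frac{(u-i-1)(u-i+1)}{(u-i)^2}.
\]
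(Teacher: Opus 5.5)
Your case $\alpha\neq\beta$ is exactly the paper's argument: the paper's whole proof is the citation of Lemma~\ref{lem:colorbubbleslide}, with the equal-colour case left to the one-colour Heisenberg relations, as you do. The gap is in your equal-colour steps 1--2.

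\emph{Mating does not just reorient the strand.} Bending $\uparrow$ into $\downarrow$ with a cup and a cap carries a bubble sitting to the left of $\uparrow$ into the region to the right of $\downarrow$. So for any fixed bubble series $B(u)$, an identity $B(u)\,\uparrow=\uparrow\,B(u)\,g(x)$ mates to $\downarrow\,B(u)=B(u)\,\downarrow\,g(x)$. Equivalently, $B(u)\,\downarrow=\downarrow\,B(u)\,g(x)^{-1}$: for a fixed bubble, the up-strand and down-strand slide factors are mutually inverse. Your steps 1--2 therefore produce the factor $\bigl(1-(u-x)^{-2}\bigr)^{-1}$, the inverse of the claim.

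\emph{Step 1 is itself misstated.} Let $B(u)=\sum_r(\text{counterclockwise bubble with }r\text{ dots})\,u^{-r-1}\in u^{k}1_\unit+u^{k-1}\End(\unit)\llbracket u^{-1}\rrbracket$, normalized as in the statement. The correct relation is $B(u)\,\uparrow=\uparrow\,B(u)\,\frac{(u-x)^2}{(u-x-1)(u-x+1)}$. The formula you wrote holds instead for the clockwise series $B(u)^{-1}$. That is the series $m_L(u)/n_L(u)$ described by \cite[Lemma~4.5]{BSW20a}, quoted as Lemma~\ref{lem:choose}. On $\k\frakS_\bullet$ that series is $\prod_{\text{addable}}(u-c)/\prod_{\text{removable}}(u-c)\sim u$, while $B(u)$ starts at $u^{k}=u^{-1}$. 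So that lemma cannot be imported verbatim for the counterclockwise bubble. With the correct step 1, mating gives exactly the statement.

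\emph{A concrete check.} Take $H=1$, so $k=-1$ and the upward strand acts on $\k\frakS_\bullet$ by induction. The counterclockwise bubble with $r$ dots acts on a rank-$n$ module by the projection of $L_{n+1}^r$ to $\k\frakS_n$; it is $1$ for $r=0$ and $n$ for $r=2$. The $u^{-3}$ coefficient of your step-1 identity says: two-dotted bubble left of $\uparrow$ equals two-dotted bubble right of $\uparrow$ minus $1_\uparrow$. On a rank-$n$ module this reads $n+1=n-1$, whereas the inverse factor gives $n+1=n+1$. The target relation $B(u)\,\downarrow=\downarrow\,B(u)\,(1-(u-x)^{-2})$ has $u^{-3}$ coefficient evaluating to the true identity $n-1=n-1$ on a rank-$n$ module.

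\emph{Your safeguard cannot detect this.} Write $f=1-(u-x)^{-2}$, so $f^{\pm1}=1\mp u^{-2}+O(u^{-3})$, and $B(u)$ begins at $u^{k}$. The two candidate identities then agree through order $u^{k-1}$ and first differ at $u^{k-2}$. For $k=-1$ that is exactly the $u^{-3}$ term you discard by working modulo $u^{-3}$.

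\emph{Repair.} Start from the clockwise relation $B(u)^{-1}\,\uparrow=\uparrow\,B(u)^{-1}f(x)$. Invert it using the infinite Grassmannian relation, then mate, keeping track of the side switch.
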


									\begin{proof}
										It follows from Lemma~\ref{lem:colorbubbleslide} directly.
									\end{proof}

        \subsection{Invariants of blocks}\label{sub:coloredweight}
        { }

									Recall that $\hat H $ is the set of isomorphism classes of irreducible representations of $\k H .$
									We write $\calP_n(\hat H )$ for the set of
									$\hat H $-partitions of $n$. Let $\mu \in \calP_n(\hat H )$. A Young $\hat H $-tableau of shape $\tuple \mu$ is obtained by taking the Young $\hat H $-diagram $Y(\tuple \mu)$ and filling its $|\tuple\mu |$ boxes (bijectively) with the numbers $1, 2, \ldots, |\tuple\mu|$. A Young $\hat H $-tableau is said to be standard if the numbers in the boxes strictly increase along each row and each column of all Young diagrams occurring in $\tuple\mu$. Let $\mathrm{tab}_ H (n, \tuple\mu)$, where $\tuple\mu \in \calP_n(\hat H )$, denote the set of all standard Young $\hat H $-tableaux of shape $\tuple\mu.$

									The complex irreducible representations of $H_n$ are parametrized by $\calP_n(\hat H )$.
									Pushkarev \cite{Pus99}, extending the Okounkov--Vershik method for $S_n$ \cite{OV96} to wreath products, gave a spectral explanation for this correspondence; see also the systematic treatment of Mishra--Srinivasan \cite{MS16}.  Namely, an internal analysis of the irreducible representations of $ H _n$ yields spectral objects parametrizing the irreducible representations, together with a bijection between these spectral objects and $\calP_n(\hat H )$. This approach is inductive in nature and has the following advantages:

									A natural byproduct of the theory yields a parametrization of the bases of
									irreducible $ H ^n$-modules using standard Young $\hat H $-tableaux and bases of irreducible $ H $-modules. More precisely, for $\mu \in \calP_n(\hat H )$, we have a Gelfand--Tsetlin decomposition
									$$V^{\tuple\lambda}\cong \prod\limits_{T\in \mathrm{tab}(n,\tuple\lambda)} V_T,$$
									where each $V_T$ is closed under the action of $ H ^n = H  \times\cdots \times  H $ ($n$ factors) and, as a
									$ H ^n$-module, is isomorphic to the irreducible $ H ^n$-module
									$$ V_T=V^{r_T(1)}\boxtimes \cdots\boxtimes V^{r_T(n)}.$$

		\begin{theorem}\label{cwf}
			For any irreducible module $\rho= \rho_{\tuple \lambda}$ with $\tuple \lambda = (\lambda^{(\chi_1)},\cdots, \lambda^{(\chi_d)})$, we have
		$$\bbO^{\chi_i} (u)(\rho) = \bfO_{\lambda^{(\chi_i)}} (u^{(i)}).$$
			In particular, $\{\bbO^{\chi_i}(u)(-)\}_{i}$ are complete invariants of irreducible modules of $\bbK H _{\bullet}\mod.$
            Equivalently, the evaluated colored bubbles generate $Z(\bbK H_n)$ in every rank $n$.
									\end{theorem}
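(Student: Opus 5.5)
We prove the content formula for $\bbO^{\chi_i}(u)(\rho_{\tuple\lambda})$ by induction along a chain of eigenfunctor steps, using the colored recursion. Then we deduce completeness and the center statement exactly as in Theorems~\ref{thm:invariants} and \ref{Thm:centersurjective}.

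\emph{Base case.} Theorem~\ref{Thm:doubleheisenberg} gives an action of $\symHeis_d$ with every $k_i=-1$. By \cite[Lemma 4.17]{BSW20a} in the colored form recalled in \S\ref{sec:colordiagam}, we have $\bbO^{\chi_i}_L(u)=m^{\chi_i}_L(u)/n^{\chi_i}_L(u)$ (with the upward/downward convention of Section~\ref{sec:HeistoKac}). At rank $0$, $E^{\chi_i}$ kills the trivial module $\mathbf 1_{H_0}$. On $F^{\chi_i}\mathbf 1=V_{\chi_i}$ the dot is $c_i^{-1}L_1=0$, so $m^{\chi_i}_{\mathbf 1}(u)=u$ and $\bbO^{\chi_i}(u)(\mathbf 1)=u=\bfO_\emptyset(u)$ for every $i$.

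\emph{Inductive step.} We identify the eigenvalues of the dot via Okounkov--Vershik--Pushkarev theory. On a Gelfand--Tsetlin vector $v_T\in V_T\subset V^{\tuple\lambda}$, the element $L_{n}$ acts by $\frac{|H|}{\chi(1)}c_T(n)$, where $\chi=r_T(n)$ is the color of the box labelled $n$ \cite{Pus99,MS16}. Hence the normalized dot $c_i^{-1}L_{n}e^{(n)}_{\chi_i}$ on $F^{\chi_i}\rho_{\tuple\mu}=\Ind(\rho_{\tuple\mu}\boxtimes V_{\chi_i})$ acts on the summand $\rho_{\tuple\lambda}$, with $\tuple\lambda=\tuple\mu+b$ and $b$ an addable box of $\mu^{(\chi_i)}$, by the scalar $\mathrm{cont}(b)$. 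The branching rule $F^{\chi_i}\rho_{\tuple\mu}=\bigoplus_b\rho_{\tuple\mu+b}$ follows from the Gelfand--Tsetlin decomposition. Thus each irreducible $\rho_{\tuple\lambda}$ is a subquotient of $F^{\chi_i}_{j}\rho_{\tuple\mu}$ with $j=\mathrm{cont}(b)$. Lemma~\ref{lem:choose}, in the degenerate case $i^{\pm}=j\pm1$ and with the functor names swapped as noted before Theorem~\ref{ThmB}, gives
\[
\bbO^{\chi_i}(u)(\rho_{\tuple\lambda})=\bbO^{\chi_i}(u)(\rho_{\tuple\mu})\,\frac{(u-j-1)(u-j+1)}{(u-j)^2},
\]
while all other colors are unchanged. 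This agrees with the recursion for $\bfO_{\lambda}$ given by the lemma $\bfO_\lambda(u)=u\prod_{b}(1-(u-\mathrm{cont}(b))^{-2})$, which closes the induction.

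The main obstacle is verifying that the normalization $c_i^{-1}$ indeed yields the raw content $\mathrm{cont}(b)$, rather than a rescaled content. I would settle this by computing $L_2$ on $\Ind(V_{\chi_i}\boxtimes V_{\chi_i})$ using Lemma~\ref{Lemma:ortho}(2). There $L_2=M^{(1,2)}(1,2)$ acts as $c_iP\cdot P$ restricted to $\pm1$-eigenspaces, so it has eigenvalues $\pm c_i$, matching contents $\pm1$. The general case then follows from the recursion $s_iL_i-L_{i+1}s_i=-M^{(i,i+1)}$.

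\emph{Completeness and the center.} By the one-runner interlacing argument (\cite[Corollary 4.19]{LSZ25}), $\bfO_{\lambda}(u)$ determines $\lambda$: its zeros are the addable contents and its poles are the removable contents, and these interlace to recover the partition. Hence the family $\{\bbO^{\chi_i}\}_i$ determines $\tuple\lambda$ and therefore $\rho$. The center statement follows verbatim from the Lagrange-interpolation argument of Theorem~\ref{Thm:centersurjective}, since $\bbK H_n$ is split semisimple.
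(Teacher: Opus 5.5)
Your proposal is correct and follows essentially the same route as the paper. Both arguments are an induction on $|\tuple\lambda|$ starting from the value $u$ at rank $0$, in which each branching step multiplies the $\chi_i$-bubble by $1-(u-\mathrm{cont}(b))^{-2}$ (the normalized dot acting by the content, via Pushkarev and Mishra--Srinivasan) and leaves the other colors unchanged; the paper slides the bubble past the downward (restriction) strand on a Gelfand--Tsetlin vector, whereas you climb the induction chain using Lemma~\ref{lem:choose}, and your explicit completeness (interlacing) and center (Lagrange interpolation) arguments fill in what the paper leaves implicit.
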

									\begin{proof}

										Assume that $|\tuple\lambda|=n$ and choose a $T\in \mathrm{tab}_H(n,\tuple\lambda).$
										We choose a GZ subspace $V_T$ of $V^{\tuple\lambda}$ such that $$V_T=V^{r_T(1)}\boxtimes \cdots \boxtimes V^{r_T(n)}$$ as $ H ^n$-modules.

                                        Choose a nonzero vector $v\in V_T$. By Schur's lemma, $\bbO^{\chi_i}(u)\cdot v=f(u)\cdot v$.
										It is enough to show that $f(u)=\bfO_{\lambda_i} (u)$.
										By the choice of $V_T$, we know that
										$$ E^{\chi_j}(V_T)\cong\begin{cases} V_T &\text{if $\chi_j=r_T(n)$}\\
											0&\text{if $\chi_j\neq r_T(n)$}
										\end{cases}.$$
										So $$\mathord{
											\begin{tikzpicture}[baseline = -1mm]
												\node at (-0.6,0) {$\anticlock \scriptstyle (u)$};
												\node at (-0.7,-.45) {${\scriptstyle{\chi_i}}$}      ;\end{tikzpicture}
										}\cdot v=\mathord{
											\begin{tikzpicture}[baseline = -1mm]
												\draw[<-] (0,-.4) to (0,.4);
												\node at (-0.6,0) {$\anticlock \scriptstyle (u)$};
												\node at (0.,-.5) {${\scriptstyle{r_T(n)}}$};
												\node at (-0.7,-.45) {${\scriptstyle{\chi_i}}$}
												;\end{tikzpicture}}\cdot v=\left\{
										\begin{array}{ll}
											\mathord{
												\begin{tikzpicture}[baseline = -1mm]
													\draw[ <-] (0.08,-.4) to (0.08,.4);      \node at (0.8,0) {$\anticlock \scriptstyle (u)$};
													\node at (.08,0) {$\dt$};
													\node at (-.6,0) {$\scriptstyle{1-(u-x)^{-2}}$};
													\node at (0.08,-.5) {${\scriptstyle{r_T(n)}}$};
													\node at (0.7,-.45) {${\scriptstyle{\chi_i}}$}      ;\end{tikzpicture}}\cdot v&
											\text{if $r_T(n)= \chi_i$,}\\\\
											\mathord{
												\begin{tikzpicture}[baseline = -1mm]
													\draw[ <-] (0.08,-.4) to (0.08,.4);
													\node at (0.8,0) {$\anticlock \scriptstyle (u)$};
													\node at (0.08,-.5) {${\scriptstyle{r_T(n)}}$};
													\node at (0.7,-.45) {${\scriptstyle{\chi_i}}$};\end{tikzpicture}} \cdot v&\text{if $r_T(n)\neq\chi_i$,}
										\end{array}\right.$$
										Since $\begin{tikzpicture}[baseline = -1mm]
											\draw[<-] (0.08,-.2) to (0.08,.2);
											\node at (0.08,-.3) {$\scriptstyle{\chi_i}$};
											\node at (.08,0) {$\dt$};		\end{tikzpicture}$ acts on $V_T$ by $c_T(n),$ so
\[
\bbO^{\chi_i}(u)^{[n]}\cdot v=
\begin{cases}
\bigl(1-(u-c_T(n))^{-2}\bigr)\,\bbO^{\chi_i}(u)^{[n-1]}\cdot v,
& r_T(n)=\chi_i,\\
\bbO^{\chi_i}(u)^{[n-1]}\cdot v,
& r_T(n)\neq\chi_i.
\end{cases}
\]
										Then by induction, using $\bbO^{\chi_i}(u)^{[0]}=u$, the series $\bbO^{\chi_i}(u)$ acts on $v$ by
$$u\prod\limits_{1\leq j\leq n,\, r_T(j)=\chi_i}(1-(u-c_T(j))^{-2}),$$
										i.e., $\bbO^{\chi_i} (u)(\rho) = \bfO_{\lambda_i} (u).$
									\end{proof}

                            For wreath products the bubbles admit an explicit formula; compare Remark~\ref{Rem:computebubbles} for the finite-classical case.
									For each $k\geq0$, the generalized Gelfand--Tsetlin algebra $GZ_k$ is the subalgebra of $\mathbb{K}[H_k]$ generated by $\mathbb{K}[H^k]$ and the centers $Z_1,\ldots,Z_k$ of $\mathbb{K}[H_1],\ldots,\mathbb{K}[H_k]$:
									\begin{equation*}
										GZ_k \overset{\mathrm{def}}{=} \langle \mathbb{K}[H^k], Z_1,\ldots,Z_k \rangle_{\mathbb{K}}.
									\end{equation*}
									We set
									\begin{equation*}
										\mathscr J_k \overset{\mathrm{def}}{=} \langle \mathbb{K}[H^k], L_1, L_2,\ldots,L_k \rangle_{\mathbb{K}}.
									\end{equation*}

																		The following theorem is due to Pushkarev \cite{Pus99}.
\begin{theorem}[\cite{Pus99}]
										For all $k \in \mathbb{N}$, we have $\mathscr J_k = GZ_k$.
									\end{theorem}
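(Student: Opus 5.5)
We prove $\mathscr J_k=GZ_k$ by showing both inclusions, by induction on $k$. The case $k=0$ is trivial. For $k=1$ we have $L_1=0$, and $Z_1=Z(\bbK H)\subseteq\bbK[H]$, so both algebras equal $\bbK[H^1]$.

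\emph{Inclusion $\mathscr J_k\subseteq GZ_k$.} It suffices to show $L_k\in GZ_k$. Write $L_k$ as a difference of central elements. Consider
\[
C_k:=\sum_{1\le i<j\le k}\sum_{g\in H}g^{(i)}(g^{-1})^{(j)}(i,j)\in\bbK H_k,
\]
the sum over the conjugacy class of all elements $g^{(i)}(g^{-1})^{(j)}(i,j)$. This set is stable under conjugation by $H^k$ and by $\frakS_k$, so $C_k\in Z_k$. By definition $L_k=C_k-C_{k-1}$, where $C_{k-1}\in Z_{k-1}$ is viewed inside $\bbK H_k$. Hence $L_k\in GZ_k$.

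\emph{Inclusion $GZ_k\subseteq\mathscr J_k$.} By induction $GZ_{k-1}=\mathscr J_{k-1}\subseteq\mathscr J_k$, so it remains to show $Z_k\subseteq\mathscr J_k$. This is the main obstacle. The plan is to use semisimplicity and spectral separation rather than explicit class-sum formulas.

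First, $GZ_k$ is commutative: $Z_i$ commutes with $\bbK H_i\supseteq Z_j$ for $j\le i$. Moreover $\bbK[H^k]$ is not inside $GZ_k$'s commutative core, but it commutes with all $Z_i$ and with all $L_j$ (by the earlier lemma of \cite{Pus99}). The maximal commutative subalgebra in question is therefore generated by $Z(\bbK[H^k])$ together with the $Z_i$. Since restriction $H_i\downarrow H_{i-1}\times H$ is multiplicity-free after fixing the $H^{(i)}$-isotypic component, the Gelfand--Tsetlin decomposition recalled before Theorem~\ref{cwf} shows that the algebra $GZ_k$ acts on each $V^{\tuple\lambda}$ through the GZ subspaces $V_T$. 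Its simple modules over $\bbK[H^k]$-corners are indexed by pairs (tableau $T$, $H^k$-irreducible), and $Z_k$ acts on $V_T$ by the central character of the shape of $T$.

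Next, $\mathscr J_k$ is semisimple: it is generated by the semisimple algebra $\bbK[H^k]$ and the commuting operators $L_j$, which are self-adjoint for the standard invariant form and commute with $H^k$. So it suffices to show that the characters of $\mathscr J_k$ separate the characters of $GZ_k$. On $V_T$, the element $c_{r_T(j)}^{-1}L_j$ acts by the content $c_T(j)$; this is the dot computation used in the proof of Theorem~\ref{cwf}. Together with the $H^k$-type $(r_T(1),\dots,r_T(k))$, the contents recover the standard $\hat H$-tableau $T$ by the usual Okounkov--Vershik content-vector argument applied colorwise. In particular they recover its shape, and hence the value of every element of $Z_k$ on $V_T$. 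Consequently every element of $GZ_k$ is a function of the joint spectrum of $\mathscr J_k$. Lagrange interpolation, as in the proof of Theorem~\ref{Thm:centersurjective}, then gives $Z_k\subseteq\mathscr J_k$.

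The delicate point is verifying that the $L_j$ eigenvalues on $V_T$ are exactly the scaled contents, independently of Theorem~\ref{cwf}, to avoid circularity. We would establish this inductively, using the relation $s_iL_i-L_{i+1}s_i=-M^{(i,i+1)}$ and Lemma~\ref{Lemma:ortho} to run the rank-two Okounkov--Vershik analysis.
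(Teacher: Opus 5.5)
The paper gives no proof of this statement; it is quoted from \cite{Pus99}. Your inclusion $\mathscr J_k\subseteq GZ_k$ is correct. The elements $g^{(i)}(g^{-1})^{(j)}(i,j)$ with $i<j$ form exactly the $H_k$-conjugacy class of $(1,2)$, so $C_k\in Z_k$ and $L_k=C_k-C_{k-1}$.

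For the reverse inclusion, your claim that $GZ_k$ is commutative is false when $H$ is nonabelian, since it contains $\mathbb K[H^k]$. The reduction still survives. Each $L_j$ lies in $GZ_k$, so it preserves every Gelfand--Tsetlin summand $V_T$ and commutes with $H^k$; hence it acts on $V_T$ by a scalar. Interpolating inside the commutative subalgebra generated by $Z(\mathbb K[H^k])$ and $L_1,\dots,L_k$ gives $Z_k\subseteq\mathscr J_k$, with no self-adjointness needed. This works as soon as the $H^k$-type together with the $L$-eigenvalues separates the summands $V_T$.

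That separation is where the gap lies, and it is not an auxiliary lemma. Since $GZ_k$ is precisely the set of elements of $\mathbb K H_k$ preserving every $V_T$, the equality $\mathscr J_k=GZ_k$ is \emph{equivalent} to it. You obtain it by quoting, from the proof of Theorem~\ref{cwf}, that the $V_T$ are indexed by standard $\hat H$-tableaux and that $c_{r_T(j)}^{-1}L_j$ acts on $V_T$ by $c_T(j)$. But in \cite{Pus99,MS16} these facts are produced by the Okounkov--Vershik analysis, which uses $\mathscr J_k=GZ_k$ to know that a joint spectrum singles out one summand.

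The repair you sketch does not break the circle. The rank-two analysis with $s_iL_i-L_{i+1}s_i=-M^{(i,i+1)}$ and Lemma~\ref{Lemma:ortho} only shows that every joint spectrum is a colored content vector. The further steps are that $s_iv-(\cdots)v$ lies in a single summand and that distinct paths have distinct spectra. Those are exactly where Okounkov--Vershik and Pushkarev use the statement you are trying to prove.

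A non-circular way to finish uses your own identity. $L_j=C_j-C_{j-1}$ acts on $V_T$ by $\omega_{\boldsymbol\lambda_j}(C_j)-\omega_{\boldsymbol\lambda_{j-1}}(C_{j-1})$, where $\boldsymbol\lambda_i$ is the shape of the part of $T$ filled with $1,\dots,i$. Clifford theory realizes $V^{\tuple\lambda}$ as induced from $\prod_aH_{n_a}$. This gives, independently of the Okounkov--Vershik method, two facts:
\begin{enumerate}
\item the branching rule for $H_j\downarrow H_{j-1}\times H^{(j)}$, namely remove one box of the color of the last factor;
\item via the induced-character formula and Frobenius' formula for transpositions, $\omega_{\tuple\lambda}(C_j)=\sum_a\frac{|H|}{\chi_a(1)}\sum_{b\in\lambda^{(\chi_a)}}\mathrm{cont}(b)$.
\end{enumerate}
Hence $L_j$ acts on $V_T$ by $c_{r_T(j)}c_T(j)$. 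Since the addable boxes of a partition have distinct contents, the type and the $L$-spectrum recover $T$ one box at a time.
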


\begin{theorem}\label{Thm:expofbubbles}
										The elements $\bbO_n^{(\chi_i)}(u) \in Z(\bbK H _n)((u^{-1}))$ have the following forms:

										$$\bbO_n^{(\chi_i)}(u)=u\prod\limits_{1\leq j\leq n}\left(1-(u-c_i^{-1}L_j)^{-2}\cdot f^{(j)}_{\chi_i}\right).$$
									\end{theorem}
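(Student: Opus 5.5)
The plan is to prove the identity rank by rank by induction on $n$, mirroring the proof of Theorem~\ref{cwf} but working inside the algebra rather than on a chosen vector. Write $\bbO_n^{(\chi_i)}(u)$ for the image in $Z(\bbK H_n)((u^{-1}))$ of the colored generating function, i.e.\ its action on the summand $\bbK H_n\mod$. The base case $n=0$ is $\bbO_0^{(\chi_i)}(u)=u$, which is the $k=-1$ normalization of the bubble series (equivalently $\bfO_\emptyset(u)=u$). For the inductive step, I use the bubble-slide lemma stated just before Theorem~\ref{cwf}. A counterclockwise $\chi_i$-bubble to the left of a downward $\chi_j$-strand equals the same bubble on the right, times $1-(u-x)^{-2}$ on the strand if $j=i$, and times $1$ if $j\neq i$.

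First I write $\bbO_n^{(\chi_i)}(u)$ as a bubble evaluated on $\bbK H_n$. Then I use the decomposition of the identity of $\bbK H_n$ (as a $(\bbK H_n,\bbK H_n)$-bimodule) through $E$: the element $1\in\bbK H_n$ is $\sum_j f^{(n)}_{\chi_j}$. Via the right adjunction in \S\ref{sec:adjwreath}, multiplication by $f^{(n)}_{\chi_j}$ is realized by $\sum_b (E^{\chi_j}_{b1})^{(n)}(E^{\chi_j}_{1b})^{(n)}$, which is the composite $\varepsilon^{\chi_j}_R\circ\eta$-type factorization of the identity through $F^{\chi_j}E^{\chi_j}$. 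The result is
\[
z=\sum_j \varepsilon^{\chi_j}_R\bigl((1\otimes z)\cdot(e^{(n)}_{\chi_j}\otimes e^{(n)}_{\chi_j})\bigr)\quad\text{summed over matrix units.}
\]
Applied to the central element $z=\bbO^{(\chi_i)}(u)$, this lets me slide the bubble across the downward $\chi_j$-strand representing $E^{\chi_j}$, which is the $n$-th tensor factor. For $j=i$ the slide picks up the factor $1-(u-\mathrm X)^{-2}$ on that strand. By the definition of dots this is $1-(u-c_i^{-1}L_n)^{-2}$ acting after the idempotent; for $j\neq i$ there is no factor. The bubble now sits on the $\bbK H_{n-1}$ side, where it equals $\bbO_{n-1}^{(\chi_i)}(u)$, embedded in $\bbK H_n$.

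Assembling the pieces gives the recursion
\[
\bbO_n^{(\chi_i)}(u)=\bbO_{n-1}^{(\chi_i)}(u)\Bigl(\sum_{j\ne i}f^{(n)}_{\chi_j}+\bigl(1-(u-c_i^{-1}L_n)^{-2}\bigr)f^{(n)}_{\chi_i}\Bigr)=\bbO_{n-1}^{(\chi_i)}(u)\bigl(1-(u-c_i^{-1}L_n)^{-2}f^{(n)}_{\chi_i}\bigr).
\]
The second equality uses $\sum_j f^{(n)}_{\chi_j}=1$ and the fact that $L_n$ commutes with $H^n$, so $f^{(n)}_{\chi_i}$ is an idempotent commuting with $L_n$. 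Iterating yields the product formula. The factors commute with each other because the $L_j$ pairwise commute and commute with $H^n$.

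The main obstacle is justifying the identification of the dot on the $n$-th strand, after closing through the $f_{\chi_i}$-corner, with $c_i^{-1}L_n f^{(n)}_{\chi_i}$ rather than merely $c_i^{-1}e^{(n)}_{\chi_i}L_ne^{(n)}_{\chi_i}$. The dot is defined with the rank-one idempotent $e_{\chi_i}$, and summing over matrix units $E_{b1}(\cdot)E_{1b}$ must reassemble it. I would first check this using $L_n\in\mathscr J_n$ commuting with $\bbK H^{(n)}$, so that $\sum_b E_{b1}L_nE_{1b}=L_nf_{\chi_i}$. This normalization is exactly the one flagged in the acknowledgment, so I would also cross-check it against Theorem~\ref{cwf}. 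Concretely, I would evaluate both sides on a Gelfand--Tsetlin vector $v\in V_T$. There $L_j$ acts by $c_i\,c_T(j)$ when $r_T(j)=\chi_i$, and $f^{(j)}_{\chi_i}$ acts by $\delta_{r_T(j),\chi_i}$. This reproduces $u\prod_{r_T(j)=\chi_i}(1-(u-c_T(j))^{-2})$. Since $\bbK H_n$ is semisimple and the $V_T$ span every irreducible, this check also gives an independent proof of the identity.
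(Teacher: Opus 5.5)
Your argument is correct. Its final paragraph is exactly the paper's proof. There one sets $Z_{n,i}(u)=u\prod_j(1-(u-c_i^{-1}L_j)^{-2}f^{(j)}_{\chi_i})$ and evaluates it on a Gelfand--Tsetlin summand $V_T$, getting $\bfO_{\lambda^{(\chi_i)}}(u)$ independently of $T$. Centrality then follows from split semisimplicity of $\bbK H_n$, and equality follows by matching scalars with Theorem~\ref{cwf}.

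Your main route is genuinely different. It lifts the vector-level induction from the proof of Theorem~\ref{cwf} to an identity of algebra elements, in three steps:
\begin{enumerate}
\item apply the bimodule map $\varepsilon^{\chi_j}_R$ to the explicit element $\sum_b(E^{\chi_j}_{b1})^{(n)}\otimes(E^{\chi_j}_{1b})^{(n)}$;
\item slide the bubble into the middle $\k H_{n-1}$ region;
\item use $\sum_bE^{\chi_i}_{b1}\,p(c_i^{-1}L_n)\,E^{\chi_i}_{1b}=p(c_i^{-1}L_n)f^{(n)}_{\chi_i}$, which holds because $L_n$ commutes with $H^{(n)}$.
\end{enumerate}
This needs only the Heisenberg relations and $\sum_jf^{(n)}_{\chi_j}=1$. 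It uses no Okounkov--Vershik/Pushkarev spectral theory, no semisimplicity of $\k H_n$, and no appeal to Theorem~\ref{cwf}, which instead becomes a corollary. So it also gives the formula in $Z(\bbF H_n)((u^{-1}))$ when $\ell\nmid|H|$. It also shows directly why the rank-one corner $e_{\chi_i}$ and the factor $c_i^{-1}$ assemble into $c_i^{-1}L_nf^{(n)}_{\chi_i}$. The paper's route is shorter, but it works only over $\bbK$ and rests on Theorem~\ref{cwf}, whose proof is itself the vector-level version of your recursion.

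Two points to tighten. First, multiplication by $f^{(n)}_{\chi_j}$ is not central, so it is not an endomorphism of the identity functor and not an ``$\varepsilon_R\circ\eta$'' factorization. Indeed $\varepsilon^{\chi_j}_R\circ\eta^{\chi_j}_L(1)=\sum_{k=1}^nf^{(k)}_{\chi_j}$. All you need is that $\varepsilon^{\chi_j}_R$ is a bimodule map and that the slide is an identity of bimodule endomorphisms.

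Second, in $\k H_ne^{(n)}_{\chi_j}\otimes_{\k H_{n-1}}e^{(n)}_{\chi_j}\k H_n$ the rank-$(n-1)$ region is the middle one. The relation you need is: bubble on the rank-$n$ side equals $(1-(u-x)^{-2})$ on the strand times bubble on the rank-$(n-1)$ side. Read the left/right lemma you quote with that convention, since the mirrored reading gives the inverse factor.
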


									\begin{proof}

										Set
										\[
										Z_{n,i}(u):=u\prod\limits_{1\leq j\leq n}\left(1-(u-c_i^{-1}L_j)^{-2}\cdot f^{(j)}_{\chi_i}\right)\in \bbK H_n((u^{-1})).
										\]
										On a Gelfand--Tsetlin summand $V_T$ of an irreducible module $V_{\tuple\lambda}$, the idempotent $f^{(j)}_{\chi_i}$ selects precisely the positions of color $\chi_i$, while $c_i^{-1}L_j$ acts there by $c_T(j)$. Hence
										\[
										Z_{n,i}(u)|_{V_T}=u\prod_{r_T(j)=\chi_i}\left(1-(u-c_T(j))^{-2}\right)=\bfO_{\lambda^{(\chi_i)}}(u).
										\]
										The right-hand side depends only on the multipartition $\tuple\lambda$, not on $T$. Thus $Z_{n,i}(u)$ acts by a scalar on every irreducible $\bbK H_n$-module; since $\bbK H_n$ is split semisimple, $Z_{n,i}(u)\in Z(\bbK H_n)[[u^{-1}]]$. Theorem~\ref{cwf} shows that $Z_{n,i}(u)$ and $\bbO_n^{(\chi_i)}(u)$ have the same scalar action on every irreducible module, and therefore they are equal.

									\end{proof}

			\subsection{Kac--Moody categorifications in characteristic 0}\label{sub:wreath-KM-char0}
            For each $i=1,\ldots,d$, the spectrum is $I_\infty^{(\chi_i)}=\bbZ$; set $I_\infty:=\bigsqcup_{i=1}^{d}I_\infty^{(\chi_i)}.$

									We identify the weight lattice $\X_{\infty}$ of $\fraks\frakl_{\infty}$
									with
									$$\mathrm{Core}_{\infty}=\left\{(a_i)_{i\in \bbZ}| a_i\in \bbZ, a_i=0\,\, \text{for all but finitely many $i\in \bbZ$}, \sum_{i\in\bbZ,a_i\neq0}a_i=0\right\}$$ via
									$\psi(\sum_i a_i\Lambda_i)=(a_i)_{i\in \bbZ}.$

									Since $N_i(\lambda ) = \sharp\{ \text{ addable $i$-nodes of $\lambda$}\}
									- \sharp\{ \text{ removable $i$-nodes of $\lambda$} \}$, by the definition, $$\bfO_{\lambda}(u)=\frac{\prod\limits_{\text{$b$  addable }}(u-\text{cont}(b))}{\prod\limits_{\text{$b$  removable }}(u-\text{cont}(b))}=\prod_{i\in \bbZ}(u-i)^{N_i(\lambda)}.$$
									The weight space of standard basis
									$|\tuple\lambda\rangle
									$ of Fock space is exactly
									$\mathrm{Wt}(\lambda)=\sum_i N_i\Lambda_i.$

									\begin{lemma}
										We have
										\begin{align*}E^{\chi_i}_j(V^{\tuple \lambda})=\begin{cases}
												V^{\tuple\mu } & \text{if $\tuple\lambda \setminus \tuple \mu $ has content $j$ and color $\chi_i$;}\\
												0,&\text{otherwise;}
											\end{cases}
										\end{align*} and
										\begin{align*}
											F^{\chi_i}_j(V^{\tuple \mu})=\begin{cases}
												V^{\tuple\lambda } & \text{if $\tuple\lambda \setminus \tuple \mu $ has content $j$ and color $\chi_i$;}\\
												0,&\text{otherwise.}
											\end{cases}
									\end{align*}\end{lemma}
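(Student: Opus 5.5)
We want: $E^{\chi_i}_j(V^{\boldsymbol\lambda})$ is the sum of $V^{\boldsymbol\mu}$ over $\boldsymbol\mu$ obtained by removing a box of content $j$ from the $\chi_i$-component (and dually for $F$). The statement is written with a single $\boldsymbol\mu$, but the honest reading is the multiplicity-free sum over all such $\boldsymbol\mu$. The plan has three steps. First compute $E^{\chi_i}(V^{\boldsymbol\lambda})$. Then split it into generalized eigenspaces of the dot. Finally get $F$ by adjunction.

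For the first step, $E^{\chi_i}$ is represented by $e^{(n)}_{\chi_i}\k H_n$. So
\[
E^{\chi_i}(V^{\boldsymbol\lambda})=\Hom_{H^{(n)}}\bigl(V_{\chi_i},\Res^{H_n}_{H_{n-1}\times H^{(n)}}V^{\boldsymbol\lambda}\bigr).
\]
We use the Gelfand--Tsetlin decomposition $V^{\boldsymbol\lambda}=\bigoplus_T V_T$ with $V_T\cong V^{r_T(1)}\boxtimes\cdots\boxtimes V^{r_T(n)}$, quoted from Pushkarev/Mishra--Srinivasan. The summands $V_T$ are grouped according to the position of the entry $n$ in $T$. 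Removing that box gives the branching rule
\[
\Res^{H_n}_{H_{n-1}\times H^{(n)}}V^{\boldsymbol\lambda}\cong\bigoplus_{\boldsymbol\mu}V^{\boldsymbol\mu}\boxtimes V_{\chi(\boldsymbol\lambda/\boldsymbol\mu)},
\]
where $\boldsymbol\mu$ runs over the multipartitions obtained by deleting a single removable box, and $\chi(\boldsymbol\lambda/\boldsymbol\mu)$ is the color of that box. Applying $\Hom_{H}(V_{\chi_i},-)$ and Schur's lemma, only the removals from $\lambda^{(\chi_i)}$ survive, each with multiplicity one. Hence
\[
E^{\chi_i}V^{\boldsymbol\lambda}\cong\bigoplus_{\boldsymbol\mu}V^{\boldsymbol\mu},
\]
summed over the $\boldsymbol\mu$ obtained by removing a box from the $\chi_i$-component.

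For the second step, we need the eigenvalue of the dot on each summand. On the downward strand the dot is the mate of $\mathrm X^{\chi_i}$, so on $E^{\chi_i}V^{\boldsymbol\lambda}$ it acts through right multiplication by $c_i^{-1}L_n$. By the Jucys--Murphy computation already used in the proof of Theorem~\ref{cwf}, $c_i^{-1}L_n$ acts on $V_T$ by $c_T(n)$, the content of the box containing $n$. The summand $V^{\boldsymbol\mu}$ consists exactly of the $V_T$ whose $n$-box is the removed box $\boldsymbol\lambda\setminus\boldsymbol\mu$. So the dot acts semisimply on $V^{\boldsymbol\mu}$ with eigenvalue $\mathrm{cont}(\boldsymbol\lambda\setminus\boldsymbol\mu)$. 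Projecting to the generalized $j$-eigenspace then yields the claimed formula for $E^{\chi_i}_j$.

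The main obstacle is this second step, specifically checking the convention: the downward dot is a mate (left or right) of multiplication by $L$, and we must confirm it gives exactly $c_T(n)$ and not, say, $-c_T(n)$ or a value shifted by the trace normalization $c_i$. We would settle this by comparing with Theorem~\ref{cwf}, where the same identification produced the factor $1-(u-c_T(n))^{-2}$ and hence the correct content function $\mathbf{O}_{\lambda^{(\chi_i)}}$. Any sign error here would contradict that theorem.

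For the third step, $F^{\chi_i}_j$ is biadjoint to $E^{\chi_i}_j$ by Section~\ref{sec:HeistoKac}, and $\bbK H_\bullet\mathrm{-mod}$ is semisimple. So the multiplicity of $V^{\boldsymbol\lambda}$ in $F^{\chi_i}_jV^{\boldsymbol\mu}$ equals the multiplicity of $V^{\boldsymbol\mu}$ in $E^{\chi_i}_jV^{\boldsymbol\lambda}$. By the first two steps this is $1$ when $\boldsymbol\lambda\setminus\boldsymbol\mu$ is a single box of color $\chi_i$ and content $j$, and $0$ otherwise, which gives the formula for $F^{\chi_i}_j$.
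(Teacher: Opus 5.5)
Your proposal is correct and is essentially the paper's argument: the branching rule from \cite{MS16}, the eigenvalue $c_T(n)$ of $c_i^{-1}L_n$ on the color-$\chi_i$ summands, projection to the generalized $j$-eigenspace, and adjunction for $F^{\chi_i}_j$.

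Two small remarks. First, the removable (resp.\ addable) boxes of a partition have pairwise distinct contents, so at most one $\boldsymbol\mu$ can occur; the lemma's single-term formulation is therefore already exact, not shorthand for a sum. Second, the convention you flag is settled by direct computation, not by consistency with Theorem~\ref{cwf}, whose proof uses the same identification: composing $\eta^{\chi_i}_R$, $\mathrm X^{\chi_i}$ and $\varepsilon^{\chi_i}_R$ shows the downward dot is \emph{left} (not right) multiplication by $c_i^{-1}L_n$ on $e^{(n)}_{\chi_i}\bbK H_n$, i.e.\ the action of $c_i^{-1}L_n$ on $e^{(n)}_{\chi_i}V^{\boldsymbol\lambda}$, with no sign or shift.
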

									\begin{proof}
										By the adjoint properties, we only need to show the first formula.
										By \cite{MS16}, $$\Res^{ H _n}_{ H _{n-1}\times H ^{(n)}}(V^{\tuple\lambda})=\sum_{|\tuple\lambda\setminus\tuple\mu|=1}V^{\tuple\mu}\boxtimes V^{\chi_{{\mathrm{color}(\tuple\lambda\setminus\tuple\mu)}}}.$$ On a summand of color $\chi_a$, the normalized Jucys--Murphy operator $c_a^{-1}L_n$ (equivalently, the categorical dot $\mathrm{X}^{\chi_a}$) acts by the scalar $\mathrm{cont}(\tuple\lambda\setminus\tuple\mu)$. Then the lemma follows from the definition of $E^{\chi_i}_j.$
									\end{proof}

                                    The symmetric-product Heisenberg action and the preceding lemma give the following theorem.

									\begin{theorem}
										\begin{itemize}[leftmargin=8mm]
											\item[$\mathrm{(a)}$]
											The operators $[F_j^{(\chi_i)}],[E_j^{(\chi_i)}] $ for $j\in I_\infty^{(\chi_i)}$, $i=1,\cdots,d$ produce a representation of $\fraks\frakl'_{I_\infty}$ on $[\bbK H _\bullet\mod]$.
											\item[$\mathrm{(b)}$] The map
											$$|\tuple\lambda\rangle
											\mapsto [V^{\tuple\lambda}]$$ gives an
											$\fraks\frakl'_{I_\infty}$-module isomorphism $$
											\calF^{\otimes d}\simto [\bbK H _\bullet\mod]$$
											such that $\calF^{\otimes d}_m\simto [\bbK H _m\mod].$
										\end{itemize}
									\end{theorem}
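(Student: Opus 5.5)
The plan is to prove (a) by transporting the categorical action of Theorem~\ref{Thm:doubleheisenberg} to Grothendieck groups, and (b) by matching bases using the preceding lemma.

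For (a), I apply the Heisenberg-to-Kac--Moody construction of Section~\ref{sec:HeistoKac}, Theorem~\ref{thm:HtoK}, to the locally finite abelian category $\bbK H_\bullet\mod$. It carries the $\symHeis_d$-action of Theorem~\ref{Thm:doubleheisenberg}, with all levels $k_i=-1$ and all factors degenerate.
\begin{itemize}
\item First I identify the colored spectrum. By the preceding lemma, the dot $\mathrm{X}^{\chi_i}$ acts on each Gelfand--Tsetlin summand by the content of the added box. Since every integer occurs as such a content, $I^{(\chi_i)}=\bbZ$. Each component of the quiver is therefore of type $A_\infty$, so $\g=\bigoplus_i\fraks\frakl_\infty=\fraks\frakl'_{I_\infty}$.
\item Theorem~\ref{thm:HtoK} then gives a strict $2$-representation of $\UU(\g)$ on $\bigoplus_\lambda\mathcal C_\lambda$, with the $1$-morphisms sent to $E^{(\chi_i)}_j$ and $F^{(\chi_i)}_j$.
\item Decategorifying gives the Lie algebra action on $[\bbK H_\bullet\mod]$. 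The relations $[e_j,f_{j'}]=\delta h_j$ follow from the inversion relations \eqref{KMinverse1}--\eqref{KMinverse3}. The Serre relations follow from the KLR relations, as in the standard Khovanov--Lauda/Rouquier decategorification argument.
\item Mixed-color commutation holds by Lemma~\ref{banach1}.
\item Finally, $h^{(\chi_i)}_j$ acts on $[L]$ by $\phi_j-\epsilon_j=\operatorname{ord}_{u=j}\bbO^{\chi_i}_L(u)$, using $\bbO_L=m_L/n_L$.
\end{itemize}

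For (b), the map $|\tuple\lambda\rangle\mapsto[V^{\tuple\lambda}]$ is a bijection of bases, because the irreducibles of $H_m$ are indexed by $\calP_m(\hat H)$. It also preserves rank, so it is a linear isomorphism $\calF^{\otimes d}_m\simto[\bbK H_m\mod]$. It remains to check that the map intertwines the generators.
\begin{itemize}
\item On $\calF^{\otimes d}$, the generator $f^{(\chi_i)}_j$ acts by the Theorem~\ref{thm:Fock} formula ($\ell=\infty$) on the $i$-th tensor factor. It adds a box of content $j$ to $\lambda^{(\chi_i)}$ and is the identity on the other factors, with no signs since we use $\fraks\frakl_\infty$ componentwise.
\item The preceding lemma says $[F^{\chi_i}_j][V^{\tuple\mu}]$ is the sum of $[V^{\tuple\lambda}]$ over the same set of $\tuple\lambda$. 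One point needs care here: the lemma's phrasing reads as a single $V^{\tuple\lambda}$. In the proof I restate it as the multiplicity-free sum, using the branching rule of \cite{MS16}.
\item The same argument applies to $e$.
\item For $h$, Theorem~\ref{cwf} gives $\bbO^{\chi_i}(u)(V^{\tuple\lambda})=\bfO_{\lambda^{(\chi_i)}}(u)=\prod_j(u-j)^{N_j(\lambda^{(\chi_i)})}$. This matches $h_j|\tuple\lambda\rangle=N_j(\lambda^{(\chi_i)})|\tuple\lambda\rangle$ on the Fock side.
\end{itemize}

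I expect the main obstacle to be sign and normalization bookkeeping rather than anything conceptual. Two points need attention:
\begin{itemize}
\item The convention of Section~\ref{sec:HeistoKac} uses $E$ for upward strands, while the present functors $F^{\chi}$ are upward. The weights must be matched with the Fock convention that $f$ adds boxes, and if needed I would use the Chevalley involution on one side.
\item The dot must be the normalized $c_i^{-1}L_{r+1}$. This is exactly what makes the eigenvalues integer contents, as opposed to $c_i$ times contents.
\end{itemize}
Once these two points are fixed, checking the generators on the bases completes the proof.
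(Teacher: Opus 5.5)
Your proposal is correct and follows the paper's route. The paper gets (a) from the symmetric-product Heisenberg action via the Heisenberg-to-Kac--Moody construction, and (b) from the preceding lemma on $E^{\chi_i}_j$ and $F^{\chi_i}_j$; you spell out the decategorification and basis-matching that the paper leaves implicit.

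Two small remarks. Your worry about the lemma's singular phrasing is unnecessary: for $\ell=\infty$, each content labels at most one addable or removable box of a partition, so the sum has at most one term. Also, with the literal conventions of Section~\ref{sec:HeistoKac} one has $\phi_j-\epsilon_j=-\operatorname{ord}_{u=j}\bbO^{\chi_i}_L(u)$, and this is exactly the sign your Chevalley-involution adjustment absorbs.
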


								\subsection{Kac--Moody categorifications in characteristic $\ell$}\label{sub:wreath-KM-charl}
                                Assume that $\ell \nmid |H|$.  When $H$ is trivial, the wreath-product tower is the symmetric-group tower.
								For each color $\chi_i$, identify the modular residue spectrum with $I_\ell^{(\chi_i)}=\mathbb Z/\ell\mathbb Z$, and set $I_\ell:=\bigsqcup_{i=1}^{d}I_\ell^{(\chi_i)}$. Define $\boldsymbol{\Lambda}_0:=\sum_{i=1}^d\Lambda_0^{(\chi_i)}$, the highest weight for the direct-sum algebra $\fraks\frakl'_{I_\ell}\cong(\widehat{\mathfrak{sl}}'_\ell)^{\oplus d}$.

								We have the following reduction diagram
								$$\xymatrix{
									&\symHeis_d^{\bbK}\ar[d]&\symHeis_d^{\calO}\ar[r]^{\otimes_{\calO} \bbF} \ar[l]_{\otimes_{\calO}\bbK} &    \symHeis_d^{\bbF} \ar[d]  &\\ \bbK H  _\bullet\mod\,\,\,\circlearrowleft&\frakU(\fraks\frakl'_{I_\infty})_{\bbK}&&  \frakU(\fraks\frakl'_{I_\ell})_{\bbF}&\circlearrowright\,\,\,\bbF H  _\bullet\mod }
								$$

								It was shown in \cite{WW08} that $$\sum_{m\geq0}|\text{conj}_{\ell'}( H _m)|t^m=\prod_{m\geq1,\,\ell\nmid m}(\frac{1}{1-t^m})^{|\text{conj}( H )|}$$
								Since $|\Irr(\bbF H _m)|=|\text{conj}_{\ell'}( H _m)|,$
								we have $$\sum_{m\geq0}|\Irr(\bbF H _m)|
								t^m=\prod_{m\geq1,\,\ell\nmid m}(\frac{1}{1-t^m})^{|\Irr( \bbF H )|}.$$

								On the other hand, if $L(\Lambda_0)=\bigoplus_{m\geq0}L(\Lambda_0)_m$ denotes the basic
$\widehat{\mathfrak{sl}}'_\ell$-module graded by partition size, then
$$\sum_{m\geq0}\dim L(\Lambda_0)_m\,t^m=\prod_{m\geq1,\,\ell\nmid m}\frac{1}{1-t^m}.$$

								\begin{theorem}
                                \label{thm:Lieactwreath}
									There is a $\frakU(\fraks\frakl'_{I_\ell})$-categorical action on the category $\bbF H _{\bullet}\mod.$
									\begin{itemize}
										\item[$\mathrm{(a)}$]
										The operators $[F_j^{(\chi_i)}],[E_j^{(\chi_i)}] $ for $j\in I_\ell^{(\chi_i)}$, $i=1,\cdots,d$ produce a representation of $\fraks\frakl'_{I_\ell}$ on $[\bbF H _\bullet\mod]$.
										\item[$\mathrm{(b)}$] The linear map on $\calF_\ell^{\otimes d}$
										$$|\tuple\lambda\rangle
										\mapsto d_{\bbK,\bbF}([V^{\tuple\lambda}])$$ restricts to an
										$\fraks\frakl'_{I_\ell}$-module isomorphism $$
										L(\Lambda_0)^{\otimes d}\simto [\bbF H _\bullet\mod].$$
										\item[$\mathrm{(c)}$]
										$\bbF H _\bullet\mod$ is a minimal categorification of $\fraks\frakl'_{I_\ell}$.
									\end{itemize}
								\end{theorem}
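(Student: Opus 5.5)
The plan is to reduce everything to the general Heisenberg-to-Kac--Moody machinery of Section~\ref{sec:HeistoKac}, combined with a character count.

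For the categorical action, Theorem~\ref{Thm:doubleheisenberg} holds over any splitting field with $|H|$ invertible, in particular over $\bbF$ since $\ell\nmid|H|$. So $\bbF H_\bullet\mod$ is a locally finite abelian module category over $\symHeis_d$ with all $k_i=-1$. Theorem~\ref{thm:HtoK} then gives a 2-representation of $\frakU(\g)$, where the quiver is built from the colored spectra $I^{(\chi_i)}$. To identify this spectrum with $\bbZ/\ell\bbZ$ in each color, I will reduce modulo $\ell$. Over $\bbK$, the dot $c_i^{-1}L_{r+1}$ acts on Gelfand--Tsetlin vectors by integer contents; this is the lemma before \S\ref{sub:wreath-KM-char0}. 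The element $c_i^{-1}L_{r+1}$ is defined over $\calO$ because $c_i=|H|/\chi_i(1)$ is an $\ell'$-unit. Hence its minimal polynomials over $\bbF$ have roots in the prime field, and every residue occurs already on the trivial module. The spectrum is therefore $\bbZ/\ell\bbZ$, and each component is of type $A^{(1)}_{\ell-1}$. This proves the first sentence and (a).

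For (b), I will use the decomposition map $d_{\bbK,\bbF}$ along the reduction diagram. The functors $F^{\chi_i}$ and $E^{\chi_i}$ are given by bimodules defined over $\calO$, so they commute with $d_{\bbK,\bbF}$. Moreover $F^{(\chi_i)}_j$ over $\bbF$ is the sum of the reductions of the $F^{(\chi_i)}_{j'}$ with $j'\equiv j \pmod{\ell}$: generalized eigenspaces for $\bar j$ collect all integer eigenvalues congruent to $j$. By Proposition~\ref{Prop_compa_action}, applied in each tensor factor, $d_{\bbK,\bbF}\circ\bigl(|\tuple\lambda\rangle\mapsto[V^{\tuple\lambda}]\bigr)$ is then an $\fraks\frakl'_{I_\ell}$-homomorphism $\calF_\ell^{\otimes d}\to[\bbF H_\bullet\mod]$.

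Restricting to the submodule generated by $\emptyset$, which is $L(\Lambda_0)^{\otimes d}$ (irreducible of highest weight $\boldsymbol\Lambda_0$ for the direct sum), the map is nonzero and hence injective. For surjectivity I first note that $d_{\bbK,\bbF}$ is surjective on Grothendieck groups, since $\ell\nmid|H|$ and the group algebras are finite. The key step is then a dimension comparison in each rank. The generating function of $\dim(L(\Lambda_0)^{\otimes d})_m$ is $\prod_{\ell\nmid m}(1-t^m)^{-d}$. By \cite{WW08} this equals $\sum_m|\Irr(\bbF H_m)|t^m$, using $d=|\Irr(\bbF H)|=|\Irr(\bbK H)|$ because $\ell\nmid|H|$. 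Equal graded ranks plus injectivity give an isomorphism.

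I expect the main obstacle to be the precise compatibility of the $\bbF$-eigenfunctor decomposition with reduction. One must check that the modular generalized eigenspace decomposition of $F^{\chi_i}$ lifts to an idempotent decomposition over $\calO$ whose $\bbK$-extension is the stated sum over congruence classes. I would handle this by constructing the idempotents polynomially in the integral dot on each finite free bimodule and lifting them by Hensel's lemma.

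Finally, (c) follows. A 2-representation whose Grothendieck group is the irreducible module $L(\boldsymbol\Lambda_0)$, with simples mapping to a basis (the classes of irreducibles lie in the image), is minimal in the sense of Chuang--Rouquier. Concretely, the weight spaces are the blocks, and they are indecomposable because the Kac--Moody weight together with the component sizes determines the tuple of $\ell$-cores.
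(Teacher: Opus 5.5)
Your proofs of (a) and (b) follow the paper. For (a) you feed the Heisenberg action over $\bbF$ into Theorem~\ref{thm:HtoK}. For (b) you use compatibility of $d_{\bbK,\bbF}$ with the eigenfunctors, injectivity on the irreducible module $L(\Lambda_0)^{\otimes d}$, and surjectivity from the Wan--Wang count of $\ell$-regular classes. Lifting the generalized-eigenspace idempotents over $\calO$ is a sound way to justify the step the paper states in one line.

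One small slip in (a): for a nontrivial colour $\chi_i$, the trivial modules only produce the residue $0$ for the $\chi_i$-dot (on ${\bf1}_{H_0}$ the dot is $c_i^{-1}L_1=0$). You get all of $\bbZ/\ell\bbZ$ from $0\in I^{(\chi_i)}$ together with $\sigma$-stability of the spectrum \cite[Lemma~4.6]{BSW20a}.

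The argument for (c) has a genuine gap. You claim that an abelian 2-representation whose Grothendieck group is the irreducible module, with classes of simples forming a basis, is minimal. That criterion is false. Take $A=\bbF[x]/(x^2)$, tensor the algebras underlying $\calL(\boldsymbol{\Lambda}_0)$ with $A$, and let the functors act on the first factor. The Grothendieck group is again $L(\Lambda_0)^{\otimes d}$, and the simples are in bijection with those of $\calL(\boldsymbol{\Lambda}_0)$. But the rank-zero part is $A\mod$, so this is not the minimal categorification. Your remark on indecomposable blocks does not repair this, since it is a consequence of minimality rather than a criterion for it.

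The missing input is the one the paper uses. The object ${\bf1}_{H_0}$ is a highest-weight object, with $E_j^{\chi_i}{\bf1}_{H_0}=0$ and $\End({\bf1}_{H_0})=\bbF$; equivalently, the rank-zero part is $\bbF H_0\mod=\bbF\mod$. Rouquier's universality theorem \cite{R08} then gives a fully faithful morphism from $\calL(\boldsymbol{\Lambda}_0)$ to $\bbF H_\bullet\mod$ sending the generator to ${\bf1}_{H_0}$, with image inside the projectives. You must still show the image contains every indecomposable projective. One way is counting, which is the paper's route: by (b), the number of simples of $\bbF H_n$ equals $\dim(L(\Lambda_0)^{\otimes d})_n$. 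Alternatively, note that $F^{\chi_{i_n}}\cdots F^{\chi_{i_1}}({\bf1}_{H_0})=\bbF H_n\,e^{(1)}_{\chi_{i_1}}\cdots e^{(n)}_{\chi_{i_n}}$. Summing these over all colour sequences, with multiplicity $\prod_k\chi_{i_k}(1)$, recovers the regular module $\bbF H_n$.
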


								\begin{proof}
                                    Part $(a)$ follows from the Heisenberg-to-Kac--Moody categorification.

                                    For $(b)$, reduction modulo $\ell$ commutes with induction, restriction, and the dot, so the displayed map restricts to a nonzero $\fraks\frakl'_{I_\ell}$-homomorphism on $L(\Lambda_0)^{\otimes d}$ and hence is injective. The Hilbert series above show that it is surjective.

                                    For $(c)$, Theorem~\ref{Thm:doubleheisenberg} gives a $\frakU(\fraks\frakl'_{I_\ell})$-categorical action on $\bbF H _{\bullet}\mod$.
            Note that $[{\bf1}_{ H _0}]\in [\bbF H _{\bullet}\mod]$ is a highest vector,  so the $\frakU(\fraks\frakl'_{I_\ell})$-action generated by ${\bf1}_{H_0}$ induces an embedding of categories:
                    $$\frakU(\fraks\frakl'_{I_\ell})\cdot {\bf1}_{H_0}\cong \calL(\boldsymbol{\Lambda}_0)\hookrightarrow \bbF H _{\bullet}\mod.$$
                        By part~(b), the above minimal 2-subrepresentation has the same Grothendieck group as $\bbF H _\bullet\mod$; using the uniqueness of the minimal categorification (cf. \cite{R08}), this identifies the generated 2-representation with the whole category:
                    $$\frakU(\fraks\frakl'_{I_\ell})\cdot{\bf1}_{H_0}\cong \calL(\boldsymbol{\Lambda}_0)\cong \bbF H _{\bullet}\mod.$$
                                    So $\bbF H _\bullet\mod$ is a minimal categorification of $\fraks\frakl'_{I_\ell}$.
                                \end{proof}

										\begin{corollary}\label{weight=block}
											The decomposition of $K_0(\bbF H _n\mod)$ in blocks coincides with its simultaneous decomposition by the component sizes $n_{\chi_i}=|\lambda^{(\chi_i)}|$ and the Kac--Moody weight spaces.\end{corollary}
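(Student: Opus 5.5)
The plan is to combine the minimal-categorification statement of Theorem~\ref{thm:Lieactwreath} with two standard facts. First, blocks of a categorical $\fraks\frakl'_{I_\ell}$-action refine weight spaces. Second, in $L(\Lambda_0)$ each weight space of the basic module is spanned by partitions with a fixed $\ell$-core. We will proceed in three steps.

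\emph{Step 1: one block lies in one piece.} Let $\mathbb K$ be the characteristic-zero lift used in the reduction diagram. The central elements coming from colored bubbles reduce modulo $\ell$, and their evaluations on a simple $\bbF H_n$-module $D$ give the modular weight functions $\overline{\bfO}_{\lambda^{(\chi_i)}}(u)$. To see this, take any ordinary $V^{\tuple\lambda}$ having $D$ as a composition factor. By Theorem~\ref{cwf}, each bubble $\bbO^{\chi_i}(u)$ acts on $V^{\tuple\lambda}$ by $\bfO_{\lambda^{(\chi_i)}}(u)$, so it acts on $D$ by the reduction. The component sizes $n_{\chi_i}$ are also block invariants. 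They are detected by the central idempotents obtained from the coefficients of the bubbles: the degree-$u^{-1}$ part counts boxes of color $\chi_i$, and since $n_{\chi_i}\le n<\ell$ is not available in general, we instead use $\sum_j f^{(j)}_{\chi_i}$, which is central and integral because $\ell\nmid|H|$. It acts on $V^{\tuple\lambda}$ by the scalar $n_{\chi_i}$, and since it is a sum of commuting idempotents of the semisimple $\bbF H^n$, its eigenvalue multiset is well defined over $\bbF$. The cleaner route is Clifford theory: restricting to the normal subgroup $H^n$, whose algebra is semisimple because $\ell\nmid|H|$, the $H_n$-orbit of the $H^n$-isotypic type is a block invariant, and it records exactly $(n_{\chi_i})_i$. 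Simples in one block therefore share their component sizes and their weight, which is the tuple of $\overline{\bfO}$'s. Hence the block decomposition refines the simultaneous decomposition.

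\emph{Step 2: converse.} Fix component sizes $(n_{\chi_i})$ and a weight $\boldsymbol\mu$. By the Clifford reduction, the block theory of $\bbF H_n$ with fixed $H^n$-type and $\ell\nmid|H|$ is Morita equivalent to the block theory of $\bigotimes_i \bbF\frakS_{n_{\chi_i}}$, each tensored with a full matrix algebra over $\bbF$, which does not change blocks. Equivalently, it is the product over colors of the symmetric-group categorifications, matching Theorem~\ref{thm:Lieactwreath}(b) with $L(\Lambda_0)^{\otimes d}$. Blocks of $\bbF\frakS_m$ are classified by $\ell$-cores (the Nakayama conjecture), and by the lemma of Fayers \cite{F06} the $\ell$-core is equivalent to $\overline{\bfO}_\lambda$, that is, to the $\widehat{\mathfrak{sl}}'_\ell$-weight. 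A tensor product of blocks is a block, so each simultaneous piece is a single block.

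\emph{Main obstacle.} The delicate point is Step 2. The first option is to avoid citing Nakayama and instead use minimality (Theorem~\ref{thm:Lieactwreath}(c)) together with Chuang--Rouquier. In a minimal categorification of an integrable highest-weight module, each weight space $\calL(\boldsymbol\Lambda_0)_{\boldsymbol\mu}$ is indecomposable, because its endomorphism algebra of $1_{\boldsymbol\mu}$-action is a cyclotomic quiver Hecke algebra, whose center is local on each weight space. We will try this first, and fall back on the Clifford/Nakayama argument if the indecomposability statement requires more than the paper provides.
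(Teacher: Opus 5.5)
Your proof is correct, but it takes a different route from the paper's. The paper gives no separate proof of this corollary; it treats it as an immediate consequence of Theorem~\ref{thm:Lieactwreath}, in the spirit of \cite{LLT}. The coefficients of the colored bubbles are central, so weights are constant on blocks. The identification of $\bbF H_\bullet\mod$ with the minimal categorification $\calL(\boldsymbol\Lambda_0)$ is meant to supply the converse.

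You prove the converse classically instead. Let $e_{\underline n}=\sum_\theta e_\theta$, the sum over simple $\bbF H^n$-modules $\theta$ of type $\underline n=(n_{\chi_i})_i$. These are $\frakS_n$-invariant central idempotents of the semisimple algebra $\bbF H^n$, hence central in $\bbF H_n$. Since $V_\chi^{\boxtimes m}$ extends to $H\wr\frakS_m$ by permuting tensor factors, there is no cocycle, and $\bbF H_n e_{\underline n}$ is Morita equivalent to $\bigotimes_i\bbF\frakS_{n_{\chi_i}}$. Nakayama's conjecture and Fayers' lemma then finish the argument.

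Your route gives a complete argument from standard results and explains why component sizes must be added to the weight. The categorical route is more conceptual, but it needs an indecomposability input that the paper does not supply, and in type~$A$ that input is essentially Nakayama's conjecture again.

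Two corrections.

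First, in Step~1 drop the detour through $\sum_j f^{(j)}_{\chi_i}$. Over $\bbF$ it acts by $n_{\chi_i}$ modulo $\ell$, so it cannot separate component sizes. Use the idempotents $e_{\underline n}$ instead; they are defined over $\calO$ because $\ell\nmid|H|$, so they are compatible with reduction of ordinary modules.

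Second, and more seriously, your ``first option'' is misstated. In the paper's weight lattice $\sum_{i\in I_0}\alpha_i=0$ on each finite component. So the weight $\boldsymbol\Lambda_0-\beta$ does not determine the formal element $\beta=\sum_i c_i\alpha_i$ ($c_i\in\bbN$), and weight spaces are decomposable even in fixed rank. For example, if $d\ge 2$, the simples of $\bbF H_\ell$ labelled $((\ell),\emptyset,\ldots)$ and $(\emptyset,(\ell),\ldots)$ both have weight $\boldsymbol\Lambda_0$ but lie in different Clifford pieces.

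What is indecomposable is each summand given by the cyclotomic quiver Hecke algebra $R^{\boldsymbol\Lambda_0}_\beta=\bigotimes_i R^{\Lambda_0}_{\beta^{(i)}}$ with $\beta$ fixed, i.e.\ with component sizes and weight both fixed. Locality of its center is a genuine theorem, not a consequence of minimality alone: it is due to Lyle--Mathas, and for $\Lambda_0$ it is Nakayama's conjecture transported through the Brundan--Kleshchev isomorphism. So make the Clifford/Nakayama argument your main line rather than a fallback.
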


										\begin{remark}
											In particular, for symmetric groups, Lascoux, Leclerc and Thibon \cite{LLT} proved that
											the decomposition of $K_0(\bbF H _n\mod)$ in blocks coincides with its decomposition in weight
											spaces.  The above corollary is a generalization of their work.
										\end{remark}

										\begin{corollary}

											$\rho_{\tuple \lambda}$ and
											$\rho_{\tuple \mu}$ are in the same block if and only if ${\tuple \lambda}$ and ${\tuple \mu}$ have the same component sizes and the same componentwise $\ell$-cores.
											In particular,
											$\{(n_{\chi_i},\overline{\bbO}^{\chi_i}(u)(-))\}_{i}$ are complete invariants of blocks of $\bbF H _{n}\mod.$
										\end{corollary}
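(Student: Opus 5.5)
The first equivalence, blocks $=$ (component sizes, weight), reduces to the second assertion, so the plan is to prove the second corollary directly and then read off the first.

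I would derive the block statement from Corollary~\ref{weight=block} together with the content-function description of weights. By Theorem~\ref{thm:Lieactwreath}(b) and the lemma identifying $E^{\chi_i}_j,F^{\chi_i}_j$ on $V^{\tuple\lambda}$, the class $d_{\bbK,\bbF}([V^{\tuple\lambda}])$ lies in the $\fraks\frakl'_{I_\ell}$-weight space of weight $\sum_i\sum_{j\in\bbZ/\ell\bbZ}N_j(\lambda^{(\chi_i)})\Lambda_j^{(\chi_i)}$. The reason is that the modular dot on the $\chi_i$-colored summand acts by the content of the removed box reduced mod $\ell$, and this follows from the compatibility of Proposition~\ref{Prop_compa_action} with reduction. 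Every composition factor of the reduction of $V^{\tuple\lambda}$ then has this weight and the same component sizes $n_{\chi_i}=|\lambda^{(\chi_i)}|$. The component sizes are preserved because they record the $H^n$-isotypic data, which survive reduction since $\ell\nmid|H|$ and $\bbF H^n$ is semisimple.

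Corollary~\ref{weight=block} says that blocks of $K_0(\bbF H_n\mod)$ are exactly the simultaneous eigen-pieces for (component sizes, weight). Hence $\rho_{\tuple\lambda}$ and $\rho_{\tuple\mu}$ lie in the same block if and only if $|\lambda^{(\chi_i)}|=|\mu^{(\chi_i)}|$ and $N_j(\lambda^{(\chi_i)})=N_j(\mu^{(\chi_i)})$ for all $i$ and $j$. For fixed $i$ and equal size, the Fayers lemma (cf.~\cite{F06}) turns the equality of all $N_j$ into equality of $\overline{\bfO}_{\lambda^{(\chi_i)}}(u)=\overline{\bfO}_{\mu^{(\chi_i)}}(u)$, which in turn is equivalent to $\lambda^{(\chi_i)}$ and $\mu^{(\chi_i)}$ having the same $\ell$-core. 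This gives the first assertion.

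For the complete-invariants statement, I would use that $\overline{\bbO}^{\chi_i}(u)(\rho_{\tuple\lambda})$ is the mod-$\ell$ reduction of $\bbO^{\chi_i}(u)(\rho_{\tuple\lambda})=\bfO_{\lambda^{(\chi_i)}}(u)$ from Theorem~\ref{cwf}. This reduction is legitimate because bubbles are defined over $\calO$ through the reduction diagram and act centrally, so they are constant on blocks. Its value is $\prod_j(u-j)^{N_j(\lambda^{(\chi_i)})}$, which recovers the weight coordinate; together with $n_{\chi_i}$ this separates blocks by the first part.

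The main obstacle is the identification of the modular weight of the reduction with the $N_j$ of the characteristic-zero label. Concretely, one must check that the generalized-eigenvalue decomposition of the modular dot agrees with the mod-$\ell$ reduction of contents. This holds because the dot is the $\calO$-integral normalized Jucys--Murphy operator $c_i^{-1}L_{r+1}$, with $c_i=|H|/\chi_i(1)$ a unit since $\ell\nmid|H|$, whose eigenvalues over $\bbK$ are integer contents.
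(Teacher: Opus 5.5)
Your proposal is correct and follows essentially the same route as the paper: apply Corollary~\ref{weight=block}, identify the weight coordinates of $d_{\bbK,\bbF}([V^{\tuple\lambda}])$ with the $N_j(\lambda^{(\chi_i)})$ (equivalently, the valuations of $\overline{\bfO}_{\lambda^{(\chi_i)}}(u)$; that equivalence holds by definition, and the Fayers lemma supplies the $\ell$-core step), and conclude that for partitions of equal size this equality is the same as equality of $\ell$-cores. Your explicit justification that the modular dot has the mod-$\ell$ contents as generalized eigenvalues (the normalized Jucys--Murphy operator is $\calO$-integral with integer eigenvalues, and $c_i$ is an $\ell$-unit) spells out what the paper uses implicitly in Theorem~\ref{thm:Lieactwreath}(b).
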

										\begin{proof}
                                            The modules $V_{\tuple \lambda}$ and $V_{\tuple \mu}$ have the same component sizes and lie in the same Kac--Moody weight space if and only if their component sizes and componentwise $\ell$-cores agree. The result therefore follows from Corollary~\ref{weight=block}; the last assertion follows since the valuations of $\overline{\bbO}^{\chi_i}(u)$ recover the residue multiset in each color.
                                        \end{proof}

										Wan and Wang showed that, at the Grothendieck-group level, the modular representation theory of wreath products carries an action of $\widehat{\mathfrak{sl}}_{\ell}^{\oplus d}$ \cite{WW08}; earlier modular branching results for generalized symmetric groups were obtained by Tsuchioka \cite{Tsu07}.  Our construction gives a Kac--Moody 2-categorical realization of the corresponding product-crystal structure in the present setting.

										\begin{corollary}
											Assume $\ell\nmid |H|$. The modular branching graph of the tower
$H\wr\mathfrak S_\bullet$ is isomorphic, as a colored directed graph, to the Cartesian product of
$d=|\widehat H|$ copies of the Kleshchev branching graph for $\mathfrak S_\bullet$. Equivalently, it is the crystal $B(\Lambda_0)^{\times d}$ for the direct-sum algebra $\fraks\frakl'_{I_\ell}$, with the $d$ factors distinguished by the colors $\chi_1,\ldots,\chi_d$.
										\end{corollary}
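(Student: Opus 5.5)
The plan is to read the modular branching graph off the Kac--Moody 2-representation of Theorem~\ref{thm:Lieactwreath} via the crystal attached to a categorical action. Then we identify that crystal with $B(\Lambda_0)^{\times d}$ using the minimality statement (c) and the direct-sum structure $\fraks\frakl'_{I_\ell}\cong(\widehat{\mathfrak{sl}}'_\ell)^{\oplus d}$.

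\textbf{Step 1 (the graph).} Take the vertices to be the isomorphism classes of simple $\bbF H_n$-modules, for all $n$. For a simple $D$, put an arrow $D\to D'$ labelled $j^{(\chi_i)}$ when $D'\cong\operatorname{soc}F^{\chi_i}_jD$ (dually, $D\cong\operatorname{hd}E^{\chi_i}_jD'$). This is the modular branching graph: by Mackey theory $F^{\chi_i}=\bigoplus_jF^{\chi_i}_j$ is the $\chi_i$-isotypic part of induction to $H_{n+1}$, and similarly $E^{\chi_i}$ for restriction.

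Theorem~\ref{thm:Lieactwreath} provides a $\frakU(\fraks\frakl'_{I_\ell})$-action. Fixing a color $\chi_i$ and a residue $j$, restrict it to the $\mathfrak{sl}_2$-subcategory generated by $E^{\chi_i}_j,F^{\chi_i}_j$. The Chuang--Rouquier and Khovanov--Lauda results on simple socles then give the following facts. The module $F^{\chi_i}_jD$ is either zero or has simple socle and simple head. The operators $\tilde f_{j}^{(\chi_i)}D:=\operatorname{soc}F^{\chi_i}_jD$ and $\tilde e_j^{(\chi_i)}D:=\operatorname{soc}E^{\chi_i}_jD$ are mutually inverse partial bijections. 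The numbers $\varepsilon,\varphi$ equal the maximal nonvanishing powers and satisfy $\varphi-\varepsilon=\langle h_j^{(\chi_i)},\wt D\rangle$. This gives the branching graph the structure of an abstract $\fraks\frakl'_{I_\ell}$-crystal.

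\textbf{Step 2 (identification).} By Theorem~\ref{thm:Lieactwreath}(c), $\bbF H_\bullet\mod\cong\calL(\boldsymbol\Lambda_0)$ is the minimal categorification with highest weight object ${\bf1}_{H_0}$. By the uniqueness of the crystal of a minimal categorification (Lauda--Vazirani, Rouquier), its socle crystal is $B(\boldsymbol\Lambda_0)$. Since the Lie algebra is a direct sum, $L(\boldsymbol\Lambda_0)=L(\Lambda_0)^{\boxtimes d}$ is an outer tensor product. Its crystal is therefore the Cartesian product $B(\Lambda_0)^{\times d}$: the operators $\tilde e^{(\chi_i)}_j,\tilde f^{(\chi_i)}_j$ act on the $i$-th factor only, and $\varepsilon^{(\chi_i)}_j,\varphi^{(\chi_i)}_j$ depend only on that factor.

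At the categorical level this is visible directly from the action. By Lemma~\ref{lem:Mackey}(b) and the mixed relations \eqref{mixedHecke}, $F^{\chi_i}_j$ commutes up to isomorphism with $E^{\chi_{i'}}_{j'}$ and $F^{\chi_{i'}}_{j'}$ for $i'\ne i$. Hence applying $\tilde f^{(\chi_i)}_j$ does not change $\varepsilon^{(\chi_{i'})}_{j'}$ for $i'\ne i$, so the graph splits as a product.

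\textbf{Step 3 (the Kleshchev factor).} The case $H=1$ of the same theorem recovers the symmetric-group tower, whose branching graph is identified with $B(\Lambda_0)$ by the same argument; this is the Kleshchev graph by Grojnowski--Kleshchev and \cite{LLT}. Matching labels through the highest weight vertices ${\bf1}_{H_0}$ and ${\bf1}_{\mathfrak S_0}$ gives an isomorphism of colored directed graphs, with the $d$ factors distinguished by the colors $\chi_i$.

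\textbf{Main obstacle.} The main point to justify is that the crystal of the minimal categorification for a direct-sum Kac--Moody algebra is the product crystal. In particular, $\varepsilon^{(\chi_{i'})}$ must be invariant under $\tilde f^{(\chi_i)}$ for $i'\ne i$. I would prove this from the isomorphisms $E^{\chi_{i'}}F^{\chi_i}\cong F^{\chi_i}E^{\chi_{i'}}$ together with exactness. This gives $E^{\chi_{i'},(k)}_{j'}\tilde f^{(\chi_i)}_jD\ne0$ if and only if $E^{\chi_{i'},(k)}_{j'}D\ne0$: the forward direction is an embedding into $F^{\chi_i}_jE^{\chi_{i'},(k)}_{j'}D$, and the converse uses the head/socle characterization. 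Alternatively, one can invoke Rouquier's uniqueness and compare with the outer tensor product of $d$ copies of the symmetric-group categorification.
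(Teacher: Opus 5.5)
Your proposal is correct and follows essentially the same route as the paper. The paper deduces the corollary directly from the minimal categorification in Theorem~\ref{thm:Lieactwreath}(c) together with Rouquier's uniqueness theorem, which identifies the socle crystal with $B(\boldsymbol{\Lambda}_0)=B(\Lambda_0)^{\times d}$; your Steps 1--3 spell out the Chuang--Rouquier crystal structure and the comparison with the case $H=1$ that the paper leaves implicit, and your direct product-splitting argument via the mixed-color isomorphisms is a valid supplementary check that is not needed once Step 2 is granted.
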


										\begin{proof}
											This follows from the minimal categorification above and the corresponding uniqueness theorem; compare \cite{R08}.  For the corresponding classical and modular branching results, see \cite{Pus99,MS16,Tsu07,WW08}.
										\end{proof}


\begin{thebibliography}{BSW20b}

\bibitem[BE17]{BE17}
J.~Brundan and A.~Ellis.
\newblock Super {K}ac-{M}oody 2-categories.
\newblock {\em Proc. Lond. Math. Soc. (3)}, 115(5):925--973, 2017.
\newblock URL: \url{https://doi-org.proxy.bib.uottawa.ca/10.1112/plms.12055},
  \href {https://doi.org/10.1112/plms.12055} {\path{doi:10.1112/plms.12055}}.

\bibitem[BDK01]{BDK01}
J.~Brundan, R.~Dipper, and A.~Kleshchev.
\newblock {Quantum linear groups and representations of $\GL_n(\mathbb F_q)$}.
\newblock {Mem. Amer. Math. Soc.}, 149(706), 2001.

\bibitem[Bru18]{B18}
J.~Brundan.
\newblock On the definition of {H}eisenberg category.
\newblock {\em Algebr. Comb.}, 1(4):523--544, 2018.
\newblock \href {https://doi.org/10.5802/alco.26} {\path{doi:10.5802/alco.26}}.

\bibitem[BSW20a]{BSW20a}
J.~Brundan, A.~Savage, and B.~Webster.
\newblock Heisenberg and {K}ac-{M}oody categorification.
\newblock {\em Selecta Math. (N.S.)}, 26(5):Paper No. 74, 62, 2020.
\newblock \href {https://doi.org/10.1007/s00029-020-00602-5}
  {\path{doi:10.1007/s00029-020-00602-5}}.

\bibitem[BSW20b]{BSW20b}
J.~Brundan, A.~Savage, and B.~Webster.
\newblock On the definition of quantum {H}eisenberg category.
\newblock {\em Algebra Number Theory}, 14(2):275--321, 2020.
\newblock \href {https://doi.org/10.2140/ant.2020.14.275}
  {\path{doi:10.2140/ant.2020.14.275}}.

\bibitem[BSW23]{BSW23}
J.~Brundan, A.~Savage, and B.~Webster.
\newblock The degenerate {H}eisenberg category and its {G}rothendieck ring.
\newblock {\em Ann. Sci. \'Ec. Norm. Sup\'er. (4)}, 56(5):1517--1563, 2023.

\bibitem[CR08]{CR08}
J.~Chuang and R.~Rouquier.
\newblock Derived equivalences for symmetric groups and
  $\mathfrak{sl}_2$-categorification.
\newblock {\em Ann. of Math. (2)}, 167(1):245--298, 2008.
\newblock \href {https://doi.org/10.4007/annals.2008.167.245}
  {\path{doi:10.4007/annals.2008.167.245}}.

\bibitem[DL76]{DL76}
P.~Deligne and G.~Lusztig.
\newblock Representations of reductive groups over finite fields.
\newblock {\em Ann. of Math. (2)}, 103(1):103--161, 1976.
\newblock \href {https://doi.org/10.2307/1971021} {\path{doi:10.2307/1971021}}.

\bibitem[DM91]{DM}
F.~Digne and J.~Michel.
\newblock {\em Representations of Finite Groups of Lie Type}, volume~2 of {\em
  London Mathematical Society Student Texts}.
\newblock Cambridge University Press, Cambridge, 1991.

\bibitem[DM94]{DM94}
F.~Digne and J.~Michel.
\newblock Groupes r\'eductifs non connexes.
\newblock {\em Ann. Sci. \'Ec. Norm. Sup\'er. (4)}, 27(3):345--406, 1994.
\newblock \href{https://doi.org/10.24033/asens.1696}
  {\path{doi:10.24033/asens.1696}}.

\bibitem[DVV17]{DVV17}
O.~Dudas, M.~Varagnolo, and E.~Vasserot.
\newblock Categorical actions on unipotent representations of finite classical
  groups.
\newblock In {\em Categorification and Higher Representation Theory}, volume
  683 of {\em Contemporary Mathematics}, pages 41--104. American Mathematical
  Society, Providence, RI, 2017.
\newblock \href {https://doi.org/10.1090/conm/683/13700}
  {\path{doi:10.1090/conm/683/13700}}.

\bibitem[DVV19]{DVV19}
O.~Dudas, M.~Varagnolo, and E.~Vasserot.
\newblock Categorical actions on unipotent representations of finite unitary
  groups.
\newblock {\em Publ. Math. Inst. Hautes Études Sci.}, 129:129--197, 2019.
\newblock \href {https://doi.org/10.1007/s10240-019-00104-x}
  {\path{doi:10.1007/s10240-019-00104-x}}.

\bibitem[Fay06]{F06}
M.~Fayers.
\newblock Weights of multipartitions and representations of {A}riki-{K}oike
  algebras.
\newblock {\em Adv. Math.}, 206(1):112--144, 2006.
\newblock \href {https://doi.org/10.1016/j.aim.2005.07.017}
  {\path{doi:10.1016/j.aim.2005.07.017}}.

\bibitem[Fei87]{Feit}
W.~Feit.
\newblock Extensions of cuspidal characters of $gl_m(q)$.
\newblock {\em Publ. Math. Debrecen}, 34(3-4):273--297, 1987.
\newblock \href {https://doi.org/10.5486/pmd.1987.34.3-4.12}
  {\path{doi:10.5486/pmd.1987.34.3-4.12}}.

\bibitem[FH59]{FH59}
H.~K. Farahat and G.~Higman.
\newblock The centres of symmetric group rings.
\newblock {\em Proc. Roy. Soc. London Ser. A}, 250:212--221, 1959.
\newblock \href {https://doi.org/10.1098/rspa.1959.0060}
  {\path{doi:10.1098/rspa.1959.0060}}.

\bibitem[FS89]{FS89}
P.~Fong and B.~Srinivasan.
\newblock The blocks of finite classical groups.
\newblock {\em J. Reine Angew. Math.}, 396:122--191, 1989.
\newblock \href {https://doi.org/10.1515/crll.1989.396.122}
  {\path{doi:10.1515/crll.1989.396.122}}.

\bibitem[FS90]{FS90}
P.~Fong and B.~Srinivasan.
\newblock Brauer trees in classical groups.
\newblock {\em J. Algebra}, 131(1):179--225, 1990.
\newblock \href {https://doi.org/10.1016/0021-8693(90)90192-8}
  {\path{doi:10.1016/0021-8693(90)90192-8}}.

\bibitem[Gel70]{Gel70}
S.~Gelfand.
\newblock Representations of the full linear group over a finite field.
\newblock {\em Math. USSR-Sb.}, 12(1):13--39, 1970.
\newblock \href {https://doi.org/10.1070/SM1970v012n01ABEH000907}
  {\path{doi:10.1070/SM1970v012n01ABEH000907}}.

\bibitem[Gre55]{Green55}
J.~Green.
\newblock The characters of the finite general linear groups.
\newblock {\em Trans. Amer. Math. Soc.}, 80(2):402--447, 1955.
\newblock \href {https://doi.org/10.2307/1992997} {\path{doi:10.2307/1992997}}.

\bibitem[Gro99]{Go}
I.~Grojnowski.
\newblock Affine $\widehat{\mathfrak{sl}}_p$ controls the representation theory
  of the symmetric group and related hecke algebras.
\newblock 1999.
\newblock arXiv:math/9907129v1 [math.RT].
\newblock URL: \url{https://arxiv.org/abs/math/9907129}, \href
  {https://doi.org/10.48550/arXiv.math/9907129}
  {\path{doi:10.48550/arXiv.math/9907129}}.

\bibitem[HL80]{HL80}
R.~B. Howlett and G.~I. Lehrer.
\newblock Induced cuspidal representations and generalized hecke rings.
\newblock {\em Invent. Math.}, 58(1):37--64, 1980.
\newblock \href {https://doi.org/10.1007/BF01402273}
  {\path{doi:10.1007/BF01402273}}.

\bibitem[Jam86]{Jam86}
Gordon James.
\newblock The irreducible representations of the finite general linear groups.
\newblock {\em Proc. Lond. Math. Soc.}, 3-52(2):236--268, 1986.
\newblock \href {https://doi.org/10.1112/plms/s3-52.2.236}
  {\path{doi:10.1112/plms/s3-52.2.236}}.

\bibitem[JK81]{JK81}
G.~James and A.~Kerber.
\newblock {\em The Representation Theory of the Symmetric Group}, volume~16 of
  {\em Encyclopedia of Mathematics and its Applications}.
\newblock Addison-Wesley Publishing Co., Reading, Mass., 1981.
\newblock \href {https://doi.org/10.1017/CBO9781107340732}
  {\path{doi:10.1017/CBO9781107340732}}.

\bibitem[JW23]{JW23}
N.~Jing and Y.~Wu.
\newblock Characters of $\mathrm{GL}_n(\mathbb{F}_q)$ and vertex operators.
\newblock {\em arXiv preprint arXiv:2309.15330}, 2023.

\bibitem[KL10]{KL10}
M.~Khovanov and A.~Lauda.
\newblock A categorification of quantum {${\rm sl}(n)$}.
\newblock {\em Quantum Topol.}, 1(1):1--92, 2010.
\newblock \href {https://doi.org/10.4171/QT/1} {\path{doi:10.4171/QT/1}}.

\bibitem[LLT96]{LLT}
A.~Lascoux, B.~Leclerc, and J.~Thibon.
\newblock Hecke algebras at roots of unity and crystal bases of quantum affine
  algebras.
\newblock {\em Comm. Math. Phys.}, 181(1):205--263, 1996.
\newblock URL: \url{http://projecteuclid.org/euclid.cmp/1104287629}.

\bibitem[LLZ23]{LLZ}
P.~Li, Y.~Liu, and J.~Zhang.
\newblock Categorical actions and derived equivalences for finite
  odd-dimensional orthogonal groups.
\newblock 2023.
\newblock arXiv:2111.03698v3 [math.RT].
\newblock URL: \url{https://arxiv.org/abs/2111.03698v3}.

\bibitem[LSZ25]{LSZ25}
P.~Li, P.~Shan, and J.~Zhang.
\newblock Categorical action for finite classical groups and its applications:
  characteristic 0.
\newblock {\em Adv. Math.}, 471:Paper No. 110275, 84, 2025.
\newblock \href {https://doi.org/10.1016/j.aim.2025.110275}
  {\path{doi:10.1016/j.aim.2025.110275}}.

\bibitem[Lus76]{Lus76Green}
G.~Lusztig.
\newblock On the Green polynomials of classical groups.
\newblock {\em Proc. London Math. Soc. (3)}, 33(3):443--475, 1976.
\newblock \href {https://doi.org/10.1112/plms/s3-33.3.443}
  {\path{doi:10.1112/plms/s3-33.3.443}}.

\bibitem[Mac95]{Mac95}
I.~Macdonald.
\newblock {\em Symmetric functions and {H}all polynomials}.
\newblock Oxford Mathematical Monographs. The Clarendon Press, Oxford
  University Press, New York, second edition, 1995.
\newblock With contributions by A. Zelevinsky, Oxford Science Publications.

\bibitem[MS16]{MS16}
A.~Mishra and M.~Srinivasan.
\newblock The {O}kounkov-{V}ershik approach to the representation theory of
  {$G\wr S_n$}.
\newblock {\em J. Algebraic Combin.}, 44(3):519--560, 2016.
\newblock \href {https://doi.org/10.1007/s10801-016-0679-5}
  {\path{doi:10.1007/s10801-016-0679-5}}.

\bibitem[OV96]{OV96}
A.~Okounkov and A.~Vershik.
\newblock A new approach to representation theory of symmetric groups.
\newblock {\em Selecta Math. (N.S.)}, 2(4):581--605, 1996.
\newblock \href {https://doi.org/10.1007/PL00001384}
  {\path{doi:10.1007/PL00001384}}.

\bibitem[Pus97]{Pus99}
I.~Pushkarev.
\newblock On the theory of representations of the wreath products of finite
  groups and symmetric groups.
\newblock {\em Zap. Nauchn. Sem. S.-Peterburg. Otdel. Mat. Inst. Steklov.
  (POMI)}, 240:229--244, 294--295, 1997.
\newblock \href {https://doi.org/10.1007/BF02175835}
  {\path{doi:10.1007/BF02175835}}.

\bibitem[Rou08]{R08}
R.~Rouquier.
\newblock 2-kac-moody algebras.
\newblock {\em preprint}, 2008.
\newblock \href {https://arxiv.org/abs/0812.5023} {\path{arXiv:0812.5023}}.

\bibitem[Ryb23]{Ry}
C.~Ryba.
\newblock Stable centres of wreath products.
\newblock {\em Algebr. Comb.}, 6(2):413--455, 2023.
\newblock \href {https://doi.org/10.5802/alco.264}
  {\path{doi:10.5802/alco.264}}.

\bibitem[Ugl00]{Ug99}
D.~Uglov.
\newblock Canonical bases of higher-level {$q$}-deformed {F}ock spaces and
  {K}azhdan-{L}usztig polynomials.
\newblock In {\em Physical combinatorics ({K}yoto, 1999)}, volume 191 of {\em
  Progr. Math.}, pages 249--299. Birkh\"auser Boston, Boston, MA, 2000.

\bibitem[Wan04a]{W04a}
W~Wang.
\newblock The {F}arahat-{H}igman ring of wreath products and {H}ilbert schemes.
\newblock {\em Adv. Math.}, 187(2):417--446, 2004.
\newblock \href {https://doi.org/10.1016/j.aim.2003.09.003}
  {\path{doi:10.1016/j.aim.2003.09.003}}.

\bibitem[Wan04b]{W04b}
W.~Wang.
\newblock Vertex algebras and the class algebras of wreath products.
\newblock {\em Proc. London Math. Soc. (3)}, 88(2):381--404, 2004.
\newblock \href {https://doi.org/10.1112/S0024611503014382}
  {\path{doi:10.1112/S0024611503014382}}.

\bibitem[WW08]{WW08}
J.~Wan and W.~Wang.
\newblock Modular representations and branching rules for wreath {H}ecke
  algebras.
\newblock {\em Int. Math. Res. Not. IMRN}, pages Art. ID rnn128, 31, 2008.
\newblock \href {https://doi.org/10.1093/imrn/rnn128}
  {\path{doi:10.1093/imrn/rnn128}}.

\bibitem[WW19]{WW19}
J.~Wan and W.~Wang.
\newblock Stability of the centers of group algebras of $\mathrm{GL}_n(q)$.
\newblock {\em Adv. Math.}, 349:749--780, 2019.
\newblock \href {https://arxiv.org/abs/1805.08796} {\path{arXiv:1805.08796}},
  \href {https://doi.org/10.1016/j.aim.2019.04.016}
  {\path{doi:10.1016/j.aim.2019.04.016}}.

\bibitem[BSW21]{BSW21}
J.~Brundan, A.~Savage, and B.~Webster.
\newblock Foundations of Frobenius {H}eisenberg categories.
\newblock {\em J. Algebra}, 578:115--185, 2021.
\newblock \href{https://doi.org/10.1016/j.jalgebra.2021.02.025}
  {\path{doi:10.1016/j.jalgebra.2021.02.025}}.

\bibitem[FJW00]{FJW00}
I.~B. Frenkel, N.~Jing, and W.~Wang.
\newblock Vertex representations via finite groups and the {M}c{K}ay correspondence.
\newblock {\em Int. Math. Res. Not.}, 2000(4):195--222, 2000.
\newblock \href{https://doi.org/10.1155/S107379280000012X}
  {\path{doi:10.1155/S107379280000012X}}.

\bibitem[Kho14]{Kho14}
M.~Khovanov.
\newblock Heisenberg algebra and a graphical calculus.
\newblock {\em Fund. Math.}, 225:169--210, 2014.
\newblock \href{https://doi.org/10.4064/fm225-1-8}
  {\path{doi:10.4064/fm225-1-8}}.

\bibitem[LS13]{LS13}
A.~Licata and A.~Savage.
\newblock Hecke algebras, finite general linear groups, and {H}eisenberg categorification.
\newblock {\em Quantum Topol.}, 4(2):125--185, 2013.
\newblock \href{https://doi.org/10.4171/QT/37}
  {\path{doi:10.4171/QT/37}}.

\bibitem[LS21]{LS21}
S.~N. Likeng and A.~Savage.
\newblock Group partition categories.
\newblock {\em J. Comb. Algebra}, 5(4):369--406, 2021.
\newblock \href{https://doi.org/10.4171/JCA/55}
  {\path{doi:10.4171/JCA/55}}.

\bibitem[Gan20]{Gan20}
R.~Gandhi.
\newblock Decomposing Frobenius {H}eisenberg categories.
\newblock {\em J. Algebra Appl.}, 19(5):2050094, 2020.
\newblock \href{https://doi.org/10.1142/S0219498820500942}
  {\path{doi:10.1142/S0219498820500942}}.

\bibitem[Sav19]{Sav19}
A.~Savage.
\newblock Frobenius {H}eisenberg categorification.
\newblock {\em Algebr. Comb.}, 2(5):937--967, 2019.
\newblock \href{https://doi.org/10.5802/alco.73}
  {\path{doi:10.5802/alco.73}}.

\bibitem[RS15]{RS15}
D.~Rosso and A.~Savage.
\newblock Towers of graded superalgebras categorify the twisted {H}eisenberg double.
\newblock {\em J. Pure Appl. Algebra}, 219(11):5040--5067, 2015.
\newblock \href{https://doi.org/10.1016/j.jpaa.2015.03.016}
  {\path{doi:10.1016/j.jpaa.2015.03.016}}.

\bibitem[RS17]{RS17}
D.~Rosso and A.~Savage.
\newblock A general approach to {H}eisenberg categorification via wreath product algebras.
\newblock {\em Math. Z.}, 286(1--2):603--655, 2017.
\newblock \href{https://doi.org/10.1007/s00209-016-1776-9}
  {\path{doi:10.1007/s00209-016-1776-9}}.

\bibitem[Shu22]{Shu22}
Cheng Shu.
\newblock The character table of $\operatorname{GL}_n(\mathbb{F}_q) \rtimes
  \langle \sigma \rangle$.
\newblock {\em Adv. Math.}, 403:108357, 85 pp., 2022.
\newblock \href {https://doi.org/10.1016/j.aim.2022.108357}
  {\path{doi:10.1016/j.aim.2022.108357}}.

\bibitem[Tsu07]{Tsu07}
S.~Tsuchioka.
\newblock A modular branching rule for the generalized symmetric groups.
\newblock {\em J. Algebra}, 316(1):459--470, 2007.
\newblock \href{https://doi.org/10.1016/j.jalgebra.2006.12.007}
  {\path{doi:10.1016/j.jalgebra.2006.12.007}}.

\bibitem[Wal63]{Wall63}
G.~E. Wall.
\newblock On the conjugacy classes in the unitary, symplectic and orthogonal groups.
\newblock {\em J. Austral. Math. Soc.}, 3:1--62, 1963.
\newblock \href{https://doi.org/10.1017/S1446788700027622}
  {\path{doi:10.1017/S1446788700027622}}.

\bibitem[Zel81]{Zel81}
A.~V. Zelevinsky.
\newblock {\em Representations of Finite Classical Groups: A Hopf Algebra
  Approach}, volume 869 of {\em Lecture Notes in Mathematics}.
\newblock Springer, Berlin, 1981.

\end{thebibliography}
\end{document}